\documentclass[10 pt, singlespacing, headsepline]{book}
\usepackage[numbers]{natbib}
\usepackage[unicode]{hyperref}
\usepackage{bookmark}
\usepackage{lipsum}
\usepackage[utf8]{inputenc}
\usepackage{soul}
\usepackage[T1]{fontenc}
\usepackage{tikz, tikz-cd, enumerate, amsfonts, stmaryrd, amsmath, amsthm, amssymb, comment, todonotes, mathrsfs, mathtools}
\usepackage[shortlabels]{enumitem}
\usetikzlibrary{arrows}
\usetikzlibrary{positioning}
\usepackage[utf8]{inputenc}
\usepackage{bbm}
\usepackage{graphicx}
\usepackage[all]{xypic}
\usepackage[toc,page,title,titletoc,header]{appendix}
\usepackage{xcolor, color}
\usepackage[all,color]{xy}
\usepackage[left=1in, right=1in, top=1in, bottom=1in, includefoot, headheight=13.6pt]{geometry}
\usetikzlibrary{matrix,arrows,decorations.pathmorphing}
\usepackage{mdframed}
\usepackage{eucal}
\usetikzlibrary{positioning}
\usepackage{fancyhdr}
\usepackage{longtable}
\usepackage[french, english]{babel}
\usepackage{imakeidx}
\usepackage{hyperref}
\usetikzlibrary{decorations.markings}
\usepackage{authblk}

\input xy
\xyoption{all}

\newcommand{\bb}{\textbf}

\newcommand{\ft}{\footnote}
\newcommand{\la}{\langle}
\newcommand{\ra}{\rangle}
\newcommand{\ie}{\textit{i.e. }}
\newcommand{\cf}{\textit{cf }}

\newtheorem{theorem}[subsection]{Theorem}
\newtheorem{lemma}[subsection]{Lemma}
\newtheorem{proposition}[subsection]{Proposition}
\newtheorem{corollary}[subsection]{Corollary}

\newtheorem{example}[subsection]{Example}

\newtheorem*{theoremetoile}{Theorem}

\theoremstyle{definition}

\newtheorem{definition}[subsection]{Definition}

\newtheorem{notation}[subsection]{Notation}

\newmdtheoremenv{lem}{Lemma}
\newmdtheoremenv{pro}{Proposition}
\newmdtheoremenv{cor}{Corollary}
\newmdtheoremenv{rem}{Remark}
\newmdtheoremenv{ide}{Idea}

\newtheorem{lemm}[subsection]{Lemma}
\newtheorem{prop}[subsection]{Proposition}
\newtheorem{coro}[subsection]{Corollary}

\theoremstyle{definition}

\newtheorem{remark}[subsection]{Remark}

\DeclareMathOperator{\W}{\mathsf{W}}

\DeclareMathOperator{\ZZ}{\mathbf{Z}} 
\DeclareMathOperator{\QQ}{\mathbf{Q}} 
\DeclareMathOperator{\FF}{\mathbf{F}}
\DeclareMathOperator{\NN}{\mathbf{N}} 
\DeclareMathOperator{\RR}{\mathbf{R}} 
 
\DeclareMathOperator{\BB}{\mathbf{B}}
\renewcommand{\AA}{\mathbf{A}}
\renewcommand{\BB}{\mathbf{B}}
\DeclareMathOperator{\EE}{\mathbf{E}}
\DeclareMathOperator{\T}{\mathsf{T}}

\DeclareMathOperator{\R}{\mathsf{R}}

\DeclareMathOperator{\Mm}{\mathfrak{M}}
\DeclareMathOperator{\Oo}{\mathcal{O}}
\DeclareMathOperator{\Oe}{\mathcal{O}_{\mathcal{E}}}

\DeclareMathOperator{\Ee}{\mathcal{E}}
\DeclareMathOperator{\Rr}{\mathcal{R}}
\DeclareMathOperator{\DD}{\mathbb{D}}
\DeclareMathOperator{\mm}{\mathfrak{m}}

\DeclareMathOperator{\Ss}{\mathfrak{S}}
\DeclareMathOperator{\Ff}{\mathcal{F}}

\DeclareMathOperator{\Ext}{\mathsf{Ext}}

\DeclareMathOperator{\Id}{\mathsf{Id}}

\DeclareMathOperator{\Mod}{\mathbf{Mod}}

\DeclareMathOperator{\Ab}{\mathbf{Ab}}

\DeclareMathOperator{\id}{id}

\DeclareMathOperator{\Cc}{\mathcal{C}}

\DeclareMathOperator{\BT}{\mathbf{BT}}
\DeclareMathOperator{\rank}{rank}
\DeclareMathOperator{\res}{res}

\DeclareMathOperator{\inj}{\hookrightarrow}

\DeclareMathOperator{\Tr}{Tr}

\DeclareMathOperator{\Ind}{\mathsf{Ind}}

\DeclareMathOperator{\cris}{cris}

\DeclareMathOperator{\dR}{dR}

\DeclareMathOperator{\Frac}{\mathsf{Frac}} 
\DeclareMathOperator{\Gal}{\mathsf{Gal}}

\DeclareMathOperator{\Ker}{\mathsf{Ker}}
\DeclareMathOperator{\im}{\mathsf{Im}}
\DeclareMathOperator{\Coker}{\mathsf{Coker}}

\DeclareMathOperator{\GL}{\mathsf{GL}}

\DeclareMathOperator{\Hom}{\mathsf{Hom}}
 
\DeclareMathOperator{\Rep}{\mathbf{Rep}}

\DeclareMathOperator{\rad}{rad}
\DeclareMathOperator{\sep}{sep}
\DeclareMathOperator{\ur}{ur}
\DeclareMathOperator{\tors}{tors}

\DeclareMathOperator{\TR}{\mathsf{TR}}

\DeclareMathOperator{\naif}{\mathsf{naive}}
\DeclareMathOperator{\np}{\mathsf{np}}
\DeclareMathOperator{\pa}{\mathsf{pa}}

\renewcommand{\H}{\mathsf{H}} 
\newcommand{\rig}{\mathsf{rig}}

\newcommand{\Lim}[1]{\raisebox{0.5ex}{\scalebox{0.8}{$\displaystyle \lim_{#1}\;$}}}
\newcommand{\pLim}[1]{\raisebox{0.5ex}{\scalebox{0.8}{$\displaystyle \varprojlim_{#1}\;$}}}
\newcommand{\iLim}[1]{\raisebox{0.5ex}{\scalebox{0.8}{$\displaystyle \varinjlim_{#1}\;$}}}




\DeclareMathOperator{\HT}{\mathsf{HT}}

\renewcommand{\H}{\mathsf{H}} 





\def \ooverline #1#2#3%
{\mkern #1mu \overline{\mkern -#1mu #3 \mkern -#2mu }\mkern #2mu }


\def \Kbar {\ooverline40K{\mkern 1mu}{}}

\makeindex

\author{Luming ZHAO
}

\newcommand{\Addresses}{{
  \bigskip
  \footnotesize

  \textsc{IMB, CNRS UMR 5251, Université Bordeaux, 33405 Talence, France }\par\nopagebreak
  \textit{E-mail address}: \texttt{luming.zhao@math.u-bordeaux.fr}

}}
  
\begin{document}
	\frontmatter
	
	\pagestyle{plain}
	
\title{
\textbf{Galois cohomology of $p$-adic fields and $(\varphi, \tau)$-modules}
}
\author{Luming ZHAO}	
\date{}

\maketitle

	\chapter*{Abstract}
	
	Let $p$ be an odd prime number and $K$ a complete discrete valuation field of characteristic $0$, with perfect residue field of characteristic $p$. The goal of this work is to build complexes, defined in terms of invariants attached to a $p$-adic representation of the absolute Galois group of $K$, and whose homology is isomorphic to the Galois cohomology of the representation. In his thesis, Herr constructed such a three-term complex using the $(\varphi,\Gamma)$-module associated to the representation (defined from the cyclotomic extension of $K$). For many questions, however, it is useful to work with a Breuil-Kisin extension, obtained from $K$ by adding to it a compatible system of $p^n$-th roots of a uniformizer of $K$. An essential difference (and a notable difficulty) compared to the cyclotomic theory is that the extension obtained is not Galois. A natural solution, provided by Tavares Ribeiro in his thesis, is to work with the composite extension of the cyclotomic extension with a Breuil-Kisin extension and the corresponding Galois group, which provides a four-term complex. Since then, Caruso has developed the theory of $(\varphi,\tau)$-modules, which are to a Breuil-Kisin extension what $(\varphi,\Gamma)$-modules are to the cyclotomic extension: they provide a complete classification of $p$-adic representations (integral or not). Our first result is the construction of a three-term complex, defined in terms of the $(\varphi,\tau)$-module of a $p$-adic representation, and whose homology is isomorphic to the Galois cohomology of the representation. We prove that it refines that of Tavares Ribeiro in the finite residue field case, by building a quasi-isomorphism between the two. Then, we construct an operator $\psi$ (analogous to the one existing in the cyclotomic theory), and show that in our complex, we can substitute the Frobenius operator with it. Using the overconvergence of $(\varphi,\tau)$-modules (proved by Gao-Poyeton, and which we refine for integral representations), we define overconvergent versions of our complexes, and prove that they calculate the correct $\H^0$ and $\H^1$. Moreover, using Poyeton's results, we construct a complex over the Robba ring, simpler than the previous ones (the $\tau$ operator is replaced by a derivation), and whose $\H^0$ and $\H^1$ are isomorphic to the inductive limits of the Galois cohomology $\H^0$ and $\H^1$ along a Breuil-Kisin extension. Finally, we apply the above to the computation of Galois cohomology of the Tate module of a $p$-divisible group over the ring of integers of $K$, in terms of its associated Breuil-Kisin module.

	\tableofcontents	
	\mainmatter	  
	\pagestyle{fancy}
	\fancyhf{}
	\fancyhead[RE]{\slshape\nouppercase{\leftmark}}
	\fancyhead[LO]{\slshape\nouppercase{\rightmark}}
	\fancyfoot[C]{\thepage}
	\renewcommand{\headrulewidth}{0.53pt}
	\cleardoublepage
	\phantomsection
	\addcontentsline{toc}{chapter}{Introduction}

\chapter*{Introduction} 

\label{Introduction} 

Let $p$ be a prime number and $K$ a complete discrete valuation field, of characteristic $0$, whose residue field $k$ is perfect of characteristic $p$. Fix an algebraic closure $\Kbar$ of $K$ and put $\mathscr{G}_K=\Gal(\Kbar/K)$. For any strictly arithmetic profinite subextension $K_\infty/K$ of $\Kbar/K$, Fontaine and Wintenberger have associated perfect (the tilt of the completion of $K_\infty$) and imperfect (\cf \cite{Win83}) fields of norms. The latter is isomorphic to a field of formal Laurent series in one variable with coefficients in a finite extension of $k$, and its absolute Galois group is isomorphic to $\Gal(\Kbar/K_\infty)$.

\smallskip

The most studied case is the one where $K_{\infty}$ is the cyclotomic extension: fix $\varepsilon=(\zeta_{p^n})_{n\in \NN}$ a compatible system of primitive $p^n$-th roots of unity, and assume $K_\infty=K_\zeta:=\bigcup \limits_{n=0}^\infty K(\zeta_{p^n}).$ The extension $K_\infty/K$ is then Galois and its Galois group $\Gamma$ is identified, via the cyclotomic character, with an open subgroup of $\ZZ_p^\times.$ In \cite{Fon90}, Fontaine lifted the imperfect field of norms to characteristic $0$: he constructed a Cohen ring $\mathbf{A}_K$ endowed with a lifted Frobenius map and an action of the group $\Gamma$ which commutes with Frobenius. It is a sub-ring of the ring of formal Laurent series in one variable with coefficients in a finite extension of the ring of Witt vectors $\W(k)$, and which converges on the annulus of thickness $0$ and radius $1.$ This allowed him to build an equivalence of categories between the category of integral $p$-adic representations of $\mathscr{G}_K$ (constituted by $\ZZ_p$-modules of finite type with a continuous linear action of $\mathscr{G}_K$) and the category of \'etale $(\varphi, \Gamma)$-modules on $\AA_K:$  these are $\mathbf{A}_K$-modules of finite type endowed with a semi-linear Frobenius endomorphism, and a semi-linear action of $\Gamma$ which commutes with Frobenius ("\'etale" means that the linearization of Frobenius is an isomorphism).

\smallskip

This theory has been refined by Cherbonnier-Colmez in \cite{CC98}, in which they showed the overconvergence of $p$-adic representations. This means that in the theory of $(\varphi,\Gamma)$-modules which precedes, one can replace the ring $\mathbf{A}_K$ by the sub-ring $\mathbf{A}_K^\dagger$ constituted by overconvergent elements, that is to say, which converge on an annulus of non-zero thickness and outer radius $1$ (and $\mathbf{A}_K$ is its $p$-adic completion). One of the interests of this refinement is that it allows to relate the $(\varphi,\Gamma)$-module associated to a $p$-adic representation to its invariants coming from $p$-adic Hodge theory (\cf \cite{Ber02}, \cite{Fon04}, \cite{Ber08} and \cite{BC08}). The infinitesimal action of $\Gamma$ allows in particular to endow the $(\varphi,\Gamma)$-module over the Robba ring with a connection, thanks to which Berger proved that Crew's conjecture (proved independently by Andr\'e, Kedlaya and Mebkhout) implies Fontaine's $p$-adic monodromy conjecture (\cf \cite{Ber02}). 

\smallskip

The theory of $(\varphi,\Gamma)$-modules has many applications: the Langlands correspondence for $\GL_2(\QQ_p)$ (constructed by Colmez, \cf \cite{Col10}), and whose generalization is one of the main motivations of recent developments of the theory, reciprocity laws, theory of $p$-adic $L$ functions and Iwasawa theory (\cf \cite{Ben00}, \cite{Ber03}, \cite{Ben11}). Logically, one can compute the Galois cohomology of a $p$-adic representation directly from its $(\varphi, \Gamma)$-module using a simple three-terms complex: 

\begin{theoremetoile}
	\textup{(Herr, \cite{Her98})} Let $T$ be an integral $p$-adic representation of $\mathscr{G}_K$ and $D$ the associated $(\varphi,\Gamma)$-module. Assume $\Gamma$ is topologically generated by an element $\gamma$. Then the homology of the complex
	$$\xymatrix{
		0\ar[rrr] &&& D\ar[r] & D\oplus D\ar[r] & D\ar[r] & 0\\
		&&& x\ar@{|->}[r] & ((\varphi-1)(x),(\gamma-1)(x)) & & \\
		&&& & (y,z)\ar@{|->}[r] & (\gamma-1)(y)-(\varphi-1)(z) & }$$
	is canonically isomorphic to the Galois cohomology of $T$.
\end{theoremetoile}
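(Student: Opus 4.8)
The plan is to exhibit Herr's complex as the total complex of a double complex built from two short exact sequences, and then to match its homology with $\H^*(\mathscr{G}_K,T)$ using the Hochschild--Serre spectral sequence together with the basic computations of the cohomology of $H_K:=\Gal(\Kbar/K_\zeta)$ on period rings. Throughout, write $\mathbf{A}$ for Fontaine's big ring, so that $\mathbf{A}^{H_K}=\mathbf{A}_K$, $\mathbf{A}$ is $\ZZ_p$-flat, and $\mathbf{A}/p\mathbf{A}$ is a separably closed field (a completion of $\mathbf{E}_K^{\mathrm{sep}}$); write $\Gamma=\mathscr{G}_K/H_K=\Gal(K_\zeta/K)$, $M:=\mathbf{A}\otimes_{\ZZ_p}T$ viewed as a continuous $\mathscr{G}_K$-module, and $D=D(T)=M^{H_K}$, with its commuting semilinear operators $\varphi$ and $\gamma$.

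First I would record the two exact sequences. The fundamental exact sequence $0\to\ZZ_p\to\mathbf{A}\xrightarrow{\varphi-1}\mathbf{A}\to 0$ (surjectivity of $\varphi-1$ reduces mod $p$ to surjectivity of the additive Artin--Schreier map $x\mapsto x^q-x$ on the separably closed field $\mathbf{A}/p\mathbf{A}$, after which one lifts $p$-adically), combined with $\ZZ_p$-flatness of $\mathbf{A}$, yields on tensoring with $T$ an exact sequence of continuous $\mathscr{G}_K$-modules
\[
0\longrightarrow T\longrightarrow M\xrightarrow{\ \varphi-1\ }M\longrightarrow 0 ,
\]
so that $R\Gamma(\mathscr{G}_K,T)$ is the mapping fiber of $\varphi-1$ acting on $R\Gamma(\mathscr{G}_K,M)$ (continuous cochain cohomology). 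Next I would compute $R\Gamma(\mathscr{G}_K,M)$ through Hochschild--Serre for $1\to H_K\to\mathscr{G}_K\to\Gamma\to 1$, namely $R\Gamma(\mathscr{G}_K,M)\simeq R\Gamma\bigl(\Gamma,R\Gamma(H_K,M)\bigr)$. The crucial input is that $R\Gamma(H_K,M)$ is concentrated in degree $0$, where it equals $D$; granting this, and using that $\Gamma$ is topologically procyclic generated by $\gamma$ so that continuous $\Gamma$-cohomology of a pro-$p$ module is computed by the two-term complex $[\,D\xrightarrow{\gamma-1}D\,]$ (here the hypothesis $p\neq 2$ enters, making $\gamma-1$ a non-zero-divisor on $\ZZ_p[[\Gamma]]$ with quotient $\ZZ_p$), we get $R\Gamma(\mathscr{G}_K,M)\simeq[\,D\xrightarrow{\gamma-1}D\,]$, with $\varphi-1$ acting termwise since $\varphi$ and $\gamma$ commute on $D$.

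Combining the two steps, $R\Gamma(\mathscr{G}_K,T)$ is the mapping fiber of $\varphi-1\colon[D\xrightarrow{\gamma-1}D]\to[D\xrightarrow{\gamma-1}D]$, i.e. the total complex of the commuting square with horizontal differential $\gamma-1$ and vertical differential $\varphi-1$; unwinding the sign conventions (the commutation of the two operators forces opposite signs in the degree-$1$ differential) this total complex is exactly
\[
0\to D\xrightarrow{\ x\mapsto\bigl((\varphi-1)x,(\gamma-1)x\bigr)\ }D\oplus D\xrightarrow{\ (y,z)\mapsto(\gamma-1)y-(\varphi-1)z\ }D\to 0 ,
\]
which is Herr's complex, and taking homology gives the claimed isomorphism with $\H^*(\mathscr{G}_K,T)$. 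That this isomorphism is canonical follows because every identification used --- the fundamental exact sequence, Hochschild--Serre, the Koszul model for the procyclic group $\Gamma$, and the identification of the mapping fiber with the total complex --- is functorial in $T$, the functor $D(-)$ being additive.

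The main obstacle is the input that $R\Gamma(H_K,\mathbf{A}\otimes_{\ZZ_p}T)$ is concentrated in degree $0$. Here $\H^0(H_K,M)=D$ is the very definition of the $(\varphi,\Gamma)$-module, so the content is the vanishing $\H^i(H_K,M)=0$ for $i\ge 1$, a theorem of Fontaine. I would prove it by d\'evissage: the $p$-torsion filtration on $T$ reduces to $\FF_p$-coefficients and a $p$-adic limit argument handles the passage back, so the heart of the matter is the vanishing of $\H^i\bigl(H_K,\,\widehat{\mathbf{E}_K^{\mathrm{sep}}}\otimes_{\FF_p}\Vbar\bigr)$ for $\Vbar$ a finite $\FF_p$-representation of $H_K=\Gal(\mathbf{E}_K^{\mathrm{sep}}/\mathbf{E}_K)$. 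This is where one genuinely needs input beyond formal homological algebra --- Hilbert 90 and higher vanishing for the local field $\mathbf{E}_K$ of characteristic $p$, together with control of the effect of $p$-adic completion (a Tate--Sen / almost-purity-type estimate) --- rather than a routine calculation. A secondary point demanding care throughout is continuity: all the cohomology groups above are continuous cohomology, so one must track the topological module structures on $M$ and $D$ and check that Hochschild--Serre and the two-term model for procyclic cohomology remain valid in the continuous setting.
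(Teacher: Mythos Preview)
The paper does not give its own proof of this statement: it is quoted in the introduction as a background result with a bare citation to \cite{Her98}. Your argument is a correct and standard route to Herr's theorem.

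It is nonetheless instructive to compare with the strategy the paper uses for its own analogue (theorem~\ref{thm main result}, the $(\varphi,\tau)$ version), which follows Herr's original method rather than yours: one shows that $T\mapsto\H^i$ of the complex is an effaceable $\delta$-functor agreeing with $\H^0(\mathscr{G}_K,-)$ in degree zero, effaceability being checked by embedding $T$ into the induced module $\Ind_{\mathscr{G}_K}(T)$ and verifying acyclicity of the complex on such objects by hand. Your Hochschild--Serre/mapping-fiber approach is cleaner conceptually --- the Herr complex drops out as a total complex in one stroke --- but it relies on the full vanishing $\H^i(H_K,\mathbf{A}\otimes_{\ZZ_p}T)=0$ for \emph{all} $i\ge1$ together with a continuous section of $\varphi-1$ (the analogue of the paper's lemma~\ref{lemm conti section}) to get the long exact sequence in continuous cohomology. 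The $\delta$-functor method needs only degree-by-degree vanishing on induced modules, which is why it transports more readily to the $(\varphi,\tau)$ setting, where the relevant group $\Gal(L/K)$ is not procyclic and the complex involves the auxiliary subgroup $D_{\tau,0}$ rather than a plain module on which a Koszul model is available.
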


 This result allowed him to reprove Tate's duality theorem (\cf \cite{Her01}). The analogues with $(\varphi, \Gamma)$-modules over overconvergent rings and over the Robba ring are also valid (see \cite{Ben14}, \cite{RLiu08}).

\smallskip

The theory of $(\varphi,\Gamma)$-modules has been developed in many directions: in the relative case by Andreatta (\cf \cite{And06}), for Lubin-Tate extensions (\cf \cite{Ber13}, \cite{Ber16}), by Schneider \emph{et al.} (the latter is strongly motivated by the Langlands $p$-adic correspondence).

\medskip

Since the work of Breuil (\cf \cite{Bre00}) and Kisin (\cf \cite{Kis06}), it has been clear that for many questions (applications to $p$-divisible groups, \og integral\fg{} $p$-adic Hodge theory, study of the deformations of $p$-adic representations), it is judicious to work with other deeply ramified extensions: the Breuil-Kisin extensions. To construct them, we fix a uniformizer $\pi$ of $K$, and a compatible system $\widetilde{\pi}=(\pi_n)_{n\in\NN}$ of $p^n$-roots of $\pi$ (\ie such that $\pi_0=\pi$ and $\pi_{n+1}^p=\pi_n$ for all $n\in\NN$): the associated extension is then $K_\infty=K_\pi:=\bigcup\limits_{n=0}^\infty K(\pi_n)$. Again, one can construct a Cohen ring (which we will note $\mathcal{O}_{\mathcal{E}}$) for the corresponding field of norms, but unlike the cyclotomic case, the extension $K_\pi/K$ is not Galois, so that a Galois action on the \'etale $\varphi$-modules is \og missing\fg, which is needed to associate $p$-adic representations in this framework. Nevertheless, this allowed Kisin to associate invariants (bundles over domains in the open unit disk) to certain semi-stable representations, which allowed him to classify crystalline representations, $p$-divisible groups and finite flat group schemes over $\mathcal{O}_K$, and to show that crystalline representations of Hodge-Tate weights $0$ and $1$ all come from $p$-divisible groups. Since then, this theory has given rise to an abundant literature, notably under the impulse of Caruso and T. Liu (\cf \cite{Liu07}, \cite{Liu08}, \cite{CL09}, \cite{Liu10}, \cite{CL11}, \cite{Car11}, \cite{Liu12}, \cite{Car13}, \cite{Liu13}, \cite{Liu15a}, \cite{Liu15b}, \cite{GLS15}, \cite{CL16}, \cite{CL19}, \cite{GL20}).

\smallskip

In this context, it is natural to try to compute the Galois cohomology of a $p$-adic representation from its invariants attached to a Breuil-Kisin extension, by means of a complex similar to the Herr complex mentioned above. This is the first objective of our work. As we have seen, the main obstacle is that the extension $K_\pi/K$ is not Galois. It is natural to consider the Galois closure of $K_\pi$ in $\Kbar$: it is the compositum $L=K_\pi K_\zeta$ of $K_\pi$ with the cyclotomic extension. The Galois group $\Gal(L/K)$ is then a semi-direct product of $\ZZ_p(1)$ by an open subgroup of $\ZZ_p^{\times}$ (the extension $L/K$ is said to be m\'etab\'elian). This point of view is used by Tavares Ribeiro in his thesis (\cf \cite{Tav11}), in which he constructed a theory analogous to that of $(\varphi,\Gamma)$-modules, and a four-term complex computing the Galois cohomology of the representation.

\begin{theoremetoile}
	\textup{(Tavares Ribeiro, \cite[\S 1.5]{Tav11})} Suppose that $\Gamma$ is topologically generated by an element $\gamma$ and let $\tau$ be a topological generator of $\Gal(L/K_\zeta)$. Let $T$ be an integral $p$-adic representation of $\mathscr{G}_K$ and let $M=D_L(T)=\big(\mathcal{O}_{\widehat{\Ee^{\ur}}}\otimes_{\ZZ_p}T  \big)^{\mathscr{G}_L}$. Then the homology of the complex
	\[0\to M \xrightarrow{\tilde{\alpha}} M\oplus M\oplus M \xrightarrow{\tilde{\beta}}M\oplus M\oplus M\xrightarrow{\tilde{\eta}}M \to 0 \] 
	where
	\[	\tilde{\alpha}=\begin{pmatrix}
	\varphi-1\\
	\gamma-1\\
	\tau-1
	\end{pmatrix}  
	, \ 
	\tilde{\beta}=\begin{pmatrix}
	\gamma-1 &1-\varphi& 0\\
	\tau-1  &0&  1-\varphi\\
	0 &\tau^{\chi(\gamma)}-1& \delta-\gamma
	\end{pmatrix}   \]
	\[ \tilde{\eta}=\begin{pmatrix}
	\tau^{\chi(\gamma)}-1, \delta-\gamma, \varphi-1
	\end{pmatrix}  \]
	and $\delta=(\tau^{\chi(\gamma)}-1)(\tau-1)^{-1}\in \ZZ_p[\![\tau-1]\!]$ is canonically isomorphic to the Galois cohomology of $T$. 
\end{theoremetoile}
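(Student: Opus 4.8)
The plan is to run the argument behind Herr's complex, twisted to accommodate the non-abelian group $\Gal(L/K)$. The first ingredient is the fundamental exact sequence of $\mathscr{G}_K$-modules
\[0\longrightarrow\ZZ_p\longrightarrow\mathcal{O}_{\widehat{\Ee^{\ur}}}\xrightarrow{\varphi-1}\mathcal{O}_{\widehat{\Ee^{\ur}}}\longrightarrow 0,\]
with $\varphi-1$ surjective (Artin--Schreier--Witt) and kernel the unramified $\ZZ_p$. Tensoring it with $T$ over $\ZZ_p$ (legitimate since $\mathcal{O}_{\widehat{\Ee^{\ur}}}$ is $\ZZ_p$-flat) gives a short exact sequence $0\to T\to\mathcal{O}_{\widehat{\Ee^{\ur}}}\otimes_{\ZZ_p}T\xrightarrow{\varphi-1}\mathcal{O}_{\widehat{\Ee^{\ur}}}\otimes_{\ZZ_p}T\to 0$ of $\mathscr{G}_K$-modules, hence a distinguished triangle exhibiting $R\Gamma(\mathscr{G}_K,T)$ as the mapping fibre of $\varphi-1$ acting on $R\Gamma\big(\mathscr{G}_K,\mathcal{O}_{\widehat{\Ee^{\ur}}}\otimes_{\ZZ_p}T\big)$ (continuous cohomology throughout).

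\smallskip

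The second ingredient, supplied by Tavares Ribeiro's theory, is that $\mathcal{O}_{\widehat{\Ee^{\ur}}}\otimes_{\ZZ_p}T$ is $\mathscr{G}_L$-acyclic: $\H^{i}(\mathscr{G}_L,\mathcal{O}_{\widehat{\Ee^{\ur}}}\otimes_{\ZZ_p}T)=0$ for $i\ge 1$ while $\H^{0}=M$. This is the analogue of the étale-ness of the $(\varphi,\Gamma)$-module in the cyclotomic case. Granting it, the Hochschild--Serre spectral sequence for $1\to\mathscr{G}_L\to\mathscr{G}_K\to\Gal(L/K)\to 1$ degenerates and yields $R\Gamma\big(\mathscr{G}_K,\mathcal{O}_{\widehat{\Ee^{\ur}}}\otimes_{\ZZ_p}T\big)\simeq R\Gamma(\Gal(L/K),M)$, compatibly with the action of $\varphi$. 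Now iterate Hochschild--Serre once more, for $1\to\Gal(L/K_\zeta)\to\Gal(L/K)\to\Gamma\to 1$: since $\Gal(L/K_\zeta)=\overline{\langle\tau\rangle}\cong\ZZ_p$ one has $R\Gamma(\Gal(L/K_\zeta),M)\simeq[M\xrightarrow{\tau-1}M]$, and since $\Gamma=\overline{\langle\gamma\rangle}$ is procyclic acting on this ($p$-adically complete) two-term complex, $R\Gamma(\Gal(L/K),M)$ is the mapping fibre of $\gamma-1$ on it. Substituting back, $R\Gamma(\mathscr{G}_K,T)$ becomes the \emph{iterated} mapping fibre of $\varphi-1$, $\gamma-1$ and $\tau-1$ acting on $M$, i.e.\ the total complex of the cube of copies of $M$ with those three operators along the three axes --- a complex of the shape $0\to M\to M^{3}\to M^{3}\to M\to 0$.

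\smallskip

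The single non-formal point in forming this cube is that the face spanned by the $\gamma$- and $\tau$-directions does not commute: on $M$ one has $\gamma\circ(\tau-1)=\delta\cdot(\tau-1)\circ\gamma$, where $\delta=(\tau^{\chi(\gamma)}-1)(\tau-1)^{-1}\in\ZZ_p[\![\tau-1]\!]^{\times}$ is a unit (its value at $\tau=1$ is $\chi(\gamma)\in\ZZ_p^{\times}$), and this is precisely the incarnation of the relation $\gamma\tau\gamma^{-1}=\tau^{\chi(\gamma)}$ in $\Gal(L/K)$. To upgrade the cube to an honest triple complex one must therefore twist the chain-level $\gamma$-action on $[M\xrightarrow{\tau-1}M]$ by the unit $\delta^{\pm1}$, which along the edges meeting the $\tau$-direction turns $\gamma-1$ into an operator of the form $\delta-\gamma$ (up to sign and rescaling by $\delta$); the $\varphi$--$\gamma$ and $\varphi$--$\tau$ faces commute on the nose, $\varphi$ being Galois-equivariant, so they contribute the untwisted $\varphi-1$. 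Writing out the total complex of this twisted cube --- ordering the degree-one summand as $(\varphi,\gamma,\tau)$ and the degree-two summand accordingly --- reproduces exactly $\tilde\alpha$, $\tilde\beta$, $\tilde\eta$ (up to the signs intrinsic to the total-complex convention). Since the connecting map of the fundamental sequence, the two Hochschild--Serre edge maps, and the standard complexes of the procyclic groups $\overline{\langle\tau\rangle}$ and $\Gamma$ are all functorial in $T$, the resulting isomorphism with $\H^{\bullet}(\mathscr{G}_K,T)$ is canonical.

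\smallskip

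\textbf{Main obstacle.} Everything above is formal once the two ingredients are in hand, so the heart of the matter --- and the step I expect to be hardest --- is the $\mathscr{G}_L$-acyclicity $\H^{\ge 1}(\mathscr{G}_L,\mathcal{O}_{\widehat{\Ee^{\ur}}}\otimes_{\ZZ_p}T)=0$. In contrast with the cyclotomic setting, $L/K$ is not of the simple form $K_\infty$: one has to develop the field-of-norms (equivalently, a Tate--Sen / decompletion) formalism for the metabelian extension $L$, identify $\mathscr{G}_L$ with the absolute Galois group of an explicit characteristic-$p$ field sitting over the field of norms of $K_\pi/K$, and push the vanishing of the higher cohomology of the separably closed coefficient ring through $p$-adic completion and the tensor with $T$, all while controlling continuity. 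A secondary, purely bookkeeping, difficulty is to fix the normalization of $\delta$ (versus $\delta^{-1}$) and the signs so that the total complex of the twisted cube is literally the displayed four-term complex and not just one isomorphic to it.
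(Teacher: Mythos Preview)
Your approach is sound but follows a genuinely different route from Tavares Ribeiro's (to which the paper simply refers, and whose strategy the paper itself adopts for its own complex $\mathcal{C}_{\varphi,\tau}$). Tavares Ribeiro does not build the complex as an iterated cone: he verifies that $T\mapsto\H^i$ of the displayed four-term complex is a $\delta$-functor agreeing with $\mathscr{G}_K$-invariants in degree $0$, and then proves effaceability by showing these functors vanish on induced modules $\Ind_{\mathscr{G}_K}T$ (for $T$ torsion), via concrete surjectivity statements for $\varphi-1$, $\tau^\alpha-1$, $\gamma-1$ on the pieces of the complex (this is the content of \cite[Lemma~1.9, Proposition~1.7]{Tav11}, paralleled in the paper's Lemma~2.1.1). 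Universality then forces the identification with Galois cohomology on torsion representations, and one passes to the limit. The complex is thus justified \emph{a posteriori} rather than derived.

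Your derived-category route is more conceptual and explains the shape of the complex, but you have inverted the difficulty. The $\mathscr{G}_L$-acyclicity of $\mathcal{O}_{\widehat{\mathcal{E}^{\mathrm{ur}}}}\otimes T$ is the easy part: modulo $p$ it is additive Hilbert~90 together with $\mathrm{cd}_p(\mathscr{G}_L)\le 1$ (via the field of norms, $\mathscr{G}_L$ is the absolute Galois group of a field of characteristic $p$), and the general case is a routine d\'evissage as in the paper's Lemma~2.1.5 and Corollary~1.2.5. What you label ``bookkeeping'' is where the real work lies in your approach: making the two Hochschild--Serre reductions rigorous for continuous cohomology with non-discrete coefficients, and lifting the $\Gamma$-action to a strict chain action on $[M\xrightarrow{\tau-1}M]$ so that the resulting total complex is \emph{literally} the displayed one (the relation $(\delta-\gamma)(\tau-1)=(1-\tau^{\chi(\gamma)})(\gamma-1)$, which is Lemma~1.2.8 in the paper, is what forces $\tau^{\chi(\gamma)}-1$ and $\delta-\gamma$ into the slots where they appear, and getting this right is not just a sign convention). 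The effaceability argument sidesteps both issues entirely by never invoking a spectral sequence and never needing to identify the complex as a cone.
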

This allowed him to prove the Br\"uckner-Vostokov reciprocity law for a formal group.

\smallskip

This said, Caruso's $(\varphi,\tau)$-module theory (\cf \cite{Car13}) provides an avatar of $(\varphi,\Gamma)$-module theory in the context of Breuil-Kisin extensions. Given a $p$-adic (let's say integral) representation $T$ of $\mathscr{G}_K$ (with $p$ an odd prime), the idea is to consider not only the associated $\varphi$-module $\mathscr{D}(T)$ over $\mathscr{O}_{\mathscr{E}}$, but also the action of a topological generator $\tau$ of $\Gal(L/K_\zeta)$ on $\mathcal{D}(T)_\tau: =\mathcal{O}_{\mathcal{E}_\tau}\otimes_{\mathcal{O}_{\mathcal{E}}}\mathcal{D}(T)$ (where $\mathcal{E}_\tau$ is a suitable extension of the fraction field $\mathcal{E}$ of $\mathcal{O}_{\mathcal{E}}$ and the action is usually denoted $\tau_D)$. Explicitly we have:

\begin{theoremetoile}
	\textup{(Caruso, \cite[\S 1.3]{Car13}, \cf section \ref{section 1.1})} Let $p$ be an odd prime number. The functor
	\begin{align*}
	\Rep_{\ZZ_p}(\mathscr{G}_K) &\to \Mod_{\Oo_{\Ee},\Oo_{\Ee_\tau}}(\varphi,\tau)\\		
	T &\mapsto \big(\mathcal{O}_{\widehat{\Ee^{\ur}}}\otimes_{\ZZ_p}T\big)^{\mathscr{G}_{K_\pi}}
	\end{align*}		
	is an equivalence between the category of integral $p$-adic representations and that of $(\varphi, \tau)$-modules over $\Oo_{\Ee}.$
\end{theoremetoile}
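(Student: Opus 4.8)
The strategy is to bootstrap from Fontaine's classification of $\ZZ_p$-representations of $\mathscr{G}_{K_\pi}$ by \'etale $\varphi$-modules over $\Oo_{\Ee}$ and then to recover the missing ``Galois direction'' by means of $\tau$. First I would record that, by the field-of-norms formalism (applied to the arithmetically profinite extension $K_\pi/K$) together with Fontaine's lift, $T\mapsto \mathcal{D}(T)=\big(\mathcal{O}_{\widehat{\Ee^{\ur}}}\otimes_{\ZZ_p}T\big)^{\mathscr{G}_{K_\pi}}$ is an equivalence from $\Rep_{\ZZ_p}(\mathscr{G}_{K_\pi})$ onto the category of \'etale $\varphi$-modules over $\Oo_{\Ee}$, with quasi-inverse $\mathcal{D}\mapsto \big(\mathcal{O}_{\widehat{\Ee^{\ur}}}\otimes_{\Oo_{\Ee}}\mathcal{D}\big)^{\varphi=1}$; so the whole point is to promote $\mathscr{G}_{K_\pi}$-representations to $\mathscr{G}_K$-representations. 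The group-theoretic input that makes this possible (and where the hypothesis $p$ odd is used) is that $L=K_\pi K_\zeta$ is the Galois closure of $K_\pi/K$, that $\Gal(L/K)$ is metabelian with $\Gal(L/K_\zeta)=\overline{\langle\tau\rangle}\cong\ZZ_p$, and — consequently — that $\mathscr{G}_K$ is topologically generated by $\mathscr{G}_{K_\pi}$ together with any lift $\tilde\tau\in\mathscr{G}_{K_\zeta}$ of $\tau$; note also that $\mathscr{G}_L=\mathscr{G}_{K_\pi}\cap\mathscr{G}_{K_\zeta}$ is normal in $\mathscr{G}_K$ and contained in $\mathscr{G}_{K_\pi}$.

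To build the functor, I would first observe that although $\tilde\tau$ does not normalize $\mathscr{G}_{K_\pi}$ (this failure is precisely the non-Galois-ness of $K_\pi/K$), it does normalize $\mathscr{G}_L$; hence $\tau$ acts semilinearly on $\mathcal{D}_L(T):=\big(\mathcal{O}_{\widehat{\Ee^{\ur}}}\otimes_{\ZZ_p}T\big)^{\mathscr{G}_L}$, which by compatibility of Fontaine's functor with restriction is $\Oo_{\Ee_L}\otimes_{\Oo_{\Ee}}\mathcal{D}(T)$ for $\Oo_{\Ee_L}=\big(\mathcal{O}_{\widehat{\Ee^{\ur}}}\big)^{\mathscr{G}_L}$. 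The real work is to produce a $\tau$-stable intermediate ring $\Oo_{\Ee}\subseteq\Oo_{\Ee_\tau}\subseteq\Oo_{\Ee_L}$ for which $\mathcal{D}(T)_\tau:=\Oo_{\Ee_\tau}\otimes_{\Oo_{\Ee}}\mathcal{D}(T)$ is stable under $\tau$; this rests on explicit estimates for how $\tau$ displaces a uniformizer of $\Oo_{\Ee}$ inside $\Oo_{\Ee_L}$, and on continuity of the $\mathscr{G}_{K_\zeta}$-action on $\mathcal{O}_{\widehat{\Ee^{\ur}}}$. The resulting operator $\tau_D$ is $\tau$-semilinear over $\Oo_{\Ee_\tau}$, commutes with $\varphi$, and has $\tau_D-1$ topologically nilpotent (so that $\tau_D$ extends to a continuous action of $\overline{\langle\tau\rangle}$), i.e. $(\mathcal{D}(T),\varphi_D,\tau_D)\in\Mod_{\Oo_{\Ee},\Oo_{\Ee_\tau}}(\varphi,\tau)$; functoriality is immediate.

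For the quasi-inverse, given $(\mathcal{D},\varphi_D,\tau_D)$ I would set $T:=\big(\mathcal{O}_{\widehat{\Ee^{\ur}}}\otimes_{\Oo_{\Ee}}\mathcal{D}\big)^{\varphi=1}$, a $\ZZ_p$-module of the correct length carrying, by Fontaine, a continuous $\mathscr{G}_{K_\pi}$-action. Rewriting $\mathcal{O}_{\widehat{\Ee^{\ur}}}\otimes_{\Oo_{\Ee}}\mathcal{D}=\mathcal{O}_{\widehat{\Ee^{\ur}}}\otimes_{\Oo_{\Ee_\tau}}(\Oo_{\Ee_\tau}\otimes_{\Oo_{\Ee}}\mathcal{D})$, the action of $\tilde\tau$ on $\mathcal{O}_{\widehat{\Ee^{\ur}}}$ glues with $\tau_D$ (both being $\tau$-semilinear over $\Oo_{\Ee_\tau}$) to a $\ZZ_p$-linear operator on $\mathcal{O}_{\widehat{\Ee^{\ur}}}\otimes_{\Oo_{\Ee}}\mathcal{D}$ commuting with $\varphi$, hence stabilizing $T$ and giving the action of $\tilde\tau$. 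One then checks that this $\tilde\tau$-action together with the $\mathscr{G}_{K_\pi}$-action respects the conjugation relations holding in $\mathscr{G}_K$ — reducing, via $\mathscr{G}_K=\overline{\langle\mathscr{G}_{K_\pi},\tilde\tau\rangle}$ and the metabelian structure of $\Gal(L/K)$, to identities already verified at the level of the rings — and that the resulting map $\mathscr{G}_K\to\Aut(T)$ is continuous, so that $T\in\Rep_{\ZZ_p}(\mathscr{G}_K)$. That the two constructions are mutually quasi-inverse then reduces, for the underlying $\varphi$-modules and $\mathscr{G}_{K_\pi}$-representations, to Fontaine's theorem, the matching of $\tau_D$ with the reconstructed $\tilde\tau$-action being built into the construction.

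I expect the two genuinely hard points to be: the construction of $\Oo_{\Ee_\tau}$ together with the proof that $\mathcal{D}(T)_\tau$ is $\tau$-stable (a ring-theoretic computation controlling the ramification of the $\tau$-action, not formal nonsense); and, in the quasi-inverse, the verification that the pair consisting of the $\mathscr{G}_{K_\pi}$-action and $\tilde\tau$ satisfies the relations defining $\mathscr{G}_K$ and extends continuously. Both are where the failure of $K_\pi/K$ to be Galois bites, and both are the reason the case $p=2$ is excluded: there $\Gal(L/K)$ acquires extra torsion and $K_\pi\cap K_\zeta$ may be larger, breaking several of the steps above.
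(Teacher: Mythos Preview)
The paper does not give its own proof of this statement; it simply records it as Theorem~\ref{thm equivalence of cats} and cites \cite[\S 1.3]{Car13}. Your overall architecture --- bootstrap from Fontaine's equivalence for $\mathscr{G}_{K_\pi}$-representations, then recover the missing Galois direction via $\tau$ and the metabelian structure of $\Gal(L/K)$ --- is correct and is indeed Caruso's. But your description of $\Oo_{\Ee_\tau}$ and of how the $\tau$-action is produced does not match the actual construction and would not work as written.

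Concretely: in the paper (following Caruso), $\Oo_{\Ee_\tau}$ is \emph{defined} to be $\W(F_\tau)$ with $F_\tau=(C^\flat)^{\mathscr{G}_L}$, the ring of Witt vectors of a \emph{perfect} field. It is not a subring of $\mathcal{O}_{\widehat{\Ee^{\ur}}}$, and it is not constructed by ``explicit estimates for how $\tau$ displaces a uniformizer''; there is no such step. The $\tau$-action on $\mathcal{D}(T)_\tau$ is obtained much more directly: one passes to the large, manifestly $\mathscr{G}_K$-stable ring $\W(C^\flat)$, uses the isomorphism $\Oo_{\Ee_\tau}\otimes_{\Oo_{\Ee}}\mathcal{D}(T)\simeq(\W(C^\flat)\otimes_{\ZZ_p}T)^{\mathscr{G}_L}$ (Lemma~\ref{lemm iso for D-tau}), and lets $\tau$ act diagonally on the right-hand side, which is legitimate because $\tau$ normalizes $\mathscr{G}_L$. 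The axiom $(g\otimes 1)\circ\tau_D(x)=\tau_D^{\chi(g)}(x)$ for $x\in D$ and $g\in\mathscr{G}_{K_\pi}/\mathscr{G}_L$ (Definition~\ref{Phi-tau-module}(ii)) is then exactly the shadow of the relation $g\tau g^{-1}\equiv\tau^{\chi(g)}$ in $\Gal(L/K)$, and it is this axiom --- not a generic ``conjugation relations'' check --- that allows one, in the quasi-inverse, to assemble the $\mathscr{G}_{K_\pi}$-action on $T=(\mathcal{O}_{\widehat{\Ee^{\ur}}}\otimes_{\Oo_\Ee}D)^{\varphi=1}$ together with the $\tau$-action (again produced inside $\W(C^\flat)$, not $\mathcal{O}_{\widehat{\Ee^{\ur}}}$) into a continuous $\mathscr{G}_K$-action. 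Your attempt to glue an action of $\tilde\tau$ on $\mathcal{O}_{\widehat{\Ee^{\ur}}}$ with $\tau_D$ is problematic precisely because one does not work in $\mathcal{O}_{\widehat{\Ee^{\ur}}}$ for this step; the passage to $\W(C^\flat)$ is the device that sidesteps the non-Galois-ness of $K_\pi/K$.
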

The first contribution of our work is the construction of a three term complex close to Herr's, built from the $(\varphi,\tau)$-module $\mathcal{D}(T)$, and which computes the Galois cohomology of $T$ 

\smallskip

More precisely, we have

\begin{theoremetoile}
		\textup{(\cf theorem \ref{thm main result})}. Let $p$ be an odd prime number. Let $T$ be an integral $p$-adic representation of $\mathscr{G}_K$ and $(D,D_\tau)$ the associated $(\varphi,\tau)$-module. Put
	$$D_{\tau,0}=\big\{x\in D_\tau,\;(\forall g\in\mathscr{G}_{K_\pi})\;\chi(g)\in\ZZ_{>0}\Rightarrow (g\otimes1)(x)=x+\tau_D(x)+\cdots+\tau_D^{\chi(g)-1}(x)\big\}.$$
	Then the homology of the complex
	$$\xymatrix{
		0\ar[rrr] &&& D\ar[r] & D\oplus D_{\tau,0}\ar[r] & D_{\tau,0}\ar[r] & 0\\
		&&& x\ar@{|->}[r] & ((\varphi-1)(x),(\tau_D-1)(x)) & & \\
		&&& & (y,z)\ar@{|->}[r] & (\tau_D-1)(y)-(\varphi-1)(z) & }$$
	is canonically isomorphic to the Galois cohomology of $T.$
\end{theoremetoile}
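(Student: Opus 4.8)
The plan is to descend the problem to $K_\pi$ first, then deal with the non-Galois issue by comparing with the cohomology of $\mathscr{G}_L$. Since $[L:K_\pi]$ and $[K_\pi:K]$ are controlled by pro-$p$ and prime-to-$p$ pieces, and $p$ is odd, the first step is to observe that $\mathscr{G}_{K_\pi}$ has cohomological dimension suited to a two-step complex, and that the $\varphi$-module $D=\mathscr{D}(T)=(\mathcal{O}_{\widehat{\mathcal{E}^{\ur}}}\otimes T)^{\mathscr{G}_{K_\pi}}$ computes $\H^\bullet(\mathscr{G}_{K_\pi},T)$ via the classical two-term Herr-type complex $[D\xrightarrow{\varphi-1}D]$ (this is the $\varphi$-module analogue of the field-of-norms computation; it follows from Caruso's equivalence together with the fact that $\mathcal{O}_{\widehat{\mathcal{E}^{\ur}}}/\mathcal{O}_{\mathcal{E}}$ realizes $\H^\bullet(\mathscr{G}_{K_\pi},-)$ on the coefficients, exactly as in Fontaine's original set-up). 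So I first identify $\H^0(\mathscr{G}_{K_\pi},T)=D^{\varphi=1}$ and $\H^1(\mathscr{G}_{K_\pi},T)=D/(\varphi-1)D$, with $\H^i$ vanishing for $i\ge 2$.

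Next I bring in $\tau$. The group $\Gamma_\tau:=\Gal(L/K_\pi)$ is the image of $\mathscr{G}_{K_\pi}$ in $\Gal(L/K)$; it is (topologically) procyclic, generated by (the restriction of) $\tau$, but crucially $\mathscr{G}_{K_\pi}$ does \emph{not} act on $D$ itself — only on $D_\tau=\mathcal{O}_{\mathcal{E}_\tau}\otimes_{\mathcal{O}_{\mathcal{E}}}D$, through the operator $\tau_D$ and the twisted action encoded by the cocycle condition defining $D_{\tau,0}$. The point of introducing $D_{\tau,0}$ is precisely that it is the largest submodule of $D_\tau$ on which the full $\mathscr{G}_{K_\pi}$-action is \emph{determined} by $\tau_D$ via the formula $g\mapsto 1+\tau_D+\cdots+\tau_D^{\chi(g)-1}$ (a ``Bott-type'' / additive-cocycle normalization), so that on $D_{\tau,0}$ the operator $\tau_D-1$ behaves like $\gamma-1$ does in Herr's cyclotomic complex. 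I would prove: (i) $D\hookrightarrow D_{\tau,0}$ (the image consists of $\tau_D$-fixed, i.e. $\mathscr{G}_{K_\pi}$-fixed, elements, which trivially satisfy the telescoping identity); (ii) the inclusion $D\subset D_{\tau,0}$ identifies $D$ with $D_{\tau,0}^{\tau_D=1}$, and more importantly induces an isomorphism on the relevant cohomology, i.e. the two-term complex $[D_{\tau,0}\xrightarrow{\tau_D-1}D_{\tau,0}]$ computes $\H^\bullet(\Gamma_\tau, D)$ — for this I use that $\Gamma_\tau$ is procyclic and that $D_{\tau,0}$ is an acyclic resolution of $D$ for the $\Gamma_\tau$-action, a lemma to be established by a direct inflation-restriction / continuity argument on $\mathcal{O}_{\mathcal{E}_\tau}$ (this is essentially an unwinding of Caruso's construction of $D_\tau$, combined with the observation that $\mathcal{O}_{\mathcal{E}_\tau}^{\tau_D=1}=\mathcal{O}_{\mathcal{E}}$ and the relevant $\H^1$ vanishes).

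Finally I assemble the Hochschild–Serre spectral sequence for $1\to\mathscr{G}_L\to\mathscr{G}_K\to\Gal(L/K)\to 1$, or rather its two-step factorization $\mathscr{G}_{K_\pi}\triangleleft\mathscr{G}_K$ with quotient of order prime to $p$ contributing nothing, reducing to $\Gamma_\tau=\Gal(L/K_\pi)$ acting on $\H^\bullet(\mathscr{G}_{K_\pi},T)=\H^\bullet\big([D\xrightarrow{\varphi-1}D]\big)$. Since $\Gamma_\tau$ is procyclic, this spectral sequence collapses to a short exact mapping-cone description, and combining it with step (ii) above realizes $\H^\bullet(\mathscr{G}_K,T)$ as the homology of the total complex of the double complex
\[
\xymatrix{
D \ar[r]^{\varphi-1}\ar[d]_{\tau_D-1} & D\ar[d]^{\tau_D-1}\\
D_{\tau,0}\ar[r]_{\varphi-1} & D_{\tau,0},
}
\]
whose totalization is exactly the three-term complex in the statement (with middle map $(y,z)\mapsto(\tau_D-1)y-(\varphi-1)z$), provided one checks the two squares commute, i.e. that $\varphi$ and $\tau_D$ satisfy the expected intertwining relation on $D_{\tau,0}$ — which is where the specific definition of $D_{\tau,0}$ (the telescoping condition is $\varphi$-stable because $\chi(g)$ is unchanged and $\varphi\tau_D=\tau_D^{?}\varphi$ reduces to the additive identity) does the work. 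The canonicity of the isomorphism is then inherited from the canonicity of Hochschild–Serre and of Caruso's equivalence. The \textbf{main obstacle} is step (ii): showing that $D_{\tau,0}$ — as opposed to all of $D_\tau$, on which the naive complex would compute the wrong thing — is simultaneously (a) $\varphi$- and $\tau_D$-stable, (b) an acyclic resolution of $D$ for $\Gamma_\tau$, and (c) large enough that the totalized double complex sees all of $\H^1$ and $\H^2(\mathscr{G}_K,T)$; this requires a careful analysis of the interplay between the non-linear ``continuous cocycle'' action of $\mathscr{G}_{K_\pi}$ on $D_\tau$ and the single operator $\tau_D$, and is the technical heart that replaces the clean procyclic structure of $\Gamma$ available in Herr's cyclotomic setting.
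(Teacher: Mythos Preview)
Your proposal rests on several structural misidentifications that make the argument unworkable. Most importantly, $K_\pi/K$ is \emph{not Galois}: $\mathscr{G}_{K_\pi}$ is not normal in $\mathscr{G}_K$, so there is no Hochschild--Serre spectral sequence for the ``factorization $\mathscr{G}_{K_\pi}\triangleleft\mathscr{G}_K$'' that you invoke. This non-Galois feature is precisely the central obstacle that distinguishes the $(\varphi,\tau)$ theory from the cyclotomic $(\varphi,\Gamma)$ theory; your plan assumes it away. Relatedly, the ``quotient of order prime to $p$'' claim is false (the extension $K_\pi/K$ is pro-$p$), and you have swapped the roles of $\gamma$ and $\tau$: it is $\Gal(L/K_\pi)=\overline{\langle\gamma\rangle}$ and $\Gal(L/K_\zeta)=\overline{\langle\tau\rangle}$, so your ``$\Gamma_\tau$'' is not generated by $\tau$, and $\tau_D-1$ does not compute $\Gal(L/K_\pi)$-cohomology. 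Finally, $D$ is not contained in $D_{\tau,0}$: for $x\in D$ one has $(g\otimes1)(x)=x$, whereas the $D_{\tau,0}$-condition demands $(g\otimes1)(x)=(1+\tau_D+\cdots+\tau_D^{\chi(g)-1})(x)$; these agree only if $\tau_D(x)=x$. What holds instead is $(\tau_D-1)(D)\subset D_{\tau,0}$, which is exactly what makes the complex well-defined.

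The paper proceeds by a completely different mechanism. It shows that $T\mapsto\H^i(\mathcal{C}_{\varphi,\tau}(T))$ is a $\delta$-functor (exactness of $T\mapsto\mathcal{D}(T)_{\tau,0}$ follows from surjectivity of $\delta-\gamma\otimes1$ on $\mathcal{D}(T)_\tau$, which in turn comes from the identity $(\delta-\gamma\otimes1)(\tau_D-1)=(1-\tau_D^{\chi(\gamma)})(\gamma\otimes1-1)$ and vanishing of the relevant $\H^1$'s over $K_\pi$ and $K_\zeta$), that it agrees with $\H^0(\mathscr{G}_K,-)$ in degree~$0$, and that it is effaceable: for torsion $T$ one embeds into $U=\Ind_{\mathscr{G}_K}T$, computes $\H^1(\mathcal{C}_{\varphi,\tau}(U))=\H^1(\mathscr{G}_K,U)=0$ directly via an extension-class argument, and proves surjectivity of $\varphi-1$ on $\mathcal{D}(U)_{\tau,0}$ to kill $\H^2$. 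The general $\ZZ_p$-case then follows by d\'evissage and passage to the limit. No spectral sequence through $\mathscr{G}_{K_\pi}$ is used or available.
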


 The techniques used in the proof are standard (effaceability, d\'evissage and passing to limit), but a bit more difficult (compared to the cyclotomic case) by the fact that the complex involves a subgroup $\mathcal{D}(T)_{\tau, 0}$ of $\mathcal{D}(T)_{\tau}$ which is not easy to understand (which is necessary, as the \og naive\fg{} complex does not provide the correct cohomology groups, \cf section \ref{sec a counter example} ).

\smallskip

In the second part, we construct a morphism between the Tavares Ribeiro complex and ours, and prove that it is a quasi-isomorphism when the residue field is finite, which shows that the latter is a refinement of the former in this case (\cf theorem \ref{thm map from HC to TR}).

\smallskip

In the third part, we construct a $\psi$ operator (a left inverse of the Frobenius) and use it to construct other complexes that compute Galois cohomology. Since the residue field of $\mathcal{E}_\tau$ is perfect, such a $\psi$ operator cannot be constructed on it: it is necessary to use a refinement of $(\varphi,\tau)$-module theory, more precisely, it is necessary to work with partially unperfected coefficients $\mathcal{O}_{\mathcal{E}_{u,\tau}}$ (\cf \cite[\S 1.2.2]{Car13}, section \ref{section modules over unperfected coefficients}). It is shown that complexes analogous to the one in theorem \ref{thm main result}, but with coefficients in $\mathcal{O}_{\mathcal{E}_{u,\tau}}$ and using operator $\varphi$ or $\psi$ compute the Galois cohomology (\cf theorems \ref{coro complex over non-perfect ring works also Zp-case} and \ref{prop quais-iso psi}).

\smallskip

In the fourth part, we build overconvergent avatars of the previous complexes, using either the Frobenius operator $\varphi$, or the $\psi$ operator. Of course, we rely crucially on the overconvergence of $(\varphi,\tau)$-modules, proved by Gao and Poyeton (\cf \cite{GP21}), which we refine to the case of integral representations (\cf proposition \ref{prop Zp surconvergence}). This allows us to prove that the overconvergent complexes compute the correct $\H^0$ and $\H^1$. The case of $\H^2$ remains open. Let us point out that, as far as we know, the fact that the \og classical\fg{} overconvergent Herr complex (corresponding to the case of the cyclotomic extension) is shown to compute the Galois cohomology only in the case where $k$ is finite ($\ie$where $K$ is a finite extension of $\QQ_p$, \cf \cite{RLiu08}, \cite{Ben14}).

\smallskip

In the fifth part, we construct a three-term complex from the $(\varphi,N_\nabla)$-module over the Robba ring $\Rr$ associated to a representation (\ie associated to its $(\varphi,\tau)$-module).

More precisely, if $V$ is a $p$-adic representation of $\mathscr{G}_K$ and $D_{\rig}^\dagger$ the associated $(\varphi,N_\nabla)$-module, the complex in question is of the form

$$\xymatrix{
	0\ar[rrr] &&& D_{\rig}^\dagger\ar[r] & D_{\rig}^\dagger\oplus D_{\rig}^\dagger\ar[r] & D_{\rig}^\dagger\ar[r] & 0\\
	&&& x\ar@{|->}[r] & \big((\varphi-1)(x),N_\nabla(x)\big) & & \\
	&&& & (y,z)\ar@{|->}[r] & N_\nabla(y)-(c\varphi-1)(z) & }$$
where $c\in \Rr$ is an explicit element. It is a matter, after extension of the scalars to a suitable Robba ring, of replacing the operator $\tau_D$ by the (suitably normalized) infinitesimal action of $\Gal(L/K_\zeta)$ (\cf \cite[\S 2.2]{Poy21}). For this reason, the complex of course cannot compute Galois cohomology of the Galois representation $V$ from which we started, but it can almost be computed: we show (\cf proposition \ref{prop H0 H1 of complex C_R(V)}) that its $\H^i$ is isomorphic to $\iLim{n}\H^i(\mathscr{G}_{K_n},V)$ for $i\in \{0,1\}$. Again, the case of $\H^2$ is still problematic. In the case where the residue field $k$ is finite, we construct a pairing analogous to the one which gives rise to the Tate duality (\cf \cite{Her01}), but its non-degeneracy (which would allow to extend the above mentioned assertion to the case $i=2$) is not demonstrated. Let us observe that, although a little less fine than the complexes constructed previously (which compute the Galois cohomology), the complex considered in this part has the great advantage of not involving a group of the type $D_{\tau,0}$ (which as we said, is rather intangible), but only the $(\varphi,N_\nabla)$-module: it is thus \textit{a priori} easier to deal with.

\smallskip

In the sixth and last part, we apply the above to representations coming from $p$-divisible groups over $\mathcal{O}_K$. From the work of Breuil and Kisin (\cf \cite{Kis06}), we know that they are classified by some $\varphi$-modules over $\mathfrak{S}:=W[\![u]\!]$. The idea is then to relate the $(\varphi,N_\nabla)$-module associated to the Tate module of a $p$-divisible group with the associated $\mathfrak{S}$-module. Let us point out that according to Caruso (\cf \cite{Car13}), we are able to find the action of $\tau$ from that of $N_\nabla$ in this case, which allows us to compute the Galois cohomology of the dual of the Tate module of a $p$-divisible group, using its Breuil-Kisin module (\cf corollary \ref{coro chpater 6 module BK}).

\medskip

Based on our results in this work, we would also expect to compute the Galois cohomology of the Tate module of a $p$-divisible group over a general affine base in terms of its Zink's display (\cf \cite{Zin01}, \cite{Zin02}) in the future.

\medskip

\bb{Acknowledgements.} The main part of this work is contained in my thesis. I express my deepest gratitude to my supervisor Olivier Brinon for his numerous encouragements and comments: without him this work would not have been possible. I also benefit a lot from suggestions given by Xavier Caruso: I express my sincere thanks. My thanks equally go to Laurent Berger and Tong Liu for their careful reading and important comments on an earlier version of the manuscript. Finally, I also thank Abhinandan and L\'eo Poyeton for helpful discussions.

	\cleardoublepage
	\phantomsection
	\addcontentsline{toc}{chapter}{Notations}
	
\chapter*{Notations}

\subsection*{Extensions of fields}

Let $p$ be an odd prime, $K$\index{$K$} a complete discretly valued extension of $\QQ_p,$ with perfect residue field $k.$ Let $v$\index{$v$} be the normalized valuation on $K.$ Fix an algebraic closure $\overline{K}$\index{$\overline{K}$} of $K:$ the valuation $v$ extends to $\overline{K},$ denote by $C$\index{$C$} its completion. The group $\mathscr{G}_K=\Gal(\overline{K}/K)$\index{$\mathscr{G}_K$} acts by continuity on $C.$

\medskip

Let
$$C^\flat=\pLim{x\mapsto x^p}C=\big\{(x^{(n)})_{n\in\NN}\in C^{\NN}\,;\, (\forall n\in\NN)\, (x^{(n+1)})^p=x^{(n)}\big\}$$\index{$C^\flat$}
be the tilt of $C$ ($\cf$ \cite[\S 3]{Sch12}): with multiplication and addition defined by
\[(x^{(n)})_{n\in\NN}.(y^{(n)})_{n\in\NN}=(x^{(n)}y^{(n)})_{n\in\NN},\]
and
\[(x^{(n)})_{n\in\NN}+(y^{(n)})_{n\in\NN}=(z^{(n)})_{n\in\NN},\]
where 
\[z^{(n)}=\Lim{m\to\infty}\big(x^{(n+m)}+y^{(n+m)}\big)^{p^m}.\]
Recall that $C^\flat$ is then an algebraically closed field of characteristic $p$ and complete for the valuation given by
\begin{align*}
v^\flat\colon C^\flat &\to \RR\cup\, \{\infty\}\\
\big(x^{(n)}\big)_{n\in\NN} &\mapsto v_{p}\big(x^{(0)}\big).
\end{align*}
We denote $\mathcal{O}_{C^\flat}$ as the ring of integers of $C^\flat$ and recall that the natural map
\begin{align*}
\mathcal{O}_{C^\flat} &\to \pLim{x\mapsto x^p}\mathcal{O}_{\overline{K}}/p\mathcal{O}_{\overline{K}}\\
\big(x^{(n)}\big)_{n\in\NN} &\mapsto \big(x^{(n)} \, \text{mod} \, p\big)_{n\in\NN}
\end{align*}\index{$v^\flat$}
is an isomorphism.

\medskip

Fix $\tilde{\pi}=(\pi_n)_{n\in \NN}\in\Oo_{C^{\flat}}$\index{$\tilde{\pi}$} with $\pi_0=\pi$ and $\varepsilon=(\zeta_{p^n})_{n\in \NN}\in \Oo_{C^{\flat}}$\index{$\varepsilon$} such that $\zeta_p$ is a primitive $p$-th root of unity. Let 
\[K_{\pi}:=\bigcup\limits_n K(\pi_n), \quad K_{\zeta}:=\bigcup\limits_n K(\zeta_{p^n}),\] \index{$K_{\pi}$} \index{$K_{\zeta}$}
and
\[L:=K_{\pi}(\zeta_{p^{\infty}})=K_{\pi}K_{\zeta}.\]\index{$L$}
We denote the corresponding Galois groups as follows:
\[\mathscr{G}_{K_{\pi}}=\Gal(\overline{K}/K_{\pi}), \quad  \mathscr{G}_{K_{\zeta}}=\Gal(\overline{K}/K_{\zeta}),\quad  \mathscr{G}_{L}=\Gal(\overline{K}/L).    \] \index{$\mathscr{G}_{K_{\pi}}$} \index{$\mathscr{G}_{K_{\zeta}}$} \index{$\mathscr{G}_{L}$}
Let $\chi\colon \mathscr{G}_K\to\ZZ_p^\times$\index{$\chi$} be the cyclotomic character. It induces an injective morphism with open image:
$$\Gamma:=\Gal(K_{\zeta}/K)\inj \ZZ_p^\times.$$\index{$\Gamma$}

\medskip

Remark that $p$ is odd implies that $\Gamma$ has a topological generator. Moreover, we can choose a particular generator by the following lemma.

\begin{lemma}\label{lemm gamma with finite caracter}
	There exists a topological generator $\gamma\in\Gamma$ with $\chi(\gamma)\in\ZZ_{>0}.$\index{$\gamma$}
\end{lemma}

\begin{proof}
	Let $\gamma_0$ be a topological generator of $\Gamma.$ As $\chi(\Gamma)$ is an open subgroup of $\ZZ_p,$ there exists $N\in\ZZ_{>0}$ such that $1+p^N\ZZ_p\subset\chi(\Gamma),$ hence $\chi(\gamma_0)+p^N\ZZ_p\subset\chi(\Gamma).$ As $\ZZ_{\geq0}$ is dense in $\ZZ_p,$ we can choose $\gamma\in\Gamma$ such that
	
	$$\chi(\gamma)\in\ZZ_{\geq0}\,\cap\ (\chi(\gamma_0)+p^N\ZZ_p).$$
	
	We have $\overline{\langle\chi(\gamma)\rangle}\subset\chi(\Gamma)$ and both have the same image under $\ZZ^\times_p\to(\ZZ/p^N\ZZ)^\times:$ we have
	$$\chi(\gamma_0)\chi(\gamma)^{-1}\in\chi(\Gamma)\cap(1+p^N\ZZ_p),$$
	so we can write $\chi(\gamma_0)\chi(\gamma)^{-1}=\chi(\gamma_0)^z$ with $z\in p\ZZ_p$ (if $N\gg0$). We have $\chi(\gamma_0)=\chi(\gamma)\chi(\gamma_0)^z,$ so
	$$\chi(\gamma_0)=\chi(\gamma)^{1+z}\chi(\gamma_0)^{z^2},$$
	and by induction
	$$\chi(\gamma_0)=\chi(\gamma)^{(1+z+z^2+\cdots+z^r)}\chi(\gamma_0)^{z^{r+1}},$$
	and passing to the limit gives
	$$\chi(\gamma_0)=\chi(\gamma)^{(1+z+z^2\cdots)}$$
	so that $\gamma_0=\gamma^{\frac{1}{1-z}}\in\overline{\langle\gamma\rangle}.$
\end{proof}

\subsection*{The element $\tau$}

If $g\in \mathscr{G}_K,$ we denote by $c(g)$\index{$c(\cdot)$} the unique element in $\ZZ_p$ such that $g(\tilde{\pi})=\varepsilon^{c(g)}\tilde{\pi}.$ Indeed, for any $n\in \NN,$ there exists a unique element $c_n(g)\in\ZZ/p^n\ZZ$ such that $g(\pi_n)=\zeta_{p^n}^{c_n(g)}\pi_n.$ As $c_{n+1}(g)\equiv c_{n}(g)$ mod $p^n,$ the sequence $\big( c_n(g) \big)_n$ defines the element $c(g)\in \ZZ_p.$ Notice that the map $c\colon\mathscr{G}_K\to \ZZ_p(1)$ is a 1-cocycle (with $c^{-1}(0)=\mathscr{G}_{K_{\pi}}$), $\ie$for any $g, h\in \mathscr{G}_K:$
\[c(gh)=c(g)+\chi(g)c(h).\]

\medskip

Fix an element $\tau\in \Gal(\overline{K}/K_{\zeta})$\index{$\tau$} such that $ \tau(\tilde{\pi})=\varepsilon\tilde{\pi}$ ($\ie$$c(\tau)=1$). As $\mathscr{G}_L\subset c^{-1}(0),$ we still denote $\tau$ for its image in $\Gal(L/K_{\zeta})$ ($\cf$ diagram below).

\begin{remark}\label{rem def tau}
	The sequence 
	\[ 1\to \mathscr{G}_L \to \mathscr{G}_K \xrightarrow{\chi_{\infty}} \ZZ_p\rtimes \Gamma \to 1  \] is exact with $\chi_{\infty}=(c, \chi)$ and we have $\ZZ_p\rtimes \Gamma\simeq \mathscr{G}_K/\mathscr{G}_L=\Gal(L/K).$ Moreover, $\Gal(L/K)\simeq \overline{\la \tau \ra} \rtimes \overline{\la \gamma \ra}$ with $\gamma\tau\gamma^{-1}=\tau^{\chi(\gamma)}.$ 
\end{remark}

\begin{remark}
	If $g\in \mathscr{G}_{K_\pi},$ then
	$$\begin{cases}
	g\tau g^{-1}(\tilde{\pi})=\varepsilon^{\chi(g)}\tilde{\pi}=\tau^{\chi(g)}(\tilde{\pi})\\
	g\tau g^{-1}(\varepsilon)=\varepsilon=\tau^{\chi(g)}(\varepsilon)
	\end{cases}$$
	so that $g\tau g^{-1}$ and $\tau^{\chi(g)}$ have the same image in $\mathscr{G}_K/\mathscr{G}_L.$
\end{remark}

We summarize these notations in a diagram:

\[  \begin{tikzcd}[every arrow/.append style={dash}]
&\overline{K} \arrow[ldd, bend right, "\mathscr{G}_{K_{\pi}}"'] \arrow[d, "\mathscr{G}_L"]\arrow[rdd, bend left, "\mathscr{G}_{K_{\zeta}}"]&\\
&L\arrow[ld, "\overline{\la \gamma \ra}"']\arrow[dd]\arrow[rd,"\overline{\la \tau \ra}"]&\\		
K_{\pi}\arrow[rd]&   &K_{\zeta}\arrow[ld, "\Gamma=\overline{\la\gamma \ra}"]\\
& K &
\end{tikzcd}
\]

\medskip

\subsection*{Topologies on \texorpdfstring{$\W(C^\flat)$}{W(C\unichar{"266D})}}

Let $\W(C^\flat)$\index{$\W(C^\flat)$} be the ring of Witt vectors with coefficients in $C^\flat.$ The group $\mathscr{G}_K$ naturally acts on $\W(C^\flat)$ and we now describe some topologies on $\W(C^{\flat}).$

\begin{definition}
	We define the \emph{weak topology} on $\W(C^{\flat})$ as follows. Write
	\[\W(C^{\flat})=\pLim{n}\W_n(C^{\flat})  \] 
	and endow $\W(C^{\flat})$ with the inverse limit topology ($\ie$the topology induced by the product topology on $\prod_n\W_n(C^\flat)$) where we endow $\W_n(C^{\flat})=(C^{\flat})^n$ with the topology induced by the valuation topology on $C^\flat.$ 
\end{definition}

\begin{remark}
	
	\item (1) The $p$-adic topology on $\W(C^{\flat})$ induces the discrete topology on $C^{\flat}.$
	
	\item (2) The weak topology induces the valuation topology on $C^{\flat}.$
	
	\item (3) The $\mathscr{G}_K$-action is not continuous on $C^{\flat}$ with the discrete topology, while it is continuous on $C^{\flat}$ with the valuation topology. Indeed we have $\Lim{g\to \id}\varepsilon^{\chi(g)}=\varepsilon$ for the valuation topology, but not for the discrete topology.
\end{remark}

\chapter{The complex \texorpdfstring{$\Cc_{\varphi, \tau}$}{C\textpinferior\texthinferior\textiinferior,\texttinferior\textainferior\textuinferior}}\label{chapter The complex C varphi tau}

In this chapter, we introduce the category $\Mod_{\Ee, \Ee_{\tau}}(\varphi, \tau)$ of $(\varphi, \tau)$-modules over $(\Ee, \Ee_{\tau})$ (introduced by Caruso $\cf$ \cite{Car13}), which is categorically equivalent to the category of $p$-adic representations. Then for any $p$-adic representation $V,$ we construct a complex $\Cc_{\varphi, \tau}(V)$ (which is functorial) using the $(\varphi, \tau)$-module associated to $V,$ and we show that this complex computes the continuous Galois cohomology of $V.$ Hence, this is indeed a variant of Herr complex for $p$-adic representations ($\cf$ \cite{Her98}).

\section{Construction of the complex}\label{section 1.1}

\begin{notation}
	Let $\Rep_{\ZZ_p}(\mathscr{G}_K)$\index{$\Rep_{\ZZ_p}(\mathscr{G}_K)$} (resp. $\Rep_{\QQ_p}(\mathscr{G}_K),$\index{$\Rep_{\QQ_p}(\mathscr{G}_K)$} resp. $\Rep_{\FF_p}(\mathscr{G}_K)$)\index{$\Rep_{\FF_p}(\mathscr{G}_K)$} be the category of $\ZZ_p$-representations (resp. $p$-adic representations, resp. $\FF_p$-representations), whose objects are $\ZZ_p$-modules (resp. $\QQ_p$-vector spaces, resp. $\FF_p$-vector spaces) of finite type endowed with a linear and continuous action of $\mathscr{G}_K.$ Let $\Rep_{\ZZ_p, \tors}(\mathscr{G}_K)$\index{$\Rep_{\ZZ_p, \tors}(\mathscr{G}_K)$} be the subcategory of $\ZZ_p$-representations that are killed by some power of $p.$ Moreover, for a fixed $r\in \NN,$ let $\Rep_{\ZZ_p, p^r\textrm{-tors}}(\mathscr{G}_K)$\index{$\Rep_{\ZZ_p, p^r\textrm{-tors}}(\mathscr{G}_K)$} be the subcategory of $\ZZ_p$-representations that are killed by $p^r.$ 
\end{notation}

\begin{notation}
	Let $F_{\tau}=(C^{\flat})^{\mathscr{G}_L},\ F_0=k(\!(\tilde{\pi})\!)$\index{$F_{\tau}$}\index{$F_0$} and $F_0^{\sep}$\index{$F_0^{\sep}$}  be the separable closure of $F_0$ in $C^{\flat}.$ We then have $F_0\subset (F_0^{\sep})^{\mathscr{G}_L}\subset F_\tau.$ Put
	\[\Oo_{\Ee}=\Big\{ \sum\limits_{i\in \ZZ} a_iu^i;\ a_i\in W(k),\ \Lim{i\to -\infty}a_i=0 \Big\},\]\index{$\Oo_{\Ee}$}and embed it into $\W(C^{\flat})$ by sending $u$ to $[\tilde{\pi}].$ Endow $\Oo_{\Ee}$ with the $p$-adic valuation: it is a discrete valuation ring with residue field $F_0.$ Put $\mathcal{E}=\Frac(\mathcal{O}_{\mathcal{E}}).$\index{$\mathcal{E}$}  Let $\Oo_{\Ee^{\ur}}$\index{$\Oo_{\Ee^{\ur}}$}  be the unique ind-\'etale sub-algebra of $\W(C^{\flat})$ whose residue field is $F_0^{\sep}\subset C^{\flat}$ and $\Oo_{\widehat{\Ee^{\ur}}}$\index{$\Oo_{\widehat{\Ee^{\ur}}}$}  its $p$-adic completion. We put $\Oo_{\Ee_{\tau}}=\W(F_{\tau})$ and $\Ee_{\tau}=\Frac \Oo_{\Ee_{\tau}}.$\index{$\Oo_{\Ee_{\tau}}$} 
\end{notation}

\begin{remark}
	We have $\Gal(\Frac \Oo_{\Ee^{\ur}}/ \Frac \Oo_{\Ee})\simeq \Gal(F_0^{\sep}/F_0)\simeq \mathscr{G}_{K_{\pi}}$ ($\cf$ \cite[Th\'eor\`eme 3.2.2]{Win83}).
\end{remark}

\begin{definition}\label{Phi-tau-module}(\cf \cite[D\'efinition 1.17]{Car13})
	A \emph{$(\varphi, \tau)$-module over $(\Oo_{\Ee}, \Oo_{\Ee_{\tau}})$} consists of
	\item (i) an \'etale $\varphi$-module $D$ over $\Oo_{\Ee}$ (this means that the linearization $1\otimes \varphi\colon\Oo_{\Ee}\otimes_{\varphi, \Oo_{\Ee}}D \to D $ is an isomorphism);
	\item (ii) a $\tau$-semi-linear endomorphism $\tau_D$ on $D_\tau:=\Oo_{\Ee_{\tau}}\otimes_{\Oo_{\Ee}}D$ which commutes with $\varphi_{\Oo_{\Ee_{\tau}}}\otimes \varphi_D$ (where $\varphi_{\mathcal{O}_{\mathcal{E}_{\tau}}}$ is the Frobenius map on $\mathcal{O}_{\mathcal{E}_{\tau}}$ and $\varphi_D$ the Frobenius map on $D$) and such that:
	\[ (\forall x\in D)\ (g\otimes 1)\circ \tau_D(x) = \tau_D^{\chi(g)}(x), \]
	for all $g\in \mathscr{G}_{K_{\pi}}/\mathscr{G}_L$ such that $\chi(g)\in \ZZ_{>0}.$
	
	\medskip
	
	Let $\Mod_{\Oo_{\Ee},\Oo_{\Ee_{\tau}}}(\varphi,\tau)$\index{$\Mod_{\Oo_{\Ee},\Oo_{\Ee_{\tau}}}(\varphi,\tau)$} be the corresponding category. One defines similarly the notion of $(\varphi,\tau)$-module over $(\mathcal{E},\mathcal{E}_\tau),$ and the corresponding category $\Mod_{\mathcal{E},\mathcal{E}_\tau}(\varphi,\tau).$\index{$\Mod_{\mathcal{E},\mathcal{E}_\tau}(\varphi,\tau)$}
\end{definition}

\begin{remark}\label{remark on defi varphi-tau module gamma tau relation} (1) In the following we will use light notations, take $\Mod_{\Oo_{\Ee},\Oo_{\Ee_{\tau}}}(\varphi,\tau)$ for example: we will simply denote $D$ or $(D, D_{\tau})$ for objects of this category. 
	\item (2) The condition for $(\varphi,\tau)$-modules can be rewritten as follows (\cf \cite[\S 1.2.3]{Car13}): if $g\in \mathscr{G}_K/\mathscr{G}_L$ is such that $\chi(g)\in\ZZ_{>0}$ then
	$$(g\otimes1)\circ\tau_D=\tau_D^{\chi(g)}\circ(\tau^{-\chi(g)}g\tau\otimes1)=\tau_D^{\chi(g)}\circ(g\otimes1).$$ Notice that the first equality is hold on $D_{\tau},$ while the second equality is hold on $D.$
\end{remark}

\begin{theorem}\label{thm equivalence of cats}
	The functors
	\begin{align*}
	\Rep_{\ZZ_p}(\mathscr{G}_K) &\simeq \Mod_{\Oo_{\Ee},\Oo_{\Ee_{\tau}}}(\varphi, \tau)\\
	T&\mapsto \mathcal{D}(T)=(\mathcal{O}_{\widehat{\mathcal{E}^{\ur}}}\otimes T)^{\mathscr{G}_{K_{\pi}}}\\
	\mathcal{T}(D)=(\mathcal{O}_{\widehat{\mathcal{E}^{\ur}}}\otimes_{\Oo_{\Ee}}D)^{\varphi=1} &\mapsfrom D 
	\end{align*}
	establish quasi-inverse equivalences of categories. 
	
	\medskip 
	
	Similarly, we have quasi-inverse equivalences of categories \[\Rep_{\QQ_p}(\mathscr{G}_K)\simeq\Mod_{\mathcal{E},\mathcal{E}_\tau}(\varphi,\tau).\]
\end{theorem}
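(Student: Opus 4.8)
The statement is (a form of) Caruso's theorem \cite[\S 1.3]{Car13}, and the plan is to deduce it from Fontaine's field‑of‑norms equivalence for the non‑Galois extension $K_\pi$ together with the extra book‑keeping needed to encode the residual action of $\tau$. First I would reduce to the torsion case: a finitely generated $\ZZ_p$‑module with continuous $\mathscr{G}_K$‑action is $\plim_n T/p^nT$, and both $\mathcal{D}$ and $\mathcal{T}$ are compatible with these limits (because $\Oo_{\widehat{\Ee^{\ur}}}$ and its reductions $\Oo_{\widehat{\Ee^{\ur}}}/p^n$ are well behaved), so it suffices to treat $\Rep_{\ZZ_p,\tors}(\mathscr{G}_K)$; by dévissage along $0\to p^{n-1}T\to T\to T/p^{n-1}\to 0$ one may even assume $T$ is an $\FF_p$‑representation.

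Forgetting $\tau_D$, the functor $T\mapsto \mathcal{D}(T)=(\Oo_{\widehat{\Ee^{\ur}}}\otimes T)^{\mathscr{G}_{K_\pi}}$ is exactly Fontaine's equivalence \cite{Fon90} applied to $T|_{\mathscr{G}_{K_\pi}}$, via the field‑of‑norms identification $\mathscr{G}_{K_\pi}\simeq\Gal(F_0^{\sep}/F_0)$ of \cite{Win83}. The inputs I would recall are: $(\Oo_{\widehat{\Ee^{\ur}}})^{\mathscr{G}_{K_\pi}}=\Oo_{\Ee}$ with reduction $\Oo_{\widehat{\Ee^{\ur}}}/p=F_0^{\sep}$ and $(\Oo_{\widehat{\Ee^{\ur}}}/p)^{\varphi=1}=\FF_p$; the vanishing of $\H^1(\mathscr{G}_{K_\pi},\GL_d(\Oo_{\widehat{\Ee^{\ur}}}))$ and of its mod‑$p$ analogue (Hilbert $90$ at each finite level plus $p$‑adic completeness); and the comparison isomorphisms $\Oo_{\widehat{\Ee^{\ur}}}\otimes_{\Oo_{\Ee}}\mathcal{D}(T)\isomto\Oo_{\widehat{\Ee^{\ur}}}\otimes_{\ZZ_p}T$ and $\Oo_{\widehat{\Ee^{\ur}}}\otimes_{\ZZ_p}\mathcal{T}(D)\isomto\Oo_{\widehat{\Ee^{\ur}}}\otimes_{\Oo_{\Ee}}D$. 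Together these make $T\mapsto\mathcal{D}(T)$ and $D\mapsto\mathcal{T}(D)$ mutually quasi‑inverse equivalences between $\Rep_{\ZZ_p}(\mathscr{G}_{K_\pi})$ and étale $\varphi$‑modules over $\Oo_{\Ee}$.

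It remains to match the $\tau$‑data. Given a $\mathscr{G}_K$‑representation $T$: since $L/K$ is Galois, $\tau$ normalizes $\mathscr{G}_L$, hence acts $\tau$‑semilinearly on the $\mathscr{G}_L$‑invariants of $\Oo_{\widehat{\Ee^{\ur}}}\otimes T$; identifying these invariants, through descent (vanishing of $\H^1(\mathscr{G}_L,\GL_d)$) and the base change to $\Oo_{\Ee_\tau}=\W(F_\tau)$, with $D_\tau=\Oo_{\Ee_\tau}\otimes_{\Oo_{\Ee}}\mathcal{D}(T)$ produces $\tau_D$. Commutation with $\varphi$ is automatic, $\varphi$ being induced by the Witt‑vector Frobenius on $\W(C^\flat)$, which commutes with the $\mathscr{G}_K$‑action; and the condition $(g\otimes1)\circ\tau_D=\tau_D^{\chi(g)}$ for $g\in\mathscr{G}_{K_\pi}/\mathscr{G}_L$ with $\chi(g)\in\ZZ_{>0}$ is exactly the relation $g\tau g^{-1}\equiv\tau^{\chi(g)}$ in $\Gal(L/K)$ of Remark \ref{rem def tau}. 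Conversely, from $(D,D_\tau)$ one already has a continuous $\mathscr{G}_{K_\pi}$‑action on $\mathcal{T}(D)\subset\Oo_{\widehat{\Ee^{\ur}}}\otimes_{\Oo_{\Ee}}D\cong\Oo_{\widehat{\Ee^{\ur}}}\otimes_{\Oo_{\Ee_\tau}}D_\tau$; $\tau_D$ induces a $\tau$‑semilinear operator on this module which preserves $\{\varphi=1\}$, hence an operator $\tau$ on $\mathcal{T}(D)$. Since the images of $\mathscr{G}_{K_\pi}$ and of $\tau$ generate $\Gal(L/K)=\overline{\la\tau\ra}\rtimes\overline{\la\gamma\ra}$ and $\mathscr{G}_L\subset\mathscr{G}_{K_\pi}$, these glue to a $\mathscr{G}_K$‑action, the only relations to verify being the semidirect‑product ones — which hold precisely because of condition (ii) of Definition \ref{Phi-tau-module} — with continuity checked modulo $p^n$. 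The isomorphisms $\mathcal{T}\circ\mathcal{D}\simeq\id$ and $\mathcal{D}\circ\mathcal{T}\simeq\id$ are then those of the $\varphi$‑module case, visibly compatible with $\tau_D$.

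The rational statement follows formally by applying $-\otimes_{\ZZ_p}\QQ_p$ (passing to isogeny categories), each $p$‑adic representation containing a $\mathscr{G}_K$‑stable lattice. The step I expect to be the main obstacle is the correct handling of the coefficient ring $\Oo_{\Ee_\tau}=\W(F_\tau)$ and the descent along $\mathscr{G}_L$: since in general $(F_0^{\sep})^{\mathscr{G}_L}$ is strictly smaller than the perfect field $F_\tau=(C^\flat)^{\mathscr{G}_L}$, one must route the argument through the completed ring $\Oo_{\widehat{\Ee^{\ur}}}$ and justify the base‑change and descent identifications used to manufacture $\tau_D$ and to recover $T$ — and, in the reverse direction, check that the purely formal compatibility (ii) genuinely reassembles a \emph{continuous} $\mathscr{G}_K$‑action from the $\mathscr{G}_{K_\pi}$‑action and $\tau_D$. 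Everything else is the standard Fontaine machinery (effaceability, dévissage, passage to the limit) recalled above.
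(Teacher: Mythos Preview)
Your proposal is correct and follows the same approach as the paper, which simply cites \cite[\S 1.3]{Car13} for this result; your outline (Fontaine's field-of-norms equivalence for $\mathscr{G}_{K_\pi}$ giving the étale $\varphi$-module, plus the descent along $\mathscr{G}_L$ to produce and recover $\tau_D$, with the semidirect-product relation $g\tau g^{-1}\equiv\tau^{\chi(g)}$ encoding condition (ii)) is precisely Caruso's argument there. The subtle point you flag about routing through $\W(C^\flat)$ rather than $(F_0^{\sep})^{\mathscr{G}_L}$ is exactly the content of Lemma~\ref{lemm iso for D-tau} (Caruso's Lemma~1.18), so you have identified the right ingredient.
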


\begin{proof} 
	$\cf$ \cite[\S 1.3]{Car13}.
\end{proof}

\begin{notation}\label{nota D-tau}
	Let $(D, D_{\tau})\in\Mod_{\Oo_{\Ee},\Oo_{\Ee_{\tau}}}(\varphi,\tau),$ then we put 
	\[D_{\tau, 0}:=\big\{ x\in D_{\tau};\ (\forall g\in \mathscr{G}_{K_{\pi}})\ \chi(g)\in \ZZ_{>0} \Rightarrow (g\otimes 1)(x)=x+\tau_D(x)+\cdots +\tau_D^{\chi(g)-1}(x) \big\}.\]\index{$D_{\tau, 0}$}
\end{notation}

Recall that we have the following lemma:
\begin{lemma}\label{lemm iso for D-tau}
	Let $T\in \Rep_{\ZZ_p}(\mathscr{G}_{K_{\pi}}),$	we then have an isomorphism of $\Gamma$-modules:
	\[ \mathcal{D}(T)_{\tau}=\Oo_{\Ee_{\tau}}\otimes_{\Oo_{\Ee}}\mathcal{D}(T)\simeq  (\W(C^{\flat})\otimes_{\ZZ_p}T)^{\mathscr{G}_L}.\] 
\end{lemma}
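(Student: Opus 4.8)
The plan is to deduce the isomorphism by descending, along $\Oo_{\Ee}\subset\Oo_{\widehat{\Ee^{\ur}}}\subset\W(C^{\flat})$, the comparison isomorphism underlying Fontaine's theory of $\varphi$-modules for the extension $K_\pi/K$ (which is part of Caruso's equivalence recalled in Theorem~\ref{thm equivalence of cats}), and then taking $\mathscr{G}_L$-invariants. More precisely: for $T\in\Rep_{\ZZ_p}(\mathscr{G}_{K_\pi})$ the natural map $\Oo_{\widehat{\Ee^{\ur}}}\otimes_{\Oo_{\Ee}}\mathcal{D}(T)\to\Oo_{\widehat{\Ee^{\ur}}}\otimes_{\ZZ_p}T$ is an isomorphism, compatible with $\varphi$ and with the action of $\mathscr{G}_{K_\pi}$; on the source $\mathscr{G}_{K_\pi}$ acts only through $\Oo_{\widehat{\Ee^{\ur}}}$, since $\mathcal{D}(T)=(\Oo_{\widehat{\Ee^{\ur}}}\otimes_{\ZZ_p}T)^{\mathscr{G}_{K_\pi}}$ is fixed by $\mathscr{G}_{K_\pi}$. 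Base changing along $\Oo_{\widehat{\Ee^{\ur}}}\hookrightarrow\W(C^{\flat})$ then yields a $\mathscr{G}_{K_\pi}$-equivariant isomorphism $\W(C^{\flat})\otimes_{\Oo_{\Ee}}\mathcal{D}(T)\isomto\W(C^{\flat})\otimes_{\ZZ_p}T$, still with $\mathscr{G}_{K_\pi}$ acting on the left only through $\W(C^{\flat})$.

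I would then take $\mathscr{G}_L$-invariants of both sides. Since $L/K_\pi$ is Galois, $\mathscr{G}_L\triangleleft\mathscr{G}_{K_\pi}$, so both sides carry a residual action of $\mathscr{G}_{K_\pi}/\mathscr{G}_L=\Gal(L/K_\pi)$, which under restriction to $K_\zeta$ is identified with $\Gamma$ ($\cf$ the diagram of the Notations and Remark~\ref{rem def tau}). On the right one gets $(\W(C^{\flat})\otimes_{\ZZ_p}T)^{\mathscr{G}_L}$. On the left, $\mathscr{G}_L$ acts trivially on $\mathcal{D}(T)$, and one needs that $(-)^{\mathscr{G}_L}$ commutes with $\otimes_{\Oo_{\Ee}}\mathcal{D}(T)$: the $\mathscr{G}_K$-action on Witt vectors is coordinate-wise on Witt coordinates, whence $\W(C^{\flat})^{\mathscr{G}_L}=\W\big((C^{\flat})^{\mathscr{G}_L}\big)=\W(F_\tau)=\Oo_{\Ee_\tau}$ and likewise $\W_n(C^{\flat})^{\mathscr{G}_L}=\W_n(F_\tau)=\Oo_{\Ee_\tau}/p^n$; writing $\mathcal{D}(T)\cong\Oo_{\Ee}^{a}\oplus\bigoplus_j\Oo_{\Ee}/p^{m_j}$ by the structure theorem over the discrete valuation ring $\Oo_{\Ee}$ and using $\W(C^{\flat})\otimes_{\Oo_{\Ee}}\Oo_{\Ee}/p^{m}=\W_{m}(C^{\flat})$, the commutation follows summand by summand. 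Hence the left-hand side becomes $\Oo_{\Ee_\tau}\otimes_{\Oo_{\Ee}}\mathcal{D}(T)=\mathcal{D}(T)_\tau$, whose residual $\Gamma$-action is exactly the one built into $\mathcal{D}(T)_\tau$ (on the $\Oo_{\Ee_\tau}$-factor via $F_\tau=(C^{\flat})^{\mathscr{G}_L}$, trivially on $\mathcal{D}(T)$). All the maps involved are natural, hence $\Gamma$-equivariant, and we obtain the isomorphism of $\Gamma$-modules $\mathcal{D}(T)_\tau\isomto(\W(C^{\flat})\otimes_{\ZZ_p}T)^{\mathscr{G}_L}$.

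The only substantive ingredient is the comparison isomorphism of the first paragraph, \ie (part of) the equivalence of Theorem~\ref{thm equivalence of cats}; everything else is formal manipulation of invariants. The one point needing a little care — the mild obstacle here — is pushing $(-)^{\mathscr{G}_L}$ through $\otimes_{\Oo_{\Ee}}\mathcal{D}(T)$ when $T$ (hence $\mathcal{D}(T)$) has $p$-torsion, which is dealt with as above by the structure theorem and reduction modulo $p^{n}$; for $\ZZ_p$-free $T$ this does not arise, since then $\mathcal{D}(T)$ is finite free over $\Oo_{\Ee}$.
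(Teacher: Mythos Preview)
Your argument is correct and is precisely the standard one: extend the comparison isomorphism $\Oo_{\widehat{\Ee^{\ur}}}\otimes_{\Oo_{\Ee}}\mathcal{D}(T)\isomto\Oo_{\widehat{\Ee^{\ur}}}\otimes_{\ZZ_p}T$ to $\W(C^\flat)$, take $\mathscr{G}_L$-invariants, and use that $\W(C^\flat)^{\mathscr{G}_L}=\W(F_\tau)=\Oo_{\Ee_\tau}$ together with the structure of $\mathcal{D}(T)$ over the discrete valuation ring $\Oo_{\Ee}$ to pull the invariants through the tensor product. The paper does not give its own proof but simply cites \cite[Lemma~1.18]{Car13}; your write-up is essentially a reconstruction of that argument, and the extra care you take with the torsion case (via the structure theorem and $\W_n(C^\flat)^{\mathscr{G}_L}=\W_n(F_\tau)$) is exactly the point one has to check.
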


\begin{proof}
	$\cf$ \cite[Lemma 1.18]{Car13}.
\end{proof}

\begin{notation}
	If $z\in\ZZ_p$ and $n\in\ZZ_{\geq0},$ let
	$$\binom{z}{n}=\frac{z(z-1)\cdots(z-n+1)}{n!}\in\ZZ_p.$$\index{$\binom{z}{n}$}
	As $\Lim{m\to\infty}\tau_D^{p^m}=1$ ($\cf$ \cite[Proposition 1.3]{Car13}), the operator $\tau_D-1$ is topologically nilpotent on $D_{\tau},$ and we may define
	$$\tau_D^z=\sum\limits_{n=0}^\infty\binom{z}{n}(\tau_D-1)^n$$\index{$\tau_D^z$}
	for all $z\in\ZZ_p.$ In particular, we have
	$$\delta:=1+\tau_D+\cdots+\tau_D^{\chi(\gamma)-1}=\frac{\tau_D^{\chi(\gamma)}-1}{\tau_D-1}=\sum\limits_{n=1}^\infty\binom{\chi(\gamma)}{n}(\tau_D-1)^{n-1}.$$\index{$\delta$}
\end{notation} 

\begin{lemma}\label{Replace g witi gamma}
	We have $\mathcal{D}(T)_{\tau, 0}= \big\{ x\in \mathcal{D}(T)_{\tau};\ (\gamma\otimes 1)(x)=x+\tau_D(x)+\cdots +\tau_D^{\chi(\gamma)-1}(x) \big\}.$
\end{lemma}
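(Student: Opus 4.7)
The plan is to handle the easy inclusion, reduce the converse to an identity about $\gamma^z$ for $z \in \ZZ_p$, prove that identity by induction on positive integers, and extend by $p$-adic continuity. For the inclusion $\subseteq$, note that by Lemma~\ref{lemm gamma with finite caracter} we have $\chi(\gamma) \in \ZZ_{>0}$, and from the diagram after Remark~\ref{rem def tau} the element $\gamma$ lifts to $\mathscr{G}_{K_\pi}$ (its image in $\Gal(L/K_\pi)$ is a topological generator of that group); substituting $g=\gamma$ in the definition of $\mathcal{D}(T)_{\tau,0}$ yields this direction.

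For the reverse inclusion, the first observation is that $\Oo_{\Ee_{\tau}} = \W(F_\tau) = \W\bigl((C^\flat)^{\mathscr{G}_L}\bigr)$ is $\mathscr{G}_L$-fixed by definition of $F_\tau$, while $\mathcal{D}(T) = \bigl(\Oo_{\widehat{\Ee^{\ur}}}\otimes T\bigr)^{\mathscr{G}_{K_\pi}}$ is $\mathscr{G}_{K_\pi}$-fixed; hence $\mathscr{G}_L$ acts trivially on $\mathcal{D}(T)_\tau$ and the $\mathscr{G}_{K_\pi}$-action factors through $\Gal(L/K_\pi) = \overline{\langle \gamma \rangle}$. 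Any $g \in \mathscr{G}_{K_\pi}$ with $\chi(g) \in \ZZ_{>0}$ thus has image $\gamma^z$ in $\Gal(L/K_\pi)$ for a unique $z \in \ZZ_p$, with $\chi(\gamma)^z = \chi(g)$. It therefore suffices, assuming $(\gamma\otimes 1)(x) = \delta(x)$, to prove
$$(\gamma^z \otimes 1)(x) = \delta_{\chi(\gamma)^z}(x) \quad \text{for every } z \in \ZZ_p,$$
where $\delta_m := \sum_{k\geq 1}\binom{m}{k}(\tau_D-1)^{k-1}$ coincides with $1+\tau_D+\cdots+\tau_D^{m-1}$ when $m \in \ZZ_{>0}$.

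The inductive step for $z = n \in \ZZ_{\geq 0}$ combines two ingredients: the commutation relation $(\gamma\otimes 1)\,\tau_D^k = \tau_D^{k\chi(\gamma)}\,(\gamma\otimes 1)$ obtained by iterating Remark~\ref{remark on defi varphi-tau module gamma tau relation}(2), and the elementary polynomial identity $\delta_{ab}(X) = \delta_b(X)\,\delta_a(X^b)$ in $\ZZ[X]$ for $a,b \in \ZZ_{\geq 0}$. Applied with $X = \tau_D$, $a = \chi(\gamma)^n$, $b = \chi(\gamma)$ they give
$$(\gamma^{n+1}\otimes 1)(x) = (\gamma\otimes 1)\bigl(\delta_{\chi(\gamma)^n}(x)\bigr) = \delta_{\chi(\gamma)^n}(\tau_D^{\chi(\gamma)})\,\delta(x) = \delta_{\chi(\gamma)^{n+1}}(x).$$
To extend from $\ZZ_{\geq 0}$ to all of $\ZZ_p$, both sides of the displayed identity depend continuously on $z$: the left-hand side by continuity of the $\mathscr{G}_{K_\pi}$-action on $\mathcal{D}(T)_\tau$, and the right-hand side because $\tau_D - 1$ is topologically nilpotent on $\mathcal{D}(T)_\tau$ (\cf \cite[Proposition 1.3]{Car13}) while the coefficients $\binom{\chi(\gamma)^z}{k}$ depend continuously on $z$. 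Density of $\ZZ_{\geq 0}$ in $\ZZ_p$ then gives the identity for all $z$, and specializing to the $z$ attached to $g$ yields $(g\otimes 1)(x) = \delta_{\chi(g)}(x) = x + \tau_D(x) + \cdots + \tau_D^{\chi(g)-1}(x)$. The only delicate point is the clean formulation of the continuity of the $\mathscr{G}_{K_\pi}$-action on $\mathcal{D}(T)_\tau$, which follows from the weak topology on $\W(C^\flat)$ together with Lemma~\ref{lemm iso for D-tau}.
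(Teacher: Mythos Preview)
Your argument is correct and follows essentially the same route as the paper: induction on positive integer powers of $\gamma$, then extension to all of $\ZZ_p$ by density and continuity. One point to tighten: the commutation $(\gamma\otimes1)\,\tau_D = \tau_D^{\chi(\gamma)}(\gamma\otimes1)$ that you iterate is stated in Remark~\ref{remark on defi varphi-tau module gamma tau relation}(2) only on $D$, not on $D_\tau$; to get it on $D_\tau$ you should invoke the \emph{first} equality of that remark together with $\tau^{-\chi(\gamma)}\gamma\tau=\gamma$ in $\Gal(L/K)$ --- this is precisely why the paper instead passes through the isomorphism $\mathcal{D}(T)_\tau\simeq(\W(C^\flat)\otimes T)^{\mathscr{G}_L}$ of Lemma~\ref{lemm iso for D-tau}, where the relation becomes the group identity $\gamma\tau=\tau^{\chi(\gamma)}\gamma$.
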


\begin{proof} Let $x\in \mathcal{D}_{\tau}$ and suppose 
	\[(\gamma\otimes 1)(x)=x+\tau_D(x)+\cdots +\tau_D^{\chi(\gamma)-1}(x).\]
	Then for any $g\in \mathscr{G}_{K_{\pi}}/\mathscr{G}_L$ with $\chi(g)\in \ZZ_{>0},$ we show that 
	\[(g\otimes 1)(x)=x+\tau_D(x)+\cdots +\tau_D^{\chi(g)-1}(x).\]
	Let $g\in\mathscr{G}_{K_\pi}.$ Under the isomorphism of lemma \ref{lemm iso for D-tau}, the action of $g\otimes1$ on $\Oo_{\Ee_{\tau}}\otimes_{\Oo_{\Ee}}\mathcal{D}(T)$ corresponds to the diagonal action $g\otimes g$ on $\W(C^{\flat})\otimes_{\ZZ_p}T.$ Similarly, lemma \ref{lemm iso for D-tau} also tells that the action $\tau_D$ corresponds to $\tau\otimes\tau$ on $\mathcal{D}(T)_{\tau}\subset \W(C^{\flat})\otimes_{\ZZ_p}T.$ We prove by induction on $n\in\ZZ_{>0}$ that
	$$(g\otimes1)(x)=(1+\tau_D+\cdots+\tau_D^{\chi(g)-1})(x)$$
	when $g=\gamma^n$ (this is true for $n=1$ by hypothesis). Assume $n>1.$ Seeing $x$ as an element of $\Oo_{\Ee_{\tau}}\otimes_{\Oo_{\Ee}}\mathcal{D}(T),$ we have
	
	\begin{align*}
	(g\otimes g)(x) &= (\gamma\otimes\gamma)\big((\gamma^{n-1}\otimes\gamma^{n-1})(x)\big)\\
	&= (\gamma\otimes\gamma)\Big(\sum\limits_{i=0}^{\chi(\gamma)^{n-1}-1}\tau^i\otimes\tau^i\Big)(x)\\
	&= \Big(\sum\limits_{i=0}^{\chi(\gamma)^{n-1}-1}\tau^{i\chi(\gamma)}\otimes\tau^{i\chi(\gamma)}\Big)(\gamma\otimes\gamma)(x)\\
	&= \Big(\sum\limits_{i=0}^{\chi(\gamma)^{n-1}-1}\tau^{i\chi(\gamma)}\otimes \tau^{i\chi(\gamma)}\Big)\Big(\sum\limits_{j=0}^{\chi(\gamma)-1}\tau^j\otimes\tau^j\Big)(x)\\
	&= \Big(\sum\limits_{k=0}^{\chi(\gamma)^n-1}\tau^k\otimes\tau^k\Big)(x).
	\end{align*}
	
	Let $g\in\mathscr{G}_{K_\pi}$ be such that $\chi(g)\in\ZZ_{>0}.$ As $\gamma$ is a topological generator of $\Gamma,$ there exists a sequence $(b_m)_{m\in\ZZ_{\geq0}}$ in $\ZZ_p$ such that $\Lim{m\to\infty}\gamma^{b_m}=g.$ As $\ZZ_{\geq0}$ is dense in $\ZZ_p,$ there exists $a_m\in\ZZ_{\geq0}$ such that $b_m-a_m\in p^n\ZZ_p.$ Then $\Lim{m\to\infty}\gamma^{a_m}=g.$ We have
	$$(\gamma^{a_m}\otimes1)(x)=(1+\tau_D+\cdots+\tau_D^{\chi(\gamma)^{a_m}-1})(x)=\sum\limits_{n=1}^\infty\binom{\chi(\gamma)^{a_m}}{n}(\tau_D-1)^n(x)$$
	for all $m.$ Passing to the limit as $m\to\infty,$ we get
	$$(g\otimes1)(x)=\sum\limits_{n=1}^\infty\binom{\chi(g)}{n}(\tau_D-1)^n(x)=(1+\tau_D+\cdots+\tau_D^{\chi(g)-1})(x).$$
\end{proof}

\begin{lemma}\label{lemm 1.1.11 well-defined tau-1 varphi-1}
	The maps $\varphi-1\colon D_{\tau}\to D_{\tau}$ and $\tau_D-1\colon D_{\tau}\to D_{\tau}$ induce maps $\varphi-1\colon D_{\tau,0}\to D_{\tau, 0}$ and $\tau_D-1\colon D\to D_{\tau, 0}.$
\end{lemma}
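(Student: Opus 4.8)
The plan is to check the two assertions separately, each reducing to a short computation using the defining relation of a $(\varphi,\tau)$-module (Definition~\ref{Phi-tau-module}) together with the description of $D_{\tau,0}$ in Notation~\ref{nota D-tau}. The ingredients I would invoke throughout are: (i) on $D_\tau$ the Frobenius $\varphi=\varphi_{\Oo_{\Ee_{\tau}}}\otimes\varphi_D$ commutes with $\tau_D$, which is built into Definition~\ref{Phi-tau-module}; (ii) for every $g\in\mathscr{G}_{K_{\pi}}$ (through its image in $\mathscr{G}_{K_{\pi}}/\mathscr{G}_L$) the operator $g\otimes1$ on $D_\tau$ commutes with $\varphi$ and restricts to the identity on the submodule $D=1\otimes D\subseteq D_\tau$ — the first because $\varphi_{\Oo_{\Ee_{\tau}}}$ is the Witt‑vector Frobenius of $\W(F_{\tau})=\Oo_{\Ee_{\tau}}$, hence $\W$ of the absolute Frobenius of $F_\tau$, which commutes with the $g$-action by functoriality of $\W$; the second because $g$ fixes $\Oo_{\Ee}\subseteq\Oo_{\Ee_{\tau}}$ pointwise; and (iii) the telescoping identity $\sum_{i=0}^{m-1}\tau_D^i\circ(\tau_D-1)=\tau_D^m-1$ for $m\in\ZZ_{>0}$.

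For the claim that $\tau_D-1$ sends $D$ into $D_{\tau,0}$, I would fix $x\in D$ and an arbitrary $g\in\mathscr{G}_{K_{\pi}}$ with $m:=\chi(g)\in\ZZ_{>0}$. Since $g\otimes1$ is the identity on $D$, condition (ii) of Definition~\ref{Phi-tau-module} reads $(g\otimes1)(\tau_D(x))=\tau_D^{m}(x)$, so that $(g\otimes1)((\tau_D-1)(x))=\tau_D^{m}(x)-x=\sum_{i=0}^{m-1}\tau_D^i((\tau_D-1)(x))$ by the telescoping identity. Since this holds for every such $g$, the element $(\tau_D-1)(x)$ satisfies the defining condition of $D_{\tau,0}$, as wanted.

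For the claim that $\varphi-1$ preserves $D_{\tau,0}$, I would fix $x\in D_{\tau,0}$ and again $g\in\mathscr{G}_{K_{\pi}}$ with $m:=\chi(g)\in\ZZ_{>0}$. Using in turn that $\varphi$ commutes with $g\otimes1$, then the membership $x\in D_{\tau,0}$, then that $\varphi$ commutes with $\tau_D$, one obtains $(g\otimes1)((\varphi-1)(x))=(\varphi-1)((g\otimes1)(x))=(\varphi-1)\bigl(\sum_{i=0}^{m-1}\tau_D^i(x)\bigr)=\sum_{i=0}^{m-1}\tau_D^i((\varphi-1)(x))$, which is exactly the condition for $(\varphi-1)(x)$ to lie in $D_{\tau,0}$. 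One could instead invoke Lemma~\ref{Replace g witi gamma} to reduce in both arguments to the single generator $g=\gamma$, but this is not necessary.

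I do not expect a genuine obstacle here: this is essentially a bookkeeping lemma. The only point worth a moment of care is the justification of ingredient (ii) — that $g\otimes1$ acts trivially on $D$ and commutes with $\varphi$ — which is where the specific structure of the coefficient rings $\Oo_{\Ee}\subseteq\Oo_{\Ee_{\tau}}=\W(F_{\tau})$ enters; granted that, both assertions fall out of the two displayed computations.
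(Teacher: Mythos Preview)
Your proposal is correct and follows essentially the same approach as the paper: the paper's proof likewise reduces the $\varphi-1$ claim to the commutation of $\varphi$ with both $\tau_D$ and the $\mathscr{G}_{K_\pi}$-action, and for $\tau_D-1$ performs the same telescoping computation using $(g\otimes1)\tau_D(1\otimes x)=\tau_D^{\chi(g)}(1\otimes x)$ from Definition~\ref{Phi-tau-module}. Your write-up is slightly more explicit in justifying ingredient (ii) (the commutation of $g\otimes1$ with $\varphi$ via functoriality of $\W$), which the paper states without elaboration.
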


\begin{proof}
	For $\varphi-1,$ this results from the fact that $\varphi\colon D_{\tau}\to D_{\tau}$ commutes with $\tau_D$ and the action of $\mathscr{G}_{K_{\pi}}.$
	\item Let $x\in D$ and $y:=(\tau_D-1)(1\otimes x)\in D_{\tau},$ we then claim that $y\in D_{\tau, 0}.$ Indeed, if $g\in \mathscr{G}_{K_{\pi}}$ satisfies $\chi(g)=n\in \ZZ_{>0},$ then
	\begin{align*}
	(1+\tau_D +\cdots + \tau_D^{n-1})(y)&= (1+\tau_D +\cdots + \tau_D^{n-1})((\tau_D-1)(1\otimes x))\\
	&= (\tau_D^n-1)(1\otimes x)\\
	&= (g\otimes 1)(\tau_D (1\otimes x))-1\otimes x\\
	&= (g\otimes 1)((\tau_D-1)(1\otimes x))\\
	&= g(y).
	\end{align*}
	Indeed, $\tau_D^n(1\otimes x)=\tau_D^n\circ (g\otimes 1)(1\otimes x)=(g\otimes 1)\tau_D(1\otimes x),$ where the last equality follows from remark \ref{remark on defi varphi-tau module gamma tau relation} (2). 	
\end{proof}

\begin{definition}\label{def C-phi-tau}
	Let $(D, D_{\tau})\in\Mod_{\Oo_{\Ee},\Oo_{\Ee_{\tau}}}(\varphi,\tau).$ We define a complex $\Cc_{\varphi, \tau}(D)$\index{$\Cc_{\varphi, \tau}(D)$} as follows:  
	\[ \xymatrix{
		0\ar[rr] && D \ar[r] & D\oplus D_{\tau, 0}  \ar[r]&  D_{\tau, 0}  \ar[r] & 0   \\
		&& x  \ar@{|->}[r] &((\varphi-1)(x), (\tau_D-1)(x))  &{}\\
		&&&{} (y,z) \ar@{|->}[r]& (\tau_D-1)(y)-(\varphi-1)(z)
	}   \]
	
	where the first term is of degree $-1.$
	
	\medskip
	
	If $T\in \Rep_{\ZZ_p}(\mathscr{G}_{K}),$ we have in particular the complex $\Cc_{\varphi, \tau}(\mathcal{D}(T)),$ which will also be simply denoted $\Cc_{\varphi, \tau}(T).$\index{$\Cc_{\varphi, \tau}(T)$}
	
	\medskip
	
	Similarly, we can attach a complex $\mathcal{C}_{\varphi,\tau}(D)$ to any $D\in\Mod_{\mathcal{E},\mathcal{E}_\tau}(\varphi,\tau),$ and in particular attach a complex $\mathcal{C}_{\varphi,\tau}(V)$\index{$\mathcal{C}_{\varphi,\tau}(V)$} to any $V\in \Rep_{\QQ_p}(\mathscr{G}_{K}).$
\end{definition}

\begin{theorem}\label{thm main result}
	For any $T\in \Rep_{\ZZ_p}(\mathscr{G}_K)$ and natural integer $i,$ there is a canonical and functorial isomorphism
	$$\H^i(\mathcal{C}_{\varphi,\tau}(T))\simeq \H^i(\mathscr{G}_K,T).$$
	
	\medskip
	
	Similarly, if $V\in\Rep_{\QQ_p}(\mathscr{G}_K),$ we have a canonical and functorial isomorphism $\H^i(\mathcal{C}_{\varphi,\tau}(V))\simeq \H^i(\mathscr{G}_K,V)$ for all $i.$
\end{theorem}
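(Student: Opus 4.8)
The plan is to prove this in three stages, following the standard strategy used for the Herr complex: reduce to the torsion case, handle the torsion case by dévissage together with a comparison to group cohomology, and then recover the $p$-adic (integral and rational) case by a limit argument. Throughout, the point is that the complex $\Cc_{\varphi,\tau}(T)$ should be identified, up to quasi-isomorphism, with a complex computing $\RR\Gamma(\mathscr{G}_K,T)$; the novelty compared to the cyclotomic case is bookkeeping around the subgroup $D_{\tau,0}$.

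\textbf{Step 1: A mapping-cone description and a double-complex interpretation.} First I would observe that $\Cc_{\varphi,\tau}(T)$ is the total complex of a suitable bicomplex. Concretely, one wants to write it as the cone of $(\tau_D-1)$ acting between the two-term complexes $[D \xrightarrow{\varphi-1} D]$ and $[D_{\tau,0}\xrightarrow{\varphi-1} D_{\tau,0}]$ (the maps being well-defined by Lemma \ref{lemm 1.1.11 well-defined tau-1 varphi-1}), so that there is a long exact sequence relating $\H^i(\Cc_{\varphi,\tau}(T))$ to the cohomology of $[D\xrightarrow{\varphi-1}D]$ and $[D_{\tau,0}\xrightarrow{\varphi-1}D_{\tau,0}]$. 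The first of these two-term complexes computes $\RR\Gamma(\mathscr{G}_{K_\pi},T)$ — this is the analogue of the classical fact (for $(\varphi,\Gamma)$-modules) that $\H^0$ and $\H^1$ of $[D\xrightarrow{\varphi-1}D]$ recover $\H^0(\mathscr{G}_{K_\pi},T)$ and $\H^1(\mathscr{G}_{K_\pi},T)$, using Theorem \ref{thm equivalence of cats}, the vanishing of higher cohomology of $\mathscr{G}_{K_\pi}$ acting on $\Oo_{\widehat{\Ee^{\ur}}}\otimes T$, and the surjectivity of $\varphi-1$ on $\Oo_{\widehat{\Ee^{\ur}}}$ (resp. its fraction field). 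I would then want a Hochschild–Serre-type statement: $\Gal(L/K_\zeta)=\overline{\langle\tau\rangle}$ and $\Gamma=\overline{\langle\gamma\rangle}$ together generate $\Gal(L/K)$, but the subtlety is that $\Gamma$ does not act on $D$ directly, only on $D_\tau$ — this is exactly why $D_{\tau,0}$ is forced into the picture.

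\textbf{Step 2: The torsion case via dévissage and effaceability.} For $T\in\Rep_{\ZZ_p,\tors}(\mathscr{G}_K)$ I would argue by induction on the length, using the five lemma: a short exact sequence $0\to T'\to T\to T''\to 0$ gives short exact sequences of $(\varphi,\tau)$-modules (the functor $\mathcal{D}$ is exact), and the key technical point is that it also gives a short exact sequence of the $D_{\tau,0}$-terms, i.e. that $D'_{\tau,0}\to D_{\tau,0}\to D''_{\tau,0}$ is exact — this requires checking that the defining cocycle condition lifts, which I expect to follow from the cocycle nature of $c(\cdot)$ and a cohomological vanishing on $\mathscr{G}_{K_\pi}$. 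Granting this, $\Cc_{\varphi,\tau}(-)$ takes short exact sequences to short exact sequences of complexes, yielding a long exact sequence of $\H^i(\Cc_{\varphi,\tau})$ compatible (one must check the connecting maps agree, using effaceability — embedding any $T$ into an induced/co-induced representation on which the relevant cohomology is concentrated in degree $0$) with the long exact sequence in Galois cohomology. The base case is $T$ killed by $p$, i.e. an $\FF_p$-representation, where one can be fully explicit: here $D_\tau$, $D_{\tau,0}$ are finite-dimensional $F_\tau$-, resp. $(F_0^{\sep})^{\mathscr{G}_L}$-vector spaces and the complex can be compared directly with the inhomogeneous cochain complex of $\Gal(L/K)$ with the normalization $\gamma\tau\gamma^{-1}=\tau^{\chi(\gamma)}$ (Remark \ref{rem def tau}), after using that $[D\xrightarrow{\varphi-1}D]$ already computes $\RR\Gamma(\mathscr{G}_{K_\pi},\cdot)$ and that $\H^i(\mathscr{G}_K,\cdot)=\H^i(\Gal(L/K),\RR\Gamma(\mathscr{G}_{K_\pi},\cdot))$.

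\textbf{Step 3: Passage to the limit.} For general integral $T$, write $T=\varprojlim_n T/p^nT$; since each $\Cc_{\varphi,\tau}(T/p^nT)$ has finite-length terms the inverse system is Mittag–Leffler, so $\Cc_{\varphi,\tau}(T)=\varprojlim_n \Cc_{\varphi,\tau}(T/p^nT)$ computes $\varprojlim_n \H^i(\mathscr{G}_K,T/p^nT)=\H^i(\mathscr{G}_K,T)$ (continuous cohomology commutes with such limits, the $\mathscr{G}_K$-modules being finite). Here one must be slightly careful that forming $D_{\tau,0}$ commutes with the relevant limits — plausible since it is cut out by closed conditions. Finally the rational case $V\in\Rep_{\QQ_p}(\mathscr{G}_K)$ follows by choosing a $\mathscr{G}_K$-stable lattice $T\subset V$, tensoring the whole complex with $\QQ_p$ (which is exact), and using $\H^i(\mathscr{G}_K,V)=\H^i(\mathscr{G}_K,T)\otimes\QQ_p$.

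\textbf{Main obstacle.} The hard part will be Step 2: proving that $\Cc_{\varphi,\tau}(-)$ is exact on short exact sequences — equivalently, that the subgroup functor $D\mapsto D_{\tau,0}$ is exact — and, relatedly, pinning down the comparison of connecting homomorphisms so that the five-lemma argument actually identifies $\H^i(\Cc_{\varphi,\tau}(T))$ with $\H^i(\mathscr{G}_K,T)$ \emph{canonically and functorially} rather than just abstractly. The group $D_{\tau,0}$ is defined by an infinite family of twisted cocycle relations, and controlling its behaviour under quotients (especially surjectivity onto $D''_{\tau,0}$) is where I expect the real work to lie; the counterexample alluded to in the introduction (the ``naive'' complex failing) shows this cannot be sidestepped.
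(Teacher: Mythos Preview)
Your mapping-cone strategy in Steps 1--2 runs into a genuine obstruction: the formula $\H^i(\mathscr{G}_K,\cdot)=\H^i\big(\Gal(L/K),\RR\Gamma(\mathscr{G}_{K_\pi},\cdot)\big)$ you invoke is not available, because $K_\pi/K$ is \emph{not Galois} and $\mathscr{G}_{K_\pi}$ is not normal in $\mathscr{G}_K$. That non-normality is precisely the defect of the Breuil--Kisin tower that makes this theorem nontrivial. Hochschild--Serre along $\mathscr{G}_L\lhd\mathscr{G}_K$ is what leads to the four-term Tavares Ribeiro complex with coefficients in $D_\tau$; it does not yield a group-cohomological interpretation of the cone $[D\xrightarrow{\varphi-1}D]\xrightarrow{\tau_D-1}[D_{\tau,0}\xrightarrow{\varphi-1}D_{\tau,0}]$. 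The second column computes nothing obviously Galois-theoretic, and so your ``base case for $\FF_p$-representations via comparison with cochains of $\Gal(L/K)$'' does not go through.

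The paper's route avoids this entirely. After checking $\H^0$ directly, it identifies $\H^1(\Cc_{\varphi,\tau}(T))$ with $\H^1(\mathscr{G}_K,T)$ \emph{by hand} (Proposition \ref{prop Caruso H1 Zp}): a $1$-cocycle $(\lambda,\mu)\in D\oplus D_{\tau,0}$ is exactly the data of an extension of the unit object by $\mathcal{D}(T)$ in $\Mod_{\Oe,\Oo_{\Ee_\tau}}(\varphi,\tau)$, and the categorical equivalence (Theorem \ref{thm equivalence of cats}) converts this into $\Ext^1_{\Rep_{\ZZ_p}(\mathscr{G}_K)}(\ZZ_p,T)=\H^1(\mathscr{G}_K,T)$. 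With $\H^0$ and $\H^1$ known for all $T$, the rest is $\delta$-functor universality. Exactness of $T\mapsto\Cc_{\varphi,\tau}(T)$ --- your ``main obstacle'' --- is obtained from surjectivity of $\delta-\gamma\otimes1$ on $D_\tau$, via the factorization $(\delta-\gamma\otimes1)(\tau_D-1)=(1-\tau_D^{\chi(\gamma)})(\gamma\otimes1-1)$ and surjectivity of each factor (Lemma \ref{lemm surjective gamma tau}, Proposition \ref{propdeltagamma Zp}). Effaceability is then checked on induced modules $U=\Ind_{\mathscr{G}_K}T$ in the ind-category: $\Ff^1(U)=\H^1(\mathscr{G}_K,U)=0$ by the $\Ext^1$ computation just established, and $\Ff^2(U)=0$ is surjectivity of $\varphi-1$ on $\mathcal{D}(U)_{\tau,0}$, proved by a snake-lemma argument (Lemma \ref{Surjectivity of varphi-1 on D-tau-0 Zp}). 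No separate $\FF_p$-base-case or five-lemma d\'evissage is needed; the $\Ext^1$ identification does the work uniformly.
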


\begin{notation} If $\mathscr{C}$ is a category, we denote by $\Ind\mathscr{C}$\index{$\Ind\mathscr{C}$} the associated ind-category, whose objects are inductive systems of objects of $\mathscr{C}$ indexed by some filtered category. 
\end{notation}

\begin{remark}	(1) The category $\Ind \Rep_{\ZZ_p, p^r\textrm{-tors}}(\mathscr{G}_K)$\index{$\Ind \Rep_{\ZZ_p, p^r\textrm{-tors}}(\mathscr{G}_K)$} coincides with the category of discrete $\mathscr{G}_K$-modules killed by $p^r,$ $\ie$of discrete $(\ZZ/p^r\ZZ)[\![\mathscr{G}_K]\!]$-modules.
	
	\item (2) The equivalences of theorem \ref{thm equivalence of cats} extend into equivalences
	$$\Ind \Rep_{\ZZ_p}(\mathscr{G}_K)\simeq\Ind \Mod_{\Oo_{\Ee}, \Oo_{\Ee_{\tau}}}(\varphi,\tau).$$
\end{remark}

\begin{definition}
	Let $T\in \mathbf{\Rep}_{\ZZ_p, p^r\textrm{-tors}}(\mathscr{G}_K).$ The \emph{induced module} of $T$ is $\Ind_{\mathscr{G}_K}(T):=C^0(\mathscr{G}_K, T),$\index{$\Ind_{\mathscr{G}_K}(T)$} the set of all continuous maps from $\mathscr{G}_K$ to $T.$ Endow $\Ind_{\mathscr{G}_K}T$ with the discrete topology and the action of $\mathscr{G}_K$ given by
	\begin{align*}
	\mathscr{G}_K \times \Ind_{\mathscr{G}_K}(T) &\to \Ind_{\mathscr{G}_K}(T) \\
	(g, \eta) &\mapsto [x\mapsto \eta(xg)].
	\end{align*}
	Then $\Ind_{\mathscr{G}_K}(T)\in \Ind \mathbf{\Rep}_{\FF_p}(\mathscr{G}_K),$ and $T$ canonically injects into $\Ind_{\mathscr{G}_K}(T)$ by sending $v \in T$ to $\eta_v,$ where $\eta_v(g)=g(v)$ for any $g\in \mathscr{G}_K.$  
\end{definition}

\begin{lemma}\label{lemmcoho0nonGalois Zp}
	Let $T\in \Rep_{\ZZ_p, \tors}(\mathscr{G}_K)$ and $K^{\prime}$ be any subfield of $\overline{K}$ containing $K.$ We then have \[ \H^1(\mathscr{G}_{K^{\prime}}, \Ind_{\mathscr{G}_K}T)=0. \]
\end{lemma}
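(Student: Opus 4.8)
The statement is the vanishing of $\H^1(\mathscr{G}_{K'},\Ind_{\mathscr{G}_K}T)$ for a torsion representation $T$ and an arbitrary intermediate field $K\subset K'\subset\Kbar$. This is the standard "co-induced modules are acyclic" phenomenon, but one must be slightly careful because the induction is from $\mathscr{G}_K$ while the cohomology is taken over the (typically smaller) subgroup $\mathscr{G}_{K'}$, and everything must stay in the continuous/discrete setting. My plan is the following.

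First I would reduce to the case $T\in\Rep_{\ZZ_p,p^r\text{-tors}}(\mathscr{G}_K)$: since $T$ is killed by some power of $p$ this is automatic, so really $T$ is a discrete $(\ZZ/p^r\ZZ)[\![\mathscr{G}_K]\!]$-module (or a finite one, and $\Ind_{\mathscr{G}_K}T=C^0(\mathscr{G}_K,T)$ is the associated discrete co-induced module). Next I would identify $C^0(\mathscr{G}_K,T)$, as a discrete $\mathscr{G}_{K'}$-module, with $C^0(\mathscr{G}_{K'},C^0(\mathscr{G}_K/\mathscr{G}_{K'}\text{-coset data},T))$; more precisely, choosing a continuous section of $\mathscr{G}_{K'}\backslash\mathscr{G}_K$ (or just using that restriction along $\mathscr{G}_{K'}\hookrightarrow\mathscr{G}_K$ sends a co-induced module of $\mathscr{G}_K$ to a co-induced module of $\mathscr{G}_{K'}$), one gets $\Ind_{\mathscr{G}_K}T\cong \mathrm{Coind}_{\mathscr{G}_{K'}}^{\{1\}}(W)$ for a suitable discrete abelian group $W=C^0(\mathscr{G}_{K'}\backslash\mathscr{G}_K,T)$ with trivial action — i.e. it is $C^0(\mathscr{G}_{K'},W)$ with the right-translation action. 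The key point is Shapiro's lemma for continuous cohomology of profinite groups acting on discrete modules: $\H^i(\mathscr{G}_{K'},C^0(\mathscr{G}_{K'},W))\cong \H^i(\{1\},W)$, which vanishes for $i\geq 1$. This gives $\H^1(\mathscr{G}_{K'},\Ind_{\mathscr{G}_K}T)=0$ directly.

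Concretely, rather than invoke Shapiro abstractly I might prefer to exhibit the vanishing by a direct cocycle computation, since the module here is literally a module of continuous functions: given a continuous $1$-cocycle $c\colon\mathscr{G}_{K'}\to C^0(\mathscr{G}_K,T)$, write $c_g(x)$ for its value, use the cocycle relation $c_{gh}(x)=c_g(x)+c_h(xg)$ (matching the action $(g\cdot\eta)(x)=\eta(xg)$ defined above), and define $\eta\in C^0(\mathscr{G}_K,T)$ by $\eta(x)=c_{?}(?)$ evaluated suitably — the honest way is $\eta(x)=-c_{g_x}(x_0)$ where one writes $x=x_0 g_x$ relative to a fixed continuous system of coset representatives; then check $c_g=(g-1)\eta$. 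One must verify $\eta$ is continuous, which follows from continuity of $c$ together with compactness of $\mathscr{G}_K$ and the fact that $T$ is discrete (so $c$ and $\eta$ are locally constant). This is the step I expect to require the most care.

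The main obstacle, therefore, is not conceptual but bookkeeping: making the coset-representative construction genuinely continuous and $\mathscr{G}_{K'}$-equivariant, and confirming the sign/variance conventions in the cocycle relation are consistent with the right-translation action $(g,\eta)\mapsto[x\mapsto\eta(xg)]$ used in the definition of $\Ind_{\mathscr{G}_K}(T)$ in the excerpt. Once the identification $\Ind_{\mathscr{G}_K}T|_{\mathscr{G}_{K'}}$ with a co-induced module of $\mathscr{G}_{K'}$ (with trivial coefficient action) is in place, the vanishing of $\H^1$ — indeed of all $\H^{\geq 1}$ — is immediate from Shapiro, and one may remark that the same argument shows $\Ind_{\mathscr{G}_K}T$ is acyclic for $\mathscr{G}_{K'}$-cohomology in all positive degrees, which is what will actually be used for the effaceability/dévissage argument computing $\H^i(\Cc_{\varphi,\tau}(T))$.
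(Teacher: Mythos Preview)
Your approach is correct, but it is genuinely different from the paper's. The paper does \emph{not} choose a continuous section of $\mathscr{G}_K\to\mathscr{G}_K/\mathscr{G}_{K'}$ for an arbitrary closed subgroup. Instead it writes $\mathscr{G}_{K'}=\bigcap_M\mathscr{G}_M$ as the intersection over the finite subextensions $M/K$ inside $K'$, invokes the continuity isomorphism $\H^1(\mathscr{G}_{K'},U)\simeq\iLim{M}\H^1(\mathscr{G}_M,U)$ (from Serre, \emph{Corps locaux}), and then for each such $M$ uses that $\mathscr{G}_M$ is \emph{open} in $\mathscr{G}_K$: the finite coset decomposition $\mathscr{G}_K=\bigsqcup_t t\mathscr{G}_M$ gives $U|_{\mathscr{G}_M}\simeq\bigoplus_t\Ind_{\mathscr{G}_M}T$, and the summands are classically acyclic.

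What each approach buys: the paper's route avoids the (standard but nontrivial) fact that surjections of profinite spaces admit continuous sections, at the cost of an extra limit step. Your route is more direct and immediately yields acyclicity in all positive degrees, but it leans on that topological section result, which you should cite explicitly (e.g.\ Serre, \emph{Cohomologie galoisienne}, I.1.2, or Ribes--Zalesskii). One cosmetic point: with the right-translation action $(g\cdot\eta)(x)=\eta(xg)$ used here, the orbit space is the set of \emph{left} cosets $\mathscr{G}_K/\mathscr{G}_{K'}$, not $\mathscr{G}_{K'}\backslash\mathscr{G}_K$ as you wrote; and in your explicit coboundary the sign should be $\eta(x)=c_{h_x}(s(\bar x))$ rather than $-c_{h_x}(s(\bar x))$. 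These are exactly the variance issues you already flagged.
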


\begin{proof}
	Put $U=\Ind_{\mathscr{G}_K}T.$ Let $I_{K^{\prime}}$ be the set of finite subextensions of $K^{\prime}/K.$ Put $H=\bigcap\limits_{M\in I_{K^{\prime}}}\mathscr{G}_M:$ this is a closed subgroup of $\mathscr{G}_K$ (since all $\mathscr{G}_M$ are) and $\mathscr{G}_{K^{\prime}}\subset H.$ Hence we have $\overline{K}^H\subset \overline{K}^{\mathscr{G}_{K^{\prime}}}=K^{\prime}.$ On the other hand, for each $M\in I_{K^{\prime}},$ we have $H\leq \mathscr{G}_M,$ whence $M=\overline{K}^{\mathscr{G}_M}\subset \overline{K}^{H}.$ As this holds for each $M,$ we get $K^{\prime}=\bigcup\limits_{M\in I_{K^{\prime}}}M\subset\overline{K}^H,$ $\ie$$\overline{K}^{H}=K^{\prime}=\overline{K}^{\mathscr{G}_{K^{\prime}}}:$ by Galois correspondence we have $H=\mathscr{G}_{K^{\prime}}.$ 
	We have 	
	\begin{equation}\label{equ Tavares Ribeiro 1}
	\H^1(\mathscr{G}_{K^{\prime}}, U)=\iLim{M} \H^1(\mathscr{G}_M, U)
	\end{equation}
	by \cite[Chapitre I, Proposition 8]{Ser68}. Indeed, we saw that the groups $\{ \mathscr{G}_M \}_{M\in I_{K^{\prime}}}$ form, for inclusion, a projective system with limit $\bigcap \mathscr{G}_M=\mathscr{G}_{K^{\prime}}$ and this system is compatible with the inductive system formed by $U,$ seen as a $\mathscr{G}_M$-module by restriction. While the limit is $U,$ seen as a $\mathscr{G}_{K^{\prime}}$-module by restriction again. 
	Now we claim that $\H^1(\mathscr{G}_{K^{\prime}}, U)=0.$ Indeed, $\mathscr{G}_M$ being open in $\mathscr{G}_K,$ we have the finite decomposition $\mathscr{G}_K=\bigsqcup\limits_{t\in \mathscr{G}_K/\mathscr{G}_M}t\mathscr{G}_M$ from which we deduce that, as a $\mathscr{G}_M$-module, $U$ admits a decomposition 
	\[ U=\bigoplus_{t\in \mathscr{G}_K/\mathscr{G}_M}C^{0}(t\mathscr{G}_M, T)\simeq \bigoplus_{t\in \mathscr{G}_K/\mathscr{G}_M}C^{0}(\mathscr{G}_M, T)\simeq \bigoplus_{t\in \mathscr{G}_K/\mathscr{G}_M}\Ind_{\mathscr{G}_M}T. \]
	Indeed, the homeomorphism is given by 
	
	\begin{align*}
	C^{0}(t\mathscr{G}_M, T)&\simeq C^{0}(\mathscr{G}_M, T)\\ 
	f&\mapsto \tilde{f},
	\end{align*}  
	where $\tilde{f}(g)=f(tg)$ for all $g\in \mathscr{G}_M.$ It is an isomorphism of $\mathscr{G}_M$-modules: if $f\in C^0(t\mathscr{G}_M, T)$ and $g, h\in \mathscr{G}_M,$ we have 
	\[  (g\cdot \tilde{f})(h)=\tilde{f}(hg)=f(thg)=(g\cdot f)(th)=\widetilde{g\cdot f}(h),\]
	$\ie$$g\cdot \tilde{f}=\widetilde{g\cdot f}.$
	
	\smallskip
	
	Thus, we have
	\[\H^1(\mathscr{G}_M, U)\simeq \bigoplus_{\mathscr{G}_K/\mathscr{G}_M} \H^1(\mathscr{G}_M, \Ind_{\mathscr{G}_M} T), \]
	and the summands of the right-hand side are zero by classical results, refering to \cite[VII, Proposition 1]{Ser68}.  Now (\ref{equ Tavares Ribeiro 1}) implies that $ \H^1(\mathscr{G}_{K^{\prime}}, U)=0.$ 
\end{proof}

\begin{lemma}\textup{(\cf \cite[Lemma 1.8]{Tav11})} \label{lemm conti section}
	The following two maps 
	\[\mathcal{O}_{\widehat{\mathcal{E}^{\ur}}}\xrightarrow{\varphi-1} \mathcal{O}_{\widehat{\mathcal{E}^{\ur}}}\] 
	\[\W(C^{\flat}) \xrightarrow{\varphi-1} \W(C^{\flat})\]
	admit continuous sections. \ft{Remark that the sections are not ring morphisms.}
\end{lemma}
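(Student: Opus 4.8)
The plan is to prove both statements by the same method, reducing each to the construction of a single continuous section of the Artin--Schreier map $\wp\colon x\mapsto x^{p}-x$ on a residue ring. Writing $\W(C^{\flat})=\plim_{n}\W_{n}(C^{\flat})$ (resp. regarding $\Oo_{\widehat{\Ee^{\ur}}}$ as a $p$-adically complete ring), I would build the section by successive approximation along powers of $p$: given $a$, solve $\wp(\bar x)=\bar a$ in the residue ring $C^{\flat}$ (resp. $F_{0}^{\sep}$), lift $\bar x$ to a Teichm\"uller representative $[\bar x]$, observe that $(\varphi-1)([\bar x])-a$ lies in $p\W(C^{\flat})$ (resp. in $p\,\Oo_{\widehat{\Ee^{\ur}}}$), divide by $p$, and iterate; the resulting series converges because both rings are $p$-adically separated and complete. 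In passing this re-proves surjectivity of $\varphi-1$, the needed input being available precisely because $C^{\flat}$ is algebraically closed and $F_{0}^{\sep}$ is separably closed (so $X^{p}-X-\bar a$ is separable), which makes $\wp$ surjective on the residue ring. Thus everything comes down to a continuous section of $\wp$ on $C^{\flat}$, resp. on $F_{0}^{\sep}$, for the valuation topology. (If $\Oo_{\widehat{\Ee^{\ur}}}$ carries its $p$-adic topology this last step is vacuous, $F_{0}^{\sep}$ then being discrete; the substantive case is $\W(C^{\flat})$ with the weak topology.)

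For the continuous section of $\wp$ on $C^{\flat}$ (resp. $F_{0}^{\sep}$) the key observation is that $\wp$ is additive and that, on the ball $B_{0}=\{v^{\flat}(\cdot)>\rho_{0}\}$ about $0$ of a small fixed radius $\rho_{0}>0$, it is inverted by the convergent, additive and $v^{\flat}$-isometric series $s_{+}(y)=-\sum_{n\ge 0}y^{p^{n}}$. By translation, $\wp$ carries each ball $x_{0}+B_{0}$ homeomorphically onto $\wp(x_{0})+B_{0}$; combined with surjectivity and the fact that $\Ker\wp=\FF_{p}$ is discrete, this says that every such ball is evenly covered, i.e.\ $\wp$ is a covering map with deck group $\FF_{p}$. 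I would then finish by gluing: being ultrametric, $C^{\flat}$ (resp. $F_{0}^{\sep}$) is the disjoint union of its clopen balls of radius $\rho_{0}$; choosing over each of them one of the $p$ sheets and inverting $\wp$ there defines a section of $\wp$, continuous because it is continuous --- indeed locally $v^{\flat}$-isometric, via $s_{+}$ --- on each member of a clopen partition. Pulling this section back through the d\'evissage yields the sections required in the lemma. Continuity for the weak topology on $\W(C^{\flat})$ survives because $s_{+}$, hence the residue-ring section, is uniformly continuous, the Teichm\"uller embedding, multiplication by $p$, and division by $p$ on $p\W(C^{\flat})$ are all weak-continuous, and each Witt component of the limit section is a polynomial in finitely many (continuous) Witt components of the approximants.

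The step I expect to be the real obstacle is the locus of elements of negative valuation in $C^{\flat}$ (resp. $F_{0}^{\sep}$): there $\wp$ is still $p$-to-$1$, but no single convergent formula solves $\wp(x)=y$ --- any solution satisfies $v^{\flat}(x)=v^{\flat}(y)/p$ and is genuinely algebraic over $y$ --- so the naive hope for a ``global'' section given by a series fails. The covering/gluing mechanism is exactly what circumvents this, since it requires only a consistent \emph{local} choice over a clopen partition, never a single formula. A secondary, purely technical point, which becomes routine once $s_{+}$ is seen to be $v^{\flat}$-isometric, is to check that the $p$-adic d\'evissage does not spoil continuity for the weak topology.
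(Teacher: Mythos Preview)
Your proposal is correct and follows essentially the same strategy as the paper: build a continuous section of the Artin--Schreier map on the residue field using the convergent series $-\sum_{n\geq 0}y^{p^n}$ on small elements together with a gluing over a clopen partition (the paper phrases this as cosets of $\mathfrak{m}_{F_0^{\sep}}$ rather than in covering-space language, but it is the same mechanism), then lift to Witt vectors by $p$-adic successive approximation. One organizational difference worth noting: rather than running the construction twice in parallel, the paper builds the section on $F_0^{\sep}$ first, extends by density to $C^\flat$, lifts once to $\W(C^\flat)$, and then observes that the resulting section automatically maps $\mathcal{O}_{\widehat{\mathcal{E}^{\ur}}}$ into itself because any two preimages under $\varphi-1$ differ by an element of $\ZZ_p$---this saves a little bookkeeping over your ``resp.'' treatment.
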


\begin{proof}
	For any element $y$ of $F_0^{\sep},$ there exists a finite field extension $F_1/F_0$ such that $y\in F_1.$ If $v^{\flat}(y)>0,$ then we have  $-\sum\limits_{k=0}^{\infty}\varphi^k(y)\in F_1$ and $y=(\varphi-1)(-\sum\limits_{k=0}^{\infty}\varphi^k(y)).$ This defines a continuous section $\overline{s}\colon \mm_{F_0^{\sep}}\to \mm_{F_0^{\sep}}.$ We have $F_0^{\sep}=\bigsqcup\limits_{i\in I}(i+ \mm_{F_0^{\sep}}):$ this is a union of disjoint opens, where $I$ is a fixed set of representatives of the quotient $F_0^{\sep}/\mm_{F_0^{\sep}}.$ For any $i\in I$ choose a $y_i\in F_0^{\sep}$ such that $(\varphi-1)(y_i)=i.$ Indeed, we can solve the equations $T^p-T=i$ in $F_0^{\sep}.$ Observe that $y_i\in\mathcal{O}_{F_0^{\sep}}$ when $i\in\mathcal{O}_{F_0^{\sep}}.$ We define a continuous section on $F_0^{\sep}$ by sending $i+x$ to $y_i+\overline{s}(x)$ (note that $\overline{s}$ maps $\mathcal{O}_{F_0^{\sep}}$ into itself). As $F_0^{\sep}$ is dense in $C^{\flat},$ this section extends by continuity into a continuous section $\overline{s}$ of $\varphi-1\colon C^{\flat}\to C^{\flat}$ that maps $\mathcal{O}_{C^\flat}$ into itself.\\
	The map $s_1\colon\W(C^\flat)\to\W(C^\flat)$ defined by $s_1(x)=[\overline{s}(\overline{x})]$ is a continuous (for the weak topology) section of $\varphi-1$ modulo $p,$ that maps $\W(\mathcal{O}_{C^\flat})$ into itself. We deform $s_1$ inductively into maps $s_n\colon\W(C^\flat)\to\W(C^\flat)$ which are sections of $\varphi-1$ modulo $p^n$ and map $\W(\mathcal{O}_{C^\flat})$ into itself: if $s_n$ is constructed, we have
	$$(\varphi-1)\circ s_n=\Id+p^nf_n,$$
	where $f_n\colon\W(C^\flat)\to\W(C^\flat)$ is a continuous map that maps $\W(\mathcal{O}_{C^\flat})$ into itself. Then
	$$s_{n+1}=s_n-p^ns_1\circ f_n$$
	has the required properties. The sequence $(s_n)_{n\in\ZZ_{>0}}$ converges to a section $s$ of $\varphi-1$ on $\W(C^\flat)$ that maps $\W(\mathcal{O}_{C^\flat})$ into itself.
	
	Let $y\in\Oo_{\widehat{\mathcal{E}^{\ur}}}:$ there exists $x\in\Oo_{\widehat{\mathcal{E}^{\ur}}}$ such that $(\varphi-1)(x)=y,$ so that $(\varphi-1)(s(y)-x)=0,$ $\ie$ $s(y)-x\in\ZZ_p,$ so that $s(y)\in\Oo_{\widehat{\mathcal{E}^{\ur}}}.$ This proves that $s(\Oo_{\widehat{\mathcal{E}^{\ur}}})\subset\Oo_{\widehat{\mathcal{E}^{\ur}}},$ so that $s$ induces a continuous section of $\varphi-1\colon \Oo_{\widehat{\mathcal{E}^{\ur}}}\to \Oo_{\widehat{\mathcal{E}^{\ur}}}.$
	
\end{proof}

\begin{corollary}\label{coro H1 null}
	Let $T\in\Rep_{\ZZ_p\text{-}\tors}(\mathscr{G}_K)$ and $U=\Ind_{\mathscr{G}_K}T.$ We have
	\begin{align*}
	\H^1(\mathscr{G}_{L}, U)&=0\\
	\H^1(\mathscr{G}_{K_{\pi}}, U)&=0
	\end{align*}	
	and hence the following exact sequences:
	\begin{align*}
	0\to U^{\mathscr{G}_L}\to \mathcal{D}(U)_{\tau}\xrightarrow{\varphi-1}  \mathcal{D}(U)_{\tau} &\to 0\\
	0\to U^{\mathscr{G}_{K_{\pi}}}\to \mathcal{D}(U)\xrightarrow{\varphi-1}  \mathcal{D}(U) &\to 0.
	\end{align*}
\end{corollary}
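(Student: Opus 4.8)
The plan is to derive everything formally from an Artin--Schreier(--Witt) short exact sequence together with the two preceding lemmas; I do not expect a real obstacle. First note that if $T$ is killed by $p^r$ then so is $U=\Ind_{\mathscr{G}_K}T$, so $U\in\Ind\Rep_{\ZZ_p,p^r\textrm{-tors}}(\mathscr{G}_K)$; the two vanishing statements are then nothing but Lemma~\ref{lemmcoho0nonGalois Zp} applied with $K'=L$ and with $K'=K_\pi$.

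For the exact sequences, let $R$ be either $\mathcal{O}_{\widehat{\mathcal{E}^{\ur}}}$ or $\W(C^\flat)$. I would first record the short exact sequence of $\mathscr{G}_K$-modules $0\to\ZZ_p\to R\xrightarrow{\varphi-1}R\to 0$: the surjectivity of $\varphi-1$ is exactly the content of Lemma~\ref{lemm conti section}, which moreover furnishes a \emph{continuous} set-theoretic section (a point I will reuse), and $\Ker(\varphi-1)=\ZZ_p$ follows by $p$-adic d\'evissage from the fact that the Frobenius-fixed subfield of $F_0^{\sep}$ (resp. of $C^\flat$) is $\FF_p$. Now $\varphi-1$ induces an isomorphism $R/\ZZ_p\xrightarrow{\sim}R$, so $R/\ZZ_p$ is $\ZZ_p$-flat (being torsion-free), and therefore tensoring the sequence with $U$ over $\ZZ_p$ stays exact, yielding a short exact sequence of $\mathscr{G}_K$-modules
$$0\to U\to R\otimes_{\ZZ_p}U\xrightarrow{\varphi-1}R\otimes_{\ZZ_p}U\to 0,$$
in which $\varphi$ acts on the first factor and $\mathscr{G}_K$ acts diagonally; it admits a continuous section, namely the one from Lemma~\ref{lemm conti section} tensored with $\id_U$.

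It then remains to pass to Galois invariants and invoke the long exact sequence of continuous cohomology, which is licit precisely because of that continuous section. Taking $\mathscr{G}_{K_\pi}$-invariants in the case $R=\mathcal{O}_{\widehat{\mathcal{E}^{\ur}}}$ gives $0\to U^{\mathscr{G}_{K_\pi}}\to\mathcal{D}(U)\xrightarrow{\varphi-1}\mathcal{D}(U)\to\H^1(\mathscr{G}_{K_\pi},U)\to\cdots$, where $\mathcal{D}(U)=(\mathcal{O}_{\widehat{\mathcal{E}^{\ur}}}\otimes_{\ZZ_p}U)^{\mathscr{G}_{K_\pi}}$ via the ind-version of Theorem~\ref{thm equivalence of cats}; since $\H^1(\mathscr{G}_{K_\pi},U)=0$, the second asserted exact sequence drops out. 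Similarly, taking $\mathscr{G}_L$-invariants in the case $R=\W(C^\flat)$ and using Lemma~\ref{lemm iso for D-tau} (in its ind-form) to identify $(\W(C^\flat)\otimes_{\ZZ_p}U)^{\mathscr{G}_L}$ with $\mathcal{D}(U)_\tau$ compatibly with $\varphi$, together with $\H^1(\mathscr{G}_L,U)=0$, yields the first asserted exact sequence.

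As I said, I anticipate no genuine difficulty: the corollary is essentially an assembly of Lemmas~\ref{lemmcoho0nonGalois Zp}, \ref{lemm conti section} and~\ref{lemm iso for D-tau}. The only points deserving care are that $U$, though an ind-object, is a genuine $p^r$-torsion discrete module (so that $\Ind_{\mathscr{G}_K}T$ and its cohomology behave as expected), and that the $\mathscr{G}_{K_\pi}$- and $\mathscr{G}_L$-invariants of $R\otimes_{\ZZ_p}U$ really compute $\mathcal{D}(U)$ and $\mathcal{D}(U)_\tau$, which is where the ind-extensions of Theorem~\ref{thm equivalence of cats} and Lemma~\ref{lemm iso for D-tau} come in.
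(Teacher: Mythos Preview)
Your proof is correct and follows essentially the same approach as the paper: apply Lemma~\ref{lemmcoho0nonGalois Zp} with $K'=L$ and $K'=K_\pi$ for the vanishing, then tensor the Artin--Schreier--Witt sequence $0\to\ZZ_p\to R\xrightarrow{\varphi-1}R\to0$ with $U$, use the continuous section from Lemma~\ref{lemm conti section} to justify the long exact sequence in continuous cohomology, and truncate using the vanishing of $\H^1$. Your justification of exactness after tensoring (via flatness of $R/\ZZ_p$) is actually more precise than the paper's passing remark, and your explicit invocation of the ind-versions of Theorem~\ref{thm equivalence of cats} and Lemma~\ref{lemm iso for D-tau} to identify the invariants is a useful clarification.
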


\begin{proof}
	Let $K^{\prime}=L,$ then lemma \ref{lemmcoho0nonGalois Zp} gives $ \H^1(\mathscr{G}_L, U)=0.$  Consider the following exact sequence
	\[ 0\to \ZZ_p \to \W(C^{\flat}) \xrightarrow{\varphi-1} \W(C^{\flat}) \to 0. \]
	Tensorize with the injective $U$ and then, by lemma \ref{lemm conti section}, we can take Galois invariants to get a long exact sequence
	\[  0\to U^{\mathscr{G}_L} \to  \mathcal{D}(U)_{\tau} \xrightarrow{\varphi-1}  \mathcal{D}(U)_{\tau}\to \H^1(\mathscr{G}_{L}, U)=0. \]
	The case $K^{\prime}=K_{\pi}$ can be proved similarly.  
\end{proof}

\subsection{Tensor product and internal Hom}

\begin{remark}
	Let $T\in \Rep_{\ZZ_p}(\mathscr{G}_K),$ we then have a $\mathscr{G}_{K_{\pi}}$-equivariant isomorphism:
	\[ \mathcal{D}(T)\otimes_{\Oo_{\Ee}}\Oo_{\widehat{\Ee^{\ur}}}\simeq T\otimes_{\ZZ_p}\Oo_{\widehat{\Ee^{\ur}}}.  \]
\end{remark}

\begin{proposition}\label{prop internal hom and tensor product}
	Let $T_1, T_2\in \Rep_{\ZZ_p}(\mathscr{G}_K),$ then we have 
	\begin{align*}
	&\mathcal{D}(T_1\otimes_{\ZZ_p} T_2)\simeq \mathcal{D}(T_1)\otimes_{\Oo_{\Ee}}\mathcal{D}(T_2)\\
	&\mathcal{D}(\Hom_{\ZZ_p}(T_1, T_2))\simeq \Hom_{\Oo_{\Ee}}(\mathcal{D}(T_1), \mathcal{D}(T_2)).
	\end{align*}
\end{proposition}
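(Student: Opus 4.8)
In each case the plan is to write down the evident natural $\Oo_{\Ee}$-linear comparison map and to check that it is an isomorphism by faithfully flat descent along $\Oo_{\Ee}\to\Oo_{\widehat{\Ee^{\ur}}}$, using the remark preceding the statement — the $\mathscr{G}_{K_{\pi}}$-equivariant isomorphism $\mathcal{D}(T)\otimes_{\Oo_{\Ee}}\Oo_{\widehat{\Ee^{\ur}}}\simeq T\otimes_{\ZZ_p}\Oo_{\widehat{\Ee^{\ur}}}$ — to identify all the terms after base change. Two facts are used freely: $\Oo_{\widehat{\Ee^{\ur}}}$ is faithfully flat over $\Oo_{\Ee}$, and $\Oo_{\widehat{\Ee^{\ur}}}$ is flat over $\ZZ_p$ (it is $p$-torsion-free, being a subring of $\W(C^{\flat})$).

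\textbf{Tensor product.} Since $\mathcal{D}(T_i)=\big(\Oo_{\widehat{\Ee^{\ur}}}\otimes_{\ZZ_p}T_i\big)^{\mathscr{G}_{K_{\pi}}}$ sits inside $\Oo_{\widehat{\Ee^{\ur}}}\otimes_{\ZZ_p}T_i$, multiplication in $\Oo_{\widehat{\Ee^{\ur}}}$ gives a natural $\mathscr{G}_{K_{\pi}}$-equivariant map $\big(\Oo_{\widehat{\Ee^{\ur}}}\otimes_{\ZZ_p}T_1\big)\otimes_{\Oo_{\widehat{\Ee^{\ur}}}}\big(\Oo_{\widehat{\Ee^{\ur}}}\otimes_{\ZZ_p}T_2\big)\to \Oo_{\widehat{\Ee^{\ur}}}\otimes_{\ZZ_p}(T_1\otimes_{\ZZ_p}T_2)$, hence an $\Oo_{\Ee}$-bilinear pairing $\mathcal{D}(T_1)\times\mathcal{D}(T_2)\to\Oo_{\widehat{\Ee^{\ur}}}\otimes_{\ZZ_p}(T_1\otimes_{\ZZ_p}T_2)$ whose image consists of $\mathscr{G}_{K_{\pi}}$-invariant elements and therefore lies in $\mathcal{D}(T_1\otimes_{\ZZ_p}T_2)$. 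This induces the natural $\Oo_{\Ee}$-linear map $\mathcal{D}(T_1)\otimes_{\Oo_{\Ee}}\mathcal{D}(T_2)\to\mathcal{D}(T_1\otimes_{\ZZ_p}T_2)$. Applying $-\otimes_{\Oo_{\Ee}}\Oo_{\widehat{\Ee^{\ur}}}$ and invoking the remark for $T_1$, $T_2$ and $T_1\otimes_{\ZZ_p}T_2$, both sides become canonically identified with $(T_1\otimes_{\ZZ_p}T_2)\otimes_{\ZZ_p}\Oo_{\widehat{\Ee^{\ur}}}$, and under these identifications the base-changed map is the identity; faithful flatness then gives the claim.

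\textbf{Internal Hom.} Given an $\Oo_{\Ee}$-linear $\phi\colon\mathcal{D}(T_1)\to\mathcal{D}(T_2)$, extend scalars to $\Oo_{\widehat{\Ee^{\ur}}}$ and transport through the remark to obtain a $\mathscr{G}_{K_{\pi}}$-equivariant $\Oo_{\widehat{\Ee^{\ur}}}$-linear map $\Oo_{\widehat{\Ee^{\ur}}}\otimes_{\ZZ_p}T_1\to\Oo_{\widehat{\Ee^{\ur}}}\otimes_{\ZZ_p}T_2$; as $T_1$ is a finitely presented $\ZZ_p$-module and $\Oo_{\widehat{\Ee^{\ur}}}$ is $\ZZ_p$-flat, such maps form $\Oo_{\widehat{\Ee^{\ur}}}\otimes_{\ZZ_p}\Hom_{\ZZ_p}(T_1,T_2)$ equivariantly, and taking $\mathscr{G}_{K_{\pi}}$-invariants yields a natural $\Oo_{\Ee}$-linear map $\Hom_{\Oo_{\Ee}}(\mathcal{D}(T_1),\mathcal{D}(T_2))\to\mathcal{D}(\Hom_{\ZZ_p}(T_1,T_2))$. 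To see it is an isomorphism, base change once more to $\Oo_{\widehat{\Ee^{\ur}}}$: since $\mathcal{D}(T_1)$ is finitely presented over the Noetherian ring $\Oo_{\Ee}$, forming $\Hom$ commutes with this flat base change, and using the remark the left-hand side becomes $\Hom_{\Oo_{\widehat{\Ee^{\ur}}}}\big(\Oo_{\widehat{\Ee^{\ur}}}\otimes_{\ZZ_p}T_1,\Oo_{\widehat{\Ee^{\ur}}}\otimes_{\ZZ_p}T_2\big)\simeq\Oo_{\widehat{\Ee^{\ur}}}\otimes_{\ZZ_p}\Hom_{\ZZ_p}(T_1,T_2)$, which is exactly the base change of the right-hand side (by the remark applied to $\Hom_{\ZZ_p}(T_1,T_2)$); the map is the identity, so faithful flatness concludes.

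\textbf{Where the work is.} There is no deep obstacle here: everything rests on the remark preceding the statement together with the faithful flatness of $\Oo_{\widehat{\Ee^{\ur}}}$ over $\Oo_{\Ee}$. The only points requiring a little care are bookkeeping: checking that the several canonical identifications are mutually compatible, so that each base-changed map is genuinely the identity rather than merely some isomorphism, and, for the $\Hom$ statement, tracking the finite-presentation and flatness hypotheses — valid because $\ZZ_p$ and $\Oo_{\Ee}$ are Noetherian and $\Oo_{\widehat{\Ee^{\ur}}}$ is flat over both — needed for $\Hom$ to commute with the base changes $\ZZ_p\to\Oo_{\widehat{\Ee^{\ur}}}$ and $\Oo_{\Ee}\to\Oo_{\widehat{\Ee^{\ur}}}$.
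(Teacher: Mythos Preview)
Your proof is correct and follows essentially the same approach as the paper: both arguments use the comparison isomorphism $\mathcal{D}(T)\otimes_{\Oo_{\Ee}}\Oo_{\widehat{\Ee^{\ur}}}\simeq T\otimes_{\ZZ_p}\Oo_{\widehat{\Ee^{\ur}}}$ to identify the two sides after base change to $\Oo_{\widehat{\Ee^{\ur}}}$, and then descend. The only cosmetic difference is in the descent step: the paper takes $\mathscr{G}_{K_{\pi}}$-invariants (Galois descent), whereas you first write down the natural comparison map over $\Oo_{\Ee}$ and then invoke faithful flatness of $\Oo_{\Ee}\to\Oo_{\widehat{\Ee^{\ur}}}$; these are equivalent here and your version has the minor advantage of making the naturality of the resulting isomorphism explicit.
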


\begin{proof}
	We have the following $\mathscr{G}_{K_{\pi}}$-equivariant isomorphisms:	
	
	\begin{align*}
	\Oo_{\widehat{\Ee^{\ur}}}\otimes_{\Oo_{\Ee}}\mathcal{D}(T_1\otimes_{\ZZ_p} T_2) &\simeq T_1\otimes_{\ZZ_p} T_2	\otimes_{\ZZ_p} \Oo_{\widehat{\Ee^{\ur}}}\\
	&\simeq (T_1\otimes_{\ZZ_p}\Oo_{\widehat{\Ee^{\ur}}})\otimes_{\Oo_{\widehat{\Ee^{\ur}}}}(T_2\otimes_{\ZZ_p}\Oo_{\widehat{\Ee^{\ur}}})\\
	&\simeq(\mathcal{D}(T_1)\otimes_{\Oo_{\Ee}}\Oo_{\widehat{\Ee^{\ur}}})\otimes_{\Oo_{\widehat{\Ee^{\ur}}}}(\mathcal{D}(T_2)\otimes_{\Oo_{\Ee}}\Oo_{\widehat{\Ee^{\ur}}})\\
	&\simeq \mathcal{D}(T_1)\otimes_{\Oo_{\Ee}}\mathcal{D}(T_2)\otimes_{\Oo_{\Ee}} \Oo_{\widehat{\Ee^{\ur}}}.
	\end{align*}
	Recall that $\mathscr{G}_{K_{\pi}}$ acts trivially on $\mathcal{D}(T).$ Taking fixed points under  $\mathscr{G}_{K_{\pi}}$ on both sides of
	$	\Oo_{\widehat{\Ee^{\ur}}}\otimes_{\Oo_{\Ee}}\mathcal{D}(T_1\otimes_{\ZZ_p} T_2) \simeq \mathcal{D}(T_1)\otimes_{\Oo_{\Ee}}\mathcal{D}(T_2)\otimes_{\Oo_{\Ee}} \Oo_{\widehat{\Ee^{\ur}}}  $
	we have 
	\[  \mathcal{D}(T_1\otimes_{\ZZ_p} T_2)\simeq \mathcal{D}(T_1)\otimes_{\Oo_{\Ee}}\mathcal{D}(T_2). \]
	
	We also have the following $\mathscr{G}_{K_{\pi}}$-equivariant isomorphisms:	
	
	\begin{align*}
	\Oo_{\widehat{\Ee^{\ur}}}\otimes_{\Oo_{\Ee}}\mathcal{D}(\Hom_{\ZZ_p}(T_1, T_2))&\simeq\Oo_{\widehat{\Ee^{\ur}}}\otimes_{\ZZ_p} \Hom_{\ZZ_p}(T_1, T_2)\\
	&\simeq \Hom_{\ZZ_p}(T_1, T_2\otimes_{\ZZ_p}\Oo_{\widehat{\Ee^{\ur}}})\\
	&\simeq \Hom_{\Oo_{\widehat{\Ee^{\ur}}}}(T_1\otimes_{\ZZ_p}\Oo_{\widehat{\Ee^{\ur}}}, T_2\otimes_{\ZZ_p}\Oo_{\widehat{\Ee^{\ur}}})\\
	&\simeq \Hom_{\Oo_{\widehat{\Ee^{\ur}}}}(\mathcal{D}(T_1)\otimes_{\Oo_{\Ee}}\Oo_{\widehat{\Ee^{\ur}}}, \mathcal{D}(T_2)\otimes_{\Oo_{\Ee}}\Oo_{\widehat{\Ee^{\ur}}})\\
	&\simeq \Oo_{\widehat{\Ee^{\ur}}}\otimes_{\Oo_{\Ee}}\Hom_{\Oo_{\Ee}}(\mathcal{D}(T_1), \mathcal{D}(T_2)).
	\end{align*}
	Recall the Galois action on $\Hom_{\Oo_{\Ee}}(\mathcal{D}(T_1), \mathcal{D}(T_2))$ is defined as $g(f)=g\circ f\circ g^{-1}.$ Taking fixed points under  $\mathscr{G}_{K_{\pi}}$ on both sides of 
	\[\Oo_{\widehat{\Ee^{\ur}}}\otimes_{\Oo_{\Ee}}\mathcal{D}(\Hom_{\ZZ_p}(T_1, T_2))\simeq \Oo_{\widehat{\Ee^{\ur}}}\otimes_{\Oo_{\Ee}}\Hom_{\Oo_{\Ee}}(\mathcal{D}(T_1), \mathcal{D}(T_2)),\] and we have 
	\[  \mathcal{D}(\Hom_{\ZZ_p}(T_1, T_2))\simeq \Hom_{\Oo_{\Ee}}(\mathcal{D}(T_1), \mathcal{D}(T_2)). \]
\end{proof}

\section{Cohomological functoriality}\label{section functorial}

\begin{definition}\label{def Fi 1}
	For $i\in \ZZ_{\geq 0},$ we put 
	\begin{align*}
	\Ff^i\index{$\Ff^i$}\colon \Ind \mathbf{\Rep}_{\ZZ_p}(\mathscr{G}_K)&\to \Ab\\
	T&\mapsto \H^i(\Cc_{\varphi, \tau}(T)).
	\end{align*}
\end{definition}

To prove theorem \ref{thm main result}, we use the strategy of \cite[1.5.2]{Tav11}: we show that $\{\Ff^i\}_{i\in\ZZ_{\geq0}}$ forms a $\delta$-functor that coincides with invariants under $\mathscr{G}_K$ in degree $0$ ($\cf$ proposition \ref{prop delta functor Zp}), and we prove its effaceability ($\cf$ section \ref{section effaceability}). As in \textit{loc. cit.}, we firstly show the result for torsion representations, then pass to the limit. For torsion representations, it is necessary to work in a category with sufficiently many injectives: we have to embed $\Rep_{\ZZ_p,\tors}(\mathscr{G}_K)$ in its ind-category.

\begin{notation}
	For $T\in\mathbf{\Rep}_{\ZZ_p, \tors}(\mathscr{G}_K),$ we put $U:=\Ind_{\mathscr{G}_K}(T)\in \Ind \mathbf{\Rep}_{\ZZ_p, \tors}(\mathscr{G}_K).$\index{$U$}
\end{notation}

\begin{lemma}\label{lemm H0} If  $T\in\mathbf{\Rep}_{\ZZ_p}(\mathscr{G}_K),$ we have $\Ff^0(T)=\H^0(\mathscr{G}_K, T).$
\end{lemma}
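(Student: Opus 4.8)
The plan is to unwind the definition of the complex $\Cc_{\varphi,\tau}(T)$ in its lowest degrees and identify $\H^0$ with a suitable intersection of kernels, which I then recognize as $\H^0(\mathscr{G}_K, T)$ via the theory of $(\varphi,\tau)$-modules. Recall that the complex places $D = \mathcal{D}(T)$ in degree $-1$, $D \oplus D_{\tau,0}$ in degree $0$, and $D_{\tau,0}$ in degree $1$, with the degree $(-1)$ map $x \mapsto \big((\varphi-1)(x), (\tau_D-1)(x)\big)$. Since there is nothing in degree $-2$, we get immediately
\[
\H^0(\Cc_{\varphi,\tau}(T)) = \Ker\big(D \xrightarrow{x \mapsto ((\varphi-1)(x),(\tau_D-1)(x))} D \oplus D_{\tau,0}\big) = \{x \in D \,;\, \varphi(x) = x \text{ and } \tau_D(x) = x\}.
\]
So the whole lemma reduces to checking that $\mathcal{D}(T)^{\varphi=1, \tau_D=1} \simeq \H^0(\mathscr{G}_K, T) = T^{\mathscr{G}_K}$, canonically and functorially.

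First I would handle the $\varphi$-part. By Theorem \ref{thm equivalence of cats} (or directly by the construction of $\mathcal{D}$), taking $\varphi$-invariants of $\mathcal{O}_{\widehat{\Ee^{\ur}}} \otimes_{\Oo_{\Ee}} \mathcal{D}(T) \simeq \mathcal{O}_{\widehat{\Ee^{\ur}}} \otimes_{\ZZ_p} T$ recovers $T$ together with its $\mathscr{G}_{K_\pi}$-action; more precisely the standard computation gives $\mathcal{D}(T)^{\varphi=1} = \big((\mathcal{O}_{\widehat{\Ee^{\ur}}} \otimes_{\ZZ_p} T)^{\mathscr{G}_{K_\pi}}\big)^{\varphi=1} = (\mathcal{O}_{\widehat{\Ee^{\ur}}} \otimes_{\ZZ_p} T)^{\varphi=1, \mathscr{G}_{K_\pi}} = T^{\mathscr{G}_{K_\pi}}$, using $(\mathcal{O}_{\widehat{\Ee^{\ur}}})^{\varphi=1} = \ZZ_p$ (here the torsion case is handled first, reducing mod $p^r$, where $\mathcal{O}_{\widehat{\Ee^{\ur}}}/p^r$ has $\varphi$-invariants $\ZZ/p^r\ZZ$, and then one passes to the inverse limit). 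Now I must impose the extra condition $\tau_D(x) = x$ on an element $x \in T^{\mathscr{G}_{K_\pi}}$. Via Lemma \ref{lemm iso for D-tau}, $\mathcal{D}(T)_\tau \simeq (\W(C^\flat) \otimes_{\ZZ_p} T)^{\mathscr{G}_L}$ and the operator $\tau_D$ corresponds to $\tau \otimes \tau$ (the diagonal action of $\tau$). An element $x$ lying in $\mathcal{D}(T) \subset \mathcal{D}(T)_\tau$ and fixed by $\varphi$ corresponds, under the identifications above, to an element of $T^{\mathscr{G}_{K_\pi}} \subset (\W(C^\flat) \otimes_{\ZZ_p} T)^{\mathscr{G}_L}$ sitting inside $1 \otimes T$; the condition $\tau_D(x) = x$ then says exactly that $\tau$ acts trivially on $x \in T$, i.e. $x \in T^{\langle \mathscr{G}_{K_\pi}, \tau\rangle}$.

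Finally I would argue that $\langle \mathscr{G}_{K_\pi}, \tau \rangle$ is dense in $\mathscr{G}_K$ (equivalently, topologically generates it), so that $T^{\langle \mathscr{G}_{K_\pi}, \tau\rangle} = T^{\mathscr{G}_K} = \H^0(\mathscr{G}_K, T)$ by continuity of the action. This density is essentially Remark \ref{rem def tau}: one has the exact sequence $1 \to \mathscr{G}_L \to \mathscr{G}_K \xrightarrow{\chi_\infty} \ZZ_p \rtimes \Gamma \to 1$, and $\mathscr{G}_{K_\pi} = c^{-1}(0)$ maps onto $\Gamma$ while $\tau$ maps to a topological generator of the $\ZZ_p$-part $\overline{\langle\tau\rangle}$; so $\mathscr{G}_{K_\pi}$ together with $\tau$ generates a dense subgroup of $\mathscr{G}_K/\mathscr{G}_L = \Gal(L/K)$, and since $\mathscr{G}_L \subset \mathscr{G}_{K_\pi}$ already, it generates a dense subgroup of $\mathscr{G}_K$. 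Functoriality in $T$ is immediate because every identification used (the equivalence $\mathcal{D}$, Lemma \ref{lemm iso for D-tau}, the taking of invariants) is functorial. The main obstacle, such as it is, is making the passage through Lemma \ref{lemm iso for D-tau} precise enough to see that $\tau_D$-invariance of an element of $\mathcal{D}(T)$ that is already $\varphi$-invariant genuinely translates to $\tau$-invariance of the corresponding vector in $T$ — i.e. keeping careful track of the compatibility of the two isomorphisms $\mathcal{D}(T)^{\varphi=1} \simeq T^{\mathscr{G}_{K_\pi}}$ and $\mathcal{D}(T)_\tau \simeq (\W(C^\flat)\otimes T)^{\mathscr{G}_L}$ on the common subspace; everything else is formal.
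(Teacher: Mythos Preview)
Your approach is essentially the paper's one-line proof, just spelled out in more detail: identify $\Ff^0(T)=\mathcal{D}(T)^{\varphi=1,\tau_D=1}$, use $(\mathcal{O}_{\widehat{\mathcal{E}^{\ur}}})^{\varphi=1}=\ZZ_p$ to reduce to $T^{\mathscr{G}_{K_\pi}}$, then impose $\tau$-invariance and invoke $\overline{\langle\mathscr{G}_{K_\pi},\tau\rangle}=\mathscr{G}_K$. One small slip to fix: your degree convention is internally inconsistent---if $D$ sits in degree $-1$ as you write, then $\Ker\big(D\to D\oplus D_{\tau,0}\big)$ would be $\H^{-1}$, not $\H^0$; in practice the paper places $D$ in degree $0$ (as its own computations of $\H^0,\H^1,\H^2$ make clear), and that is what you actually compute.
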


\begin{proof}
	By definition we have:
	\[ \Ff^0(T)=(\mathcal{O}_{\widehat{\mathcal{E}^{\ur}}}\otimes_{\ZZ_p}T)^{\mathscr{G}_{K_{\pi}}, \varphi=1, \tau_D=1}=((\mathcal{O}_{\widehat{\mathcal{E}^{\ur}}})^{\varphi=1}\otimes_{\ZZ_p}T)^{\mathscr{G}_{K_{\pi}}, \tau_D=1}=T^{\la \mathscr{G}_{K_{\pi}}, \tau\ra}=T^{\mathscr{G}_K}=\H^0(\mathscr{G}_K, T).\]
\end{proof}

\begin{lemm}\label{lemmcoho0 Zp tor}
	If $T\in\Rep_{\ZZ_p, \tors}(\mathscr{G}_K),$ then
	\begin{align*}
	\H^i(\mathscr{G}_L,\W(C^{\flat})\otimes_{\ZZ_p}T)&=0\\
	\H^i(\mathscr{G}_{K_\pi},\W(C^{\flat})\otimes_{\ZZ_p}T)&=0\\
	\H^i(\mathscr{G}_{K_\zeta},\W(C^{\flat})\otimes_{\ZZ_p}T)&=0
	\end{align*}
	for all $i>0.$
\end{lemm}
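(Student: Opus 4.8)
The plan is to prove the vanishing $\H^i(\mathscr{G}_L,\W(C^\flat)\otimes_{\ZZ_p}T)=0$ (and similarly for $\mathscr{G}_{K_\pi}$, $\mathscr{G}_{K_\zeta}$) for $i>0$ and all torsion $T$ by \emph{dévissage on the exponent} of $T$. First I would reduce to the case $pT=0$: since $T$ is killed by some power of $p$, I induct on $r$ with $p^rT=0$, using the short exact sequence $0\to pT\to T\to T/pT\to 0$ of $\ZZ_p[\![\mathscr{G}_K]\!]$-modules. Tensoring with the flat $\ZZ_p$-module $\W(C^\flat)$ keeps the sequence exact, and the associated long exact sequence in $\mathscr{G}_L$-cohomology, together with the inductive hypothesis applied to $pT$ and $T/pT$ (both killed by a smaller power of $p$), reduces everything to the case $pT=0$, i.e.\ $T\in\Rep_{\FF_p}(\mathscr{G}_K)$.

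In the case $pT=0$ we have $\W(C^\flat)\otimes_{\ZZ_p}T=C^\flat\otimes_{\FF_p}T$ as $\mathscr{G}_L$-modules (the $p$-adic topology induces the discrete topology on $C^\flat$, cf.\ the remark after the definition of the weak topology), so it suffices to prove $\H^i(\mathscr{G}_L,C^\flat\otimes_{\FF_p}T)=0$ for $i>0$. Here I would invoke the theory of fields of norms: by Fontaine--Wintenberger, $(C^\flat)^{\mathscr{G}_L}=F_\tau$ and, more importantly, $C^\flat$ is the completion of the separable (indeed algebraic) closure of the field of norms in a way compatible with the Galois action, with $\mathscr{G}_L$ acting as the absolute Galois group of a complete discretely (or appropriately) valued field of characteristic $p$ — or one reduces via the perfection. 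The key input is the analogue of the Ax--Sen--Tate / almost purity computation: $\H^i(\mathscr{G}_L,C^\flat)=0$ for $i>0$, and more generally for $C^\flat\otimes_{\FF_p}T$ by picking a basis of $T$ over $\FF_p$ after passing to a finite extension trivializing the action, then using a Hochschild--Serre / inflation-restriction argument together with the normal basis theorem in characteristic $p$ (which makes induced modules cohomologically trivial, as in Lemma \ref{lemmcoho0nonGalois Zp}). Concretely: choose a finite Galois extension $L'/L$ (inside $\Kbar$) such that $\mathscr{G}_{L'}$ acts trivially on $T$; then $C^\flat\otimes_{\FF_p}T\simeq (C^\flat)^{\dim T}$ as $\mathscr{G}_{L'}$-modules, so $\H^i(\mathscr{G}_{L'},C^\flat\otimes T)=\H^i(\mathscr{G}_{L'},C^\flat)^{\dim T}$, and one knows $\H^i(\mathscr{G}_{L'},C^\flat)=0$ for $i>0$; the Hochschild--Serre spectral sequence for $1\to\mathscr{G}_{L'}\to\mathscr{G}_L\to\Gal(L'/L)\to 1$ then reduces to group cohomology of the finite group $\Gal(L'/L)$ with coefficients in $(C^\flat\otimes T)^{\mathscr{G}_{L'}}$, which vanishes in positive degrees because $C^\flat$ over its subfield fixed by a finite group is, by the normal basis theorem applied at each finite layer and passing to the limit, an induced (hence cohomologically trivial) module. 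Exactly the same argument works verbatim with $\mathscr{G}_{K_\pi}$ and $\mathscr{G}_{K_\zeta}$ in place of $\mathscr{G}_L$, since all three are absolute Galois groups of deeply ramified fields whose tilts behave the same way.

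The main obstacle I expect is making the characteristic-$p$ vanishing $\H^i(\mathscr{G}_L, C^\flat)=0$ precise and citable: one must check that $C^\flat$ with its $\mathscr{G}_L$-action really is (the completion of) a separable closure of the field of norms $F_\tau$ — so that Galois cohomology of $\mathscr{G}_L$ with coefficients in it is computed by the usual additive Hilbert 90 / normal basis machinery — rather than something more exotic, and to control the completion step (passing from $F_0^{\sep}$, dense in $C^\flat$, to $C^\flat$ itself) so that cohomology classes can be approximated and killed. This is where the hypothesis that $L/K$ (resp.\ $K_\pi/K$, $K_\zeta/K$) is strictly arithmetically profinite, hence deeply ramified, is used: it guarantees the field-of-norms identification and the vanishing of the relevant higher cohomology. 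Once that is in hand, the dévissage and Hochschild--Serre steps are routine.
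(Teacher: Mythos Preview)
Your d\'evissage on the exponent of $T$ matches the paper's argument exactly. The difference is in the base case $pT=0$.

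The paper handles this case in one stroke: since the completions $\widehat{L}$, $\widehat{K_\pi}$, $\widehat{K_\zeta}$ are perfectoid, the cohomology $\H^i(\mathscr{G}_M,\mathcal{O}_{\overline{K}}/p\mathcal{O}_{\overline{K}})$ is almost zero for $i>0$, hence so is $\H^i(\mathscr{G}_M,\mathcal{O}_{C^\flat}\otimes_{\FF_p}T)$ for any finite $T$, and inverting a pseudo-uniformizer gives the honest vanishing of $\H^i(\mathscr{G}_M,C^\flat\otimes_{\FF_p}T)$. No Hochschild--Serre step or passage to a trivializing subgroup is needed: the almost-vanishing input already handles arbitrary finite coefficients.

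Your Hochschild--Serre reduction (trivialize $T$ over a finite $L'/L$, then use the normal basis theorem for $(C^\flat)^{\mathscr{G}_{L'}}/F_\tau$) is correct but an unnecessary detour. More importantly, the alternative route you sketch for the key vanishing $\H^i(\mathscr{G}_{L'},C^\flat)=0$ --- via the field-of-norms identification plus ``additive Hilbert 90 / normal basis machinery'' --- has precisely the gap you yourself flag: those tools yield $\H^i(G_F,\overline{F})=0$ for a characteristic-$p$ field $F$ with $\overline{F}$ carrying the \emph{discrete} topology, whereas $C^\flat$ is the valuation-topological \emph{completion} of the algebraic closure, and bridging that gap is exactly where the perfectoid input is needed, not merely the field-of-norms isomorphism of Galois groups. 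The paper resolves this by invoking the perfectoid almost vanishing directly rather than attempting a characteristic-$p$ approximation argument. You do mention almost purity as a possible source; if you commit to it as the input, your argument collapses to the paper's (with the Hochschild--Serre step removed as superfluous).
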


\begin{proof}
	Recall that if $M$ is a subextension of $\overline{K}/K$ whose completion is perfectoid (this is the case for $K_\pi,$ $K_\zeta$ and $L$) and $i\in\ZZ_{>0},$ then $\H^i(\mathscr{G}_M,\Oo_{C^{\flat}}\otimes_{\FF_p}T)$ is almost zero (this follows from the almost vanishing of $\H^i(\mathscr{G}_M,\mathcal{O}_{\overline{K}}/p\mathcal{O}_{\overline{K}})$), so that $\H^i(\mathscr{G}_M,C^{\flat}\otimes_{\FF_p}T)=0.$ Then we proceed by induction on $r\in\ZZ_{>0}$ such that $p^rT=0:$ the exact sequence
	$$0\to pT\to T\to T/pT\to0$$
	induces the exact sequence
	$$\H^i\big(\mathscr{G}_M,\W(C^{\flat})\otimes_{\ZZ_p}pT\big)\to \H^i\big(\mathscr{G}_M,\W(C^{\flat})\otimes_{\ZZ_p}T\big)\to \H^i\big(\mathscr{G}_M,\W(C^{\flat})\otimes_{\ZZ_p}(T/pT)\big),$$
	so that $\H^i\big(\mathscr{G}_M,\W(C^{\flat})\otimes_{\ZZ_p}T\big)=0$ since $\H^i\big(\mathscr{G}_M,\W(C^{\flat})\otimes_{\ZZ_p}pT\big)=0$ (because $pT$ is killed by $p^{r-1}$) and $\H^i\big(\mathscr{G}_M,\W(C^{\flat})\otimes_{\ZZ_p}(T/pT)\big)=0$ (because $T/pT\in\Rep_{\FF_p}(\mathscr{G}_K)$).
\end{proof}

\begin{corollary}\label{lemmcoho0 Zp}
	If $T\in\Rep_{\ZZ_p}(\mathscr{G}_K),$ then
	\begin{align*}
	\H^i(\mathscr{G}_L,\W(C^{\flat})\otimes_{\ZZ_p}T)&=0\\
	\H^i(\mathscr{G}_{K_\pi},\W(C^{\flat})\otimes_{\ZZ_p}T)&=0\\
	\H^i(\mathscr{G}_{K_\zeta},\W(C^{\flat})\otimes_{\ZZ_p}T)&=0
	\end{align*}
	for all $i>0.$
\end{corollary}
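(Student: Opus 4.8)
The statement is the passage from torsion coefficients (Lemma~\ref{lemmcoho0 Zp tor}) to arbitrary finitely generated $\ZZ_p$-coefficients, and the plan is to carry this out by a $p$-adic limit argument. Write $M$ for any of $L$, $K_\pi$, $K_\zeta$, and put $N := \W(C^\flat)\otimes_{\ZZ_p}T$ and $N_n := \W(C^\flat)\otimes_{\ZZ_p}(T/p^nT) = N/p^nN$. Since $T$ is finitely generated over $\ZZ_p$ and $\W(C^\flat)$ is $p$-adically complete and separated (being the Witt ring of a perfect $\FF_p$-algebra), $N$ is $p$-adically complete and separated and $N \cong \varprojlim_n N_n$ as a topological $\mathscr{G}_M$-module, the transition maps being the natural surjections with kernels $p^nN/p^{n+1}N \cong \W(C^\flat)\otimes_{\ZZ_p}(p^nT/p^{n+1}T)$.

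The first step is to apply the standard $\varprojlim$--$\varprojlim^1$ exact sequence in continuous cohomology to the system $(N_n)_n$, whose transition maps are surjective:
\[ 0 \longrightarrow {\varprojlim_n}^{1} \H^{i-1}(\mathscr{G}_M, N_n) \longrightarrow \H^i(\mathscr{G}_M, N) \longrightarrow \varprojlim_n \H^i(\mathscr{G}_M, N_n) \longrightarrow 0. \]
For $i \geq 2$ both outer terms vanish by Lemma~\ref{lemmcoho0 Zp tor} (applied in degrees $i$ and $i-1$, both $>0$), so $\H^i(\mathscr{G}_M, N) = 0$ immediately.

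It then remains to treat $i = 1$. The term $\varprojlim_n \H^1(\mathscr{G}_M, N_n)$ vanishes, again by Lemma~\ref{lemmcoho0 Zp tor}, so one is reduced to showing ${\varprojlim_n}^{1} \H^0(\mathscr{G}_M, N_n) = {\varprojlim_n}^{1} N_n^{\mathscr{G}_M} = 0$. For this it suffices, by Mittag-Leffler, that the transition maps $N_{n+1}^{\mathscr{G}_M} \to N_n^{\mathscr{G}_M}$ be surjective. Taking continuous $\mathscr{G}_M$-cohomology of the short exact sequence $0 \to \W(C^\flat)\otimes_{\ZZ_p}(p^nT/p^{n+1}T) \to N_{n+1} \to N_n \to 0$ yields an exact sequence
\[ N_{n+1}^{\mathscr{G}_M} \longrightarrow N_n^{\mathscr{G}_M} \longrightarrow \H^1\!\big(\mathscr{G}_M, \W(C^\flat)\otimes_{\ZZ_p}(p^nT/p^{n+1}T)\big), \]
and the rightmost group is zero because $p^nT/p^{n+1}T$ is an $\FF_p$-representation, so Lemma~\ref{lemmcoho0 Zp tor} applies. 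Hence the transition maps on invariants are surjective, $\varprojlim^1$ vanishes, and $\H^1(\mathscr{G}_M, N) = 0$.

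The conceptual content is entirely in Lemma~\ref{lemmcoho0 Zp tor}; what needs care here is the topological bookkeeping, and I expect that to be the only real obstacle: one must make sure the inverse system $(N_n)$ and the short exact sequences above are well behaved enough for the $\varprojlim^1$ sequence and the long exact cohomology sequences to be valid in the continuous setting (e.g.\ that those short exact sequences admit continuous set-theoretic sections, which they do, via the evident continuous sections of the Witt-vector truncation maps). A variant is to first reduce to $T$ torsion-free via the exact sequence $0 \to T_{\tors} \to T \to T/T_{\tors} \to 0$ and Lemma~\ref{lemmcoho0 Zp tor}, but this does not actually shorten the limiting argument.
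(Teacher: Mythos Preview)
Your proof is correct and follows essentially the same route as the paper: both use the $\varprojlim$--$\R^1\varprojlim$ exact sequence (the paper cites \cite[Theorem 2.3.4]{NSW13}) together with Lemma~\ref{lemmcoho0 Zp tor}, and for $i=1$ both invoke Mittag--Leffler for the system of invariants. Your version is in fact slightly more explicit, since the paper merely asserts the Mittag--Leffler property of $\{\H^0(\mathscr{G}_M,\W_n(C^\flat)\otimes_{\ZZ_p}T)\}_n$ without spelling out the argument via the short exact sequence that you give.
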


\begin{proof}
	By \cite[Theorem 2.3.4]{NSW13}, we have the exact sequence
	$$0\to \R^1\pLim{n}\H^{i-1}(\mathscr{G}_L,\W_n(C^{\flat})\otimes_{\ZZ_p}T)\to
	\H^i(\mathscr{G}_L,\W(C^{\flat})\otimes_{\ZZ_p}T)\to\pLim{n} \H^i(\mathscr{G}_L,\W_n(C^{\flat})\otimes_{\ZZ_p}T)\to0.$$
	By lemma \ref{lemmcoho0 Zp tor} we have 
	\[\H^{i-1}(\mathscr{G}_L,\W_n(C^{\flat})\otimes_{\ZZ_p}T)=0, \text{ if } i>1,\]
	while when $i=1$ we know that $\{\H^0(\mathscr{G}_L,\W_n(C^{\flat})\otimes_{\ZZ_p}T)\}_n$ has the Mittag-Leffler property. This implies \[\H^i(\mathscr{G}_L,\W_n(C^{\flat})\otimes_{\ZZ_p}T)=0, \text{ if } i>0 \] and 
	\[\R^1\pLim{n}\H^{i-1}(\mathscr{G}_L,\W_n(C^{\flat})\otimes_{\ZZ_p}T)=0, \text{ if } i>0,\]
	hence 
	\[\H^i(\mathscr{G}_L,\W(C^{\flat})\otimes_{\ZZ_p}T)=0, \text{ if } i>0.\] The proofs of the other statements are similar.
\end{proof}

\begin{notation}
	Let $T\in\Rep_{\ZZ_p}(\mathscr{G}_K),$ we will put $D_\tau=\mathcal{D}(T)_\tau$ in the rest of this section to keep light notations.
\end{notation}

\begin{lemma}\label{lemm coho procyclic}
	Let $H$ be a profinite group homeomorphic to $\ZZ_p$ with $\sigma$ a topological generator, and $M$ be a $p$-torsion $H$-module. Then the continuous cohomology $\H^i(H,M)$ is computed by the complex
	$$[M\xrightarrow{\sigma-1}M]$$
	in which the first term is in degree $0.$
\end{lemma}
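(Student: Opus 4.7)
The plan is to recognize this as the standard cohomology computation for a procyclic pro-$p$ group. My approach is to exhibit an explicit length-one projective resolution of the trivial module $\ZZ_p$ in the category of profinite $\ZZ_p[\![H]\!]$-modules. Since $H\cong \ZZ_p$ is topologically generated by $\sigma$, the completed group algebra $\ZZ_p[\![H]\!]$ is isomorphic to the power series ring $\ZZ_p[\![T]\!]$ via $T\leftrightarrow \sigma-1$, and multiplication by $\sigma-1$ is injective. This produces the short exact sequence
$$0\to \ZZ_p[\![H]\!] \xrightarrow{\sigma-1}\ZZ_p[\![H]\!]\xrightarrow{\mathrm{aug}}\ZZ_p\to 0,$$
a projective resolution of the trivial module. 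Applying $\Hom_{\mathrm{cts}}(-,M)$ yields exactly the claimed two-term complex, whose cohomology computes $\mathrm{Ext}^\bullet_{\ZZ_p[\![H]\!]}(\ZZ_p,M)$; this agrees with the continuous group cohomology $\H^\bullet(H,M)$, the identification being standard for discrete, in particular $p$-torsion, coefficient modules.

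Alternatively, one may compute directly. In degree zero, $\H^0(H,M)=M^H$ equals $\ker(\sigma-1)$ because $\sigma$ topologically generates $H$ and $M$ is discrete. In degree one, a continuous cocycle $c\colon H\to M$ is entirely determined by $c(\sigma)$ via the cocycle relation $c(\sigma^n)=(1+\sigma+\cdots+\sigma^{n-1})c(\sigma)$ on $\sigma^{\ZZ}$ together with continuity on $H$. Conversely, given $m\in M$, the same formula defines a cocycle on $\sigma^{\ZZ}$ that one must show extends continuously to $H$; coboundaries then correspond exactly to $(\sigma-1)M$. Vanishing in degrees $\geq 2$ follows from the fact that $\mathrm{cd}_p(\ZZ_p)=1$, or equivalently from the projective resolution above.

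The main obstacle in the direct approach is precisely the continuous extension for $\H^1$. This is where the $p$-torsion hypothesis is essential: since $M$ is discrete, the stabilizer of $m$ is open, so $\sigma^{p^k}$ fixes $m$ for some $k\geq 0$; and if $p^r$ annihilates $M$, one computes
$$c(\sigma^{p^{k+r}})=p^r\cdot(1+\sigma+\cdots+\sigma^{p^k-1})m=0,$$
so $c$ factors through a finite quotient of $H$ and extends uniquely and continuously. Once this is established, all remaining verifications are formal.
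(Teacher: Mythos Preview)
Your proposal is correct. Both the projective-resolution argument via $\ZZ_p[\![H]\!]\simeq\ZZ_p[\![T]\!]$ and the direct cocycle computation are standard and complete; the $p$-torsion hypothesis is used exactly where you indicate, to force a would-be cocycle to factor through a finite quotient of $H$.

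The paper itself does not give a proof at all: it simply cites \cite[Proposition 1.6.13]{NSW13}. So you have supplied considerably more than the paper does. The reference in Neukirch--Schmidt--Wingberg proves the result essentially by your second (direct) method, writing $M$ as a filtered colimit of its $p^n$-torsion submodules and using the explicit cocycle description. Your first approach via the length-one free resolution of $\ZZ_p$ over the Iwasawa algebra is cleaner conceptually and immediately explains the vanishing in degrees $\geq 2$ without invoking $\mathrm{cd}_p(\ZZ_p)=1$ as a separate fact; the trade-off is that one must know the identification of continuous cohomology with $\Ext$ over $\ZZ_p[\![H]\!]$ for discrete modules, which you flag as standard (it is, e.g.\ NSW \S5.2). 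Either route is acceptable here.

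One small remark: in your direct argument you write ``if $p^r$ annihilates $M$'', but in fact you only need that the single element $m=c(\sigma)$ is $p$-power torsion, which holds automatically since $M$ is $p$-torsion. This matters if one reads ``$p$-torsion'' as ``every element killed by some power of $p$'' rather than ``killed by a fixed $p^r$''.
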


\begin{proof}
	$\cf$ \cite[Proposition 1.6.13]{NSW13}.
\end{proof}

\begin{lemm}\label{lemm surjective gamma tau}
	If $n\in\ZZ_{>0},$ the operators $\gamma-1$ and $\tau_D^n-1$ are surjective on $D_\tau.$
\end{lemm}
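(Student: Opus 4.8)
The plan is to read each of the two cokernels as a first continuous cohomology group of a procyclic group and to kill it with Corollary~\ref{lemmcoho0 Zp}. By Lemma~\ref{lemm iso for D-tau} (applied to the restriction of $T$ to $\mathscr{G}_{K_\pi}$) we identify $D_\tau$ with $\big(\W(C^{\flat})\otimes_{\ZZ_p}T\big)^{\mathscr{G}_L}$, carrying the semilinear action of $\Gal(L/K)=\mathscr{G}_K/\mathscr{G}_L$ induced by the diagonal action $g\mapsto g\otimes g$; under this identification $\tau_D$ is the action of $\tau$, and $\gamma$ acts as a topological generator of $\overline{\la\gamma\ra}=\Gal(L/K_\pi)$ (Remark~\ref{rem def tau} and the diagram following it). For any intermediate field $K\subseteq M\subseteq L$ with $L/M$ Galois we then have $\mathscr{G}_L\trianglelefteq\mathscr{G}_M$, the quotient $\Gal(L/M)$ acts on $D_\tau=\big(\W(C^{\flat})\otimes_{\ZZ_p}T\big)^{\mathscr{G}_L}$, and since $\H^q\big(\mathscr{G}_L,\W(C^{\flat})\otimes_{\ZZ_p}T\big)=0$ for $q>0$ by Corollary~\ref{lemmcoho0 Zp}, the inflation-restriction sequence gives $\H^1\big(\Gal(L/M),D_\tau\big)\simeq\H^1\big(\mathscr{G}_M,\W(C^{\flat})\otimes_{\ZZ_p}T\big)$. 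When $\Gal(L/M)$ is topologically cyclic, generated by some $\sigma$ acting on $D_\tau$, its $\H^1$ with coefficients in $D_\tau$ equals the cokernel of $\sigma-1$: for torsion $T$ this is Lemma~\ref{lemm coho procyclic}, and in general one runs that computation modulo $p^m$ and passes to the limit (legitimate, as $D_\tau$ is finitely generated over the $p$-adically complete ring $\Oo_{\Ee_{\tau}}$).

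For the surjectivity of $\gamma-1$, take $M=K_\pi$, so that $\Gal(L/K_\pi)=\overline{\la\gamma\ra}$ is generated by $\gamma$. The discussion above gives that the cokernel of $\gamma-1$ on $D_\tau$ is isomorphic to $\H^1\big(\overline{\la\gamma\ra},D_\tau\big)\simeq\H^1\big(\mathscr{G}_{K_\pi},\W(C^{\flat})\otimes_{\ZZ_p}T\big)$, which is $0$ by Corollary~\ref{lemmcoho0 Zp}. (If $\overline{\la\gamma\ra}$ is not literally $\ZZ_p$ but of the form $\Delta\times\Gamma_1$ with $\Delta$ finite of order prime to $p$ and $\Gamma_1\simeq\ZZ_p$, one still deduces surjectivity by decomposing $D_\tau$ into $\Delta$-isotypic components: $\gamma-1$ acts invertibly on each nontrivial component, because $\zeta-1\in\ZZ_p^{\times}$ for a nontrivial prime-to-$p$ root of unity $\zeta$ while the relevant part of $\gamma_1-1$ is topologically nilpotent, and on the trivial component it is $\gamma_1-1$ on $D_\tau^{\Delta}$, of cokernel $\H^1(\Gamma_1,D_\tau^{\Delta})=\H^1(\overline{\la\gamma\ra},D_\tau)=0$.)

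For the surjectivity of $\tau_D^{n}-1$, write $n=p^{a}u$ with $a\in\ZZ_{\geq0}$, $u\in\ZZ_p^{\times}$. If $a=0$, then $\tau^{n}$ again topologically generates $\Gal(L/K_\zeta)=\overline{\la\tau\ra}$, so taking $M=K_\zeta$ the cokernel of $\tau_D^{n}-1$ is isomorphic to $\H^1\big(\overline{\la\tau\ra},D_\tau\big)\simeq\H^1\big(\mathscr{G}_{K_\zeta},\W(C^{\flat})\otimes_{\ZZ_p}T\big)=0$. If $a>0$, put $M_n:=L^{\overline{\la\tau^{n}\ra}}$; since $\overline{\la\tau^{n}\ra}=\overline{\la\tau^{p^{a}}\ra}$ has index $p^{a}$ in $\overline{\la\tau\ra}\simeq\ZZ_p$, the extension $M_n/K_\zeta$ is finite, hence $\widehat{M_n}$ is perfectoid (a finite extension of the perfectoid field $\widehat{K_\zeta}$), and the argument of Lemma~\ref{lemmcoho0 Zp tor} together with the limit argument of Corollary~\ref{lemmcoho0 Zp} gives $\H^q\big(\mathscr{G}_{M_n},\W(C^{\flat})\otimes_{\ZZ_p}T\big)=0$ for $q>0$. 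As $\Gal(L/M_n)=\overline{\la\tau^{n}\ra}\simeq\ZZ_p$ is topologically generated by $\tau^{n}$, whose action on $D_\tau$ is $\tau_D^{n}$, the cokernel of $\tau_D^{n}-1$ is isomorphic to $\H^1\big(\overline{\la\tau^{n}\ra},D_\tau\big)\simeq\H^1\big(\mathscr{G}_{M_n},\W(C^{\flat})\otimes_{\ZZ_p}T\big)=0$, so $\tau_D^{n}-1$ is surjective.

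I expect the substance of the argument to be entirely in the input Corollary~\ref{lemmcoho0 Zp}, ultimately the almost-purity vanishing over perfectoid fields; beyond that the only non-formal point is that $\widehat{M_n}$ is perfectoid, which is immediate. The mildly annoying issue is that $\Gal(L/M)$ need not be exactly $\ZZ_p$ in the $\gamma$-case (it may carry prime-to-$p$ torsion), so Lemma~\ref{lemm coho procyclic} has to be supplemented by the elementary semisimplicity remark above in order to pass from vanishing of $\H^1$ to surjectivity of $\gamma-1$.
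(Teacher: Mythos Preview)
Your argument is correct and follows the same route as the paper: identify $D_\tau$ with $(\W(C^\flat)\otimes_{\ZZ_p}T)^{\mathscr{G}_L}$, use the vanishing of $\H^{>0}(\mathscr{G}_L,-)$ from Corollary~\ref{lemmcoho0 Zp} to pass via inflation--restriction to $\H^1$ of the relevant procyclic quotient, and read the cokernel off Lemma~\ref{lemm coho procyclic}. You are in fact more scrupulous than the paper on two points the paper leaves implicit --- the possible prime-to-$p$ torsion in $\overline{\la\gamma\ra}$ (handled by your $\Delta$-isotypic decomposition) and the extension of Lemma~\ref{lemm coho procyclic} beyond $p$-torsion coefficients --- and you correctly spell out why $\widehat{M_n}$ is perfectoid, whereas the paper simply asserts $\H^1(\mathscr{G}_{K_{\zeta,n}},\W(C^\flat)\otimes T)=0$.
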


\begin{proof}
	As $\H^i(\mathscr{G}_L,\W(C^{\flat})\otimes_{\ZZ_p}T)=0$ for all $i>0,$ together with lemma \ref{lemm coho procyclic} we have
	$$0=\H^1(\mathscr{G}_{K_\pi},\W(C^{\flat})\otimes_{\ZZ_p}T)=\H^1(\Gal(L/K_\pi),D_\tau)=\Coker\big(D_\tau\xrightarrow{\gamma-1}D_\tau\big)$$
	since $\Gal(L/K_\pi)=\overline{\langle\gamma\rangle}.$ Similarly, put $K_{\zeta,n}=L^{\overline{\la \tau^n \ra}}:$ this is a finite extension of $K_\zeta$ and we have
	$$0=\H^1(\mathscr{G}_{K_{\zeta,n}},\W(C^{\flat})\otimes_{\ZZ_p}T)=\H^1(\Gal(L/K_{\zeta,n}),D_\tau)=\Coker\big(D_\tau\xrightarrow{\tau_D^n-1}D_\tau\big)$$
	since $\Gal(L/K_{\zeta,n})=\overline{\langle\tau^n\rangle}.$
\end{proof}

\begin{lemm}\label{lemm amazing but easy equation}
	We have $(\delta-\gamma\otimes1)\circ(\tau_D-1)=\big(1-\tau_D^{\chi(\gamma)}\big)\circ(\gamma\otimes1-1)$ on $D_\tau.$
\end{lemm}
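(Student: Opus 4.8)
The plan is to prove the identity $(\delta-\gamma\otimes 1)\circ(\tau_D-1)=(1-\tau_D^{\chi(\gamma)})\circ(\gamma\otimes 1-1)$ on $D_\tau$ by a direct algebraic manipulation, exploiting the commutation relation between $\gamma$ and $\tau_D$. First I would recall the key structural fact: under the isomorphism of Lemma \ref{lemm iso for D-tau}, the operator $\tau_D$ corresponds to $\tau\otimes\tau$ and $\gamma\otimes 1$ to $\gamma\otimes\gamma$ acting on $\mathcal{D}(T)_\tau\subset\W(C^\flat)\otimes_{\ZZ_p}T$, and on $\W(C^\flat)$ we have $\gamma\tau\gamma^{-1}=\tau^{\chi(\gamma)}$ (Remark \ref{rem def tau}). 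This gives the fundamental twisted-commutation relation
\[(\gamma\otimes 1)\circ\tau_D=\tau_D^{\chi(\gamma)}\circ(\gamma\otimes 1)\]
as operators on $D_\tau$; equivalently, this is Remark \ref{remark on defi varphi-tau module gamma tau relation}(2). Since $\delta=1+\tau_D+\cdots+\tau_D^{\chi(\gamma)-1}=(\tau_D^{\chi(\gamma)}-1)(\tau_D-1)^{-1}$ commutes with $\tau_D$, the proof reduces to a polynomial identity in the (commuting) operators $\tau_D$, $\tau_D^{\chi(\gamma)}$ together with $\gamma\otimes 1$ subject only to the above relation.

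The computation itself I would carry out as follows. Expand the left-hand side:
\[(\delta-\gamma\otimes 1)(\tau_D-1)=\delta(\tau_D-1)-(\gamma\otimes 1)(\tau_D-1)=(\tau_D^{\chi(\gamma)}-1)-(\gamma\otimes 1)\tau_D+(\gamma\otimes 1),\]
using $\delta(\tau_D-1)=\tau_D^{\chi(\gamma)}-1$. Now apply the twisted-commutation relation $(\gamma\otimes 1)\tau_D=\tau_D^{\chi(\gamma)}(\gamma\otimes 1)$ to the middle term, obtaining
\[(\tau_D^{\chi(\gamma)}-1)-\tau_D^{\chi(\gamma)}(\gamma\otimes 1)+(\gamma\otimes 1).\]
Expand the right-hand side:
\[(1-\tau_D^{\chi(\gamma)})(\gamma\otimes 1-1)=(\gamma\otimes 1)-1-\tau_D^{\chi(\gamma)}(\gamma\otimes 1)+\tau_D^{\chi(\gamma)}.\]
These two expressions are visibly equal, term by term, which completes the proof.

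The only subtlety — and it is minor — is justifying that the twisted-commutation relation holds on all of $D_\tau$ (not merely for $g$ with $\chi(g)\in\ZZ_{>0}$, which is the form in which the $(\varphi,\tau)$-module axiom is stated) and with the exponent $\chi(\gamma)$ which may not be a positive integer. I would handle this exactly as in the proof of Lemma \ref{Replace g witi gamma}: choose $\gamma$ as in Lemma \ref{lemm gamma with finite caracter} so that $\chi(\gamma)\in\ZZ_{>0}$, in which case the relation $(\gamma\otimes 1)\tau_D=\tau_D^{\chi(\gamma)}(\gamma\otimes 1)$ is literally Remark \ref{remark on defi varphi-tau module gamma tau relation}(2); alternatively, use the density of $\ZZ_{\geq 0}$ in $\ZZ_p$ and the continuity (topological nilpotence) of $\tau_D-1$ to pass from integer exponents to $\chi(\gamma)\in\ZZ_p$. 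I do not expect any real obstacle here; the lemma is a purely formal consequence of the defining relations, and its role is evidently to serve as an algebraic input for verifying that the Tavares Ribeiro complex (or the comparison maps between complexes) is well-defined.
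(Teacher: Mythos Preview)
Your proof is correct and follows essentially the same route as the paper: both expand $(\delta-\gamma\otimes1)(\tau_D-1)$ using $\delta(\tau_D-1)=\tau_D^{\chi(\gamma)}-1$, apply the commutation relation $(\gamma\otimes1)\tau_D=\tau_D^{\chi(\gamma)}(\gamma\otimes1)$ from Remark \ref{remark on defi varphi-tau module gamma tau relation}(2), and regroup. Your extra discussion of why $\chi(\gamma)\in\ZZ_{>0}$ suffices is a welcome clarification but not a different approach.
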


\begin{proof}
	As $(\gamma\otimes1)\circ\tau_D=\tau_D^{\chi(\gamma)}\circ(\gamma\otimes1)$, we have
	\begin{align*}
	(\delta-\gamma\otimes1)\circ(\tau_D-1) &= \delta\circ(\tau_D-1)-\gamma\circ\tau_D+\gamma\otimes1\\
	&= \tau_D^{\chi(\gamma)}-1-\tau_D^{\chi(\gamma)}\circ(\gamma\otimes1)+\gamma\otimes1\\
	&= \big(1-\tau_D^{\chi(\gamma)}\big)\circ(\gamma\otimes1-1).
	\end{align*}
\end{proof}

\begin{prop}\label{propdeltagamma Zp}
	The map $\delta-\gamma\otimes1$ is surjective on $D_\tau.$
\end{prop}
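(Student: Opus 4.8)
The plan is to read off the proposition directly from Lemmas \ref{lemm surjective gamma tau} and \ref{lemm amazing but easy equation}, using the factorization identity provided by the latter. The key observation is that surjectivity of $\delta-\gamma\otimes1$ need not be established \og on the nose\fg: it suffices to exhibit a composite $(\delta-\gamma\otimes1)\circ(\text{something})$ that is already surjective, since the image of a composite is contained in the image of the outer map.

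First I would recall that, by our choice of generator (Lemma \ref{lemm gamma with finite caracter}), we may assume $\chi(\gamma)\in\ZZ_{>0}$. Then Lemma \ref{lemm surjective gamma tau} applies with $n=\chi(\gamma)$ and also with the operator $\gamma-1$: both $\tau_D^{\chi(\gamma)}-1$ (hence $1-\tau_D^{\chi(\gamma)}$) and $\gamma\otimes1-1$ are surjective on $D_\tau$. Next I would invoke Lemma \ref{lemm amazing but easy equation}, which gives on $D_\tau$ the identity
$$(\delta-\gamma\otimes1)\circ(\tau_D-1)=\big(1-\tau_D^{\chi(\gamma)}\big)\circ(\gamma\otimes1-1).$$
Since $\gamma\otimes1-1$ is surjective, the image of the right-hand side equals $\im\big(1-\tau_D^{\chi(\gamma)}\big)=D_\tau$. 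Hence the composite $(\delta-\gamma\otimes1)\circ(\tau_D-1)$ is surjective on $D_\tau$, and therefore so is $\delta-\gamma\otimes1$, its image containing that of the composite.

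I do not expect a genuine obstacle here; the argument is a one-line consequence of the two preceding lemmas. The only points requiring a modicum of care are (i) that the surjectivity inputs are exactly those furnished by Lemma \ref{lemm surjective gamma tau}, which itself rests on the cohomology vanishing of Corollary \ref{lemmcoho0 Zp} (reduced to the torsion case in Lemma \ref{lemmcoho0 Zp tor}), and (ii) that the hypothesis $\chi(\gamma)\in\ZZ_{>0}$ — needed to apply Lemma \ref{lemm surjective gamma tau} to $\tau_D^{\chi(\gamma)}-1$ — is in force, which is guaranteed by Lemma \ref{lemm gamma with finite caracter}.
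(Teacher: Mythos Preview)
Your proposal is correct and follows essentially the same approach as the paper: both use Lemma \ref{lemm amazing but easy equation} to factor $(\delta-\gamma\otimes1)\circ(\tau_D-1)$ as $(1-\tau_D^{\chi(\gamma)})\circ(\gamma\otimes1-1)$, observe that the right-hand side is surjective by Lemma \ref{lemm surjective gamma tau}, and conclude that $\delta-\gamma\otimes1$ is surjective. Your additional remarks about $\chi(\gamma)\in\ZZ_{>0}$ and the cohomological inputs are accurate but not strictly necessary, as these are already standing assumptions in the paper.
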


\begin{proof}
	As $\gamma-1$ and $\tau_D^{\chi(\gamma)}-1$ are surjective by lemma \ref{lemm surjective gamma tau}, so is $(\delta-\gamma\otimes1)\circ(\tau_D-1).$ Hence $\delta-\gamma$ is surjective.
\end{proof}

\begin{coro}\label{Exactness D-tau-0 Zp}
	If $0\to T^\prime\to T\to T^{\prime\prime}\to0$ is an exact sequence in $\Rep_{\ZZ_p}(\mathscr{G}_K),$ then the sequences
	$$0\to\mathcal{D}(T^\prime)\to\mathcal{D}(T)\to\mathcal{D}(T^{\prime\prime})\to0$$
	$$0\to\mathcal{D}(T^\prime)_\tau\to\mathcal{D}(T)_\tau\to\mathcal{D}(T^{\prime\prime})_\tau\to0$$
	$$0\to\mathcal{D}(T^\prime)_{\tau,0}\to\mathcal{D}(T)_{\tau,0}\to\mathcal{D}(T^{\prime\prime})_{\tau,0}\to0$$
	are exact. In particular, the functor $T\mapsto\mathcal{C}_{\varphi,\tau}(T)$ is exact.
\end{coro}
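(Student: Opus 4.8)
The plan is to establish the three displayed short exact sequences one after another --- first the one involving $\mathcal{D}$, then deduce the one involving $\mathcal{D}(\cdot)_{\tau}$ by a flat base change, and finally the one involving $\mathcal{D}(\cdot)_{\tau,0}$ by the snake lemma --- after which the asserted exactness of $T\mapsto\Cc_{\varphi,\tau}(T)$ is nothing but termwise exactness of the associated complexes.

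First I would prove the exact sequence for $\mathcal{D}$. Since $\Oo_{\widehat{\Ee^{\ur}}}$ is $p$-torsion free, it is flat over $\ZZ_p$, so applying $\Oo_{\widehat{\Ee^{\ur}}}\otimes_{\ZZ_p}(-)$ to $0\to T^\prime\to T\to T^{\prime\prime}\to 0$ keeps it exact; taking continuous $\mathscr{G}_{K_{\pi}}$-cohomology, left exactness is automatic and the only point is the vanishing of $\H^1(\mathscr{G}_{K_{\pi}},\Oo_{\widehat{\Ee^{\ur}}}\otimes_{\ZZ_p}T^\prime)$. For $T^\prime$ killed by $p$ one has $\Oo_{\widehat{\Ee^{\ur}}}\otimes_{\ZZ_p}T^\prime\simeq F_0^{\sep}\otimes_{\FF_p}T^\prime$, whose higher $\mathscr{G}_{K_{\pi}}$-cohomology vanishes by additive Hilbert 90 (the normal basis theorem), using the identification $\mathscr{G}_{K_{\pi}}\simeq\Gal(F_0^{\sep}/F_0)$; a d\'evissage along $0\to pT^\prime\to T^\prime\to T^\prime/pT^\prime\to 0$ extends this to all $p$-power torsion $T^\prime$, and the exact sequence of \cite[Theorem 2.3.4]{NSW13} (exactly as in the proof of corollary \ref{lemmcoho0 Zp}) handles a general $T^\prime\in\Rep_{\ZZ_p}(\mathscr{G}_K)$: the $\pLim{n}\H^1(\mathscr{G}_{K_{\pi}},\Oo_{\widehat{\Ee^{\ur}}}\otimes_{\ZZ_p}T^\prime/p^n)$ term vanishes by the torsion case, and the $\R^1\pLim{n}$ term vanishes because the system $\{\mathcal{D}(T^\prime/p^n)\}_n$ has surjective transition maps (again by the torsion case), hence is Mittag--Leffler.

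The sequence for $\mathcal{D}(\cdot)_{\tau}$ then follows formally: $\mathcal{D}(\cdot)_{\tau}=\Oo_{\Ee_{\tau}}\otimes_{\Oo_{\Ee}}\mathcal{D}(\cdot)$ and $\Oo_{\Ee_{\tau}}=\W(F_{\tau})$ is $p$-torsion free, hence torsion free --- hence flat --- over the discrete valuation ring $\Oo_{\Ee}$ (whose uniformizer is $p$), so applying $\Oo_{\Ee_{\tau}}\otimes_{\Oo_{\Ee}}(-)$ to the sequence just obtained does the job (alternatively, rewrite $\mathcal{D}(\cdot)_{\tau}\simeq(\W(C^{\flat})\otimes_{\ZZ_p}(\cdot))^{\mathscr{G}_L}$ via lemma \ref{lemm iso for D-tau} and invoke $\H^1(\mathscr{G}_L,\W(C^{\flat})\otimes_{\ZZ_p}T^\prime)=0$ from corollary \ref{lemmcoho0 Zp}). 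For $\mathcal{D}(\cdot)_{\tau,0}$ I would use lemma \ref{Replace g witi gamma} to identify $\mathcal{D}(T)_{\tau,0}=\Ker\big(\delta-\gamma\otimes 1\colon\mathcal{D}(T)_{\tau}\to\mathcal{D}(T)_{\tau}\big)$ and proposition \ref{propdeltagamma Zp} to know that $\delta-\gamma\otimes 1$ is surjective on each of $\mathcal{D}(T^\prime)_{\tau},\mathcal{D}(T)_{\tau},\mathcal{D}(T^{\prime\prime})_{\tau}$; since the transition maps commute with $\tau_D$ and $\gamma\otimes 1$, hence with $\delta$ (a power series in $\tau_D-1$) and with $\delta-\gamma\otimes 1$, the operator $\delta-\gamma\otimes 1$ is an endomorphism of the short exact sequence $0\to\mathcal{D}(T^\prime)_{\tau}\to\mathcal{D}(T)_{\tau}\to\mathcal{D}(T^{\prime\prime})_{\tau}\to 0$, and the snake lemma yields $0\to\mathcal{D}(T^\prime)_{\tau,0}\to\mathcal{D}(T)_{\tau,0}\to\mathcal{D}(T^{\prime\prime})_{\tau,0}\to\Coker\big((\delta-\gamma\otimes 1)|_{\mathcal{D}(T^\prime)_{\tau}}\big)\to\cdots$, in which the displayed cokernel is $0$ by proposition \ref{propdeltagamma Zp}. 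Finally, $\Cc_{\varphi,\tau}(T)$ is the complex $\mathcal{D}(T)\to\mathcal{D}(T)\oplus\mathcal{D}(T)_{\tau,0}\to\mathcal{D}(T)_{\tau,0}$ with functorial differentials, so the three exact sequences above give exactness of $0\to\Cc_{\varphi,\tau}(T^\prime)\to\Cc_{\varphi,\tau}(T)\to\Cc_{\varphi,\tau}(T^{\prime\prime})\to 0$ in every degree, i.e.\ as a sequence of complexes.

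The main obstacle is the first step: everything downstream is formal once $\mathcal{D}$ is known to be exact, and the real input there is the cohomological vanishing $\H^1(\mathscr{G}_{K_{\pi}},\Oo_{\widehat{\Ee^{\ur}}}\otimes_{\ZZ_p}T^\prime)=0$ together with the limit bookkeeping for non-torsion coefficients. A secondary point deserving care is that in the third step one cannot merely restrict the sequence for $\mathcal{D}(\cdot)_{\tau}$ to the subgroups $\mathcal{D}(\cdot)_{\tau,0}$: exactness on the right genuinely uses the surjectivity furnished by proposition \ref{propdeltagamma Zp}.
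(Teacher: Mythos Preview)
Your proof is correct and follows essentially the same approach as the paper: the vanishing of $\H^1(\mathscr{G}_{K_\pi},\Oo_{\widehat{\Ee^{\ur}}}\otimes_{\ZZ_p}T^\prime)$ for the $\mathcal{D}$-sequence (the paper cites this as lemma \ref{lemm H1 G infty vanishes}, whose proof is the d\'evissage and Hilbert 90 argument you spell out), and the snake lemma with $\delta-\gamma\otimes1$ and proposition \ref{propdeltagamma Zp} for the $\mathcal{D}(\cdot)_{\tau,0}$-sequence. The only minor difference is that the paper derives the $\mathcal{D}(\cdot)_\tau$-sequence directly from the vanishing $\H^1(\mathscr{G}_L,\W(C^\flat)\otimes_{\ZZ_p}T^\prime)=0$ of corollary \ref{lemmcoho0 Zp} rather than by flat base change from the $\mathcal{D}$-sequence, but you note this alternative yourself.
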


\begin{proof}
	As $\W(C^{\flat})$ is torsion-free, we have the exact sequence
	$$0\to\W(C^{\flat})\otimes_{\ZZ_p}T^\prime\to\W(C^{\flat})\otimes_{\ZZ_p}T\to\W(C^{\flat})\otimes_{\ZZ_p}T^{\prime\prime}\to0$$
	which induces the exact sequence
	$$0\to\mathcal{D}(T^\prime)_\tau\to\mathcal{D}(T)_\tau\to\mathcal{D}(T^{\prime\prime})_\tau\to\H^1(\mathscr{G}_L,\W(C^{\flat})\otimes_{\ZZ_p}T^\prime).$$
	By corollary \ref{lemmcoho0 Zp}, we get the second exact sequence. Similarly, by the observation $\H^1(\mathscr{G}_{K_{\pi}}, \Oo_{\widehat{\Ee^{\ur}}}\otimes_{\ZZ_p}T^\prime)=0$ (where $\Oo_{\widehat{\Ee^{\ur}}}$ is endowed with the $p$-adic topology, \cf lemma \ref{lemm H1 G infty vanishes}) we have the first exact sequence. Moreover, we have the commutative diagram
	$$\xymatrix{0\ar[r] & \mathcal{D}(T^\prime)_\tau\ar[r]\ar[d]_{\delta-\gamma\otimes1} & \mathcal{D}(T)_\tau\ar[r]\ar[d]_{\delta-\gamma\otimes1} & \mathcal{D}(T^{\prime\prime})_\tau\ar[r]\ar[d]_{\delta-\gamma\otimes1} & 0\\
		0\ar[r] & \mathcal{D}(T^\prime)_\tau\ar[r] & \mathcal{D}(T)_\tau\ar[r] & \mathcal{D}(T^{\prime\prime})_\tau
		\ar[r] & 0}.$$
	The snake lemma and proposition \ref{propdeltagamma Zp} provide the last exact sequence.
\end{proof}

\begin{proposition}\label{prop delta functor Zp}
	The functors $\{\Ff^i\}_{i\in \NN}$ form a $\delta$-functor.
\end{proposition}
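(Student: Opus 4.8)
The plan is to obtain the $\delta$-functor structure formally, the only real input being the exactness of the complex-valued functor $T\mapsto\Cc_{\varphi,\tau}(T)$ already established in Corollary~\ref{Exactness D-tau-0 Zp}. Recall that to exhibit $\{\Ff^i\}_{i\in\NN}$ as a (cohomological) $\delta$-functor one must produce, for every short exact sequence $0\to T'\to T\to T''\to 0$, connecting maps $\delta^i\colon\Ff^i(T'')\to\Ff^{i+1}(T')$ such that (i) the long sequence $\cdots\to\Ff^i(T')\to\Ff^i(T)\to\Ff^i(T'')\xrightarrow{\delta^i}\Ff^{i+1}(T')\to\cdots$ is exact and (ii) the $\delta^i$ are natural in morphisms of short exact sequences. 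Since $\Ff^i(T)=\H^i(\Cc_{\varphi,\tau}(T))$ by Definition~\ref{def Fi 1}, both will be instances of the standard long exact cohomology sequence attached to a short exact sequence of cochain complexes of abelian groups, together with its naturality.

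The steps I would carry out are as follows. First, record that $\Cc_{\varphi,\tau}$ is a functor from representations to cochain complexes of abelian groups: the three terms $\mathcal{D}(T)$, $\mathcal{D}(T)\oplus\mathcal{D}(T)_{\tau,0}$, $\mathcal{D}(T)_{\tau,0}$ are functorial in $T$, and the differentials, being built from $\varphi-1$ and $\tau_D-1$ (well-defined on the relevant subobjects by Lemma~\ref{lemm 1.1.11 well-defined tau-1 varphi-1}), commute with morphisms of $(\varphi,\tau)$-modules; in particular an injection $T'\hookrightarrow T$ really does induce a morphism of complexes $\Cc_{\varphi,\tau}(T')\to\Cc_{\varphi,\tau}(T)$. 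Second, given $0\to T'\to T\to T''\to 0$, invoke the last assertion of Corollary~\ref{Exactness D-tau-0 Zp} (exactness of $T\mapsto\Cc_{\varphi,\tau}(T)$) to conclude that
\[0\to\Cc_{\varphi,\tau}(T')\to\Cc_{\varphi,\tau}(T)\to\Cc_{\varphi,\tau}(T'')\to 0\]
is a short exact sequence of complexes. For a short exact sequence in the ind-category $\Ind\Rep_{\ZZ_p}(\mathscr{G}_K)$ on which $\Ff^i$ is actually defined, one first represents it by a filtered system of short exact sequences of genuine representations, applies the above levelwise, and passes to the colimit, using that filtered colimits are exact in $\Ab$ and that $\Cc_{\varphi,\tau}$ commutes with them via the ind-extension of Theorem~\ref{thm equivalence of cats}. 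Third, invoke the snake lemma degree by degree — equivalently, the usual construction of the connecting homomorphism — to produce the $\delta^i$ and the long exact sequence, and recall that this construction is natural in the short exact sequence of complexes; combined with the functoriality of $\Cc_{\varphi,\tau}$ from the first step, this yields naturality of the $\delta^i$ in the short exact sequence of representations. This exhibits $\{\Ff^i\}_{i\in\NN}$ as a $\delta$-functor; it is moreover bounded, with $\Ff^i=0$ for $i>2$ since $\Cc_{\varphi,\tau}$ is concentrated in degrees $0,1,2$.

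I do not expect a genuine obstacle at this stage: the delicate point — exactness of $T\mapsto\mathcal{D}(T)_{\tau,0}$, which rested on the surjectivity of $\delta-\gamma\otimes 1$ (Proposition~\ref{propdeltagamma Zp}) and a diagram chase — has already been dealt with in Corollary~\ref{Exactness D-tau-0 Zp}, so what remains here is bookkeeping. The only things worth double-checking are that the differentials of $\Cc_{\varphi,\tau}$ restrict correctly to the subgroups $D_{\tau,0}$ (this is exactly Lemma~\ref{lemm 1.1.11 well-defined tau-1 varphi-1}), so that $\Cc_{\varphi,\tau}$ is genuinely functorial, and that every construction in sight commutes with the filtered colimits used to descend the statement from $\Rep_{\ZZ_p}(\mathscr{G}_K)$ to its ind-category — both routine. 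Together with Lemma~\ref{lemm H0} identifying $\Ff^0$ with $(-)^{\mathscr{G}_K}$ and the effaceability established in Section~\ref{section effaceability}, this is precisely the input needed for the uniqueness-of-$\delta$-functors argument leading to Theorem~\ref{thm main result}.
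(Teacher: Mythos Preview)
Your proposal is correct and follows essentially the same approach as the paper: both reduce the claim to the exactness of $T\mapsto\Cc_{\varphi,\tau}(T)$ (Corollary~\ref{Exactness D-tau-0 Zp}) and then invoke the standard long exact cohomology sequence attached to a short exact sequence of complexes. You spell out the functoriality and ind-category bookkeeping more carefully than the paper does, but the argument is the same.
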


\begin{proof}
	Let $0\to T^\prime\to T\to T^{\prime\prime}\to0$ be a short exact sequence in $\Ind \mathbf{\Rep}_{\ZZ_p}(\mathscr{G}_K).$ We have a short exact sequence of complexes $0\to\Cc_{\varphi, \tau}(T^{\prime})\to \Cc_{\varphi, \tau}(T)\to \Cc_{\varphi, \tau}(T^{\prime\prime})\to 0$ by corollary \ref{Exactness D-tau-0 Zp}. Classical result ($\cf$ \cite[Theorem 1.3.1]{Wei95}) gives the desired long exact sequence of cohomological groups.
\end{proof}

\section{Computation of \texorpdfstring{$\H^1(\Cc_{\varphi, \tau})$}{H\textonesuperior(C\textpinferior\texthinferior\textiinferior,\texttinferior\textainferior\textuinferior)}}

\begin{notation}
	For any $(\varphi, \tau)$-module $D,$ we denote $\Ext_{\Mod_{\Oo_{\Ee},\Oo_{\Ee_{\tau}}}(\varphi,\tau)}(\Oo_{\Ee}, D)$\index{$\Ext_{\Mod_{\Oo_{\Ee},\Oo_{\Ee_{\tau}}}(\varphi,\tau)}(\Oo_{\Ee}, D)$} the group of extensions of $\Oo_{\Ee}$ (the unit object in $\Mod_{\Oo_{\Ee}, \Oo_{\Ee_{\tau}}}(\varphi,\tau)$) by $D.$ More precisely, the group of exact sequences
	\[ 0\to D\to E\to \Oo_{\Ee}\to 0 \]
	in $\Mod_{\Oo_{\Ee}, \Oo_{\Ee_{\tau}}}(\varphi,\tau)$  modulo equivalence. Two extensions $E_1,E_2$ are equivalent when there exists a map $f$ making the diagram commute:
	\[ \begin{tikzcd}
	0\arrow[r] &D \arrow[r]\arrow[d, equal] &E_1 \arrow[r]\arrow[d, "f"] &\Oo_{\Ee} \arrow[r]\arrow[d, equal]& 0\\
	0\arrow[r] &D \arrow[r] &E_2 \arrow[r] &\Oo_{\Ee} \arrow[r] &0.
	\end{tikzcd}   \]
\end{notation}

\begin{lemma}\label{Extension}
	Define two submodules of $D \oplus D_{\tau}$ as follows:
	\[ M:=\Big\{(\lambda, \mu) \in D\oplus D_{\tau};\ \begin{cases}
	(\varphi-1)\mu=(\tau_D-1)\lambda\\
	(\forall g\in \mathscr{G}_{K_{\pi}})\  \chi(g)\in \ZZ_{>0} \Rightarrow g(\mu)=\mu+ \tau_D(\mu)+ \tau_D^2(\mu)+ \cdots + \tau_D^{\chi(g)-1}(\mu) 
	\end{cases} \Big\}, \]
	
	\[N:= \Big\{ \big((\varphi-1)d, (\tau_D-1)d\big);\ d\in D \Big\}. \]
	Then there is a group isomorphism  \[ \Ext(\Oo_{\Ee}, D)\simeq M/N. \]
\end{lemma}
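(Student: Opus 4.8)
The plan is to make explicit the usual Yoneda dictionary between extensions and $1$-cocycles, adapted to $(\varphi,\tau)$-modules, checking that each condition in the definition of $M$ (resp.\ the relation defining $N$) is exactly what is forced by the $(\varphi,\tau)$-module structure (resp.\ by a change of set-theoretic splitting). First I would construct the map $\Phi\colon\Ext(\Oo_{\Ee},D)\to M/N$. Starting from an extension $0\to D\to E\to\Oo_{\Ee}\to 0$ in $\Mod_{\Oo_{\Ee},\Oo_{\Ee_{\tau}}}(\varphi,\tau)$, I forget Frobenius and $\tau$ to get a short exact sequence of $\Oo_{\Ee}$-modules; since $\Oo_{\Ee}$ is free of rank one it splits, so I may pick $e\in E$ lifting $1\in\Oo_{\Ee}$ and write $E=D\oplus\Oo_{\Ee}e$. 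Tensoring this decomposition with $\Oo_{\Ee_{\tau}}$ over $\Oo_{\Ee}$ gives $E_\tau=D_\tau\oplus\Oo_{\Ee_{\tau}}e$. Since $\varphi_E$ lies over $\varphi_{\Oo_{\Ee}}$ (which fixes $1$) and $\tau_D$ on $E_\tau$ lies over $\tau$ on $(\Oo_{\Ee})_\tau=\Oo_{\Ee_{\tau}}$ (which also fixes $1$), there are unique $\lambda\in D$ and $\mu\in D_\tau$ with $\varphi(e)=e+\lambda$ and $\tau_D(1\otimes e)=1\otimes e+\mu$.

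Next I would translate the axioms into the conditions defining $M$. Evaluating the commutation $\varphi\circ\tau_D=\tau_D\circ\varphi$ on $e$, and using that $\varphi$ and $\tau$ commute on $\Oo_{\Ee_{\tau}}$, yields $(\varphi-1)(\mu)=(\tau_D-1)(\lambda)$. Evaluating the relation $(g\otimes1)\circ\tau_D=\tau_D^{\chi(g)}$ on $e$, for $g\in\mathscr{G}_{K_{\pi}}$ with $\chi(g)\in\ZZ_{>0}$, using that $g\otimes1$ fixes $1\otimes e$ together with the telescoping identity $\tau_D^n(1\otimes e)=1\otimes e+\mu+\tau_D(\mu)+\cdots+\tau_D^{n-1}(\mu)$, yields $(g\otimes1)(\mu)=\mu+\tau_D(\mu)+\cdots+\tau_D^{\chi(g)-1}(\mu)$, i.e.\ $\mu\in D_{\tau,0}$ in the sense of Notation~\ref{nota D-tau}. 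Hence $(\lambda,\mu)\in M$. Replacing $e$ by $e+d$ with $d\in D$ replaces $(\lambda,\mu)$ by $\bigl(\lambda+(\varphi-1)(d),\,\mu+(\tau_D-1)(d)\bigr)$, i.e.\ by the same class modulo $N$; and a morphism of extensions carries one splitting to another while intertwining $\varphi$ and $\tau_D$, so the class in $M/N$ depends only on the equivalence class of the extension. This gives a well-defined $\Phi$, which is a group homomorphism because the Baer (Yoneda) sum of extensions corresponds to addition of the associated pairs.

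Then I would prove bijectivity. For injectivity: if $\Phi([E])=0$, choose the splitting so that $(\lambda,\mu)=(0,0)$; then $\varphi(e)=e$ and $\tau_D(1\otimes e)=1\otimes e$, so $\Oo_{\Ee}e$ is a $(\varphi,\tau)$-submodule of $E$ mapping isomorphically onto $\Oo_{\Ee}$, which splits the extension. For surjectivity, given $(\lambda,\mu)\in M$ I set $E:=D\oplus\Oo_{\Ee}e$ with $\varphi$ extending $\varphi_D$ by $\varphi(e):=e+\lambda$ ($\varphi$-semilinearly), and on $E_\tau:=D_\tau\oplus\Oo_{\Ee_{\tau}}e$ I let $\tau_D$ extend that of $D$ by $\tau_D(1\otimes e):=1\otimes e+\mu$ ($\tau$-semilinearly). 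Then $E$ is an \'etale $\varphi$-module, since in the basis made of a basis of $D$ together with $e$ the matrix of $\varphi_E$ is block upper triangular with invertible diagonal blocks; the commutation of $\varphi$ and $\tau_D$ holds on $D_\tau$ (given) and on $e$ (first relation in $M$), hence on all of $E_\tau$ by semilinearity; and the relation $(g\otimes1)\circ\tau_D=\tau_D^{\chi(g)}$ holds on $D$ (given) and on $e$ (since $\mu\in D_{\tau,0}$), whence on all of $E$. Thus $E\in\Mod_{\Oo_{\Ee},\Oo_{\Ee_{\tau}}}(\varphi,\tau)$ and $\Phi([E])=[(\lambda,\mu)]$.

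\emph{Expected main obstacle.} The one genuinely delicate step is checking that, for the $E$ just built, the relation $(g\otimes1)\circ\tau_D=\tau_D^{\chi(g)}$ holds on \emph{all} of $E$ and not merely on the generators $D$ and $e$: this map is only additive, not $\Oo_{\Ee}$-linear, so one must match semilinearities. For $a\in\Oo_{\Ee}$ one has $(g\otimes1)(\tau_D(a\,e))=g(\tau(a))\cdot(g\otimes1)(\tau_D e)$ while $\tau_D^{\chi(g)}(a\,e)=\tau^{\chi(g)}(a)\cdot\tau_D^{\chi(g)}(e)$, so the point reduces to $g(\tau(a))=\tau^{\chi(g)}(a)$ on $\Oo_{\Ee}$; this follows from $g\in\mathscr{G}_{K_{\pi}}$ fixing $\Oo_{\Ee}$ and from $g\tau g^{-1}=\tau^{\chi(g)}$ in $\Gal(L/K)$ (Remark~\ref{rem def tau}), which is precisely the rewriting recorded in Remark~\ref{remark on defi varphi-tau module gamma tau relation}(2). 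The remaining verifications (that the $\Oo_{\Ee}$-split sequence stays split after base change to $\Oo_{\Ee_{\tau}}$, that morphisms of extensions intertwine the data, and that Baer sums correspond to addition) are formal.
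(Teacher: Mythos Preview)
Your proof is correct and follows essentially the same approach as the paper: both pick a lift $e$ of $1$, record $(\lambda,\mu)$ via $\varphi(e)=e+\lambda$ and $\tau_D(1\otimes e)=1\otimes e+\mu$, and translate the $(\varphi,\tau)$-module axioms into the defining conditions of $M$ while identifying $N$ with changes of splitting. Your write-up is in fact more explicit than the paper's on the formal points (well-definedness of $\Phi$, bijectivity, compatibility with Baer sum, and the semilinearity check in your ``expected obstacle''), which the paper leaves implicit.
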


\begin{proof}
	Consider an extension of $\Oo_{\Ee}$ by $D:$
	\[ 0\to D \to E \xrightarrow{\epsilon} \Oo_{\Ee} \to 0. \]
	It is equivalent to giving a $(\varphi, \tau)$-module structure to the $\Oo_{\Ee}$-module  $E=D\oplus \Oo_{\Ee}\cdot x,$ where $x\in E$ is a preimage of 1 under $\epsilon.$ Since $D$ is already a $(\varphi, \tau)$-module, it suffices to specify the images of $x$ by $\varphi$ and $\tau_D.$ Since $\epsilon(\varphi(x)-x)=0,$ we must have $\varphi(x)-x\in D$ and hence we put
	\[\varphi(x)=x+\lambda \text{ with } \lambda\in D.\]
	For $\tau_D,$ we tensorize the original exact sequence with $\Oo_{\Ee_{\tau}}$ as follows
	\[  0\to D_{\tau} \to \Oo_{\Ee_{\tau}}\otimes E \xrightarrow{\epsilon} \Oo_{\Ee_{\tau}} \to 0  \] 
	then similarly we have $\epsilon((\tau_D-1)(1\otimes x))=0$ and hence $(\tau_D-1)(1\otimes x)\in D_{\tau}.$ We then put
	\[ \tau_D(1\otimes x)=1\otimes x+\mu \text{ with } \mu\in D_{\tau}.\] 
	Such a pair $(\lambda, \mu)$ has to satisfy two conditions to give an extension $E.$ The first condition is that $\varphi$ commutes with $\tau_D,$ $\ie$$\varphi(\tau_D(x))=\tau_D(\varphi(x)).$ Notice 
	\[ \varphi(\tau_D(1\otimes x))=\tau_D(\varphi(1\otimes x)) \iff\varphi(1\otimes x+\mu)=\tau_D(1\otimes x+\lambda)\iff 1\otimes x+\lambda +\varphi(\mu)=1\otimes x+\mu+ \tau_D(\lambda). \]
	This is equivalent to: 
	\begin{equation}\label{equ Caruso 1}
	\varphi(\mu)-\mu=\tau_D(\lambda)-\lambda.
	\end{equation}
	The second condition is: $(g\otimes 1)(\tau_D(1\otimes x))=\tau_D^{\chi(g)}(1\otimes x)$ whenever $g\in \mathscr{G}_{K_{\pi}}/\mathscr{G}_L$ is such that $\chi(g)\in \ZZ_{>0}.$ By induction we have $\tau_D^{\chi(g)}(1\otimes x)=1\otimes x+\mu +\tau_D(\mu)+\cdots +\tau_D^{\chi(g)-1}(\mu).$ Thus the second condition rewrites as
	\begin{equation}\label{equ Caruso 1-2}
	g(\mu)=\mu+ \tau_D(\mu)+ \tau_D^2(\mu)+ \cdots + \tau_D^{\chi(g)-1}(\mu).
	\end{equation}
	Indeed
	\[(g\otimes 1)(\mu)=(g\otimes 1)(\tau_D(1\otimes x)-1\otimes x)=\tau_D^{\chi(g)}(1\otimes x)-1\otimes x=\mu+ \tau_D(\mu)+ \tau_D^2(\mu)+ \cdots + \tau_D^{\chi(g)-1}(\mu).\]

	Hence we are left to show that an extension $E$ arising from the pair $(\lambda, \mu)$ is trivial if and only if there exists $d\in D$ such that $\lambda=(\varphi-1)(d)$ and $\mu=(\tau_D-1)(d).$ Indeed, $E$ is trivial if and only if there exists $d\in D$ such that $\Oo_{\Ee}\cdot (x-d)$ is a sub-$(\varphi,\tau)$-module of $E.$ 
	In other words, $E=D\oplus \Oo_{\Ee}\cdot(x-d)$ as $(\varphi, \tau)$-modules. This is equivalent to the existence of $\alpha\in \Oo_{\Ee}$ and $\beta\in \Oo_{\Ee_{\tau}}$ such that $\varphi(x-d)=\alpha(x-d)$ and $\tau_D(x-d)=\beta(x-d).$ More precisely, it is $x+\lambda-\varphi(d)=\alpha(x-d)$ and after applying the map $\epsilon$ we have $\alpha=1$ and $\beta=1.$ 
	We now have $\varphi(x-d)=x-d$ and it gives directly $(\varphi-1)(d)=\lambda.$ Similarly we have $\mu=(\tau_D-1)(d).$ Hence we find the desired $d.$
\end{proof}

\begin{proposition}\label{prop Caruso H1 Zp}
	Let $T\in\Rep_{\ZZ_p}(\mathscr{G}_K),$ then we have
	\[ \H^1(\Cc_{\varphi, \tau}(T))=\H^1(\mathscr{G}_K, T).\]
\end{proposition}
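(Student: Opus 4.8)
The plan is to recognise $\H^1(\Cc_{\varphi,\tau}(T))$ as a group of extensions, and then transport that group through the equivalence of categories to the Galois side, where it is the classical $\Ext^1$-description of $\H^1$.

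First I would observe that $\H^1(\Cc_{\varphi,\tau}(T))$ is tautologically the quotient $M/N$ appearing in Lemma~\ref{Extension} for $D=\mathcal{D}(T)$. Indeed, the submodule $M\subset D\oplus D_\tau$ is exactly the kernel of the second differential $(y,z)\mapsto(\tau_D-1)(y)-(\varphi-1)(z)$ of $\Cc_{\varphi,\tau}(\mathcal{D}(T))$: the second defining condition of $M$ says precisely that the second coordinate lies in $D_{\tau,0}$ (this is the very definition of $D_{\tau,0}$, Notation~\ref{nota D-tau}), so $M=\ker\bigl(D\oplus D_{\tau,0}\to D_{\tau,0}\bigr)$; and $N$ is by definition the image of the first differential $d\mapsto\bigl((\varphi-1)d,(\tau_D-1)d\bigr)$. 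Hence Lemma~\ref{Extension} gives an isomorphism $\H^1(\Cc_{\varphi,\tau}(T))\simeq\Ext_{\Mod_{\Oo_{\Ee},\Oo_{\Ee_{\tau}}}(\varphi,\tau)}(\Oo_{\Ee},\mathcal{D}(T))$.

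Next I would apply Theorem~\ref{thm equivalence of cats}: the functor $T\mapsto\mathcal{D}(T)$ is an exact equivalence of abelian categories, and it carries the trivial representation $\ZZ_p$ to the unit object $\Oo_{\Ee}$, since $\mathcal{D}(\ZZ_p)=\Oo_{\widehat{\Ee^{\ur}}}^{\mathscr{G}_{K_\pi}}=\Oo_{\Ee}$ with $\tau_D$ acting trivially. An exact equivalence induces bijections on Yoneda $\Ext^1$ groups, so
\[\Ext_{\Mod_{\Oo_{\Ee},\Oo_{\Ee_{\tau}}}(\varphi,\tau)}(\Oo_{\Ee},\mathcal{D}(T))\simeq\Ext^1_{\Rep_{\ZZ_p}(\mathscr{G}_K)}(\ZZ_p,T),\]
the right-hand side being the group of extensions of $\ZZ_p$ by $T$ in $\Rep_{\ZZ_p}(\mathscr{G}_K)$. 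Finally I would invoke the standard identification $\Ext^1_{\Rep_{\ZZ_p}(\mathscr{G}_K)}(\ZZ_p,T)\simeq\H^1(\mathscr{G}_K,T)$: an extension $0\to T\to E\to\ZZ_p\to 0$ of continuous $\mathscr{G}_K$-modules is strict (all terms being $\ZZ_p$-modules of finite type with their canonical topology), hence admits a continuous set-theoretic splitting $s$, and $g\mapsto g(s(1))-s(1)$ is a continuous $1$-cocycle whose cohomology class depends only on the equivalence class of the extension; this assignment is an isomorphism, functorial in $T$. Composing the three isomorphisms yields $\H^1(\Cc_{\varphi,\tau}(T))\simeq\H^1(\mathscr{G}_K,T)$.

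The step I expect to be the crux is the last one: checking that $\Ext^1$ in the category of $p$-adic representations is genuinely computed by \emph{continuous} cochain cohomology — that is, strictness of extensions, existence of a continuous section, and independence of the cocycle class from the choices. This is routine, but it is the only place where the topology really enters; everything upstream is formal once Lemma~\ref{Extension} and the equivalence of categories are available. Alternatively, one could instead deduce the statement from the $\delta$-functor structure of Proposition~\ref{prop delta functor Zp} together with an effaceability argument, but the extension-theoretic route above is the most direct for $\H^1$.
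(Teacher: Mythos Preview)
Your proof is correct and follows essentially the same route as the paper: identify $\H^1(\Cc_{\varphi,\tau}(T))$ with $\Ext$ in the category of $(\varphi,\tau)$-modules via Lemma~\ref{Extension}, transport through the equivalence of categories of Theorem~\ref{thm equivalence of cats} to $\Ext^1_{\Rep_{\ZZ_p}(\mathscr{G}_K)}(\ZZ_p,T)$, and then invoke the standard identification with $\H^1(\mathscr{G}_K,T)$. The paper runs the chain in the opposite order and leaves the continuous-cohomology step implicit, but the argument is the same.
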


\begin{proof}
	As $\mathcal{D}$ establishes the equivalence of categories, we have 
	\[\H^1(\mathscr{G}_K, T)\simeq \Ext_{\mathbf{\Rep}_{\ZZ_p}(\mathscr{G}_K)}(\ZZ_p, T)\simeq \Ext_{\Mod_{\Oo_{\Ee},\Oo_{\Ee_{\tau}}}(\varphi,\tau)}(\Oo_{\Ee}, \mathcal{D}(T))\] (since $\mathcal{D}(\ZZ_p)=\Oo_{\Ee}$ is the trivial $(\varphi, \tau)$-module). Consider the complex
	\[ 0\to \mathcal{D} \xrightarrow{\alpha} \mathcal{D}\oplus \mathcal{D}_{\tau, 0} \xrightarrow{\beta} \mathcal{D}_{\tau, 0} \to 0.  \]
	We firstly describe $\H^1(\Cc_{\varphi, \tau}(T)).$ A pair $(\lambda, \mu)\in \mathcal{D}\oplus \mathcal{D}_{\tau}$ is in $\Ker \beta$ if and only if it satisfies the following two conditions:
	\begin{equation}\label{equ Caruso 2-1}
	\varphi(\mu)-\mu=\tau_D(\lambda)-\lambda
	\end{equation}
	
	\begin{equation}\label{equ Caruso 2-2}
	\mu \in D_{\tau, 0}
	\end{equation} 
	
	We see that (\ref{equ Caruso 2-1}) and (\ref{equ Caruso 2-2}) correspond to (\ref{equ Caruso 1}) and (\ref{equ Caruso 1-2}) respectively. It suffices to show that two pairs $(\lambda_1, \mu_1)$ and $(\lambda_2, \mu_2)$ give equivalent extensions if and only if $(\lambda_2, \mu_2)-(\lambda_1, \mu_1)\in \im \alpha,$ but this is clear. 
\end{proof}

\section{Effaceability}\label{section effaceability}

\begin{lemma}\label{lemmcoker0UGL Zp}
	The map $\delta-\gamma\otimes 1\colon U^{\mathscr{G}_L}\to U^{\mathscr{G}_L}$ is surjective.
\end{lemma}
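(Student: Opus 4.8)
The plan is to follow the proof of Proposition \ref{propdeltagamma Zp}. I would first observe that since $\mathscr{G}_L$ is normal in $\mathscr{G}_K$, the module $U^{\mathscr{G}_L}$ is a discrete $\Gal(L/K)$-module, so $\gamma$ and $\tau$ act on it; I keep the notation $\gamma\otimes1$, $\tau_D$ and $\delta=1+\tau_D+\cdots+\tau_D^{\chi(\gamma)-1}$ for the induced operators (the sum is finite since $\chi(\gamma)\in\ZZ_{>0}$, so $\delta$ is well defined and $\delta\circ(\tau_D-1)=\tau_D^{\chi(\gamma)}-1$). Because $\gamma\tau\gamma^{-1}=\tau^{\chi(\gamma)}$ (Remark \ref{rem def tau}), the computation in the proof of Lemma \ref{lemm amazing but easy equation} applies verbatim on $U^{\mathscr{G}_L}$ and yields
\[(\delta-\gamma\otimes1)\circ(\tau_D-1)=\big(1-\tau_D^{\chi(\gamma)}\big)\circ(\gamma\otimes1-1).\]
So it suffices to prove that $\gamma\otimes1-1$ and $\tau_D^{\chi(\gamma)}-1$ are both surjective on $U^{\mathscr{G}_L}$: then the right-hand side, hence the left-hand side, of this identity is surjective, and therefore so is $\delta-\gamma\otimes1$.

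For the surjectivity of $\gamma\otimes1-1$, I would use that $\Gal(L/K_\pi)=\overline{\langle\gamma\rangle}$ is procyclic with topological generator $\gamma$: by Lemma \ref{lemm coho procyclic} (applied to $U^{\mathscr{G}_L}$, after reducing $T$ to the $p$-torsion case — this reduction is legitimate because $\H^1(\mathscr{G}_L,\Ind_{\mathscr{G}_K}(pT))=0$ by Lemma \ref{lemmcoho0nonGalois Zp} keeps $\mathscr{G}_L$-invariants exact, so that $\Coker(\gamma\otimes1-1)$ fits into an exact sequence reducing $r$) one gets $\Coker\big(\gamma\otimes1-1\colon U^{\mathscr{G}_L}\to U^{\mathscr{G}_L}\big)\simeq\H^1(\overline{\langle\gamma\rangle},U^{\mathscr{G}_L})$. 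The low-degree terms of the Hochschild--Serre spectral sequence for $1\to\mathscr{G}_L\to\mathscr{G}_{K_\pi}\to\Gal(L/K_\pi)\to1$, together with $\H^1(\mathscr{G}_L,U)=0$ (Lemma \ref{lemmcoho0nonGalois Zp} with $K'=L$), then give an injection $\H^1(\overline{\langle\gamma\rangle},U^{\mathscr{G}_L})\hookrightarrow\H^1(\mathscr{G}_{K_\pi},U)$; and $\H^1(\mathscr{G}_{K_\pi},U)=0$ again by Lemma \ref{lemmcoho0nonGalois Zp} (with $K'=K_\pi$). Hence $\gamma\otimes1-1$ is surjective. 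For $\tau_D^{\chi(\gamma)}-1$: since $\chi(\gamma)\in\chi(\Gamma)\subset\ZZ_p^\times$ is a unit, $\tau^{\chi(\gamma)}$ is again a topological generator of $\overline{\langle\tau\rangle}=\Gal(L/K_\zeta)$, and running the same argument with $\mathscr{G}_{K_\zeta}$ in place of $\mathscr{G}_{K_\pi}$ gives $\Coker\big(\tau_D^{\chi(\gamma)}-1\big)\simeq\H^1(\Gal(L/K_\zeta),U^{\mathscr{G}_L})\hookrightarrow\H^1(\mathscr{G}_{K_\zeta},U)=0$.

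I expect the argument to be essentially formal once Lemmas \ref{lemmcoho0nonGalois Zp} and \ref{lemm amazing but easy equation} are available; the only point needing a little care is that $\mathscr{G}_L$ is a closed but \emph{not} open normal subgroup of $\mathscr{G}_{K_\pi}$ and $\mathscr{G}_{K_\zeta}$, so one cannot reduce to finite-index subgroups — which is precisely why Lemma \ref{lemmcoho0nonGalois Zp} was proved for arbitrary (not necessarily finite) subextensions. Alternatively, one could bypass the Hochschild--Serre bookkeeping altogether and simply transcribe the proof of Lemma \ref{lemm surjective gamma tau}, replacing $\W(C^\flat)\otimes_{\ZZ_p}T$ there by $U$ and invoking the vanishing of $\H^1(\mathscr{G}_{K_\pi},U)$ and $\H^1(\mathscr{G}_{K_\zeta},U)$ from Lemma \ref{lemmcoho0nonGalois Zp}.
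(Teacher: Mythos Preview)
Your proposal is correct and follows essentially the same route as the paper: reduce via the identity $(\delta-\gamma\otimes1)\circ(\tau_D-1)=(1-\tau_D^{\chi(\gamma)})\circ(\gamma\otimes1-1)$ to the surjectivity of $\gamma-1$ and $\tau_D^{\chi(\gamma)}-1$ on $U^{\mathscr{G}_L}$, then identify the cokernels with $\H^1$ of procyclic quotients and use Hochschild--Serre together with Lemma~\ref{lemmcoho0nonGalois Zp}. Two small remarks: your d\'evissage aside is unnecessary here since $T$ is already torsion (so $U^{\mathscr{G}_L}$ is $p$-power torsion and Lemma~\ref{lemm coho procyclic} applies directly), and your observation that $\tau^{\chi(\gamma)}$ topologically generates $\overline{\langle\tau\rangle}$ (because $\chi(\gamma)\in\ZZ_p^\times$) is a slight streamlining of the paper, which instead passes through the intermediate field $K_{\zeta,n}=L^{\overline{\langle\tau^n\rangle}}$ --- but since $n=\chi(\gamma)$ is a unit these coincide anyway.
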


\begin{proof}
	By lemma \ref{lemm amazing but easy equation}, it is sufficient to prove that $\gamma-1$ and $\tau_D^{\chi(\gamma)}-1$ are surjective over $U^{\mathscr{G}_L},$ which is equivalent to the vanishing of $\H^1(\overline{\la \gamma\ra},  U^{\mathscr{G}_L})$ and $\H^1(\overline{\la \tau^{\chi(\gamma)}\ra},  U^{\mathscr{G}_L})$ according to lemma \ref{lemm coho procyclic}. Indeed, by lemma \ref{lemmcoho0nonGalois Zp} we have $\H^1(\mathscr{G}_L, U)=0,$ which implies that 
	\[\H^1(\overline{\la \gamma \ra},  U^{\mathscr{G}_L})=\H^1(\overline{\la \gamma\ra},  \H^0(\mathscr{G}_L, U))=\H^1(\mathscr{G}_{K_{\pi}}, U).\] The group $\H^1(\mathscr{G}_{K_{\pi}}, U)$ vanishes by lemma \ref{lemmcoho0nonGalois Zp}. Assume $\chi(\gamma)=n$ and put $K_{\zeta, n}=L^{\overline{\la \tau^n \ra}}.$ Then similarly we have 
	\[\H^1(\overline{\la \tau^{\chi(\gamma)} \ra},  U^{\mathscr{G}_L})=\H^1(\overline{\la \tau^{\chi(\gamma)} \ra},  \H^0(\mathscr{G}_L, U))=\H^1(\mathscr{G}_{K_{\zeta, n}},  U)=0,\]
	where the last equality follows from lemma \ref{lemmcoho0nonGalois Zp}.
\end{proof}

\begin{lemma}\label{Surjectivity of varphi-1 on D-tau-0 Zp}
	The map $\varphi-1\colon \mathcal{D}(U)_{\tau, 0}\to \mathcal{D}(U)_{\tau, 0}$ is surjective.
\end{lemma}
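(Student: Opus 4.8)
The plan is to lift a given element along the surjection $\varphi-1\colon\mathcal{D}(U)_{\tau}\to\mathcal{D}(U)_{\tau}$ and then repair the lift. Recall from Corollary \ref{coro H1 null} that $\varphi-1$ is surjective on $\mathcal{D}(U)_{\tau}$ with kernel $U^{\mathscr{G}_L}$, and that $\varphi-1$ does map $\mathcal{D}(U)_{\tau,0}$ into itself because $\varphi$ commutes with $\tau_D$ and with the $\mathscr{G}_{K_{\pi}}$-action (Lemma \ref{lemm 1.1.11 well-defined tau-1 varphi-1}). So, given $z\in\mathcal{D}(U)_{\tau,0}$, I would first choose any $y\in\mathcal{D}(U)_{\tau}$ with $(\varphi-1)(y)=z$.

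Second, I would measure how far $y$ is from lying in $\mathcal{D}(U)_{\tau,0}$. By Lemma \ref{Replace g witi gamma} the defining condition only needs to be checked for $g=\gamma$, so the relevant quantity is
$$w:=(\gamma\otimes1)(y)-\delta(y)\in\mathcal{D}(U)_{\tau},\qquad\text{where }\ \delta=1+\tau_D+\cdots+\tau_D^{\chi(\gamma)-1},$$
and $y\in\mathcal{D}(U)_{\tau,0}$ exactly when $w=0$. Since $\varphi$ commutes with $\tau_D$ and with $\gamma\otimes1$, applying $\varphi-1$ to $w$ yields $(\gamma\otimes1)(z)-\delta(z)$, which vanishes because $z\in\mathcal{D}(U)_{\tau,0}$. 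Hence $w\in\ker(\varphi-1)=U^{\mathscr{G}_L}$.

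Third, I would cancel $w$ using Lemma \ref{lemmcoker0UGL Zp}, which asserts precisely that $\delta-\gamma\otimes1$ is surjective on $U^{\mathscr{G}_L}$: pick $u\in U^{\mathscr{G}_L}$ with $(\delta-\gamma\otimes1)(u)=-w$ and set $y':=y-u$. Then $(\varphi-1)(y')=z$ because $u\in U^{\mathscr{G}_L}=\ker(\varphi-1)$, while the defect of $y'$ is $(\gamma\otimes1)(y')-\delta(y')=w+(\delta-\gamma\otimes1)(u)=0$, so $y'\in\mathcal{D}(U)_{\tau,0}$ by Lemma \ref{Replace g witi gamma}. This exhibits $z$ as the image under $\varphi-1$ of an element of $\mathcal{D}(U)_{\tau,0}$, as desired. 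There is no real difficulty here: the argument is a two-step correction, and the only point to get right is that the defect $w$ lands in $U^{\mathscr{G}_L}$ (this uses only the commutation of $\varphi$ with $\tau_D$ and the Galois action) together with the precise surjectivity statement of the already-established Lemma \ref{lemmcoker0UGL Zp}, which itself rests on the vanishing of $\H^1(\mathscr{G}_{K_{\pi}},U)$ and of $\H^1$ over a suitable finite extension of $K_\zeta$.
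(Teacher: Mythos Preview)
Your proof is correct and is essentially the same argument as the paper's: the paper applies the snake lemma to the short exact sequence $0\to U^{\mathscr{G}_L}\to\mathcal{D}(U)_\tau\xrightarrow{\varphi-1}\mathcal{D}(U)_\tau\to0$ with vertical maps $\delta-\gamma\otimes1$, obtaining $\mathcal{D}(U)_{\tau,0}\xrightarrow{\varphi-1}\mathcal{D}(U)_{\tau,0}\to\Coker(\delta-\gamma\otimes1\colon U^{\mathscr{G}_L}\to U^{\mathscr{G}_L})=0$, while you carry out the identical diagram chase at the level of elements. Both proofs rest on the same two inputs, Corollary~\ref{coro H1 null} and Lemma~\ref{lemmcoker0UGL Zp}.
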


\begin{proof}
	By corollary \ref{coro H1 null} we have the following exact sequence:
	
	\[  0\to U^{\mathscr{G}_L} \to\mathcal{D}(U)_\tau\xrightarrow{\varphi-1}\mathcal{D}(U)_\tau\to0.\]
	Consider the following map of complexes:
	$$\xymatrix{0\ar[r] & U^{\mathscr{G}_L}\ar[r]\ar[d]_{\delta-\gamma\otimes 1} & \mathcal{D}(U)_\tau\ar[r]^{\varphi-1}\ar[d]_{\delta-\gamma\otimes 1} & \mathcal{D}(U)_\tau\ar[r]\ar[d]_{\delta-\gamma\otimes 1} & 0\\
		0\ar[r] & U^{\mathscr{G}_L}\ar[r] & \mathcal{D}(U)_\tau\ar[r]^{\varphi-1} & \mathcal{D}(U)_\tau
		\ar[r] & 0.}$$
	By snake lemma we have the exact sequence: 
	\[ \mathcal{D}(U)_{\tau, 0}\xrightarrow{\varphi-1} \mathcal{D}(U)_{\tau, 0} \to \Coker (\delta-\gamma\otimes 1\colon U^{\mathscr{G}_L}\to U^{\mathscr{G}_L})=0 \]
	where the last equality is lemma \ref{lemmcoker0UGL Zp}.
\end{proof}

\begin{proof}[Proof of theorem \ref{thm main result}]\label{pf thm main}
	Write $\Cc_{\varphi, \tau}(\mathcal{D}(U))$ as follows:
	
	\[\xymatrix{
		0\ar[rr]&& \mathcal{D}(U) \ar[r]^{\alpha\quad \quad \quad } & \mathcal{D}(U)\oplus \mathcal{D}(U)_{\tau, 0}  \ar[r]^{\beta} & \mathcal{D}(U)_{\tau, 0}  \ar[r]& 0   \\
		&&	x  \ar@{->}[r] &((\varphi-1)(x), (\tau_D-1)(x))  &{}\\
		&&	&{} (y,z)  \ar@{->}[r] &(\tau_D-1)(y)-(\varphi-1)(z).
	}\]

	As remarked in section \ref{section functorial}, it suffices to show the effaceability of $\Ff^i,$ more precisely that $\Ff^i(U)=0$ for $i\in \{ 1, 2\}.$ For $i=1,$ by proposition \ref{prop Caruso H1 Zp} we have $\H^1(\Cc_{\varphi, \tau}(U))=\H^1(\mathscr{G}_K, U),$ which is zero by lemma \ref{lemmcoho0nonGalois Zp}. For $i=2,$ it follows from lemma \ref{Surjectivity of varphi-1 on D-tau-0 Zp}. 
\end{proof}

\begin{remark}
We recover the fact that $\H^i(\mathscr{G}_K,T)=0$ for $T\in \Rep_{\ZZ_p}(\mathscr{G}_K)$ and $i\geq3.$
\end{remark}

\section{A counter example}\label{sec a counter example}

A more natural complex can be defined as follows.

\begin{definition}\label{def naive}
	Let $(D, D_{\tau})\in \Mod_{\Oo_{\Ee}, \Oo_{\Ee_{\tau}}}(\varphi, \tau).$ We define a complex $\Cc_{\varphi, \tau}^{\naif}(D)$\index{$\Cc_{\varphi, \tau}^{\naif}(D)$} as follows:
	\[ \xymatrix{
		0\ar[rr] && D \ar[r] & D\oplus D_{\tau} \ar[r] &  D_{\tau}  \ar[r]  & 0   \\
		&& x  \ar@{|->}[r] & ((\varphi-1)(x), (\tau_D-1)(x))  &{}\\
		&&&{} (y,z) \ar[r]  & (\tau_D-1)(y)-(\varphi-1)(z).
	}  \]
	If $T\in \Rep_{\ZZ_p}(\mathscr{G}_K),$ we have in particular the complex $\Cc_{\varphi, \tau}^{\naif}(\mathcal{D}(T)),$ which will also be simply denoted $\Cc_{\varphi, \tau}^{\naif}(T).$\index{$\Cc_{\varphi, \tau}^{\naif}(T)$}
\end{definition}

However, this complex \emph{does not} compute the continuous Galois cohomology in general, as showed in the following example.

\medskip

\begin{example}
	Let $T=\FF_p$ be the trivial representation: then $D=F_0$ and $D_{\tau}=F_{\tau}.$ Now we look at the following diagram, which embeds the complex $\Cc_{\varphi, \tau}(\FF_p)$ into $\Cc_{\varphi, \tau}^{\naif}(\FF_p)$ and we denote the cokernel complex by $\Cc(\FF_p).$
	
	\[\xymatrix{
		\Cc_{\varphi, \tau}(\FF_p)& 	0 \ar[r] & F_0 \ar@{=}[d]\ar[rr]^{(\varphi-1, \tau-1)}&& F_0\oplus F_{\tau, 0} \ar@{^(->}[d]\ar[rr]^{(\tau-1)\ominus(\varphi-1)}&& F_{\tau,0} \ar@{^(->}[d]\ar[r] & 0\\
		\Cc_{\varphi, \tau}^{\naif}(\FF_p)&		0 \ar[r] & F_0 \ar[d]\ar[rr]^{(\varphi-1, \tau-1)}&& F_0\oplus F_{\tau} \ar@{->>}[d]\ar[rr]^{(\tau-1)\ominus(\varphi-1)}&& F_{\tau} \ar@{->>}[d]\ar[r] & 0\\
		\Cc(\FF_p) &	0 \ar[r] & 0 \ar[rr]^{}&& F_{\tau}/F_{\tau, 0} \ar[rr]^{\varphi-1}&& F_{\tau}/F_{\tau,0} \ar[r] & 0\\
	}\]
	
	The associated long exact sequence is
	
	\begin{align*}
	0&\to \H^0(\mathscr{G}_K, \FF_p) \to \H^0_{\naif}(\mathscr{G}_K, \FF_p) \to 0\to \H^1(\mathscr{G}_K, \FF_p)\to \H^1_{\naif}(\mathscr{G}_K, \FF_p)\\
	&\to \Ker(\varphi-1)\to \H^2(\mathscr{G}_K, \FF_p) \to \H^2_{\naif}(\mathscr{G}_K, \FF_p) \to \Coker(\varphi-1)\to 0
	\end{align*}
	
	where the subscript "$\naif$" refers to the complex $\Cc_{\varphi, \tau}^{\naif}(\FF_p)$ in the middle of the diagram, while $\Ker(\varphi-1)$ and $\Coker(\varphi-1)$ refer to the last complex $F_{\tau}/F_{\tau, 0}\xrightarrow{\varphi-1}F_{\tau}/F_{\tau, 0}.$
	
	\medskip
	
	Notice that $1\not\in F_{\tau, 0}$ and $1\in \Ker(F_{\tau}\xrightarrow{\varphi-1}F_{\tau}),$ this implies that $\Ker(\varphi-1)$ is not trivial. In particular, this implies that 
	\[\H^1(\mathscr{G}_K, \FF_p)\subsetneqq \H^1_{\naif}(\mathscr{G}_K, \FF_p).\]
	More precisely, take any $\alpha\in k,$ then $(\alpha, 1)\in \Ker((\tau-1)\ominus (\varphi-1)) \setminus (F_0\oplus F_{\tau, 0})$ induces an element of $\H^1_{\naif}(\mathscr{G}_K, \FF_p)\setminus \H^1(\mathscr{G}_K, \FF_p).$
\end{example}

\begin{remark} 
	Let $T\in \Rep_{\ZZ_p}(\mathscr{G}_K)$ with $(D, D_{\tau})\in \Mod_{\Oo_{\Ee}, \Oo_{\Ee_{\tau}}}(\varphi, \tau)$ its $(\varphi, \tau)$-module. Then we have  \[(\tau_D-1)D\subset D_{\tau, 0} \subset D_{\tau} \]
	($\cf$ lemma \ref{lemm 1.1.11 well-defined tau-1 varphi-1}), and it is natural to consider the following complex:
	\[ \xymatrix{
		0\ar[rr] && D \ar[r] & D\oplus (\tau_D-1)D \ar[r] &  (\tau_D-1)D  \ar[r]  & 0   \\
		&& x  \ar@{|->}[r] & ((\varphi-1)(x), (\tau_D-1)(x))  &{}\\
		&&&{} (y,z) \ar[r]  & (\tau_D-1)(y)-(\varphi-1)(z).
	}  \]
	However, this complex \emph{does not} compute the continuous Galois cohomology of $T$ in general as it has trivial $H^2$.
\end{remark}

\chapter{Relation with Tavares Ribeiro's complex}

In this chapter, we show that the complex $\Cc_{\varphi, \tau}$ defined in chapter \ref{chapter The complex C varphi tau} is a refinement of Tavares Ribeiro's complex introduced in \cite[\S 1.5]{Tav11}, at least in the finite residue field case.

\begin{theorem}
	Let $T$ be an integral $p$-adic representation of $\mathscr{G}_K$ and let $M=D_L(T)=\big(\mathcal{O}_{\widehat{\Ee^{\ur}}}\otimes_{\ZZ_p}T  \big)^{\mathscr{G}_L}$. Then the homology of the complex $\Cc_{\varphi, \gamma, \tau}(M)$\index{$\Cc_{\varphi, \gamma, \tau}(M)$} defined as follows:
	\[0\to M \xrightarrow{\tilde{\alpha}} M\oplus M\oplus M \xrightarrow{\tilde{\beta}}M\oplus M\oplus M\xrightarrow{\tilde{\eta}}M \to 0 \] 
	where
	\[	\tilde{\alpha}=\begin{pmatrix}
	\varphi-1\\
	\gamma-1\\
	\tau-1
	\end{pmatrix}  
	, \ 
	\tilde{\beta}=\begin{pmatrix}
	\gamma-1 &1-\varphi& 0\\
	\tau-1  &0&  1-\varphi\\
	0 &\tau^{\chi(\gamma)}-1& \delta-\gamma
	\end{pmatrix}   \]
	\[ \tilde{\eta}=\begin{pmatrix}
	\tau^{\chi(\gamma)}-1, \delta-\gamma, \varphi-1
	\end{pmatrix}  \]
	with $\delta=(\tau^{\chi(\gamma)}-1)(\tau-1)^{-1}\in \ZZ_p[\![\tau-1]\!],$ identifies canonically and functorially with the continuous Galois cohomology of $T.$ 
\end{theorem}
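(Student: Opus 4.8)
This is, in substance, \cite[\S 1.5]{Tav11}; since it will serve as the benchmark for the comparison of the next section, I would reprove it along the lines of Theorem \ref{thm main result} — via a $\delta$-functor, effaceability, d\'evissage and a passage to the limit — rather than through the (less economical) route of first establishing a quasi-isomorphism with $\Cc_{\varphi,\tau}$, which anyway is only available when $k$ is finite (Theorem \ref{thm map from HC to TR}). Extend $D_L(\cdot)$ to $\Ind\Rep_{\ZZ_p}(\mathscr{G}_K)$ and, for $T$ in this category, set $\mathcal{G}^i(T):=\H^i(\Cc_{\varphi,\gamma,\tau}(D_L(T)))$, with the first term $M=D_L(T)$ placed in degree $0$, so that $\mathcal{G}^0=\Ker\tilde\alpha$, $\mathcal{G}^3=\Coker\tilde\eta$ and $\mathcal{G}^i=0$ for $i\geq 4$.

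First I would check that $\{\mathcal{G}^i\}_{i\geq 0}$ is a $\delta$-functor with $\mathcal{G}^0=\H^0(\mathscr{G}_K,-)$. The functor $T\mapsto D_L(T)=(\Oo_{\widehat{\Ee^{\ur}}}\otimes_{\ZZ_p}T)^{\mathscr{G}_L}$ is exact, because $\H^1(\mathscr{G}_L,\Oo_{\widehat{\Ee^{\ur}}}\otimes_{\ZZ_p}T)=0$ (\cf Corollary \ref{coro H1 null} and Lemma \ref{lemm H1 G infty vanishes}); since $\Cc_{\varphi,\gamma,\tau}$ only applies finite direct sums and additive operators to $D_L(T)$ — no subquotient of the type $D_{\tau,0}$ intervenes, in contrast to $\Cc_{\varphi,\tau}$ — the functor $T\mapsto\Cc_{\varphi,\gamma,\tau}(D_L(T))$ is exact, and \cite[Theorem 1.3.1]{Wei95} supplies the connecting maps. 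In degree $0$, using $(\Oo_{\widehat{\Ee^{\ur}}})^{\varphi=1}=\ZZ_p$ and $\Oo_{\widehat{\Ee^{\ur}}}\otimes_{\ZZ_p}T\simeq\Oo_{\widehat{\Ee^{\ur}}}\otimes_{\Oo_{\Ee}}\mathcal{D}(T)$ one gets $D_L(T)^{\varphi=1}=T^{\mathscr{G}_L}$ (with its residual action of $\mathscr{G}_K/\mathscr{G}_L$), and since $\overline{\la\gamma\ra}$ and $\overline{\la\tau\ra}$ topologically generate $\Gal(L/K)=\mathscr{G}_K/\mathscr{G}_L$ (Remark \ref{rem def tau}), $\mathcal{G}^0(T)=(T^{\mathscr{G}_L})^{\gamma=1,\tau=1}=T^{\mathscr{G}_K}$.

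The heart of the argument is effaceability: for $T\in\Rep_{\ZZ_p,\tors}(\mathscr{G}_K)$ and $U=\Ind_{\mathscr{G}_K}(T)$, one must show $\mathcal{G}^i(U)=0$ for $i\in\{1,2,3\}$ (the cases $i\geq 4$ being vacuous). Put $M=D_L(U)$. The inputs are: $\varphi-1$ is surjective on $M$ (Artin--Schreier $0\to\ZZ_p\to\Oo_{\widehat{\Ee^{\ur}}}\xrightarrow{\varphi-1}\Oo_{\widehat{\Ee^{\ur}}}\to 0$, the vanishing $\H^1(\mathscr{G}_L,U)=0$ of Lemma \ref{lemmcoho0nonGalois Zp}, and the continuous section of Lemma \ref{lemm conti section}); $\gamma-1$, $\tau-1$, $\tau^{\chi(\gamma)}-1$ are surjective on $M$ by the reasoning of Lemmas \ref{lemm surjective gamma tau} and \ref{lemmcoker0UGL Zp} (vanishing of $\H^1$ of $U$ over $K_\pi$ and over $L^{\overline{\la\tau^n\ra}}$); and $\delta-\gamma$ is surjective on $M$ from the identity $(\delta-\gamma)(\tau-1)=(1-\tau^{\chi(\gamma)})(\gamma-1)$ of Lemma \ref{lemm amazing but easy equation} together with the two preceding surjectivities, exactly as in Proposition \ref{propdeltagamma Zp}. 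Surjectivity of $\varphi-1$ already gives $\tilde\eta$ surjective, hence $\mathcal{G}^3(U)=0$. For $\mathcal{G}^1(U)=\mathcal{G}^2(U)=0$ I would present $\Cc_{\varphi,\gamma,\tau}(M)$ as the cone, shifted, of $\varphi-1$ acting on the three-term complex computing $\H^\bullet(\Gal(L/K),-)$ on $M$ — namely $M\to M\oplus M\to M$ with first arrow $x\mapsto((\gamma-1)x,(\tau-1)x)$ and second arrow $(y,z)\mapsto(\tau^{\chi(\gamma)}-1)y+(\delta-\gamma)z$ — which is exact in positive degrees over $M$ by the surjectivities above and Lemma \ref{lemm amazing but easy equation}; a snake-lemma chase, of the type of Lemmas \ref{lemmcoker0UGL Zp} and \ref{Surjectivity of varphi-1 on D-tau-0 Zp}, then kills $\mathcal{G}^1(U)$ and $\mathcal{G}^2(U)$. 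An effaceable $\delta$-functor agreeing with $\H^0(\mathscr{G}_K,-)$ in degree $0$ is the universal one, hence $\mathcal{G}^i\simeq\H^i(\mathscr{G}_K,-)$ on $\Rep_{\ZZ_p,\tors}(\mathscr{G}_K)$; d\'evissage along $p$-power torsion and then $\varprojlim$ (resp. $\varinjlim$ after inverting $p$) propagate the isomorphism to $\Rep_{\ZZ_p}(\mathscr{G}_K)$ and to $\Rep_{\QQ_p}(\mathscr{G}_K)$, and its functoriality is built in.

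The main obstacle is, as for Theorem \ref{thm main result}, this last effaceability step: the surjectivity statements and especially the exactness in the middle degrees are not formal, because the twist $\delta=(\tau^{\chi(\gamma)}-1)(\tau-1)^{-1}$ forced by the metab\'elian relation $\gamma\tau\gamma^{-1}=\tau^{\chi(\gamma)}$ makes $\Cc_{\varphi,\gamma,\tau}(M)$ a genuine total complex of a triple complex rather than a direct sum of Koszul complexes, so one must lean on the commutation identity of Lemma \ref{lemm amazing but easy equation} and on careful diagram chases already rehearsed in Lemmas \ref{lemmcoker0UGL Zp} and \ref{Surjectivity of varphi-1 on D-tau-0 Zp}. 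By comparison, the $\delta$-functor formalism and the degree-$0$ identification are routine — and in fact lighter here than for $\Cc_{\varphi,\tau}$, since no intangible subquotient $D_{\tau,0}$ enters.
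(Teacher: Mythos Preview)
Your proposal is correct and matches the paper's approach: the paper simply cites \cite[Theorem 1.5]{Tav11}, and the strategy you outline --- $\delta$-functor formalism, identification in degree $0$, effaceability via the surjectivity of $\varphi-1$, $\gamma-1$, $\tau^\alpha-1$ and $\delta-\gamma$ on $D_L(\Ind_{\mathscr{G}_K}T)$, then d\'evissage and passage to the limit --- is precisely Tavares Ribeiro's argument, and is also what the paper reproduces in Section~\ref{section 2.1} for the $D_\tau$-variant. One small caveat: the lemmas you invoke (e.g.\ Lemma~\ref{lemm surjective gamma tau}, Proposition~\ref{propdeltagamma Zp}) are stated for $\mathcal{D}(T)_\tau=(\W(C^\flat)\otimes T)^{\mathscr{G}_L}$ rather than for $D_L(T)=(\Oo_{\widehat{\Ee^{\ur}}}\otimes T)^{\mathscr{G}_L}$; the arguments do transfer (the relevant input is $\H^1(\mathscr{G}_{K'},U)=0$ from Lemma~\ref{lemmcoho0nonGalois Zp}, which is insensitive to the coefficient ring), and your phrasing ``by the reasoning of'' signals you are aware of this, but in a written-out proof you would want the analogue of Lemma~\ref{lemm replace DL with D-tau} with $D_L$ in place of $D_\tau$, which is exactly \cite[Lemma 1.9]{Tav11}.
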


\begin{proof}
	$\cf$ \cite[Theorem 1.5]{Tav11}.
\end{proof}

\section{Tavares Ribeiro's complex with \texorpdfstring{$D_{\tau}$}{D\texttinferior\textainferior\textuinferior}}\label{section 2.1}

By replacing $D_L(T)$ in \cite[Lemma 1.9]{Tav11} with $\mathcal{D}(T)_{\tau},$ we have:

\begin{lemma}\label{lemm replace DL with D-tau} 
	For any $T\in \Ind \Rep_{\ZZ_p, p^r-tor}(\mathscr{G}_K)$ and $\alpha\in \ZZ_p^{\times},$ we have: 
	\begin{align*}
	0&\to \Ind_{\widehat{G}}(T)\to \mathcal{D}(\Ind_{\mathscr{G}_K}(T))_{\tau}\xrightarrow{\varphi-1} \mathcal{D}(\Ind_{\mathscr{G}_K}(T))_{\tau}\to 0\\
	0&\to \Ind_{\Gamma}(T) \to  \Ind_{\widehat{G}}(T) \xrightarrow{\tau^{\alpha}-1}\Ind_{\widehat{G}}(T) \to 0\\
	0&\to \Ind_{\mathscr{G}_K}(T)^{\mathscr{G}_K} \to \Ind_{\Gamma}(T) \xrightarrow{\gamma-1} \Ind_{\Gamma}(T)\to 0.
	\end{align*}
\end{lemma}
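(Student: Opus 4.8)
The plan is to mimic the proof of \cite[Lemma 1.9]{Tav11}, the only change being that the period ring $\mathcal{O}_{\widehat{\mathcal{E}^{\ur}}}$ underlying $D_L$ is replaced by $\W(C^{\flat})$. Put $U:=\Ind_{\mathscr{G}_K}(T)$; by the remark following Theorem \ref{thm main result} this is a discrete $\mathscr{G}_K$-module killed by $p^r$. Two inputs will be used throughout. First, $\H^1(\mathscr{G}_{K'},U)=0$ for \emph{every} intermediate field $K\subseteq K'\subseteq\overline{K}$: this is Lemma \ref{lemmcoho0nonGalois Zp}, which passes to the ind-category by writing $T$ as a filtered colimit of torsion representations and using that both $\Ind_{\mathscr{G}_K}$ and continuous cohomology commute with such colimits (the group $\mathscr{G}_K$ being compact and the coefficients discrete). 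Second, the identifications $U^{\mathscr{G}_L}=\Ind_{\widehat{G}}(T)$ (with $\widehat{G}=\mathscr{G}_K/\mathscr{G}_L=\Gal(L/K)$) and $U^{\mathscr{G}_{K_{\zeta}}}=\Ind_{\Gamma}(T)$, equivariantly for the residual right-translation actions of $\widehat{G}$ and of $\Gamma=\mathscr{G}_K/\mathscr{G}_{K_{\zeta}}=\Gal(K_{\zeta}/K)$; these hold because $\mathscr{G}_L$ and $\mathscr{G}_{K_{\zeta}}$ are normal in $\mathscr{G}_K$, so that $U^{\mathscr{G}_L}$ resp. $U^{\mathscr{G}_{K_{\zeta}}}$ is, as a module over the quotient group, the module of continuous functions on that quotient.

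The first sequence is exactly the first displayed exact sequence in Corollary \ref{coro H1 null}, whose proof applies verbatim in the present ind-setting: one tensors $0\to\ZZ_p\to\W(C^{\flat})\xrightarrow{\varphi-1}\W(C^{\flat})\to 0$ with $U$, uses the continuous section provided by Lemma \ref{lemm conti section} in order to take $\mathscr{G}_L$-cohomology, identifies $\big(\W(C^{\flat})\otimes_{\ZZ_p}U\big)^{\mathscr{G}_L}$ with $\mathcal{D}(U)_{\tau}$ via Lemma \ref{lemm iso for D-tau}, and invokes $\H^1(\mathscr{G}_L,U)=0$ to get surjectivity of $\varphi-1$. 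The resulting sequence $0\to U^{\mathscr{G}_L}\to\mathcal{D}(U)_{\tau}\xrightarrow{\varphi-1}\mathcal{D}(U)_{\tau}\to 0$ is the first claimed sequence, once one reads $U^{\mathscr{G}_L}=\Ind_{\widehat{G}}(T)$.

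For the second sequence, since $\alpha\in\ZZ_p^{\times}$ one has $\overline{\langle\tau^{\alpha}\rangle}=\overline{\langle\tau\rangle}=\Gal(L/K_{\zeta})=\mathscr{G}_{K_{\zeta}}/\mathscr{G}_L$, a normal procyclic subgroup of $\widehat{G}$ with quotient $\Gamma$; hence the kernel of $\tau^{\alpha}-1$ on $U^{\mathscr{G}_L}=\Ind_{\widehat{G}}(T)$ is $(U^{\mathscr{G}_L})^{\overline{\langle\tau\rangle}}=U^{\mathscr{G}_{K_{\zeta}}}=\Ind_{\Gamma}(T)$, while by Lemma \ref{lemm coho procyclic} its cokernel is $\H^1(\overline{\langle\tau^{\alpha}\rangle},U^{\mathscr{G}_L})$; inflation--restriction for $1\to\mathscr{G}_L\to\mathscr{G}_{K_{\zeta}}\to\overline{\langle\tau\rangle}\to 1$ together with $\H^1(\mathscr{G}_L,U)=0$ identifies this group with $\H^1(\mathscr{G}_{K_{\zeta}},U)$, which vanishes by the first input. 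This is the mechanism already used in Lemma \ref{lemmcoker0UGL Zp}. The third sequence is obtained identically, replacing the triple $(\mathscr{G}_L\triangleleft\mathscr{G}_{K_{\zeta}},\ \tau^{\alpha},\ \overline{\langle\tau\rangle})$ by $(\mathscr{G}_{K_{\zeta}}\triangleleft\mathscr{G}_K,\ \gamma,\ \Gamma)$: the kernel of $\gamma-1$ on $\Ind_{\Gamma}(T)=U^{\mathscr{G}_{K_{\zeta}}}$ is $U^{\mathscr{G}_K}=\Ind_{\mathscr{G}_K}(T)^{\mathscr{G}_K}$, and its cokernel is $\H^1(\overline{\langle\gamma\rangle},U^{\mathscr{G}_{K_{\zeta}}})\simeq\H^1(\mathscr{G}_K,U)=0$. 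Everything here is formal once the cited vanishing statements and inflation--restriction are in hand; the only points that I expect will require real care in the writing are the two module identifications in the setup — in particular checking that under them $\tau^{\alpha}-1$ and $\gamma-1$ act as the residual translation operators, so that their kernels really are $\Ind_{\Gamma}(T)$ and $\Ind_{\mathscr{G}_K}(T)^{\mathscr{G}_K}$ — and the (routine) verification that Lemma \ref{lemmcoho0nonGalois Zp} survives the passage to ind-objects.
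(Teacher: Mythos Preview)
Your proposal is correct and follows essentially the same approach as the paper: the paper's proof simply cites Corollary~\ref{coro H1 null} for the first sequence and defers to \cite[Lemma 1.9]{Tav11} for the remaining two, and you have faithfully unpacked what that citation amounts to (identifying $U^{\mathscr{G}_L}=\Ind_{\widehat{G}}(T)$ and $U^{\mathscr{G}_{K_\zeta}}=\Ind_{\Gamma}(T)$, then using Lemma~\ref{lemm coho procyclic} together with inflation--restriction and the vanishing from Lemma~\ref{lemmcoho0nonGalois Zp}). Your explicit remark about the passage to ind-objects is a point the paper leaves implicit.
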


\begin{proof}
	The first exact sequence follows from corollary \ref{coro H1 null}, while the proof for the rest ones are the same as in \cite[Lemma 1.9]{Tav11}.
\end{proof}

\begin{definition}\label{def Fi 2}
	For $i\in \NN,$ we put 
	\begin{align*}
	\Ff^i\colon \Rep_{\ZZ_p}(\mathscr{G}_K)&\to \Ab \\
	T &\to \H^i(\Cc_{\varphi, \gamma, \tau}(\mathcal{D}(T)_{\tau})).
	\end{align*}\index{$\Ff^i$}
\end{definition}

\begin{remark}
The functor $\{ \Ff^i \}_{i}$ just defined is different from that of the previous chapter ($\cf$ definition \ref{def Fi 1}).
\end{remark}

\begin{lemma}\label{lemm right H1 C-TR}
	We have
	\[ \Ff^0(T)\simeq \H^0(\mathscr{G}_K,T).  \]
\end{lemma}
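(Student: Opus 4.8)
The plan is to compute directly the degree-zero cohomology of the four-term complex $\Cc_{\varphi,\gamma,\tau}(\mathcal{D}(T)_\tau)$, which by definition is $\Ker\tilde\alpha=\{x\in M\;;\;(\varphi-1)(x)=(\gamma-1)(x)=(\tau-1)(x)=0\}$ with $M=\mathcal{D}(T)_\tau$, and to identify it with $T^{\mathscr{G}_K}$ by peeling off the operators one at a time, exactly as in the proof of Lemma \ref{lemm H0}. First I would use Lemma \ref{lemm iso for D-tau} to replace $M$ by $(\W(C^\flat)\otimes_{\ZZ_p}T)^{\mathscr{G}_L}$; under this identification $\varphi$ corresponds to $\varphi\otimes\id$, while the operators $\gamma$ and $\tau$ correspond to the diagonal action on $\W(C^\flat)\otimes_{\ZZ_p}T$ of the chosen lifts in $\Gal(L/K)=\mathscr{G}_K/\mathscr{G}_L$ (this group acts on the $\mathscr{G}_L$-invariants since $\mathscr{G}_L\triangleleft\mathscr{G}_K$).

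Next I would take $\varphi$-invariants first. Tensoring the exact sequence $0\to\ZZ_p\to\W(C^\flat)\xrightarrow{\varphi-1}\W(C^\flat)\to 0$ (used in the proof of Corollary \ref{coro H1 null}) over $\ZZ_p$ with $T$, and using that $\W(C^\flat)$ is $\ZZ_p$-flat, yields $\big(\W(C^\flat)\otimes_{\ZZ_p}T\big)^{\varphi=1}=T$, the copy of $T$ embedded via $\ZZ_p=\W(\FF_p)\subset\W(C^\flat)$, which is $\mathscr{G}_K$-equivariant for the diagonal (i.e.\ the original) action on $T$. Since $\varphi$ commutes with the $\mathscr{G}_L$-action, taking $\mathscr{G}_L$-invariants and $\varphi$-invariants commute, so $M^{\varphi=1}=T^{\mathscr{G}_L}$, with its residual $\Gal(L/K)$-action.

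It then remains to compute $\big(T^{\mathscr{G}_L}\big)^{\gamma=1,\tau=1}$. By Remark \ref{rem def tau} we have $\Gal(L/K)\simeq\overline{\la\tau\ra}\rtimes\overline{\la\gamma\ra}$, so $\gamma$ and $\tau$ together topologically generate $\Gal(L/K)$; hence $\big(T^{\mathscr{G}_L}\big)^{\gamma=1,\tau=1}=\big(T^{\mathscr{G}_L}\big)^{\Gal(L/K)}=T^{\mathscr{G}_K}=\H^0(\mathscr{G}_K,T)$. All the identifications involved are natural in $T$, which gives the functoriality of the isomorphism $\Ff^0(T)\simeq\H^0(\mathscr{G}_K,T)$.

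The argument is essentially routine and parallels Lemma \ref{lemm H0}; the only points requiring a little care are the compatibility of the isomorphism of Lemma \ref{lemm iso for D-tau} with Frobenius and with the residual $\Gal(L/K)$-action (so that the operators called $\gamma$ and $\tau$ in $\Cc_{\varphi,\gamma,\tau}$ really do act as the geometric elements), and the order in which the successive invariants are taken, which is legitimate precisely because $\varphi$, $\gamma$ and $\tau$ all commute with the $\mathscr{G}_L$-action on $\W(C^\flat)\otimes_{\ZZ_p}T$.
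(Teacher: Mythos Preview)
Your proof is correct and follows essentially the same approach as the paper: identify $\mathcal{D}(T)_\tau$ with $(\W(C^\flat)\otimes_{\ZZ_p}T)^{\mathscr{G}_L}$ via Lemma~\ref{lemm iso for D-tau}, take $\varphi$-invariants using $\W(C^\flat)^{\varphi=1}=\ZZ_p$ to reduce to $T^{\mathscr{G}_L}$, and then use that $\langle\mathscr{G}_L,\tau,\gamma\rangle=\mathscr{G}_K$. The paper compresses this into a single line, but your more detailed version makes the same identifications in the same order.
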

\begin{proof}
We have $\Ff^0(T)=(\W(C^{\flat})\otimes_{\ZZ_p} T)^{\mathscr{G}_L, \varphi=1, \gamma=1, \tau=1}=(\W(C^{\flat})^{\varphi=1}\otimes_{\ZZ_p} T)^{ \la \mathscr{G}_L,\tau, \gamma \ra }=T^{ \la \mathscr{G}_L,\tau, \gamma \ra }=T^{\mathscr{G}_K}.$
\end{proof}

\begin{lemma}\label{lemm H1 G infty vanishes}
	Let $\mathcal{M}$ be an $\mathcal{O}_{\widehat{\mathcal{E}^{\ur}}}$-module of finite type with a continuous (for the $p$-adic topology) semi-linear action of $\mathscr{G}_{K_{\pi}},$ $\ie$$(\forall g\in \mathscr{G}_{K_{\pi}}) (\forall \lambda\in \mathcal{O}_{\widehat{\mathcal{E}^{\ur}}}) (\forall m\in \mathcal{M})\  g(\lambda m)=g(\lambda)g(m),$ then
	\[ \H^1(\mathscr{G}_{K_{\pi}}, \mathcal{M})=0. \]
\end{lemma}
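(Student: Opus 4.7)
The plan is to reduce to the mod $p$ situation by $p$-adic devissage, and then to invoke a semi-linear Hilbert 90 via the field-of-norms isomorphism $\mathscr{G}_{K_\pi}\simeq\Gal(F_0^{\sep}/F_0)$.

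First I would reduce to the case $p\mathcal{M}=0$. Assuming the statement is proved for every finite-dimensional $F_0^{\sep}$-vector space with continuous semi-linear $\mathscr{G}_{K_\pi}$-action, I would argue by induction on $n$ using the short exact sequences
$$0\to p^n\mathcal{M}/p^{n+1}\mathcal{M}\to \mathcal{M}/p^{n+1}\mathcal{M}\to \mathcal{M}/p^n\mathcal{M}\to 0,$$
in which the left-hand term is a $\mathscr{G}_{K_\pi}$-stable quotient of $\mathcal{M}/p\mathcal{M}$ (via multiplication by $p^n$) and hence an $F_0^{\sep}$-vector space to which the mod $p$ case applies. This yields $\H^1(\mathscr{G}_{K_\pi},\mathcal{M}/p^n\mathcal{M})=0$ for every $n$. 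Since $\mathcal{M}$ is of finite type over a $p$-adically complete ring, $\mathcal{M}=\varprojlim_n\mathcal{M}/p^n\mathcal{M}$, and the continuous cohomology exact sequence of \cite[Theorem 2.3.4]{NSW13} (the same tool used in the proof of corollary \ref{lemmcoho0 Zp})
$$0\to \R^1\!\varprojlim_n\H^0(\mathscr{G}_{K_\pi},\mathcal{M}/p^n\mathcal{M})\to \H^1(\mathscr{G}_{K_\pi},\mathcal{M})\to \varprojlim_n\H^1(\mathscr{G}_{K_\pi},\mathcal{M}/p^n\mathcal{M})\to 0$$
kills both ends once one verifies Mittag-Leffler on the $\H^0$-system, which follows from the surjectivity of the transition maps (itself a consequence of the vanishing of the $\H^1$ of the kernels).

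For the key mod $p$ step, $\mathcal{M}$ is a finite-dimensional $F_0^{\sep}$-vector space with continuous semi-linear action of $\mathscr{G}_{K_\pi}\simeq\Gal(F_0^{\sep}/F_0)$ (theory of fields of norms). By continuity, the common stabilizer of a basis of $\mathcal{M}$ is an open subgroup $H\leq\mathscr{G}_{K_\pi}$, so $\mathcal{M}$ descends to $(F_0^{\sep})^H$, a finite Galois extension $L$ of $F_0$. Semi-linear Hilbert 90 — i.e.\ the descent equivalence between finite-dimensional $F_0$-vector spaces and finite-dimensional $L$-vector spaces with semi-linear $\Gal(L/F_0)$-action, combined with the classical additive $\H^1(\Gal(L/F_0),L)=0$ — then gives $\H^1(\Gal(L/F_0),\mathcal{M})=0$, and passing to the inductive limit over $L$ yields $\H^1(\mathscr{G}_{K_\pi},\mathcal{M})=0$.

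The main obstacle is bookkeeping in the devissage: because $\mathcal{M}$ may have $p$-torsion, the successive quotients $p^n\mathcal{M}/p^{n+1}\mathcal{M}$ are only $\mathscr{G}_{K_\pi}$-stable quotients—not submodules—of $\mathcal{M}/p\mathcal{M}$; this is harmless because Hilbert 90 applies to all such semi-linear $F_0^{\sep}$-vector spaces. The Hilbert 90 input itself is classical once the field-of-norms identification is invoked.
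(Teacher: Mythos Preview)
Your proposal is correct and follows essentially the same three-step strategy as the paper: Hilbert 90 for the mod $p$ case, d\'evissage for $p$-power torsion, and passage to the limit via the $\R^1\varprojlim$ exact sequence with Mittag-Leffler. The only cosmetic difference is that the paper invokes the non-abelian Hilbert 90 directly (triviality of $\H^1(\mathscr{G}_{K_\pi},\GL_d(F_0^{\sep}))$ for the discrete topology) to conclude $\mathcal{M}\simeq (F_0^{\sep})^d$ as a $\mathscr{G}_{K_\pi}$-module, whereas you phrase it as a finite-level descent; both yield the same conclusion.
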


\begin{proof}
	\begin{enumerate}
		\item [(1)] We first assume that $\mathcal{M}$ is a $F_0^{\sep}$-vector space of finite dimension, and hence $p\mathcal{M}=0.$ Denote $d=\dim_{F_0^{\sep}}\mathcal{M}$ and denote $\underline{e}=(e_1, \dots, e_d)$ a basis of $\mathcal{M}$ over $F_0^{\sep}.$ For any $g\in \mathscr{G}_{K_{\pi}},$ we denote by $U_g\in \GL_d(F_0^{\sep})$ the matrix of the action of $g$ under the fixed basis $\underline{e}.$ There is a bijection between the classes of $F_0^{\sep}$-representations of $\mathscr{G}_{K_{\pi}}$ and the set $\H^1(\mathscr{G}_{K_{\pi}}, \GL_d(F_0^{\sep})),$ which has one point by Hilbert 90 theorem (here $F_0^{\sep}$ is endowed with the discrete topology). This implies that $\mathcal{M}\simeq (F_0^{\sep})^d$ as a $\mathscr{G}_{K_\pi}$-module and hence $\H^1(\mathscr{G}_{K_{\pi}}, \mathcal{M})\simeq \H^1(\mathscr{G}_{K_{\pi}}, F_0^{\sep})^d=0.$
		\item [(2)]  Now we assume that $\mathcal{M}$ is killed by a power of $p.$ We use induction as follows: if $\mathcal{M}$ is killed by $p^n,$ then we have an exact sequence
		\[ 0\to p^{n-1}\mathcal{M} \to \mathcal{M} \to \mathcal{M}/p^{n-1}\mathcal{M} \to 0, \]
		hence an exact sequence 
		\[ \cdots \to \H^1(\mathscr{G}_{K_{\pi}}, p^{n-1}\mathcal{M}) \to \H^1(\mathscr{G}_{K_{\pi}}, \mathcal{M}) \to \H^1(\mathscr{G}_{K_{\pi}}, \mathcal{M}/p^{n-1})\to \cdots. \]
		As $p^{n-1}\mathcal{M}$ is killed by $p$ and $\mathcal{M}/p^{n-1}$ by $p^{n-1},$ the induction hypothesis and (1) implies the vanishing of two sides, hence the vanishing in the middle.
		\item [(3)]  If $\mathcal{M}$ is an $\mathcal{O}_{\widehat{\mathcal{E}^{\ur}}}$-module of finite type, we have $\mathcal{M}=\pLim{n} \mathcal{M}/p^n\mathcal{M}$ and hence we have the following exact sequence
		\[ \R^1\pLim{n} \H^0(\mathscr{G}_{K_{\pi}}, \mathcal{M}/p^n)\to \H^1(\mathscr{G}_{K_{\pi}}, \mathcal{M})\to \pLim{n} \H^1(\mathscr{G}_{K_{\pi}}, \mathcal{M}/p^{n}). \] 
		The first term vanishes since the short exact sequence
		\[ 0\to p^n\mathcal{M}/p^{n+1}\to \mathcal{M}/p^{n+1} \to \mathcal{M}/p^n \to 0 \]
		gives the exact sequence \[\H^0(\mathscr{G}_{K_{\pi}}, \mathcal{M}/p^{n+1}) \to \H^0(\mathscr{G}_{K_{\pi}}, \mathcal{M}/p^n) \to \H^1(\mathscr{G}_{K_{\pi}}, p^n\mathcal{M}/p^{n+1}),\]
		and we know the right term vanishes by situation (2) we discussed. Hence we have Mittag-Leffler condition for the system of $\H^0$ and then $\R^1\pLim{n} \H^0(\mathscr{G}_{K_{\pi}}, \mathcal{M}/p^n)=0.$ Notice that $\pLim{n} \H^1(\mathscr{G}_{K_{\pi}}, \mathcal{M}/p^{n})=0$ by case (2), and hence $\H^1(\mathscr{G}_{K_{\pi}}, \mathcal{M})=0.$
	\end{enumerate}	
\end{proof}

\begin{lemma}\label{lemm continuous action p-adic topology ur}
	The group $\mathscr{G}_{K_{\pi}}$ acts continuously on the ring $\Oo_{\widehat{\mathcal{E}^{\ur}}},$ where the latter is endowed with the $p$-adic topology. 
\end{lemma}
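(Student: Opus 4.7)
The plan is to prove continuity at the identity of the orbit map $g \mapsto g(x)$ for a fixed $x \in \Oo_{\widehat{\mathcal{E}^{\ur}}}$; since the action is by ring automorphisms and both sides are topological groups, this is sufficient. Concretely, for each $n \geq 1$, I want to produce an open subgroup $H \subset \mathscr{G}_{K_\pi}$ with $g(x) - x \in p^n \Oo_{\widehat{\mathcal{E}^{\ur}}}$ for all $g \in H$.

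The first step is a reduction mod $p^n$. Because $\Oo_{\widehat{\mathcal{E}^{\ur}}}$ is by definition the $p$-adic completion of $\Oo_{\mathcal{E}^{\ur}}$, the canonical map $\Oo_{\mathcal{E}^{\ur}}/p^n \to \Oo_{\widehat{\mathcal{E}^{\ur}}}/p^n$ is an isomorphism, so I can choose $y \in \Oo_{\mathcal{E}^{\ur}}$ with $x - y \in p^n\Oo_{\widehat{\mathcal{E}^{\ur}}}$. It then suffices to find an open $H \subset \mathscr{G}_{K_\pi}$ fixing $y$, because any $g \in \mathscr{G}_{K_\pi}$ preserves the ideal $p^n\Oo_{\widehat{\mathcal{E}^{\ur}}}$ (it fixes $p$ and is a ring map), so $g(x) - x = g(x-y) - (x-y) \in p^n \Oo_{\widehat{\mathcal{E}^{\ur}}}$ whenever $g(y) = y$.

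The second step uses the ind-étale structure: by construction of $\Oo_{\mathcal{E}^{\ur}}$, the element $y$ lies in a finite étale $\Oo_{\Ee}$-subalgebra $A \subset \Oo_{\mathcal{E}^{\ur}}$. Under the equivalence between finite étale $\Oo_{\Ee}$-algebras and finite separable extensions of the residue field $F_0$ (which holds because $\Oo_{\Ee}$ is a complete discrete valuation ring, hence Henselian), $A$ corresponds to a finite separable extension $F_1/F_0$ inside $F_0^{\sep}$. Via the isomorphism $\mathscr{G}_{K_\pi} \simeq \Gal(F_0^{\sep}/F_0)$ recalled just after the definition of $\Oo_{\Ee_\tau}$, the action of $\mathscr{G}_{K_\pi}$ on $A$ therefore factors through the finite quotient $\Gal(F_1^{\mathrm{Gal}}/F_0)$, where $F_1^{\mathrm{Gal}}$ is the Galois closure of $F_1$ in $F_0^{\sep}$. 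In particular the pointwise stabilizer $H := \mathrm{Stab}_{\mathscr{G}_{K_\pi}}(y)$ contains $\Gal(F_0^{\sep}/F_1^{\mathrm{Gal}})$ and is thus open, which completes the proof by the previous paragraph.

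The only point requiring mild care is the identification of the Galois action on $A$ with a continuous action of the profinite quotient, but this is immediate from the fact that the inclusion $A \hookrightarrow \Oo_{\mathcal{E}^{\ur}} \hookrightarrow \W(C^\flat)$ is $\mathscr{G}_{K_\pi}$-equivariant and that $A$ is finite étale over $\Oo_{\Ee}$. No serious obstacle is expected; the argument is a formal consequence of the $p$-adic density of $\Oo_{\mathcal{E}^{\ur}}$ in $\Oo_{\widehat{\mathcal{E}^{\ur}}}$ combined with the ind-étale description already built into the definition.
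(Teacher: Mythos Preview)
Your argument is correct. The paper's proof is considerably shorter and takes a slightly different route: it simply notes that $\mathcal{E}^{\ur}/\mathcal{E}$ is Galois with group $\mathscr{G}_{K_\pi}$, and that since $\mathcal{E}$ is complete the $p$-adic valuation extends uniquely to $\mathcal{E}^{\ur}$, so $\mathscr{G}_{K_\pi}$ acts by isometries, hence continuously; the action then passes to the $p$-adic completion. Your proof unpacks the continuity in the group variable explicitly (via the ind-\'etale structure and open stabilizers) and handles the passage to the completion by a direct $\bmod\ p^n$ reduction, rather than invoking uniqueness of the valuation extension. The paper's approach is slicker but leaves the $g$-continuity somewhat implicit; your approach is more self-contained and makes every step visible, at the cost of more writing. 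Both are fine, and your version would make the paper's terse argument more transparent.
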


\begin{proof}
	The extension $\mathcal{E}^{ur}/\mathcal{E}$ is Galois with group $\mathscr{G}_{K_\pi}.$The $p$-adic valuation is the unique valuation that extends the $p$-adic valuation on $\mathcal{E}:$ this implies that $\mathscr{G}_{K_\pi}$ acts by isometries hence continuously on $\mathcal{E}^{\ur}.$ In particular, the action over $\mathcal{O}_{\widehat{\mathcal{E}^{ur}}}$ is continuous for the $p$-adic topology.
\end{proof}

\begin{corollary}\label{coro D and D tau are exact}
	The functors $D(-)$ and $D(-)_{\tau}$ are exact.
\end{corollary}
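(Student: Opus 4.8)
The plan is to realize each of the two functors as a composition of an exact base-change functor with the (left-exact) formation of Galois invariants, and to check that the relevant first cohomology group vanishes, so that the composition is exact. Recall (Theorem \ref{thm equivalence of cats} and Definition \ref{Phi-tau-module}) that $D(T)=\mathcal D(T)=\big(\Oo_{\widehat{\Ee^{\ur}}}\otimes_{\ZZ_p}T\big)^{\mathscr G_{K_{\pi}}}$ and $D(T)_\tau=\mathcal D(T)_\tau=\Oo_{\Ee_{\tau}}\otimes_{\Oo_{\Ee}}\mathcal D(T)$.

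For $\mathcal D(-)$, let $0\to T'\to T\to T''\to 0$ be a short exact sequence. Since $\Oo_{\widehat{\Ee^{\ur}}}$ is $p$-torsion free, hence $\ZZ_p$-flat, the sequence
$$0\to \Oo_{\widehat{\Ee^{\ur}}}\otimes_{\ZZ_p}T'\to \Oo_{\widehat{\Ee^{\ur}}}\otimes_{\ZZ_p}T\to \Oo_{\widehat{\Ee^{\ur}}}\otimes_{\ZZ_p}T''\to 0$$
is exact, and by Lemma \ref{lemm continuous action p-adic topology ur} each term carries a continuous semilinear $\mathscr G_{K_{\pi}}$-action for the $p$-adic topology. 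Passing to $\mathscr G_{K_{\pi}}$-cohomology (reducing, if one wishes, to $\FF_p$-coefficients by dévissage as in the proof of Corollary \ref{Exactness D-tau-0 Zp}, or working in the ind-category, which has enough injectives) produces a long exact sequence whose first connecting map lands in $\H^1\big(\mathscr G_{K_{\pi}},\Oo_{\widehat{\Ee^{\ur}}}\otimes_{\ZZ_p}T'\big)$; as $\Oo_{\widehat{\Ee^{\ur}}}\otimes_{\ZZ_p}T'$ is a finite type $\Oo_{\widehat{\Ee^{\ur}}}$-module with continuous semilinear action, this group is $0$ by Lemma \ref{lemm H1 G infty vanishes}. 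Hence $0\to\mathcal D(T')\to\mathcal D(T)\to\mathcal D(T'')\to 0$ is exact, and the same argument applies to $\Ind$-objects, continuous cohomology commuting with the filtered colimits involved.

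For $\mathcal D(-)_\tau$ I would use that $\Oo_{\Ee_{\tau}}=\W(F_{\tau})$ is flat over the discrete valuation ring $\Oo_{\Ee}$ — it is $p$-torsion free, hence torsion free over $\Oo_{\Ee}$, whose maximal ideal is $(p)$ — so that $\Oo_{\Ee_{\tau}}\otimes_{\Oo_{\Ee}}(-)$ is exact, and composing with the exactness of $\mathcal D$ just established gives exactness of $\mathcal D(-)_\tau$. Alternatively, one identifies $\mathcal D(T)_\tau\simeq\big(\W(C^{\flat})\otimes_{\ZZ_p}T\big)^{\mathscr G_L}$ via Lemma \ref{lemm iso for D-tau}, tensors the $\ZZ_p$-flat ring $\W(C^{\flat})$, takes $\mathscr G_L$-invariants, and invokes the vanishing $\H^1\big(\mathscr G_L,\W(C^{\flat})\otimes_{\ZZ_p}T'\big)=0$ from Corollary \ref{lemmcoho0 Zp} (and Lemma \ref{lemmcoho0 Zp tor} in the torsion case) — this is exactly the computation already carried out in Corollary \ref{Exactness D-tau-0 Zp}.

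There is no deep obstacle here: the whole content is packaged into Lemmas \ref{lemm continuous action p-adic topology ur} and \ref{lemm H1 G infty vanishes} (and Corollary \ref{lemmcoho0 Zp}). The only points that require care are that Lemma \ref{lemm H1 G infty vanishes} must be applied with the \emph{$p$-adic} topology on $\Oo_{\widehat{\Ee^{\ur}}}$ — the weak topology would not yield the vanishing, which is precisely why Lemma \ref{lemm continuous action p-adic topology ur} is needed — and that, in the $\Ind$-categorical setting, one should make explicit that the relevant continuous $\H^1$ commutes with the filtered colimits in play.
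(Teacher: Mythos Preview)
Your proposal is correct and follows essentially the same route as the paper: for $\mathcal D(-)$ you use continuity of the $p$-adic action (Lemma~\ref{lemm continuous action p-adic topology ur}) together with the vanishing of $\H^1(\mathscr G_{K_\pi},\Oo_{\widehat{\Ee^{\ur}}}\otimes T')$ from Lemma~\ref{lemm H1 G infty vanishes}, and for $\mathcal D(-)_\tau$ you invoke flatness of $\Oo_{\Ee_\tau}$ over $\Oo_\Ee$, which is exactly what the paper does (your alternative via Lemma~\ref{lemm iso for D-tau} and Corollary~\ref{lemmcoho0 Zp} also works and was already carried out in Corollary~\ref{Exactness D-tau-0 Zp}). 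Your remarks on the $\Ind$-setting are not needed here, since the corollary is only applied to $\Rep_{\ZZ_p}(\mathscr G_K)$ in Lemma~\ref{lemm delta fct C-TR}.
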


\begin{proof}
	The functor $D(-)$ is left exact by construction. If $T\in\Rep_{\ZZ_p}(\mathscr{G}_{K_\pi}),$ the semi-linear action on $\mathcal{O}_{\widehat{\mathcal{E}^{\ur}}}\otimes_{\ZZ_p}T$ is continuous for the $p$-adic topology (because it is continuous on $T$ and $\Oo_{\widehat{\Ee^{\ur}}}$) : by lemma \ref{lemm H1 G infty vanishes}, the functor $D(-)$ is also right exact. Recall $D(T)_{\tau}=\Oo_{\Ee_{\tau}}\otimes_{\Oo_{\Ee}}\mathcal{D}(T),$ and hence it is exact.
\end{proof}

\begin{lemma}\label{lemm delta fct C-TR}
	The functors $\{ \Ff^i \}_{i\in \NN}\colon \Rep_{\ZZ_p}(\mathscr{G}_K)\to \Ab$ form a $\delta$-functor.
\end{lemma}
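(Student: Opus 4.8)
The plan is to reduce the statement to a completely formal one: I will show that $T\mapsto \Cc_{\varphi,\gamma,\tau}(\mathcal{D}(T)_\tau)$ is an exact functor from $\Rep_{\ZZ_p}(\mathscr{G}_K)$ to the category of cochain complexes of abelian groups, and then invoke the classical long exact sequence in cohomology attached to a short exact sequence of complexes (\cf \cite[Theorem 1.3.1]{Wei95}), exactly as in the proof of proposition \ref{prop delta functor Zp}. Recall that the data of a $\delta$-functor is precisely a functorial family of connecting maps together with the associated long exact sequences, and both are produced by \emph{loc. cit.} once one has, functorially in the short exact sequence of representations, a short exact sequence of the associated complexes.

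So let $0\to T'\to T\to T''\to 0$ be exact in $\Rep_{\ZZ_p}(\mathscr{G}_K)$. First I would apply corollary \ref{coro D and D tau are exact}: the functor $\mathcal{D}(-)_\tau$ is exact, hence
\[0\to \mathcal{D}(T')_\tau\to \mathcal{D}(T)_\tau\to \mathcal{D}(T'')_\tau\to 0\]
is exact. Next, each term of $\Cc_{\varphi,\gamma,\tau}(M)$ is a finite direct sum of copies of $M$, and the differentials $\tilde\alpha,\tilde\beta,\tilde\eta$ are assembled from the operators $\varphi-1$, $\gamma-1$, $\tau-1$, $\tau^{\chi(\gamma)}-1$ and $\delta-\gamma$, all of which are natural in $M$; here $\delta=(\tau^{\chi(\gamma)}-1)(\tau-1)^{-1}\in\ZZ_p[\![\tau-1]\!]$ acts on $\mathcal{D}(T)_\tau$ because $\tau-1$ is topologically nilpotent there, and this action is functorial. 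Since finite direct sums are exact and these operators commute with the transition maps, I obtain a commutative diagram with exact columns whose rows are the three complexes, i.e.\ a short exact sequence of complexes
\[0\to \Cc_{\varphi,\gamma,\tau}(\mathcal{D}(T')_\tau)\to \Cc_{\varphi,\gamma,\tau}(\mathcal{D}(T)_\tau)\to \Cc_{\varphi,\gamma,\tau}(\mathcal{D}(T'')_\tau)\to 0.\]

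Then \cite[Theorem 1.3.1]{Wei95} yields the connecting homomorphisms $\Ff^i(T'')\to \Ff^{i+1}(T')$ and the long exact sequence $\cdots\to \Ff^i(T')\to \Ff^i(T)\to \Ff^i(T'')\to \Ff^{i+1}(T')\to\cdots$, and the same reference gives naturality of this construction with respect to morphisms of short exact sequences of complexes; on our side such morphisms come from morphisms of short exact sequences of representations via the functoriality of $\mathcal{D}(-)_\tau$ and of the complex construction. This is exactly the data of a $\delta$-functor.

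I do not expect a genuine obstacle: the argument is formal once $\mathcal{D}(-)_\tau$ is known to be exact, and that is corollary \ref{coro D and D tau are exact}, whose only non-trivial input is the vanishing $\H^1(\mathscr{G}_{K_\pi},\mathcal{M})=0$ for $p$-adically continuous $\Oo_{\widehat{\Ee^{\ur}}}$-modules of finite type (lemma \ref{lemm H1 G infty vanishes}), used to deduce right-exactness of $\mathcal{D}(-)$. The one point deserving a word of care is that $\Cc_{\varphi,\gamma,\tau}(\mathcal{D}(T)_\tau)$ really is a complex — but this is implicit in definition \ref{def Fi 2} and rests on the same operator identities as Tavares Ribeiro's complex (e.g.\ lemma \ref{lemm amazing but easy equation}), which hold verbatim on $\mathcal{D}(T)_\tau$. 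The overall strategy simply mirrors that of proposition \ref{prop delta functor Zp}, with $\mathcal{D}(T)_\tau$ in place of $\mathcal{D}(T)$ and $D_{\tau,0}$, and the four-term complex in place of $\Cc_{\varphi,\tau}$.
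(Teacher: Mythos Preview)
Your proposal is correct and follows essentially the same approach as the paper: the paper's proof is the single line ``This follows directly from corollary \ref{coro D and D tau are exact}'', and you have simply unpacked that remark by noting that each term of $\Cc_{\varphi,\gamma,\tau}(\mathcal{D}(T)_\tau)$ is a finite direct sum of copies of $\mathcal{D}(T)_\tau$ with natural differentials, so exactness of $\mathcal{D}(-)_\tau$ gives a short exact sequence of complexes and hence the long exact sequence via \cite[Theorem 1.3.1]{Wei95}.
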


\begin{proof}
	This follows directly from corollary \ref{coro D and D tau are exact}.
\end{proof}

\begin{theorem}
	Let $T\in \Rep_{\ZZ_p}(\mathscr{G}_K)$ and $i\in\NN,$ we then have
	\[ \H^i(\mathscr{G}_K, T)\simeq \Ff^i(T). \]
\end{theorem}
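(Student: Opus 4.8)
The plan is to re-run, essentially verbatim, Tavares Ribeiro's proof of his theorem recalled at the beginning of this chapter (\cf \cite[\S 1.5]{Tav11}), with $\mathcal{D}(T)_\tau$ in the role played there by $D_L(T)$. The only place where \textit{loc. cit.} uses a specific property of $D_L(T)$ is the exact sequence $0\to\Ind_{\widehat{G}}(T)\to D_L(T)\xrightarrow{\varphi-1}D_L(T)\to0$ together with its two companions relating $\Ind_{\widehat{G}}(T)$, $\Ind_\Gamma(T)$ and $\Ind_{\mathscr{G}_K}(T)^{\mathscr{G}_K}$; Lemma \ref{lemm replace DL with D-tau} supplies precisely the analogues of these three exact sequences with $\mathcal{D}(T)_\tau$ in place of $D_L(T)$, the first being Corollary \ref{coro H1 null}. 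So the skeleton is: $\{\Ff^i\}_{i\in\NN}$ is a cohomological $\delta$-functor (Lemma \ref{lemm delta fct C-TR}) agreeing with $\H^0(\mathscr{G}_K,-)$ in degree $0$ (Lemma \ref{lemm right H1 C-TR}); since $\{\H^i(\mathscr{G}_K,-)\}$ is the universal $\delta$-functor extending $\H^0(\mathscr{G}_K,-)$, it suffices to prove that $\{\Ff^i\}$ is effaceable in positive degrees. As usual this is first done for torsion representations, embedded into the ind-category $\Ind \Rep_{\ZZ_p, \tors}(\mathscr{G}_K)$ (which has enough injectives): for $T\in\Rep_{\ZZ_p, \tors}(\mathscr{G}_K)$ one shows $\Ff^i(\Ind_{\mathscr{G}_K}(T))=0$ for all $i\geq1$, and then descends to all of $\Rep_{\ZZ_p}(\mathscr{G}_K)$ by passing to the inverse limit over $T/p^nT$.

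For the effaceability, put $U=\Ind_{\mathscr{G}_K}(T)$ and $M=\mathcal{D}(U)_\tau$. One recognises $\Cc_{\varphi,\gamma,\tau}(M)$ as the total complex of the double complex $\bigl[\,C^\bullet(M)\xrightarrow{\varphi-1}C^\bullet(M)\,\bigr]$, where $C^\bullet(M)$ is the three-term complex
\[M\xrightarrow{\;x\,\mapsto\,((\gamma-1)x,\,(\tau-1)x)\;}M\oplus M\xrightarrow{\;(y,z)\,\mapsto\,(\tau^{\chi(\gamma)}-1)y+(\delta-\gamma)z\;}M\]
(the composite vanishing by Lemma \ref{lemm amazing but easy equation}), and in which $\varphi-1$ is a map of complexes because $\varphi$ commutes with $\gamma$, $\tau$, $\delta$ and $\tau^{\chi(\gamma)}$. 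The first exact sequence of Lemma \ref{lemm replace DL with D-tau}, $0\to\Ind_{\widehat{G}}(T)\to M\xrightarrow{\varphi-1}M\to0$, makes the $\varphi$-direction of the double complex acyclic off degree $0$, where it computes $\Ind_{\widehat{G}}(T)$; hence the spectral sequence of the double complex filtered by $\varphi$-degree degenerates and $\H^*(\Cc_{\varphi,\gamma,\tau}(M))\simeq\H^*\!\bigl(C^\bullet(\Ind_{\widehat{G}}(T))\bigr)$. The complex $C^\bullet(\Ind_{\widehat{G}}(T))$ is in turn an iterated cone: the second exact sequence of Lemma \ref{lemm replace DL with D-tau}, applied with $\alpha=1$ and with $\alpha=\chi(\gamma)$, namely $0\to\Ind_\Gamma(T)\to\Ind_{\widehat{G}}(T)\xrightarrow{\tau^\alpha-1}\Ind_{\widehat{G}}(T)\to0$, shows that the $\tau$-direction is acyclic off degree $0$, reducing the computation to the cone of $\gamma-1$ on $\Ind_\Gamma(T)$; and the third exact sequence, $0\to\Ind_{\mathscr{G}_K}(T)^{\mathscr{G}_K}\to\Ind_\Gamma(T)\xrightarrow{\gamma-1}\Ind_\Gamma(T)\to0$, finishes it. Throughout, the identity $(\delta-\gamma)\circ(\tau-1)=(1-\tau^{\chi(\gamma)})\circ(\gamma-1)$ of Lemma \ref{lemm amazing but easy equation} is exactly what lets the twisted entries $\delta-\gamma$ and $\tau^{\chi(\gamma)}-1$ pass through these reductions. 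The outcome is $\H^0(\Cc_{\varphi,\gamma,\tau}(M))=\Ind_{\mathscr{G}_K}(T)^{\mathscr{G}_K}=\H^0(\mathscr{G}_K,U)$ and $\H^i(\Cc_{\varphi,\gamma,\tau}(M))=0$ for $i\geq1$ (trivially also for $i\geq4$, as $\Cc_{\varphi,\gamma,\tau}$ has length four), i.e. $\Ff^i(U)=0$ for $i\geq1$.

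The passage from torsion to arbitrary $\ZZ_p$-representations is routine: $\mathcal{D}(-)$ and $(-)_\tau$ being exact (Corollary \ref{coro D and D tau are exact}) and continuous for the $p$-adic topology, one has $\Cc_{\varphi,\gamma,\tau}(\mathcal{D}(T)_\tau)=\pLim{n}\Cc_{\varphi,\gamma,\tau}(\mathcal{D}(T/p^nT)_\tau)$ with surjective transition maps in each degree, so the $\R^1\pLim{n}$-vanishing argument of the proof of Corollary \ref{lemmcoho0 Zp} (via \cite[Theorem 2.3.4]{NSW13}), together with $\H^i(\mathscr{G}_K,T)=\pLim{n}\H^i(\mathscr{G}_K,T/p^nT)$, promotes the torsion statement to the asserted isomorphism $\H^i(\mathscr{G}_K,T)\simeq\Ff^i(T)$. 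I expect all the genuine work to be in the effaceability step — seeing $\Cc_{\varphi,\gamma,\tau}$ as the iterated cone above and carrying the twisting operators $\delta$ and $\tau^{\chi(\gamma)}$ correctly through the two successive snake-lemma reductions — but this is exactly the bookkeeping of \cite[\S 1.5]{Tav11}, which carries over unchanged once Lemma \ref{lemm replace DL with D-tau} provides the one new ingredient it needs.
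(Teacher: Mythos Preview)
Your proposal is correct and follows essentially the same approach as the paper: the paper's proof simply invokes Lemmas \ref{lemm right H1 C-TR} and \ref{lemm delta fct C-TR} and then says that the effaceability follows by the same argument as \cite[Proposition 1.7]{Tav11}, since the latter rests on \cite[Lemma 1.9]{Tav11} and Lemma \ref{lemm replace DL with D-tau} supplies its analogue with $\mathcal{D}(-)_\tau$. You have spelled out the iterated-cone structure of that effaceability argument in more detail than the paper does, but the content is the same.
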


\begin{proof}
	By lemmas \ref{lemm right H1 C-TR} and \ref{lemm delta fct C-TR}, it suffices to prove the effaceability of $\{\Ff^i \}_{i\in \NN}.$ Similar proof as \cite[Proposition 1.7]{Tav11} works, as it is based on \cite[Lemma 1.9]{Tav11}. 
\end{proof}

\begin{notation}
	Let $T\in \Rep_{\ZZ_p}(\mathscr{G}_K).$ To make light notations, in the rest of this chapter we denote $\Cc_{\varphi, \tau}$\index{$\Cc_{\varphi, \tau}$} for the complex $\Cc_{\varphi, \tau}(T),$ and $\Cc_{\TR}$\index{$\Cc_{\TR}$} for Tavares Ribeiro's complex attached to $\mathcal{D}(T)_{\tau},$ $\ie$$\Cc_{\varphi, \gamma, \tau}(\mathcal{D}(T)_{\tau}).$
\end{notation}

Using the morphisms introduced by Tavares Ribeiro, we have another description of $\mathcal{D}(T)_{\tau, 0}$ as follows:

\begin{lemma}\label{D tau 0 appears}
	Let $x\in \mathcal{D}(T)_{\tau},$ then $x\in \mathcal{D}(T)_{\tau,0} \iff (\delta-\gamma)x=0.$ 
\end{lemma}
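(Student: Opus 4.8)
The plan is to unwind both conditions and show they are literally the same constraint. First I would recall that by Lemma~\ref{Replace g witi gamma}, membership $x\in\mathcal{D}(T)_{\tau,0}$ is equivalent to the single equation
$(\gamma\otimes1)(x)=x+\tau_D(x)+\cdots+\tau_D^{\chi(\gamma)-1}(x)$,
i.e.\ $(\gamma\otimes1)(x)=\big(1+\tau_D+\cdots+\tau_D^{\chi(\gamma)-1}\big)(x)$. Now observe that, by the very definition of $\delta$ as $1+\tau_D+\cdots+\tau_D^{\chi(\gamma)-1}=\dfrac{\tau_D^{\chi(\gamma)}-1}{\tau_D-1}$, the right-hand side is exactly $\delta(x)$. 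Hence $x\in\mathcal{D}(T)_{\tau,0}$ if and only if $\delta(x)=(\gamma\otimes1)(x)$, which is precisely $(\delta-\gamma)x=0$. So the proof is essentially a one-line identification, once Lemma~\ref{Replace g witi gamma} is invoked.

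The only point requiring a word of justification is that the series defining $\delta$ really does act as the finite sum $1+\tau_D+\cdots+\tau_D^{\chi(\gamma)-1}$ on the relevant elements: this is immediate since $\chi(\gamma)\in\ZZ_{>0}$ (Lemma~\ref{lemm gamma with finite caracter}), so the identity $\sum_{n\geq1}\binom{\chi(\gamma)}{n}(\tau_D-1)^{n-1}=\sum_{i=0}^{\chi(\gamma)-1}\tau_D^i$ holds already at the level of the polynomial $\frac{X^{\chi(\gamma)}-1}{X-1}=\sum_{i=0}^{\chi(\gamma)-1}X^i$ evaluated at $X=\tau_D$ (both sides being continuous operators on $\mathcal{D}(T)_\tau$, the topological nilpotence of $\tau_D-1$ guarantees convergence but is not even needed here since $\delta$ is a polynomial in $\tau_D-1$ when $\chi(\gamma)\in\ZZ_{>0}$). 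I do not expect any genuine obstacle; the statement is a reformulation, and the work has already been done in Lemma~\ref{Replace g witi gamma}.

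Concretely the write-up goes: ($\Leftarrow$) if $(\delta-\gamma)x=0$ then $(\gamma\otimes1)(x)=\delta(x)=\big(1+\tau_D+\cdots+\tau_D^{\chi(\gamma)-1}\big)(x)$, which by Lemma~\ref{Replace g witi gamma} means $x\in\mathcal{D}(T)_{\tau,0}$. ($\Rightarrow$) Conversely if $x\in\mathcal{D}(T)_{\tau,0}$, then in particular $(\gamma\otimes1)(x)=\big(1+\tau_D+\cdots+\tau_D^{\chi(\gamma)-1}\big)(x)=\delta(x)$, so $(\delta-\gamma)x=0$. This closes the argument.
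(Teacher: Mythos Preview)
Your proposal is correct and takes essentially the same approach as the paper, which simply says ``This is a translation of lemma~\ref{Replace g witi gamma} with new notations.'' Your write-up is actually more detailed than the paper's one-liner, but the core observation is identical: $\delta=1+\tau_D+\cdots+\tau_D^{\chi(\gamma)-1}$, so $(\delta-\gamma)x=0$ is exactly the condition in Lemma~\ref{Replace g witi gamma}.
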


\begin{proof} This is a translation of lemma \ref{Replace g witi gamma} with new notations.
\end{proof}

\begin{notation}
	Let $F_0^{\rad}$\index{$F_0^{\rad}$} be the radical closure of $F_0$ in $C^{\flat},$ and $\widehat{F_0^{\rad}}$\index{$\widehat{F_0^{\rad}}$} the closure (for the valuation topology) of $F_0^{\rad}$ in $C^{\flat}:$ this is a complete perfect subfield of $C^{\flat}.$ Let $\W(\widehat{F_0^{\rad}})$ be the ring of Witt vectors of $\widehat{F_0^{\rad}}.$\index{$\W(\widehat{F_0^{\rad}})$}
\end{notation}

\begin{lemma}\label{lemm gamma=1}
	In $\mathbf{\Rep}_{\ZZ_p}(\mathscr{G}_K),$ we have $(\mathcal{D}(T)_{\tau})^{\gamma=1}=\W(\widehat{F_0^{\rad}})\otimes \mathcal{D}(T).$
\end{lemma}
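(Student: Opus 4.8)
The plan is to reduce the computation of $(\mathcal{D}(T)_\tau)^{\gamma=1}$ to a computation of $\mathscr{G}_{K_\pi}$-invariants inside $\W(C^{\flat})\otimes_{\ZZ_p}T$, and then to descend these invariants down to $\Oo_{\Ee}$-coefficients. First I would invoke Lemma \ref{lemm iso for D-tau} to identify $\mathcal{D}(T)_\tau\simeq\big(\W(C^{\flat})\otimes_{\ZZ_p}T\big)^{\mathscr{G}_L}$ as $\Gamma$-modules, hence in particular $\gamma$-equivariantly. Since $\mathscr{G}_L$ is normal in $\mathscr{G}_{K_\pi}$ with quotient $\Gal(L/K_\pi)=\overline{\la\gamma\ra}$, and since the $\mathscr{G}_{K_\pi}$-action on $\W(C^{\flat})\otimes_{\ZZ_p}T$ is continuous for the weak topology, the stabilizer of any element of $\big(\W(C^{\flat})\otimes_{\ZZ_p}T\big)^{\mathscr{G}_L}$ fixed by $\gamma$ is a closed subgroup containing $\mathscr{G}_L$ and $\gamma$, hence containing $\overline{\la\gamma\ra}\cdot\mathscr{G}_L=\mathscr{G}_{K_\pi}$. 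Therefore
\[(\mathcal{D}(T)_\tau)^{\gamma=1}=\big(\W(C^{\flat})\otimes_{\ZZ_p}T\big)^{\mathscr{G}_{K_\pi}}.\]

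Next I would rewrite the right-hand side using the $\mathscr{G}_{K_\pi}$-equivariant isomorphism $\Oo_{\widehat{\Ee^{\ur}}}\otimes_{\ZZ_p}T\simeq\Oo_{\widehat{\Ee^{\ur}}}\otimes_{\Oo_{\Ee}}\mathcal{D}(T)$ recalled before Proposition \ref{prop internal hom and tensor product}: base-changing along $\Oo_{\widehat{\Ee^{\ur}}}\hookrightarrow\W(C^{\flat})$ yields a $\mathscr{G}_{K_\pi}$-equivariant isomorphism $\W(C^{\flat})\otimes_{\ZZ_p}T\simeq\W(C^{\flat})\otimes_{\Oo_{\Ee}}\mathcal{D}(T)$, where now $\mathscr{G}_{K_\pi}$ acts only through the first factor, since it acts trivially on $\mathcal{D}(T)$. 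It then remains to establish
\[\big(\W(C^{\flat})\otimes_{\Oo_{\Ee}}\mathcal{D}(T)\big)^{\mathscr{G}_{K_\pi}}=\W(C^{\flat})^{\mathscr{G}_{K_\pi}}\otimes_{\Oo_{\Ee}}\mathcal{D}(T)\]
and to identify $\W(C^{\flat})^{\mathscr{G}_{K_\pi}}$ with $\W(\widehat{F_0^{\rad}})$. For the displayed identity, note that $\W(C^{\flat})$ is flat over $\Oo_{\Ee}$ (it is $p$-torsion-free and flat modulo $p$, as $C^{\flat}$ is an $F_0$-vector space), so one can take a finite presentation of $\mathcal{D}(T)$ over the discrete valuation ring $\Oo_{\Ee}$, tensor it with $\W(C^{\flat})$, and apply the left exact functor $(-)^{\mathscr{G}_{K_\pi}}$; the required right-exactness comes down to the vanishing of $\H^1\big(\mathscr{G}_{K_\pi},\W(C^{\flat})\otimes_{\Oo_{\Ee}}\mathcal{D}'\big)$ for finitely generated $\Oo_{\Ee}$-modules $\mathcal{D}'$, which, via the structure theorem over $\Oo_{\Ee}$ and the short exact sequences $0\to C^{\flat}\to\W_n(C^{\flat})\to\W_{n-1}(C^{\flat})\to0$, follows from Corollary \ref{lemmcoho0 Zp} and Lemma \ref{lemmcoho0 Zp tor}.

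For the identification of $\W(C^{\flat})^{\mathscr{G}_{K_\pi}}$, I would use that $\mathscr{G}_{K_\pi}$ acts component-wise on Witt coordinates, so $\W(C^{\flat})^{\mathscr{G}_{K_\pi}}=\W\big((C^{\flat})^{\mathscr{G}_{K_\pi}}\big)$, and then appeal to the field-of-norms picture: $\mathscr{G}_{K_\pi}\simeq\Gal(F_0^{\sep}/F_0)$ and $C^{\flat}$ is the completion of the algebraic closure $\overline{F_0}=(F_0^{\sep})^{\rad}$. Since Frobenius is injective on $C^{\flat}$, an element $x=y^{1/p^m}$ of $\overline{F_0}$ (with $y\in F_0^{\sep}$) is fixed by $\Gal(F_0^{\sep}/F_0)$ iff $y$ is, i.e. iff $y\in F_0$; hence $(\overline{F_0})^{\mathscr{G}_{K_\pi}}=F_0^{\rad}$, and passing to completions gives $(C^{\flat})^{\mathscr{G}_{K_\pi}}=\widehat{F_0^{\rad}}$, so that $\W(C^{\flat})^{\mathscr{G}_{K_\pi}}=\W(\widehat{F_0^{\rad}})$. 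Combining the three displays gives $(\mathcal{D}(T)_\tau)^{\gamma=1}=\W(\widehat{F_0^{\rad}})\otimes_{\Oo_{\Ee}}\mathcal{D}(T)$.

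The step I expect to require the most care is the second one: the free case of the identity $\big(\W(C^{\flat})\otimes_{\Oo_{\Ee}}\mathcal{D}(T)\big)^{\mathscr{G}_{K_\pi}}=\W(C^{\flat})^{\mathscr{G}_{K_\pi}}\otimes_{\Oo_{\Ee}}\mathcal{D}(T)$ is immediate, but in the general (torsion) case one genuinely needs the $\H^1$-vanishing inputs, and one has to be attentive that all the isomorphisms used are compatible with the weak-topology continuity invoked in the first reduction. Everything else is a matter of unwinding the definitions and applying results already available in the preceding sections.
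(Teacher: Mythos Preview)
Your argument is correct, and the overall skeleton matches the paper's: identify $\gamma$-invariants with $\mathscr{G}_{K_\pi}$-invariants, pull the invariants through the tensor product, and compute the fixed subring as $\W(\widehat{F_0^{\rad}})$. However, you take a detour that the paper avoids. The paper never passes to $\W(C^\flat)\otimes_{\ZZ_p}T$; it simply uses the \emph{definition} $\mathcal{D}(T)_\tau=\Oo_{\Ee_\tau}\otimes_{\Oo_{\Ee}}\mathcal{D}(T)$, notes that $\mathscr{G}_{K_\pi}$ acts trivially on $\mathcal{D}(T)$, and writes
\[
(\mathcal{D}(T)_\tau)^{\gamma=1}=(\Oo_{\Ee_\tau}\otimes_{\Oo_{\Ee}}\mathcal{D}(T))^{\mathscr{G}_{K_\pi}}=\Oo_{\Ee_\tau}^{\mathscr{G}_{K_\pi}}\otimes_{\Oo_{\Ee}}\mathcal{D}(T)=\W(\widehat{F_0^{\rad}})\otimes_{\Oo_{\Ee}}\mathcal{D}(T),
\]
together with the remark $F_\tau^{\mathscr{G}_{K_\pi}}=\widehat{F_0^{\rad}}$. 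The point is that $\Oo_{\Ee_\tau}=\W(F_\tau)$ with $F_\tau$ perfect, so taking $\mathscr{G}_{K_\pi}$-invariants commutes with $\W_n(-)$ by Witt functoriality; hence the tensor--invariants commutation for a finitely generated $\Oo_{\Ee}$-module with trivial action is immediate from the structure theorem over the DVR $\Oo_{\Ee}$, without any appeal to $\H^1$-vanishing. Your route through $\W(C^\flat)$ and Corollary~\ref{lemmcoho0 Zp} works, but the step you flag as ``requiring the most care'' disappears entirely in the paper's formulation.
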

\begin{proof}
	$(\mathcal{D}(T)_{\tau})^{\gamma=1}=(\mathcal{D}(T)_{\tau})^{\mathscr{G}_{K_{\pi}}}=(\Oo_{\Ee_{\tau}}\otimes_{\Oo_{\Ee}} \mathcal{D}(T))^{\mathscr{G}_{K_{\pi}}}=(\Oo_{\Ee_{\tau}})^{\mathscr{G}_{K_{\pi}}}\otimes_{\Oo_{\Ee}} \mathcal{D}(T)=\W(\widehat{F_0^{\rad}})\otimes \mathcal{D}(T).$ Indeed, $F_\tau^{\gamma=1}=F_\tau^{\mathscr{G}_{K_{\pi}}}=\widehat{F_0^{\rad}}.$ 
\end{proof}

Let $T\in\Rep_{\ZZ_p}(\mathscr{G}_K)$ and put $D=\mathcal{D}(T).$ We have the following morphism of complexes from $\Cc_{\varphi, \tau}$ to $\Cc_{\TR}$
$$\xymatrix@R=15pt@C=30pt{
	0\ar[r] & D\ar[r]^-\alpha\ar[d]_{\textrm{incl}} & D\oplus D_{\tau,0}\ar[r]^-\beta\ar[d]_u & D_{\tau,0}\ar[r]\ar[d]^v & 0\ar[r]\ar[d] & 0\\
	0\ar[r] & D_\tau\ar[r]^-{\widetilde{\alpha}} & D_\tau\oplus D_\tau\oplus D_\tau\ar[r]^-{\widetilde{\beta}} & D_\tau\oplus D_\tau\oplus D_\tau\ar[r]^-{\widetilde{\eta}} & D_\tau\ar[r] & 0.}$$
where
\begin{align*}
\alpha(d) &= ((\varphi-1)d,(\tau_D-1)d),\\
\beta(x,z) &= (\tau_D-1)x-(\varphi-1)z,\\
\widetilde{\alpha} &= \left(\begin{smallmatrix}\varphi-1\\ \gamma-1\\ \tau_D-1\end{smallmatrix}\right),\\
\widetilde{\beta} &= \left(\begin{smallmatrix} \gamma-1 & 1-\varphi & 0\\ \tau_D-1 & 0 & 1-\varphi\\ 0 & \tau_D^{\chi(\gamma)}-1 & \delta-\gamma\end{smallmatrix}\right),\\
\widetilde{\eta} &= (\tau_D^{\chi(\gamma)}-1,\delta-\gamma,\varphi-1)
\end{align*}
and
$$\xymatrix@R=15pt@C=30pt{
	d\ar@{|->}[r]^-\alpha\ar@{|->}[d] & (x=(\varphi-1)d,z=(\tau_D-1)d)\ar@{|->}[r]^-\beta\ar@{|->}[d]^u & t=(\tau_D-1)x-(\varphi-1)z\ar@{|->}[r]\ar@{|->}[d]^-v & 0\ar@{|->}[d]\\
	d\ar@{|->}[r]^-{\widetilde{\alpha}} & (x,0,z)\ar@{|->}[r]^-{\widetilde{\beta}} & (0,t,0)\ar@{|->}[r]^-{\widetilde{\eta}} & 0}$$

\begin{theorem}\label{thm map from HC to TR}
	This morphism of complexes is a quasi-isomorphism when the residue field of $K$ is finite.
\end{theorem}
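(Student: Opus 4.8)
The plan is to reduce the claim to a comparison of the two long exact sequences that these complexes fit into, and then to invoke the five lemma. Both complexes $\Cc_{\varphi,\tau}(T)$ and $\Cc_{\TR}$ compute $\H^\bullet(\mathscr{G}_K,T)$ (theorem \ref{thm main result} and the Tavares Ribeiro theorem recalled at the start of this chapter). So the morphism of complexes drawn above induces a map on cohomology, which in degrees $0,1,2$ is a map $\H^i(\mathscr{G}_K,T)\to\H^i(\mathscr{G}_K,T)$; proving it is a quasi-isomorphism amounts to checking this induced map is the identity (or at least an isomorphism) in each degree, together with verifying there are no further contributions in the outer degrees (the cohomology of $\Cc_{\TR}$ in degrees outside $\{0,1,2\}$ vanishes, and likewise for $\Cc_{\varphi,\tau}$, so the map of complexes has no work to do there). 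Since both isomorphisms with Galois cohomology are natural/canonical, the cleanest route is to trace through the explicit identifications: in degree $0$ this is immediate since both $\H^0$'s are literally $D^{\varphi=1,\tau_D=1}$ sitting inside $D_\tau$ in compatible ways; in degree $1$ one uses the $\Ext$-description of $\H^1$ from lemma \ref{Extension} and proposition \ref{prop Caruso H1 Zp}, checking that a pair $(\lambda,\mu)\in M$ (i.e.\ $\mu\in D_{\tau,0}$, equivalently $(\delta-\gamma)\mu=0$ by lemma \ref{D tau 0 appears}, with $(\varphi-1)\mu=(\tau_D-1)\lambda$) maps under $u$ to the triple $(\lambda,0,\mu)$, which is manifestly a cocycle for $\widetilde\beta$, and that this respects the coboundary relations.

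The honest way, and the one I would actually write, is via \emph{d\'evissage and dimension shifting}, mirroring the proof of theorem \ref{thm main result}: reduce to $T$ torsion by passing to the limit (both functors $T\mapsto\H^i(\Cc_{\varphi,\tau}(T))$ and $T\mapsto\H^i(\Cc_{\TR})$ commute with filtered colimits, and $\ZZ_p$-representations are limits of their torsion quotients), then for torsion $T$ embed $T\hookrightarrow U=\Ind_{\mathscr{G}_K}(T)$ and use that both complexes are \emph{exact} functors (corollary \ref{Exactness D-tau-0 Zp} and corollary \ref{coro D and D tau are exact}), so the inclusion of complexes $\Cc_{\varphi,\tau}(T)\to\Cc_{\TR}(T)$ sits in a morphism of short exact sequences of complexes. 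Both $\{\H^i(\Cc_{\varphi,\tau}(-))\}$ and $\{\H^i(\Cc_{\TR})\}$ are $\delta$-functors agreeing with $\H^0(\mathscr{G}_K,-)$ in degree $0$ (lemmas \ref{lemm H0}, \ref{lemm right H1 C-TR}), and the induced map is the identity on $\H^0$, so by the universal-$\delta$-functor / effaceability machinery the induced map is an isomorphism in all degrees provided the map $\H^1(\Cc_{\varphi,\tau}(U))\to\H^1(\Cc_{\TR}(U))$ and $\H^2(\Cc_{\varphi,\tau}(U))\to\H^2(\Cc_{\TR}(U))$ are isomorphisms (indeed both groups vanish by effaceability, lemmas \ref{lemmcoho0nonGalois Zp}, \ref{Surjectivity of varphi-1 on D-tau-0 Zp} and the analogous effaceability for $\Cc_{\TR}$). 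Then a standard induction on the long exact sequences finishes it. Under this strategy one needs essentially nothing from finiteness of $k$; I'd keep the hypothesis only if it is genuinely used below.

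That last point is where I expect the catch, and why the hypothesis ``$k$ finite'' appears: the quasi-isomorphism statement should really follow from the cokernel complex
\[ \Cone(u)\colon\quad 0\to 0\to D_\tau/D_{\tau,0}\xrightarrow{\ \varphi-1\ } D_\tau/D_\tau \oplus (D_\tau/D_{\tau,0})\to\cdots \]
being acyclic, and unwinding the mapping-cone of the chain map above one is led to show that certain quotients like $D_\tau/D_{\tau,0}$, together with the maps $\varphi-1$, $\gamma-1$, $\delta-\gamma$ on them, form an exact complex. The operators $\gamma-1$ and $\tau_D^n-1$ are surjective on $D_\tau$ (lemma \ref{lemm surjective gamma tau}) and $\delta-\gamma$ is surjective (proposition \ref{propdeltagamma Zp}), but to control $\varphi-1$ \emph{on the quotient} $D_\tau/D_{\tau,0}$ — equivalently to control $\H^1$ and $\H^2$ of $[D_\tau/D_{\tau,0}\xrightarrow{\varphi-1}\cdots]$ — one needs a finiteness or Mittag-Leffler input that is available precisely when $k$ is finite (so that the relevant cohomology groups of $\mathscr{G}_K$ are finite and inverse limits behave). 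Concretely, I would identify $D_\tau/D_{\tau,0}$ with something like $\im(\delta-\gamma)$ on a related module via lemma \ref{D tau 0 appears} (so $D_{\tau,0}=\ker(\delta-\gamma)$, hence $D_\tau/D_{\tau,0}\cong\im(\delta-\gamma)=D_\tau$ by surjectivity!) — which makes the cokernel complex genuinely contentful and shows the real work is the interplay of $\varphi-1$ with $\delta-\gamma$ — and then prove acyclicity of the cone by comparing with Galois cohomology of $\mathscr{G}_{K_\pi}$ and $\mathscr{G}_L$ with coefficients in $\W(C^\flat)\otimes T$, invoking finiteness of $k$ at the step where a $\lim^1$ term or an infinite cohomology group would otherwise obstruct. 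The main obstacle, then, is establishing this acyclicity of the cone/cokernel complex; everything else (the degree-$0$ and degree-$1$ compatibilities, the $\delta$-functor formalism, the d\'evissage to torsion) is routine.
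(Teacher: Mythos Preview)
Your second approach (the universal $\delta$-functor argument) is correct, and it is genuinely different from what the paper does. You observe that the morphism of complexes is natural in $T$, that both $T\mapsto\Cc_{\varphi,\tau}(T)$ and $T\mapsto\Cc_{\TR}(T)$ are exact (corollary \ref{Exactness D-tau-0 Zp} and corollary \ref{coro D and D tau are exact}), that the induced map on $\H^0$ is the identity on $T^{\mathscr{G}_K}$, and that both $\delta$-functors are effaceable on $\Ind\Rep_{\ZZ_p,\tors}(\mathscr{G}_K)$. The dimension-shifting induction you sketch (embed torsion $T$ into $U=\Ind_{\mathscr{G}_K}T$, use that $\H^i(U)=0$ for $i\geq1$ on both sides, and shift down) is the standard proof that a morphism of effaceable $\delta$-functors which is an isomorphism in degree $0$ is an isomorphism in all degrees. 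As you suspect, this does \emph{not} use finiteness of $k$.

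The paper takes a completely different route. It introduces an intermediate complex $\Cc_{\varphi,\tau}^{\rad}$, obtained from $\Cc_{\varphi,\tau}$ by replacing $D$ by $\W(\widehat{F_0^{\rad}})\otimes_{\Oo_{\Ee}}D$ in the first two terms, and factors the morphism as $\Cc_{\varphi,\tau}\to\Cc_{\varphi,\tau}^{\rad}\to\Cc_{\TR}$. The step $\Cc_{\varphi,\tau}^{\rad}\to\Cc_{\TR}$ is a quasi-isomorphism by explicit cocycle manipulation (using surjectivity of $\gamma-1$ and $\delta-\gamma$ on $D_\tau$), with no finiteness hypothesis. The step $\Cc_{\varphi,\tau}\to\Cc_{\varphi,\tau}^{\rad}$ uses a key technical lemma: if $\delta\in\W(\widehat{F_0^{\rad}})\otimes_{\Oo_{\Ee}}D$ satisfies $(\varphi-1)\delta\in D$, then $\delta\in D$. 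This gives the isomorphism on $\H^0$ and injectivity on $\H^1$; for \emph{surjectivity} on $\H^1$ and $\H^2$ the paper then invokes that both sides are already known to compute $\H^i(\mathscr{G}_K,T)$, hence have the same finite dimension when $k$ is finite, so an injective map between them is surjective. This dimension count is the only place finiteness enters.

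What each approach buys: the paper's route is constructive and yields the intermediate complex $\Cc_{\varphi,\tau}^{\rad}$ (which has independent interest, and whose comparison with $\Cc_{\TR}$ holds without any hypothesis on $k$), but the final dimension argument is somewhat ad hoc. Your route is cleaner and more general, removing the finiteness assumption entirely; its cost is that it treats the two complexes as black boxes computing the same universal $\delta$-functor, so one learns nothing about how the explicit cocycles match up.

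Your third paragraph, attempting to locate the role of finiteness by analyzing the mapping cone and the quotient $D_\tau/D_{\tau,0}$, is off track. You correctly note $D_\tau/D_{\tau,0}\cong D_\tau$ via $\delta-\gamma$, but the subsequent speculation about $\varprojlim{}^1$ obstructions and cohomology of $\mathscr{G}_{K_\pi}$, $\mathscr{G}_L$ does not crystallize into an argument; the actual mechanism in the paper is the elementary dimension count described above. Drop this paragraph.
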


\section{The intermediate complex \texorpdfstring{$\Cc_{\varphi, \tau}^{\rad}$}{C\textpinferior\texthinferior\textiinferior,\texttinferior\textainferior\textuinferior \unichar{"0072}\unichar{"0061}\unichar{"0064}rad}}

\begin{definition}
	Let $(D, D_{\tau})\in \Mod_{\Oo_{\Ee}, \Oo_{\Ee_{\tau}}}(\varphi, \tau).$ We define a complex $\mathcal{C}_{\varphi,\tau}^{\rad}(D)$\index{$\mathcal{C}_{\varphi,\tau}^{\rad}(D)$} as follows:
	\[ \xymatrix{
		0\ar[rr] && \W(\widehat{F_0^{\rad}})\otimes D \ar[r]^{{\alpha \quad \quad }} & \W(\widehat{F_0^{\rad}})\otimes D\bigoplus D_{\tau, 0} \ar[r]^{\quad \quad \quad \beta} &  D_{\tau, 0}   \ar[r]  & 0   \\
		&& x  \ar@{|->}[r] & ((\varphi-1)(x), (\tau_D-1)(x))  &{}\\
		&&&{} (y,z) \ar[r]  & (\tau_D-1)(y)-(\varphi-1)(z).
	}  \]
	If $T\in \Rep_{\ZZ_p}(\mathscr{G}_K),$ we have in particular the complex $\mathcal{C}_{\varphi,\tau}^{\rad}(\mathcal{D}(T)),$ which will also be simply denoted $\mathcal{C}_{\varphi,\tau}^{\rad}(T).$\index{$\mathcal{C}_{\varphi,\tau}^{\rad}(T)$}
\end{definition}

\begin{remark}
	The morphism $\tau_D-1\colon  \W(\widehat{F_0^{\rad}})\otimes D \to D_{\tau, 0}$ is well-defined. Indeed, for any $x\in\W(\widehat{F_0^{\rad}})\otimes D$ ($\ie$$x\in D_{\tau}$ with $(\gamma-1)x=0$ $\cf$ lemma \ref{lemm gamma=1}), we have by lemma \ref{lemm amazing but easy equation}
	\[ (\delta-\gamma)(\tau_D-1)x= (1-\tau_D^{\chi(\gamma)})(\gamma-1)x=0.\]
	This exactly tells that $(\tau_D-1)x\in D_{\tau, 0}.$ Hence the complex $\Cc_{\varphi, \tau}^{\rad}$ is well defined.
\end{remark}

\begin{remark} We have a morphism from $\Cc_{\varphi, \tau}^{\rad}$ to $\Cc_{\TR}$ as follows:
	$$\xymatrix@R=15pt@C=30pt{
		0\ar[r] & \W(\widehat{F_0^{\rad}})\otimes D\ar[r]^-\alpha\ar[d]_{\textrm{incl}} & \W(\widehat{F_0^{\rad}})\otimes D\bigoplus D_{\tau,0}\ar[r]^-\beta\ar[d]_u & D_{\tau,0}\ar[r]\ar[d]^v & 0\ar[r]\ar[d] & 0\\
		0\ar[r] & D_\tau\ar[r]^-{\widetilde{\alpha}} & D_\tau\oplus D_\tau\oplus D_\tau\ar[r]^-{\widetilde{\beta}} & D_\tau\oplus D_\tau\oplus D_\tau\ar[r]^-{\widetilde{\eta}} & D_\tau\ar[r] & 0}$$
	where the maps are defined similarly as in section \ref{section 2.1}.
\end{remark}

\begin{theorem}\label{thm main result for C-phi-tau-rad}
	Let $T\in \Rep_{\ZZ_p}(\mathscr{G}_K),$ the homology of the complex $\Cc_{\varphi, \tau}^{\rad}(T)$ computes $\H^i(\mathscr{G}_K, T).$ 
\end{theorem}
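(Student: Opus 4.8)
The plan is to compare $\Cc_{\varphi,\tau}^{\rad}(T)$ with the complex $\Cc_{\varphi,\tau}(T)$ of Theorem \ref{thm main result} via a natural map of complexes, and show that map is a quasi-isomorphism. Write $D=\mathcal{D}(T)$. Since $\mathcal{O}_{\mathcal{E}}\hookrightarrow \W(\widehat{F_0^{\rad}})$ is faithfully flat and $D$ is finitely generated, the inclusion $D\hookrightarrow \W(\widehat{F_0^{\rad}})\otimes D$ is injective, and together with the identity on $D_{\tau,0}$ it defines a morphism of complexes $\iota\colon \Cc_{\varphi,\tau}(T)\to \Cc_{\varphi,\tau}^{\rad}(T)$: the squares commute because both differentials are given by the same formulas $x\mapsto((\varphi-1)x,(\tau_D-1)x)$ and $(y,z)\mapsto(\tau_D-1)y-(\varphi-1)z$, and $\varphi$ preserves $\W(\widehat{F_0^{\rad}})\otimes D$. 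Inspecting the quotient, the contributions of $D_{\tau,0}$ cancel, so the cokernel complex of $\iota$ is (up to a trivial term in degree $1$) the two-term complex $[Q\xrightarrow{\varphi-1}Q]$ in degrees $-1,0$, where $Q:=\big(\W(\widehat{F_0^{\rad}})\otimes D\big)/D$. Hence $\iota$ is a quasi-isomorphism if and only if $\varphi-1$ is bijective on $Q$, and in that case the long exact sequence attached to $0\to \Cc_{\varphi,\tau}(T)\to \Cc_{\varphi,\tau}^{\rad}(T)\to[Q\xrightarrow{\varphi-1}Q]\to 0$ together with Theorem \ref{thm main result} gives $\H^i(\Cc_{\varphi,\tau}^{\rad}(T))\simeq\H^i(\mathscr{G}_K,T)$.

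\textbf{Bijectivity of $\varphi-1$ on $Q$.} Apply the snake lemma to the exact rows $0\to D\to \W(\widehat{F_0^{\rad}})\otimes D\to Q\to 0$ with vertical maps $\varphi-1$. Consider the two short exact sequences of $\mathscr{G}_{K_\pi}$-modules
$$0\to T\to \mathcal{O}_{\widehat{\mathcal{E}^{\ur}}}\otimes_{\ZZ_p}T\xrightarrow{\varphi-1}\mathcal{O}_{\widehat{\mathcal{E}^{\ur}}}\otimes_{\ZZ_p}T\to 0,\qquad 0\to T\to \W(C^{\flat})\otimes_{\ZZ_p}T\xrightarrow{\varphi-1}\W(C^{\flat})\otimes_{\ZZ_p}T\to 0.$$
Taking $\mathscr{G}_{K_\pi}$-invariants (using the continuous sections of Lemma \ref{lemm conti section}), together with $\H^1(\mathscr{G}_{K_\pi},\mathcal{O}_{\widehat{\mathcal{E}^{\ur}}}\otimes_{\ZZ_p}T)=0$ (Lemma \ref{lemm H1 G infty vanishes}) and $\H^1(\mathscr{G}_{K_\pi},\W(C^{\flat})\otimes_{\ZZ_p}T)=0$ (Corollary \ref{lemmcoho0 Zp}), and recalling $(\mathcal{O}_{\widehat{\mathcal{E}^{\ur}}}\otimes T)^{\mathscr{G}_{K_\pi}}=D$ and $(\W(C^{\flat})\otimes T)^{\mathscr{G}_{K_\pi}}=\big(\mathcal{D}(T)_\tau\big)^{\gamma=1}=\W(\widehat{F_0^{\rad}})\otimes D$ (Lemmas \ref{lemm iso for D-tau} and \ref{lemm gamma=1}), one identifies $\Ker(\varphi-1)$ on both $D$ and $\W(\widehat{F_0^{\rad}})\otimes D$ with $T^{\mathscr{G}_{K_\pi}}$, and $\Coker(\varphi-1)$ on both with $\H^1(\mathscr{G}_{K_\pi},T)$. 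Since the two short exact sequences are linked by $\mathcal{O}_{\widehat{\mathcal{E}^{\ur}}}\hookrightarrow\W(C^{\flat})$, which is the identity on the common subobject $T$, the resulting maps $T^{\mathscr{G}_{K_\pi}}\to T^{\mathscr{G}_{K_\pi}}$ and $\H^1(\mathscr{G}_{K_\pi},T)\to\H^1(\mathscr{G}_{K_\pi},T)$ are the identity. The snake sequence
$$0\to T^{\mathscr{G}_{K_\pi}}\xrightarrow{\ =\ }T^{\mathscr{G}_{K_\pi}}\to\Ker(\varphi-1|_Q)\to\H^1(\mathscr{G}_{K_\pi},T)\xrightarrow{\ =\ }\H^1(\mathscr{G}_{K_\pi},T)\to\Coker(\varphi-1|_Q)\to 0$$
then forces $\Ker(\varphi-1|_Q)=\Coker(\varphi-1|_Q)=0$, completing the proof.

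\textbf{Main obstacle.} The one point that needs genuine care is the compatibility in the last step: verifying that the two descriptions of $\Ker(\varphi-1)$ and $\Coker(\varphi-1)$—one through $D$, one through $\W(\widehat{F_0^{\rad}})\otimes D$—arise from a single morphism of long exact cohomology sequences induced by $\mathcal{O}_{\widehat{\mathcal{E}^{\ur}}}\hookrightarrow\W(C^{\flat})$, so that the connecting maps match up and the induced vertical arrows are literally identities. Everything else is formal, reusing Theorem \ref{thm main result}, the vanishing lemmas, and the identifications of $\mathcal{D}(T)_\tau$ and $(\mathcal{D}(T)_\tau)^{\gamma=1}$. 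Alternatively, one could avoid the comparison and re-run the $\delta$-functor plus effaceability argument of Section \ref{section effaceability} directly for $\Cc_{\varphi,\tau}^{\rad}$, but the route above is shorter since it invokes Theorem \ref{thm main result} wholesale and, notably, imposes no condition on the residue field.
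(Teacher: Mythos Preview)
Your argument is correct and takes a genuinely different route from the paper. The paper proves Theorem~\ref{thm main result for C-phi-tau-rad} by mapping $\Cc_{\varphi,\tau}^{\rad}$ to Tavares Ribeiro's four-term complex $\Cc_{\TR}$ and checking by hand that this map is an isomorphism on $\H^1$ and on $\H^2$ (three separate lemmas involving explicit manipulations with $\gamma$, $\tau_D$, $\delta$). You instead compare $\Cc_{\varphi,\tau}^{\rad}$ directly to $\Cc_{\varphi,\tau}$, identify the quotient as $[Q\xrightarrow{\varphi-1}Q]$, and kill it by a clean snake-lemma argument using the vanishing of $\H^1(\mathscr{G}_{K_\pi},-)$ on both $\mathcal{O}_{\widehat{\mathcal{E}^{\ur}}}\otimes T$ and $\W(C^\flat)\otimes T$. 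This is shorter and more conceptual; it also yields, as you note, the quasi-isomorphism $\Cc_{\varphi,\tau}\to\Cc_{\varphi,\tau}^{\rad}$ with no hypothesis on the residue field, whereas the paper's later direct comparison of these two complexes (Section~2.3) appeals to a dimension count and hence needs $k$ finite. The trade-off is in logical dependence: the paper's proof via $\Cc_{\TR}$ is independent of Proposition~\ref{prop Caruso H1 Zp} (the $\H^1$-as-$\Ext$ computation), and the paper explicitly remarks on this; your route invokes Theorem~\ref{thm main result} wholesale and so inherits that dependence. One minor indexing slip: the quotient $[Q\xrightarrow{\varphi-1}Q]$ sits in degrees $0,1$ (not $-1,0$) under the paper's convention, but this is harmless.
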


To prove the theorem, we need the following results.

\begin{lemma}
	The constructed map from $\Cc_{\varphi, \tau}^{\rad}$ to $\Cc_{\TR}$ induces an isomorphism on $\H^1.$
\end{lemma}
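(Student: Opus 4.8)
The plan is a direct diagram chase. Write $D=\mathcal{D}(T)$; then $\Cc_{\varphi,\tau}^{\rad}$ is the three-term complex $\W(\widehat{F_0^{\rad}})\otimes D\xrightarrow{\alpha}\W(\widehat{F_0^{\rad}})\otimes D\oplus D_{\tau,0}\xrightarrow{\beta}D_{\tau,0}$, with $\alpha(d)=((\varphi-1)d,(\tau_D-1)d)$ and $\beta(y,z)=(\tau_D-1)y-(\varphi-1)z$, and the chain map identifies its three terms with the first three terms of $\Cc_{\TR}=\bigl(D_\tau\xrightarrow{\widetilde\alpha}D_\tau^{3}\xrightarrow{\widetilde\beta}D_\tau^{3}\xrightarrow{\widetilde\eta}D_\tau\bigr)$, where $\widetilde\alpha(d)=((\varphi-1)d,(\gamma-1)d,(\tau_D-1)d)$. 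Thus the $\H^1$ in question is $\ker\beta/\im\alpha$ on one side and $\ker\widetilde\beta/\im\widetilde\alpha$ on the other (the latter being $\H^1(\mathscr{G}_K,T)$ by Section~\ref{section 2.1}), and the induced map sends the class of $(y,z)$ to the class of $(y,0,z)$. The tools I will use are: the identifications $\W(\widehat{F_0^{\rad}})\otimes D=(D_\tau)^{\gamma=1}$ (Lemma~\ref{lemm gamma=1}) and $D_{\tau,0}=\ker(\delta-\gamma\otimes1)$ on $D_\tau$ (Lemma~\ref{D tau 0 appears}); the relation $(\delta-\gamma\otimes1)\circ(\tau_D-1)=(1-\tau_D^{\chi(\gamma)})\circ(\gamma\otimes1-1)$ (Lemma~\ref{lemm amazing but easy equation}); the commutation of $\varphi$ with the $\mathscr{G}_{K_\pi}$-action on $D_\tau$; and the surjectivity of $\gamma-1$ on $D_\tau$ (Lemma~\ref{lemm surjective gamma tau}).

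For injectivity: if $(y,z)\in\ker\beta$ and $(y,0,z)=\widetilde\alpha(d)$ for some $d\in D_\tau$, then $(\gamma-1)d=0$, so $d\in(D_\tau)^{\gamma=1}=\W(\widehat{F_0^{\rad}})\otimes D$ by Lemma~\ref{lemm gamma=1}; hence $d$ is an element of the first term of $\Cc_{\varphi,\tau}^{\rad}$ and $\alpha(d)=((\varphi-1)d,(\tau_D-1)d)=(y,z)$, so the class of $(y,z)$ vanishes. This is exactly where enlarging the first two terms from $D$ to $\W(\widehat{F_0^{\rad}})\otimes D$ is essential, in contrast with the complex $\Cc_{\varphi,\tau}$ of Theorem~\ref{thm map from HC to TR}.

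For surjectivity: let $(a,b,c)\in\ker\widetilde\beta$, i.e. $(\gamma-1)a=(\varphi-1)b$, $(\tau_D-1)a=(\varphi-1)c$, and $(1-\tau_D^{\chi(\gamma)})b=(\delta-\gamma\otimes1)c$. Choose, by Lemma~\ref{lemm surjective gamma tau}, an element $d\in D_\tau$ with $(\gamma-1)d=b$, and put $y:=a-(\varphi-1)d$, $z:=c-(\tau_D-1)d$. Then $(\gamma-1)y=(\gamma-1)a-(\varphi-1)(\gamma-1)d=(\gamma-1)a-(\varphi-1)b=0$, so $y\in\W(\widehat{F_0^{\rad}})\otimes D$; by Lemma~\ref{lemm amazing but easy equation}, $(\delta-\gamma\otimes1)z=(\delta-\gamma\otimes1)c-(1-\tau_D^{\chi(\gamma)})(\gamma-1)d=(\delta-\gamma\otimes1)c-(1-\tau_D^{\chi(\gamma)})b=0$, so $z\in D_{\tau,0}$ by Lemma~\ref{D tau 0 appears}; and $(\tau_D-1)y-(\varphi-1)z=(\tau_D-1)a-(\varphi-1)c=0$, so $(y,z)\in\ker\beta$. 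Finally $(a,b,c)-(y,0,z)=((\varphi-1)d,(\gamma-1)d,(\tau_D-1)d)=\widetilde\alpha(d)$, so the class of $(a,b,c)$ is the image of the class of $(y,z)$. Combined with injectivity, this is the desired isomorphism.

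I do not anticipate a genuine obstacle: once the kernel descriptions of $\W(\widehat{F_0^{\rad}})\otimes D$ and of $D_{\tau,0}$ are in hand, everything reduces to bookkeeping with the defining relations of the two complexes. The single load-bearing input is the unconditional surjectivity of $\gamma-1$ on $D_\tau$ (Lemma~\ref{lemm surjective gamma tau}), which both produces the element $d$ in the surjectivity step and explains why this statement needs no hypothesis on the residue field of $K$, unlike the quasi-isomorphism of Theorem~\ref{thm map from HC to TR}.
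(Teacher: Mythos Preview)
Your proof is correct and follows essentially the same approach as the paper: injectivity via the identification $(D_\tau)^{\gamma=1}=\W(\widehat{F_0^{\rad}})\otimes D$, and surjectivity by lifting $b$ along $\gamma-1$ (using Lemma~\ref{lemm surjective gamma tau}) and then correcting $(a,b,c)$ by an element of $\im\widetilde\alpha$ to land in the image of $u$. The only difference is a cosmetic sign convention in the corrected element (your $y=a-(\varphi-1)d$ versus the paper's $x'=(\varphi-1)s-a$).
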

\begin{proof}
	For injectivity: suppose a pair $(x, z)\in \Ker \beta$ is mapped to $0$ in $\H^1(\Cc_{\TR})$ by $u,$ $\ie$there exists $d\in D_{\tau}$ such that 
	\[\begin{cases}
	(\varphi-1)d=x\\
	(\gamma-1)d=0\\
	(\tau_D-1)d=z.
	\end{cases} \] 
	Then $d\in (D_{\tau})^{\gamma=1}=\W(\widehat{F_0^{\rad}})\otimes D$ and $\alpha(d)=(x, z)\in \im \alpha$ and hence $[(x,z)]=0\in \H^1(\Cc_{\varphi, \tau}^{\rad}).$
	
	\medskip
	For surjectivity: given $(a, b, c)\in \Ker \tilde{\beta},$ $\ie$
	\[
	\begin{cases}
	(\gamma-1)a+(1-\varphi)b=0\\
	(\tau_D-1)a+(1-\varphi)c=0\\
	(\tau_D^{\chi(\gamma)}-1)b+(\delta-\gamma)c=0.
	\end{cases}
	\]
	By lemma \ref{lemm surjective gamma tau}, we can fix an element $s\in D_{\tau}$ such that $(\gamma-1)s=b.$ Denote $x^{\prime}:= (\varphi-1)s-a$ and $z^{\prime}:=c-(\tau_D-1)s\in D_{\tau}.$ We have $(x^{\prime}, z^{\prime})\in \Ker \beta$ so that $[u(x^{\prime},z^{\prime})]=[(a, b, c)]\in \H^1(\Cc_{\TR}).$  Indeed, we have 
	\[(\gamma-1)x^{\prime}=(\gamma-1)((\varphi-1)s-a)=(\varphi-1)b-(\gamma-1)a=0\]
	and
	\begin{align*}
	(\delta-\gamma)z^{\prime}&=(\delta-\gamma)(c-(\tau_D-1)s)\\
	&=-(\delta-\gamma)c+(\delta-\gamma)((\tau_D-1)s)\\
	&=-(\delta-\gamma)c+ (\tau_D^{\chi(\gamma)}-1)s-\gamma(\tau_D-1)s\\
	&=-(\delta-\gamma)c+ (\tau_D^{\chi(\gamma)}-1)s-(\tau_D^{\chi(\gamma)}\gamma-\gamma)s\\
	&=-(\delta-\gamma)c+ (\tau_D^{\chi(\gamma)}-1)(1-\gamma)s\\
	&=-(\delta-\gamma)c+ (\tau_D^{\chi(\gamma)}-1)b\\
	&=0
	\end{align*}
	hence $x^{\prime}\in (D_{\tau})^{\gamma=1}=\W(\widehat{F_0^{\rad}})\otimes D$ and $z^{\prime}\in D_{\tau,0}.$ Finally we have
	\begin{align*}
	&(\tau_D-1)x^{\prime}-(\varphi-1)z^{\prime}\\
	=&(\tau_D-1)((\varphi-1)s-a)-(\varphi-1)(c-(\tau_D-1)s)\\
	=&(\tau_D-1)a-(\varphi-1)c\\
	=&0
	\end{align*}
	hence $(x^{\prime}, z^{\prime})\in \Ker \beta$ is as required.
\end{proof}

\begin{lemma}\label{Coro H2 injec C-Rad}
	The constructed map from $\Cc_{\varphi, \tau}^{\rad}$ to $\Cc_{\TR}$ induces an injective map on $\H^2.$
\end{lemma}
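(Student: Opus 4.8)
The plan is to make the statement concrete. Unravelling definitions, $\H^2(\Cc_{\varphi, \tau}^{\rad})=D_{\tau,0}/\im\beta$ and $\H^2(\Cc_{\TR})=\Ker\widetilde{\eta}/\im\widetilde{\beta}$, and the induced map sends the class of $t\in D_{\tau,0}$ to the class of $(0,t,0)$ (which indeed lies in $\Ker\widetilde{\eta}$, since $\widetilde{\eta}(0,t,0)=(\delta-\gamma)t=0$ because $t\in D_{\tau,0}$, by Lemma \ref{D tau 0 appears}). So injectivity on $\H^2$ amounts to the following: if $t\in D_{\tau,0}$ and $(0,t,0)=\widetilde{\beta}(a,b,c)$ for some $(a,b,c)\in D_\tau^3$, then one can find $y\in\W(\widehat{F_0^{\rad}})\otimes D$ and $z\in D_{\tau,0}$ with $\beta(y,z)=(\tau_D-1)y-(\varphi-1)z=t$; this exactly says $[t]=0$ in $\H^2(\Cc_{\varphi, \tau}^{\rad})$.

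The construction of such a $(y,z)$ is a variant of the surjectivity computation already carried out for $\H^1$ in the previous lemma. Expanding $\widetilde{\beta}(a,b,c)=(0,t,0)$ into its three rows yields $(\gamma-1)a=(\varphi-1)b$, then $(\tau_D-1)a-(\varphi-1)c=t$, and finally $(\tau_D^{\chi(\gamma)}-1)b+(\delta-\gamma)c=0$. Using Lemma \ref{lemm surjective gamma tau} (surjectivity of $\gamma-1$ on $D_\tau$), choose $s\in D_\tau$ with $(\gamma-1)s=b$, and set $y:=a-(\varphi-1)s$ and $z:=c-(\tau_D-1)s$. Then I would verify the three required properties in turn. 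First, $(\gamma-1)y=(\gamma-1)a-(\varphi-1)(\gamma-1)s=(\gamma-1)a-(\varphi-1)b=0$, so $y\in(D_\tau)^{\gamma=1}=\W(\widehat{F_0^{\rad}})\otimes D$ by Lemma \ref{lemm gamma=1}. Second, Lemma \ref{lemm amazing but easy equation} gives $(\delta-\gamma)(\tau_D-1)s=(1-\tau_D^{\chi(\gamma)})(\gamma-1)s=(1-\tau_D^{\chi(\gamma)})b$, hence $(\delta-\gamma)z=(\delta-\gamma)c+(\tau_D^{\chi(\gamma)}-1)b=0$, so $z\in D_{\tau,0}$ by Lemma \ref{D tau 0 appears}. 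Third, since $\varphi$ and $\tau_D$ commute the two cross terms cancel, so $\beta(y,z)=(\tau_D-1)a-(\varphi-1)c=t$ by the second row above. This produces the desired preimage of $t$, hence the injectivity.

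I do not expect a genuine obstacle here: the argument is essentially formal, invoking only surjectivity of $\gamma-1$ on $D_\tau$ (Lemma \ref{lemm surjective gamma tau}) and the commutation identity $(\delta-\gamma)(\tau_D-1)=(1-\tau_D^{\chi(\gamma)})(\gamma-1)$ of Lemma \ref{lemm amazing but easy equation}. The only mildly delicate points are the sign bookkeeping in the definition of $(y,z)$ and ensuring that $z$ lands in the subgroup $D_{\tau,0}$ rather than merely in $D_\tau$ — and that is precisely where Lemma \ref{lemm amazing but easy equation} (together with Lemma \ref{D tau 0 appears}) intervenes. Note that the statement is one-sided on purpose: no surjectivity on $\H^2$ is asserted here, and this injectivity is meant to be combined with the isomorphism on $\H^1$ from the preceding lemma and the identification of $\H^0$ in order to reach Theorem \ref{thm main result for C-phi-tau-rad}.
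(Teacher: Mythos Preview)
Your proof is correct and follows essentially the same approach as the paper's: both fix a preimage of $b$ under $\gamma-1$ (you call it $s$, the paper calls it $b'$), shift $(a,c)$ by $((\varphi-1)s,(\tau_D-1)s)$, and then use Lemma~\ref{lemm amazing but easy equation} together with Lemma~\ref{D tau 0 appears} and Lemma~\ref{lemm gamma=1} to check that the resulting pair lies in $\W(\widehat{F_0^{\rad}})\otimes D\times D_{\tau,0}$ and maps to $t$ under $\beta$. Your $(y,z)$ is exactly the paper's $(a',-c')$.
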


\begin{proof}
	Take $t\in D_{\tau, 0}$ and suppose $v(t)$ vanishes in $\H^2(\Cc_{\varphi, \tau}).$ Hence there exists $(a, b, c)\in D_\tau^{\oplus 3}$ such that $\tilde{\beta}(a, b, c)=(0, t, 0),$ $\ie$the following relations holds:

	\begin{equation}\label{equ inj H2 (1)}
	(\gamma-1)a+(1-\varphi)b=0
	\end{equation}
	
	\begin{equation}\label{equ inj H2 (2)}
	(\tau_D-1)a+(1-\varphi)c=t
	\end{equation}
	
	\begin{equation}\label{equ inj H2 (3)}
	(\tau_D^{\chi(\gamma)}-1)b+(\delta-\gamma)c=0.
	\end{equation}
	
	It suffices to show there exist $a^{\prime}\in \W(\widehat{F_0^{\rad}})\otimes_{\Oo_{\Ee}}D$ and $c^{\prime}\in D_{\tau, 0}$ such that $(\tau_D-1)a^{\prime}+(\varphi-1)c^{\prime}=t.$ \\
	By lemma \ref{lemm surjective gamma tau}, we can fix an element $b^{\prime}\in D_{\tau}$ such that $(\gamma-1)b^{\prime}=b.$ From (\ref{equ inj H2 (1)}) we have
	\begin{equation}
	(\gamma-1)(a+(1-\varphi)b^{\prime})=0
	\end{equation} \label{equ H2 inj (1)''}
	and from (\ref{equ inj H2 (3)}) we have $(\tau_D^{\chi(\gamma)}-1)(\gamma-1)b^{\prime}+(\delta-\gamma)c=0.$  
	Lemma \ref{lemm amazing but easy equation} then tells 
	\begin{equation}\label{equ H2 inj (3)''}
	(\delta-\gamma)(c-(\tau_D-1)b^{\prime})=0.
	\end{equation} 
	Now (\ref{equ inj H2 (2)}) implies $(\tau_D-1)(a+(1-\varphi)b^{\prime})+(1-\varphi)(c-(\tau_D-1)b^{\prime})=t.$ Hence $a^{\prime}:=a+(1-\varphi)b^{\prime}$ and $c^{\prime}:=-c+(\tau_D-1)b^{\prime}$ are good candidates, as they satisfy $(\tau_D-1)a^{\prime}+(\varphi-1)c^{\prime}=t.$ Indeed, we have now
	\begin{align*}
	(\gamma-1)a^{\prime}&=0\\
	(\delta-\gamma)c^{\prime}&=0		
	\end{align*} 
	which tells exactly $a^{\prime}\in D_{\tau}^{\mathscr{G}_{K_{\pi}}}=\W(\widehat{F_0^{\rad}})\otimes_{F_0}D$ and  $c^{\prime}\in D_{\tau, 0}.$
\end{proof}

\begin{lemma}\label{Coro H2 surj C-Rad}
	The constructed map from $\Cc_{\varphi, \tau}^{\rad}$ to $\Cc_{\TR}$ induces a surjective map on $\H^2.$
\end{lemma}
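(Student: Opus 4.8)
The plan is to reduce every class of $\H^2(\Cc_{\TR})=\Ker\tilde\eta/\im\tilde\beta$ to the normal form $(0,t,0)$ with $t\in D_{\tau,0}$, which is precisely the image $v(t)$ of the degree-$2$ term $D_{\tau,0}$ of $\Cc_{\varphi,\tau}^{\rad}$. So I start with a triple $(a,b,c)\in D_\tau^{\oplus 3}$ lying in $\Ker\tilde\eta$, i.e.\ satisfying $(\tau_D^{\chi(\gamma)}-1)a+(\delta-\gamma)b+(\varphi-1)c=0$, and I look for an element of $\im\tilde\beta$ to subtract off so as to reach this form.

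First I would invoke the surjectivity facts already established: by lemma~\ref{lemm surjective gamma tau} the operator $\gamma-1$ is surjective on $D_\tau$, so I may pick $u_1\in D_\tau$ with $(\gamma-1)u_1=a$; and by proposition~\ref{propdeltagamma Zp} the operator $\delta-\gamma$ is surjective on $D_\tau$, so I may pick $u_3\in D_\tau$ with $(\delta-\gamma)u_3=c$. Applying $\tilde\beta$ to $(u_1,0,u_3)$ gives $\tilde\beta(u_1,0,u_3)=\big(a,\ (\tau_D-1)u_1+(1-\varphi)u_3,\ c\big)$, hence $(a,b,c)-\tilde\beta(u_1,0,u_3)=(0,t,0)$ with $t:=b-(\tau_D-1)u_1-(1-\varphi)u_3$.

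The only step that requires care --- and the one I would check most carefully --- is that $t\in D_{\tau,0}$, which by lemma~\ref{D tau 0 appears} amounts to $(\delta-\gamma)t=0$. To see this I would apply $\delta-\gamma$ to the definition of $t$: using lemma~\ref{lemm amazing but easy equation} I rewrite $(\delta-\gamma)(\tau_D-1)u_1=(1-\tau_D^{\chi(\gamma)})(\gamma-1)u_1=(1-\tau_D^{\chi(\gamma)})a$; using that $\varphi$ commutes with $\gamma$ (lemma~\ref{lemm 1.1.11 well-defined tau-1 varphi-1}) and with $\delta$ (a power series in $\tau_D-1$), hence with $\delta-\gamma$, I rewrite $(\delta-\gamma)(1-\varphi)u_3=(1-\varphi)(\delta-\gamma)u_3=(1-\varphi)c$; and the cocycle relation rearranges to $(\delta-\gamma)b=(1-\tau_D^{\chi(\gamma)})a+(1-\varphi)c$. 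Combining these three identities yields $(\delta-\gamma)t=0$, so $t\in D_{\tau,0}$ and $(0,t,0)=v(t)$ is a $\tilde\eta$-cocycle whose class equals that of $(a,b,c)$ in $\H^2(\Cc_{\TR})$. This proves surjectivity on $\H^2$. There is essentially no obstacle beyond organizing this computation cleanly; all the needed surjectivity inputs (lemma~\ref{lemm surjective gamma tau}, proposition~\ref{propdeltagamma Zp}) and the key commutation relation (lemma~\ref{lemm amazing but easy equation}) are already in place.
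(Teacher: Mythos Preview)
Your proof is correct and follows essentially the same route as the paper: subtract $\tilde\beta(u_1,0,u_3)$ with $(\gamma-1)u_1=a$ and $(\delta-\gamma)u_3=c$ to reduce to $(0,t,0)$, then use lemma~\ref{lemm amazing but easy equation} and the cocycle relation to verify $(\delta-\gamma)t=0$. Your citations are in fact slightly more accurate than the paper's (you correctly invoke proposition~\ref{propdeltagamma Zp} for the surjectivity of $\delta-\gamma$, whereas the paper cites only lemma~\ref{lemm surjective gamma tau}).
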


\begin{proof}
	Let $(\tilde{a}, \tilde{b}, \tilde{c})\in \Ker \tilde{\eta},$ $\ie$
	\begin{equation}\label{equ (4) surj H2}
	(\tau_D^{\chi(\gamma)}-1)\tilde{a} + (\delta-\gamma)\tilde{b} +(\varphi-1)\tilde{c}=0. 
	\end{equation} 
	
	It suffices to find $t\in D_{\tau, 0}$ such that $[v(t)]=[(\tilde{a}, \tilde{b}, \tilde{c})]\in \H^2(\Cc_{\TR}).$ More precisely, to find $(a, b, c)\in D_{\tau}^{\oplus 3}$ and $t\in D_{\tau, 0}$ that satisfy the following system of equations:
	
	\[
	\begin{cases}
	(\gamma-1)a+(1-\varphi)b=\tilde{a}\\
	(\tau_D-1)a+(1-\varphi)c+t=\tilde{b}\\
	(\tau_D^{\chi(\gamma)}-1)b+(\delta-\gamma)c=\tilde{c}.
	\end{cases}
	\]
	
	Take $b=0:$ it suffices to solve
	\[
	\begin{cases}
	(\gamma-1)a=\tilde{a}\\
	(\tau_D-1)a+(1-\varphi)c+t=\tilde{b}\\
	(\delta-\gamma)c=\tilde{c}.
	\end{cases}
	\]
	
	By lemma \ref{lemm surjective gamma tau}, we can find $a,\ c\in D_{\tau}$ such that $(\gamma-1)a=\tilde{a},\ (\delta-\gamma)c=\tilde{c}.$ Now we have
	\begin{align*}
	(\delta-\gamma)((\tau_D-1)a+(1-\varphi)c)=& (1-\tau_D^{\chi(\gamma)})(\gamma-1)a+(1-\varphi)\tilde{c}\\
	=& (1-\tau_D^{\chi(\gamma)})\tilde{a}+(1-\varphi)\tilde{c}\\
	=& (\delta-\gamma)\tilde{b}
	\end{align*}
	where the first equality follows from lemma \ref{lemm amazing but easy equation} and the last one is from (\ref{equ (4) surj H2}). This shows that \[t:=\tilde{b}-(\tau_D-1)a-(1-\varphi)c\] belongs to $D_{\tau,0},$ as required.
\end{proof}

\begin{remark}
	We hence proved theorem \ref{thm main result for C-phi-tau-rad} without refering to proposition \ref{prop Caruso H1 Zp}.
\end{remark}

\section{Proof of Theorem \ref{thm map from HC to TR}}

\begin{lemma}\label{lemm0}
	If $\delta\in \W(\widehat{F_0^{\rad}})\otimes_{\Oo_{\Ee}}D$ is such that $(\varphi-1)(\delta)\in D,$ then $\delta\in D.$ 
\end{lemma}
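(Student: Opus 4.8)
The plan is to realize everything inside $\W(C^{\flat})\otimes_{\Oo_{\Ee}}\mathcal{D}(T)\cong\W(C^{\flat})\otimes_{\ZZ_p}T$ (a $\varphi$- and $\mathscr{G}_{K_\pi}$-equivariant identification), in which $D=\mathcal{D}(T)$, $\Oo_{\widehat{\Ee^{\ur}}}\otimes_{\Oo_{\Ee}}D$ and $\W(\widehat{F_0^{\rad}})\otimes_{\Oo_{\Ee}}D$ all sit as submodules (each of $\Oo_{\Ee}$, $\Oo_{\widehat{\Ee^{\ur}}}$, $\W(\widehat{F_0^{\rad}})$, $\W(C^{\flat})$ being a complete discrete valuation ring with uniformizer $p$, the inclusions among them are faithfully flat, so these remain injective after $-\otimes_{\Oo_{\Ee}}D$ even if $T$ has torsion). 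The two facts I would exploit are: (i) $\varphi-1$ is surjective on $\Oo_{\widehat{\Ee^{\ur}}}\otimes_{\Oo_{\Ee}}D$, and (ii) $(\W(C^{\flat})\otimes_{\ZZ_p}T)^{\mathscr{G}_{K_\pi}}=\W(\widehat{F_0^{\rad}})\otimes_{\Oo_{\Ee}}D$.

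First I would descend $\delta$ into $\Oo_{\widehat{\Ee^{\ur}}}\otimes_{\Oo_{\Ee}}D$. Set $d:=(\varphi-1)(\delta)\in D$. By Lemma \ref{lemm conti section}, $\varphi-1$ admits a continuous section on $\Oo_{\widehat{\Ee^{\ur}}}$, hence is surjective there; tensoring with $T$ over $\ZZ_p$ gives surjectivity of $\varphi-1$ on $\Oo_{\widehat{\Ee^{\ur}}}\otimes_{\Oo_{\Ee}}D$, so I may pick $\delta'$ in this module with $(\varphi-1)(\delta')=d$. The same lemma yields the exact sequence $0\to\ZZ_p\to\W(C^{\flat})\xrightarrow{\varphi-1}\W(C^{\flat})\to0$, which tensored with $T$ over $\ZZ_p$ (flatness of $\W(C^{\flat})$) gives $0\to T\to\W(C^{\flat})\otimes_{\ZZ_p}T\xrightarrow{\varphi-1}\W(C^{\flat})\otimes_{\ZZ_p}T\to0$; thus the kernel of $\varphi-1$ on $\W(C^{\flat})\otimes_{\Oo_{\Ee}}D$ is $T$, and this copy of $T$ lies in $\Oo_{\widehat{\Ee^{\ur}}}\otimes_{\Oo_{\Ee}}D$. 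Since $(\varphi-1)(\delta-\delta')=0$, we get $\delta-\delta'\in T$, hence $\delta\in\Oo_{\widehat{\Ee^{\ur}}}\otimes_{\Oo_{\Ee}}D$.

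Now I bring in the hypothesis $\delta\in\W(\widehat{F_0^{\rad}})\otimes_{\Oo_{\Ee}}D$. For (ii): since $\mathscr{G}_L\subseteq\mathscr{G}_{K_\pi}$ with $\mathscr{G}_{K_\pi}/\mathscr{G}_L=\Gal(L/K_\pi)=\overline{\la\gamma\ra}$ (Remark \ref{rem def tau} and the diagram), Lemma \ref{lemm iso for D-tau} gives $(\W(C^{\flat})\otimes_{\ZZ_p}T)^{\mathscr{G}_L}\cong D_\tau$ as $\overline{\la\gamma\ra}$-modules, and then Lemma \ref{lemm gamma=1} gives $(\W(C^{\flat})\otimes_{\ZZ_p}T)^{\mathscr{G}_{K_\pi}}=(D_\tau)^{\gamma=1}=\W(\widehat{F_0^{\rad}})\otimes_{\Oo_{\Ee}}D$. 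In particular $\delta$, lying in $\W(\widehat{F_0^{\rad}})\otimes_{\Oo_{\Ee}}D$, is $\mathscr{G}_{K_\pi}$-fixed in $\W(C^{\flat})\otimes_{\ZZ_p}T$. But $\Oo_{\widehat{\Ee^{\ur}}}\otimes_{\Oo_{\Ee}}D$ is a $\mathscr{G}_{K_\pi}$-stable submodule whose invariants are, by construction of $\mathcal{D}$ (Theorem \ref{thm equivalence of cats}), exactly $D=(\Oo_{\widehat{\Ee^{\ur}}}\otimes_{\ZZ_p}T)^{\mathscr{G}_{K_\pi}}$. Hence $\delta\in(\Oo_{\widehat{\Ee^{\ur}}}\otimes_{\Oo_{\Ee}}D)^{\mathscr{G}_{K_\pi}}=D$, as wanted.

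Given the tools already in the text, no step is genuinely hard; the only point requiring care — and where a hasty argument goes wrong — is the bookkeeping: keeping straight that $D$, $T$, $\Oo_{\widehat{\Ee^{\ur}}}\otimes D$ and $\W(\widehat{F_0^{\rad}})\otimes D$ all embed compatibly in $\W(C^{\flat})\otimes_{\ZZ_p}T$ with the \emph{right} actions, and in particular that $(\W(C^{\flat})\otimes_{\ZZ_p}T)^{\mathscr{G}_{K_\pi}}=\W(\widehat{F_0^{\rad}})\otimes_{\Oo_{\Ee}}D$ rather than something smaller. If one prefers to avoid invoking Lemma \ref{lemm iso for D-tau} here, the identity $(\Oo_{\widehat{\Ee^{\ur}}}\otimes_{\Oo_{\Ee}}D)\cap(\W(\widehat{F_0^{\rad}})\otimes_{\Oo_{\Ee}}D)=D$ can instead be proved by the structure theorem for $D$ over the DVR $\Oo_{\Ee}$, reducing to $\Oo_{\widehat{\Ee^{\ur}}}\cap\W(\widehat{F_0^{\rad}})=\Oo_{\Ee}$ in $\W(C^{\flat})$ and then, modulo $p$, to $F_0^{\sep}\cap\widehat{F_0^{\rad}}=F_0$ in $C^{\flat}$ — the latter via Krasner's lemma, since a separable element of $C^{\flat}$ approximable arbitrarily well by elements of the purely inseparable extension $F_0^{\rad}/F_0$ must lie in $F_0$.
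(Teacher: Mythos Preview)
Your argument is correct and is a genuinely different route from the paper's. The paper works entirely inside $\W(\widehat{F_0^{\rad}})\otimes_{\Oo_{\Ee}}D$: after reducing by d\'evissage to the case $pD=0$, it uses the explicit decomposition $\widehat{F_0^{\rad}}=\widehat{\bigoplus_{\lambda\in\Lambda}F_0\pi^\lambda}$ with $\Lambda=\ZZ[p^{-1}]\cap[0,1)$, expands $\delta=\sum_\lambda\pi^\lambda\otimes d_\lambda$, derives the recursion $d_\lambda=\sum_{m=0}^{p-1}\pi^m\varphi(d_{(m+\lambda)/p})$ for $\lambda\neq0$, and then runs a valuation estimate (on the quantities $c_k=\min_{p^{k-1}\lambda\notin\ZZ}v_{\mathfrak B,\pi}(d_\lambda)$) to force $d_\lambda=0$ for all $\lambda\neq0$. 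This is a purely $\varphi$-module argument: it never invokes $T$, the $\tau$-action, or any Galois group, which is why it transports verbatim to the subquotients $p^nD/p^{n+1}D$ in the d\'evissage.

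Your approach, by contrast, exploits the Galois-representation origin of $D$: you pass to the ambient $\W(C^\flat)\otimes_{\ZZ_p}T$, use surjectivity of $\varphi-1$ on $\Oo_{\widehat{\Ee^{\ur}}}\otimes T$ together with $\ker(\varphi-1)=T$ to land $\delta$ in $\Oo_{\widehat{\Ee^{\ur}}}\otimes_{\Oo_{\Ee}}D$, and then read off $\delta\in D$ from the identification $\W(\widehat{F_0^{\rad}})\otimes_{\Oo_{\Ee}}D=(\W(C^\flat)\otimes_{\ZZ_p}T)^{\mathscr{G}_{K_\pi}}$ (Lemma~\ref{lemm gamma=1}). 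This is shorter and more conceptual, and the ingredients you cite are all established earlier in the text, so there is no circularity. What you lose relative to the paper's proof is its independence from the equivalence of categories: your argument needs $D=\mathcal D(T)$, whereas the paper's works for any \'etale $\varphi$-module over $\Oo_{\Ee}$ (a distinction without practical consequence here, since that equivalence is already in hand). Your closing remark about $F_0^{\sep}\cap\widehat{F_0^{\rad}}=F_0$ via Krasner is also a valid way to finish once you have $\delta\in\Oo_{\widehat{\Ee^{\ur}}}\otimes_{\Oo_{\Ee}}D$.
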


\begin{proof}\begin{enumerate}
		\item [(1)] First consider the case when $D$ is killed by $p:$ we have to prove that if $\delta\in\widehat{F_0^{\rad}}\otimes_{F_0}D$ satisfies $(\varphi-1)\delta\in D,$ then $\delta\in D.$\\
		We have
		$$F_0^{\rad}=\bigoplus\limits_{\lambda\in\Lambda}F_0\pi^\lambda$$
		where $\Lambda=\ZZ\big[p^{-1}\big]\cap[0,1).$ This shows that
		$$F_\tau^{\gamma=1}=\Big\{\sum\limits_{\lambda\in\Lambda}\alpha_\lambda\pi^\lambda\,;\,(\alpha_\lambda)_{\lambda\in\Lambda}\in F_0^\Lambda,\,\Lim{\lambda}\alpha_\lambda=0\Big\}$$
		where the limit is taken for the filter complement of finite subsets in $\Lambda$ (this means that for every $C\in\RR,$ the set $\{\lambda\in\Lambda\,;\,v_\pi(a_\lambda)<C\}$ is finite). This implies that
		$$D_\tau^{\gamma=1}=F_\tau^{\gamma=1}\otimes_{F_0}D=\Big\{\sum\limits_{\lambda\in\Lambda}\pi^\lambda\otimes d_\lambda\,;\,(d_\lambda)_{\lambda\in\Lambda}\in D^\Lambda,\,\Lim{\lambda} d_\lambda=0\Big\}.$$
		Here we use a basis $\mathfrak{B}=(e_1,\ldots,e_r)$ of $D$ over $F_0,$ and endow $D$ with the "valuation" $$v_{\mathfrak{B},\pi}(x_1e_1+\cdots+x_re_r)=\min\limits_{1\leq i\leq r}v_\pi(x_i).$$
		Put $\mathcal{D}=\mathcal{D}_{\mathfrak{B}}:=\bigoplus\limits_{i=1}^rk[\![\pi]\!]e_i,$ this is a lattice in $D.$
		
		\noindent
		Note that the writting $\sum\limits_{\lambda\in\Lambda}\pi^\lambda\otimes d_\lambda$ of an element in $F_\tau^{\gamma=1}\otimes_{F_0}D$ as above is unique.
		
		\smallskip
		
		\noindent
		Write $\delta=\sum\limits_{\lambda\in\Lambda}\pi^\lambda\otimes d_\lambda$ as above: we have
		\begin{align*}
		(\varphi-1)\delta &= \sum\limits_{\lambda\in\Lambda}(\pi^{p\lambda}\otimes\varphi(d_\lambda)-\pi^\lambda\otimes d_\lambda)\\
		&= \sum\limits_{\lambda\in\Lambda}\pi^\lambda\otimes\Big(\sum\limits_{m=0}^{p-1}\pi^m\varphi\Big(d_{\frac{m+\lambda}{p}}\Big)-d_\lambda\Big)
		\end{align*}
		so that $d_\lambda=\sum\limits_{m=0}^{p-1}\pi^m\varphi\Big(d_{\frac{m+\lambda}{p}}\Big)$ for all $\lambda\in\Lambda\setminus\{0\}.$
		
		\medskip

		\noindent
		For $k\in\ZZ_{>0},$ put $c_k=\min\limits_{\substack{\lambda\in\Lambda\\ p^{k-1}\lambda\notin\ZZ}}v_{\mathfrak{B},\pi}(d_\lambda).$ We have $\Lim{k\to\infty}c_k=+\infty.$
		
		\noindent
		Let $k\in\ZZ_{>0}$ and $\lambda\in\Lambda$ such that $p^k\lambda\in\ZZ$ and $p^{k-1}\lambda\notin\ZZ.$ If $m\in\{0,\ldots,p-1\}$ and $\mu=\frac{m+\lambda}{p},$ we have $p^k\mu\notin\ZZ,$ so $v_{\mathfrak{B},\pi}(d_\mu)\geq c_{k+1}.$ Assume $c_{k+1}>0:$ we have $v_{\mathfrak{B},\pi}(\varphi(d_\mu))\geq pc_{k+1}.$ This implies that $v_{\mathfrak{B},\pi}(d_\lambda)\geq pc_{k+1}\geq c_{k+1}.$ Thus we have
		$$c_{k+1}>0\Rightarrow c_k\geq c_{k+1}>0.$$
		Let $c\in\RR_{>0}:$ we have $c_k\geq c$ for $k\gg0,$ and the above shows that $c_k\geq c$ for all $k\in\ZZ_{>0}.$ As this holds for all $c>0,$ this means that $c_1=+\infty,$ $\ie$$d_\lambda=0$ whenever $\lambda\neq0,$ so that
		$$\delta=d_0\in D.$$
		
		\item [(2)] In the general case, let $\delta\in\textrm{W}(\widehat{F_0^{\textrm{rad}}})\otimes_{\mathcal{O}_{\mathcal{E}}}D$ be such that $(\varphi-1)\delta\in D.$ We show that
		$$\delta\in D+p^n\textrm{W}(\widehat{F_0^{\textrm{rad}}})\otimes_{\mathcal{O}_{\mathcal{E}}}D$$
		by induction on $n\in\ZZ_{>0}.$ The case $n=1$ follows from the special case above. Assume $n\geq1$ and that we have $\delta\in D+p^n\textrm{W}(\widehat{F_0^{\textrm{rad}}})\otimes_{\mathcal{O}_{\mathcal{E}}}D:$ write $\delta=\delta_n+\delta_n'$ with $\delta_n\in D$ and $\delta_n'\in p^n\textrm{W}(\widehat{F_0^{\textrm{rad}}})\otimes_{\mathcal{O}_{\mathcal{E}}}D.$ Then
		$$(\varphi-1)\delta_n'=(\varphi-1)\delta_n-(\varphi-1)\delta\in D\cap p^n\textrm{W}(\widehat{F_0^{\textrm{rad}}})\otimes_{\mathcal{O}_{\mathcal{E}}}D=p^nD.$$
		If we apply the special case to the image of $\delta_n'$ in $p^nD/p^{n+1}D$ (which is a $\varphi$-module over $F_0$)\ft{Observe that the proof of the particular case ($D$ killed by $p$) uses nothing but the $\varphi$-module structure of $D:$ it holds with $p^nD/p^{n+1}D.$}, we deduce that $\delta_n^\prime\in p^nD+p^{n+1}\textrm{W}(\widehat{F_0^{\textrm{rad}}})\otimes_{\mathcal{O}_{\mathcal{E}}}D,$ which shows that
		$$\delta\in D+p^{n+1}\textrm{W}(\widehat{F_0^{\textrm{rad}}})\otimes_{\mathcal{O}_{\mathcal{E}}}D.$$
		As this holds for all $n\in\ZZ_{>0},$ this shows that $\delta$ belongs to the closure of $D$ in $\textrm{W}(\widehat{F_0^{\textrm{rad}}})\otimes_{\mathcal{O}_{\mathcal{E}}}D$ for the $p$-adic topology: as $D$ is closed, we deduce that $\delta\in D.$ 
		
	\end{enumerate}
\end{proof}

\subsection{Morphism from \texorpdfstring{$\Cc_{\varphi, \tau}$ to $\Cc_{\varphi, \tau}^{\rad}$}{C\textpinferior\texthinferior\textiinferior,\texttinferior\textainferior\textuinferior to C\textpinferior\texthinferior\textiinferior,\texttinferior\textainferior\textuinferior rad}}

We have a morphism from $\Cc_{\varphi, \tau}$ to $\mathcal{C}^{\rad}_{\varphi,\tau}$ as follows
$$\xymatrix{
	0\ar[r] & D\ar[d]_{\mathrm{incl}}\ar[r]^\alpha & D\oplus D_{\tau,0}\ar[d]_{\mathrm{incl}}\ar[r]^\beta & D_{\tau,0}\ar@{=}[d]\ar[r] & 0\\
	0\ar[r] & \W(\widehat{F_0^{\rad}})\otimes_{\Oo_{\Ee}}D\ar[r]^-{\alpha^{\rad}} &  \W(\widehat{F_0^{\rad}})\otimes_{\Oo_{\Ee}}D\bigoplus D_{\tau,0}\ar[r]^-{\beta^{\rad}} & D_{\tau,0}\ar[r] & 0.}$$

\medskip

\begin{proposition}
	The above morphism is a quasi-isomorphism when the residue field of $K$ is finite. 
\end{proposition}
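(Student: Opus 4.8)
The plan is to exhibit the morphism as the inclusion of a subcomplex and to reduce the assertion to the acyclicity of the quotient complex. Write $\widetilde{D}:=\W(\widehat{F_0^{\rad}})\otimes_{\Oo_{\Ee}}D$. The three vertical maps of the diagram — the inclusion $D\hookrightarrow\widetilde{D}$ (injective, since $\Oo_{\Ee}\to\W(\widehat{F_0^{\rad}})$ is a faithfully flat local map of discrete valuation rings), the inclusion $D\oplus D_{\tau,0}\hookrightarrow\widetilde{D}\oplus D_{\tau,0}$, and the identity on $D_{\tau,0}$ — are injective, so there is a short exact sequence of complexes $0\to\Cc_{\varphi, \tau}(T)\to\Cc_{\varphi, \tau}^{\rad}(T)\to Q\to 0$. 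Computing $Q$ degreewise: the rightmost term dies, the first two cokernels are both $\widetilde{D}/D$, and since $\tau_D-1$ maps $\widetilde{D}$ into $D_{\tau,0}$ (the Remark following the definition of $\Cc_{\varphi, \tau}^{\rad}$, which rests on Lemma \ref{lemm amazing but easy equation}), the differential of $Q$ induced by $\alpha^{\rad}$ is simply $\overline{d}\mapsto\overline{(\varphi-1)d}$. Hence $Q$ is the two-term complex $[\,\widetilde{D}/D\xrightarrow{\varphi-1}\widetilde{D}/D\,]$, placed in two consecutive degrees, and by the long exact cohomology sequence the morphism of the proposition is a quasi-isomorphism if and only if $\varphi-1\colon\widetilde{D}/D\to\widetilde{D}/D$ is bijective.

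Injectivity of $\varphi-1$ on $\widetilde{D}/D$ is precisely the content of Lemma \ref{lemm0}: if $\delta\in\widetilde{D}$ satisfies $(\varphi-1)\delta\in D$ then $\delta\in D$. For surjectivity it suffices to prove that $\varphi-1\colon\widetilde{D}\to\widetilde{D}$ is onto, since a preimage in $\widetilde{D}$ of a representative then gives a preimage in $\widetilde{D}/D$. By Lemma \ref{lemm gamma=1} together with Lemma \ref{lemm iso for D-tau} (and $\mathscr{G}_{K_\pi}/\mathscr{G}_L=\overline{\la\gamma\ra}$), one identifies $\widetilde{D}=(\mathcal{D}(T)_\tau)^{\gamma=1}=(\W(C^\flat)\otimes_{\ZZ_p}T)^{\mathscr{G}_{K_\pi}}$. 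Applying $(-)^{\mathscr{G}_{K_\pi}}$ to the exact sequence $0\to T\to\W(C^\flat)\otimes_{\ZZ_p}T\xrightarrow{\varphi-1}\W(C^\flat)\otimes_{\ZZ_p}T\to 0$, which admits a continuous section by Lemma \ref{lemm conti section} tensored with $T$, yields a long exact sequence whose relevant piece is $\widetilde{D}\xrightarrow{\varphi-1}\widetilde{D}\to\H^1(\mathscr{G}_{K_\pi},\W(C^\flat)\otimes_{\ZZ_p}T)$, and the last group vanishes by Corollary \ref{lemmcoho0 Zp} (using Lemma \ref{lemmcoho0 Zp tor} for the torsion part, and a d\'evissage plus passage to the limit to reduce the general finite-type case to the free and $p$-torsion ones). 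Thus $\varphi-1$ is surjective on $\widetilde{D}$, hence on $\widetilde{D}/D$, so $Q$ is acyclic and we conclude.

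The one genuinely delicate ingredient is the injectivity, i.e. Lemma \ref{lemm0}: upgrading the Artin--Schreier-type condition $(\varphi-1)\delta\in D$ to $\delta\in D$ requires the explicit (completed) direct sum decomposition $F_0^{\rad}=\bigoplus_{\lambda\in\ZZ[1/p]\cap[0,1)}F_0\,\pi^\lambda$, a valuation-theoretic descent forcing every fractional-exponent component of $\delta$ to vanish, and a $p$-adic d\'evissage for non-$p$-torsion $T$; the reduction to $Q$, the identification of $\widetilde{D}$, and the surjectivity are then formal, given the cohomological vanishing statements already established. (The finite-residue-field hypothesis is the one carried over from Theorem \ref{thm map from HC to TR}; in the reduction above the only point where a condition on $k$ could conceivably enter is the vanishing of $\H^1(\mathscr{G}_{K_\pi},\W(C^\flat)\otimes T)$, which Corollary \ref{lemmcoho0 Zp} grants for arbitrary perfect $k$.) Combining this proposition with the three lemmas of the preceding section, which show that $\Cc_{\varphi, \tau}^{\rad}\to\Cc_{\TR}$ induces isomorphisms on $\H^0$, $\H^1$ and $\H^2$, also establishes Theorem \ref{thm map from HC to TR}.
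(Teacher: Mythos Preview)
Your argument is correct and in fact cleaner than the paper's. You pass to the quotient complex $Q=[\widetilde{D}/D\xrightarrow{\varphi-1}\widetilde{D}/D]$ and prove directly that $\varphi-1$ is bijective on $\widetilde{D}/D$: injectivity is Lemma~\ref{lemm0}, and surjectivity follows from the vanishing of $\H^1(\mathscr{G}_{K_\pi},\W(C^\flat)\otimes_{\ZZ_p}T)$ (Corollary~\ref{lemmcoho0 Zp}) via the Artin--Schreier sequence with its continuous section (Lemma~\ref{lemm conti section}). The paper, by contrast, treats each cohomological degree separately: $\H^0$ and the injectivity on $\H^1$ come from Lemma~\ref{lemm0} as in your argument, but for the surjectivity on $\H^1$ and for $\H^2$ the paper invokes Theorems~\ref{thm main result} and~\ref{thm main result for C-phi-tau-rad} to identify both sides with $\H^i(\mathscr{G}_K,T)$ and then uses that these are finite-dimensional over $\QQ_p$ (resp.\ $\FF_p$) when $k$ is finite, so that an injective (resp.\ surjective) linear map between them is bijective. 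Your route avoids this dimension count entirely; as you observe, nothing in your proof uses finiteness of $k$, so you have actually removed the hypothesis from the statement. The paper's approach has the mild advantage of being self-contained once the two ``computes Galois cohomology'' theorems are in place, whereas yours calls on Corollary~\ref{lemmcoho0 Zp} one more time; but your argument is both shorter and strictly more general.
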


\begin{proof}
	For $\H^0:$ We have $\H^0(\Cc_{\varphi, \tau})=\Ker(\alpha)\subset\Ker(\alpha^{\rad})=\H^0(\Cc_{\varphi, \tau}^{\rad}).$ We have $(\W(\widehat{F_0^{\rad}})\otimes_{\Oo_{\Ee}}D)^{\varphi=1}=D^{\varphi=1}$ (by lemma \ref{lemm0}). Thus $\Ker(\alpha^{\rad})\subset\Ker(\alpha)=\H^0(\Cc_{\varphi, \tau}).$

	\medskip

	Injectivity of $\H^1:$ Let $(x,y)\in\Ker(\beta)$ whose image in $\H^1(\mathcal{C}_{\varphi,\tau}^{\rad})$ is zero: there exists $\delta\in\W(\widehat{F_0^{\rad}})\otimes_{\Oo_{\Ee}}D$ such that $(x,y)=((\varphi-1)\delta,(\tau_D-1)\delta).$ As $(\varphi-1)\delta=x\in D:$ this implies $\delta\in D$ by lemma \ref{lemm0}, so $(x,y)\in\im(\alpha),$ and so that the class of $(x,y)$ is zero in $\H^1(\Cc_{\varphi, \tau}).$ This proves the injectivity of $\H^1(\Cc_{\varphi, \tau})\to \H^1(\mathcal{C}_{\varphi,\tau}^{\rad}).$

	\medskip

	Surjectivity of $\H^1:$ 
	By theorems \ref{thm main result} and \ref{thm main result for C-phi-tau-rad}, the homology groups $\H^1$ of both complexes are isomorphic to $\H^1(\mathscr{G}_K, T),$ hence they have the same finite dimension. Notice the map is a linear transformation, hence injectivity implies surjectivity.

	\medskip

	For $\H^2:$ Since we proved that both $\Cc_{\varphi, \tau}$ and $\Cc_{\varphi, \tau}^{\rad}$ have the correct $\H^2$ by theorems \ref{thm main result} and \ref{thm main result for C-phi-tau-rad}, hence have the same finite dimension. It is enough to show either injectivity or surjectivity. But the surjectivity is direct.
\end{proof}

\begin{corollary}
	The complexes $\Cc_{\varphi, \tau},$ $\Cc_{\varphi, \tau}^{\rad}$ and $\Cc_{\TR}$ are all quasi-isomorphic when the residue field of $K$ is finite.
\end{corollary}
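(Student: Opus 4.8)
The plan is to assemble the statement from the comparison morphisms already at our disposal, together with the two-out-of-three property of quasi-isomorphisms. Recall that we have constructed explicit morphisms of complexes
\[ \Cc_{\varphi, \tau} \xrightarrow{\;\iota\;} \Cc_{\varphi, \tau}^{\rad} \longrightarrow \Cc_{\TR}, \]
and unwinding the definitions of the maps $\iota$, $u$, $v$ in the diagrams above shows that the composite is precisely the morphism $\Cc_{\varphi, \tau}\to\Cc_{\TR}$ of Theorem \ref{thm map from HC to TR} (in each degree the composite is the same inclusion-type map $\mathrm{incl}$, $u$, or $v$). So the first step is just to quote the two results already proved: the proposition of the previous subsection, which says $\iota$ is a quasi-isomorphism once $k$ is finite, and Theorem \ref{thm map from HC to TR}, which says the composite $\Cc_{\varphi, \tau}\to\Cc_{\TR}$ is a quasi-isomorphism once $k$ is finite.

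Then I would conclude that the remaining arrow $\Cc_{\varphi, \tau}^{\rad}\to\Cc_{\TR}$ is also a quasi-isomorphism, using that whenever $h$ and $g\circ h$ are quasi-isomorphisms so is $g$ (immediate from $H(g)=H(g\circ h)\circ H(h)^{-1}$ on cohomology). In fact this arrow can be checked to be a quasi-isomorphism directly, with no assumption on $k$: on $\H^1$ and $\H^2$ it is the content of the three lemmas on the intermediate complex (isomorphism on $\H^1$, and injectivity and surjectivity on $\H^2$, see Lemmas \ref{Coro H2 injec C-Rad} and \ref{Coro H2 surj C-Rad}); on $\H^0$ both sides are $T^{\mathscr{G}_K}$ — for $\Cc_{\varphi, \tau}^{\rad}$ via Lemmas \ref{lemm0} and \ref{lemm gamma=1}, for $\Cc_{\TR}$ via Lemma \ref{lemm right H1 C-TR} — and the induced map is the restriction of the inclusion $\W(\widehat{F_0^{\rad}})\otimes D=D_\tau^{\gamma=1}\hookrightarrow D_\tau$ to these invariants, hence bijective; and in degrees $<0$ or $\geq 3$ both complexes have zero cohomology ($\Cc_{\varphi, \tau}^{\rad}$ being concentrated in degrees $0,1,2$, while $\H^i(\Cc_{\TR})=\H^i(\mathscr{G}_K,T)=0$ for $i\geq 3$). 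Chaining $\Cc_{\varphi, \tau}\xrightarrow{\sim}\Cc_{\varphi, \tau}^{\rad}\xrightarrow{\sim}\Cc_{\TR}$ then gives the three asserted quasi-isomorphisms.

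I do not expect any real obstacle here: the substantive work is entirely contained in the lemmas and the proposition already established, and what remains is bookkeeping. The only point worth a careful line is the identification of the composite $\Cc_{\varphi, \tau}\to\Cc_{\varphi, \tau}^{\rad}\to\Cc_{\TR}$ with the morphism of Theorem \ref{thm map from HC to TR} (this is what makes the two-out-of-three argument legitimate); alternatively, if one takes the direct route via $\Cc_{\varphi, \tau}^{\rad}\to\Cc_{\TR}$, the only extra verification is that the map induced on $\H^0$ is an honest isomorphism and not merely an identification of abstractly isomorphic groups — both checks are routine once the explicit formulas for $\iota$, $u$, $v$ are unwound.
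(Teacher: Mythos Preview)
Your primary argument is circular. You invoke Theorem \ref{thm map from HC to TR} as an input, but in the paper's logical order this corollary \emph{is} the proof of Theorem \ref{thm map from HC to TR}: the section is titled ``Proof of Theorem \ref{thm map from HC to TR}'', and immediately after the corollary the paper writes ``In particular, we have proved theorem \ref{thm map from HC to TR}.'' So you cannot quote that theorem here; it has only been stated, not established.

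Your alternative route is the correct one, and it is exactly what the paper does. The map $\Cc_{\varphi,\tau}^{\rad}\to\Cc_{\TR}$ is shown to be a quasi-isomorphism directly (and with no hypothesis on $k$) by the three lemmas on the intermediate complex: isomorphism on $\H^1$, injectivity and surjectivity on $\H^2$ (Lemmas \ref{Coro H2 injec C-Rad} and \ref{Coro H2 surj C-Rad}), together with the easy check on $\H^0$; this is precisely how Theorem \ref{thm main result for C-phi-tau-rad} is proved. Then the Proposition just before the corollary gives $\Cc_{\varphi,\tau}\xrightarrow{\sim}\Cc_{\varphi,\tau}^{\rad}$ when $k$ is finite. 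Composing yields the corollary, and \emph{then} Theorem \ref{thm map from HC to TR}. So drop the two-out-of-three argument and present the chain $\Cc_{\varphi,\tau}\xrightarrow{\sim}\Cc_{\varphi,\tau}^{\rad}\xrightarrow{\sim}\Cc_{\TR}$ with the inputs in the right order.
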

In particular, we have proved theorem \ref{thm map from HC to TR}.

\chapter{Complexes with \texorpdfstring{$\psi$}{psi}-operator}\label{chapter Complex with psi-operator}

In this chapter, we construct a complex with $\psi$ operator (similar as in \cite[\S 3]{Her98}) that computes the continuous Galois cohomology.

\medskip

As $F_{\tau}$ is perfect, we cannot have such an operator on $(\varphi, \tau)$-modules over $(\Oo_{\Ee}, \Oo_{\Ee_{\tau}})$ (\cf remark \ref{rem why we need unperfected version}): we have to use a refinement of $(\varphi, \tau)$-module theory developped in \cite[1.2.2]{Car13}. More precisely, we will work with coefficients $\Oo_{\Ee_{u, \tau}},$ whose residue field is not perfect ($\cf$ notation \ref{not dictionary}).

\medskip

We construct a complex $\Cc^{u}_{\varphi, \tau}$ in section \ref{section C varphi tau u} and we show that it computes the continuous Galois cohomology. Replacing the operator $\varphi$ with $\psi$ in $\Cc^{u}_{\varphi,\tau}$ provides another complex $\Cc^{u}_{\psi,\tau}$ and we show that these two complexes are quasi-isomorphic. Hence $\Cc^{u}_{\psi,\tau}$ computes the continuous Galois cohomology.

\medskip

To prove the results mentioned above, we will as usual start with $\FF_p$-representations and then pass to $\ZZ_p$-representations by d\'evissage.

\section{The \texorpdfstring{$(\varphi, \tau)$}{(phi,tau)}-modules over partially unperfected coefficients}\label{section modules over unperfected coefficients}

\medskip

For simplicity, we will denote by $u$ and $\eta$ the elements $\widetilde{\pi}$ and $\varepsilon-1$ in $C^\flat$ (this is a little abuse of notation since, strictly speaking, $u$ is a variable that maps to $\widetilde{\pi}$ under the injective map $F_0 \to C^\flat$ and similarly for $\eta$) or the elements $[\widetilde{\pi}]$ and $[\varepsilon]-1$ in $\W(C^\flat)$ under the injective map $\Oo_{\Ee}\to \W(C^{\flat}).$

\begin{notation}($\cf$ \cite[\S 1.2.2]{Car13})\label{rem fixed finite degree}
	\item (1) We put \[ F_{u, \tau}:= k(\!(u, \eta^{1/p^{\infty}})\!)=k(\!(u, \eta )\!)\big[\eta^{1/p^{\infty}}\big]=\bigcup_{n\in \NN}k(\!(u, \eta)\!)[\eta^{1/p^n}]\subset C^{\flat}. \]\index{$F_{u, \tau}$}
	
	\medskip
	
	\item (2) By an abuse of notation we denote 
	\[  C_{u-\np}^{\flat}=F_{u,\tau}^{\sep}\subset C^{\flat}  \]\index{$C_{u-\np}^{\flat}$}
	the separable closure of $F_{u, \tau}$ in $C^{\flat}.$   
	
	\medskip
	
	Note that $C^{\flat}_{u-\np}$ is \emph{not} the tilt of a perfectoid field, though ambiguously it carries a superscript $\flat$ in the notation. 
	
	\item (3) We put \[\mathsf{G}:=\Gal(F_{u,\tau}^{\sep}/ F_{u, \tau}).\]	  \index{$\mathsf{G}$}
\end{notation}

\begin{lemma}\label{lemm for injectivity of lambda}
	The group $\mathsf{G}$ acts isometrically over $F_{u, \tau}^{\sep}.$
\end{lemma}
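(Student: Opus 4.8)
The statement is that the Galois group $\mathsf{G} = \Gal(F_{u,\tau}^{\sep}/F_{u,\tau})$ acts isometrically on $F_{u,\tau}^{\sep}$, i.e.\ preserves the valuation $v^\flat$ restricted to $F_{u,\tau}^{\sep}$. The plan is to reduce this to the uniqueness of the extension of a valuation to a separable algebraic extension, exactly as in the proof of Lemma \ref{lemm continuous action p-adic topology ur} (the analogous statement for $\Oo_{\widehat{\Ee^{\ur}}}$). First I would recall that $F_{u,\tau} = k(\!(u,\eta^{1/p^\infty})\!)$ is a complete (or at least Henselian) discretely — well, not discretely, but nontrivially — valued field under the restriction of $v^\flat$: the point is that $v^\flat$ defines a valuation on $F_{u,\tau}$ with respect to which it is Henselian, because it sits inside the complete field $C^\flat$ as the separable closure's base and more concretely because $k(\!(u,\eta)\!)$ is complete discretely valued and $F_{u,\tau}$ is an algebraic (purely inseparable) extension of it, hence still Henselian for the unique extension of the valuation.

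Then, given $g \in \mathsf{G}$ and $x \in F_{u,\tau}^{\sep}$, I would argue as follows. The map $x \mapsto v^\flat(g(x))$ defines a valuation on $F_{u,\tau}^{\sep}$. Since $g$ fixes $F_{u,\tau}$ pointwise, this valuation restricts to $v^\flat$ on $F_{u,\tau}$. But $F_{u,\tau}$ is Henselian (being algebraic over the complete discretely valued field $k(\!(u,\eta)\!)$), so its valuation extends \emph{uniquely} to any algebraic extension, in particular to $F_{u,\tau}^{\sep}$. Therefore $v^\flat \circ g = v^\flat$ on $F_{u,\tau}^{\sep}$, which is exactly the claim that $g$ acts isometrically. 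This is the heart of the argument; no real computation is involved.

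The one point that deserves a sentence of care — and the only place I see a potential obstacle — is establishing that $F_{u,\tau}$ is Henselian for the restriction of $v^\flat$, so that the uniqueness-of-extension theorem applies. The cleanest route: $k(\!(u,\eta)\!)$ is a complete discretely valued field (complete Laurent series field), hence Henselian; $F_{u,\tau} = \bigcup_n k(\!(u,\eta)\!)[\eta^{1/p^n}]$ is an algebraic (in fact purely inseparable, so in particular immediate) extension, and a Henselian valued field remains Henselian under algebraic extensions with respect to the unique prolongation of the valuation. One should double-check that the valuation on $F_{u,\tau}$ inherited from $C^\flat$ really is the (unique) prolongation of the valuation on $k(\!(u,\eta)\!)$, which is immediate since both are restrictions of $v^\flat$ and $C^\flat$ itself carries only one valuation extending that of $k(\!(u,\eta)\!)$ by Henselianity of the latter. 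Once this is in place, the uniqueness of the prolongation to $F_{u,\tau}^{\sep}$ gives the result. I would then remark that this isometric action is precisely what is needed to make the subsequent constructions (continuity of the $\mathsf{G}$-action, injectivity of the map denoted $\lambda$ referenced in Lemma \ref{lemm for injectivity of lambda}) go through.
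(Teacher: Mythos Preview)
Your proposal is correct and follows essentially the same route as the paper: both arguments reduce to the uniqueness of the extension of the valuation $v^\flat$ from the base to its separable closure, and then observe that $v^\flat\circ g$ is another such extension, hence equals $v^\flat$. The only packaging difference is that the paper, rather than invoking Henselianity of $F_{u,\tau}$ directly, takes an arbitrary $\alpha\in F_{u,\tau}^{\sep}$, finds $n$ with $\alpha$ algebraic over the \emph{complete} field $k(\!(u,\eta^{1/p^n})\!)$, and applies the uniqueness-of-extension theorem for complete valued fields (\cite[Chapter II, \S4, Theorem 4.8]{Neu13}) on $k(\!(u,\eta^{1/p^n})\!)[\alpha]$; your version absorbs this into the single statement that $F_{u,\tau}$, being algebraic over a complete valued field, is Henselian.
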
	

\begin{proof}
	Denote $v$ the valuation of $C^{\flat}$ that is normalized by $v(\widetilde{\pi})=1/e.$ We show that $v_{|_{F_{u, \tau}^{\sep}}}$ is the unique valuation of $F_{u, \tau}^{\sep}$ that extends $v_{|_{F_{u, \tau}}}.$
	
	\medskip
	
	If $\alpha\in F_{u, \tau}^{\sep},$ then $\alpha$ is algebraic over $F_{u, \tau}=\bigcup\limits_n k(\!(u, \eta^{1/p^n})\!).$ This implies that there exists $n\in \NN$ such that $\alpha$ is algebraic over $k(\!(u, \eta^{1/p^n})\!).$ Notice that $k(\!(u, \eta^{1/p^n})\!)$ is complete for the valuation $v$ and hence there exists a unique valuation over $k(\!(u, \eta^{1/p^n})\!)[\alpha]$ that extends $v_{|_{F_{u, \tau}}}$ ($\cf$ \cite[Chapter II, \S 4, Theorem 4.8]{Neu13}). This implies that for any $g\in \mathsf{G},$ $v\circ g=v$ over $F_{u, \tau}^{\sep}.$
\end{proof}

\begin{remark}
	Notice that the action of $\mathsf{G}\simeq \Gal(F_{u, \tau}^{\sep}/F_{u, \tau})$ on $F_{u, \tau}^{\sep}$ is continuous for the discrete topology.
\end{remark}

\begin{proposition}\label{prop G}
	We have \[ \mathsf{G}\simeq \mathscr{G}_L.  \]
\end{proposition}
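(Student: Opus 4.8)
The statement $\mathsf{G}\simeq \mathscr{G}_L$ is the ``partially unperfected'' analogue of the fact that the absolute Galois group of the imperfect field of norms $F_0$ is $\mathscr{G}_{K_\pi}$. The natural strategy is to apply the theory of fields of norms (Fontaine--Wintenberger) to an appropriate arithmetically profinite extension, namely $L/K$, and to identify its (imperfect) field of norms with $F_{u,\tau}=k(\!(u,\eta^{1/p^\infty})\!)$. Concretely, I would proceed as follows. First, recall that $L=K_\pi K_\zeta$ is a strictly arithmetically profinite extension of $K$ (it is the compositum of the deeply ramified extensions $K_\pi$ and $K_\zeta$, and one checks the APF condition either directly or via \cite{Win83}); hence Fontaine--Wintenberger attach to it an imperfect field of norms $X_K(L)$, a local field of characteristic $p$ of the form $k'(\!(t)\!)$ for a finite extension $k'/k$, whose absolute Galois group is canonically isomorphic to $\mathscr{G}_L=\Gal(\overline{K}/L)$.

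The heart of the argument is then to identify this field of norms, sitting inside $C^\flat$, with $F_{u,\tau}$. Here I would use that $K_\pi$ contributes the element $u=\widetilde{\pi}$ (its field of norms being $F_0=k(\!(u)\!)$, by the classical cyclotomic-style computation, cf.\ the discussion preceding the excerpt) and that adjoining the cyclotomic tower $K_\zeta$ on top corresponds, in the field of norms, to adjoining $\varepsilon=(\zeta_{p^n})_n$, equivalently $\eta=\varepsilon-1$; but since $K_\zeta/K$ already contributes a $p$-power-index tower, the norm-compatible system picks up not $\eta$ alone but all its $p$-power roots $\eta^{1/p^n}$, reflecting that $\Gal(L/K_\pi)\simeq\Gal(K_\zeta/K)$ acts on the $\widetilde\pi$-tower through the cyclotomic character (this is exactly the relation $\gamma\tau\gamma^{-1}=\tau^{\chi(\gamma)}$ of Remark \ref{rem def tau}, which forces the extra ramification). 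This gives $X_K(L)=k(\!(u,\eta^{1/p^\infty})\!)=F_{u,\tau}$ as subfields of $C^\flat$. I would then invoke the functoriality of the field-of-norms construction: separable extensions of $F_{u,\tau}$ inside $C^\flat$ correspond to subextensions of $\overline{K}/L$, so that $F_{u,\tau}^{\sep}$ corresponds to $\overline{K}$ and $\mathsf{G}=\Gal(F_{u,\tau}^{\sep}/F_{u,\tau})\simeq\Gal(\overline{K}/L)=\mathscr{G}_L$. The isometry statement of Lemma \ref{lemm for injectivity of lambda} guarantees that the embedding $F_{u,\tau}^{\sep}\hookrightarrow C^\flat$ is compatible with the valuations, which is what makes the field-of-norms identification legitimate.

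Alternatively — and this may be cleaner given the machinery already set up — one can argue by reduction to the known case over $F_0$. We already know (Winterberger, cited in the Remark after Theorem \ref{thm equivalence of cats}) that $\Gal(F_0^{\sep}/F_0)\simeq\mathscr{G}_{K_\pi}$. The extension $F_{u,\tau}/F_0$ is the ``Kummer-type'' tower $\bigcup_n F_0[\eta^{1/p^n}]$, and on the Galois side the subextension of $\overline K/K_\pi$ cut out by it is precisely $L/K_\pi$: indeed $\Gal(\overline K/K_\pi)=\mathscr{G}_{K_\pi}$ surjects onto $\Gal(L/K_\pi)=\overline{\langle\gamma\rangle}\simeq\Gamma$, and $L$ is characterized inside $\overline K$ as the fixed field matching the extension generated by $\varepsilon^{1/p^\infty}$ in the field of norms. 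So $\mathscr{G}_L$ is the kernel of $\mathscr{G}_{K_\pi}\to\Gamma$, and correspondingly $\mathsf{G}=\Gal(F_{u,\tau}^{\sep}/F_{u,\tau})$ is the kernel of $\Gal(F_0^{\sep}/F_0)\to\Gal(F_{u,\tau}/F_0)$ under the same identification; hence $\mathsf{G}\simeq\mathscr{G}_L$.

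\textbf{Main obstacle.} The genuinely delicate point is establishing that $F_{u,\tau}$ — with \emph{all} $p$-power roots of $\eta$ but only the single variable $u$ — is exactly the field of norms of $L/K$ inside $C^\flat$, i.e.\ getting the ramification bookkeeping right so that one adjoins $\eta^{1/p^\infty}$ rather than, say, both $u^{1/p^\infty}$ and $\eta^{1/p^\infty}$ (which would give the perfect field of norms $F_\tau$) or $\eta$ alone. This hinges on the precise structure of $L$ as a non-Galois-over-$K_\zeta$, metabelian-over-$K$ tower and on tracking how the two towers $K_\pi$ and $K_\zeta$ interact; it is essentially the content of \cite[\S1.2.2]{Car13}, so I would follow Caruso's normalization closely and cite \emph{loc.\ cit.}\ for the identification of coefficients, keeping my contribution to the clean packaging of the Galois-theoretic consequence.
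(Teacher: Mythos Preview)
Your first approach rests on a false identification: $F_{u,\tau}=k(\!(u,\eta^{1/p^\infty})\!)$ is \emph{not} the Fontaine--Wintenberger field of norms $X_K(L)$. The field of norms of an APF extension is a one-variable local field of the form $k'(\!(t)\!)$, whereas $F_{u,\tau}=\bigcup_n k(\!(u,\eta^{1/p^n})\!)$ is a union of two-variable fields and is neither complete nor local. The ``main obstacle'' you flag is therefore not a bookkeeping issue but a structural one: the identification you propose does not hold, and nothing in \cite[\S1.2.2]{Car13} claims it. Your second approach is closer in spirit, but the phrase ``$\mathsf{G}$ is the kernel of $\Gal(F_0^{\sep}/F_0)\to\Gal(F_{u,\tau}/F_0)$'' is ill-posed: such a restriction map would require $F_{u,\tau}\subset F_0^{\sep}$, but $F_{u,\tau}$ contains the elements $\eta^{1/p^n}$, purely inseparable over any field containing $\eta$, so it cannot lie inside a separable closure.

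The paper's proof is entirely elementary and avoids field-of-norms machinery beyond the already-known $\Gal(F_0^{\sep}/F_0)\simeq\mathscr{G}_{K_\pi}$. One defines, by restriction, maps $\rho\colon\mathscr{G}_L\to\mathsf{G}$ (since $\mathscr{G}_L$ stabilises $F_{u,\tau}^{\sep}$ and fixes $F_{u,\tau}$) and $\lambda\colon\mathsf{G}\to\Gal(F_0^{\sep}/F_0)\simeq\mathscr{G}_{K_\pi}$ (since $F_0^{\sep}\subset F_{u,\tau}^{\sep}$); the composite $\lambda\circ\rho$ is the inclusion $\mathscr{G}_L\hookrightarrow\mathscr{G}_{K_\pi}$, so $\rho$ is injective. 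The real use of Lemma~\ref{lemm for injectivity of lambda} is to prove $\lambda$ injective: since $F_0^{\sep}$ is dense in $F_{u,\tau}^{\sep}$ for the valuation topology and $\mathsf{G}$ acts by isometries, any $g\in\mathsf{G}$ acting trivially on $F_0^{\sep}$ is the identity. One then has $\mathscr{G}_L\hookrightarrow\mathsf{G}\hookrightarrow\mathscr{G}_{K_\pi}$, and the proof finishes by checking that no nontrivial element of $\mathscr{G}_{K_\pi}/\mathscr{G}_L\simeq\overline{\langle\gamma\rangle}$ fixes $F_{u,\tau}$: if $\gamma^z$ fixes $\eta=\varepsilon-1$ then $\varepsilon^{\chi(\gamma)^z}=\varepsilon$, whence $\gamma^z=\id$. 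This last concrete step is what your sketch is missing.
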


\begin{proof}
	We first prove that there are injective maps $\mathscr{G}_L\to \mathsf{G} \to \mathscr{G}_{K_{\pi}}$ whose composite is the inclusion. 
	
	\medskip
	
	Let $\alpha\in F_{u, \tau}^{\sep}$ and $P(X)\in F_{u, \tau}[X]$ its minimal polynomial of $\alpha$ over $F_{u, \tau}.$ Then for any $g\in \mathscr{G}_L,$ we have 
	\[  g(P(X))=P(X) \quad  \Longrightarrow  \quad  P(g(\alpha))=g(P(\alpha))=0 \quad  \Longrightarrow  \quad g(\alpha)\in F_{u, \tau}^{\sep}.\] This shows that $F_{u, \tau}^{\sep}$ is stable under $\mathscr{G}_L.$ As $\mathscr{G}_L$ fixes $F_{u, \tau},$ this implies that we have a morphism of groups:
	\[  \mathscr{G}_L \xrightarrow{\quad \rho \quad } \mathsf{G}. \]
	Similarly, let $\alpha\in F_0^{\sep}$ and $Q(X)\in F_0[X]$ its minimal polynomial over $F_0.$ Then for any $g\in \mathsf{G},$ we have 
	\[  g(Q(X))=Q(X) \quad \Longrightarrow \quad  Q(g(\alpha))=g(Q(\alpha))=0 \quad  \Longrightarrow g(\alpha)\in F_{0}^{\sep}.\]
	This shows that $F_{0}^{\sep}$ is stable under $\mathsf{G}.$ As $\mathsf{G}$ fixes $F_0,$ this implies that we have a morphism of groups:
	\[ \mathsf{G} \xrightarrow{\quad \lambda \quad} \Gal(F_0^{\sep}/F_0)\simeq \mathscr{G}_{K_{\pi}}. \]
	Hence finally we have the diagram: 
	\[ \xymatrix{   \mathscr{G}_L \ar[r]^{\rho} \ar@{_(->}[rd] & \mathsf{G} \ar[d]^{\lambda} \\
		& \mathscr{G}_{K_{\pi}}.     } \]
	The map $\rho$ is thus injective and we are left to prove that $\lambda$ is injective. Take any $g\in \mathsf{G}$ that acts trivially on $F_0^{\sep}.$ Recall that $F_0^{\sep}$ is dense in $C^{\flat}$ for the valuation topology ($\cf$ \cite[Proof of Proposition 1.8]{Car13}) and hence also dense in $F_{u, \tau}^{\sep}.$ By lemma \ref{lemm for injectivity of lambda}, we have $g=\id_{F_{u, \tau}^{\sep}}$ and hence $\lambda$ is injective. 
	
	\medskip
	
	Now we can see $\mathsf{G}/\rho(\mathscr{G}_L)$ as a subgroup of $\mathscr{G}_{K_{\pi}}/\mathscr{G}_L\simeq \overline{\la  \gamma \ra}.$ Suppose $z\in \ZZ_p$ is such that $\gamma^{z}$ acts trivially on $F_{u, \tau},$ then we have
	\[\varepsilon-1 =\gamma^{z}(\varepsilon-1) =\varepsilon^{\chi(\gamma)^z}-1,\ \ie   \varepsilon^{\chi(\gamma)^z}=\varepsilon,\ \text{ thus }  \gamma^{z}=\id.\]
	This shows that $\rho$ is surjective, and we conclude that $\mathsf{G}\simeq \mathscr{G}_L.$
\end{proof}

\begin{corollary}\label{coro F u tau = C GL}
	We have $F_{u, \tau}=(C_{u-\np}^{\flat})^{\mathscr{G}_L}.$
\end{corollary}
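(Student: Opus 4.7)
The plan is to deduce this directly from proposition \ref{prop G}, which identifies $\mathsf{G} = \Gal(F_{u,\tau}^{\sep}/F_{u,\tau})$ with $\mathscr{G}_L$. First I would check that this isomorphism is compatible with the natural actions on $C_{u-\np}^\flat = F_{u,\tau}^{\sep}$: the map $\rho\colon \mathscr{G}_L \to \mathsf{G}$ constructed in the proof of proposition \ref{prop G} is nothing but restriction of the $\mathscr{G}_L$-action on $C^\flat$ to the stable subfield $F_{u,\tau}^{\sep}$, so by construction the $\mathscr{G}_L$-fixed points and the $\mathsf{G}$-fixed points in $C_{u-\np}^\flat$ coincide. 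In particular,
\[ (C_{u-\np}^\flat)^{\mathscr{G}_L} \;=\; (F_{u,\tau}^{\sep})^{\mathsf{G}}. \]

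Next, I would invoke infinite Galois theory. By definition, $F_{u,\tau}^{\sep}/F_{u,\tau}$ is a separable algebraic extension which is also normal (every irreducible separable polynomial over $F_{u,\tau}$ having one root in $F_{u,\tau}^{\sep}$ has all its roots there), hence Galois. The fundamental theorem of infinite Galois theory therefore yields
\[ (F_{u,\tau}^{\sep})^{\mathsf{G}} \;=\; F_{u,\tau}, \]
which combined with the previous display gives the claimed equality. There is no serious obstacle here: the corollary is a formal consequence of proposition \ref{prop G} together with Galois descent, and the only point requiring a brief justification is the compatibility of actions, which is immediate from the construction of $\rho$.
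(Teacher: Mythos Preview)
Your proof is correct and follows the same approach as the paper: the paper's proof simply states that $F_{u,\tau}=(C_{u-\np}^\flat)^{\mathsf{G}}$ by definition (i.e.\ by Galois theory for $F_{u,\tau}^{\sep}/F_{u,\tau}$) and then invokes proposition~\ref{prop G} to replace $\mathsf{G}$ by $\mathscr{G}_L$. Your version is more detailed in making explicit both the Galois-theory step and the compatibility of actions, but the underlying argument is identical.
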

\begin{proof}
	By definition we have $F_{u, \tau}=(C_{u-\np}^{\flat})^{\mathsf{G}},$ which is $(C_{u-\np}^{\flat})^{\mathscr{G}_L}$  by proposition \ref{prop G}.
\end{proof}

\begin{remark} (1) Recall that
	$$\Oo_{\Ee}=\Big\{ \sum\limits_{i\in \ZZ} a_iu^i;\ a_i\in W(k),\ \Lim{i\to -\infty}a_i=0 \Big\}$$ is a Cohen ring for $k(\!(u)\!),$ and is equipped with the lift $\varphi$ of the Frobenius of $k(\!(u)\!),$ such that $\varphi(u)=u^p.$ It embeds in $\W(C^{\flat})$ by sending $u$ to $[\widetilde{\pi}].$ Similarly we have a Cohen ring $\Oo_{\Ff_u}$\index{$\Oo_{\Ff_u}$} for $k(\!(u, \eta^{1/p^{\infty}})\!)$ which is endowed with a Frobenius $\varphi$ and embeds in $\W(C^{\flat})$ so that 
	\[\Oo_{\Ee}\to \Oo_{\Ff_{u}}\to \W(C^{\flat})  \]
	are compatible with Frobenius maps ($\cf$ \cite[1.3.3]{Car13}).
	\item (2) Note that $F_{u,\tau}$ is stable by the action of $\tau$ on $\big(C^\flat\big)^{\mathscr{G}_L}$, because $\tau(u)=u(\eta+1)$ and $\tau(\eta^{1/p^n})=\eta^{1/p^n}$ for all $n\in\NN$. Similarly, $\mathcal{O}_{\Ff_u}$ is stable under the action of $\tau$ on $\W(C^\flat)^{\mathscr{G}_L}$.
\end{remark}

\begin{notation}\label{def F ur Frob}
	Let $\Ff_u=\Frac(\Oo_{\Ff_u})$\index{$\Ff_u$} and $\Ff^{\ur}_u$\index{$\Ff^{\ur}_u$} the maximal unramified extension in $\W(C^{\flat})[1/p]$ and $\Oo_{\Ff^{\ur}_u}$\index{$\Oo_{\Ff^{\ur}_u}$} its ring of integers. We denote $\Oo_{\widehat{\Ff^{\ur}_{u}}}$\index{$\Oo_{\widehat{\Ff^{\ur}_{u}}}$} its $p$-adic completion and put $\widehat{\Ff^{\ur}_{u}}=\Oo_{\widehat{\Ff^{\ur}_{u}}}[1/p].$\index{$\widehat{\Ff^{\ur}_{u}}$}
	
	The ring $\Oo_{\Ff^{\ur}_{u}}$ is endowed with a Frobenius map that is compatible with the Frobenius map in $\W(C^{\flat})$ by our construction above. By continuity, it extends into a Frobenius map on $\Oo_{\widehat{\Ff^{\ur}_{u}}}$ and $\widehat{\Ff^{\ur}_{u}}.$
	
	We put 
	\[\Oo_{\Ee_{u,\tau}}:=(\Oo_{\widehat{\Ff^{\ur}_{u}}})^{\mathscr{G}_L},\quad   \Ee_{u,\tau}=\Oo_{\Ee_{u,\tau}}[1/p]. \]\index{$\Oo_{\Ee_{u,\tau}}$}
\end{notation}

\begin{remark}
	We summarize the notations by the following diagram: 
	\[ \xymatrix{
		\W(C^{\flat}) \ar@{->>}[r] & C^{\flat}\ar@{-}[d]\\
		\Oo_{\Ff^{\ur}_u}\ar[u]\ar@{->>}[r]& F_{u,\tau}^{\sep}\ar@{-}[d]\\
		\Oo_{\Ff_{u}} \ar@{->>}[r]\ar[u]& F_{u,\tau}\\
		\ZZ_p\ar[u] \ar@{->>}[r]& \FF_p.\ar[u]
	} \]
\end{remark}

\begin{theorem}\label{thm notation of Caruso}
	We have $\Oo_{\Ff_u}=\Oo_{\Ee_{u, \tau}}.$
\end{theorem}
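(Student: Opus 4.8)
I want to show $\Oo_{\Ff_u} = \Oo_{\Ee_{u,\tau}}$, i.e. that the Cohen ring of $F_{u,\tau}$ sitting inside $\W(C^\flat)$ coincides with $(\Oo_{\widehat{\Ff^{\ur}_u}})^{\mathscr{G}_L}$. The obvious strategy is to pass to residue fields and then bootstrap to the $p$-adic level. By construction $\Oo_{\Ff_u}$ is $p$-adically complete with residue field $F_{u,\tau}$, and by Proposition \ref{prop G} we have $\mathsf{G} = \Gal(F^{\sep}_{u,\tau}/F_{u,\tau}) \simeq \mathscr{G}_L$, so the residue field of $\Oo_{\Ff^{\ur}_u}$ is $F^{\sep}_{u,\tau}$ and the residue field of $\Oo_{\widehat{\Ff^{\ur}_u}}$ is its completion (or at least contains $F^{\sep}_{u,\tau}$ densely). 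Thus the candidate equality is really the statement that taking $\mathscr{G}_L = \mathsf{G}$-invariants commutes with the passage to the Cohen ring, which is the kind of thing one proves by $p$-adic dévissage.

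\textbf{Step 1: the inclusion $\Oo_{\Ff_u} \subseteq \Oo_{\Ee_{u,\tau}}$.} This should be immediate: $\Oo_{\Ff_u} \subseteq \Oo_{\Ff^{\ur}_u} \subseteq \Oo_{\widehat{\Ff^{\ur}_u}}$, and $\Oo_{\Ff_u}$ is fixed by $\mathscr{G}_L$ because $F_{u,\tau}$ is the fixed field and the $\mathscr{G}_L$-action on $\Oo_{\Ff^{\ur}_u}$ lifts the action on $F^{\sep}_{u,\tau}$ (using that $\mathscr{G}_L$ acts continuously for the $p$-adic topology, as in Lemma \ref{lemm continuous action p-adic topology ur} adapted to this setting, or directly by functoriality of Witt/Cohen rings). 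So $\Oo_{\Ff_u} \subseteq (\Oo_{\widehat{\Ff^{\ur}_u}})^{\mathscr{G}_L} = \Oo_{\Ee_{u,\tau}}$.

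\textbf{Step 2: the reverse inclusion, modulo $p$.} Take $x \in \Oo_{\Ee_{u,\tau}}$ and reduce mod $p$. Its image lies in $(F^{\sep}_{u,\tau})^{\mathscr{G}_L}$ (or in the completion of $F^{\sep}_{u,\tau}$, fixed by $\mathscr{G}_L$). Here is where I would invoke that $\mathsf{G} \simeq \mathscr{G}_L$ acts on $F^{\sep}_{u,\tau}$ with fixed field exactly $F_{u,\tau}$, together with the analogue of the completeness argument: since $F_{u,\tau}$ is complete (it is a finite-then-union extension of $k(\!(u,\eta)\!)$, and $\mathsf{G}$ acts isometrically by Lemma \ref{lemm for injectivity of lambda}), Ax–Sen–Tate–type reasoning — or just ordinary Galois descent for the dense subfield $F^{\sep}_{u,\tau}$ followed by taking closures — gives that the $\mathscr{G}_L$-invariants of $\widehat{F^{\sep}_{u,\tau}}$ equal $F_{u,\tau}$. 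Hence $\bar x \in F_{u,\tau}$, the residue field of $\Oo_{\Ff_u}$.

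\textbf{Step 3: dévissage.} Now run the standard inductive argument: given $x \in \Oo_{\Ee_{u,\tau}}$, by Step 2 there is $y_0 \in \Oo_{\Ff_u}$ with $x \equiv y_0 \pmod p$; then $(x - y_0)/p \in \Oo_{\Ee_{u,\tau}}$ (it is $\mathscr{G}_L$-invariant since both $x$ and $y_0$ are), apply Step 2 again, and iterate. This produces $y = \sum_n p^n y_n \in \Oo_{\Ff_u}$ (which is $p$-adically complete) with $x = y$. Therefore $\Oo_{\Ee_{u,\tau}} \subseteq \Oo_{\Ff_u}$, completing the proof.

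\textbf{Main obstacle.} The crux is Step 2: identifying the $\mathscr{G}_L$-invariants of the residue field of $\Oo_{\widehat{\Ff^{\ur}_u}}$ with $F_{u,\tau}$ rather than merely with $(F^{\sep}_{u,\tau})^{\mathsf{G}}$. The subtlety is that $\Oo_{\widehat{\Ff^{\ur}_u}}$, being a $p$-adic completion, has residue field the \emph{completion} $\widehat{F^{\sep}_{u,\tau}}$ of $F^{\sep}_{u,\tau}$ for the valuation topology, so one genuinely needs that $\mathscr{G}_L$-invariants of this completion give nothing new beyond $F_{u,\tau}$ — this uses the isometry of the $\mathsf{G}$-action (Lemma \ref{lemm for injectivity of lambda}) and the completeness of $F_{u,\tau}$ for that valuation, together with a continuity/density argument exactly parallel to the one used in the proof of Proposition \ref{prop G}. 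Once that is in hand, the rest is formal. (One should also double-check that the Frobenius structures match, but since both $\Oo_{\Ff_u}$ and $\Oo_{\Ee_{u,\tau}}$ are equipped with the Frobenius induced from $\W(C^\flat)$ and we have shown they are equal as subrings, the Frobenii automatically agree.)
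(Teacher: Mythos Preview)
Your proof is correct and follows essentially the same route as the paper: establish the inclusion $\Oo_{\Ff_u}\subset\Oo_{\Ee_{u,\tau}}$, check that both have residue field $F_{u,\tau}$, and conclude by a $p$-adic density argument (the paper compresses your Step~3 d\'evissage into the one-liner ``$\Oo_{\Ff_u}$ is dense and closed in $\Oo_{\Ee_{u,\tau}}$ for the $p$-adic topology'').

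One correction to your ``Main obstacle'': the $p$-adic completion does \emph{not} change the residue field, since $\Oo_{\Ff^{\ur}_u}/p\simeq\Oo_{\widehat{\Ff^{\ur}_u}}/p$. Thus the residue field of $\Oo_{\widehat{\Ff^{\ur}_u}}$ is $F^{\sep}_{u,\tau}$ itself, not its valuation-completion, and $(F^{\sep}_{u,\tau})^{\mathscr{G}_L}=F_{u,\tau}$ is immediate from Proposition~\ref{prop G} (equivalently Corollary~\ref{coro F u tau = C GL}) with no Ax--Sen--Tate input needed. Your Step~2 therefore simplifies considerably, and the obstacle you flag is illusory.
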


\begin{proof}
	We have the following diagram:
	\[ \xymatrix{
		& \Oo_{\widehat{\Ff_u^{\ur}}}  &\\
		\Oo_{\Ff_u}\ar@{-}[ru]	\ar@{->>}[d]&&   \Oo_{\Ee_{u, \tau}} \ar@{-}[lu] \ar@{->>}[d]\\
		F_{u,\tau}\ar@{=}[rr]^{\text{Proposition }\ref{prop G}}&&(F_{u,\tau}^{\sep})^{\mathscr{G}_L}.	
	}  \]
	Notice that $\Oo_{\Ff_u}$ is fixed by $\mathscr{G}_L,$ hence $\Oo_{\Ff_u}\subset \Oo_{\Ee_{u, \tau}}.$ Both $\Oo_{\Ff_u}$ and $\Oo_{\Ee_{u, \tau}}$ have the same residue field $F_{u, \tau}.$ Indeed, $\Oo_{\Ee_{u, \tau}}$ has residue field $(F_{u,\tau}^{\sep})^{\mathscr{G}_L},$ which is $F_{u,\tau}$ by proposition \ref{prop G}. Hence $\Oo_{\Ff_u}\subset \Oo_{\Ee_{u, \tau}}$ are two Cohen rings for $F_{u, \tau}$ and they must be equal as $\Oo_{\Ff_u}$ is dense and closed inside $\Oo_{\Ee_{u, \tau}}$ for the $p$-adic topology. 
\end{proof}

\begin{remark}
	The theorem \ref{thm notation of Caruso} can be rewritten as $\Ff^{\mathsf{int}}_{u-\mathsf{np}}=\Ee_{u-\mathsf{np}, \tau}^{\mathsf{int}}$ with Caruso's notations in \cite[1.3.3]{Car13}.
\end{remark}

\begin{definition}\label{def phi tau Fp u}
	A \emph{$(\varphi, \tau)$-module over $(F_0, F_{u,\tau})$} is the data:
	\item (1) an \'etale $\varphi$-module $D$ over $F_0$; 
	\item (2) a $\tau$-semi-linear endomorphism $\tau_D$ over $D_{u, \tau}:= F_{u,\tau} \otimes_{F_0} D$ which commutes with $\varphi_{F_{u,\tau}}\otimes \varphi_D$ (where $\varphi_{F_{u,\tau}}$ is the Frobenius map on $F_{u,\tau}$ and $\varphi_D$ the Frobenius map on $D$) such that 
	\[ \big(\forall x \in D\big)\  (g\otimes 1)\circ \tau_D (x) = \tau_D^{\chi(g)}(x), \] 
	for all $g\in \mathscr{G}_{K_{\pi}}/\mathscr{G}_L$ such that $\chi(g)\in \NN.$\\
	
	We denote $\Mod_{F_0,F_{u,\tau}}(\varphi,\tau)$\index{$\Mod_{F_0,F_{u,\tau}}(\varphi,\tau)$} the corresponding category.
\end{definition}

\begin{theorem}\label{thm cat equi Fp u}
	The functors 
	\begin{align*}
	\Rep_{\FF_p}(\mathscr{G}_K) &\to \Mod_{F_0,F_{u,\tau}}(\varphi,\tau)\\
	T &\mapsto \mathcal{D}(T)= (F_0^{\sep}\otimes_{\FF_p} T)^{\mathscr{G}_{K_{\pi}}}\\
	\mathcal{T}(D)=(F_0^{\sep}\otimes_{F_0} D)^{\varphi=1}  &\mapsfrom  D
	\end{align*}	
	establish quasi-inverse equivalences of categories,	where the $\tau$-semilinear endomorphism $\tau_D$ over $\mathcal{D}(T)_{u,\tau}:=F_{u,\tau}\otimes_{F_0} \mathcal{D}(T)$ is induced by $\tau\otimes\tau$ on $C_{u-\np}^\flat\otimes T$ using the following lemma \ref{lemm tau action natural}. 
\end{theorem}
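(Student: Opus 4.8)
The statement to prove is Theorem \ref{thm cat equi Fp u}, the equivalence $\Rep_{\FF_p}(\mathscr{G}_K)\simeq\Mod_{F_0,F_{u,\tau}}(\varphi,\tau)$. The plan is to split the assertion into three layers, each already essentially available: first, the classical Fontaine-Winding correspondence for $\FF_p$-representations over the imperfect field of norms $F_0$; second, the construction of the extra datum $\tau_D$ on $D_{u,\tau}=F_{u,\tau}\otimes_{F_0}D$; and third, the verification that this extra datum captures exactly the residual $\mathscr{G}_L$-information that is missing from the $\varphi$-module over $F_0$. Concretely, I would start from the well-known equivalence (originally due to Fontaine, using Winding's theorem $\Gal(F_0^{\sep}/F_0)\simeq\mathscr{G}_{K_\pi}$ recalled in the excerpt) between $\Rep_{\FF_p}(\mathscr{G}_{K_\pi})$ and \'etale $\varphi$-modules over $F_0$, via $T\mapsto(F_0^{\sep}\otimes_{\FF_p}T)^{\mathscr{G}_{K_\pi}}$ with quasi-inverse $D\mapsto(F_0^{\sep}\otimes_{F_0}D)^{\varphi=1}$. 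A $\mathscr{G}_K$-representation is a $\mathscr{G}_{K_\pi}$-representation together with compatible descent data for the quotient $\mathscr{G}_K/\mathscr{G}_{K_\pi}$; the point of the theorem is that this descent is encoded faithfully by $\tau_D$.

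\textbf{Key steps.} Step 1: record the lemma (referenced in the statement as Lemma \ref{lemm tau action natural}, and implicitly using Proposition \ref{prop G} that $\mathsf{G}\simeq\mathscr{G}_L$ and Corollary \ref{coro F u tau = C GL}) that $C_{u-\np}^\flat\otimes_{\FF_p}T$ carries a diagonal $\tau\otimes\tau$-action whose $\mathscr{G}_L$-invariants recover $F_{u,\tau}\otimes_{F_0}\mathcal{D}(T)$, exactly as in Lemma \ref{lemm iso for D-tau} and Lemma \ref{Replace g witi gamma}. This produces a functor $\Rep_{\FF_p}(\mathscr{G}_K)\to\Mod_{F_0,F_{u,\tau}}(\varphi,\tau)$: the $\varphi$-module structure over $F_0$ comes from the classical theory applied to the restriction $T|_{\mathscr{G}_{K_\pi}}$; the operator $\tau_D$ is $\tau\otimes\tau$ restricted to the invariants; the commutation with Frobenius is automatic since $\varphi$ and $\tau$ commute on $\W(C^\flat)$ (hence on $C_{u-\np}^\flat$); and the relation $(g\otimes1)\circ\tau_D=\tau_D^{\chi(g)}$ for $g$ with $\chi(g)\in\NN$ follows from the cocycle identity for $c(\cdot)$ together with the identity $g\tau g^{-1}$ and $\tau^{\chi(g)}$ having the same image in $\mathscr{G}_K/\mathscr{G}_L$, exactly as in Remark \ref{remark on defi varphi-tau module gamma tau relation}(2) and Lemma \ref{lemm 1.1.11 well-defined tau-1 varphi-1}. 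Step 2: construct the quasi-inverse. Given $(D,\tau_D)$, set $\mathcal{T}(D)=(F_0^{\sep}\otimes_{F_0}D)^{\varphi=1}$; by the classical theory this is an $\FF_p$-vector space of the right dimension with a $\mathscr{G}_{K_\pi}$-action. To promote it to a $\mathscr{G}_K$-action one must define the action of a lift of $\tau$ (and of $\gamma$, a topological generator of $\mathscr{G}_{K_\pi}/\mathscr{G}_L$, though the $\gamma$-action is already there on the $\mathscr{G}_{K_\pi}$-side). The natural recipe: extend scalars along $F_0^{\sep}\hookrightarrow C_{u-\np}^\flat$, use that $\big(C_{u-\np}^\flat\otimes_{F_{u,\tau}}D_{u,\tau}\big)^{\varphi=1}=\mathcal{T}(D)$ (again by a Hilbert-90/\'etale descent argument as in the proof of Theorem \ref{thm equivalence of cats}), and transport $\tau\otimes\tau_D$ on $C_{u-\np}^\flat\otimes_{F_{u,\tau}}D_{u,\tau}$. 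One then checks that $\tau\otimes\tau_D$ preserves $\varphi=1$-invariants (because $\tau_D$ commutes with $\varphi$) and defines a semilinear automorphism of $\mathcal{T}(D)$ over the appropriate coefficient ring; the compatibility $(g)\circ\tau=\tau^{\chi(g)}\circ g$ coming from condition (2) of Definition \ref{def phi tau Fp u} guarantees that the $\mathscr{G}_{K_\pi}$-action and the $\tau$-action glue into a genuine continuous $\mathscr{G}_K=\overline{\langle\mathscr{G}_{K_\pi},\tau\rangle}$-action (here one uses that $\mathscr{G}_K/\mathscr{G}_L\simeq\ZZ_p\rtimes\Gamma$ is topologically generated by $\tau$ and $\gamma$ with the single relation $\gamma\tau\gamma^{-1}=\tau^{\chi(\gamma)}$, Remark \ref{rem def tau}). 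Step 3: check the two composites are naturally isomorphic to the identity. For $\mathcal{T}\circ\mathcal{D}\simeq\id$ this is the classical statement $(F_0^{\sep}\otimes_{F_0}(F_0^{\sep}\otimes_{\FF_p}T)^{\mathscr{G}_{K_\pi}})^{\varphi=1}\simeq T$ as $\mathscr{G}_{K_\pi}$-modules, refined to a $\mathscr{G}_K$-module isomorphism by tracking the $\tau$-action through Step 1 and Step 2 — both sides carry $\tau\otimes\tau$ and the isomorphism is tautologically equivariant. For $\mathcal{D}\circ\mathcal{T}\simeq\id$ one likewise uses the classical $\varphi$-module statement plus the fact that $\tau_D$ is recovered as the invariants of $\tau\otimes\tau_D$.

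\textbf{Main obstacle.} The routine part is the classical equivalence over $F_0$; the genuinely new point, and the one requiring care, is Step 2 — showing that a bare semilinear operator $\tau_D$ on $D_{u,\tau}$, subject only to the finitely-many relations $(g\otimes1)\tau_D=\tau_D^{\chi(g)}$ for $g$ with positive $\chi(g)$, integrates to an honest continuous $\mathscr{G}_K$-action on $\mathcal{T}(D)$. The subtlety is the continuity and the passage from the dense sub-semigroup $\{g:\chi(g)\in\NN\}$ to all of $\mathscr{G}_{K_\pi}$: one must invoke topological nilpotence of $\tau_D-1$ (so that $\tau_D^z=\sum_n\binom{z}{n}(\tau_D-1)^n$ makes sense for $z\in\ZZ_p$, as in the excerpt's discussion of $\tau_D^z$) and a density/continuity argument of the type carried out in Lemma \ref{Replace g witi gamma}. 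I expect the whole of Step 2 can be handled by citing \cite[\S 1.2.2]{Car13}, where Caruso establishes exactly this equivalence over partially unperfected coefficients; the contribution here is essentially to record the statement and fix notation, so the proof should read ``cf. \cite[\S 1.2.2]{Car13}'' together with the identifications $\mathsf{G}\simeq\mathscr{G}_L$ (Proposition \ref{prop G}) and $\Oo_{\Ff_u}=\Oo_{\Ee_{u,\tau}}$ (Theorem \ref{thm notation of Caruso}) that make Caruso's framework match the one set up in this section.
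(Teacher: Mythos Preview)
Your proposal is correct and lands exactly where the paper does: the paper's entire proof is the one-line citation ``\textit{cf.} \cite[Th\'eor\`eme 1.14]{Car13}''. Your detailed three-step sketch is a faithful unpacking of what that citation contains, and your closing prediction that the proof should simply read as a reference to Caruso (supplemented by the identifications of Proposition~\ref{prop G} and Theorem~\ref{thm notation of Caruso}) is precisely what happens, though the paper cites Th\'eor\`eme~1.14 rather than \S1.2.2.
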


\begin{proof}
	$\cf$ \cite[Th\'eor\`eme 1.14]{Car13}.
\end{proof}

\begin{lemma}\label{lemm tau action natural}
	The natural map $F_{u,\tau} \otimes_{F_0} \mathcal{D}(T)\to (C^{\flat}_{u-\np}\otimes T)^{\mathscr{G}_L}$ is an isomorphism.
\end{lemma}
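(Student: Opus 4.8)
The plan is to reduce the statement to the field-of-norms comparison isomorphism over $F_0$ and then descend along $\mathscr{G}_L$, in the same spirit as the proof of Lemma \ref{lemm iso for D-tau} (Caruso's Lemma 1.18); the only genuinely new input is the identification $\mathsf{G}\simeq\mathscr{G}_L$ of Proposition \ref{prop G}.

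First I would use the equivalence of categories of Theorem \ref{thm cat equi Fp u}. Since $T\in\Rep_{\FF_p}(\mathscr{G}_K)$ is a finite-dimensional $\FF_p$-vector space, $\mathcal{D}(T)=(F_0^{\sep}\otimes_{\FF_p}T)^{\mathscr{G}_{K_{\pi}}}$ is a finite-dimensional $F_0$-vector space, and the fact that $\mathcal{T}(\mathcal{D}(T))\simeq T$ provides a canonical $\mathscr{G}_{K_{\pi}}$-equivariant isomorphism $F_0^{\sep}\otimes_{F_0}\mathcal{D}(T)\isomto F_0^{\sep}\otimes_{\FF_p}T$, where $\mathscr{G}_{K_{\pi}}$ acts only through $F_0^{\sep}$ on the source and diagonally on the target. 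Next I would observe that $F_0\subset F_{u,\tau}$ forces $F_0^{\sep}\subset F_{u,\tau}^{\sep}=C_{u-\np}^{\flat}$ (a separable algebraic extension remains separable algebraic after enlarging the base field), and that, via the triangle $\mathscr{G}_L\xrightarrow{\rho}\mathsf{G}\xrightarrow{\lambda}\mathscr{G}_{K_{\pi}}$ from the proof of Proposition \ref{prop G}, this inclusion $F_0^{\sep}\hookrightarrow C_{u-\np}^{\flat}$ is $\mathscr{G}_L$-equivariant — that is, the $\mathscr{G}_L$-action on $F_0^{\sep}$ through its embedding into $\mathscr{G}_{K_{\pi}}$ coincides with the restriction to $F_0^{\sep}$ of the $\mathsf{G}$-action on $C_{u-\np}^{\flat}$ via $\rho$. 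Extending scalars along $F_0^{\sep}\hookrightarrow C_{u-\np}^{\flat}$ then turns the comparison isomorphism into a $\mathscr{G}_L$-equivariant isomorphism $C_{u-\np}^{\flat}\otimes_{F_0}\mathcal{D}(T)\isomto C_{u-\np}^{\flat}\otimes_{\FF_p}T$, with $\mathscr{G}_L$ acting only on the left tensor factor of the source (it acts trivially on $\mathcal{D}(T)$, being a subgroup of $\mathscr{G}_{K_{\pi}}$) and diagonally on the target.

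Finally I would take $\mathscr{G}_L$-invariants on both sides. On the source, $\mathcal{D}(T)$ being free of finite rank over $F_0$ and carrying the trivial $\mathscr{G}_L$-action, invariants commute with $-\otimes_{F_0}\mathcal{D}(T)$, so the source becomes $(C_{u-\np}^{\flat})^{\mathscr{G}_L}\otimes_{F_0}\mathcal{D}(T)=F_{u,\tau}\otimes_{F_0}\mathcal{D}(T)$ by Corollary \ref{coro F u tau = C GL}; the target becomes $(C_{u-\np}^{\flat}\otimes_{\FF_p}T)^{\mathscr{G}_L}$. A short unwinding shows that the resulting isomorphism is precisely the natural map of the statement (both send $a\otimes d$, with $d\in\mathcal{D}(T)\subset F_0^{\sep}\otimes_{\FF_p}T\subset C_{u-\np}^{\flat}\otimes_{\FF_p}T$, to $ad$). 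The only step requiring real care is the bookkeeping of the two descriptions of the $\mathscr{G}_L$-action through Proposition \ref{prop G}; granting that, the argument is pure formal descent, so there is no substantial obstacle beyond what Proposition \ref{prop G} already supplies.
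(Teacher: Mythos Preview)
Your argument is correct and is precisely the standard descent proof: extend the comparison isomorphism $F_0^{\sep}\otimes_{F_0}\mathcal{D}(T)\simeq F_0^{\sep}\otimes_{\FF_p}T$ along $F_0^{\sep}\hookrightarrow C_{u-\np}^\flat$, then take $\mathscr{G}_L$-invariants using Proposition~\ref{prop G} and Corollary~\ref{coro F u tau = C GL}; this is essentially the argument the paper cites from \cite[Lemma~1.12]{Car13}. One small remark: the only input you actually need from Theorem~\ref{thm cat equi Fp u} is the underlying \'etale $\varphi$-module equivalence over $F_0$ (a classical fact independent of the $\tau$-action), so there is no circularity even though in Caruso's numbering Lemma~1.12 precedes Th\'eor\`eme~1.14.
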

\begin{proof}
	$\cf$ \cite[Lemma 1.12]{Car13}.
\end{proof}

More generally, we have the integral analogue of theorem \ref{thm cat equi Fp u}. 

\begin{definition}\label{def phi tau over O e Oe u tau}
	A \emph{$(\varphi, \tau)$-module over $(\Oo_{\Ee}, \mathcal{O}_{\mathcal{E}_{u,\tau}})$} is the data:
	\item (1) an \'etale $\varphi$-module $D$ over $\Oo_{\Ee}$; 
	\item (2) a $\tau$-semi-linear endomorphism $\tau_D$ over $D_{u, \tau}:=\mathcal{O}_{\mathcal{E}_{u,\tau}} \otimes_{\Oo_{\Ee}} D$ which commutes with $\varphi_{\mathcal{O}_{\mathcal{E}_{u,\tau}}}\otimes \varphi_D$ (where $\varphi_{\mathcal{O}_{\mathcal{E}_{u,\tau}}}$ is the Frobenius map on $\mathcal{O}_{\mathcal{E}_{u,\tau}}$ and $\varphi_D$ the Frobenius map on $D$) such that 
	\[ (\forall x \in D)\  (g\otimes 1)\circ \tau_D (x) = \tau_D^{\chi(g)}(x), \]
	for all $g\in \mathscr{G}_{K_{\pi}}/\mathscr{G}_L$ such that $\chi(g)\in \NN.$ \\
	
	We denote $\Mod_{\Oo_{\Ee}, \mathcal{O}_{\mathcal{E}_{u,\tau}}}(\varphi,\tau)$\index{$\Mod_{\Oo_{\Ee}, \mathcal{O}_{\mathcal{E}_{u,\tau}}}(\varphi,\tau)$} the corresponding category. 
\end{definition}

\begin{theorem}
	The functors 
	\begin{align*}
	\Rep_{\ZZ_p}(\mathscr{G}_K) &\to \Mod_{\Oo_{\Ee},\Oo_{\Ee_{u,\tau}}}(\varphi,\tau)\\
	T &\mapsto \mathcal{D}(T)= (\Oo_{\widehat{\Ee^{\ur}}}\otimes_{\ZZ_p} T)^{\mathscr{G}_{K_{\pi}}}\\
	\mathcal{T}(D)=(\Oo_{\widehat{\Ee^{\ur}}}\otimes_{\Oo_{\Ee}} D)^{\varphi=1}  &\mapsfrom  D
	\end{align*}
	establish quasi-inverse equivalences of categories, where the $\tau$-semilinear endomorphism $\tau_D$ over $\mathcal{D}(T)_{u,\tau}:=(\Oo_{\widehat{\Ff^{\ur}_{u}}}\otimes_{\ZZ_p} T)^{\mathscr{G}_L}\simeq\Oo_{\Ee_{u,\tau}}\otimes\mathcal{D}(T)$ is induced by $\tau\otimes\tau$ on $\Oo_{\widehat{\Ff^{\ur}_{u}}}\otimes_{\ZZ_p} T.$	
\end{theorem}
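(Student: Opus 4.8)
The plan is to deduce this from the $\FF_p$-case (Theorem \ref{thm cat equi Fp u}) by the standard dévissage, exactly as Theorem \ref{thm equivalence of cats} is obtained with perfect coefficients. The underlying $\varphi$-module part, $T\mapsto(\Oo_{\widehat{\Ee^{\ur}}}\otimes T)^{\mathscr{G}_{K_\pi}}$ with quasi-inverse $D\mapsto(\Oo_{\widehat{\Ee^{\ur}}}\otimes_{\Oo_{\Ee}}D)^{\varphi=1}$, is Fontaine's classical equivalence between $\Rep_{\ZZ_p}(\mathscr{G}_{K_\pi})$ and the category of étale $\varphi$-modules over $\Oo_{\Ee}$; so the whole content is about the extra $\tau$-structure. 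The key point is that a continuous $\ZZ_p[\mathscr{G}_K]$-structure on a finite $\ZZ_p$-module amounts to a continuous $\ZZ_p[\mathscr{G}_{K_\pi}]$-structure together with the action of a single extra element $\tau$, subject to $g\tau g^{-1}\equiv\tau^{\chi(g)}\pmod{\mathscr{G}_L}$: indeed, in $\mathscr{G}_K/\mathscr{G}_L\simeq\ZZ_p\rtimes\Gamma$ the image of $\mathscr{G}_{K_\pi}$ is $\Gamma$, the image of $\tau$ topologically generates $\ZZ_p(1)$, and $\gamma\tau\gamma^{-1}=\tau^{\chi(\gamma)}$ recovers all of $\ZZ_p$, so $\mathscr{G}_{K_\pi}$ and $\tau$ topologically generate $\mathscr{G}_K$. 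Under $\mathcal{D}$ the $\mathscr{G}_{K_\pi}$-action is recorded by the $\Oo_{\Ee}$-$\varphi$-module structure, the action of $\tau$ is recorded by $\tau_D$ on $\mathcal{D}(T)_{u,\tau}$, and the defining relation $(g\otimes1)\circ\tau_D=\tau_D^{\chi(g)}$ on $\mathcal{D}(T)$ (for $\chi(g)\in\NN$) is precisely the translation of the above conjugation relation (compare Remark \ref{remark on defi varphi-tau module gamma tau relation}(2)); continuity of the reconstructed $\mathscr{G}_K$-action is guaranteed by $\varprojlim_m\tau_D^{p^m}=1$ (\cite[Proposition 1.3]{Car13}).

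Concretely, the first step is the integral analogue of Lemma \ref{lemm tau action natural}: the natural map
\[\Oo_{\Ee_{u,\tau}}\otimes_{\Oo_{\Ee}}\mathcal{D}(T)\longrightarrow\big(\Oo_{\widehat{\Ff^{\ur}_u}}\otimes_{\ZZ_p}T\big)^{\mathscr{G}_L}\]
is an isomorphism, using $\Oo_{\Ee_{u,\tau}}=\Oo_{\Ff_u}$ (Theorem \ref{thm notation of Caruso}) and $\mathsf{G}\simeq\mathscr{G}_L$ (Proposition \ref{prop G}). This is where $\tau_D$ receives its definition: it is the restriction to $\mathscr{G}_L$-invariants of $\tau\otimes\tau$ acting on $\Oo_{\widehat{\Ff^{\ur}_u}}\otimes_{\ZZ_p}T$ — which makes sense because $\tau$ normalizes $\mathscr{G}_L$ and stabilizes $\Oo_{\widehat{\Ff^{\ur}_u}}$ — and one verifies directly that it is $\tau$-semilinear over $\Oo_{\Ee_{u,\tau}}$, commutes with $\varphi$, and satisfies the compatibility relation. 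For $T$ killed by $p$ the isomorphism is Lemma \ref{lemm tau action natural} together with $\Oo_{\Ee_{u,\tau}}/p=F_{u,\tau}$; for $T$ killed by $p^r$ one uses the sequence $0\to pT\to T\to T/pT\to0$ together with the vanishing $\H^1(\mathscr{G}_L,\Oo_{\widehat{\Ff^{\ur}_u}}\otimes_{\ZZ_p}T)=0$, which is proved exactly as Lemma \ref{lemm H1 G infty vanishes} with $F_0^{\sep}$ replaced by $F_{u,\tau}^{\sep}$ and $\mathscr{G}_{K_\pi}$ by $\mathsf{G}$, the Hilbert 90 argument still applying since $F_{u,\tau}^{\sep}$ is separably closed; for general $T$ one passes to the inverse limit over $n$ (no $\R^1\varprojlim$ term, by Mittag-Leffler). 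The same vanishing gives exactness of $\mathcal{D}$ and of $D\mapsto D_{u,\tau}$, so the five lemma propagates the equivalence from the $\FF_p$-case to $p^r$-torsion representations, and then to all of $\Rep_{\ZZ_p}(\mathscr{G}_K)$ by writing $T=\varprojlim T/p^n$, $\mathcal{D}(T)=\varprojlim\mathcal{D}(T/p^n)$ and $\mathcal{T}(D)=\varprojlim\mathcal{T}(D/p^n)$, and checking these limits are compatible with the $\tau$-structures.

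The main obstacle I expect is not any single deep point but the bookkeeping forced by the fact that $\Oo_{\Ee_{u,\tau}}$, unlike $\Oo_{\Ee}$, does not arise from a clean field-of-norms formalism, $F_{u,\tau}$ being imperfect and not the tilt of a perfectoid field: one must verify by hand that $\Oo_{\widehat{\Ff^{\ur}_u}}$ still enjoys the cohomological vanishing $\H^1(\mathsf{G},\Oo_{\widehat{\Ff^{\ur}_u}}\otimes T)=0$ and the density/completeness used in Theorem \ref{thm notation of Caruso}, so that Caruso's $\FF_p$-level equivalence propagates to the integral level without obstruction. Everything else is routine dévissage and passage to the limit.
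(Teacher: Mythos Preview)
Your proposal is correct and is precisely the expected argument. The paper in fact states this theorem without proof, treating it as the straightforward integral analogue of Theorem~\ref{thm cat equi Fp u} (itself deferred to \cite[Th\'eor\`eme~1.14]{Car13}), so there is no independent ``paper's proof'' to compare against; your d\'evissage from the $\FF_p$-case via the isomorphism $\Oo_{\Ee_{u,\tau}}\otimes_{\Oo_{\Ee}}\mathcal{D}(T)\simeq(\Oo_{\widehat{\Ff^{\ur}_u}}\otimes_{\ZZ_p}T)^{\mathscr{G}_L}$ and the vanishing of $\H^1(\mathscr{G}_L,\Oo_{\widehat{\Ff^{\ur}_u}}\otimes_{\ZZ_p}T)$ is exactly the intended route, and the cohomological vanishing you flag as the ``main obstacle'' is proved later in the paper as Lemma~\ref{lemmcoho0 Zp tor non-perfect case} and Corollary~\ref{lemmcoho0 Zp non-perfect case}.
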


\begin{remark}\label{rem 3.1.16 e e u tau}
	For $V\in \Rep_{\QQ_p}(\mathscr{G}_K),$ we can similarly define $\Mod_{\Ee, \mathcal{E}_{u,\tau}}(\varphi,\tau)$\index{$\Mod_{\Ee, \mathcal{E}_{u,\tau}}(\varphi,\tau)$}: the category of $(\varphi, \tau)$-modules over $(\Ee, \mathcal{E}_{u,\tau})$ and establish an equivalence of category.
\end{remark}

\section{The complex \texorpdfstring{$\Cc_{\varphi, \tau}^{u}$}{C\textpinferior\texthinferior\textiinferior, \texttinferior\textainferior\textuinferior\unichar{"1D58}}}\label{section C varphi tau u}

\begin{notation}\label{not D u tau normla version at the beginning}
	Let $(D, D_{u,\tau})\in \Mod_{\Oo_{\Ee}, \mathcal{O}_{\mathcal{E}_{u,\tau}}}(\varphi,\tau),$ we put
	\[D_{u, \tau, 0}:=\big\{ x\in D_{u, \tau};\ (\forall g\in \mathscr{G}_{K_{\pi}})\ \chi(g)\in \ZZ_{>0} \Rightarrow (g\otimes 1)(x)=x+\tau_D(x)+\tau_D^{2}(x)+\cdots +\tau_D^{\chi(g)-1}(x) \big\}. \]\index{$D_{u, \tau, 0}$}
	By similar arguments as that of lemma \ref{Replace g witi gamma}, we have 
	
	\[D_{u,\tau, 0}=\big\{ x\in D_{u,\tau} ;\  (\gamma\otimes 1)x=(1+\tau_D+\tau_D^{2}+\cdots + \tau_D^{\chi(\gamma)-1})(x)   \big\}.\]
\end{notation}

\begin{lemma}\label{lemm complex u well defined}
	Let $(D, D_{u,\tau})\in \Mod_{\Oo_{\Ee}, \mathcal{O}_{\mathcal{E}_{u,\tau}}}(\varphi,\tau),$ then $\varphi-1$ and $\tau_D-1$ induce maps $\varphi-1: D_{u, \tau, 0}\to D_{u, \tau, 0}$ and $\tau_D-1: D\to D_{u, \tau, 0}.$ 
\end{lemma}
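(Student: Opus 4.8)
The plan is to imitate the proof of Lemma \ref{lemm 1.1.11 well-defined tau-1 varphi-1}, transferring it verbatim to the partially unperfected setting, since the only structural feature being used there is the $(\varphi,\tau)$-module axioms, which are the same in Definition \ref{def phi tau over O e Oe u tau} as in Definition \ref{Phi-tau-module}.

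First I would treat $\varphi-1\colon D_{u,\tau,0}\to D_{u,\tau,0}$. Given $x\in D_{u,\tau,0}$ and $g\in\mathscr{G}_{K_\pi}$ with $\chi(g)=n\in\ZZ_{>0}$, I apply $\varphi$ to the defining relation $(g\otimes1)(x)=x+\tau_D(x)+\cdots+\tau_D^{n-1}(x)$. Since $\varphi$ commutes with $\tau_D$ (part (2) of Definition \ref{def phi tau over O e Oe u tau}) and with the action of $\mathscr{G}_{K_\pi}/\mathscr{G}_L$ on $D_{u,\tau}$ (the Galois action on $D_{u,\tau}=\Oo_{\Ee_{u,\tau}}\otimes D$ is induced by the diagonal action on $\Oo_{\widehat{\Ff^{\ur}_u}}\otimes T$, which commutes with Frobenius), one gets $(g\otimes1)\big((\varphi-1)(x)\big)=\sum_{i=0}^{n-1}\tau_D^i\big((\varphi-1)(x)\big)$, so $(\varphi-1)(x)\in D_{u,\tau,0}$.

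Next, for $\tau_D-1\colon D\to D_{u,\tau,0}$, I take $x\in D$, set $y=(\tau_D-1)(1\otimes x)\in D_{u,\tau}$, and for $g\in\mathscr{G}_{K_\pi}$ with $\chi(g)=n\in\ZZ_{>0}$ compute, exactly as in the proof of Lemma \ref{lemm 1.1.11 well-defined tau-1 varphi-1},
\[
(1+\tau_D+\cdots+\tau_D^{n-1})(y)=(\tau_D^n-1)(1\otimes x)=(g\otimes1)\big(\tau_D(1\otimes x)\big)-1\otimes x=(g\otimes1)(y),
\]
using $\tau_D^n(1\otimes x)=\tau_D^n\circ(g\otimes1)(1\otimes x)=(g\otimes1)\circ\tau_D(1\otimes x)$, which is the analogue of Remark \ref{remark on defi varphi-tau module gamma tau relation}(2) in this context (valid since the condition $(g\otimes1)\circ\tau_D=\tau_D^{\chi(g)}\circ(g\otimes1)$ on $D$ is formally identical). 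Hence $y\in D_{u,\tau,0}$.

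There is no real obstacle here: the statement is a direct transcription, and the only point requiring a word of care is confirming that the formal identities used — commutation of $\varphi$ with $\tau_D$ and with the relevant Galois action, and the rewriting of the cocycle-type condition on $D$ — hold in the unperfected setting. These are all built into Definition \ref{def phi tau over O e Oe u tau} and the description of $\tau_D$ and the Galois action via $\Oo_{\widehat{\Ff^{\ur}_u}}\otimes_{\ZZ_p}T$, so the proof is essentially a pointer to the argument of Lemma \ref{lemm 1.1.11 well-defined tau-1 varphi-1} together with the observation (made in Notation \ref{not D u tau normla version at the beginning}) that $D_{u,\tau,0}$ can equivalently be cut out by the single condition involving $\gamma$.
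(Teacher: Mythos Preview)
Your proposal is correct and follows exactly the same approach as the paper: the paper's proof is simply a pointer ``\emph{cf.}\ Lemma~\ref{lemm 1.1.11 well-defined tau-1 varphi-1}'', and you have faithfully unwound that reference, noting that the axioms in Definition~\ref{def phi tau over O e Oe u tau} are formally identical to those in Definition~\ref{Phi-tau-module} so the argument transfers verbatim.
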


\begin{proof}
	$\cf$ lemma \ref{lemm 1.1.11 well-defined tau-1 varphi-1}.
\end{proof}

\begin{definition}\label{def complex varphi tau}
	Let $(D, D_{\tau})\in \Mod_{\Oo_{\Ee}, \mathcal{O}_{\mathcal{E}_{u,\tau}}}(\varphi,\tau)$ (resp. $(D, D_{\tau})\in \Mod_{\Ee, \mathcal{E}_{u,\tau}}(\varphi,\tau)$). We define a complex $\Cc_{\varphi, \tau}^{u}(D)$\index{$\Cc_{\varphi, \tau}^{u}(D)$} as follows:
	\[  \xymatrix{    0 \ar[rr] && D \ar[r] & D\oplus D_{u,\tau, 0} \ar[r] & D_{u,\tau, 0} \ar[r] & 0\\
		&&x \ar@{|->}[r] &  ((\varphi-1)(x), (\tau_D-1)(x)) &&\\
		&&              &    (y,z) \ar@{|->}[r]& (\tau_D-1)(y)-(\varphi-1)(z).&
	}
	\]
	If $T\in \Rep_{\ZZ_p}(\mathscr{G}_K)$ (resp. $V\in \Rep_{\QQ_p}(\mathscr{G}_K)$), we have in particular the complex $\Cc_{\varphi, \tau}^{u}(\mathcal{D}(T))$ (resp. $\Cc_{\varphi, \tau}^{u}(\mathcal{D}(V))$), which will also be simply denoted $\Cc_{\varphi, \tau}^{u}(T)$\index{$\Cc_{\varphi, \tau}^{u}(T)$} (resp. $\Cc_{\varphi, \tau}^{u}(V)$).
\end{definition}

\begin{theorem}\label{coro complex over non-perfect ring works also Zp-case}
	Let $T\in \Rep_{\ZZ_p}(\mathscr{G}_K),$ then the complex $\Cc_{\varphi, \tau}^{u}(T)$ computes the continuous Galois cohomology of $T,$ $\ie \H^i(\mathscr{G}_K,\Cc_{\varphi, \tau}^{u}(T))\simeq \H^i(\mathscr{G}_K, T)$ for $i\in \NN.$ 
\end{theorem}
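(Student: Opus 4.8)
The plan is to reduce this to the already-established Theorem \ref{thm main result}, which says that the complex $\Cc_{\varphi, \tau}(T)$ with \emph{perfect} coefficients $(\Oo_{\Ee}, \Oo_{\Ee_\tau})$ computes $\H^i(\mathscr{G}_K, T)$. The natural map $\Oo_{\Ee_{u,\tau}} \to \Oo_{\Ee_\tau}$ (coming from $\Oo_{\widehat{\Ff_u^{\ur}}} \hookrightarrow \W(C^\flat)$ and taking $\mathscr{G}_L$-invariants) induces a morphism of complexes $\Cc_{\varphi, \tau}^u(T) \to \Cc_{\varphi, \tau}(T)$, and it suffices to prove this is a quasi-isomorphism. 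First I would check the map is well-defined on the subgroups $D_{u,\tau,0}$: since $D_{u,\tau,0}$ is cut out inside $D_{u,\tau} = \Oo_{\Ee_{u,\tau}} \otimes_{\Oo_{\Ee}} D$ by exactly the same Galois-equivariance relations (via Lemma \ref{Replace g witi gamma} and Notation \ref{not D u tau normla version at the beginning}) that cut out $D_{\tau,0}$ inside $D_\tau$, the base-change map sends $D_{u,\tau,0}$ into $D_{\tau,0}$, so the three vertical arrows form a genuine morphism of complexes commuting with $\varphi-1$ and $\tau_D-1$.

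For the quasi-isomorphism, the cleanest route is to mimic the $\delta$-functor strategy already used twice in the excerpt (proofs of Theorems \ref{thm main result} and \ref{thm main result for C-phi-tau-rad}): define $\Ff^i_u(T) = \H^i(\Cc_{\varphi,\tau}^u(T))$ on $\Ind\Rep_{\ZZ_p}(\mathscr{G}_K)$, show $\{\Ff^i_u\}$ is a $\delta$-functor agreeing with $T \mapsto T^{\mathscr{G}_K}$ in degree $0$, and prove effaceability. Degree $0$ is immediate: $\Ff^0_u(T) = (\Oo_{\widehat{\Ff_u^{\ur}}} \otimes T)^{\mathscr{G}_{K_\pi},\,\varphi=1,\,\tau_D=1}$, and since $(\Oo_{\widehat{\Ff_u^{\ur}}})^{\varphi=1} = \ZZ_p$ (the maximal unramified extension has the same $\varphi$-invariants as $\W(C^\flat)$, namely $\ZZ_p$), this collapses to $T^{\langle \mathscr{G}_{K_\pi}, \tau\rangle} = T^{\mathscr{G}_K}$, exactly as in Lemma \ref{lemm H0}. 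The $\delta$-functor property follows once one has the analogue of Corollary \ref{Exactness D-tau-0 Zp}: exactness of $T \mapsto \mathcal{D}(T)$, $T \mapsto \mathcal{D}(T)_{u,\tau}$, and $T \mapsto \mathcal{D}(T)_{u,\tau,0}$. The first two are handled by the vanishing statements $\H^1(\mathscr{G}_{K_\pi}, \Oo_{\widehat{\Ee^{\ur}}} \otimes T') = 0$ and (the $\Ff_u$-analogue of) $\H^1(\mathscr{G}_L, \Oo_{\widehat{\Ff_u^{\ur}}} \otimes T') = 0$, the latter following from Lemma \ref{lemm H1 G infty vanishes} applied with $\mathscr{G}_L \simeq \mathsf{G}$ (Proposition \ref{prop G}) acting on the finite-type $\Oo_{\widehat{\Ff_u^{\ur}}}$-module $\Oo_{\widehat{\Ff_u^{\ur}}} \otimes T'$ for the $p$-adic topology; the third then comes from the snake lemma together with surjectivity of $\delta - \gamma\otimes 1$ on $\mathcal{D}(T)_{u,\tau}$, which in turn (via Lemma \ref{lemm amazing but easy equation}) reduces to surjectivity of $\gamma-1$ and $\tau_D^{\chi(\gamma)}-1$ on $\mathcal{D}(T)_{u,\tau}$ — and these hold because $\H^1(\mathscr{G}_{K_\pi}, \Oo_{\widehat{\Ff_u^{\ur}}}\otimes T) = 0 = \H^1(\mathscr{G}_{K_{\zeta,n}}, \Oo_{\widehat{\Ff_u^{\ur}}}\otimes T)$ by Lemma \ref{lemm H1 G infty vanishes} applied to the relevant open subgroups of $\mathsf{G}$.

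For effaceability, take $T \in \Rep_{\ZZ_p,\tors}(\mathscr{G}_K)$ and $U = \Ind_{\mathscr{G}_K}(T)$; one must show $\Ff^i_u(U) = 0$ for $i \in \{1,2\}$. For $i=1$, identify $\H^1(\Cc_{\varphi,\tau}^u(U))$ with $\Ext_{\Mod_{\Oo_{\Ee},\Oo_{\Ee_{u,\tau}}}(\varphi,\tau)}(\Oo_{\Ee}, \mathcal{D}(U))$ by the exact analogue of Lemma \ref{Extension} and Proposition \ref{prop Caruso H1 Zp} — the computation of the extension group is formally identical once one replaces $\Oo_{\Ee_\tau}$ by $\Oo_{\Ee_{u,\tau}}$ — and this is $\H^1(\mathscr{G}_K, U) = 0$ by Lemma \ref{lemmcoho0nonGalois Zp}. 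For $i=2$, one needs surjectivity of $\varphi - 1$ on $\mathcal{D}(U)_{u,\tau,0}$; following the proof of Lemma \ref{Surjectivity of varphi-1 on D-tau-0 Zp}, apply the snake lemma to the map of the exact sequence $0 \to U^{\mathscr{G}_L} \to \mathcal{D}(U)_{u,\tau} \xrightarrow{\varphi-1} \mathcal{D}(U)_{u,\tau} \to 0$ (valid because $\H^1(\mathscr{G}_L, \Oo_{\widehat{\Ff_u^{\ur}}} \otimes U) = 0$, again via $\mathsf{G} \simeq \mathscr{G}_L$ and a continuous section of $\varphi - 1$ on $\Oo_{\widehat{\Ff_u^{\ur}}}$ constructed as in Lemma \ref{lemm conti section}) into itself via $\delta - \gamma\otimes 1$, reducing to surjectivity of $\delta - \gamma\otimes 1$ on $U^{\mathscr{G}_L}$, which follows as in Lemma \ref{lemmcoker0UGL Zp} from the vanishing of $\H^1(\mathscr{G}_{K_\pi}, U)$ and $\H^1(\mathscr{G}_{K_{\zeta,n}}, U)$. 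The main obstacle I anticipate is not any single step but rather verifying carefully that the key vanishing input $\H^1(\mathscr{G}_L, \Oo_{\widehat{\Ff_u^{\ur}}}\otimes_{\ZZ_p} T) = 0$ genuinely holds: unlike $\W(C^\flat)$, the ring $\Oo_{\widehat{\Ff_u^{\ur}}}$ is a finite-type module situation where one must invoke Lemma \ref{lemm H1 G infty vanishes} through the isomorphism $\mathsf{G} \simeq \mathscr{G}_L$ and the continuity of the $\mathscr{G}_L$-action for the $p$-adic topology (the analogue of Lemma \ref{lemm continuous action p-adic topology ur}); once this is in place everything else is a faithful transcription of the perfect-coefficient arguments.
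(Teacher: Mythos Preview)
Your overall strategy—rerunning the $\delta$-functor/effaceability argument of Chapter~1 with $\Oo_{\Ee_{u,\tau}}$ in place of $\Oo_{\Ee_\tau}$—is a genuinely different route from the paper's, and much of it would go through; but one step is not justified as written. You claim
\[
\H^1(\mathscr{G}_{K_\pi}, \Oo_{\widehat{\Ff_u^{\ur}}}\otimes T)=0=\H^1(\mathscr{G}_{K_{\zeta,n}}, \Oo_{\widehat{\Ff_u^{\ur}}}\otimes T)
\]
``by Lemma~\ref{lemm H1 G infty vanishes} applied to the relevant open subgroups of $\mathsf{G}$''. This is backwards: $\mathscr{G}_{K_\pi}$ and $\mathscr{G}_{K_{\zeta,n}}$ \emph{contain} $\mathsf{G}\simeq\mathscr{G}_L$, they are not subgroups of it. Lemma~\ref{lemm H1 G infty vanishes} (and its $\Oo_{\widehat{\Ff_u^{\ur}}}$-analogue, which is Lemma~\ref{lemmcoho0 Zp tor non-perfect case}) is a Hilbert~90 statement for $\mathsf{G}=\Gal(F_{u,\tau}^{\sep}/F_{u,\tau})$; it does not extend automatically to the larger groups, whose action on $F_{u,\tau}^{\sep}$ is not transparently the Galois action over an identified subfield. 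Without this vanishing you cannot deduce surjectivity of $\gamma-1$ and $\tau_D^{\chi(\gamma)}-1$ on $D_{u,\tau}$, so neither the exactness of $T\mapsto \mathcal{D}(T)_{u,\tau,0}$ (your $\delta$-functor step) nor the snake-lemma reduction for $\varphi-1$ on $\mathcal{D}(U)_{u,\tau,0}$ is established.

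The paper takes a shorter and different path: it proves directly that the inclusion $\Cc_{\varphi,\tau}^u(T)\hookrightarrow\Cc_{\varphi,\tau}(T)$ is a quasi-isomorphism by showing that the cokernel complex $[D_{\tau,0}/D_{u,\tau,0}\xrightarrow{\varphi-1}D_{\tau,0}/D_{u,\tau,0}]$ is acyclic. The new idea is that $\varphi-1$ is bijective on $D_\tau/D_{u,\tau}$: injectivity is an elementary Artin--Schreier separability argument (Lemma~\ref{lemm D tau D u tau inj}), and surjectivity comes from comparing the two four-term exact sequences
\[
0\to T^{\mathscr{G}_L}\to D_{u,\tau}\xrightarrow{\varphi-1}D_{u,\tau}\to \H^1(\mathscr{G}_L,T)\to 0,\qquad
0\to T^{\mathscr{G}_L}\to D_{\tau}\xrightarrow{\varphi-1}D_{\tau}\to \H^1(\mathscr{G}_L,T)\to 0
\]
(Lemma~\ref{lemm 3.2.8 assume T is killed by p}), which share outer terms; then d\'evissage. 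A final snake lemma with $\delta-\gamma$ passes from $D_\tau/D_{u,\tau}$ to $D_{\tau,0}/D_{u,\tau,0}$. This approach only ever uses $\H^1(\mathscr{G}_L,-)$-vanishing (Hilbert~90 for $\mathsf{G}$), never $\H^1(\mathscr{G}_{K_\pi},\Oo_{\widehat{\Ff_u^{\ur}}}\otimes-)$, which is precisely the input your argument needs but cannot supply.
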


\begin{remark}\label{rem diagram non-perfect complex} (1) Let $T\in \Rep_{\ZZ_p}(\mathscr{G}_K)$ and $(D, D_{u,\tau})$ be its $(\varphi, \tau)$-module over $(\Oo_{\Ee},\Oo_{\Ee_{u,\tau}}).$ We have the following diagram of complexes

	\[\xymatrix{
		\Cc_{\varphi, \tau}^{u}(D)&	0 \ar[r] & D \ar@{=}[d]\ar[rr]^-{(\varphi-1, \tau_D-1)}&& D \oplus D_{u,\tau, 0} \ar@{^(->}[d]\ar[rr]^-{ (\tau_D-1)\ominus(\varphi-1)}&& D_{u,\tau,0} \ar@{^(->}[d]\ar[r] & 0\\
		\Cc_{\varphi, \tau}(D)&	0 \ar[r] & D \ar[d]\ar[rr]^-{(\varphi-1, \tau_D-1)}&& D \oplus D_{\tau,0} \ar@{->>}[d]\ar[rr]^-{(\tau_D-1)\ominus(\varphi-1)}&& D_{\tau,0} \ar@{->>}[d]\ar[r] & 0\\
		\Cc(D)	&	0 \ar[r] & 0 \ar[rr]^{}&& D_{\tau,0}/D_{u,\tau, 0} \ar[rr]^-{\varphi-1}&& D_{\tau, 0}/D_{u,\tau, 0} \ar[r] & 0.\\
	}\]
	
	\item (2) Recall that for $T\in \Rep_{\ZZ_p}(\mathscr{G}_K),$ the complex $\Cc_{\varphi, \tau}(D)$ computes the continuous Galois cohomology by theorem \ref{thm main result}. We will show in the following that $\Cc_{\varphi, \tau}^{u}(D)$ and $\Cc_{\varphi, \tau}(D)$ are quasi-isomorphic, and hence the complex $\Cc_{\varphi, \tau}^{u}(D)$ also computes the continuous Galois cohomology. 
\end{remark}

\subsection{Proof of theorem \ref{coro complex over non-perfect ring works also Zp-case}: the quasi-isomorphism}

Let $T\in \Rep_{\ZZ_p}(\mathscr{G}_K),$ and denote by $(D, D_{u, \tau})$ its $(\varphi, \tau)$-module over $(\Oo_{\Ee}, \Oo_{\Ee_{u,\tau}}).$

\begin{lemma}\label{lemm D tau D u tau inj}
	The map $D_{\tau}/D_{u,\tau}\xrightarrow{\varphi-1}D_{\tau}/D_{u,\tau}$ is injective. 
\end{lemma}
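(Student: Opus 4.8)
The plan is to prove injectivity of $\varphi-1$ on the quotient $D_\tau/D_{u,\tau}$ by unwinding the definitions and reducing to the analogous statement for the coefficient rings themselves. Recall that $D_\tau = \Oo_{\Ee_\tau}\otimes_{\Oo_{\Ee}}D$ and $D_{u,\tau} = \Oo_{\Ee_{u,\tau}}\otimes_{\Oo_{\Ee}}D$, where by Theorem \ref{thm notation of Caruso} we have $\Oo_{\Ee_{u,\tau}} = \Oo_{\Ff_u}$, a Cohen ring for $F_{u,\tau}$, sitting inside $\Oo_{\Ee_\tau} = \W(F_\tau)$ with $F_\tau$ perfect. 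So the statement amounts to: if $x\in D_\tau$ satisfies $(\varphi-1)(x)\in D_{u,\tau}$, then $x\in D_{u,\tau}$. Choosing a basis of $D$ over $\Oo_{\Ee}$ identifies $D_\tau$ with $(\Oo_{\Ee_\tau})^d$ and $D_{u,\tau}$ with $(\Oo_{\Ee_{u,\tau}})^d$, but one must be careful because $\varphi$ acts semilinearly (the matrix of $\varphi$ with respect to such a basis lies in $\GL_d(\Oo_{\Ee})$ by étaleness).

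First I would reduce modulo $p$ via a dévissage: the exact sequence $0\to p^nD/p^{n+1}D\to D/p^{n+1}D\to D/p^nD\to 0$ (and the fact that $D_\tau, D_{u,\tau}$ are formed by flat base change, so reduction mod $p$ commutes with everything) lets one induct on the $p$-exponent, and in the limit one uses that $D_{u,\tau}$ is $p$-adically closed in $D_\tau$. This is the same pattern used in the proof of Lemma \ref{lemm0}. So the crux is the case where $D$ is killed by $p$, i.e. a finite-dimensional étale $\varphi$-module over $F_0 = k(\!(u)\!)$, and one must show: if $\delta\in F_\tau\otimes_{F_0}D$ with $(\varphi-1)(\delta)\in F_{u,\tau}\otimes_{F_0}D$, then $\delta\in F_{u,\tau}\otimes_{F_0}D$. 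Here $F_\tau = F_{u,\tau}^{\wedge,\mathrm{perf}}$-ish — more precisely $F_\tau$ is a perfect field containing $F_{u,\tau}$, and the obstruction to being in $F_{u,\tau}$ is measured by the "$p$-power parts" of $u$ and $\eta$ that one has to adjoin.

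The key computation is then a Frobenius-descent argument on the exponents, exactly parallel to the one in Lemma \ref{lemm0}. Writing $F_\tau$ as a topological direct sum indexed by the fractional parts of exponents of $u$ (and similarly managing $\eta^{1/p^\infty}$), one expresses $\delta = \sum_\lambda (\text{monomial})_\lambda\otimes d_\lambda$, computes $(\varphi-1)(\delta)$, and observes that the condition $(\varphi-1)(\delta)\in D_{u,\tau}$ forces a recursion $d_\lambda = (\text{Frobenius-twisted combination of } d_{\lambda'})$ for the "bad" exponents $\lambda$, where the $\lambda'$ are strictly $p$-divided versions of $\lambda$. Introducing $c_k = \min_{\lambda \text{ bad at level } k} v(d_\lambda)$, étaleness of $D$ (so $\varphi$ preserves/scales the valuation appropriately on a lattice) gives $c_{k+1}>0 \Rightarrow c_k\ge p\,c_{k+1}>0$; since $c_k\to+\infty$, one concludes $c_1 = +\infty$, i.e. all bad coefficients vanish, so $\delta\in D_{u,\tau}$.

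The main obstacle I anticipate is bookkeeping the two independent directions of "imperfection" simultaneously: $F_\tau/F_{u,\tau}$ differs not only by $p$-power roots of $u$ (the direction handled in Lemma \ref{lemm0} for $F_0^{\rad}/F_0$) but also by the completion and by the interaction with $\eta^{1/p^\infty}$. One needs a clean choice of "coordinates" on $F_\tau$ over $F_{u,\tau}$ — presumably $F_\tau = \widehat{\bigoplus_\lambda F_{u,\tau}\cdot u^\lambda}$ for $\lambda$ ranging over an appropriate set of representatives — so that the Frobenius recursion on exponents closes up. Once that algebraic description is pinned down, the estimate on valuations is routine and mirrors Lemma \ref{lemm0} verbatim; I would likely just cite that lemma's argument for the valuation estimate and spend the real effort on setting up the decomposition of $F_\tau$ and checking that étaleness of $D$ is exactly what makes the Frobenius scaling work.
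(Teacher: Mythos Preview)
Your approach is workable in spirit but takes a much harder route than the paper, and the ``main obstacle'' you flag is a genuine one that the paper simply sidesteps.

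The paper does \emph{not} work with the description $D_\tau = \Oo_{\Ee_\tau}\otimes_{\Oo_\Ee}D$ at all. Instead it uses the alternative descriptions
\[
D_\tau = \big(\W(C^\flat)\otimes_{\ZZ_p}T\big)^{\mathscr{G}_L},\qquad
D_{u,\tau} = \big(\Oo_{\widehat{\Ff^{\ur}_u}}\otimes_{\ZZ_p}T\big)^{\mathscr{G}_L},
\]
and observes that it suffices to prove the statement \emph{before} taking $\mathscr{G}_L$-invariants, i.e.\ to show that $(\varphi-1)(x)\in\Oo_{\widehat{\Ff^{\ur}_u}}\otimes T$ forces $x\in\Oo_{\widehat{\Ff^{\ur}_u}}\otimes T$. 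After d\'evissage to the $p$-torsion case, the crucial gain is that $C^\flat\otimes_{\FF_p}T\simeq(C^\flat)^d$ \emph{as $\varphi$-modules} (Frobenius acts trivially on $T$), and likewise over $F_{u,\tau}^{\sep}=C^\flat_{u-\np}$. So the question collapses to the rank-one case: if $x\in C^\flat$ satisfies $x^p-x\in F_{u,\tau}^{\sep}$, is $x\in F_{u,\tau}^{\sep}$? Yes, because $X^p-X-(x^p-x)$ is separable. That's the whole proof.

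By contrast, your plan stays on the module side over $\Oo_{\Ee_\tau}/\Oo_{\Ee_{u,\tau}}$, where the $\varphi$-action is genuinely twisted and you need a structural decomposition of $F_\tau$ over $F_{u,\tau}$. Your hoped-for identity $F_\tau = \widehat{\bigoplus_\lambda F_{u,\tau}\cdot u^\lambda}$ amounts to asserting that $F_\tau$ is exactly the completed $u$-perfection of $F_{u,\tau}$; this is plausible but is not established anywhere in the paper and would itself require a nontrivial argument (unlike the situation in Lemma~\ref{lemm0}, where the analogous decomposition of $\widehat{F_0^{\rad}}$ is explicit by construction). Even granting it, you would still have to run the valuation recursion with a nontrivial Frobenius matrix. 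The paper's trick of passing to the representation side trivializes the Frobenius and replaces all of this with a one-line Artin--Schreier argument.
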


\begin{proof}
	For any $x\in D_{\tau}=(\W(C^{\flat})\otimes_{\ZZ_p}T)^{\mathscr{G}_L}$ such that $(\varphi-1)x\in D_{u,\tau}=(\Oo_{\widehat{\Ff^{\ur}_{u}}}\otimes_{\ZZ_p} T)^{\mathscr{G}_L},$ we have to show that $x\in D_{u,\tau}.$ Obviously it suffices to show that for any element $x\in \W(C^{\flat})\otimes_{\ZZ_p}T,$ the relation $(\varphi-1)x\in \Oo_{\widehat{\Ff^{\ur}_{u}}}\otimes_{\ZZ_p} T$ implies $x\in \Oo_{\widehat{\Ff^{\ur}_{u}}}\otimes_{\ZZ_p} T.$ By d\'evissage we can reduce to the case $pT=0.$ For any $x\in C^{\flat}\otimes_{\FF_p} T,$ we claim that $(\varphi-1)x\in k(\!( u, \eta^{1/p^{\infty}})\!)^{\sep}\otimes_{\FF_p}T$ implies $x\in k(\!( u, \eta^{1/p^{\infty}})\!)^{\sep}\otimes_{\FF_p}T.$ We have 
	$C^{\flat}\otimes T\simeq (C^{\flat})^d$ and 
	$$k(\!( u, \eta^{1/p^{\infty}})\!)^{\sep}\otimes T\simeq (k(\!( u, \eta^{1/p^{\infty}})\!)^{\sep})^d$$ as $\varphi$-modules. Hence it suffices to show that for any $x\in C^{\flat},$ $x^p-x\in k(\!( u, \eta^{1/p^{\infty}})\!)^{\sep}$ will imply that $x\in k(\!( u, \eta^{1/p^{\infty}})\!)^{\sep}.$ Put $P(X)=X^p-X-(x^p-x)\in k(\!(u, \eta^{1/p^{\infty}})\!)[X]:$ it is separable as $P^{\prime}(X)=-1$ so that $x$ is separable over $k(\!( u, \eta^{1/p^{\infty}})\!)$ and hence $x\in k(\!( u, \eta^{1/p^{\infty}})\!)^{\sep}.$
\end{proof}

\begin{lemma}\label{lemm 3.2.8 assume T is killed by p} Assume $T$ is killed by $p,$ then there are exact sequences of abelian groups
	\begin{align*}
	0&\to T^{\mathscr{G}_L} \to D_{u, \tau} \xrightarrow{\varphi-1} D_{u, \tau} \to \H^1(\mathscr{G}_L, T) \to 0\\
	0&\to T^{\mathscr{G}_L} \to D_{\tau} \xrightarrow{\varphi-1} D_{\tau} \to \H^1(\mathscr{G}_L, T) \to 0.
	\end{align*}
\end{lemma}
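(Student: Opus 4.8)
The plan is to deduce both exact sequences from the continuous Galois cohomology of $\mathscr{G}_L$ acting on suitable period rings, exploiting the key fact (already available in the excerpt via Lemma~\ref{lemm conti section} and the discussion around Corollary~\ref{coro H1 null}) that the Artin--Schreier-type map $\varphi-1$ on $\W(C^\flat)$ admits a continuous section, together with the analogous statement for $\Oo_{\widehat{\Ff^{\ur}_u}}$. Concretely, the second sequence is the one that comes for free: since $T$ is killed by $p$, we have the exact sequence of $\mathscr{G}_L$-modules
\[ 0\to \FF_p\to C^\flat\xrightarrow{\varphi-1} C^\flat\to 0, \]
and tensoring with $T$ over $\FF_p$ (which is exact, $C^\flat$ being a field) gives
\[ 0\to T\to C^\flat\otimes_{\FF_p}T\xrightarrow{\varphi-1} C^\flat\otimes_{\FF_p}T\to 0. \]
Taking $\mathscr{G}_L$-cohomology produces a long exact sequence whose first six terms are
\[ 0\to T^{\mathscr{G}_L}\to (C^\flat\otimes T)^{\mathscr{G}_L}\xrightarrow{\varphi-1}(C^\flat\otimes T)^{\mathscr{G}_L}\to \H^1(\mathscr{G}_L,T)\to \H^1(\mathscr{G}_L,C^\flat\otimes T)\to\cdots, \]
and by Lemma~\ref{lemmcoho0 Zp tor} (applied with $M=L$, whose completion is perfectoid) we have $\H^1(\mathscr{G}_L,C^\flat\otimes_{\FF_p}T)=0$ since $\W(C^\flat)\otimes_{\ZZ_p}T$ has vanishing higher $\mathscr{G}_L$-cohomology and $T$ is $p$-torsion (equivalently $C^\flat\otimes_{\FF_p}T$ is a quotient of it). Identifying $(C^\flat\otimes_{\FF_p}T)^{\mathscr{G}_L}$ with $D_\tau=\mathcal{D}(T)_\tau$ via Lemma~\ref{lemm iso for D-tau} (noting $D_\tau/pD_\tau$ is what is relevant here, but $T$ being $p$-torsion everything lives mod $p$) yields the second exact sequence. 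The only subtlety is checking that the connecting map and the identification are compatible with the $\varphi$ appearing in $\mathcal{D}(T)_\tau$, which is immediate since the embedding $\Oo_{\Ee_\tau}\hookrightarrow\W(C^\flat)$ is Frobenius-equivariant.

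For the first sequence, the same argument runs with $C^\flat$ replaced by $F_{u,\tau}^{\sep}=C^\flat_{u\text{-}\np}$: one uses the Artin--Schreier sequence
\[ 0\to \FF_p\to F_{u,\tau}^{\sep}\xrightarrow{\varphi-1} F_{u,\tau}^{\sep}\to 0 \]
(surjectivity of $\varphi-1$ here is exactly the solvability of $X^p-X=a$ in the separable closure, as used in Lemma~\ref{lemm conti section}), tensors with $T$ over $\FF_p$, and takes $\mathsf{G}$-cohomology. By Proposition~\ref{prop G} we have $\mathsf{G}\simeq\mathscr{G}_L$, so $\mathsf{G}$-cohomology is $\mathscr{G}_L$-cohomology; and $(F_{u,\tau}^{\sep}\otimes_{\FF_p}T)^{\mathsf{G}}=(C^\flat_{u\text{-}\np}\otimes T)^{\mathscr{G}_L}\simeq F_{u,\tau}\otimes_{F_0}\mathcal{D}(T)=D_{u,\tau}$ by Lemma~\ref{lemm tau action natural} (its mod-$p$ / $\FF_p$-version, since $T$ is killed by $p$, which is precisely the situation of Theorem~\ref{thm cat equi Fp u}). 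What remains is to show that $\H^1(\mathscr{G}_L,F_{u,\tau}^{\sep}\otimes_{\FF_p}T)=0$. Since $F_{u,\tau}^{\sep}\otimes_{\FF_p}T\simeq (F_{u,\tau}^{\sep})^d$ as $\mathscr{G}_L$-modules (after trivializing the $\mathscr{G}_L$-action, or more carefully: $T$ restricted to $\mathscr{G}_L$ is a trivial $\FF_p$-representation up to the $F_0^{\sep}$-descent, but in any case $F_{u,\tau}^{\sep}\otimes_{\FF_p}T$ is a successive extension of copies of $F_{u,\tau}^{\sep}$), it suffices that $\H^1(\mathscr{G}_L,F_{u,\tau}^{\sep})=0$; but $\mathscr{G}_L\simeq\mathsf{G}=\Gal(F_{u,\tau}^{\sep}/F_{u,\tau})$ acts on $F_{u,\tau}^{\sep}$ as its full absolute Galois group on the separable closure, so this vanishing is the additive Hilbert~90 / normal basis theorem for the (discrete) Galois module $F_{u,\tau}^{\sep}$, exactly as invoked in part (1) of the proof of Lemma~\ref{lemm H1 G infty vanishes}.

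The main obstacle I anticipate is purely bookkeeping rather than conceptual: one must be careful that the $\varphi$ in the statement is the Frobenius of the $(\varphi,\tau)$-module $D$ extended $\Oo_{\Ee_{u,\tau}}$- (resp. $\Oo_{\Ee_\tau}$-) semilinearly, and that under the isomorphisms $D_{u,\tau}\simeq(F_{u,\tau}^{\sep}\otimes T)^{\mathscr{G}_L}$ and $D_\tau\simeq(C^\flat\otimes T)^{\mathscr{G}_L}$ this $\varphi$ corresponds to $\varphi\otimes\id_T$ on the right-hand side. This holds because the embeddings $\Oo_{\Ff_u}=\Oo_{\Ee_{u,\tau}}\hookrightarrow\W(C^\flat)$ (Theorem~\ref{thm notation of Caruso}) and $\Oo_{\Ee_\tau}\hookrightarrow\W(C^\flat)$ are constructed to be Frobenius-compatible, and $\mathscr{G}_{K_\pi}$ (hence $\mathscr{G}_L$) acts trivially on the $T$-factor of $\mathcal{D}(T)$ only after passing to $F_0^{\sep}$, so one should phrase the Artin--Schreier sequence with the coefficient field being a separable closure and only then take $\mathscr{G}_L$-invariants. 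A secondary point is the vanishing $\H^1(\mathsf{G},F_{u,\tau}^{\sep}\otimes T)=0$: if one prefers not to trivialize the Galois action on $T$, one runs the same d\'evissage on $\dim_{\FF_p}T$ as in Lemma~\ref{lemm H1 G infty vanishes}(1)--(2), reducing to $\dim 1$ and invoking Hilbert~90 for $\GL_d$. Neither of these presents a genuine difficulty, so the lemma follows routinely once the Artin--Schreier sequences and the two period-ring identifications are in place.
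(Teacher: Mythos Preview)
Your proposal is correct and follows essentially the same approach as the paper: start from the Artin--Schreier exact sequences for $C^\flat$ and $F_{u,\tau}^{\sep}=k(\!(u,\eta^{1/p^\infty})\!)^{\sep}$, tensor with $T$, take $\mathscr{G}_L$-cohomology, and conclude using the vanishing of $\H^1(\mathscr{G}_L,C^\flat\otimes_{\FF_p}T)$ (perfectoid argument) and $\H^1(\mathscr{G}_L,F_{u,\tau}^{\sep}\otimes_{\FF_p}T)$ (Hilbert~90 for the discrete module). The paper's proof is terser but uses exactly these ingredients; your extra care about Frobenius compatibility of the identifications is correct and harmless.
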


\begin{proof}
	The sequence of $\mathscr{G}_L$-modules 
	\[ 0\to \FF_p \to k(\!( u, \eta^{1/p^{\infty}} )\!)^{\sep} \xrightarrow{\varphi-1} k(\!( u, \eta^{1/p^{\infty}} )\!)^{\sep} \to 0 \] and 
	\[ 0\to \FF_p \to C^{\flat} \xrightarrow{\varphi-1} C^{\flat} \to 0 \] are exact (here we endow $k(\!( u, \eta^{1/p^{\infty}} )\!)^{\sep}$ with the discrete topology and $C^{\flat}$ with its valuation topology). Tensoring with $T$ and taking continuous cohomology (the first for the discrete topology, the second for valuation topology) gives exact sequences:
	\begin{align*}
	0&\to T^{\mathscr{G}_L} \to D_{u, \tau} \xrightarrow{\varphi-1} D_{u, \tau} \to \H^1(\mathscr{G}_L, T) \to \H^1(\mathscr{G}_L, k(\!( u, \eta^{1/p^{\infty}} )\!)^{\sep}\otimes_{\FF_p} T) \\
	0&\to T^{\mathscr{G}_L} \to D_{\tau} \xrightarrow{\varphi-1} D_{\tau} \to \H^1(\mathscr{G}_L, T) \to \H^1(\mathscr{G}_L, C^{\flat}\otimes_{\FF_p} T).
	\end{align*}
	The lemma follows from the vanishing of $\H^1(\mathscr{G}_L, k(\!( u, \eta^{1/p^{\infty}} )\!)^{\sep}\otimes_{\FF_p} T)$ (by Hilbert 90) and the vanishing of $\H^1(\mathscr{G}_L, C^{\flat}\otimes_{\FF_p} T)$ (that follows from the fact that $\widehat{L}$ is perfectoid.)
\end{proof}

\begin{lemm}\label{lemmcoho0 Zp tor non-perfect case}
	If $T\in\Rep_{\ZZ_p, \tors}(\mathscr{G}_K),$ then
	\[\H^1(\mathscr{G}_L,\Oo_{\widehat{\Ff^{\ur}_{u}}}\otimes_{\ZZ_p}T)=0\]
	where $\Oo_{\widehat{\Ff^{\ur}_{u}}}$ is endowed with the $p$-adic topology.
\end{lemm}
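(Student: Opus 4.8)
The statement to prove is that $\H^1(\mathscr{G}_L, \Oo_{\widehat{\Ff^{\ur}_u}}\otimes_{\ZZ_p}T) = 0$ for a torsion representation $T$, where $\Oo_{\widehat{\Ff^{\ur}_u}}$ carries the $p$-adic topology. This is the exact analogue, for the partially unperfected coefficients, of Lemma~\ref{lemmcoho0 Zp tor} (vanishing of $\H^i(\mathscr{G}_L, \W(C^\flat)\otimes T)$) combined with the vanishing in Lemma~\ref{lemm H1 G infty vanishes}, so I would mimic the strategy of those proofs. The plan is to first reduce to the case $pT=0$ by d\'evissage, then handle the mod-$p$ case, then reassemble via an inverse limit argument.

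\emph{Step 1 (d\'evissage).} Suppose $p^rT=0$ and argue by induction on $r$. The exact sequence $0\to pT\to T\to T/pT\to 0$ is preserved by $\Oo_{\widehat{\Ff^{\ur}_u}}\otimes_{\ZZ_p}-$ since $\Oo_{\widehat{\Ff^{\ur}_u}}$ is $p$-torsion-free (it is a $p$-adically complete torsion-free $\ZZ_p$-algebra, being the $p$-adic completion of a domain of characteristic $0$). The long exact sequence in continuous cohomology then sandwiches $\H^1(\mathscr{G}_L, \Oo_{\widehat{\Ff^{\ur}_u}}\otimes T)$ between $\H^1$ of a $p^{r-1}$-torsion module and $\H^1$ of an $\FF_p$-module, so it suffices to treat $pT=0$.

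\emph{Step 2 (the mod-$p$ case).} When $pT=0$ the module $\Oo_{\widehat{\Ff^{\ur}_u}}\otimes_{\ZZ_p}T$ reduces mod $p$ to $F^{\sep}_{u,\tau}\otimes_{\FF_p}T$, which as an $F^{\sep}_{u,\tau}$-vector space with semilinear $\mathscr{G}_L\simeq\mathsf{G}$-action (using Proposition~\ref{prop G}) is, by Hilbert 90 (the argument in part (1) of the proof of Lemma~\ref{lemm H1 G infty vanishes}), isomorphic to $(F^{\sep}_{u,\tau})^d$ as a $\mathscr{G}_L$-module; hence $\H^1(\mathscr{G}_L, F^{\sep}_{u,\tau}\otimes_{\FF_p}T)\simeq \H^1(\mathsf{G}, F^{\sep}_{u,\tau})^d = 0$ by the additive Hilbert 90 for the Galois extension $F^{\sep}_{u,\tau}/F_{u,\tau}$. (This is exactly the vanishing already used inside the proof of Lemma~\ref{lemm 3.2.8 assume T is killed by p}.)

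\emph{Step 3 (passage to the limit / cohomology with $p$-adic coefficients).} For $pT=0$ we are done by Step 2. For general torsion $T$, one can also argue directly as in Lemma~\ref{lemm H1 G infty vanishes}: write $\Oo_{\widehat{\Ff^{\ur}_u}}\otimes T = \varprojlim_n (\Oo_{\widehat{\Ff^{\ur}_u}}/p^n)\otimes T$, use the exact sequence
\[ \R^1\pLim{n}\H^0(\mathscr{G}_L, (\Oo_{\widehat{\Ff^{\ur}_u}}/p^n)\otimes T) \to \H^1(\mathscr{G}_L, \Oo_{\widehat{\Ff^{\ur}_u}}\otimes T) \to \pLim{n}\H^1(\mathscr{G}_L, (\Oo_{\widehat{\Ff^{\ur}_u}}/p^n)\otimes T), \]
and check both outer terms vanish: the right term by the finite-level vanishing (which follows from Steps 1--2 applied to the finite-length modules $(\Oo_{\widehat{\Ff^{\ur}_u}}/p^n)\otimes T$, noting $\Oo_{\widehat{\Ff^{\ur}_u}}/p^n$ is a successive extension of copies of $F^{\sep}_{u,\tau}$), and the $\R^1\pLim{}$ term because the system $\{\H^0(\mathscr{G}_L, (\Oo_{\widehat{\Ff^{\ur}_u}}/p^n)\otimes T)\}_n$ is Mittag-Leffler (it is obtained from surjections $(\Oo_{\widehat{\Ff^{\ur}_u}}/p^{n+1})\otimes T\epi (\Oo_{\widehat{\Ff^{\ur}_u}}/p^n)\otimes T$ whose kernels have vanishing $\H^1$).

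\emph{Main obstacle.} The genuine content is Step 2, and within it the identification $\mathscr{G}_L\simeq\mathsf{G}=\Gal(F^{\sep}_{u,\tau}/F_{u,\tau})$ from Proposition~\ref{prop G}, together with the fact that $F^{\sep}_{u,\tau}/F_{u,\tau}$ is a (not necessarily finite, but ind-finite Galois) extension to which additive Hilbert 90 and the triviality of $\H^1(\mathsf{G},\GL_d)$ apply for the discrete topology. One must also be careful that the topology on $\Oo_{\widehat{\Ff^{\ur}_u}}$ is the $p$-adic one (so the relevant continuous cohomology at finite level is just cohomology of a discrete module), which is why the reduction in Step 1 and the Mittag-Leffler bookkeeping in Step 3 are needed rather than a direct almost-purity argument as for $\W(C^\flat)$; the perfectoid input is replaced here by plain separable-closure Hilbert 90.
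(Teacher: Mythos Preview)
Your proposal is correct and follows essentially the same approach as the paper: d\'evissage to reduce to $pT=0$, then Hilbert 90 for the separable extension $F^{\sep}_{u,\tau}/F_{u,\tau}$ (via $\mathscr{G}_L\simeq\mathsf{G}$). Your Step~3 is superfluous here---once Steps~1 and~2 are done the induction on $r$ already finishes the torsion case, since $\Oo_{\widehat{\Ff^{\ur}_u}}\otimes_{\ZZ_p}T$ is $p^r$-torsion hence discrete; the inverse-limit argument you sketch is rather the content of the subsequent Corollary~\ref{lemmcoho0 Zp non-perfect case} for general (non-torsion) $T$.
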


\begin{proof}
	By d\'evissage we may assume that $T$ is killed by $p,$ in which case this reduces to the vanishing of $\H^1(\mathscr{G}_L, C_{u-\np}^{\flat}\otimes_{\FF_p}T),$ which follows from Hilbert 90 since $\mathscr{G}_L$ acts continuously on $C_{u-\np}^{\flat}=k(\!(u, \eta^{1/p^{\infty}})\!)^{\sep}$ (endowed with the discrete topology).
\end{proof}

\begin{corollary}\label{lemmcoho0 Zp non-perfect case}
	If $T\in\Rep_{\ZZ_p}(\mathscr{G}_K),$ then
	\begin{align*}
	\H^1(\mathscr{G}_L,\Oo_{\widehat{\Ff^{\ur}_{u}}}\otimes_{\ZZ_p}T)=0,
	\end{align*}
	where $\Oo_{\widehat{\Ff^{\ur}_{u}}}$ is endowed with the $p$-adic topology.
	
\end{corollary}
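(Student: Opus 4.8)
The plan is to deduce this from the torsion case, Lemma \ref{lemmcoho0 Zp tor non-perfect case}, exactly as Corollary \ref{lemmcoho0 Zp} was deduced from Lemma \ref{lemmcoho0 Zp tor}. Since $T$ is a finitely generated $\ZZ_p$-module and $\Oo_{\widehat{\Ff^{\ur}_{u}}}$ is $p$-adically complete, the module $\Oo_{\widehat{\Ff^{\ur}_{u}}}\otimes_{\ZZ_p}T$ is $p$-adically complete, hence it is the inverse limit of the discrete $\mathscr{G}_L$-modules $\Oo_{\widehat{\Ff^{\ur}_{u}}}/p^n\otimes_{\ZZ_p}T\simeq\Oo_{\widehat{\Ff^{\ur}_{u}}}\otimes_{\ZZ_p}(T/p^nT)$. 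First I would invoke \cite[Theorem 2.3.4]{NSW13} to obtain the short exact sequence
\[0\to\R^1\pLim{n}\H^0\big(\mathscr{G}_L,\Oo_{\widehat{\Ff^{\ur}_{u}}}\otimes_{\ZZ_p}(T/p^nT)\big)\to\H^1\big(\mathscr{G}_L,\Oo_{\widehat{\Ff^{\ur}_{u}}}\otimes_{\ZZ_p}T\big)\to\pLim{n}\H^1\big(\mathscr{G}_L,\Oo_{\widehat{\Ff^{\ur}_{u}}}\otimes_{\ZZ_p}(T/p^nT)\big)\to0,\]
and then show that both outer terms vanish.

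For the right-hand term: each $T/p^nT$ is a torsion representation, so Lemma \ref{lemmcoho0 Zp tor non-perfect case} gives $\H^1(\mathscr{G}_L,\Oo_{\widehat{\Ff^{\ur}_{u}}}\otimes_{\ZZ_p}(T/p^nT))=0$, hence the $\pLim{n}$ vanishes. For the $\R^1\pLim{n}$ term I would check that the projective system $\{\H^0(\mathscr{G}_L,\Oo_{\widehat{\Ff^{\ur}_{u}}}\otimes_{\ZZ_p}(T/p^nT))\}_n$ satisfies the Mittag-Leffler condition; in fact its transition maps are surjective. Indeed, $\Oo_{\widehat{\Ff^{\ur}_{u}}}$ is torsion-free, hence flat over $\ZZ_p$, so the sequence $0\to p^nT/p^{n+1}T\to T/p^{n+1}T\to T/p^nT\to0$ remains exact after tensoring, giving a short exact sequence of $\mathscr{G}_L$-modules whose long exact cohomology sequence shows that $\H^0(\mathscr{G}_L,\Oo_{\widehat{\Ff^{\ur}_{u}}}\otimes(T/p^{n+1}T))\to\H^0(\mathscr{G}_L,\Oo_{\widehat{\Ff^{\ur}_{u}}}\otimes(T/p^nT))$ is surjective, because the obstruction lies in $\H^1(\mathscr{G}_L,\Oo_{\widehat{\Ff^{\ur}_{u}}}\otimes(p^nT/p^{n+1}T))$, which is $0$ by Lemma \ref{lemmcoho0 Zp tor non-perfect case} again (the module being killed by $p$). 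Thus $\R^1\pLim{n}=0$, and the exact sequence forces $\H^1(\mathscr{G}_L,\Oo_{\widehat{\Ff^{\ur}_{u}}}\otimes_{\ZZ_p}T)=0$.

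I do not expect any real obstacle here: the only bookkeeping point is to make sure \cite[Theorem 2.3.4]{NSW13} is applied in the correct topological setting, namely that the $\Oo_{\widehat{\Ff^{\ur}_{u}}}/p^n\otimes_{\ZZ_p}T$ are discrete $\mathscr{G}_L$-modules with continuous action whose inverse limit, taken topologically, is $\Oo_{\widehat{\Ff^{\ur}_{u}}}\otimes_{\ZZ_p}T$ endowed with its $p$-adic topology. This is, however, exactly the situation already handled in the proof of Corollary \ref{lemmcoho0 Zp}, so nothing genuinely new is involved.
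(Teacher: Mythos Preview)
Your proposal is correct and follows essentially the same approach as the paper: both invoke \cite[Theorem 2.3.4]{NSW13} to get the $\R^1\pLim{n}$ exact sequence, kill the right-hand term via Lemma~\ref{lemmcoho0 Zp tor non-perfect case}, and dispose of the $\R^1\pLim{n}$ term by Mittag--Leffler. You actually give more detail than the paper on why the transition maps on $\H^0$ are surjective.
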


\begin{proof}
	Denote $\Oo_{\widehat{\Ff^{\ur}_{u}}, n}:=\Oo_{\widehat{\Ff^{\ur}_{u}}}/p^n.$
	By \cite[Theorem 2.3.4]{NSW13}, we have the exact sequence
	$$0\to \R^1\pLim{n}\H^{0}(\mathscr{G}_L,\Oo_{\widehat{\Ff^{\ur}_{u}},n}\otimes_{\ZZ_p}T)\to
	\H^1(\mathscr{G}_L,\Oo_{\widehat{\Ff^{\ur}_{u}}}\otimes_{\ZZ_p}T)\to\pLim{n} \H^1(\mathscr{G}_L,\Oo_{\widehat{\Ff^{\ur}_{u}},n}\otimes_{\ZZ_p}T).$$
	We have $\H^1(\mathscr{G}_L,\Oo_{\widehat{\Ff^{\ur}_{u}},n}\otimes_{\ZZ_p}T)=0$ by lemma \ref{lemmcoho0 Zp tor non-perfect case} and $\R^1\pLim{n}\H^{0}(\mathscr{G}_L,\Oo_{\widehat{\Ff^{\ur}_{u}},n}\otimes_{\ZZ_p}T)=0$ from the observation that $\{\H^0(\mathscr{G}_L,\Oo_{\widehat{\Ff^{\ur}_{u}},n}\otimes_{\ZZ_p}T)\}_n$ has the Mittag-Leffler property. Hence we get $\H^1(\mathscr{G}_L,\Oo_{\widehat{\Ff^{\ur}_{u}}}\otimes_{\ZZ_p}T)=0.$ 
\end{proof}

\begin{coro}\label{Exactness D-tau-0 Zp non-perfect case}\label{coro D u tau Zp case}
	If $0\to T^\prime\to T\to T^{\prime\prime}\to0$ is an exact sequence in $\Rep_{\ZZ_p}(\mathscr{G}_K),$ then the sequence
	$$0\to\mathcal{D}(T^\prime)_{u,\tau}\to\mathcal{D}(T)_{u,\tau}\to\mathcal{D}(T^{\prime\prime})_{u,\tau}\to 0$$
	is exact.
\end{coro}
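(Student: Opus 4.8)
The statement to prove is the exactness of
\[
0\to\mathcal{D}(T^\prime)_{u,\tau}\to\mathcal{D}(T)_{u,\tau}\to\mathcal{D}(T^{\prime\prime})_{u,\tau}\to 0
\]
for a short exact sequence $0\to T^\prime\to T\to T^{\prime\prime}\to0$ in $\Rep_{\ZZ_p}(\mathscr{G}_K)$. Recall $\mathcal{D}(T)_{u,\tau}=(\Oo_{\widehat{\Ff^{\ur}_{u}}}\otimes_{\ZZ_p}T)^{\mathscr{G}_L}$. So I want to tensor the given short exact sequence with $\Oo_{\widehat{\Ff^{\ur}_{u}}}$, take $\mathscr{G}_L$-invariants, and show that the connecting map to $\H^1$ vanishes.

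First I would note that $\Oo_{\widehat{\Ff^{\ur}_{u}}}$ is $p$-torsion free (it is a $p$-adically complete torsion-free ring, being the ring of integers of a $p$-adic field), so the sequence
\[
0\to\Oo_{\widehat{\Ff^{\ur}_{u}}}\otimes_{\ZZ_p}T^\prime\to\Oo_{\widehat{\Ff^{\ur}_{u}}}\otimes_{\ZZ_p}T\to\Oo_{\widehat{\Ff^{\ur}_{u}}}\otimes_{\ZZ_p}T^{\prime\prime}\to0
\]
is exact, where each term carries the semilinear $\mathscr{G}_L$-action and the $p$-adic topology. Taking continuous $\mathscr{G}_L$-cohomology gives a long exact sequence
\[
0\to\mathcal{D}(T^\prime)_{u,\tau}\to\mathcal{D}(T)_{u,\tau}\to\mathcal{D}(T^{\prime\prime})_{u,\tau}\to\H^1(\mathscr{G}_L,\Oo_{\widehat{\Ff^{\ur}_{u}}}\otimes_{\ZZ_p}T^\prime)\to\cdots
\]
Then it suffices to invoke corollary \ref{lemmcoho0 Zp non-perfect case}, which gives $\H^1(\mathscr{G}_L,\Oo_{\widehat{\Ff^{\ur}_{u}}}\otimes_{\ZZ_p}T^\prime)=0$. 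This yields the claimed exact sequence, and the final assertion of corollary \ref{coro D u tau Zp case} follows, exactly parallel to the proof of corollary \ref{Exactness D-tau-0 Zp} (which handled the $\W(C^\flat)$ version using corollary \ref{lemmcoho0 Zp}).

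The only point requiring a little care — and the one I expect to be the mild technical obstacle — is the exactness of taking $\mathscr{G}_L$-invariants at the \emph{left}, i.e.\ why the $\mathscr{G}_L$-cohomology of the system behaves well: one needs the continuity of the $\mathscr{G}_L$-action on $\Oo_{\widehat{\Ff^{\ur}_{u}}}$ for the $p$-adic topology (the analogue of lemma \ref{lemm continuous action p-adic topology ur}, which gives continuity of the $\mathscr{G}_{K_\pi}$-action on $\Oo_{\widehat{\Ee^{\ur}}}$; here one restricts to $\mathscr{G}_L\subset\mathscr{G}_{K_\pi}$ under the embedding $\Oo_{\widehat{\Ff^{\ur}_{u}}}\hookrightarrow\Oo_{\widehat{\mathcal{E}^{\ur}}}$, or argues directly that $\mathscr{G}_L$ acts by isometries since the $p$-adic valuation is the unique extension) together with the fact that the module in question is of finite type over $\Oo_{\widehat{\Ff^{\ur}_{u}}}$, so that $\H^0$ is a closed submodule and the relevant $\R^1\varprojlim$ terms vanish by a Mittag-Leffler argument as in corollary \ref{lemmcoho0 Zp non-perfect case}. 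Everything else is formal.
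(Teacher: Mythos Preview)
Your proposal is correct and follows essentially the same route as the paper: tensor by the torsion-free ring $\Oo_{\widehat{\Ff^{\ur}_{u}}}$, take $\mathscr{G}_L$-invariants, and kill the connecting map using corollary~\ref{lemmcoho0 Zp non-perfect case}. One small slip in your side remark: the inclusion goes the other way, $\Oo_{\widehat{\mathcal{E}^{\ur}}}\hookrightarrow\Oo_{\widehat{\Ff^{\ur}_{u}}}$ (since $F_0^{\sep}\subset F_{u,\tau}^{\sep}$), so the continuity argument should be made directly for $\mathscr{G}_L$ acting on $\Oo_{\widehat{\Ff^{\ur}_{u}}}$ rather than by restriction from $\Oo_{\widehat{\mathcal{E}^{\ur}}}$; but this does not affect the main line, which matches the paper's proof.
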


\begin{proof}
	As $\Oo_{\widehat{\Ff^{\ur}_{u}}}$ is torsion-free, we have the exact sequence
	$$0\to\Oo_{\widehat{\Ff^{\ur}_{u}}}\otimes_{\ZZ_p}T^\prime\to\Oo_{\widehat{\Ff^{\ur}_{u}}}\otimes_{\ZZ_p}T\to\Oo_{\widehat{\Ff^{\ur}_{u}}}\otimes_{\ZZ_p}T^{\prime\prime}\to0$$
	which induces the exact sequences
	$$0\to\mathcal{D}(T^\prime)_{u,\tau}\to\mathcal{D}(T)_{u,\tau}\to\mathcal{D}(T^{\prime\prime})_{u,\tau}\to\H^1(\mathscr{G}_L,\Oo_{\widehat{\Ff^{\ur}_{u}}}\otimes_{\ZZ_p}T^\prime).$$
	By corollary \ref{lemmcoho0 Zp non-perfect case}, we get the exact sequence.
\end{proof}

\begin{proposition}
	The map $D_{\tau}/D_{u,\tau}\xrightarrow{\varphi-1}D_{\tau}/D_{u,\tau}$ is bijective.
\end{proposition}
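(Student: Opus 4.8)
The plan is to prove bijectivity by establishing injectivity and surjectivity separately, relying on the exact sequences from Lemma \ref{lemm 3.2.8 assume T is killed by p} (in the torsion case) together with corollaries \ref{lemmcoho0 Zp non-perfect case} and \ref{coro H1 null}. First I would recall that Lemma \ref{lemm D tau D u tau inj} already gives injectivity of $\varphi-1$ on $D_\tau/D_{u,\tau}$, so only surjectivity remains. For surjectivity, I would start with the case where $T$ is killed by $p$, where Lemma \ref{lemm 3.2.8 assume T is killed by p} provides the two exact rows
\[
\begin{tikzcd}
0\arrow[r] & T^{\mathscr{G}_L}\arrow[r]\arrow[d,equal] & D_{u,\tau}\arrow[r,"\varphi-1"]\arrow[d,hook] & D_{u,\tau}\arrow[r]\arrow[d,hook] & \H^1(\mathscr{G}_L,T)\arrow[r]\arrow[d,equal] & 0\\
0\arrow[r] & T^{\mathscr{G}_L}\arrow[r] & D_{\tau}\arrow[r,"\varphi-1"] & D_{\tau}\arrow[r] & \H^1(\mathscr{G}_L,T)\arrow[r] & 0
\end{tikzcd}
\]
with the same kernel $T^{\mathscr{G}_L}$ and the same cokernel $\H^1(\mathscr{G}_L,T)$ on both ends. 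A diagram chase (or the snake lemma applied to the quotient complex) then shows that $\varphi-1$ induces a bijection on $D_\tau/D_{u,\tau}$: exactness forces the induced map on the quotient to have trivial kernel and trivial cokernel.

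Next I would pass from the $p$-torsion case to the general torsion case by dévissage: for $T$ killed by $p^n$, use the filtration $0\to pT\to T\to T/pT\to 0$, note that tensoring with the torsion-free modules $\W(C^\flat)$ and $\Oo_{\widehat{\Ff^{\ur}_{u}}}$ preserves exactness, so one gets a short exact sequence of quotient complexes $0\to (pD_\tau)/(pD_{u,\tau})\to D_\tau/D_{u,\tau}\to (D/pD)_\tau/(D/pD)_{u,\tau}\to 0$ on which $\varphi-1$ acts compatibly; the five lemma (applied to the long exact sequences in kernel and cokernel of $\varphi-1$) gives bijectivity for $T$ from bijectivity for $pT$ and $T/pT$. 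Then for a general $T\in\Rep_{\ZZ_p}(\mathscr{G}_K)$ I would write $D_\tau/D_{u,\tau}$ as an inverse limit of the $p^n$-torsion quotients, using corollaries \ref{lemmcoho0 Zp non-perfect case} and \ref{coro H1 null} to control the relevant $\H^1$ terms and the associated $\R^1\varprojlim$; the Mittag-Leffler property of the systems of invariants (as in the proof of Corollary \ref{lemmcoho0 Zp non-perfect case}) ensures that passing to the limit preserves both injectivity and surjectivity.

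I expect the main obstacle to be the passage to the $p$-adic limit for arbitrary (non-torsion) $T$: one must be careful that $D_\tau/D_{u,\tau}$ really is the inverse limit of the finite-level quotients $(\mathcal{D}(T)/p^n)_\tau/(\mathcal{D}(T)/p^n)_{u,\tau}$, which amounts to checking that the natural map $(D_\tau/D_{u,\tau})/p^n \to (D/p^nD)_\tau/(D/p^nD)_{u,\tau}$ is an isomorphism, and that $\varprojlim$ is exact on the relevant short exact sequences. This is where the vanishing results for $\H^1(\mathscr{G}_L,-)$ over both $\W(C^\flat)$ and $\Oo_{\widehat{\Ff^{\ur}_{u}}}$ are essential. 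Alternatively, one could bypass the limit entirely by observing that injectivity is already known in full generality (Lemma \ref{lemm D tau D u tau inj}), and that surjectivity of $\varphi-1$ on $D_\tau/D_{u,\tau}$ follows from surjectivity of $\varphi-1$ on $D_\tau$ (Corollary \ref{coro H1 null}, since $\H^1(\mathscr{G}_L,U)=0$ type arguments apply at the integral level) combined with the fact that the preimage of an element of $D_{u,\tau}$ under $\varphi-1$ on $D_\tau$, when it exists, differs from an element of $D_{u,\tau}$ by something in $\Ker(\varphi-1)=T^{\mathscr{G}_L}\subset D_{u,\tau}$ — which is precisely the injectivity argument run backwards. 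I would present whichever of these two routes is shorter, most likely the direct one using surjectivity of $\varphi-1$ on $D_\tau$ together with the identification $\Ker(\varphi-1|_{D_\tau})=\Ker(\varphi-1|_{D_{u,\tau}})$.
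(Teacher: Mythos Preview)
Your main approach is correct and essentially identical to the paper's: cite Lemma~\ref{lemm D tau D u tau inj} for injectivity, use the two four-term exact sequences from Lemma~\ref{lemm 3.2.8 assume T is killed by p} (with identical outer terms $T^{\mathscr{G}_L}$ and $\H^1(\mathscr{G}_L,T)$) to deduce surjectivity on the quotient when $pT=0$, then d\'evisser via $0\to pT\to T\to T/pT\to 0$ and pass to the $p$-adic limit. The paper handles the limit step tersely (``we then conclude by passing to the limit''), so your caution there is well placed but not a real obstacle.

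However, the alternative route you sketch at the end is not sound, and you indicate you might prefer it, so be warned: $\varphi-1$ is \emph{not} surjective on $D_\tau$ for general $T$. Corollary~\ref{coro H1 null} gives surjectivity only on $\mathcal{D}(U)_\tau$ for the induced module $U=\Ind_{\mathscr{G}_K}T$, not for $T$ itself; indeed Lemma~\ref{lemm 3.2.8 assume T is killed by p} explicitly identifies the cokernel of $\varphi-1$ on $D_\tau$ with $\H^1(\mathscr{G}_L,T)$, which is typically nonzero. The entire content of the diagram argument is precisely that this cokernel is the \emph{same} for $D_{u,\tau}$ and $D_\tau$, which is what forces bijectivity on the quotient---there is no shortcut through surjectivity on $D_\tau$. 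Stick with your first route.
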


\begin{proof}
	\begin{enumerate}
	\item [(1)]	We first assume $T$ is killed by $p.$
		By lemma \ref{lemm D tau D u tau inj}, it suffices to show the surjectivity.
		By lemma \ref{lemm 3.2.8 assume T is killed by p}, the inclusion $k(\!( u, \eta^{1/p^{\infty}} )\!)^{\sep}\subset C^{\flat}$ induces a commutative diagram:
		\[
		\xymatrix{
			0 \ar[r] & T^{\mathscr{G}_L} \ar[r] \ar@{=}[d]& D_{u, \tau} \ar[r]^{\varphi-1} \ar[d]& D_{u, \tau} \ar[r] \ar[d]& \H^1(\mathscr{G}_L, T) \ar[r]\ar@{=}[d] & 0\\
			0 \ar[r] & T^{\mathscr{G}_L} \ar[r] & D_{\tau} \ar[r]^{\varphi-1} & D_{\tau} \ar[r]  & \H^1(\mathscr{G}_L, T) \ar[r] & 0.
		}
		\]
		
		If $y\in D_{\tau},$ there exists $z\in D_{u, \tau}$ having the same image in $\H^1(\mathscr{G}_L, T):$ hence $y-z$ maps to 0 in $\H^1(\mathscr{G}_L, T),$ so there exists $x\in D_{\tau}$ such that $(\varphi-1)x=y-z$ and thus $(\varphi-1)(x+D_{u, \tau})=y+D_{u, \tau}.$ This finishes the proof.
	\item [(2)] We now use d\'evissage for $\ZZ_p$-representations. Notice that $\mathcal{D}(T/p^n)=\mathcal{D}(T)/p^n$ and $T, \mathcal{D}(T)$ are both $p$-adically complete. Hence it suffices to deal with the case where $T$ is killed by $p^n$ with $n\in \NN_{\geq0}.$ We use induction over $n.$ Suppose $T$ is killed by $p^n$, we put $T^{'}=pT, T^{''}=T/pT$ and consider the following exact sequence 
	\[ 0 \to T^{'} \to T \to T^{''} \to 0  \] 
	in $\Rep_{\ZZ_p}(\mathscr{G}_K).$ Then we have the following diagram of exact sequences by corollary \ref{coro D u tau Zp case}
		\[ \xymatrix{ 0 \ar[r] & \mathcal{D}(T^{\prime})_{u, \tau} \ar[r] \ar@{^(->}[d]& \mathcal{D}(T)_{u, \tau} \ar[r] \ar@{^(->}[d]& \mathcal{D}(T^{\prime\prime})_{u, \tau} \ar@{^(->}[d] \ar[r]& 0 \\
		0 \ar[r] & \mathcal{D}(T^{\prime})_{\tau}\ar[r]  & \mathcal{D}(T)_{\tau} \ar[r] & \mathcal{D}(T^{\prime\prime})_{\tau} \ar[r]  & 0.\\
	}   \]
By snake lemma we have the exact sequence
\[ 0 \to \mathcal{D}(T^{\prime})_{\tau}/\mathcal{D}(T^{\prime})_{u, \tau} \to \mathcal{D}(T)_{\tau}/\mathcal{D}(T)_{u,\tau} \to \mathcal{D}(T^{\prime\prime})_{\tau}/ \mathcal{D}(T^{\prime\prime})_{u, \tau} \to 0 \]
and we consider the following diagram
	\[ \xymatrix{  0 \ar[r] & \mathcal{D}(T^{\prime})_{\tau}/\mathcal{D}(T^{\prime})_{u, \tau} \ar[r] \ar[d]_{\varphi-1}& \mathcal{D}(T)_{\tau}/\mathcal{D}(T)_{u,\tau} \ar[r] \ar[d]_{\varphi-1}& \mathcal{D}(T^{\prime\prime})_{\tau}/ \mathcal{D}(T^{\prime\prime})_{u, \tau} \ar[d]_{\varphi-1} \ar[r]& 0 \\
	0 \ar[r] & \mathcal{D}(T^{\prime})_{\tau}/\mathcal{D}(T^{\prime})_{u, \tau} \ar[r]& \mathcal{D}(T)_{\tau}/\mathcal{D}(T)_{u,\tau} \ar[r]& \mathcal{D}(T^{\prime\prime})_{\tau}/ \mathcal{D}(T^{\prime\prime})_{u, \tau} \ar[r] & 0.
}   \]
Notice that the first and the third vertical maps are isomorphisms by induction hypothesis, hence the middle one is an isomorphism. We then conclude by passing to the limit.
	\end{enumerate}	
\end{proof}

\begin{corollary}\label{coro prepare complex over non-perfect ring works also}
	The map $D_{\tau,0}/D_{u,\tau,0}\xrightarrow{\varphi-1}D_{\tau,0}/D_{u,\tau,0}$ is bijective.
\end{corollary}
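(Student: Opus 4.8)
The plan is to reduce the statement to two inputs: the preceding proposition, which says $\varphi-1$ is bijective on $D_\tau/D_{u,\tau}$, and the surjectivity of the operator $\delta-\gamma$ on $D_{u,\tau}$ — this second point being the only genuinely new ingredient. I would first record the structural observations: by Lemma \ref{Replace g witi gamma} (and its analogue in Notation \ref{not D u tau normla version at the beginning}) one has $D_{\tau,0}=\Ker(\delta-\gamma\colon D_\tau\to D_\tau)$ and $D_{u,\tau,0}=\Ker(\delta-\gamma\colon D_{u,\tau}\to D_{u,\tau})=D_{u,\tau}\cap D_{\tau,0}$, and $\varphi$ commutes with $\delta$ and with $\gamma\otimes 1$, so $\varphi-1$ preserves both $D_{\tau,0}$ and $D_{u,\tau,0}$ (cf.\ Lemmas \ref{lemm 1.1.11 well-defined tau-1 varphi-1} and \ref{lemm complex u well defined}).

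Granting the surjectivity of $\delta-\gamma$ on $D_{u,\tau}$, the conclusion is a formal diagram chase. I would apply the snake lemma to the morphism of short exact sequences obtained from $0\to D_{u,\tau}\to D_\tau\to D_\tau/D_{u,\tau}\to 0$ with itself, the three vertical arrows being $\delta-\gamma$ and its quotient $\overline{\delta-\gamma}$. Since $\delta-\gamma$ is surjective on $D_\tau$ by Proposition \ref{propdeltagamma Zp} and, by assumption, on $D_{u,\tau}$, the connecting maps vanish and the snake lemma produces a short exact sequence $0\to D_{\tau,0}/D_{u,\tau,0}\to D_\tau/D_{u,\tau}\xrightarrow{\overline{\delta-\gamma}}D_\tau/D_{u,\tau}\to 0$, identifying $D_{\tau,0}/D_{u,\tau,0}$ with $\Ker(\overline{\delta-\gamma})$. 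As $\varphi-1$ is an invertible endomorphism of $D_\tau/D_{u,\tau}$ commuting with $\overline{\delta-\gamma}$, it restricts to a bijection of $\Ker(\overline{\delta-\gamma})=D_{\tau,0}/D_{u,\tau,0}$, which is exactly the claim.

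The real work, and the main obstacle, is the surjectivity of $\delta-\gamma$ on $D_{u,\tau}$. Here I would transpose the proof of Proposition \ref{propdeltagamma Zp}: Lemma \ref{lemm amazing but easy equation} holds verbatim over $D_{u,\tau}$ (its proof uses only the relation $(\gamma\otimes 1)\tau_D=\tau_D^{\chi(\gamma)}(\gamma\otimes 1)$), so it suffices to show that $\gamma-1$ and $\tau_D^{\chi(\gamma)}-1$ are surjective on $D_{u,\tau}$. By dévissage (Corollary \ref{coro D u tau Zp case}) one reduces to the case $pT=0$, where $D_{u,\tau}=(C^\flat_{u-\np}\otimes_{\FF_p}T)^{\mathscr{G}_L}$ is, as a module under $\overline{\la\gamma\ra}=\Gal(L/K_\pi)$, a finite sum of copies of $F_{u,\tau}^{\sep}$ (because $\mathscr{G}_{K_\pi}$ acts trivially on $\mathcal{D}(T)$); then the surjectivity of $\gamma-1$ is the vanishing of $\H^1(\overline{\la\gamma\ra},D_{u,\tau})$ (Lemma \ref{lemm coho procyclic}), which via Hochschild--Serre for $\mathscr{G}_L\triangleleft\mathscr{G}_{K_\pi}$ together with the vanishing of $\H^1(\mathscr{G}_L,C^\flat_{u-\np}\otimes T)$ (Hilbert 90, cf.\ Lemma \ref{lemmcoho0 Zp tor non-perfect case}) reduces to $\H^1(\mathscr{G}_{K_\pi},F_{u,\tau}^{\sep})=0$, the additive Hilbert 90. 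The case of $\tau_D^{\chi(\gamma)}-1$ is identical with $\mathscr{G}_{K_{\zeta,n}}$ ($n=\chi(\gamma)\in\ZZ_{>0}$) replacing $\mathscr{G}_{K_\pi}$. The delicate point throughout is checking that $\mathscr{G}_{K_\pi}$ and $\mathscr{G}_{K_{\zeta,n}}$ genuinely stabilize $F_{u,\tau}$ (hence $F_{u,\tau}^{\sep}$) inside $C^\flat$ and act there compatibly with the $(\varphi,\tau)$-module structure — this is where perfectness of $k$ enters and where the cohomological computations of the perfect-coefficient case have to be re-examined; once it is in place, everything else is routine.
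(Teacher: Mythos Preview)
Your snake-lemma reduction is exactly the paper's argument: embed $D_{\tau,0}/D_{u,\tau,0}$ into $D_\tau/D_{u,\tau}$ as $\Ker(\overline{\delta-\gamma})$ and invoke the bijectivity of $\varphi-1$ on $D_\tau/D_{u,\tau}$ from the preceding proposition. The paper records this in a single diagram without further comment; you go further and correctly isolate the point the paper leaves implicit, namely that the identification $\Ker(\overline{\delta-\gamma})=D_{\tau,0}/D_{u,\tau,0}$ amounts to the surjectivity of $\delta-\gamma$ on $D_{u,\tau}$.

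Your argument for that surjectivity, however, has a genuine gap. The d\'evissage to $pT=0$, the use of Lemma~\ref{lemm amazing but easy equation}, and the Hochschild--Serre step (via $\H^1(\mathscr{G}_L,C^\flat_{u-\np}\otimes T)=0$) are fine, and together they reduce the question to the vanishing of $\H^1(\mathscr{G}_{K_\pi},F_{u,\tau}^{\sep})$. But this is \emph{not} ``additive Hilbert~90'': only the normal subgroup $\mathscr{G}_L\subset\mathscr{G}_{K_\pi}$ acts on $F_{u,\tau}^{\sep}$ as the Galois group of an algebraic extension (Proposition~\ref{prop G}); the quotient $\overline{\la\gamma\ra}$ acts on the base $F_{u,\tau}$, and there is no algebraic Galois extension to which Hilbert~90 applies. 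Indeed, running Hochschild--Serre once more gives $\H^1(\mathscr{G}_{K_\pi},F_{u,\tau}^{\sep})\simeq\H^1(\overline{\la\gamma\ra},F_{u,\tau})=\Coker(\gamma-1\colon F_{u,\tau}\to F_{u,\tau})$, which for $T=\FF_p$ (so $D_{u,\tau}=F_{u,\tau}$) is precisely the statement you set out to prove --- the argument is circular. The analogous vanishing in the perfect case (Lemma~\ref{lemmcoho0 Zp tor}) rests on the perfectoid almost-vanishing $\H^1(\mathscr{G}_{K_\pi},C^\flat)=0$, not on Hilbert~90, and no comparable input is available for $F_{u,\tau}^{\sep}$ with the discrete topology; this is a real obstruction, not merely the stability/compatibility check you flag at the end.
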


\begin{proof}
	Consider the following morphism of short exact sequences:
	\[
	\xymatrix{  
		& D_{\tau, 0}/D_{u, \tau, 0} \ar[r]^{\varphi-1} \ar@{_(->}[d] & D_{\tau, 0}/D_{u, \tau, 0}  \ar@{_(->}[d] &\\
		0 \ar[r] & D_{\tau}/D_{u, \tau} \ar[r]^{\varphi-1} \ar@{->>}[d]^{\delta-\gamma} & D_{\tau}/D_{u, \tau} \ar[r] \ar@{->>}[d]^{\delta-\gamma} & 0\\
		0 \ar[r] & D_{\tau}/D_{u, \tau} \ar[r]^{\varphi-1} & D_{\tau}/D_{u, \tau}  \ar[r] & 0.
	}
	\]
	We then have the claimed isomorphism using the snake lemma.
\end{proof}

\begin{remark}
By remark \ref{rem diagram non-perfect complex}, this finishes the proof of theorem \ref{coro complex over non-perfect ring works also Zp-case}. 
\end{remark}

\section{The complex \texorpdfstring{$\Cc_{\psi,\tau}^{u}$}{C\textpinferior\textsinferior\textiinferior,\texttinferior\textainferior\textuinferior\unichar{"1D58}}}

In this section we will define a $\psi$ operator for $(\varphi, \tau)$-modules over $\Oo_{\widehat{\Ff^{\ur}_{u}}},$ and then construct a complex $\Cc_{\psi, \tau}^{u}.$ At the end of this section, we will show that this complex  $\Cc_{\psi, \tau}^{u}$ computes the continuous Galois cohomology: we will first prove the case of $\FF_p$-representations and then pass to the case of $\ZZ_p$-representations by d\'evissage.\\

Recall that we have defined $\Oo_{\Ff^{\ur}_{u}}$ in definition \ref{def F ur Frob} and denote $\Oo_{\widehat{\Ff^{\ur}_{u}}}$ its $p$-adic completion. The ring $\Oo_{\widehat{\Ff^{\ur}_{u}}}$ has a Frobenius map that lifts that of the residue field $k(\!(u, \eta^{1/p^{\infty}})\!)^{\sep}$ ($\cf$ \cite[Theorem 29.2]{Mat89}).

\begin{lemma}\label{lemm Fp-version degree of sep clo}
	Let $M$ be a field of characteristic $p,$ such that $[M:\varphi(M)]<\infty,$ then $\varphi(M^{\sep})\otimes_{\varphi(M)}M\simeq M^{\sep}$ and in particular $[M^{\sep}: \varphi(M^{\sep})]=[M:\varphi(M)].$ 
\end{lemma}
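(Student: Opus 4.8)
The plan is to reduce everything to two standard facts about field extensions in characteristic $p$: a separable algebraic extension of a field $F$ is linearly disjoint from any purely inseparable extension of $F$, and an extension that is simultaneously separable and purely inseparable is trivial.

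First I would record the elementary observations. The Frobenius $\varphi$ is an isomorphism of $M^{\sep}$ onto $\varphi(M^{\sep})=(M^{\sep})^p$ carrying $M$ onto $\varphi(M)=M^p$; since $M^{\sep}/M$ is separable algebraic, $\varphi(M^{\sep})/\varphi(M)$ is separable algebraic as well. On the other hand $M/\varphi(M)=M/M^p$ is purely inseparable, and by hypothesis finite of $p$-power degree $q:=[M:M^p]$. Both $\varphi(M^{\sep})$ and $M$ are subfields of $M^{\sep}$ containing $\varphi(M)$, so there is a natural multiplication map $\mu\colon \varphi(M^{\sep})\otimes_{\varphi(M)}M\to M^{\sep}$, $a\otimes b\mapsto ab$, whose image is the compositum $\varphi(M^{\sep})\cdot M$. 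The goal is to show $\mu$ is an isomorphism.

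Next I would show $L:=\varphi(M^{\sep})\otimes_{\varphi(M)}M$ is a field. Writing $\varphi(M^{\sep})$ as the directed union of its finite separable subextensions $E_i$ of $\varphi(M)$, each $E_i\otimes_{\varphi(M)}M$ is a finite $M$-algebra which is reduced (because $E_i/\varphi(M)$ is separable, so $E_i\otimes_{\varphi(M)}M$ embeds into $E_i\otimes_{\varphi(M)}\overline{\varphi(M)}$, a finite product of copies of $\overline{\varphi(M)}$) and connected (an idempotent $e$ satisfies $e=e^{p^n}\in E_i\otimes_{\varphi(M)}M^{p^n}\subseteq E_i\otimes_{\varphi(M)}\varphi(M)=E_i$ for $n\gg 0$, forcing $e\in\{0,1\}$), hence a field; a directed union of fields is a field. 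Since $\mu$ is a nonzero ring homomorphism out of the field $L$, it is injective, so $L$ is identified with the subfield $\varphi(M^{\sep})\cdot M$ of $M^{\sep}$.

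Finally I would show this compositum is all of $M^{\sep}$. The extension $M^{\sep}/(\varphi(M^{\sep})\cdot M)$ is separable, being a sub-extension of the separable algebraic extension $M^{\sep}/M$; it is also purely inseparable, since $\varphi(M^{\sep})\cdot M\supseteq \varphi(M^{\sep})=(M^{\sep})^p$ and $M^{\sep}/(M^{\sep})^p$ is purely inseparable. An extension that is both separable and purely inseparable is trivial, so $\varphi(M^{\sep})\cdot M=M^{\sep}$ and hence $\mu$ is an isomorphism. Taking dimensions over $\varphi(M^{\sep})$ then yields $[M^{\sep}:\varphi(M^{\sep})]=\dim_{\varphi(M^{\sep})}L=[M:\varphi(M)]=q$. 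The only point requiring any genuine care is the passage to the (generally infinite) separable extension $\varphi(M^{\sep})/\varphi(M)$ in the step where $L$ is shown to be a field; the rest is bookkeeping with the two standard lemmas recalled at the outset.
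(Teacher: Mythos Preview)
Your proof is correct and follows essentially the same strategy as the paper: both arguments rest on the linear disjointness of the separable extension $\varphi(M^{\sep})/\varphi(M)$ and the purely inseparable extension $M/\varphi(M)$, then show the compositum $M\cdot\varphi(M^{\sep})$ exhausts $M^{\sep}$. The paper cites \cite[Theorem 26.4]{Mat89} for the linear disjointness and then, working with a finite separable $L/M$, uses the degree count $[M\varphi(L):M]=[\varphi(L):\varphi(M)]=[L:M]$ to force $M\varphi(L)=L$ before passing to the limit; you instead prove the tensor product is a field by hand (the idempotent argument is fine, and already $n=1$ suffices since $b^p\in\varphi(M)$) and replace the degree count by the observation that $M^{\sep}/(M\cdot\varphi(M^{\sep}))$ is simultaneously separable and purely inseparable, hence trivial. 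These are minor variations on the same idea; your route has the small advantage of not needing to reduce to finite subextensions for the second step.
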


\begin{proof}
	It suffices to show that for any separable and algebraic extension $L/M$ (we pass to inductive limit for the case of separable extension) we have an isomorphism $M\otimes_{\varphi(M)}\varphi(L)\simeq L.$ By \cite[Theorem 26.4]{Mat89}, the natural map $M\otimes_{\varphi(M)}\varphi(L)\to M\varphi(L)$ is an isomorphism. This implies that $[M\varphi(L):M]=[\varphi(L):\varphi(M)]=[L:M].$ As $M\subset M\varphi(L)\subset L,$ then $[L:M]=[L:M\varphi(L)][M\varphi(L):M]$ and hence $[L:M\varphi(L)]=1,$ $\ie$ $M\varphi(L)=L.$
\end{proof}

\begin{corollary}\label{coro Fp case degree p}
	The extension $C_{u-\np}^\flat/\varphi(C_{u-\np}^\flat)$ has degree $p.$
\end{corollary}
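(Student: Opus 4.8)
The plan is to deduce Corollary \ref{coro Fp case degree p} directly from Lemma \ref{lemm Fp-version degree of sep clo} applied to the field $M = F_{u,\tau} = k(\!(u,\eta^{1/p^\infty})\!)$, whose separable closure is $C_{u-\np}^\flat = F_{u,\tau}^{\sep}$ by definition. So I first need to compute $[F_{u,\tau} : \varphi(F_{u,\tau})]$ and check it is finite. The key observation is that $\varphi$ is the $p$-power map, so $\varphi(F_{u,\tau}) = F_{u,\tau}^p = k(\!(u^p, \eta^{1/p^\infty})\!)$: indeed raising to the $p$-th power kills the obstruction to extracting roots of $\eta$ (since $\eta^{1/p^n}$ is a $p$-th power of $\eta^{1/p^{n+1}}$, already present in $F_{u,\tau}$), but $u$ is sent to $u^p$ and $u$ itself is not a $p$-th power in $F_{u,\tau}$. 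Thus $F_{u,\tau}$ is a degree-$p$ extension of $\varphi(F_{u,\tau})$, with $\{1, u, u^2, \ldots, u^{p-1}\}$ forming a basis.

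Concretely, first I would record that $k$ is perfect, so $k^p = k$, and that for each $n$ we have $\eta^{1/p^n} = (\eta^{1/p^{n+1}})^p \in F_{u,\tau}^p$; hence the coefficient field $k$ and all the $\eta$-part lie in $\varphi(F_{u,\tau})$. It remains to understand the variable $u$: we have $F_{u,\tau} = \big(\varphi(F_{u,\tau})\big)(u)$ and $u$ satisfies $X^p - u^p = 0$ over $\varphi(F_{u,\tau})$ (noting $u^p = \varphi(u) \in \varphi(F_{u,\tau})$), which is irreducible because $u$ has valuation $1$ (for the normalized valuation $v$ with $v(u) = 1/e$, appropriately scaled) while elements of $\varphi(F_{u,\tau})$ have valuation in $p\cdot(\text{value group})$ together with the dense $\eta$-contributions — more carefully, one checks $u \notin \varphi(F_{u,\tau})$ by a valuation/leading-term argument in the two-dimensional local field $k(\!(u,\eta^{1/p^n})\!)$ for $n$ large, since $u$ is a uniformizer for the $u$-adic valuation on the residue field $k(\!(\eta^{1/p^n})\!)(\!(u)\!)$ and is visibly not a $p$-th power there. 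So $[F_{u,\tau} : \varphi(F_{u,\tau})] = p$.

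Then I would invoke Lemma \ref{lemm Fp-version degree of sep clo} with $M = F_{u,\tau}$: since $[M:\varphi(M)] = p < \infty$, the lemma gives $[M^{\sep} : \varphi(M^{\sep})] = [M:\varphi(M)] = p$, i.e. $[C_{u-\np}^\flat : \varphi(C_{u-\np}^\flat)] = p$, which is exactly the assertion. The only genuinely non-routine point is the computation $[F_{u,\tau} : \varphi(F_{u,\tau})] = p$, and within that the verification that $u \notin \varphi(F_{u,\tau})$; everything else is a formal application of the already-proved lemma. I expect the valuation-theoretic argument that $u$ is not a $p$-th power to be the main (though minor) obstacle, and it is cleanly handled by passing to a fixed finite layer $k(\!(u,\eta^{1/p^n})\!)$ — which is a complete discretely valued (in fact two-dimensional local) field — and using that $u$ is a uniformizer there whose image in the value group is not divisible by $p$.
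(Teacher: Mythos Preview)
Your proposal is correct and follows exactly the paper's approach: apply Lemma~\ref{lemm Fp-version degree of sep clo} with $M = F_{u,\tau} = k(\!(u,\eta^{1/p^\infty})\!)$, noting that $C_{u-\np}^\flat = F_{u,\tau}^{\sep}$ by definition. The paper's proof is a single sentence and leaves the computation $[F_{u,\tau}:\varphi(F_{u,\tau})] = p$ implicit, whereas you spell out the (correct) reason via $\varphi(F_{u,\tau}) = k(\!(u^p,\eta^{1/p^\infty})\!)$ and the observation that $u$ is not a $p$-th power; this extra detail is fine but not something the paper felt the need to justify.
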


\begin{proof}
	By definition $C_{u-\np}^{\flat}=k(\!(u, \eta^{1/p^{\infty}})\!)^{\sep},$ and we apply lemma \ref{lemm Fp-version degree of sep clo}.
\end{proof}

\begin{lemma}\label{lemm Zp-version degree of sep clo}
	The extension $\widehat{\Ff^{\ur}_{u}}/\varphi(\widehat{\Ff^{\ur}_{u}})$ has degree $p.$
\end{lemma}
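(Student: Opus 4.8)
The statement is the mixed-characteristic lift of Corollary \ref{coro Fp case degree p}, so the natural strategy is to reduce to characteristic $p$ via the residue field, then bootstrap from the fact that $\Oo_{\widehat{\Ff^{\ur}_u}}$ is a complete discrete valuation ring with uniformizer $p$. First I would record that $\widehat{\Ff^{\ur}_u}$ is the fraction field of $\Oo_{\widehat{\Ff^{\ur}_u}}$, that $\varphi$ is injective on it (it lifts the Frobenius of a field of characteristic $p$, which is injective), and that $\varphi(\widehat{\Ff^{\ur}_u})=\varphi(\Oo_{\widehat{\Ff^{\ur}_u}})[1/p]$ since $\varphi(p)=p$. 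Thus $\widehat{\Ff^{\ur}_u}/\varphi(\widehat{\Ff^{\ur}_u})$ is the extension of fraction fields of $\Oo_{\widehat{\Ff^{\ur}_u}}$ over $\varphi(\Oo_{\widehat{\Ff^{\ur}_u}})$, and it suffices to show this ring extension is finite free of rank $p$.

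\emph{Key steps.} (i) Both rings are $p$-adically complete DVRs with uniformizer $p$ (for $\varphi(\Oo_{\widehat{\Ff^{\ur}_u}})$ this uses that $\varphi$ is an isomorphism onto its image, a closed subring, since it is continuous for the $p$-adic topology and $\varphi(p)=p$). (ii) The residue field extension is $C_{u-\np}^{\flat}/\varphi(C_{u-\np}^{\flat})$, which has degree $p$ by Corollary \ref{coro Fp case degree p}. (iii) Lift a basis: pick $x\in C_{u-\np}^{\flat}$ whose image generates the residue extension — concretely $x = u$ works, since $u \notin \varphi(C_{u-\np}^\flat) = k(\!(u^p, \eta^{1/p^\infty})\!)^{\mathrm{sep},\, p}$ type subfield and $u^p = \varphi(u)$, so $1, u, \dots, u^{p-1}$ is a basis — and take its Teichmüller-type lift $[\tilde\pi]=u\in\Oo_{\widehat{\Ff^{\ur}_u}}$ (abusing notation as in the paper). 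Then $1,u,\dots,u^{p-1}$ generate $\Oo_{\widehat{\Ff^{\ur}_u}}$ as a $\varphi(\Oo_{\widehat{\Ff^{\ur}_u}})$-module by the standard successive-approximation argument: by (ii) they span modulo $p$, so modulo $p^n$ by induction (write any element as a $\varphi(\Oo)$-combination plus a $p$-multiple of an element handled by the inductive hypothesis), and then conclude by $p$-adic completeness and closedness. (iv) They are linearly independent over $\varphi(\widehat{\Ff^{\ur}_u})$ because a nontrivial relation, cleared of denominators and reduced modulo $p$, would contradict that $1,\dots,u^{p-1}$ are a basis of $C_{u-\np}^{\flat}$ over $\varphi(C_{u-\np}^{\flat})$. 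Hence $[\widehat{\Ff^{\ur}_u}:\varphi(\widehat{\Ff^{\ur}_u})]=p$.

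\emph{Main obstacle.} The one point requiring care is (iii): that the lift of a residual basis is actually a basis over the \emph{complete} ring, i.e. the Nakayama/successive-approximation argument in the non-Noetherian-looking setting. The resolution is that $\Oo_{\widehat{\Ff^{\ur}_u}}$ is $p$-adically complete and separated and $\varphi(\Oo_{\widehat{\Ff^{\ur}_u}})$ is a closed subring (being the image of a $p$-adically continuous ring homomorphism from a complete ring), so the free submodule $\bigoplus_{i=0}^{p-1}\varphi(\Oo_{\widehat{\Ff^{\ur}_u}})\cdot u^i$ is closed and, surjecting onto every quotient mod $p^n$, equals the whole ring. A secondary subtlety is confirming $\varphi$ is a topological isomorphism onto a closed subring of $\Oo_{\widehat{\Ff^{\ur}_u}}$; this follows since $\varphi$ lifts the (injective) Frobenius of the residue field and preserves $p$, so it is an isometry for the $p$-adic valuation. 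Everything else is a formal transcription of Corollary \ref{coro Fp case degree p} through the reduction functor.
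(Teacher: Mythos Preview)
Your proposal is correct and follows essentially the same route as the paper: both reduce to the residue-field statement (Corollary \ref{coro Fp case degree p}), lift a basis of $C_{u-\np}^\flat$ over $\varphi(C_{u-\np}^\flat)$ to $\Oo_{\widehat{\Ff^{\ur}_u}}$, and use successive $p$-adic approximation plus completeness to obtain generators over $\varphi(\Oo_{\widehat{\Ff^{\ur}_u}})$. The only cosmetic differences are that the paper works with abstract lifts $a_1,\dots,a_p$ rather than your explicit choice $1,u,\dots,u^{p-1}$, and for the rank computation the paper invokes Nakayama's lemma (legitimate once $\Oo_{\widehat{\Ff^{\ur}_u}}$ is known to be finitely generated over $\varphi(\Oo_{\widehat{\Ff^{\ur}_u}})$) whereas you argue linear independence directly by reduction mod $p$.
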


\begin{proof}
	We have the diagram 
	\[ \xymatrix{
		\Oo_{\widehat{\Ff^{\ur}_{u}}} \ar@{->>}[r] \ar[d]^{\varphi}& C_{u-\np}^\flat \ar[d]^{\varphi}\\
		\Oo_{\widehat{\Ff^{\ur}_{u}}} \ar@{->>}[r]& C_{u-\np}^\flat
	}  \]
	with $[C_{u-\np}^\flat: \varphi(C_{u-\np}^\flat)]=p.$ Hence there exist $a_1, \dots, a_p\in \Oo_{\widehat{\Ff^{\ur}_{u}}}$ such that whose image   $(\overline{a_1}, \dots, \overline{a_p})$ modulo $p$ forms a basis of $C_{u-\np}^\flat$ over $\varphi(C_{u-\np}^\flat).$  The following map is surjective
	
	\begin{align*}
	\rho\colon	\big( \varphi(\Oo_{\widehat{\Ff^{\ur}_{u}}})\big)^{p} &\to  \Oo_{\widehat{\Ff^{\ur}_{u}}}\\
	(\lambda_1, \dots, \lambda_p) & \mapsto \sum\limits_{i=1}^{p}\lambda_i a_i.
	\end{align*}
	
	Indeed, for any $a\in \Oo_{\widehat{\Ff^{\ur}_{u}}}$ there exists $(\lambda_1^{(1)}, \dots,  \lambda_p^{(1)})\in \big( \varphi(\Oo_{\widehat{\Ff^{\ur}_{u}}})\big)^{p}$ such that 
	\[ a-\sum\limits_{i=1}^{p} \lambda_i^{(1)}a_i  \in p\Oo_{\widehat{\Ff^{\ur}_{u}}}. \]
	Hence there exists $(\lambda_1^{(2)}, \dots,  \lambda_p^{(2)})\in 	\big( \varphi(\Oo_{\widehat{\Ff^{\ur}_{u}}})\big)^{p}$ such that 
	\[ a-\sum\limits_{i=1}^{p} \lambda_i^{(1)}a_i-p\sum\limits_{i=1}^{p} \lambda_i^{(2)}a_i  \in p^2 \Oo_{\widehat{\Ff^{\ur}_{u}}}. \]
	By induction, for any $n\in \NN$ there exists $(\lambda_1^{(n)}, \dots,  \lambda_p^{(n)})\in 	\big( \varphi(\Oo_{\widehat{\Ff^{\ur}_{u}}})\big)^{p}$ such that 
	\[ a-\sum\limits_{i=1}^{p} \bigg(\sum\limits_{j=1}^n p^{j-1} \lambda_i^{(j)}\bigg)a_i  \in p^{n}\Oo_{\widehat{\Ff^{\ur}_{u}}}. \]
	As $\Oo_{\widehat{\Ff^{\ur}_{u}}}$ is $p$-adically complete, we have 
	\[ a= \sum\limits_{i=1}^{p} \bigg(\sum\limits_{j=1}^{\infty} p^{j-1} \lambda_i^{(j)}\bigg)a_i. \]
	Notice that $\sum\limits_{j=1}^{\infty} p^{j-1} \lambda_i^{(j)}\in \varphi(\Oo_{\widehat{\Ff^{\ur}_{u}}})$ and hence we proved the surjectivity of the map $\rho$ defined above. Now $\Oo_{\widehat{\Ff^{\ur}_{u}}}$ is a $\varphi(\Oo_{\widehat{\Ff^{\ur}_{u}}})$-module of finite type and we can apply Nakayama's lemma: the map is an isomorphism since it is so modulo $p$ by corollary \ref{coro Fp case degree p}.
\end{proof}

\begin{definition}\label{def psi operator}
	For any $x\in \widehat{\Ff^{\ur}_{u}},$ we put
	\[ \psi(x)=\frac{1}{p}\varphi^{-1}(\Tr_{\widehat{\Ff^{\ur}_{u}}/\varphi(\widehat{\Ff^{\ur}_{u}})}(x)). \]\index{$\psi$}
	
	In particular we have $\psi\circ \varphi=\id_{\widehat{\Ff^{\ur}_{u}}}$ on $\widehat{\Ff^{\ur}_{u}}.$ Applying lemma \ref{lemm Zp-version degree of sep clo} to $\widehat{\Ee^{\ur}},$ we see that the operator $\psi$ induces an operator $\psi\colon \widehat{\Ee^{\ur}}\to \widehat{\Ee^{\ur}}.$ Note that $\psi(\Oo_{\widehat{\Ff^{\ur}_{u}}})\subset \Oo_{\widehat{\Ff^{\ur}_{u}}}$ and $\psi(\Oo_{\widehat{\Ee^{\ur}}})\subset \Oo_{\widehat{\Ee^{\ur}}}.$
\end{definition}

\begin{remark}\label{rem Zp-case psi commutes with g}
	We have $\psi\circ g=g\circ\psi$ for all $g\in\mathscr{G}_K.$ Indeed, we have the following commutative square
	
	\[  \begin{tikzcd}
	\widehat{\Ff^{\ur}_{u}} \arrow[r, "g"] \arrow[dd, "\varphi","\simeq"'] & \widehat{\Ff^{\ur}_{u}}\arrow[dd, "\varphi"',"\simeq"] \\
	\\
	\varphi(\widehat{\Ff^{\ur}_{u}})  \arrow[r, "g"] \arrow[uu, bend left, "\varphi^{-1}"]&  \varphi(\widehat{\Ff^{\ur}_{u}}). \arrow[uu, bend right, "\varphi^{-1}"']
	\end{tikzcd}  \]
	
	This implies $\varphi^{-1}$ commutes with $g\in \mathscr{G}_K$ over $\varphi(\widehat{\Ff^{\ur}_{u}}).$ As $\Tr_{\widehat{\Ff^{\ur}_{u}}/\varphi(\widehat{\Ff^{\ur}_{u}})}\colon \widehat{\Ff^{\ur}_{u}} \to \varphi(\widehat{\Ff^{\ur}_{u}})$ commutes with $g\in \mathscr{G}_K,$ so does $\psi$ on $\widehat{\Ff^{\ur}_{u}}.$	
\end{remark}

\begin{proposition}
	\label{prop Zp-case psi surjective}
	Let $(D, D_{\tau})\in \Mod_{\Oo_{\Ee}, \Oo_{\Ee_{\tau}}}(\varphi, \tau).$ There exists a unique additive map
	\[ \psi_D\colon D\to D, \]
	satisfying
	\item (1) \[ (\forall a\in \Oo_{\Ee})  ( \forall x\in D )\quad  \psi_D(a \varphi_D (x))=\psi_{\Oo_{\Ee}}(a)x, \]
	\item (2) \[ (\forall a\in \Oo_{\Ee}) (\forall x\in D) \quad  \psi_D(\varphi_{\Oo_{\Ee}}(a)x)=a\psi_D(x). \]  
	This map is surjective and satisfies $\psi_D \circ \varphi_D =\id_D$.\\

	There also exists a unique additive map $\psi_{D_{u,\tau}}: D_{u,\tau} \to D_{u,\tau}$ that satisfies similar conditions as above and extends the additive map $\psi_D$. 
\end{proposition}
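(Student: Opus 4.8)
The plan is to mimic the classical construction of the $\psi$-operator on $(\varphi,\Gamma)$-modules (\cf \cite[\S 3]{Her98}), transported to our situation through the equivalence of categories and the trace-map construction on $\widehat{\Ff^{\ur}_{u}}$. First I would use Lemma \ref{lemm Zp-version degree of sep clo}: since $\widehat{\Ff^{\ur}_{u}}$ is free of rank $p$ over $\varphi(\widehat{\Ff^{\ur}_{u}})$, the $\varphi$-module $D$ (which is étale, hence $1\otimes\varphi\colon\Oo_{\Ee}\otimes_{\varphi,\Oo_{\Ee}}D\to D$ is an isomorphism) becomes, after tensoring with $\Oo_{\widehat{\Ff^{\ur}_{u}}}$, free of rank $p$ over its image under $\varphi$. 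Concretely, one wants to see that $\Oo_{\Ee}=\Oo_{\Ee}\varphi(\Oo_{\Ee})\oplus u\varphi(\Oo_{\Ee})\oplus\cdots\oplus u^{p-1}\varphi(\Oo_{\Ee})$ (the analogue of the classical decomposition for $\QQ_p$-coefficients), so that every $x\in D$ decomposes uniquely as $x=\sum_{i=0}^{p-1}u^i\varphi_D(x_i)$ with $x_i\in D$; then define $\psi_D(x):=x_0$. Properties (1) and (2) then follow by a direct computation from this decomposition, exactly as in \emph{loc. cit.}, and $\psi_D\circ\varphi_D=\id_D$ is immediate (take $x=\varphi_D(y)$, so $x_0=y$ and $x_i=0$ for $i>0$). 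Uniqueness also follows from the decomposition: any additive map satisfying (1) and (2) is forced to send $u^i\varphi_D(x_i)$ to $\psi_{\Oo_{\Ee}}(u^i)x_i$, and a short check (using $\psi_{\Oo_{\Ee}}(u^i)=0$ for $0<i<p$ and $\psi_{\Oo_{\Ee}}(1)=1$) pins down $\psi_D$.

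For surjectivity of $\psi_D$: given $y\in D$, write $y=\psi_D(\varphi_D(y))$, so $\psi_D$ is surjective simply because $\psi_D\circ\varphi_D=\id_D$; alternatively one observes that $\psi_D$ restricted to $\varphi_D(D)$ is already surjective onto $D$. (The same remark applies to the map defined below on $D_{u,\tau}$.)

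For the $D_{u,\tau}$ statement, the idea is the same but now over $\Oo_{\Ee_{u,\tau}}=\Oo_{\Ff_u}$ (Theorem \ref{thm notation of Caruso}), which by Lemma \ref{lemm Zp-version degree of sep clo} (applied through Corollary \ref{coro Fp case degree p}, noting $[k(\!(u,\eta^{1/p^\infty})\!):\varphi(k(\!(u,\eta^{1/p^\infty})\!))]=p$ since $\eta^{1/p^\infty}$ is already a $p$-th power) is again free of rank $p$ over its $\varphi$-image, with basis $\{u^i\}_{0\le i\le p-1}$. So one gets an analogous decomposition $D_{u,\tau}=\bigoplus_{i=0}^{p-1}u^i\varphi_{D_{u,\tau}}(D_{u,\tau})$ and defines $\psi_{D_{u,\tau}}$ as the projection onto the $i=0$ component. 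The only thing requiring care is compatibility: one must check that $\psi_{D_{u,\tau}}$ restricted to $D_\tau$ — or rather, that the two decompositions (of $D$ over $\Oo_{\Ee}$ and of $D_{u,\tau}$ over $\Oo_{\Ee_{u,\tau}}$) are compatible under the inclusion $\Oo_{\Ee}\hookrightarrow\Oo_{\Ee_{u,\tau}}$. This holds because the basis $\{1,u,\dots,u^{p-1}\}$ of $\Oo_{\Ee_{u,\tau}}$ over $\varphi(\Oo_{\Ee_{u,\tau}})$ is the image of the corresponding basis of $\Oo_{\Ee}$ over $\varphi(\Oo_{\Ee})$, so $\psi_{D_{u,\tau}}|_D=\psi_D$, and $\psi_{D_{u,\tau}}$ extends $\psi_D$ as claimed.

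I expect the main obstacle to be the verification that $\Oo_{\Ee}$ (resp. $\Oo_{\Ee_{u,\tau}}$) really does decompose as $\bigoplus_{i=0}^{p-1}u^i\varphi(\text{ring})$ with $u^i$ a genuine basis — i.e. that $\{u^i\}$ are linearly independent over $\varphi(\Oo_{\Ee})$ and span — and the corresponding freeness statement for $D$, which uses étaleness in an essential way: one reduces modulo $p$ to the residue field $k(\!(u)\!)$ where $\{1,\dots,u^{p-1}\}$ is manifestly a basis over $k(\!(u^p)\!)=\varphi(k(\!(u)\!))$, then lifts by $p$-adic completeness (a Nakayama-type argument, as in the proof of Lemma \ref{lemm Zp-version degree of sep clo}). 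Everything else — properties (1), (2), uniqueness, surjectivity, $\psi_D\circ\varphi_D=\id$, and the extension statement — is then a formal consequence of the decomposition and requires only routine bookkeeping.
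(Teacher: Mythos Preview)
Your approach is correct but differs from the paper's. You construct $\psi_D$ intrinsically from the decomposition $D=\bigoplus_{i=0}^{p-1}u^i\varphi_D(D)$ (obtained from étaleness together with the analogous decomposition of $\Oo_\Ee$), in the style of \cite{Her98}. The paper instead works extrinsically: it invokes the equivalence of categories to write $D=\mathcal{D}(T)$ for some $T\in\Rep_{\ZZ_p}(\mathscr{G}_K)$, then uses the operator $\psi$ already defined on $\Oo_{\widehat{\Ee^{\ur}}}$ and $\Oo_{\widehat{\Ff^{\ur}_u}}$ via the trace formula (Definition~\ref{def psi operator}) to form $\psi\otimes 1$ on $\Oo_{\widehat{\Ee^{\ur}}}\otimes_{\ZZ_p}T$ and $\Oo_{\widehat{\Ff^{\ur}_u}}\otimes_{\ZZ_p}T$; taking $\mathscr{G}_{K_\pi}$- and $\mathscr{G}_L$-invariants yields $\psi_D$ and $\psi_{D_{u,\tau}}$ at once. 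The paper's route makes the compatibility $\psi_{D_{u,\tau}}|_D=\psi_D$ essentially automatic (both come from the same operator on the big ring; the paper spells this out in Remark~\ref{rem psi embedding Zp case}), whereas in your approach this requires the basis-compatibility check you sketch. Conversely, your argument stays inside the module category and does not depend on the equivalence with Galois representations. The two constructions agree, since on the ring level the trace formula $\psi(x)=\tfrac{1}{p}\varphi^{-1}(\Tr(x))$ recovers precisely the $i=0$ component of $x=\sum_{i=0}^{p-1} u^i\varphi(x_i)$ (\cf Lemma~\ref{lemm psi mini poly}).
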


\begin{proof} Let $T\in \Rep_{\ZZ_p}(\mathscr{G}_K)$ be such that $D=\mathcal{D}(T).$ We have defined $\psi$ on $\widehat{\Ff^{\ur}_{u}},$ hence it is defined over $\Ee_{u,\tau}=(\widehat{\Ff^{\ur}_{u}})^{\mathscr{G}_L}.$ The operator $\psi\otimes 1$ on $\Oo_{\widehat{\Ee^{\ur}}}\otimes_{\ZZ_p}T$ and $\Oo_{\widehat{\Ff^{\ur}_{u}}}\otimes_{\ZZ_p}T$ induces operators $\psi$ on $D$ and $D_{u,\tau}.$ One easily verifies the above properties: this shows the existence. The unicity follows from the fact that $D$ is \'etale, $\ie  \varphi(D)$ generates $D$ as an $\Oo_{\Ee}$-module.
\end{proof}

\begin{remark}\label{rem why we need unperfected version} (1)	Let $(D, D_{\tau})\in \Mod_{\Oo_{\Ee}, \Oo_{\Ee_{\tau}}}(\varphi, \tau).$ Suppose there exists an operator $\psi_{D_{\tau}}$ over $D_{\tau}$ that extends $\psi_D$ and $\psi_{D_{\tau}}\circ \varphi_{D_{\tau}}=\id_{D_{\tau}}$, then $\varphi_{D_{\tau}}$ being bijective over $D_{\tau}$ (as $D$ is \'etale and $\varphi_{\Oo_{\Ee_{\tau}}}$ is bijective over $\Oo_{\Ee_{\tau}}$) will imply that $\psi_{D_{\tau}}$ is bijective, which contradicts the fact $\psi_D$ is not injective.
\item (2) When there is no confusion, we will simply denote $\psi$ instead of $\psi_{\Oo_{\Ee}}, \psi_{D}$ and $\psi_{D_{u,\tau}}$.
\end{remark}

\begin{lemma}\label{lemm psi D u tau 0}
	Let $(D, D_{u, \tau})\in \Mod_{\Oo_{\Ee}, \Oo_{\Ee_{u,\tau}}}(\varphi, \tau)$. We have a map $\psi\colon D_{u, \tau, 0}\to D_{u, \tau, 0}.$
\end{lemma}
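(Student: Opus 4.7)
The plan is to verify that if $x\in D_{u,\tau,0}$, then $\psi(x)$ satisfies the defining relation of $D_{u,\tau,0}$. The two key compatibility facts we need are:

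\textbf{(a)} $\psi$ commutes with the $\mathscr{G}_K$-action on $D_{u,\tau}$. This follows from remark \ref{rem Zp-case psi commutes with g}, which asserts that $\psi\circ g=g\circ\psi$ on $\widehat{\Ff^{\ur}_{u}}$ for every $g\in \mathscr{G}_K$. Since $D_{u,\tau}=(\Oo_{\widehat{\Ff^{\ur}_{u}}}\otimes_{\ZZ_p}T)^{\mathscr{G}_L}$ and $\psi$ is induced by $\psi\otimes 1$, commutativity with $g\otimes 1$ on $D_{u,\tau}$ is inherited from commutativity of $\psi$ with $g$ on the coefficients.

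\textbf{(b)} $\psi$ commutes with $\tau_D$ on $D_{u,\tau}$. Indeed, $\tau_D$ is induced by $\tau\otimes\tau$ on $\Oo_{\widehat{\Ff^{\ur}_{u}}}\otimes_{\ZZ_p}T$ (\cf lemma \ref{lemm iso for D-tau}), and $\tau\in\mathscr{G}_K$, so this is a special case of~(a).

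Granting these two points, let $x\in D_{u,\tau,0}$ and let $g\in \mathscr{G}_{K_\pi}$ with $\chi(g)=n\in\ZZ_{>0}$. By definition we have
$$(g\otimes 1)(x)=x+\tau_D(x)+\cdots+\tau_D^{n-1}(x).$$
Applying $\psi$ to both sides, using its additivity together with~(a) on the left-hand side and~(b) repeatedly on the right-hand side, we obtain
$$(g\otimes 1)(\psi(x))=\psi(x)+\tau_D(\psi(x))+\cdots+\tau_D^{n-1}(\psi(x)),$$
which is exactly the condition defining $D_{u,\tau,0}$. Hence $\psi(x)\in D_{u,\tau,0}$, proving the lemma.

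No serious obstacle is anticipated; the whole argument reduces to the commutation $\psi\circ g=g\circ\psi$ on the coefficient ring established in remark \ref{rem Zp-case psi commutes with g} and its transfer to the $\tau_D$-action via lemma \ref{lemm iso for D-tau}. The mildest subtlety is to ensure that $\psi$ on $D_{u,\tau}$ genuinely agrees with the restriction of $\psi\otimes 1$ after taking $\mathscr{G}_L$-invariants, but this is immediate from the construction in proposition \ref{prop Zp-case psi surjective}.
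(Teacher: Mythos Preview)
Your proof is correct and follows essentially the same approach as the paper: both apply $\psi$ to the defining relation of $D_{u,\tau,0}$ and use that $\psi$ commutes with the Galois action (remark \ref{rem Zp-case psi commutes with g}) and with $\tau_D$. The only minor point is that your citation of lemma \ref{lemm iso for D-tau} pertains to the perfect coefficient ring $\Oo_{\Ee_\tau}$ rather than $\Oo_{\Ee_{u,\tau}}$; the relevant statement for the partially unperfected setting is the description of $\tau_D$ as $\tau\otimes\tau$ given just after definition \ref{def phi tau over O e Oe u tau}, but this does not affect the validity of your argument.
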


\begin{proof}
	We have the $\psi$ operator on $D_{u, \tau}$ by proposition \ref{prop Zp-case psi surjective}. Notice that $\psi$ commutes with $g\in \mathscr{G}_K$ by remark \ref{rem Zp-case psi commutes with g}, hence $\psi$ induces a $\ZZ_p$-linear endomorphism on $D_{u, \tau, 0}.$ Indeed, if $x\in D_{u, \tau, 0},$ then we have
	\[(\gamma\otimes 1)x=(1+\tau_D+\cdots +\tau_D^{\chi(\gamma)-1})x.\]
	Applying $\psi$ to both sides and by the commutativity we have
	\[(\gamma\otimes 1)(\psi(x))=(1+\tau_D+\cdots +\tau_D^{\chi(\gamma)-1})\psi(x).\]
	This implies $\psi(x)\in D_{u,\tau, 0}.$
\end{proof}

\begin{remark}
By proposition \ref{prop Zp-case psi surjective} and lemma \ref{lemm psi D u tau 0}, $\psi$ is surjective on $D$ and $D_{u,\tau,0}.$
\end{remark}

We now define the following complex:

\begin{definition}\label{def complex psi tau}
	
	Let $(D, D_{u,\tau})\in \Mod_{\Oo_{\Ee},\Oo_{\Ee_{u,\tau}}}(\varphi, \tau).$ We define a complex $\Cc_{\psi, \tau}^{u}(D)$\index{$\Cc_{\psi, \tau}^{u}(D)$} as follows:
	\[ \xymatrix{0\ar[rr] && D \ar[r] & D \oplus D_{u,\tau, 0} \ar[r] &   D_{u, \tau, 0}  \ar[r] & 0   \\
		&&  x  \ar@{|->}[r] & ((\psi-1)(x), (\tau_D-1)(x)) &&\\
		&&& (y,z) \ar@{|->}[r]  & (\tau_D-1)(y)-(\psi-1)(z).
	}  \]
	If $T\in \Rep_{\ZZ_p}(\mathscr{G}_K),$ we have in particular the complex $\Cc_{\psi, \tau}^{u}(\mathcal{D}(T)),$ which will also be simply denoted $\Cc_{\psi, \tau}^{u}(T).$\index{$\Cc_{\psi, \tau}^{u}(T)$}
\end{definition}

\begin{theorem}\label{prop quais-iso psi} The morphism of complexes 
	\[ 
	\xymatrix{
		\Cc^{u}_{\varphi,\tau}\colon & 0\ar[rr]  && D \ar[rr]^-{(\varphi-1, \tau_D-1)} \ar@{=}[d] && D\oplus D_{u, \tau,0} \ar[rr]^-{(\tau_D-1)\ominus(\varphi-1)} \ar[d]^{-\psi\ominus \id} && D_{u, \tau, 0}\ar[rr] \ar[d]^{-\psi} && 0 \\
		\Cc^{u}_{\psi,\tau}\colon & 0\ar[rr]  && D \ar[rr]^-{(\psi-1, \tau_D-1)}  && D\oplus D_{u, \tau,0} \ar[rr]^-{(\tau_D-1)\ominus(\psi-1)} && D_{u, \tau, 0}\ar[rr] && 0
	} \]
	is a quasi-isomorphism.
\end{theorem}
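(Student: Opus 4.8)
The plan is to show that the vertical map $-\psi \ominus \id$ and $-\psi$ (together with the identity) form a quasi-isomorphism by the standard trick used in the cyclotomic setting (\cf \cite[\S 3]{Her98}): one replaces the complex $\Cc^{u}_{\varphi,\tau}$ by $\Cc^{u}_{\psi,\tau}$ using the fact that $\psi$ is a left inverse of $\varphi$ and that $\psi - 1$ and $\varphi - 1$ differ by an operator whose kernel and cokernel are controlled. More precisely, I would first observe that since $\psi \circ \varphi = \id$, we have $\psi \circ (\varphi - 1) = \id - \psi = -(\psi - 1)$, which is exactly what makes the diagram commute; this is the routine verification that the vertical arrows are morphisms of complexes, and I would dispatch it quickly by writing out the two squares.

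The substance is then to prove the map is a quasi-isomorphism. I would do this by examining the mapping cone, or more concretely by a direct diagram chase on each $\H^i$. The key auxiliary facts are: (a) $\psi$ is surjective on $D$ and on $D_{u,\tau,0}$ (proposition \ref{prop Zp-case psi surjective} and lemma \ref{lemm psi D u tau 0}, as noted in the remark following the latter); (b) $\varphi - 1$ restricted to $D^{\psi = 0}$ — or rather the interplay between $\Ker(\psi)$ and $\varphi$ — behaves well. Actually the cleanest route is the following. Consider the subcomplex of $\Cc^{u}_{\varphi,\tau}$ and the quotient data: since $\psi\varphi = \id$, the operator $\varphi\psi$ is an idempotent on $D$ (and on $D_{u,\tau,0}$), splitting $D = \varphi(D) \oplus D^{\psi=0}$, and $\varphi - 1$ maps $D^{\psi=0}$ bijectively to itself (because on $D^{\psi=0}$ the operator $\varphi$ is topologically nilpotent in the relevant sense, so $1 - \varphi$ is invertible). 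Hence $\H^i$ of the complex built from $\varphi - 1$ on $D^{\psi=0}$ vanishes, and the inclusion of the "$\psi$-part" is a quasi-isomorphism; then on $\varphi(D) \cong D$ the operators $\varphi - 1$ and $-(\psi-1)$ agree up to the isomorphism $\varphi$. Assembling these over the three terms of the complex, with $\tau_D$ commuting with both $\varphi$ and $\psi$ (remark \ref{rem Zp-case psi commutes with g} gives $\psi g = g\psi$, and $\psi$ commutes with $\tau_D$ since $\tau \in \mathscr{G}_{K_\zeta}$ acts and $\psi$ commutes with all of $\mathscr{G}_K$), yields the quasi-isomorphism. I would phrase this via a short five-lemma / snake-lemma argument applied to the short exact sequence of complexes $0 \to \Cc(D^{\psi=0}) \to \Cc^{u}_{\varphi,\tau} \to \Cc^{u}_{\psi,\tau} \to 0$ after identifying the cokernel term correctly.

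As usual I would first establish the statement for $\FF_p$-representations, where $D$ is a finite-dimensional $F_0$-vector space, $D_{u,\tau,0}$ is manageable, and the decomposition $D = \varphi(D) \oplus D^{\psi=0}$ together with the invertibility of $1 - \varphi$ on $D^{\psi=0}$ can be checked directly from the étale structure and the explicit description of $\psi$ via the trace (definition \ref{def psi operator}, lemma \ref{lemm Zp-version degree of sep clo}); then I would pass to $\Rep_{\ZZ_p,\tors}$ by dévissage along $0 \to p^{n-1}T \to T \to T/p^{n-1} \to 0$ using the snake lemma and the exactness results (corollary \ref{coro D u tau Zp case}), and finally to $\Rep_{\ZZ_p}$ by passing to the limit over $T/p^nT$, using that $\mathcal{D}(-)$ and $\mathcal{D}(-)_{u,\tau}$ commute with these quotients and are $p$-adically complete.

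The main obstacle I anticipate is controlling the behaviour of $\psi$ and the splitting $D = \varphi(D) \oplus D^{\psi=0}$ \emph{on the subgroup} $D_{u,\tau,0}$ rather than on $D_{u,\tau}$: the subgroup $D_{u,\tau,0}$ is defined by a Galois-theoretic condition and it is not immediately obvious that it is stable under the idempotent $\varphi\psi$ or that $1-\varphi$ is invertible on its $\psi=0$ part. Lemma \ref{lemm psi D u tau 0} gives stability under $\psi$, and since $\varphi$ preserves $D_{u,\tau,0}$ (it commutes with $\tau_D$ and with the $\mathscr{G}_{K_\pi}$-action, \cf lemma \ref{lemm complex u well defined}), the idempotent does preserve it, so the decomposition restricts; the invertibility of $1 - \varphi$ on $(D_{u,\tau,0})^{\psi=0}$ should then follow from a topological nilpotence argument, but making that rigorous — identifying the right completeness/finiteness to invoke, since $D_{u,\tau,0}$ is not obviously finitely generated over a nice ring — is where the real care is needed. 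If a direct argument proves awkward, the fallback is to deduce the result purely formally from theorem \ref{coro complex over non-perfect ring works also Zp-case}: both $\Cc^{u}_{\varphi,\tau}$ and $\Cc^{u}_{\psi,\tau}$ would be shown to compute $\H^i(\mathscr{G}_K, T)$ and one checks the comparison map is compatible with these identifications on $\H^0$ and $\H^1$, then uses that everything vanishes above degree $2$ and a dimension/exactness count in degree $2$.
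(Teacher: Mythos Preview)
You have misidentified the kernel complex, and this leads to the wrong operator being studied. The vertical maps are $\id$ in degree $0$, $(-\psi)\oplus\id$ in degree $1$, and $-\psi$ in degree $2$; since $\psi$ is surjective, the cokernel complex vanishes and the kernel complex is
\[
0 \longrightarrow D^{\psi=0}\oplus 0 \xrightarrow{\ (\tau_D-1)\ominus(\varphi-1)\ } D_{u,\tau,0}^{\psi=0} \longrightarrow 0,
\]
i.e.\ the map $\tau_D-1\colon D^{\psi=0}\to D_{u,\tau,0}^{\psi=0}$. So the question is whether $\tau_D-1$ is a bijection there, not whether $\varphi-1$ is. Your claim that $\varphi-1$ maps $D^{\psi=0}$ bijectively to itself is not even well-posed: for $x\in D^{\psi=0}$ one has $\psi\big((\varphi-1)x\big)=\psi\varphi(x)-\psi(x)=x$, so $(\varphi-1)x\in D^{\psi=0}$ only when $x=0$. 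In particular $\varphi$ is certainly not topologically nilpotent on $D^{\psi=0}$; it does not preserve that subspace at all. The analogy with \cite[\S 3]{Her98} is that in the cyclotomic setting one must prove $\gamma-1$ is bijective on $D^{\psi=0}$; here the analogue is $\tau_D-1$, and this is the entire content of the theorem.

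The paper's proof is substantial and does not follow from formal manipulations. Injectivity of $\tau_D-1$ on $D^{\psi=0}$ comes from showing that any $x\in D^{\psi=0}$ fixed by $\tau_D$ lies in $\mathbf{D}(T)^{\gamma=1}=(C^\flat_{u-\np}\otimes T)^{\mathscr{G}_{K_\zeta},\gamma=1}$, which is a finite-dimensional $k$-vector space on which $\varphi$ is therefore bijective, forcing $x=\varphi(y)$ and hence $x=0$. Surjectivity is harder: one first treats the trivial representation by an explicit computation in $F_{u,\tau,0}^{\psi=0}$ (writing elements as $\sum b_{ij}u^{i+pj}(\varepsilon^{i+pj}-1)$), then for general $T$ one passes to $\tau_D^{p^r}$ for $r\gg 0$ so that $\tau_D^{p^r}$ is close to the identity on a chosen basis, runs an approximation argument to solve $(\tau_D^{p^r}-1)x=z$, and descends via the injectivity of $\frac{\tau_D^{p^r}-1}{\tau_D-1}$ on $D_{u,\tau,0}^{\psi=0}$. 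The d\'evissage to $\ZZ_p$-representations at the end is as you describe. Your fallback is circular: there is no independent proof that $\Cc^u_{\psi,\tau}$ computes Galois cohomology --- that is exactly what this theorem establishes.
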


\begin{remark}\label{rem psi embedding Zp case} (1) The diagram in theorem \ref{prop quais-iso psi} is indeed a morphism of complexes.
	
	\begin{proof}
		As $\psi$ commutes with the Galois action, it induces a map $\psi\colon D_{u,\tau,0}\to D_{u,\tau,0}.$ We claim that ${\psi_{D_{u,\tau}}}_{|_{D}}=\psi_{D},$ and hence the diagram in theorem \ref{prop quais-iso psi} commutes. Indeed, we have the following commutative square
		\[ \xymatrix{
			\Oo_{\widehat{\Ee^{\ur}}} \ar@{^(->}[r] & \Oo_{\widehat{\Ff^{\ur}_{u}}}\\
			\Oo_{\widehat{\Ee^{\ur}}} \ar@{^(->}[r] \ar[u]^{\varphi} & \Oo_{\widehat{\Ff^{\ur}_{u}}} \ar[u]^{\varphi}. }
		\]
		By lemma \ref{lemm Zp-version degree of sep clo}, $[	\Oo_{\widehat{\Ee^{\ur}}}:\varphi(	\Oo_{\widehat{\Ee^{\ur}}})]=[\Oo_{\widehat{\Ff^{\ur}_{u}}}:\varphi(\Oo_{\widehat{\Ff^{\ur}_{u}}})]=p,$ hence we conclude by the construction of $\psi.$
		
	\end{proof}
	
	\item (2) We have diagram
	\[ 
	\xymatrix{
		&0 \ar[r] & 0 \ar[rr]\ar[d] && D^{\psi=0} \ar[rr]^-{\tau_D-1} \ar[d] && D_{u,\tau, 0}^{\psi=0 }   \ar[r]\ar[d]& 0\\
		\Cc^{u}_{\varphi,\tau}\colon & 0\ar[r]  & D \ar[rr]^-{(\varphi-1, \tau_D-1)} \ar@{=}[d] && D\oplus D_{u, \tau,0} \ar[rr]^-{(\tau_D-1)\ominus(\varphi-1)} \ar[d]^{-\psi\ominus \id} && D_{u, \tau, 0}\ar[r] \ar[d]^{-\psi} & 0 \\
		\Cc^{u}_{\psi,\tau}\colon & 0  \ar[r] & D \ar[rr]^-{(\psi-1, \tau_D-1)}  \ar[d]&& D\oplus D_{u, \tau,0} \ar[rr]^-{(\tau_D-1)\ominus(\psi-1)} \ar[d]&& D_{u, \tau, 0}\ar[r]  \ar[d]& 0\\
		& 0 \ar[r]& 0 \ar[rr] &&0 \ar[rr] &&0 \ar[r] &0.	
	} \]
	As $\psi$ is surjective, the cokernel complex is trivial and it suffices to show that the kernel complex is acyclic, $\ie$the map
	\[ D^{\psi=0} \xrightarrow{\tau_D-1} D_{u, \tau, 0}^{\psi=0}  \]
	is an isomorphism. 
\end{remark}

To prove the theorem \ref{prop quais-iso psi}, we will start with the case of $\FF_p$-representations and then pass to the $\ZZ_p$-representations  by d\'evissage.

\subsection{The case of \texorpdfstring{$\FF_p$}{\unichar{"0046}_p}-representations}

We assume in this subsection that $T\in \Rep_{\FF_p}(\mathscr{G}_K).$

\begin{lemma}\label{lemm point fixed by tau p n}
	For all $r\in \NN,$ we have $(C^{\flat}_{u-\np})^{\mathscr{G}_{K_\zeta}}=F_{u,\tau}^{\tau^{p^r}}=k(\!(\eta^{1/p^\infty})\!).$
\end{lemma}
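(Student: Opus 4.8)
The statement to prove is that $(C^{\flat}_{u-\np})^{\mathscr{G}_{K_\zeta}} = F_{u,\tau}^{\tau^{p^r}} = k(\!(\eta^{1/p^\infty})\!)$ for all $r \in \NN$. The plan is to compute the three objects and match them up.

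First I would pin down the middle term via Galois theory. By Proposition \ref{prop G}, $\mathsf{G} = \Gal(F_{u,\tau}^{\sep}/F_{u,\tau}) \simeq \mathscr{G}_L$, and under the identification from Remark \ref{rem def tau}, the image of $\tau$ generates (topologically) $\Gal(L/K_\zeta) \simeq \overline{\la\tau\ra}$. Since $K_{\zeta,n} := L^{\overline{\la\tau^{p^r}\ra}}$ (with $n = p^r$) is the fixed field, $F_{u,\tau}^{\tau^{p^r}}$ should be identified with the fixed field of the closed subgroup topologically generated by $\tau^{p^r}$, or rather — since here $\tau^{p^r}$ acts on $F_{u,\tau}$ itself — I would argue directly: $\tau$ acts on $F_{u,\tau} = k(\!(u,\eta^{1/p^\infty})\!)$ by $\tau(u) = u(\eta+1)$ and $\tau(\eta^{1/p^m}) = \eta^{1/p^m}$ for all $m$ (Remark after Theorem \ref{thm notation of Caruso}, item (2)). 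So $\tau$ fixes $k(\!(\eta^{1/p^\infty})\!)$ pointwise, and more generally $\tau^{p^r}(u) = u(\eta+1)^{p^r} = u\big((\eta^{1/p^r})^{p^r}(\ldots)\big)$ — I would compute $\tau^j(u) = u(\eta+1)^j$ by induction, so $\tau^{p^r}(u) = u(\eta+1)^{p^r}$. The key point is then that an element of $F_{u,\tau}$ fixed by $\tau^{p^r}$ cannot involve $u$ nontrivially: writing $F_{u,\tau} = k(\!(\eta^{1/p^\infty})\!)(\!(u)\!)$ (Laurent series in $u$ over the complete subfield $k(\!(\eta^{1/p^\infty})\!)$), an element $\sum a_i u^i$ with some $a_i \neq 0$ for $i \neq 0$ maps under $\tau^{p^r}$ to $\sum a_i u^i (\eta+1)^{p^r i}$, and comparing leading terms in $u$ forces $(\eta+1)^{p^r i} = 1$, hence $i = 0$ (since $\eta + 1 = \varepsilon$ is not a root of unity in $C^\flat$ — it has infinite order as $\varepsilon^{p^n} \neq 1$ for all $n$). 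Thus $F_{u,\tau}^{\tau^{p^r}} = k(\!(\eta^{1/p^\infty})\!)$, which gives the second equality and shows it is independent of $r$.

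Next I would handle the first equality $(C^{\flat}_{u-\np})^{\mathscr{G}_{K_\zeta}} = k(\!(\eta^{1/p^\infty})\!)$. Since $C^{\flat}_{u-\np} = F_{u,\tau}^{\sep}$ and $\mathscr{G}_{K_\zeta} \supset \mathscr{G}_L$ with $\mathscr{G}_L \simeq \mathsf{G} = \Gal(F_{u,\tau}^{\sep}/F_{u,\tau})$, taking $\mathscr{G}_L$-invariants first gives $(C^{\flat}_{u-\np})^{\mathscr{G}_L} = F_{u,\tau}$ (this is essentially Corollary \ref{coro F u tau = C GL}). It remains to take invariants under $\mathscr{G}_{K_\zeta}/\mathscr{G}_L$. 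Now $\mathscr{G}_{K_\zeta}/\mathscr{G}_L = \Gal(L/K_\zeta) = \overline{\la\tau\ra}$, so $(C^{\flat}_{u-\np})^{\mathscr{G}_{K_\zeta}} = F_{u,\tau}^{\overline{\la\tau\ra}}$. By the continuity of the $\tau$-action (and since $F_{u,\tau}$ with the valuation topology carries a continuous $\overline{\la\tau\ra}$-action), an element fixed by all of $\overline{\la\tau\ra}$ is in particular fixed by $\tau$ itself; conversely, being fixed by $\tau$ and by continuity of the action, it is fixed by the whole closure $\overline{\la\tau\ra}$. So $F_{u,\tau}^{\overline{\la\tau\ra}} = F_{u,\tau}^{\tau} = k(\!(\eta^{1/p^\infty})\!)$ by the same argument as above (the case $r = 0$).

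The main obstacle I anticipate is being careful about which $\tau$-action is in play (the action on $F_{u,\tau}$ as a subfield of $C^\flat$, versus the abstract Galois-theoretic one) and making the "$u$ must not appear" argument rigorous — in particular justifying that the comparison of leading $u$-coefficients is legitimate in the completed Laurent field $k(\!(\eta^{1/p^\infty})\!)(\!(u)\!)$, and that $\varepsilon = \eta + 1$ genuinely has infinite multiplicative order (so that $(\eta+1)^m = 1 \Rightarrow m = 0$ in $\ZZ$). Everything else is a short Galois-theoretic bookkeeping, reducing the general $r$ to the case $r=0$ since $\tau^{p^r}$ still moves $u$ by a non-root-of-unity factor.
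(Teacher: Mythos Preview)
Your approach is correct and essentially identical to the paper's: both expand an element of $F_{u,\tau}$ as a series $\sum_i f_i(\eta)\,u^i$, apply $\tau^{p^r}$ (which multiplies the $i$-th coefficient by $\varepsilon^{ip^r}$), and use that $\varepsilon^m\neq 1$ for $m\neq 0$ to kill all nonconstant terms; then both reduce $(C^\flat_{u\text{-}\np})^{\mathscr{G}_{K_\zeta}}$ to $F_{u,\tau}^{\tau}$ via $\mathscr{G}_{K_\zeta}=\langle\mathscr{G}_L,\tau\rangle$ and Corollary~\ref{coro F u tau = C GL}. One small correction: you only have a strict inclusion $F_{u,\tau}\subsetneq k(\!(\eta^{1/p^\infty})\!)(\!(u)\!)$ (and $k(\!(\eta^{1/p^\infty})\!)$ is not complete), but the inclusion is all the coefficient comparison needs, so this does not affect your argument.
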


\begin{proof}
	Let $z\in F_{u, \tau},$ write $z=\sum\limits_{i=m}^{\infty}f_i(\eta)u^i,$ where $m\in \ZZ$ and $f_i(X)\in k(\!(X^{1/p^{n_i}})\!)$ for some $n_i\in \NN.$ We have $\tau^{p^n}(z)=\sum\limits_{i=m}^{\infty}\varepsilon^{ip^n}f_i(\eta)u^i$ so that $\tau^{p^n}(z)=z$ implies that $\varepsilon^{ip^n}f_i(\eta)=f_i(\eta)$ and hence $f_i=0$ for $i\not=0.$ We conclude $z=f_0(\eta)\subset \bigcup\limits_{n\in \NN} k(\!(\eta^{1/p^n})\!)$ and hence $F_{u,\tau}^{\tau^{p^r}} \subset k(\!(\eta^{1/p^\infty})\!).$ Conversely, we have $ k(\!(\eta^{1/p^\infty})\!) \subset F_{u,\tau}^{\tau^{p^r}},$ as $k(\!(\eta^{1/p^\infty})\!)\subset F_{u,\tau}$ and $\tau$ acts trivially over $K_{\zeta}.$ Notice that $\mathscr{G}_{K_{\zeta}}=\la \mathscr{G}_L, \tau \ra$ and  $F_{u,\tau}=(C^{\flat}_{u-\np})^{\mathscr{G}_L}$ by corollary \ref{coro F u tau = C GL}, so that $(C^{\flat}_{u-\np})^{\mathscr{G}_{K_\zeta}}=F_{u,\tau}^{\tau},$ which is $k(\!(\eta^{1/p^\infty})\!)$ by similar computations as above.
\end{proof}

\begin{lemma}\label{lemma varphi is surjective over non-perfect ring modified}
	Put $\mathbf{D}(T)=(C^{\flat}_{u-\np}\otimes T)^{\mathscr{G}_{K_{\zeta}}}.$ We have a $\mathscr{G}_{K_{\zeta}}$-equivariant isomorphism 
	\[  C^{\flat}_{u-\np}\otimes_{\FF_p}T\simeq C^{\flat}_{u-\np}\otimes_{k(\!(\eta^{1/p^\infty})\!)}\mathbf{D}(T). \]
	In particular, we have $\dim_{k(\!(\eta^{1/p^{\infty}})\!)}\mathbf{D}(T)=\dim_{\FF_p}T.$	 
\end{lemma}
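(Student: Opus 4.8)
The statement is a standard ``descent'' statement for étale $\varphi$-modules, but over the \emph{imperfect} coefficient field $k(\!(\eta^{1/p^\infty})\!)$ rather than over $F_0=k(\!(\widetilde\pi)\!)$ or its separable closure. The plan is to reduce everything to a Hilbert 90 / torsor argument for $\mathscr{G}_{K_\zeta}$, exactly as in the classical construction of $(\varphi,\Gamma)$-modules. First I would record that, by Proposition \ref{prop G} and Corollary \ref{coro F u tau = C GL} together with Lemma \ref{lemm point fixed by tau p n}, the field $C^\flat_{u-\np}$ is a separable closure of $k(\!(\eta^{1/p^\infty})\!)$: indeed $C^\flat_{u-\np}=F_{u,\tau}^{\sep}$ and $F_{u,\tau}$ is a finite separable (even: obtained by adjoining $\widetilde\pi$, which is transcendental — so rather a purely transcendental then…) — more carefully, $F_{u,\tau}=k(\!(\eta^{1/p^\infty})\!)(\!(u)\!)$ is the Laurent series field in $u$ over $k(\!(\eta^{1/p^\infty})\!)$, and since $C^\flat_{u-\np}$ is the separable closure of $F_{u,\tau}$ inside $C^\flat$, the Galois group $\Gal(C^\flat_{u-\np}/k(\!(\eta^{1/p^\infty})\!))$ is $\mathscr{G}_{K_\zeta}$ (because $\mathscr{G}_{K_\zeta}=\langle\mathscr{G}_L,\tau\rangle$ and $F_{u,\tau}^{\tau}=k(\!(\eta^{1/p^\infty})\!)$). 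So $\mathbf{D}(T)=(C^\flat_{u-\np}\otimes_{\FF_p}T)^{\mathscr{G}_{K_\zeta}}$ is the ``$(\varphi,\tau)$-module over $k(\!(\eta^{1/p^\infty})\!)$'' of $T$ viewed as a $\mathscr{G}_{K_\zeta}$-representation.

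Next I would run the usual argument: fix a basis $(t_1,\dots,t_d)$ of $T$ over $\FF_p$ and, for $g\in\mathscr{G}_{K_\zeta}$, let $U_g\in\GL_d(\FF_p)$ be the matrix of $g$. Then $g\mapsto U_g$ is a continuous cocycle with values in $\GL_d(C^\flat_{u-\np})$ (with the discrete topology), and by Hilbert 90 for the profinite group $\mathscr{G}_{K_\zeta}$ acting on $C^\flat_{u-\np}$ — i.e. the vanishing of $\H^1(\mathscr{G}_{K_\zeta},\GL_d(C^\flat_{u-\np}))$, which holds since $C^\flat_{u-\np}$ is the separable closure of the fixed field $k(\!(\eta^{1/p^\infty})\!)$ — this cocycle is a coboundary. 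Concretely, there is $P\in\GL_d(C^\flat_{u-\np})$ with $g(P)=P U_g^{-1}$ for all $g$; writing $(e_1,\dots,e_d)=(t_1,\dots,t_d)P$ gives a $\mathscr{G}_{K_\zeta}$-fixed basis, hence $(e_i)$ lies in $\mathbf{D}(T)$, is free over $k(\!(\eta^{1/p^\infty})\!)$, and is a $C^\flat_{u-\np}$-basis of $C^\flat_{u-\np}\otimes_{\FF_p}T$. The natural map $C^\flat_{u-\np}\otimes_{k(\!(\eta^{1/p^\infty})\!)}\mathbf{D}(T)\to C^\flat_{u-\np}\otimes_{\FF_p}T$ sends the $(1\otimes e_i)$ to the $(e_i)$, so it is an isomorphism, and it is $\mathscr{G}_{K_\zeta}$-equivariant by construction (the source carries the action only through the left factor, since $\mathbf{D}(T)$ is fixed). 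The dimension count $\dim_{k(\!(\eta^{1/p^\infty})\!)}\mathbf{D}(T)=d=\dim_{\FF_p}T$ is then immediate by comparing $C^\flat_{u-\np}$-dimensions on both sides.

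The one point requiring a little care — and what I expect to be the main (minor) obstacle — is justifying that $\mathscr{G}_{K_\zeta}$ really is the absolute Galois group of $k(\!(\eta^{1/p^\infty})\!)$ with $C^\flat_{u-\np}$ its separable closure, so that Hilbert 90 applies verbatim. This is where I would invoke Lemma \ref{lemm point fixed by tau p n} (giving $(C^\flat_{u-\np})^{\mathscr{G}_{K_\zeta}}=k(\!(\eta^{1/p^\infty})\!)$) together with Proposition \ref{prop G} (giving $\mathsf{G}\simeq\mathscr{G}_L$, hence $(C^\flat_{u-\np})^{\mathscr{G}_L}=F_{u,\tau}$ and $\Gal(C^\flat_{u-\np}/F_{u,\tau})=\mathscr{G}_L$); since $\mathscr{G}_{K_\zeta}$ is generated topologically by $\mathscr{G}_L$ and $\tau$, and $\tau$ has image a topological generator of $\Gal(F_{u,\tau}/k(\!(\eta^{1/p^\infty})\!))\simeq\ZZ_p$, one concludes $\mathscr{G}_{K_\zeta}=\Gal(C^\flat_{u-\np}/k(\!(\eta^{1/p^\infty})\!))$. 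The continuity of the cocycle $g\mapsto U_g$ (for the discrete topology on $\GL_d(C^\flat_{u-\np})$) follows because $T$ is finite and the $\mathscr{G}_K$-action on it is continuous, so the action factors through a finite quotient. Everything else is a formal transcription of the argument already used in the proof of Theorem \ref{thm cat equi Fp u} (cf. \cite[Lemma 1.12]{Car13}), now applied to $\mathscr{G}_{K_\zeta}$ instead of $\mathscr{G}_{K_\pi}$.
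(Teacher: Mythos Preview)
Your strategy has a genuine gap at exactly the point you flagged as ``minor'': the claim that $C^\flat_{u-\np}$ is a separable closure of $k(\!(\eta^{1/p^\infty})\!)$ with Galois group $\mathscr{G}_{K_\zeta}$ is \emph{false}, so Hilbert~90 does not apply verbatim. Indeed the $\mathscr{G}_{K_\zeta}$-orbit of $u=\widetilde\pi$ is $\{\tau^a(u)\,:\,a\in\ZZ_p\}=\{\varepsilon^a u\,:\,a\in\ZZ_p\}$, and since $a\mapsto\varepsilon^a$ is injective on $\ZZ_p$ this orbit is infinite. If $u$ were algebraic over $k(\!(\eta^{1/p^\infty})\!)$ its Galois orbit would be finite; hence $u$ is transcendental over $k(\!(\eta^{1/p^\infty})\!)$, the extension $F_{u,\tau}/k(\!(\eta^{1/p^\infty})\!)$ is not algebraic, and $C^\flat_{u-\np}\supsetneq k(\!(\eta^{1/p^\infty})\!)^{\sep}$. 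The chain you propose---$\Gal(C^\flat_{u-\np}/F_{u,\tau})=\mathscr{G}_L$ together with ``$\Gal(F_{u,\tau}/k(\!(\eta^{1/p^\infty})\!))\simeq\ZZ_p$''---breaks at the second step, which is not a Galois extension at all. One could try to salvage things by a two-step descent (first Hilbert~90 for $\mathscr{G}_L$, then an ad hoc $\H^1$ vanishing for the $\overline{\langle\tau\rangle}$-action on $\GL_d(F_{u,\tau})$), but the second step is not standard and you have not supplied it.

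The paper avoids this entirely by working over the \emph{actual} separable closure $k(\!(\eta^{1/p^\infty})\!)^{\sep}$ first. It introduces the auxiliary module $\DD(T)=\big(k(\!(\eta^{1/p^\infty})\!)^{\sep}\otimes_{\FF_p}T\big)^{\mathscr{G}_{K_\zeta}}$ and obtains, by the classical (perfect) field-of-norms theory (\cite[Proposition~1.2.4]{Fon90}), the $\mathscr{G}_{K_\zeta}$-equivariant isomorphism
\[
k(\!(\eta^{1/p^\infty})\!)^{\sep}\otimes_{k(\!(\eta^{1/p^\infty})\!)}\DD(T)\;\simeq\;k(\!(\eta^{1/p^\infty})\!)^{\sep}\otimes_{\FF_p}T.
\]
Only \emph{then} does it extend scalars along $k(\!(\eta^{1/p^\infty})\!)^{\sep}\hookrightarrow C^\flat_{u-\np}$ to get the desired isomorphism with $C^\flat_{u-\np}$ coefficients, and finally takes $\mathscr{G}_{K_\zeta}$-invariants (using Lemma~\ref{lemm point fixed by tau p n}) to identify $\DD(T)$ with $\mathbf{D}(T)$. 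The point is that Hilbert~90/Fontaine's theory is invoked only where it is legitimate, and the passage to the larger ring $C^\flat_{u-\np}$ is a harmless base change afterwards.
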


\begin{proof}
		Denote $\DD(T)=\big(k(\!( \eta^{1/p^{\infty}} )\!)^{\sep} \otimes_{\FF_p} T \big)^{\mathscr{G}_{K_{\zeta}}}$, then from the field of norm theory (of perfect fields) ($\cf$ \cite[Proposition 1.2.4]{Fon90}) we have the following $\mathscr{G}_{K_\zeta}$-equivariant isomorphism
	\[  k(\!( \eta^{1/p^{\infty}} )\!)^{\sep} \otimes_{k(\!(\eta^{1/p^{\infty}})\!)} \DD(T) \simeq k(\!( \eta^{1/p^{\infty}} )\!)^{\sep} \otimes_{\FF_p} T. \]
	Tensoring with $C_{u-\np}^{\flat}$ over $k(\!(\eta^{1/p^{\infty}})\!)^{\sep}$, we have 
	\begin{equation}\label{equ remark varphi is surjective over non-perfect ring modified}
	C_{u-\np}^{\flat}\otimes_{k(\!(\eta^{1/p^{\infty}})\!)} \DD(T) \simeq C_{u-\np}^{\flat}\otimes_{\FF_p} T.
	\end{equation}
	Taking the points fixed by $\mathscr{G}_{K_{\zeta}}$ on both sides gives
	\[ \big( C_{u-\np}^{\flat}\otimes_{k(\!(\eta^{1/p^{\infty}})\!)} \DD(T) \big)^{\mathscr{G}_{K_{\zeta}}} \simeq  \big( C_{u-\np}^{\flat}\otimes_{\FF_p} T \big)^{\mathscr{G}_{K_{\zeta}}} = \mathbf{D}(T). \]
	As $\DD(T)$ is fixed by ${\mathscr{G}_{K_{\zeta}}}$ from definition, the left hand side is 
	\[  \big( C_{u-\np}^{\flat}\otimes_{k(\!(\eta^{1/p^{\infty}})\!)} \DD(T) \big)^{\mathscr{G}_{K_{\zeta}}}= ( C_{u-\np}^{\flat} )^{\mathscr{G}_{K_{\zeta}}} \otimes_{k(\!(\eta^{1/p^{\infty}})\!)} \DD(T) = \DD(T) \]
	by lemma \ref{lemm point fixed by tau p n}.
	This proves that $\DD(T)=\mathbf{D}(T),$ hence equation \ref{equ remark varphi is surjective over non-perfect ring modified} gives what we want.
\end{proof}

\begin{lemma}\label{lemm x in D(T)}
	Let $r\in \NN_{>0},$ we then have $\mathcal{D}(T)_{u, \tau}^{\tau_D^{p^r}}\subset \mathbf{D}(T).$
\end{lemma}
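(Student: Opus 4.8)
\textbf{Proof proposal for Lemma \ref{lemm x in D(T)}.}

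The plan is to unwind the definitions of $\mathcal{D}(T)_{u,\tau}$ and $\mathbf{D}(T)$ and reduce the claim to a statement about fixed points in $C^\flat_{u-\np}$, exactly as in the proof of Lemma \ref{lemm point fixed by tau p n}. Recall that by Theorem \ref{thm cat equi Fp u} and Lemma \ref{lemm tau action natural} we have $\mathcal{D}(T)_{u,\tau} = F_{u,\tau}\otimes_{F_0}\mathcal{D}(T) \simeq (C^\flat_{u-\np}\otimes_{\FF_p}T)^{\mathscr{G}_L}$, with $\tau_D$ corresponding to the diagonal action $\tau\otimes\tau$. On the other side, $\mathbf{D}(T) = (C^\flat_{u-\np}\otimes_{\FF_p}T)^{\mathscr{G}_{K_\zeta}}$ by definition (Lemma \ref{lemma varphi is surjective over non-perfect ring modified}). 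Since $\mathscr{G}_{K_\zeta} = \langle\mathscr{G}_L,\tau\rangle$, an element of $(C^\flat_{u-\np}\otimes T)^{\mathscr{G}_L}$ lies in $\mathbf{D}(T)$ if and only if it is also fixed by $\tau\otimes\tau$, i.e. by $\tau_D$. So the content of the lemma is: if $x\in \mathcal{D}(T)_{u,\tau}$ satisfies $\tau_D^{p^r}(x)=x$, then in fact $\tau_D(x)=x$.

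First I would reduce to the case where $x$ is "scalar", or rather argue directly with coordinates. Fix a basis $\underline e = (e_1,\dots,e_d)$ of $\mathcal{D}(T)$ over $F_0$; this is also a basis of $\mathcal{D}(T)_{u,\tau}$ over $F_{u,\tau}$. Write $x = \sum_j a_j\otimes e_j$ with $a_j\in F_{u,\tau}$. The subtlety is that $\tau_D$ does not act coefficient-wise, since $\tau(u) = u(\eta+1)\neq u$, so $\tau_D$ mixes the coefficients with the (possibly nontrivial) matrix of $\tau$ on $\mathcal{D}(T)$. To handle this cleanly I would instead work on the level of $C^\flat_{u-\np}\otimes T$ directly: choose an $\FF_p$-basis $(t_1,\dots,t_d)$ of $T$, write $x = \sum_i c_i\otimes t_i$ with $c_i\in C^\flat_{u-\np}$, and observe that $\tau\otimes\tau$ acting on $x$ and the condition $\tau^{p^r}(x)=x$ translate, after expanding $\tau^{p^r}$ on the $t_i$ via the (finite-order-mod-$p$, but here we only need it is a fixed matrix) action, into a system on the $c_i$. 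Then I would mimic the Fourier-type argument of Lemma \ref{lemm point fixed by tau p n}: expand each $c_i\in F_{u,\tau} = k(\!(u,\eta^{1/p^\infty})\!)$ as $\sum_n f_{i,n}(\eta)u^n$, note $\tau^{p^r}$ multiplies the $u^n$-component by $\varepsilon^{np^r}$ (times mixing on the finite index set), and use that $\varepsilon^{np^r}=1$ forces $p^r\mid n$...

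The cleanest route, and the one I expect to actually write, is: the hypothesis $\tau_D^{p^r}(x)=x$ says $x\in \big(\mathcal{D}(T)_{u,\tau}\big)^{\tau_D^{p^r}} = \big(C^\flat_{u-\np}\otimes T\big)^{\langle\mathscr{G}_L,\tau^{p^r}\rangle}$. Now $\langle\mathscr{G}_L,\tau^{p^r}\rangle = \mathscr{G}_{K_{\zeta,r}}$ where $K_{\zeta,r} = L^{\overline{\langle\tau^{p^r}\rangle}}$ is a finite extension of $K_\zeta$ (as in Lemma \ref{lemm surjective gamma tau}), and its completion is perfectoid with tilt a finite extension of $k(\!(\eta^{1/p^\infty})\!)$; but more to the point, by the same field-of-norms computation as in Lemma \ref{lemma varphi is surjective over non-perfect ring modified} one has $(C^\flat_{u-\np})^{\mathscr{G}_{K_{\zeta,r}}} = (C^\flat_{u-\np})^{\mathscr{G}_{K_\zeta}} = k(\!(\eta^{1/p^\infty})\!)$ — indeed Lemma \ref{lemm point fixed by tau p n} already records $F_{u,\tau}^{\tau^{p^r}} = k(\!(\eta^{1/p^\infty})\!)$ for all $r$. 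Hence $\big(C^\flat_{u-\np}\otimes T\big)^{\mathscr{G}_{K_{\zeta,r}}}$, computed by tensoring the field-of-norms isomorphism (\ref{equ remark varphi is surjective over non-perfect ring modified}) and taking $\mathscr{G}_{K_{\zeta,r}}$-invariants, equals $(C^\flat_{u-\np})^{\mathscr{G}_{K_{\zeta,r}}}\otimes_{k(\!(\eta^{1/p^\infty})\!)}\DD(T) = k(\!(\eta^{1/p^\infty})\!)\otimes_{k(\!(\eta^{1/p^\infty})\!)}\DD(T) = \DD(T) = \mathbf{D}(T)$. This gives $\mathcal{D}(T)_{u,\tau}^{\tau_D^{p^r}} = \mathbf{D}(T)$ (in fact an equality, not just an inclusion), which is stronger than claimed.

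The main obstacle is making sure the tensor-product manipulation "$(C^\flat_{u-\np})^{\mathscr{G}_{K_{\zeta,r}}}\otimes \DD(T)$" is legitimate, i.e. that $\DD(T) = \big(k(\!(\eta^{1/p^\infty})\!)^{\sep}\otimes T\big)^{\mathscr{G}_{K_\zeta}}$ is already $\mathscr{G}_{K_{\zeta,r}}$-fixed and that taking $\mathscr{G}_{K_{\zeta,r}}$-invariants commutes with extending scalars from $k(\!(\eta^{1/p^\infty})\!)^{\sep}$ to $C^\flat_{u-\np}$ — this is the same descent argument already used in Lemma \ref{lemma varphi is surjective over non-perfect ring modified}, so I would simply invoke that lemma and Lemma \ref{lemm point fixed by tau p n} rather than redo it. Everything else is bookkeeping with the identifications from Theorem \ref{thm cat equi Fp u}.
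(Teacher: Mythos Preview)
Your proposal is correct and the ``cleanest route'' you settle on is essentially identical to the paper's proof: identify $\mathcal{D}(T)_{u,\tau}^{\tau_D^{p^r}}$ with $(C^\flat_{u-\np}\otimes T)^{\mathscr{G}_{K_\zeta(\pi_r)}}$, use the isomorphism of Lemma~\ref{lemma varphi is surjective over non-perfect ring modified} to pull the invariants inside as $(C^\flat_{u-\np})^{\mathscr{G}_{K_\zeta(\pi_r)}}\otimes\mathbf{D}(T)$, and finish with $F_{u,\tau}^{\tau^{p^r}}=k(\!(\eta^{1/p^\infty})\!)$ from Lemma~\ref{lemm point fixed by tau p n}. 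Your field $K_{\zeta,r}=L^{\overline{\langle\tau^{p^r}\rangle}}$ is exactly the paper's $K_\zeta(\pi^{1/p^r})$, and you correctly note that the argument actually yields an equality, not just an inclusion.
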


\begin{proof}
	We have $\mathcal{D}(T)_{u,\tau}^{\tau_D^{p^r}}= (C^{\flat}_{u-\np}\otimes_{\FF_p} T)^{\mathscr{G}_{K_{\zeta}(\pi^{1/p^r})}}.$
	Notice that by lemma \ref{lemma varphi is surjective over non-perfect ring modified}, we have 
	
	\begin{align*}
	(C^{\flat}_{u-\np}\otimes_{\FF_p} T)^{\mathscr{G}_{K_{\zeta}(\pi^{1/p^r})}}&=(C^{\flat}_{u-\np})^{\mathscr{G}_{K_{\zeta}(\pi^{1/p^r})}}\otimes \mathbf{D}(T)\\
	&=((C^{\flat}_{u-\np})^{\mathscr{G}_L})^{\tau^{p^r}}\otimes \mathbf{D}(T)\\
	&=F_{u,\tau}^{\tau^{p^r}}\otimes \mathbf{D}(T)\\
	&=\mathbf{D}(T).
	\end{align*}
	
	The last step follows from lemma \ref{lemm point fixed by tau p n}, as $\bigcup\limits_{n\in \NN} k(\!(\eta)\!)[\eta^{1/p^n}]\subset (C^{\flat}_{u-\np})^{\mathscr{G}_{K_{\zeta}}}.$
	
\end{proof}

\begin{proposition}\label{prop tau-1 high power inj}
	The map $\frac{\tau_D^{p^r}-1}{\tau_D-1}\colon D_{u,\tau,0}^{\psi=0}\to D_{u,\tau^{p^r},0}^{\psi=0}$ is injective.
\end{proposition}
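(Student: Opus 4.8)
The statement concerns the operator $\dfrac{\tau_D^{p^r}-1}{\tau_D-1}=1+\tau_D+\cdots+\tau_D^{p^r-1}$, restricted to $D_{u,\tau,0}^{\psi=0}$, and asserts injectivity with target $D_{u,\tau^{p^r},0}^{\psi=0}$. First I would check that the map is well-defined, i.e. that it indeed lands in $D_{u,\tau^{p^r},0}^{\psi=0}$: the identity $(1+\tau_D+\cdots+\tau_D^{p^r-1})(\tau_D-1)=\tau_D^{p^r}-1$ together with the defining property of $D_{u,\tau,0}$ (and the analogue for $\tau^{p^r}$, which uses that $\mathscr{G}_{K_\pi}$ still surjects onto the relevant quotient) shows membership in $D_{u,\tau^{p^r},0}$, and commutation of $\psi$ with $\tau_D$ (hence with any polynomial in $\tau_D$) preserves the condition $\psi=0$. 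This is routine.

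\textbf{Main step (injectivity).} The key point is the following. Suppose $x\in D_{u,\tau,0}^{\psi=0}$ satisfies $(1+\tau_D+\cdots+\tau_D^{p^r-1})(x)=0$. I would first reduce to the $\FF_p$-case by d\'evissage (as announced in the text, the $\ZZ_p$-case is handled by devissage, so it suffices to treat $T\in\Rep_{\FF_p}(\mathscr{G}_K)$). In the $\FF_p$-case, $1+\tau_D+\cdots+\tau_D^{p^r-1}=(\tau_D-1)^{p^r-1}$ since we are in characteristic $p$ and $\tau_D-1$ is topologically nilpotent. So the hypothesis becomes $(\tau_D-1)^{p^r-1}(x)=0$, i.e. $x$ is killed by a power of $\tau_D-1$. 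The plan is to show that the only such $x$ in $D_{u,\tau,0}^{\psi=0}$ is $0$. Concretely: if $(\tau_D-1)^{p^r-1}x=0$ then $x$ lies in a finite-dimensional subspace over a field fixed by a large power of $\tau$, and by Lemma \ref{lemm x in D(T)} (applied with a suitable $r$, since $(\tau_D-1)^{p^r-1}x=0$ forces $\tau_D^{p^{r'}}x=x$ for $r'$ with $p^{r'}\geq p^r-1$, using that raising $\tau_D-1$ to the $p^{r'}$ power annihilates anything annihilated by $(\tau_D-1)^{p^r-1}$) we get $x\in\mathbf{D}(T)=(C^\flat_{u-\np}\otimes T)^{\mathscr{G}_{K_\zeta}}$. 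On $\mathbf{D}(T)$ the operator $\tau_D$ acts trivially (it is $\tau\otimes\tau$ and $\tau$ fixes $K_\zeta$), so the condition $x\in D_{u,\tau,0}$ — i.e. $(\gamma\otimes1)x=(1+\tau_D+\cdots+\tau_D^{\chi(\gamma)-1})x=\chi(\gamma)\cdot x$ — combined with $\tau_D x=x$ forces $\gamma$ to act on $x$ by multiplication by $\chi(\gamma)$; but on the $\FF_p$-vector space $\mathbf{D}(T)$ this pins $x$ down severely. Then I would bring in $\psi$: since $x\in\mathbf{D}(T)$ and $\psi(x)=0$, and $\psi$ restricted to the relevant subring (coefficients $k(\!(\eta^{1/p^\infty})\!)$, whose separable closure carries $\psi$ via $\mathrm{Tr}_{\cdot/\varphi(\cdot)}$) together with $\varphi\psi+(\text{complementary projection})=\id$ — more precisely $x=\varphi(\psi x)+x_0$ where $x_0$ ranges over a complement — gives that $\psi=0$ on a finite-rank free module over a field forces, together with the $\varphi$-\'etale structure and the fact that $\tau_D x=x$, that $x=0$. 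The cleanest route is probably: $\psi x=0$ and $x\in\mathbf{D}(T)$ which is $\varphi$-stable; iterating $x=\varphi(\psi x)+(\text{error in }\ker)$ is vacuous, so instead one notes $x\in\bigcap_n\ker(\psi)\cap\mathbf{D}(T)$ and uses that $\ker\psi$ contains no nonzero $\varphi$-fixed-type vectors compatible with the Galois conditions — equivalently, transport to the representation side: $x$ corresponds to an element of $T$ fixed by $\mathscr{G}_{K_\zeta}$ and by the extra $\gamma$-eigenvalue condition, hence fixed by $\mathscr{G}_K$, and such elements inject into $D^{\psi=1}$, contradicting $\psi x=0$ unless $x=0$.

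\textbf{Expected obstacle.} The delicate part is making the interaction between the three conditions — $x\in D_{u,\tau,0}$, $\psi x=0$, and $(\tau_D-1)$-torsion — precise without circularity, in particular identifying exactly which field of coefficients $x$ lives over after the torsion hypothesis, and verifying that $\psi$ is compatible with that identification (this is where Lemmas \ref{lemm point fixed by tau p n}, \ref{lemma varphi is surjective over non-perfect ring modified}, \ref{lemm x in D(T)} and the construction of $\psi$ in Definition \ref{def psi operator} all have to be combined). Once $x\in\mathbf{D}(T)$ is established, the finiteness of $\dim_{k(\!(\eta^{1/p^\infty})\!)}\mathbf{D}(T)=\dim_{\FF_p}T$ makes the remaining argument a finite linear-algebra computation, and the passage from $\FF_p$ to $\ZZ_p$ is the usual snake-lemma d\'evissage using that $\mathcal{D}(-)_{u,\tau}$ is exact (Corollary \ref{coro D u tau Zp case}) and that $\psi$, $\tau_D$ are compatible with reduction mod $p$.
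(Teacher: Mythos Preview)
Your overall strategy is correct up to the point where you land $x\in\mathbf{D}(T)^{\psi=0}$, but the finishing argument you sketch is where the gap lies. After that point you try several routes (the $D_{u,\tau,0}$ condition giving a $\gamma$-eigenvalue, ``transport to the representation side'', contradiction with $D^{\psi=1}$), none of which are made to work: an element of $\mathbf{D}(T)$ does \emph{not} correspond to an element of $T$, and the $\gamma$-eigenvector condition you extract does not by itself force $x=0$.

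The paper's proof closes the argument in one line that you are missing: by Lemma~\ref{lemma varphi is surjective over non-perfect ring modified}, $\mathbf{D}(T)$ is an \'etale $\varphi$-module over the \emph{perfect} field $k(\!(\eta^{1/p^\infty})\!)$, so $\varphi$ is \emph{bijective} on $\mathbf{D}(T)$. Hence $x=\varphi(x')$ for some $x'\in\mathbf{D}(T)$, and then $0=\psi(x)=\psi(\varphi(x'))=x'$, whence $x=0$. This is precisely the ``finite linear-algebra computation'' you allude to but never carry out; the perfectness of the coefficient field is the key input. Note also that the $D_{u,\tau,0}$ condition plays no role at all in the paper's argument --- only $\psi(x)=0$ and $\tau_D^{p^r}(x)=x$ are used.

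Two smaller remarks. First, the proposition is stated inside the subsection ``The case of $\FF_p$-representations'', so no d\'evissage is needed here; the $\ZZ_p$-case is handled later and separately. Second, the paper obtains $(\tau_D^{p^r}-1)x=0$ simply by composing with $\tau_D-1$, without invoking the characteristic-$p$ identity $\frac{\tau_D^{p^r}-1}{\tau_D-1}=(\tau_D-1)^{p^r-1}$; your identity is correct but an unnecessary detour.
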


\begin{proof}
	Take any $x\in D_{u, \tau, 0}^{\psi=0}$ with $\frac{\tau_D^{p^r}-1}{\tau_D-1}(x)=0,$ then in particular $(\tau_D^{p^r}-1)x=0.$ Hence $x\in D_{u, \tau}^{\tau_D^{p^r}, \psi=0}$ and  $x\in \mathbf{D}(T)^{\psi=0}$ by lemma \ref{lemm x in D(T)}. Lemma \ref{lemm point fixed by tau p n} and lemma \ref{lemma varphi is surjective over non-perfect ring modified} imply that $x=\varphi(x^{\prime})$ for some $x^{\prime}\in \mathbf{D}(T)$ (as $\mathbf{D}(T)$ is \'etale and the base field $k(\!(\eta^{1/p^\infty})\!)$ is perfect) and hence $0=\psi(x)=\psi(\varphi(x^{\prime}))=x^{\prime}$ implies $x=0.$
\end{proof}
Recall that by remark \ref{rem psi embedding Zp case} (2), to prove theorem \ref{prop quais-iso psi} it suffices to prove $D^{\psi=0} \xrightarrow{\tau_D-1} D_{u, \tau, 0}^{\psi=0}$ being isomorphic: we firstly prove the case of $\FF_p$-representations, in several steps.

\subsubsection{The injectivity}

Recall that $F_0=k(\!(u)\!)$ embeds into $C^{\flat}$ by $u\mapsto \tilde{\pi}.$

\begin{lemma}\label{lemma I.3.1} We have $F_0^{\tau=1}=k.$
\end{lemma}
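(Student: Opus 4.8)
The statement $F_0^{\tau=1}=k$ concerns the field $F_0=k(\!(u)\!)$ embedded in $C^\flat$ via $u\mapsto\widetilde\pi$, on which $\tau$ acts by $\tau(u)=\tau([\widetilde\pi]\bmod p)=\varepsilon\widetilde\pi=(\eta+1)u$, i.e. $\tau(u)=(1+\eta)u$ where $\eta=\varepsilon-1$ has positive valuation. The plan is to write a general element of $F_0$ as a Laurent series $z=\sum_{i\geq m}a_iu^i$ with $a_i\in k$ and exploit that $\tau$ multiplies the coefficient of $u^i$ by the unit $(1+\eta)^i$. Since $k\subset K$ is fixed by the whole Galois group (in particular by $\tau$), the inclusion $k\subset F_0^{\tau=1}$ is immediate, so the content is the reverse inclusion.

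First I would expand $\tau(z)=\sum_{i\geq m}a_i(1+\eta)^iu^i$ inside $C^\flat$; note that $(1+\eta)^i\in k(\!(\eta)\!)\subset F_{u,\tau}$ for each $i$, and that the monomials $u^i$ ($i\in\ZZ$) remain $k(\!(\eta)\!)$-linearly independent in $F_{u,\tau}=k(\!(u,\eta^{1/p^\infty})\!)$ — this is the same kind of unique-expansion fact already used for $F_\tau^{\gamma=1}$ in the proof of Lemma \ref{lemm0}. Imposing $\tau(z)=z$ then forces $a_i(1+\eta)^i=a_i$ for every $i$, hence $a_i((1+\eta)^i-1)=0$. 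For $i\neq0$ we have $(1+\eta)^i\neq1$ in $C^\flat$ (as $v^\flat(\eta)>0$, so $v^\flat((1+\eta)^i-1)=v^\flat(\eta)<\infty$ when $p\nmid i$, and more generally $(1+\eta)^i=1$ would force $\varepsilon^i=1$, impossible since $\varepsilon$ has infinite order as $\zeta_p\neq1$), and since $a_i\in k$ this gives $a_i=0$. Therefore $z=a_0\in k$, which is the desired conclusion.

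The only point requiring a little care — and the place I'd expect the main (mild) obstacle — is justifying the linear independence of the $u^i$ over the coefficient field when one works inside $C^\flat$ rather than abstractly: one must be sure that no cancellation among different powers of $u$ can occur after applying $\tau$, i.e. that $\tau$ does not mix the $u$-grading. This follows because $\tau$ sends $u^i$ to $(1+\eta)^iu^i$ with $(1+\eta)^i$ lying in the subfield $k(\!(\eta^{1/p^\infty})\!)$ which is "transverse" to the $u$-variable inside $F_{u,\tau}$; concretely, the decomposition $F_{u,\tau}=\bigcup_n k(\!(\eta^{1/p^n})\!)(\!(u)\!)$ shows $u$-expansions are unique with coefficients in $k(\!(\eta^{1/p^\infty})\!)$, so the equation $\tau(z)=z$ can be read off coefficient-by-coefficient. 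Once this is in place the argument is a two-line computation. (Alternatively, one could invoke Lemma \ref{lemm point fixed by tau p n} applied with the roles of $u$ and $\eta$ swapped, but the direct series computation is cleaner here.)
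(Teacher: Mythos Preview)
Your proof is correct and follows essentially the same approach as the paper: expand $z\in F_0$ as a Laurent series in $u$, apply $\tau(u^i)=(1+\eta)^iu^i$, and compare coefficients to force $a_i=0$ for $i\neq0$ since $\varepsilon^i\neq1$. The paper's version is terser --- it simply reads off $\lambda_i=\lambda_i(\eta+1)^i$ without dwelling on the uniqueness of $u$-expansions --- whereas you make explicit that the comparison takes place inside $F_{u,\tau}$ where the $u^i$ are linearly independent over $k(\!(\eta^{1/p^\infty})\!)$; this extra care is reasonable but not strictly necessary for such a short computation.
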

\begin{proof}
	Recall that $\tau(u)=\varepsilon u= (\eta+1)u$ and $\tau(u^i)=(\eta +1)^iu^i.$ Suppose $x=\sum\limits_{i=i_0}^{\infty}\lambda_i u^i\in F_0^{\tau=1}$ with $\lambda_i\in k,$ then $\tau(x)=\sum\limits_{i=i_0}^{\infty}\lambda_i (\eta+1)^i u^i.$
	Hence $\lambda_i=\lambda_i(\eta+1)^i,$ so that $\lambda_i((\eta+1)^i-1)=0$ for all $i\geq i_0.$ If $i\neq 0,$ then $(\eta+1)^i\neq 1,$ hence $(\eta+1)^i-1\in k(\!(\eta)\!)^{\times}$ so that $\lambda_i=0.$ In particular, $x=\lambda_0\in k.$
\end{proof}

\begin{lemma}\label{lemm Brinon saves}
	We have $k(\!(\eta^{1/p^{\infty}})\!)^{\gamma=1}=k.$
\end{lemma}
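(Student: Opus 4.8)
The statement to prove is $k(\!(\eta^{1/p^{\infty}})\!)^{\gamma=1}=k$, where $\gamma$ is the chosen topological generator of $\Gamma$ and acts via $\gamma(\eta^{1/p^n})=(1+\eta)^{\chi(\gamma)/p^n}-1$ (so that $\gamma(\varepsilon)=\varepsilon^{\chi(\gamma)}$, i.e. $\gamma(1+\eta)=(1+\eta)^{\chi(\gamma)}$). The plan is to mimic the proof of Lemma \ref{lemma I.3.1}: expand an element fixed by $\gamma$ in a suitable basis, compare coefficients, and show that all but the constant term must vanish because the relevant Galois-group element acts nontrivially on the basis vectors $\eta^{j/p^n}$ for $j\not\equiv 0$.

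First I would reduce to a finite level: any $x\in k(\!(\eta^{1/p^{\infty}})\!)$ lies in $k(\!(\eta^{1/p^n})\!)$ for some $n\in\NN$. Writing $v=\eta^{1/p^n}$, I have $x=\sum_{j\geq j_0}\lambda_j v^j$ with $\lambda_j\in k$ and $v_\eta(x)$ bounded below. The key computation is that $\gamma(v)=\gamma(\eta)^{1/p^n}=((1+\eta)^{\chi(\gamma)}-1)^{1/p^n}$; more usefully, since $\gamma$ acts on $k(\!(\eta)\!)$ by $\eta\mapsto (1+\eta)^{\chi(\gamma)}-1$, we have $\gamma(v)=\mu\cdot v$ for a unit $\mu\in k(\!(\eta^{1/p^n})\!)^\times$ with $\mu\equiv \chi(\gamma)^{1/p^n}\pmod{v}$ — more precisely $\gamma(v)/v$ is a $p^n$-th root (chosen compatibly) of $\gamma(\eta)/\eta = \frac{(1+\eta)^{\chi(\gamma)}-1}{\eta}$, which is a $1$-unit once we pass to $k(\!(\eta^{1/p^n})\!)$ only up to the constant $\chi(\gamma)\bmod p$. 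The cleanest route is instead to work directly with $\tau^{p^n}$-style bookkeeping on the exponents: I will compare $x$ and $\gamma(x)$ degree by degree in $v$, using that $\gamma$ sends the monomial $v^j$ to $v^j$ times a $1$-unit (in the $v$-adic topology) whose leading coefficient is $\chi(\gamma)^j$ (interpreting $p^n$-th roots consistently), so that matching the lowest-degree term forces $\lambda_{j_0}(\chi(\gamma)^{j_0}-1)=0$ modulo higher order, then inductively $\lambda_j=0$ for every $j$ with $\chi(\gamma)^j\neq 1$ in $k^\times$ — i.e. for every $j\neq 0$, since $\chi(\gamma)$ reduces to a generator of a nontrivial subgroup of $\FF_p^\times$ when $p$ is odd and $\chi(\gamma)\not\equiv 1$, and one handles the case $\chi(\gamma)\equiv 1\pmod p$ by passing to a higher power or by noting $\gamma$ still acts nontrivially on $v^j$ at the next $\eta$-adic order.

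Alternatively — and this is probably the slicker argument — I would invoke that $k(\!(\eta^{1/p^\infty})\!)$ is the perfect field of norms for $K_\zeta/K$ (the cyclotomic tower), so by Fontaine-Wintenberger its automorphism group over the base is $\Gal(\overline{K}/K_\zeta)$-related; concretely $k(\!(\eta^{1/p^\infty})\!)^{\mathscr{G}_{K_\zeta}\text{-action through }\Gamma}$ should be the field of norms of $K/K$, which is just $k$. But to keep the proof self-contained and avoid circularity with the field-of-norms formalism used elsewhere, I expect the author wants the elementary coefficient-comparison proof above, exactly parallel to Lemma \ref{lemma I.3.1}.

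The main obstacle is the case distinction forced by whether $\chi(\gamma)\equiv 1\pmod p$: when it is, the leading-order comparison $\lambda_j(\chi(\gamma)^j-1)=0$ is vacuous for small $j$, and one must extract a genuinely nontrivial relation from the next term in the $\eta$-adic expansion of $\gamma(v)/v$ — i.e. show that $\frac{(1+\eta)^{\chi(\gamma)}-1}{\chi(\gamma)\eta}$ is a $1$-unit but not equal to $1$, hence $\gamma$ acts on $v^j$ as $v^j(\text{constant})(1+c_j\eta+\cdots)$ with $c_j\neq 0$ for $j\neq 0$, which still forces $\lambda_j=0$ by an easy induction on $\eta$-adic valuation. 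Writing this carefully is the only non-routine point; everything else is the same bookkeeping as in the proof of $F_0^{\tau=1}=k$.
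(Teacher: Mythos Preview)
Your primary approach --- coefficient comparison in $k(\!(v)\!)$ with $v=\eta^{1/p^n}$ --- is \emph{not} what the paper does, and it is also not as parallel to Lemma~\ref{lemma I.3.1} as you suggest. In that lemma $\tau$ acts \emph{diagonally} on the monomials $u^i$ (with eigenvalue $\varepsilon^i$), so the argument is a one-line eigenvalue check. Here $\gamma$ acts only upper-triangularly: $\gamma(v^j)=c_0^j v^j+(\text{higher order})$ with $c_0=\chi(\gamma)\bmod p$. Your leading-order step gives $\lambda_j(c_0^{j}-1)=0$, which only kills $\lambda_j$ when $d\nmid j$, where $d=\mathrm{ord}_{\FF_p^\times}(c_0)$. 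Your claim that this already forces $\lambda_j=0$ for all $j\neq0$ ``since $\chi(\gamma)$ reduces to a generator of a nontrivial subgroup'' is incorrect: nothing forces $c_0$ to be a primitive root, so there are infinitely many $j\neq0$ (all multiples of $d$) that survive the leading-order comparison, even before you get to the $\chi(\gamma)\equiv1\pmod p$ case. The higher-order induction you allude to can presumably be made to work, but it is genuinely delicate (you must control the next nonvanishing term of $(\gamma(v)/v)^j-1$ uniformly in $j$), and you have not carried it out.

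The paper sidesteps all of this with a short conceptual argument --- essentially your ``alternative'', which is not circular at all. One observes that $k(\!(\eta^{1/p^\infty})\!)\subset (C^\flat)^{\mathscr{G}_{K_\zeta}}$, hence $k(\!(\eta^{1/p^\infty})\!)^{\gamma=1}\subset (C^\flat)^{\mathscr{G}_K}$. Then one computes $(C^\flat)^{\mathscr{G}_K}=\varprojlim_{x\mapsto x^p}K$ directly: for a nonzero $(x_n)_n$ in this limit, $v(x_0)=p^n v(x_n)$ for all $n$ forces $v(x_n)=0$ (discreteness of the valuation on $K$), and then a short argument with Teichm\"uller lifts shows $(x_n)_n$ comes from $k$. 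This uses only $C^{\mathscr{G}_K}=K$ and the discreteness of $v$ on $K$ --- no coefficient bookkeeping, no case split on $\chi(\gamma)\bmod p$.
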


\begin{proof}
	We have $k(\!(\eta^{1/p^{\infty}})\!)\subset (C^{\flat})^{\mathscr{G}_{K_{\zeta}}},$ hence $k(\!(\eta^{1/p^{\infty}})\!)^{\gamma=1}\subset (C^{\flat})^{\mathscr{G}_{K}}.$ As $C^{\flat}=\pLim{x\mapsto x^p} C,$ we have 
	\[(C^{\flat})^{\mathscr{G}_K}=\pLim{x\mapsto x^p}K.\]
	Let $x=(x_n)_n\in \pLim{x\mapsto x^p}K,$ we have $x_0=x_n^{p^n}$ for all $n\in \NN.$ In particular, we have $v(x_0)=p^nv(x_n).$ If $x\neq0,$ this implies $v(x_n)=0$ for all $n\in \NN.$ Let $\overline{x_0}$ be the image of $x_0$ in $k=\Oo_K/(\pi)$ and $y=\big([\overline{x_0}^{p{^{-n}}}] \big)_n\in \pLim{x\mapsto x^p}K,$ then $y^{-1}x=(z_n)_n\in \pLim{x\mapsto x^p}K$ satisfies $z_n\equiv 1$ mod $\pi\Oo_K$ for all $n\in \NN.$ This implies $z_n=z_{n+m}^{p^m}\equiv 1$ mod $\pi^{m+1}\Oo_K$ for all $m\in \NN,$ hence $z_n=1$ for all $n\in\NN.$ This shows that $x=y,$ and that the map 
	\begin{align*}
	k &\to \pLim{x\mapsto x^p} K=(C^{\flat})^{\mathscr{G}_K}\\
	\alpha &\mapsto \big([\alpha^{p^{-n}}] \big)_n
	\end{align*}
	is a ring isomorphism. As $k\subset k(\!(\eta^{1/p^{\infty}})\!)^{\gamma=1}\subset (C^{\flat})^{\mathscr{G}_K}$, this shows that $k=k(\!(\eta^{1/p^{\infty}})\!)^{\gamma=1}=(C^{\flat})^{\mathscr{G}_K}.$
\end{proof}

\begin{proposition}
	The natural map $k(\!(\eta^{1/p^{\infty}})\!)\otimes_k \mathbf{D}(T)^{\gamma=1}\to \mathbf{D}(T)$ is injective.
\end{proposition}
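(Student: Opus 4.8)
The statement to prove is that the natural multiplication map
\[
k(\!(\eta^{1/p^{\infty}})\!)\otimes_k \mathbf{D}(T)^{\gamma=1}\to \mathbf{D}(T)
\]
is injective. The plan is to reduce the statement to a linear-independence assertion over $k$ and then exploit the already-established fact that $k(\!(\eta^{1/p^\infty})\!)^{\gamma=1}=k$ (Lemma \ref{lemm Brinon saves}). First I would observe that $\mathbf{D}(T)$ carries a semi-linear action of $\Gamma=\overline{\la\gamma\ra}$ over $k(\!(\eta^{1/p^\infty})\!)$, since $\mathbf{D}(T)=(C^{\flat}_{u-\np}\otimes_{\FF_p}T)^{\mathscr{G}_{K_\zeta}}$ and $\mathscr{G}_{K_\zeta}/\mathscr{G}_{K_\zeta}\cap(\text{kernel of action on }\mathbf{D}(T))$ realizes $\Gamma$; concretely $\gamma$ acts on $C^{\flat}_{u-\np}$ fixing $F_{u,\tau}^{\tau}=k(\!(\eta^{1/p^\infty})\!)$ only up to the $\Gamma$-action via $\eta\mapsto(\eta+1)^{\chi(\gamma)}-1$, so the action is genuinely semi-linear on the base. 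The submodule $\mathbf{D}(T)^{\gamma=1}$ is then a $k$-vector space (as $k=k(\!(\eta^{1/p^\infty})\!)^{\gamma=1}$), and what must be shown is that any family of elements of $\mathbf{D}(T)^{\gamma=1}$ that is $k$-linearly independent remains $k(\!(\eta^{1/p^\infty})\!)$-linearly independent in $\mathbf{D}(T)$.

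The key step is a minimal-counterexample / shortest-relation argument, entirely analogous to the classical proof that fixed vectors under a group action remain linearly independent after base change (the ``linear independence of characters'' or Speiser-type argument). Suppose the map is not injective; choose a relation $\sum_{i=1}^{n}\lambda_i x_i=0$ with $\lambda_i\in k(\!(\eta^{1/p^\infty})\!)$ not all zero, $x_i\in\mathbf{D}(T)^{\gamma=1}$ that are $k$-linearly independent, and $n$ minimal. Normalize so that $\lambda_1=1$. Apply $\gamma$: since $\gamma(x_i)=x_i$ and $\gamma$ is semi-linear, we get $\sum_i \gamma(\lambda_i) x_i=0$. Subtract from the original relation to obtain $\sum_{i\geq 2}(\lambda_i-\gamma(\lambda_i))x_i=0$, a shorter relation, forcing $\lambda_i=\gamma(\lambda_i)$ for all $i$ by minimality. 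Hence each $\lambda_i\in k(\!(\eta^{1/p^\infty})\!)^{\gamma=1}=k$ by Lemma \ref{lemm Brinon saves}, contradicting the $k$-linear independence of the $x_i$. This yields injectivity.

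The one point that requires a little care — and which I expect to be the main obstacle — is justifying that $\mathbf{D}(T)$ really is stable under a semi-linear $\gamma$-action and that $\mathbf{D}(T)^{\gamma=1}$ is the set of genuine fixed points, rather than invariants under some larger or smaller group; this needs the identification $\mathscr{G}_{K_\zeta}=\la\mathscr{G}_L,\tau\ra$ together with $(C^{\flat}_{u-\np})^{\mathscr{G}_{K_\zeta}}=k(\!(\eta^{1/p^\infty})\!)$ (Lemma \ref{lemm point fixed by tau p n}) and the compatibility with Lemma \ref{lemma varphi is surjective over non-perfect ring modified}, which presents $C^{\flat}_{u-\np}\otimes_{\FF_p}T$ as $C^{\flat}_{u-\np}\otimes_{k(\!(\eta^{1/p^\infty})\!)}\mathbf{D}(T)$ equivariantly for $\mathscr{G}_{K_\zeta}$, hence in particular for the quotient acting through $\Gamma$. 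Once this setup is in place, the shortest-relation argument above is completely formal and routine. (Alternatively, one could phrase the whole thing as: $k(\!(\eta^{1/p^\infty})\!)/k$ together with the $\Gamma$-action satisfies the hypotheses of Galois descent / Hilbert 90 in the semi-linear setting, and $\mathbf{D}(T)^{\gamma=1}$ is a descent datum; but the elementary argument is shorter and self-contained.)
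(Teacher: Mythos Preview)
Your proposal is correct and takes essentially the same approach as the paper: both use a shortest-relation argument, apply $\gamma$ and subtract to obtain a strictly shorter relation, then invoke Lemma~\ref{lemm Brinon saves} to conclude all coefficients lie in $k$. The paper's version is slightly more streamlined (it works directly with a minimal-length element of the kernel rather than first reducing to $k$-linearly independent $x_i$, and it does not dwell on the semi-linearity setup), but the mathematical content is identical.
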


\begin{proof}
	We use the standard argument: assume it is not and let $x=\sum\limits_{i=1}^r\lambda_i\otimes \alpha_i,$ with $\lambda_i\in k(\!(\eta^{1/p^{\infty}})\!)$ and $\alpha_i\in \mathbf{D}(T)^{\gamma=1}$ for all $i,$ be a nonzero element in the kernel such that $r$ is minimal. Dividing by $\lambda_r$ we may assume that $\lambda_r=1.$ As $\sum\limits_{i=1}^{r}\gamma(\lambda_i)\otimes \alpha_i$ maps to $\gamma(\sum\limits_{i=1}^{r}\lambda_i\alpha_i)=0,$ the element $\sum\limits_{i=1}^{r-1}(\gamma-1)(\lambda_i)\otimes \alpha_i$ lies in the kernel, by the minimality of $r,$ we have $\gamma(\lambda_i)=\lambda_i,$ $\ie$$\lambda_i\in k$ for all $i\in \{ 1, \ldots, r\}$ ($\cf$ lemma \ref{lemm Brinon saves}). Then $x=1\otimes(\sum\limits_{i=1}^r\lambda_i\alpha_i)=0,$ which is a contradiction to the assumption. Hence the map is injective.
\end{proof}

\begin{corollary}\label{coro dim Brinon}
	We have $\dim_k\mathbf{D}(T)^{\gamma=1}\leq \dim_{\FF_p}T.$
\end{corollary}

\begin{lemma}
	\label{lemm 1-tau inj Cherbonnier}
	If $x\in D^{\psi=0}$ is such that $\tau_D(x)=x,$ then $x=0.$
\end{lemma}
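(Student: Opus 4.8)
The statement to prove is: if $x \in D^{\psi=0}$ satisfies $\tau_D(x) = x$, then $x = 0$, where $T \in \Rep_{\FF_p}(\mathscr{G}_K)$ and $D = \mathcal{D}(T)$. The plan is to use the structure theory developed in the previous lemmas to reduce $x$ to an element of $\mathbf{D}(T)^{\gamma=1}$, and then exploit $\psi$-étaleness over the perfect base field.

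First I would observe that $D \subset D_{u,\tau}$ and that $x$ fixed by $\tau_D$ in particular lies in $D_{u,\tau}^{\tau_D^{p^r}}$ for every $r \in \NN_{>0}$; by Lemma \ref{lemm x in D(T)} this gives $x \in \mathbf{D}(T)$. So we may regard $x$ as an element of $\mathbf{D}(T) = (C^\flat_{u-\np} \otimes_{\FF_p} T)^{\mathscr{G}_{K_\zeta}}$, which by Lemma \ref{lemma varphi is surjective over non-perfect ring modified} is an étale $\varphi$-module over the perfect field $k(\!(\eta^{1/p^\infty})\!)$ of dimension $\dim_{\FF_p} T$. Next, since $x \in D = \mathcal{D}(T)$ is fixed by $\mathscr{G}_{K_\pi}$ (recall $\mathscr{G}_{K_\pi}$ acts trivially on $\mathcal{D}(T)$), and $\gamma$ generates $\mathscr{G}_{K_\pi}/\mathscr{G}_L$, I would check that $x$ is fixed by $\gamma$, so that $x \in \mathbf{D}(T)^{\gamma=1}$. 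The point is then to use the $\psi$-condition: since $\mathbf{D}(T)$ is étale over the \emph{perfect} field $k(\!(\eta^{1/p^\infty})\!)$, the Frobenius $\varphi$ is bijective on $\mathbf{D}(T)$, and as in Proposition \ref{prop tau-1 high power inj} one has $\psi = \varphi^{-1}$ on $\mathbf{D}(T)$. Therefore $\psi(x) = 0$ forces $\varphi^{-1}(x) = 0$, hence $x = 0$.

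The one subtlety I would want to nail down carefully is that $\psi$, which is defined via the trace $\Tr_{\widehat{\Ff^{\ur}_u}/\varphi(\widehat{\Ff^{\ur}_u})}$, genuinely restricts on $\mathbf{D}(T)$ to the inverse of the (bijective) Frobenius on $\mathbf{D}(T)$. This follows because on the perfect residue field $k(\!(\eta^{1/p^\infty})\!)^{\sep}$ the extension $\varphi(\cdot) \hookrightarrow (\cdot)$ is trivial (Frobenius is an isomorphism there), so the "local" trace-normalized left inverse of $\varphi$ is simply $\varphi^{-1}$; concretely, for $x = \varphi(x')$ with $x' \in \mathbf{D}(T)$ one gets $\psi(x) = \psi(\varphi(x')) = x'$ by property (1) of Proposition \ref{prop Zp-case psi surjective}, and every element of $\mathbf{D}(T)$ is of this form by étaleness and perfectness.

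The main obstacle, and the step deserving the most care, is the reduction $x \in \mathbf{D}(T)$: one must justify that being $\tau_D$-invariant (hence $\tau_D^{p^r}$-invariant for all $r$) is exactly what is needed to invoke Lemma \ref{lemm x in D(T)}, and then that the residual $\gamma$-invariance places $x$ in the subspace $\mathbf{D}(T)^{\gamma=1}$ on which the perfect-field étale structure is available. Once $x$ is seen inside this small, well-understood space, the conclusion $\psi(x) = 0 \Rightarrow x = 0$ is immediate from $\psi = \varphi^{-1}$ there. I do not anticipate any genuine difficulty beyond bookkeeping the group-theoretic identifications ($\mathscr{G}_{K_\zeta} = \langle \mathscr{G}_L, \tau\rangle$, $\mathscr{G}_{K_\pi}/\mathscr{G}_L = \overline{\langle\gamma\rangle}$), all of which are already recorded in the Notations section and the earlier lemmas of this chapter.
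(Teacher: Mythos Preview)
Your proposal is correct and follows essentially the same route as the paper: reduce $x$ to an element of $\mathbf{D}(T)$ (indeed of $\mathbf{D}(T)^{\gamma=1}$), then use that $\varphi$ is bijective there so that $x=\varphi(y)$ and $\psi(x)=0$ forces $y=0$. The only cosmetic difference is that the paper gets bijectivity of $\varphi$ on $\mathbf{D}(T)^{\gamma=1}$ from its finite $k$-dimension (Corollary~\ref{coro dim Brinon}) rather than from \'etaleness of $\mathbf{D}(T)$ over the perfect field $k(\!(\eta^{1/p^\infty})\!)$, and it reaches $x\in\mathbf{D}(T)^{\gamma=1}$ in one step by observing directly that $x$ is fixed by $\mathscr{G}_K=\langle\mathscr{G}_{K_\pi},\tau\rangle$, rather than passing through Lemma~\ref{lemm x in D(T)}.
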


\begin{proof}
	Let $x\in D^{\psi=0}$ be such that $\tau_D(x)=x.$ This shows that $x,$ seen as an element of $C^{\flat}_{u-\np}\otimes_{\FF_p}T,$ is fixed by $\mathscr{G}_K:$ it belongs to $\mathbf{D}(T)^{\gamma=1}.$ By corollary \ref{coro dim Brinon}, the latter is a finite dimensional $k$-vector space. It is endowed with the restriction $\varphi$ of the Frobenius map, and this restriction is injective. As $k$ is perfect, this implies that $\varphi\colon \mathbf{D}(T)^{\gamma=1}\to \mathbf{D}(T)^{\gamma=1}$ is bijective: there exists $y\in \mathbf{D}(T)^{\gamma=1}$ such that $x=\varphi(y).$ Then $y=\psi\varphi(y)=\psi(x)=0,$ hence $x=\varphi(y)=0.$
\end{proof}

\begin{corollary}\label{cor injective before "the trivial case"}
	The map $D^{\psi=0} \xrightarrow{\tau_D-1} D_{u, \tau, 0}^{\psi=0}$ is injective.
\end{corollary}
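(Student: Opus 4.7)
The proof is essentially immediate from the preceding Lemma \ref{lemm 1-tau inj Cherbonnier}, so the main task is just to verify that the stated map is well-defined and then invoke the lemma.

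For well-definedness, I would note two things: first, by (the $u$-analogue of) Lemma \ref{lemm 1.1.11 well-defined tau-1 varphi-1}, the map $\tau_D-1$ sends $D$ into $D_{u,\tau,0}$; second, since $\psi$ commutes with the Galois action on $\Oo_{\widehat{\Ff^{\ur}_{u}}}\otimes_{\ZZ_p}T$ (remark \ref{rem Zp-case psi commutes with g}), and in particular with $\tau_D$, the condition $\psi(x)=0$ for $x\in D$ forces $\psi((\tau_D-1)(x))=(\tau_D-1)(\psi(x))=0$, so the image indeed lies in $D_{u,\tau,0}^{\psi=0}$.

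For injectivity, let $x\in D^{\psi=0}$ lie in the kernel of $\tau_D-1$. Then $\tau_D(x)=x$, so the hypotheses of Lemma \ref{lemm 1-tau inj Cherbonnier} are satisfied, and we conclude $x=0$. This is really all there is to say; the substantive work has already been done in Lemma \ref{lemm 1-tau inj Cherbonnier}, where the argument used the fact that an element of $\mathbf{D}(T)^{\gamma=1}$ in the kernel of $\psi$ must vanish (via the finite-dimensionality of $\mathbf{D}(T)^{\gamma=1}$ over the perfect field $k$, together with the bijectivity of the restricted Frobenius).

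No real obstacle is expected, since this corollary is only a repackaging of the preceding lemma into the statement needed for the quasi-isomorphism argument outlined in remark \ref{rem psi embedding Zp case}~(2). The only thing worth being careful about is making explicit the well-definedness of the target $D_{u,\tau,0}^{\psi=0}$, so that the reader sees that the commutation of $\psi$ with both $\tau_D$ and the action of $\mathscr{G}_{K_\pi}$ is what ensures the map lands where claimed.
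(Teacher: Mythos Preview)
Your proposal is correct and matches the paper's approach: the corollary is stated without proof in the paper, as it follows immediately from Lemma~\ref{lemm 1-tau inj Cherbonnier}. Your additional remarks on well-definedness are accurate (and already covered by Lemmas~\ref{lemm complex u well defined} and~\ref{lemm psi D u tau 0}), but not strictly needed here.
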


\subsubsection{The trivial case}

Recall that $\psi\colon F_0\to F_0$ is given by the formula $\psi(x)=\frac{1}{p}\varphi^{-1}(\Tr_{F_0/\varphi(F_0)}(x)).$ The elements $1, u, \ldots, u^{p-1}$ form a basis of $F_0$ over $\varphi(F_0).$

\begin{lemma}\label{lemm psi mini poly}
	Let $x=\sum\limits_{i=0}^{p-1}x_iu^i$ be an element of $F_0$ with $x_i\in \varphi(F_0).$ Then we have $\psi(x)=\varphi^{-1}(x_0).$ 
\end{lemma}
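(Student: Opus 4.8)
The statement is a direct computation of the $\psi$-operator on $F_0 = k(\!(u)\!)$ in terms of the chosen basis $1, u, \dots, u^{p-1}$ of $F_0$ over $\varphi(F_0)$. First I would recall the definition $\psi(x) = \frac{1}{p}\varphi^{-1}\big(\Tr_{F_0/\varphi(F_0)}(x)\big)$ and note that since $x = \sum_{i=0}^{p-1} x_i u^i$ with $x_i \in \varphi(F_0)$ and the trace is $\varphi(F_0)$-linear, we have $\Tr_{F_0/\varphi(F_0)}(x) = \sum_{i=0}^{p-1} x_i \Tr_{F_0/\varphi(F_0)}(u^i)$. So the whole problem reduces to computing $\Tr_{F_0/\varphi(F_0)}(u^i)$ for $0 \le i \le p-1$.

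For this, observe that $F_0 = \varphi(F_0)[u]$ where $u$ satisfies the polynomial $X^p - \varphi(u) \in \varphi(F_0)[X]$ (indeed $u^p = \varphi(u)$, abusing notation by writing $\varphi(u)$ for the image of $u$ under $\varphi$, which in the Cohen ring reduction is $u^p$ — I would be careful here, perhaps better to write the minimal polynomial of $u$ over $\varphi(F_0)$ as $X^p - c$ where $c = \varphi(u) \in \varphi(F_0)$). The key step is then the standard fact that for the basis $1, u, \dots, u^{p-1}$ of a degree-$p$ extension of the form $M(c^{1/p})/M$ in characteristic $p$, one has $\Tr(u^i) = 0$ for $1 \le i \le p-1$ and $\Tr(1) = p \cdot 1 = 0$ in characteristic $p$ — wait, but here the trace lands in $\varphi(F_0)$ which has characteristic $p$, so $\Tr(1) = 0$ as well. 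This would make $\psi(x) = 0$ always, which is wrong. The resolution: the formula involves dividing by $p$, so one must work with the correct normalization. Actually in this mod-$p$ setting the relevant fact is different: since $X^p - c$ has $X^p - c = (X - c^{1/p})^p$ as a purely inseparable polynomial, the "trace" here is not the field trace but rather the trace associated to $F_0$ as a free $\varphi(F_0)$-module of rank $p$, i.e. $\Tr_{F_0/\varphi(F_0)}(u^i) = \mathrm{tr}(\text{multiplication by } u^i)$. For $i = 0$ this is $p \cdot 1$; for $1 \le i \le p-1$, multiplication by $u^i$ permutes (up to scaling) the basis vectors $u^j \mapsto u^{i+j}$, with $u^{i+j} = c \cdot u^{i+j-p}$ when $i + j \ge p$, so the matrix has zero diagonal, hence trace $0$.

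So the computation gives $\Tr_{F_0/\varphi(F_0)}(u^i) = 0$ for $1 \le i \le p-1$ and $\Tr_{F_0/\varphi(F_0)}(1) = p$ (as an element of $\varphi(F_0)$, i.e. $p$ times the identity — this is the place where the characteristic-$0$ structure of the Cohen ring matters, since $F_0$ is a field of characteristic $p$ but the statement should really be read with the appropriate lift, or the formula $\psi(x) = \frac{1}{p}\varphi^{-1}(\Tr(\cdot))$ only makes sense with the $p$ present). Plugging in: $\Tr_{F_0/\varphi(F_0)}(x) = x_0 \cdot p$, hence $\psi(x) = \frac{1}{p}\varphi^{-1}(p x_0) = \varphi^{-1}(x_0)$, as claimed. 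The main obstacle is being precise about the setting — whether we are computing in $F_0$ (characteristic $p$, where the formula with $1/p$ is formal) or in $\Oo_{\Ee}$ (characteristic $0$); I would clarify that the lemma is really about $F_0$ viewed so that the trace of a rank-$p$ free module basis behaves as described, and the identity $\Tr(1) = p$, $\Tr(u^i) = 0$ for $i \ne 0$, is the crux. This is a short, essentially linear-algebra argument once the minimal-polynomial/matrix picture is set up correctly.
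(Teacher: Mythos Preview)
Your approach is essentially the same as the paper's: reduce to computing $\Tr_{F_0/\varphi(F_0)}(u^i)$ and show it vanishes for $1\le i\le p-1$. The paper does this via the minimal polynomial $X^p-u^{pi}$ (no $X^{p-1}$ term), while you use the matrix of multiplication by $u^i$ in the basis $1,u,\ldots,u^{p-1}$; these are equivalent computations.

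Your discomfort with $\Tr(1)=p$ and the $\tfrac{1}{p}$ in characteristic $p$ is legitimate---the formula $\psi=\tfrac{1}{p}\varphi^{-1}\circ\Tr$ on $F_0$ is indeed an abuse, to be read as the reduction of the characteristic-zero formula on $\mathcal{O}_{\mathcal{E}}$. But you do not need to pass to characteristic zero for the $i=0$ term at all: since $x_0\in\varphi(F_0)$, write $x_0=\varphi(y_0)$ and use directly that $\psi\circ\varphi=\id$ to get $\psi(x_0)=y_0=\varphi^{-1}(x_0)$. This is what the paper leaves implicit, and it sidesteps the $0/0$ issue entirely. With that cleanup, your argument is complete.
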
	

\begin{proof}
	For $1\leq i<p,$ we have $\psi(u^i)=0$ as $\Tr_{F_0/\varphi(F_0)}(u^{i})=0.$ Indeed, the minimal polynomial of $u^i$ is $f(X)=X^p-u^{pi}\in \varphi(F_0)[X]$ when $1\leq i <p.$
\end{proof}

\begin{corollary}\label{coro psi=0 3.3.23}
	An element $x=\sum\limits_{i=0}^{p-1}x_iu^i\in F_0$ with $x_i\in \varphi(F_0)$ is killed by $\psi$ if and only if $x_0=0.$
\end{corollary}

\begin{proof}
	By the lemma \ref{lemm psi mini poly} and notice that $\varphi$ is injective.
\end{proof}

Let $z=\sum\limits_{i=1}^{p-1}u^iz_i\in {F_0}^{\psi=0}$ with $z_i\in \varphi(F_0)=k(\!(u^p)\!).$ More precisely, write $z_i=\sum\limits_{j=n_i}^{+\infty}b_{ij}u^{pj}$ with $b_{ij}\in k.$ Then $z=\sum\limits_{i=1}^{p-1}\sum\limits_{j=n_i}^{+\infty}b_{ij}u^{i+pj}$ with $b_{ij}\in k$ so that

\begin{equation}\label{eq tau-1}
(\tau-1)z=\sum\limits_{i=1}^{p-1}\sum\limits_{j=n_i}^{\infty}b_{ij}u^{i+pj}(\varepsilon^{i+pj}-1).
\end{equation}

\begin{lemma}\label{lemm inverse of tau-1}\label{coro inverse almost}
	The map $\tau-1\colon F_0^{\psi=0}\to F_{u,\tau,0}^{\psi=0}$ is surjective.
\end{lemma}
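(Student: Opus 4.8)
The plan is to construct an explicit inverse to $\tau-1$ on $F_0^{\psi=0}$, using the decomposition coming from the basis $1,u,\dots,u^{p-1}$ of $F_0$ over $\varphi(F_0)$. By Corollary~\ref{coro psi=0 3.3.23}, an element of $F_0^{\psi=0}$ is of the form $z=\sum_{i=1}^{p-1}u^iz_i$ with $z_i\in\varphi(F_0)=k(\!(u^p)\!)$, i.e. $z=\sum_{i=1}^{p-1}\sum_{j}b_{ij}u^{i+pj}$ with $b_{ij}\in k$ and $b_{ij}=0$ for $j\ll0$. Note that every exponent $i+pj$ occurring is prime to $p$ (since $1\le i\le p-1$), hence nonzero, so by Lemma~\ref{lemma I.3.1} the factor $\varepsilon^{i+pj}-1=(\eta+1)^{i+pj}-1$ is a unit of $k(\!(\eta)\!)$ (indeed it has positive $\eta$-valuation equal to $v^\flat$ of $\varepsilon-1$ up to the unit $i+pj$, wait — it is a unit times $\eta$, so it is invertible in $k(\!(\eta)\!)$). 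So formally, given a target $y\in F_{u,\tau,0}^{\psi=0}$, one wants to divide its $u^{i+pj}$-coefficient by $\varepsilon^{i+pj}-1$ and check the result still lies in $F_0^{\psi=0}$ and that $\psi y=0$ forces the candidate preimage to also be killed by $\psi$.

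\textbf{Key steps.} First I would make precise the shape of a general element of $F_{u,\tau,0}^{\psi=0}\subset F_{u,\tau}$: writing it in the form $\sum_{i\in\ZZ}c_i u^i$ with $c_i\in k(\!(\eta^{1/p^\infty})\!)$ (this is the description of $F_{u,\tau}=k(\!(u,\eta^{1/p^\infty})\!)$), I would use the condition $\psi(y)=0$ — via the analogue of Lemma~\ref{lemm psi mini poly} over the coefficient field $k(\!(\eta^{1/p^\infty})\!)$ — to show that $c_i=0$ whenever $p\mid i$, and then use the defining condition of $D_{u,\tau,0}$ (the relation $(\gamma\otimes1)y=(1+\tau+\cdots+\tau^{\chi(\gamma)-1})y$, equivalently $(\delta-\gamma)y=0$ by Lemma~\ref{Replace g witi gamma}) to pin down the coefficients $c_i$ to actually lie in $k$, so that in fact $y=\sum_{i\not\equiv0(p)}b_iu^i$ with $b_i\in k$ and $b_i=0$ for $i\ll0$; this identifies $F_{u,\tau,0}^{\psi=0}$ with the same space of "$p$-adically supported away from multiples of $p$" Laurent series that appears on the left. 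Then, given such $y$, set $z:=\sum_{i\not\equiv0(p)}\dfrac{b_i}{\varepsilon^i-1}\,u^i$; one checks term by term using $(\tau-1)(u^i)=(\varepsilon^i-1)u^i$ that $(\tau-1)z=y$. It remains to verify (a) $z$ lies in $F_0$, i.e. the coefficients $b_i/(\varepsilon^i-1)$ actually lie in $k$ — this is where the $D_{u,\tau,0}$-condition must be invoked again, since a priori $b_i/(\varepsilon^i-1)\in k(\!(\eta)\!)$; the point is that the $D_{u,\tau,0}$-membership is exactly the constraint that makes this division land back in $k$ — and (b) $\psi(z)=0$, which is immediate from Corollary~\ref{coro psi=0 3.3.23} since $z$ has no $u^{pj}$-terms. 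Convergence (the coefficients vanish for $i\ll0$) is preserved because dividing by the units $\varepsilon^i-1$ does not change the $u$-valuation pattern.

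\textbf{Main obstacle.} The delicate point, and the one I expect to cost the most work, is verifying that the naive term-by-term preimage $z=\sum b_i(\varepsilon^i-1)^{-1}u^i$ has \emph{coefficients in $k$} rather than merely in $k(\!(\eta^{1/p^\infty})\!)$ — in other words, showing that $F_{u,\tau,0}^{\psi=0}$ really is spanned over $k$ by $\{u^i : i\not\equiv 0\ (p)\}$ (with the appropriate decay condition), and that $\tau-1$ carries $F_0^{\psi=0}$ \emph{onto} it. This requires unwinding the definition of $D_{u,\tau,0}$ carefully: the relation $(\gamma\otimes1)(x)=\sum_{j=0}^{\chi(\gamma)-1}\tau_D^j(x)$ forces, on each graded piece $u^i$, an identity among the $\eta$-coefficients that (using that $\gamma$ acts on $\eta$ by $\eta\mapsto(\eta+1)^{\chi(\gamma)}-1$ and on $u$ by $u\mapsto(\eta+1)u$, together with $\gamma$ being a topological generator) has only the constant solutions. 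I would isolate this as a preliminary lemma describing $F_{u,\tau,0}$ explicitly, after which the surjectivity of $\tau-1$ becomes the short computation above; combined with the already-established injectivity (Corollary~\ref{cor injective before "the trivial case"}) this gives the claimed bijectivity in the trivial case, completing the proof of the lemma.
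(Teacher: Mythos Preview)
Your overall strategy matches the paper's, but the central claim is misstated in a way that makes the argument collapse. You assert that the $D_{u,\tau,0}$-condition forces the coefficients $c_i$ of $u^i$ in $y\in F_{u,\tau,0}^{\psi=0}$ to lie in $k$, so that $F_{u,\tau,0}^{\psi=0}$ is ``spanned over $k$ by $\{u^i:p\nmid i\}$''. This is false: if $y=\sum_{p\nmid i}c_iu^i$ with $c_i\in k$, then $\gamma(y)=y$ (since $\gamma$ fixes both $k$ and $u$ --- note your claim that $\gamma$ sends $u$ to $(\eta+1)u$ is wrong; that is the action of $\tau$, whereas $\gamma\in\Gal(L/K_\pi)$ fixes $u=[\tilde\pi]$), while $\delta(y)=\sum_i c_i\frac{\varepsilon^{\chi(\gamma)i}-1}{\varepsilon^i-1}u^i$, so $(\delta-\gamma)y=0$ forces $\varepsilon^{(\chi(\gamma)-1)i}=1$ for every $i$ with $c_i\neq0$, hence $y=0$. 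Consequently your proposed preimage $z=\sum c_i(\varepsilon^i-1)^{-1}u^i$ can never lie in $F_0$ for $y\neq0$, and your step (a) cannot be carried out.

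The correct conclusion from the $D_{u,\tau,0}$-condition is that $c_i/(\varepsilon^i-1)\in k$, not $c_i\in k$. Comparing coefficients of $u^i$ in $(\gamma\otimes1)y=(1+\tau+\cdots+\tau^{\chi(\gamma)-1})y$ gives
\[
\gamma(c_i)=\frac{\varepsilon^{\chi(\gamma)i}-1}{\varepsilon^i-1}\,c_i,\qquad\text{i.e.}\qquad \gamma\Big(\frac{c_i}{\varepsilon^i-1}\Big)=\frac{c_i}{\varepsilon^i-1},
\]
and then Lemma~\ref{lemm Brinon saves} (namely $k(\!(\eta^{1/p^\infty})\!)^{\gamma=1}=k$) yields $c_i=b_i(\varepsilon^i-1)$ with $b_i\in k$. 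The preimage is then simply $z=\sum b_iu^i\in F_0^{\psi=0}$, with no further division needed. So the missing ingredient is precisely Lemma~\ref{lemm Brinon saves}, and the correct description is $F_{u,\tau,0}^{\psi=0}=\big\{\sum_{p\nmid i}b_i(\varepsilon^i-1)u^i:\ b_i\in k,\ b_i=0\text{ for }i\ll0\big\}$, not the one you wrote.
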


\begin{proof}
	Let $x \in F_{u,\tau,0}^{\psi=0}\subset k(\!(u, \eta^{1/p^{\infty}} )\!)^{\psi=0}:$ by corollary \ref{coro psi=0 3.3.23} we can write uniquely $x=\sum\limits_{i=1}^{p-1}u^i x_i$ with \[x_i\in \varphi\big(k(\!(u, \eta^{1/p^{\infty}} )\!)^{\psi=0}\big).\] 
	
	We can write 
	\[x_i=\sum\limits_{j=m_i}^{\infty}u^{pj}f_{ij}(\eta)\]  with $m_i\in \ZZ$ and $f_{ij}(X)\in k(\!(X^{1/p^{n_{ij}}})\!)$ for some $n_{ij}\in \NN$, hence \[x=\sum\limits_{i=1}^{p-1}\sum\limits_{j=m_i}^{\infty}u^{i+pj} f_{ij}(\eta).\] 
	By definition, $x\in F_{u, \tau, 0}$ implies 
	\[\gamma(x)=(1+\tau+\cdots + \tau^{\chi(\gamma)-1})(x).\]
	The left hand side is (recall that $\eta=\varepsilon-1$)
	\[ \gamma(x)=\sum\limits_{i=1}^{p-1}\sum\limits_{j=m_i}^{\infty}u^{i+pj} f_{ij}(\gamma(\eta))=\sum\limits_{i=1}^{p-1}\sum\limits_{j=m_i}^{\infty}u^{i+pj} f_{ij}(\varepsilon^{\chi(\gamma)}-1), \]
	and the right hand side is 
	\[ (1+\tau+\cdots+\tau^{\chi(\gamma)-1})(x)=\sum\limits_{i=1}^{p-1}\sum\limits_{j=m_i}^{\infty}u^{i+pj}\cdot(1+\varepsilon^{i+pj}+ \varepsilon^{2(i+pj)} + \cdots + \varepsilon^{(\chi(\gamma)-1)(i+pj)})\cdot f_{ij}(\eta).\] This implies that for all $i, j,$ we have
	\[ f_{ij}(\gamma(\eta))=f_{ij}(\varepsilon^{\chi(\gamma)}-1)=(1+\varepsilon^{i+pj}+ \varepsilon^{2(i+pj)}+ \cdots + \varepsilon^{(\chi(\gamma)-1)(i+pj)})\cdot f_{ij}(\eta).\]
	Put $l=\chi(\gamma)$ and $m=i+pj$ (notice $m\neq 0$ as $p\nmid i$), the condition translates into
	\[\gamma(f_{ij}(\eta))=\frac{\varepsilon^{lm}-1}{\varepsilon^m-1}f_{ij}(\eta),\]
	$\ie$
	\[\gamma\Big( \frac{f_{ij}(\eta)}{\varepsilon^m-1}  \Big) = \frac{f_{ij}(\eta)}{\varepsilon^m-1}. \] 
	As $\frac{f_{ij}(\eta)}{\varepsilon^m-1}\in k(\!(\eta^{1/p^{\infty}})\!),$ we have $\frac{f_{ij}(\eta)}{\varepsilon^m-1}\in k$ by lemma \ref{lemm Brinon saves}: there exist $b_{i,j}\in k$ such that \[x=\sum\limits_{i=1}^{p-1}\sum\limits_{j=m_i}^{\infty}b_{ij}u^{i+pj}(\varepsilon^{i+pj}-1).\]
	
	By equation (\ref{eq tau-1}), an inverse image of $x=\sum\limits_{i=1}^{p-1}\sum\limits_{j=m_i}^{\infty}b_{ij}u^{i+pj}(\varepsilon^{i+pj}-1)$ is \[ (\tau-1)^{-1}(x):=\sum\limits_{i=1}^{p-1}\sum\limits_{j=m_i}^{\infty}b_{ij}u^{i+pj}.\]
\end{proof}

\begin{corollary}\label{coro form of F u tau 0}
An element $x\in \Oo_{\Ee_{u, \tau, 0}}^{\psi=0}$ can be written in the form 
\[ x=\sum\limits_{i=1}^{p-1}\sum\limits_{j\in \ZZ}c_{ij}u^{i+pj}([\varepsilon]^{i+pj}-1)   \]
with $c_{ij}\in \W(k)$ such that $\Lim{j \to -\infty}c_{ij}=0$ for all $i\in \{1, \ldots, p-1\}.$
\end{corollary}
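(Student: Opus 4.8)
The plan is to adapt the proof of Lemma~\ref{coro inverse almost} (the $\FF_p$-case) to the integral setting by combining a $p$-adic approximation argument with a careful bookkeeping of the coefficients. First I would recall that by Lemma~\ref{lemm Zp-version degree of sep clo} the elements $1, u, \ldots, u^{p-1}$ form a basis of $\Oo_{\Ff_u}$ (hence of $\Oo_{\Ee}$) over $\varphi(\Oo_{\Ff_u})$, and that by the same argument as in Corollary~\ref{coro psi=0 3.3.23} an element $x\in\Oo_{\Ee_{u,\tau}}$ lies in $\Oo_{\Ee_{u,\tau}}^{\psi=0}$ if and only if, writing $x=\sum_{i=0}^{p-1}u^i x_i$ with $x_i\in\varphi(\Oo_{\Ee_{u,\tau}})$, one has $x_0=0$. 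So any $x\in\Oo_{\Ee_{u,\tau,0}}^{\psi=0}$ can be written uniquely as $x=\sum_{i=1}^{p-1}u^i x_i$ with $x_i\in\varphi(\Oo_{\Ee_{u,\tau}}^{\psi=0})$, and expanding $x_i=\sum_{j}u^{pj}f_{ij}$ with $f_{ij}\in\W(k)\{\{\eta^{1/p^\infty}\}\}$ (coefficients in a finite-level ring $\W(k)(\!(\eta^{1/p^{n}})\!)^\wedge$, with $f_{ij}\to 0$ $p$-adically and for the $u$-adic filter as $j\to-\infty$), we get $x=\sum_{i=1}^{p-1}\sum_{j}u^{i+pj}f_{ij}$.

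Next I would impose the membership condition $x\in\Oo_{\Ee_{u,\tau,0}}$, i.e. $(\gamma\otimes 1)(x)=(1+\tau_D+\cdots+\tau_D^{\chi(\gamma)-1})(x)$ in the sense of Notation~\ref{not D u tau normla version at the beginning} (here applied with $D=\Oo_{\Ee}$, the trivial $(\varphi,\tau)$-module). Comparing coefficients of $u^{i+pj}$ on both sides exactly as in the proof of Lemma~\ref{coro inverse almost}, and using that $\gamma$ acts on $\eta=[\varepsilon]-1$ by $\gamma(\eta)=[\varepsilon]^{\chi(\gamma)}-1$ while $\tau$ fixes $\eta^{1/p^n}$, one finds for $m=i+pj\neq 0$ that $\gamma\big(f_{ij}/([\varepsilon]^m-1)\big)=f_{ij}/([\varepsilon]^m-1)$. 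The key input is then the integral analogue of Lemma~\ref{lemm Brinon saves}: the $\gamma$-invariants of $\W(k)\{\{\eta^{1/p^\infty}\}\}$ (the $p$-adic completion of $\W(k)[\eta^{1/p^\infty}][1/\eta]$, or rather its integral version inside $\W(C^\flat)$) equal $\W(k)$. This follows from Lemma~\ref{lemm Brinon saves} itself by $p$-adic dévissage: if $z$ is $\gamma$-invariant in the mod-$p^n$ quotient, reduce mod $p$, conclude the reduction lies in $k$, lift, subtract a Teichmüller representative and induct — this is the same mechanism used in Lemma~\ref{lemm0}. Hence $f_{ij}/([\varepsilon]^{i+pj}-1)=c_{ij}\in\W(k)$, giving $x=\sum_{i=1}^{p-1}\sum_{j\in\ZZ}c_{ij}u^{i+pj}([\varepsilon]^{i+pj}-1)$.

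Finally I would check the convergence condition $\lim_{j\to-\infty}c_{ij}=0$ (in $\W(k)$, i.e. $p$-adically). This must be extracted from the fact that $x\in\Oo_{\Ee_{u,\tau}}$, which forces its $u$-adic expansion to have coefficients tending $p$-adically to $0$ as the exponent $i+pj\to-\infty$; since $[\varepsilon]^{i+pj}-1$ is a unit times a bounded element (it is $\equiv (i+pj)\eta \pmod{\text{higher order}}$, but in any case has bounded denominators once we localize appropriately), one deduces $c_{ij}\to 0$. The main obstacle I anticipate is precisely making this last step rigorous: one has to be careful about in which ring the $c_{ij}$ live and what "$([\varepsilon]^m-1)$ is invertible" costs — $[\varepsilon]^m-1$ is not a unit in $\W(k)\{\{\eta^{1/p^\infty}\}\}$ for the $p$-adic topology alone, so the division $f_{ij}/([\varepsilon]^m-1)$ and the subsequent identification of the invariants requires working in $\W(C^\flat)$ (where the $\gamma$-invariance computation of Lemma~\ref{lemm Brinon saves} lives) and then checking the quotient actually lands back in $\W(k)$ via the invariance. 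A clean way around this is to run the whole argument modulo $p^n$ (where $[\varepsilon]^m-1$ behaves like in the $\FF_p$-case up to the unit ambiguity already handled in Lemma~\ref{coro inverse almost}), obtain the formula with $c_{ij}\in\W_n(k)$ and the truncated convergence, and then pass to the limit over $n$ using $p$-adic completeness of $\Oo_{\Ee_{u,\tau}}$ and the uniqueness of the expansion.
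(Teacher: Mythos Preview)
Your final suggested method---run the argument modulo $p^n$ using the $\FF_p$-case and pass to the limit---is exactly what the paper does; its entire proof is the two-line statement ``by Lemma~\ref{coro inverse almost} elements of $F_{u,\tau,0}^{\psi=0}$ have the claimed form with $b_{ij}\in k$, then conclude by d\'evissage.'' Your more elaborate direct integral argument (lifting Lemma~\ref{lemm Brinon saves} to $\W(k)$ and dividing by $[\varepsilon]^m-1$ inside $\W(C^\flat)$) is not needed, and as you correctly identify, it runs into the issue that $[\varepsilon]^m-1$ is not a unit in the relevant integral coefficient ring; the d\'evissage bypasses this entirely since at each mod-$p$ step one only ever works in the field $k(\!(\eta^{1/p^\infty})\!)$ where Lemma~\ref{coro inverse almost} already applies.
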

\begin{proof}
By lemma \ref{coro inverse almost}, elements of $F_{u,\tau,0}^{\psi=0}$ can be written in the form 
\[x=\sum\limits_{i=1}^{p-1}\sum\limits_{j=m_i}^{\infty}b_{ij}u^{i+pj}(\varepsilon^{i+pj}-1) \text{ with } b_{ij}\in k.\] We then conclude by d\'evissage.
\end{proof}

\begin{corollary}
	The map $\tau-1\colon F_0^{\psi=0}\to F_{u,\tau, 0}^{\psi=0}$ is bijective.
\end{corollary}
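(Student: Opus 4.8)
The statement to prove is that $\tau-1\colon F_0^{\psi=0}\to F_{u,\tau,0}^{\psi=0}$ is bijective. Surjectivity is exactly Lemma \ref{coro inverse almost}, so the only thing left is injectivity, and this is Corollary \ref{cor injective before "the trivial case"} specialised to the trivial representation $T=\FF_p$ (for which $D=\mathcal{D}(\FF_p)=F_0$ and $D_{u,\tau,0}=F_{u,\tau,0}$). So the cleanest write-up is simply to assemble these two ingredients.

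\begin{proof}
	Surjectivity is Lemma \ref{coro inverse almost}. For injectivity, take $x\in F_0^{\psi=0}$ with $(\tau-1)(x)=0$, i.e. $\tau(x)=x$. By Corollary \ref{cor injective before "the trivial case"} (applied to $T=\FF_p$, so that $D=F_0$ and $D_{u,\tau,0}=F_{u,\tau,0}$), the map $D^{\psi=0}\xrightarrow{\tau_D-1}D_{u,\tau,0}^{\psi=0}$ is injective, whence $x=0$. Thus $\tau-1\colon F_0^{\psi=0}\to F_{u,\tau,0}^{\psi=0}$ is bijective.
\end{proof}

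Alternatively, and perhaps more instructively, one can give the injectivity argument directly in this concrete setting without invoking Corollary \ref{cor injective before "the trivial case"}: if $x\in F_0^{\psi=0}$ and $\tau(x)=x$, then viewing $x$ inside $C^\flat_{u-\np}\otimes_{\FF_p}\FF_p=C^\flat_{u-\np}$, the relation $\tau(x)=x$ together with $x\in F_0\subset(C^\flat_{u-\np})^{\mathscr{G}_{K_\pi}}$ forces $x\in(C^\flat_{u-\np})^{\mathscr{G}_K}=k$ (by Remark \ref{rem def tau}, since $\mathscr{G}_K$ is topologically generated by $\mathscr{G}_L$, $\tau$ and $\gamma$, and $x$ being in $F_0$ is already $\gamma$-invariant). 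But $k\subset\varphi(F_0)$ since $k$ is perfect, so $x=\varphi(y)$ for $y=\varphi^{-1}(x)\in k$, and then $0=\psi(x)=\psi(\varphi(y))=y$, so $x=0$. This is really the content of Lemma \ref{lemm 1-tau inj Cherbonnier} unwound in the trivial case, using Lemma \ref{lemma I.3.1} ($F_0^{\tau=1}=k$) directly.

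There is essentially no obstacle here: the corollary is a formal consequence of results already established in the preceding two subsections, and its only role is to record, as a clean standalone statement, the bijectivity that will be used in the subsequent dévissage to $\ZZ_p$-representations in the proof of Theorem \ref{prop quais-iso psi}. I would present the short first version as the proof, since it keeps the logical dependencies transparent, and perhaps add a one-line remark that this is the base case $T=\FF_p$ of the map $D^{\psi=0}\to D_{u,\tau,0}^{\psi=0}$ whose bijectivity is the heart of Theorem \ref{prop quais-iso psi}.
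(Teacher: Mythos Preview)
Your proof is correct and matches the paper's approach exactly: the paper's proof is the one-liner ``This follows from corollary \ref{cor injective before "the trivial case"} and lemma \ref{coro inverse almost},'' which is precisely your assembly of surjectivity from Lemma \ref{coro inverse almost} and injectivity from Corollary \ref{cor injective before "the trivial case"}. Your alternative direct argument for injectivity via $F_0^{\tau=1}=k$ is also fine and is indeed just Lemma \ref{lemm 1-tau inj Cherbonnier} unwound in the trivial case.
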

\begin{proof}
	This follows from corollary \ref{cor injective before "the trivial case"} and lemma \ref{coro inverse almost}.
\end{proof}

We have proved the bijection for trivial representations, and we now prove the general case.

\subsubsection{The general case}

Recall that $D=\mathcal{D}(T)$ is an \'etale $(\varphi,\tau)$-module; the natural map $\varphi^\ast\colon F_0\otimes_{\varphi,F_0}D\to D$ is an isomorphism. Let $(e_1,\ldots,e_d)$ be a basis of $D$ over $F_0,$ then $(\varphi(e_1),\ldots,\varphi(e_d))$ is again a basis, $\ie$$D=\oplus_{i=1}^dF_0\varphi(e_i).$

\begin{lemm}\label{lemm psi semi-linearity in another sense}
	If $x=\sum\limits_{i=1}^d\lambda_i\varphi(e_i)\in D,$ then $\psi(x)=\sum\limits_{i=1}^d\psi(\lambda_i)e_i.$
\end{lemm}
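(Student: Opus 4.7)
The plan is to derive this directly from the defining properties of the $\psi$ operator on $D$ established in proposition \ref{prop Zp-case psi surjective}. Recall that $\psi_D$ is the unique additive map $D\to D$ satisfying
\begin{itemize}
\item[(1)] $\psi_D(a\,\varphi_D(y))=\psi_{\Oo_{\Ee}}(a)\,y$ for all $a\in\Oo_{\Ee}$ and $y\in D$,
\item[(2)] $\psi_D(\varphi_{\Oo_{\Ee}}(a)\,y)=a\,\psi_D(y)$ for all $a\in\Oo_{\Ee}$ and $y\in D$.
\end{itemize}
Given $x=\sum_{i=1}^d\lambda_i\varphi(e_i)$ with $\lambda_i\in\Oo_{\Ee}$, I would first invoke the additivity of $\psi_D$ to write $\psi(x)=\sum_{i=1}^d\psi_D(\lambda_i\varphi(e_i))$, and then apply property (1) with $a=\lambda_i$ and $y=e_i$ to each summand, obtaining $\psi_D(\lambda_i\varphi(e_i))=\psi_{\Oo_{\Ee}}(\lambda_i)\,e_i$. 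Summing yields the claimed formula $\psi(x)=\sum_{i=1}^d\psi(\lambda_i)e_i$, where on the right $\psi$ denotes $\psi_{\Oo_{\Ee}}$ (consistent with the convention of remark \ref{rem why we need unperfected version}(2)).

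There is no real obstacle here: the statement is essentially a restatement of the defining property (1) of $\psi_D$ on the specific basis $(\varphi(e_1),\ldots,\varphi(e_d))$ of $D$ over $F_0$, combined with additivity. The only thing worth noting is that one must use that $(\varphi(e_1),\ldots,\varphi(e_d))$ is indeed an $\Oo_{\Ee}$-basis of $D$, which is exactly the étaleness hypothesis (the isomorphism $\varphi^\ast\colon \Oo_{\Ee}\otimes_{\varphi,\Oo_{\Ee}}D\to D$) recalled just before the lemma; this guarantees that every $x\in D$ admits a (unique) such decomposition, so the formula makes sense as stated. The resulting proof is a two-line calculation, and I would present it in exactly this form without further commentary.
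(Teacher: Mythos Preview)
Your argument is correct. You invoke property (1) of proposition \ref{prop Zp-case psi surjective} directly, and additivity then yields the claim in one line. The paper takes a slightly more hands-on route: it writes each $e_i=\sum_j\alpha_j\otimes v_j$ inside $F_0^{\sep}\otimes_{\FF_p}T$ using a basis $(v_1,\ldots,v_d)$ of $T$, so that $\varphi(e_i)=\sum_j\alpha_j^p\otimes v_j$, and then computes $\psi(\lambda_i\varphi(e_i))$ explicitly from the fact that $\psi$ on $D$ is induced by $\psi\otimes1$ on $F_0^{\sep}\otimes_{\FF_p}T$. In effect the paper is re-verifying property (1) in this concrete situation rather than citing it. Your approach is shorter and entirely legitimate; the only cosmetic point is that in this subsection $T$ is an $\FF_p$-representation and $D$ is a module over $F_0$, so the coefficients $\lambda_i$ lie in $F_0$ rather than $\Oo_{\Ee}$ --- the defining property of $\psi_D$ holds verbatim in this context (an $\FF_p$-representation being in particular a torsion $\ZZ_p$-representation).
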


\begin{proof}
	We have $D=(F_0^{\sep}\otimes_{\FF_p}T)^{\mathscr{G}_{K_\pi}}.$ Let $(v_1,\ldots,v_d)$ be a basis of $T$ over $\FF_p:$ we can write $e_i=\sum\limits_{j=1}^d\alpha_j\otimes v_j,$ so that $\varphi(e_i)=\sum\limits_{j=1}^d\alpha_j^p\otimes v_j,$ hence $\psi(\lambda_i\varphi(e_i))=\sum\limits_{j=1}^d\psi(\lambda_i)\alpha_j\otimes v_j=\psi(\lambda_i)e_i$ for all $i\in\{1,\ldots,d\}.$
\end{proof}

\begin{coro}
	We have $\sum\limits_{i=1}^d\lambda_i\varphi(e_i)\in D^{\psi=0}$ if and only if $\lambda_i\in F_0^{\psi=0}$ for all $i\in\{1,\ldots,d\}.$
\end{coro}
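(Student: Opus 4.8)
The statement to prove is the corollary following Lemma 3.3.26 (denoted \verb|\begin{coro}| in the excerpt): for $x=\sum_{i=1}^d\lambda_i\varphi(e_i)\in D$, one has $x\in D^{\psi=0}$ if and only if $\lambda_i\in F_0^{\psi=0}$ for all $i$.

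\medskip

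The plan is to deduce this immediately from Lemma \ref{lemm psi semi-linearity in another sense}. First I would observe that, by the cited lemma, $\psi(x)=\sum_{i=1}^d\psi(\lambda_i)e_i$, where on the right-hand side $\psi$ denotes the operator $\psi_{F_0}$ on $F_0$ (each $\psi(\lambda_i)\in F_0$). Since $(e_1,\ldots,e_d)$ is a basis of $D$ over $F_0$, the element $\sum_{i=1}^d\psi(\lambda_i)e_i$ is zero if and only if each coefficient $\psi(\lambda_i)$ vanishes, i.e.\ if and only if $\lambda_i\in F_0^{\psi=0}$ for every $i\in\{1,\ldots,d\}$. Combining the two equivalences gives $x\in D^{\psi=0}\iff \psi(x)=0 \iff (\forall i)\ \psi(\lambda_i)=0 \iff (\forall i)\ \lambda_i\in F_0^{\psi=0}$, which is exactly the claim.

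\medskip

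There is essentially no obstacle here: the only subtlety is the bookkeeping point that $(\varphi(e_1),\ldots,\varphi(e_d))$ is genuinely a basis of $D$ over $F_0$ — which is precisely the étaleness of the $\varphi$-module $D$, recalled just before Lemma \ref{lemm psi semi-linearity in another sense} — so that writing a general $x\in D$ in the form $\sum_i\lambda_i\varphi(e_i)$ with uniquely determined $\lambda_i\in F_0$ is legitimate, and so that the uniqueness of coordinates with respect to $(e_1,\ldots,e_d)$ can be invoked to conclude. I would state the proof in one or two sentences: "This is immediate from Lemma \ref{lemm psi semi-linearity in another sense}: writing $\psi(x)=\sum_{i=1}^d\psi(\lambda_i)e_i$ and using that $(e_1,\ldots,e_d)$ is a basis of $D$ over $F_0$, we get $\psi(x)=0$ if and only if $\psi(\lambda_i)=0$ for all $i$, i.e.\ $\lambda_i\in F_0^{\psi=0}$ for all $i$." No induction, dévissage, or limiting argument is needed, since the reduction to the scalar case $F_0$ has already been done inside Lemma \ref{lemm psi semi-linearity in another sense}.
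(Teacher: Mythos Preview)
Your proof is correct and follows exactly the same approach as the paper, which simply says ``This follows from lemma \ref{lemm psi semi-linearity in another sense}.'' Your added remark that $(e_1,\ldots,e_d)$ being a basis is what makes the vanishing of $\sum_i\psi(\lambda_i)e_i$ equivalent to the vanishing of each $\psi(\lambda_i)$ is the only point worth spelling out, and you have done so.
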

\begin{proof}
	This follows from lemma \ref{lemm psi semi-linearity in another sense}.
\end{proof}

\begin{coro}\label{lemm psi semi-linearity in another sense: tau case}
	If $\mu_1,\ldots,\mu_d\in F_{u,\tau},$ we have $\sum\limits_{i=1}^d\mu_i\varphi(e_i)\in D_{u,\tau}^{\psi=0}$ if and only if $\mu_i\in F_{u,\tau}^{\psi=0}$ for all $i\in\{1,\ldots,d\}.$
\end{coro}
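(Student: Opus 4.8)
The statement is the $F_{u,\tau}$-avatar of the corollary following Lemma~\ref{lemm psi semi-linearity in another sense}, so the plan is to first upgrade that lemma to the ring $D_{u,\tau}$ and then run the same bookkeeping argument. Concretely, I will prove: if $x=\sum_{i=1}^d\mu_i\varphi(e_i)\in D_{u,\tau}$ with $\mu_i\in F_{u,\tau}$, then $\psi(x)=\sum_{i=1}^d\psi(\mu_i)e_i$ — and then note that $(e_1,\dots,e_d)$ is an $F_{u,\tau}$-basis of $D_{u,\tau}$ on which the claimed equivalence becomes visible.

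\textbf{Step 1: the semilinearity identity over $F_{u,\tau}$.} This mimics the proof of Lemma~\ref{lemm psi semi-linearity in another sense}. Recall $D=(F_0^{\sep}\otimes_{\FF_p}T)^{\mathscr{G}_{K_\pi}}$ and, by Lemma~\ref{lemm tau action natural}, $D_{u,\tau}=(C^{\flat}_{u-\np}\otimes_{\FF_p}T)^{\mathscr{G}_L}$, with $\psi$ on $D_{u,\tau}$ induced by $\psi\otimes1$. Fix a basis $(v_1,\dots,v_d)$ of $T$ over $\FF_p$ and write $e_i=\sum_{j=1}^d\alpha_{ij}\otimes v_j$ with $\alpha_{ij}\in F_0^{\sep}$, so that $\varphi(e_i)=\sum_j\alpha_{ij}^p\otimes v_j$ and $\mu_i\varphi(e_i)=\sum_j\mu_i\varphi(\alpha_{ij})\otimes v_j$. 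The key point is the identity $\psi(\mu\,\varphi(\alpha))=\alpha\,\psi(\mu)$ for $\mu,\alpha\in C^{\flat}_{u-\np}$: by Definition~\ref{def psi operator} the trace $\Tr_{C^{\flat}_{u-\np}/\varphi(C^{\flat}_{u-\np})}$ is $\varphi(C^{\flat}_{u-\np})$-linear, whence $\Tr(\mu\,\varphi(\alpha))=\varphi(\alpha)\Tr(\mu)$ and $\psi(\mu\,\varphi(\alpha))=\varphi^{-1}\big(\varphi(\alpha)\Tr(\mu)/p\big)=\alpha\,\psi(\mu)$ (working modulo $p$, as $T$ is an $\FF_p$-representation). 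Applying $\psi\otimes1$ term by term therefore gives $\psi(x)=\sum_i\sum_j\psi(\mu_i)\alpha_{ij}\otimes v_j=\sum_i\psi(\mu_i)e_i$.

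\textbf{Step 2: bookkeeping and conclusion.} Since $D$ is an étale $\varphi$-module over $F_0$, base change shows $D_{u,\tau}=F_{u,\tau}\otimes_{F_0}D$ is an étale $\varphi$-module over $F_{u,\tau}$; hence both $(e_1,\dots,e_d)$ and $(\varphi(e_1),\dots,\varphi(e_d))$ are $F_{u,\tau}$-bases of $D_{u,\tau}$, so every element of $D_{u,\tau}$ has a unique expression $\sum_i\mu_i\varphi(e_i)$ with $\mu_i\in F_{u,\tau}$. Moreover $\psi$ commutes with the $\mathscr{G}_L$-action (Remark~\ref{rem Zp-case psi commutes with g}), hence stabilizes $F_{u,\tau}=(C^{\flat}_{u-\np})^{\mathscr{G}_L}$, so $\psi(\mu_i)\in F_{u,\tau}$ for each $i$. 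Combining with Step~1: $x=\sum_i\mu_i\varphi(e_i)\in D_{u,\tau}^{\psi=0}$ if and only if $\sum_i\psi(\mu_i)e_i=0$; as $(e_1,\dots,e_d)$ is an $F_{u,\tau}$-basis and $\psi(\mu_i)\in F_{u,\tau}$, this holds if and only if $\psi(\mu_i)=0$ for all $i$, i.e.\ $\mu_i\in F_{u,\tau}^{\psi=0}$ for all $i$.

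\textbf{Expected difficulty.} There is essentially no serious obstacle: the whole content is Step~1, which is a line-by-line transcription of the $F_0$-argument, the only two points needing a brief check being the $\varphi$-semilinearity identity $\psi(\mu\,\varphi(\alpha))=\alpha\,\psi(\mu)$ (immediate from the trace formula) and the stability $\psi(F_{u,\tau})\subseteq F_{u,\tau}$ (immediate from Galois-equivariance of $\psi$, already recorded around Definition~\ref{def psi operator} and Proposition~\ref{prop Zp-case psi surjective}).
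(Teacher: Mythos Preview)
Your proof is correct and follows essentially the same approach as the paper, which simply states that the proof is similar to that of Lemma~\ref{lemm psi semi-linearity in another sense}. You have fleshed out precisely the details the paper leaves implicit: the identity $\psi(\mu_i\varphi(e_i))=\psi(\mu_i)e_i$ (which also follows directly from property~(1) of Proposition~\ref{prop Zp-case psi surjective} applied over $\Oo_{\Ee_{u,\tau}}$ and reduced mod~$p$), together with the observation that $(e_1,\dots,e_d)$ is an $F_{u,\tau}$-basis of $D_{u,\tau}$.
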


\begin{proof}
	The proof is similar to that of lemma \ref{lemm psi semi-linearity in another sense}.
\end{proof}

\begin{lemm}\label{lemm5.18}
	Let $n\in\ZZ$ and $f=\sum\limits_{j=n}^\infty\lambda_ju^j\in F_{u,\tau},$ where $\lambda_j\in k(\!(\eta^{1/p^\infty})\!):=\bigcup\limits_{n=0}^\infty k(\!(\eta^{1/p^n})\!).$ Then $\psi(f)=0$ if and only if $p\mid j\Rightarrow\lambda_j=0.$
\end{lemm}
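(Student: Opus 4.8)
The plan is to reduce to the special case already handled for the trivial representation, using the étale structure of $D$ and semi-linearity of $\psi$. Recall $F_{u,\tau}=k(\!(u,\eta^{1/p^\infty})\!)$ and that we are given $f=\sum_{j=n}^\infty\lambda_ju^j$ with $\lambda_j\in k(\!(\eta^{1/p^\infty})\!)$. First I would write $f$ according to the $\varphi(F_0)$-basis $1,u,\dots,u^{p-1}$ of $F_0$ over $\varphi(F_0)$, but twisted by the fact that coefficients now lie in $k(\!(\eta^{1/p^\infty})\!)$ rather than $k$. Concretely, grouping the sum $\sum_j\lambda_ju^j$ according to the residue class $i\in\{0,\dots,p-1\}$ of $j$ modulo $p$, write $j=i+pm$ and
\[ f=\sum_{i=0}^{p-1}u^i\Big(\sum_{m}\lambda_{i+pm}u^{pm}\Big)=\sum_{i=0}^{p-1}u^i\varphi(g_i),\quad g_i=\sum_m \varphi^{-1}(\lambda_{i+pm})u^m, \]
noting that $\varphi$ on $F_{u,\tau}$ restricted to $k(\!(\eta^{1/p^\infty})\!)$ is the $p$-th power map, which is bijective on the perfect field $k(\!(\eta^{1/p^\infty})\!)$, so each $\varphi^{-1}(\lambda_{i+pm})$ makes sense.

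Next I would invoke the key trace computation: by the same argument as in lemma \ref{lemm psi mini poly} (whose proof only uses the minimal polynomial $X^p-u^{pi}$ of $u^i$ over $\varphi(F_{u,\tau})$, valid verbatim over the larger base field since $\Tr_{F_{u,\tau}/\varphi(F_{u,\tau})}(u^i)=0$ for $1\le i<p$), one gets $\psi(u^i\varphi(g_i))=0$ for $i\ne 0$ and $\psi(\varphi(g_0))=g_0$. Here I should be careful that $\psi$ on $F_{u,\tau}=\Oo_{\Ff_u}/p$-type ring agrees with the map coming from $\Tr_{F_{u,\tau}/\varphi(F_{u,\tau})}$; this is the defining property, cf. definition \ref{def psi operator} and the degree computation of corollary \ref{coro Fp case degree p}. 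Summing, $\psi(f)=g_0=\sum_m\varphi^{-1}(\lambda_{pm})u^m$.

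Finally, since $\varphi^{-1}$ is bijective on $k(\!(\eta^{1/p^\infty})\!)$ and the $u^m$ are linearly independent, $\psi(f)=0$ if and only if $\varphi^{-1}(\lambda_{pm})=0$ for all $m$, i.e. $\lambda_{pm}=0$ for all $m$, which is exactly the statement "$p\mid j\Rightarrow\lambda_j=0$". The main subtlety — really the only thing requiring care rather than routine bookkeeping — is justifying that the formal rearrangement of the doubly-indexed sum is legitimate in $F_{u,\tau}$, i.e. that grouping by residue class mod $p$ and applying $\varphi^{-1}$ term by term stays inside $F_{u,\tau}$ and is compatible with the valuation-topology convergence; this follows because for fixed $i$ the coefficients $\lambda_{i+pm}$ are eventually zero for $m\ll 0$ and the map $\sum_j\lambda_ju^j\mapsto(g_0,\dots,g_{p-1})$ is just the inverse of the isomorphism $\bigoplus_{i=0}^{p-1}u^i\varphi(F_{u,\tau})\xrightarrow{\sim}F_{u,\tau}$ from lemma \ref{lemm Zp-version degree of sep clo} (more precisely its mod-$p$ shadow, the analogue of corollary \ref{coro Fp case degree p}). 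I expect this to cost at most a line or two.
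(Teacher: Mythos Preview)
Your proposal is correct and follows essentially the same approach as the paper: both exploit that $k(\!(\eta^{1/p^\infty})\!)$ is perfect to write each coefficient as $\lambda_j=\varphi(\varphi^{-1}(\lambda_j))$, then use the semilinearity $\psi(\varphi(a)b)=a\psi(b)$ together with $\psi(u^i)=0$ for $1\le i\le p-1$ and $\psi(u^{pm})=u^m$. The only difference is cosmetic: the paper computes $\psi(\lambda_j u^j)=\varphi^{-1}(\lambda_j)\psi(u^j)$ term by term and sums, whereas you first group into the blocks $u^i\varphi(g_i)$; the resulting formula $\psi(f)=\sum_{p\mid j}\varphi^{-1}(\lambda_j)u^{j/p}$ and the conclusion are identical.
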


\begin{proof}
	If $\lambda\in k(\!(\eta^{1/p^{\infty}})\!)$ and $i\in \ZZ$, we have
	\[ \psi(\lambda u^i)=\psi\big(\varphi(\varphi^{-1}(\lambda))u^i)=\varphi^{-1}(\lambda)\psi(u^i),  \]
	and 
	\[ \psi(u^i)=\begin{cases}
	u^{i/p},\  \text{ if } p|i\\
	0,\ \text{ else.}
	\end{cases}  \]
	We have thus 
	\[  \psi(f)=\sum\limits_{j\geq n,\, p|j}\varphi^{-1}(\lambda_j)u^{j/p}, \]
	so that $\psi(f)=0$ if and only if $\varphi^{-1}(\lambda_j)=0$, $\ie$$\lambda_j=0$ for all $j$ such that $p|j$.
\end{proof}

\begin{notation}
	We have $k[\![u,\eta]\!]\subset k(\!(\eta)\!)[\![u]\!],$ so that there is an inclusion $k(\!(u,\eta)\!)\subset k(\!(\eta)\!)(\!(u)\!),$ hence an inclusion $F_{u,\tau}=k(\!(u,\eta^{1/p^\infty})\!)\subset\bigcup\limits_{n=0}^\infty k(\!(\eta)\!)(\!(u)\!)[\eta^{1/p^n}]$ ($\cf$ notation \ref{rem fixed finite degree}). Put $$\mathscr{D}=\bigoplus_{i=1}^dk[\![u]\!]\varphi(e_i)$$\index{$\mathscr{D}$} and $$\mathscr{D}_{u,\tau}=\bigoplus_{i=1}^dk(\!(\eta)\!)[\![u]\!]\varphi(e_i).$$\index{$\mathscr{D}_{u,\tau}$} By construction, we have $D_{u,\tau}\subset\bigcup\limits_{n=0}^\infty\mathscr{D}_{u,\tau}\big[\eta^{1/p^n},\frac{1}{u}\big]$ ($\cf$ definition \ref{def phi tau Fp u}) and $\mathscr{D}_{u,\tau}$ is $u$-adically separated and complete.
\end{notation}

\begin{lemma}\label{lem cf-1 proof of thm 3.32}
	Assume $n, k\in \NN$ are such that $\tau_D^{k}(\varphi(e_i))\in\mathscr{D}_{u,\tau}[\eta^{1/p^n}].$ Then for any $m\in \NN,$ $y\in \frac{1}{u^m}\mathscr{D}_{u,\tau}[\eta^{1/p^n}]$ implies $\tau_D^{k}(y)\in\frac{1}{u^m}\mathscr{D}_{u,\tau}[\eta^{1/p^n}].$
\end{lemma}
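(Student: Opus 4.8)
The statement is a stability assertion: if $\tau_D^k$ maps each basis element $\varphi(e_i)$ into the lattice-type module $\mathscr{D}_{u,\tau}[\eta^{1/p^n}]$, then $\tau_D^k$ preserves the shifted module $\tfrac{1}{u^m}\mathscr{D}_{u,\tau}[\eta^{1/p^n}]$ for every $m\in\NN$. The essential point is to control how $\tau_D^k$ interacts with the $u$-adic filtration: since $\tau_D$ is $\tau$-semilinear and $\tau(u)=u(\eta+1)=u\varepsilon$, multiplication by negative powers of $u$ is \emph{not} $\tau_D$-equivariant, and one must check that the correction terms stay in the right module.

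\textbf{Step 1.} First I would reduce to the case $m=0$ together with an explicit formula for $\tau_D^k(u^{-m})$. Namely, for any $y\in\tfrac{1}{u^m}\mathscr{D}_{u,\tau}[\eta^{1/p^n}]$ write $y=u^{-m}z$ with $z\in\mathscr{D}_{u,\tau}[\eta^{1/p^n}]$. Since $\tau_D$ is $\tau$-semilinear over $F_{u,\tau}$ (and $\mathscr{D}_{u,\tau}$, $\mathscr{D}_{u,\tau}[\eta^{1/p^n}]$ are $k(\!(\eta)\!)[\![u]\!][\eta^{1/p^n}]$-submodules stable under $\tau$ by the remark following Notation~\ref{rem fixed finite degree}, since $\tau(\eta^{1/p^j})=\eta^{1/p^j}$), we get
\[
\tau_D^k(u^{-m}z)=\tau^k(u^{-m})\,\tau_D^k(z)=u^{-m}(\varepsilon^k)^{-m}\,\tau_D^k(z)=u^{-m}\varepsilon^{-km}\,\tau_D^k(z).
\]
Because $\varepsilon^{-km}=(\eta+1)^{-km}\in k[\![\eta]\!]^\times\subset k(\!(\eta)\!)^\times$, the factor $\varepsilon^{-km}$ is a unit in $k(\!(\eta)\!)[\![u]\!]$, hence preserves $\mathscr{D}_{u,\tau}[\eta^{1/p^n}]$. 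So it suffices to show $\tau_D^k(z)\in\mathscr{D}_{u,\tau}[\eta^{1/p^n}]$ for $z\in\mathscr{D}_{u,\tau}[\eta^{1/p^n}]$, i.e. the case $m=0$.

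\textbf{Step 2.} Now prove the $m=0$ case. Write $z=\sum_{i=1}^d c_i\varphi(e_i)$ with $c_i\in k(\!(\eta)\!)[\![u]\!][\eta^{1/p^n}]$. Then $\tau_D^k(z)=\sum_i\tau^k(c_i)\,\tau_D^k(\varphi(e_i))$. By hypothesis $\tau_D^k(\varphi(e_i))\in\mathscr{D}_{u,\tau}[\eta^{1/p^n}]$, and $\tau^k(c_i)\in k(\!(\eta)\!)[\![u]\!][\eta^{1/p^n}]$ because $\tau$ sends $u\mapsto u\varepsilon$ (with $\varepsilon\in k[\![\eta]\!]^\times$, so $u$-adic filtration and $k(\!(\eta)\!)[\![u]\!]$-membership are preserved) and fixes each $\eta^{1/p^j}$. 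Since $\mathscr{D}_{u,\tau}[\eta^{1/p^n}]$ is a module over $k(\!(\eta)\!)[\![u]\!][\eta^{1/p^n}]$, each summand lies in $\mathscr{D}_{u,\tau}[\eta^{1/p^n}]$, hence so does $\tau_D^k(z)$. Combining with Step~1 gives the claim. One should be a little careful that the infinite $u$-adic sums converge: $\mathscr{D}_{u,\tau}$ is $u$-adically complete and separated (as recorded in the Notation preceding this lemma) and $\tau_D^k$ is $u$-adically continuous, so the termwise computation is legitimate; a remark to that effect should be included.

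\textbf{Main obstacle.} The only real subtlety — and what the lemma is designed to isolate — is the non-equivariance of the negative-power-of-$u$ shift under $\tau_D$: the identity $\tau^k(u^{-m})=u^{-m}\varepsilon^{-km}$ and the observation that $\varepsilon^{-km}$ is a \emph{unit} in $k(\!(\eta)\!)[\![u]\!]$ (so it does not spoil the module membership) is the crux. Everything else is bookkeeping with the completeness of $\mathscr{D}_{u,\tau}$ and semilinearity. I expect the writeup to be short, essentially the two displayed computations above with a sentence on convergence.
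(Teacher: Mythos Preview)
Your proof is correct and follows essentially the same approach as the paper: both arguments write $y$ in terms of the basis $\varphi(e_i)$, use semilinearity to get $\tau_D^k(y)=\sum_i\tau^k(\lambda_i)\,\tau_D^k(\varphi(e_i))$, and reduce to checking that $\tau^k(u^{-m}\mu_i)=u^{-m}\varepsilon^{-km}\tau^k(\mu_i)\in\frac{1}{u^m}k(\!(\eta)\!)[\![u]\!]$ via $\varepsilon\in k[\![\eta]\!]^\times$. Your organization (first reducing to $m=0$, then handling the integral case) is a cosmetic rearrangement of the paper's direct computation, and your extra remark on $u$-adic convergence is a harmless elaboration the paper leaves implicit.
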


\begin{proof}
	Write $y=\sum_{i=1}^d\lambda_i\varphi(e_i)$ with $\lambda_i\in\frac{1}{u^m}k(\!(\eta)\!)[\![u]\!].$ Then $\tau_D^{k}(y)=\sum_{i=1}^d\tau^{k}(\lambda_i)\tau_D^{k}(\varphi(e_i)).$ By assumption we know that $\tau_D^{k}(\varphi(e_i))\in\mathscr{D}_{u,\tau}[\eta^{1/p^n}]:$ it remains to control $\tau^{k}(\lambda_i).$
	Recall that $\tau(u)=\varepsilon u=u+\eta u.$ Write $\lambda_i=\frac{1}{u^m}\mu_i$ with $\mu_i\in k(\!(\eta)\!)[\![u]\!]:$ we have $\tau^{k}(\lambda_i)=\frac{1}{u^m\varepsilon^{km}}\tau^{k}(\mu_i)\in \frac{1}{u^m}k(\!(\eta)\!)[\![u]\!]$ as $\varepsilon=1+\eta\in k[\![\eta]\!]^{\times}.$
\end{proof}

\begin{proof}[Proof of theorem \ref{prop quais-iso psi} for $\FF_p$-representations:]
	
	Let $y\in D_{u,\tau,0}^{\psi=0}:$ there exist $n,m\in\NN$ such that $y\in\frac{1}{u^m}\mathscr{D}_{u,\tau}[\eta^{1/p^n}].$ By continuity, there exists $r\in\NN$ such that $\tau_D^{p^r}(e_i)\equiv e_i\mod u^m\mathscr{D}_{u,\tau}[\eta^{1/p^n}]$ (making $n$ larger if necessary), whence $\tau_D^{p^r}(\varphi(e_i))\equiv\varphi(e_i)\mod u^{pm}\mathscr{D}_{u,\tau}[\eta^{1/p^n}]$ (recall that $\tau_D$ and $\varphi$ commute). Put \ft{\cf corollary \ref{coro precise computation of map between different tau power -1} for more details.}
	$$z=\tfrac{\tau_D^{p^r}-1}{\tau_D-1}(y)=(1+\tau_D+\cdots+\tau_D^{p^r-1})(y)\in D_{u,\tau^{p^r},0}^{\psi=0}$$
	There exists $l\in \NN$, such that $z\in\frac{1}{u^l}\mathscr{D}_{u,\tau}[\eta^{1/p^n}]^{\psi=0}$ (making $n$ larger if necessary) by lemma  \ref{lem cf-1 proof of thm 3.32} and the fact that $\psi$ commutes with $\tau_D.$ By lemma \ref{lemm5.18}, we can write $z=\sum\limits_{i=1}^d\sum\limits_{\substack{j\geq-l\\ p\nmid j}}f_{i,j}(\eta)u^j\varphi(e_i)$ with $f_{i,j}(\eta)\in k(\!(\eta^{1/p^n})\!).$ An argument similar to the proof of lemma \ref{lemm inverse of tau-1} shows that for all $i,j,$ there exists $c_{i,j}\in k$ such that $f_{i,j}(\eta)=c_{i,j}(\varepsilon^{jp^r}-1)\ \mod u^{pm}\mathscr{D}_{u,\tau}[\eta^{1/p^n}].$ 
	
	Indeed $z\in D_{u,\tau^{p^r},0}^{\psi=0}$ implies that
	$$(\gamma\otimes 1)z=(1+\tau_D^{p^r}+\tau_D^{2p^r}+\cdots +\tau_D^{( \chi{(\gamma)}-1) p^r } )(z).$$
	The left hand side is
	$$(\gamma\otimes 1)z=(\gamma\otimes 1)\big(\sum\limits_{i=1}^d\sum\limits_{\substack{j\geq-l\\ p\nmid j}}f_{i,j}(\eta)u^j\varphi(e_i)\big)=\sum\limits_{i=1}^d\sum\limits_{\substack{j\geq-l\\ p\nmid j}}f_{i,j}(\eta^{\chi(\gamma)})u^j\varphi(e_i)$$
	as $\varphi(e_i)$ are all fixed by $\mathscr{G}_{K_{\pi}},$ hence in particular fixed by $\gamma.$ For the right hand side, we have
	\begin{align*}
	&(1+\tau_D^{p^r}+\tau_D^{2p^r}+\cdots +\tau_D^{( \chi{(\gamma)}-1) p^r } )(z)\\
	\equiv& \sum\limits_{i=1}^d\sum\limits_{\substack{j\geq-l\\ p\nmid j}}f_{i,j}(\eta)u^j(1+\varepsilon^{jp^r}+\varepsilon^{2jp^r}+\cdots + \varepsilon^{(\chi(\gamma)-1)jp^r})\varphi(e_i)\  \mod u^{pm}\mathscr{D}_{u,\tau}[\eta^{1/p^n}]\\
	=&  \sum\limits_{i=1}^d\sum\limits_{\substack{j\geq-l\\ p\nmid j}}f_{i,j}(\eta)u^j\big( \frac{\varepsilon^{\chi(\gamma)j^{p^r}-1}}{\varepsilon^{jp^r}-1} \big) \varphi(e_i)
	\end{align*} 
	as $\tau_D^{p^r}(\varphi(e_i))\equiv\varphi(e_i)\mod u^{pm}\mathscr{D}_{u,\tau}[\eta^{1/p^n}].$ Hence for all $i\in\{1, \ldots, d\}$ and $j<pm,$ there exists $c_{i,j}\in k$ such that $f_{i,j}(\eta)=c_{i,j}(\varepsilon^{jp^r}-1)$ ($\cf$ lemma \ref{lemm inverse of tau-1}). Hence 
	\[z=\sum\limits_{i=1}^d\sum\limits_{\substack{p\nmid j \\ -l\leq j\leq pm}}c_{i,j}(\varepsilon^{jp^r}-1)u^j\varphi(e_i)\ \mod u^{pm}\mathscr{D}_{u,\tau}[\eta^{1/p^n}].\]
	
	Put
	$$x_0=\sum_{i=1}^d\Big(\sum_{\substack{p\nmid j\\-l\leq j\leq pm}} c_{i,j}u^j\Big)\varphi(e_i)\in\tfrac{1}{u^m}\mathscr{D}.$$
	For all $i\in\{1,\ldots,d\},$ we can write $\tau_D^{p^r}(\varphi(e_i))=\varphi(e_i)+u^{pm}g_{i,j}$ with $g_{i,j}\in\mathscr{D}_{u,\tau}[\eta^{1/p^n}]:$ we have
	\[  (\tau_D^{p^r}-1)(x_0)=\sum\limits_{i=1}^d\sum\limits_{\substack{p\nmid j\\-l\leq j\leq pm}} c_{i,j}\Big(\varepsilon^{jp^r}u^j(\varphi(e_i)+u^{pm}g_{i,j})-u^j\varphi(e_i)\Big)\equiv z-z_1 \ \mod u^{pm}\mathscr{D}_{u,\tau}[\eta^{1/p^n}], \]
	where \[ z_1=\sum\limits_{i=1}^d\sum\limits_{\substack{j\geq pm\\ p\nmid j}}f_{i,j}(\eta)u^j\varphi(e_i) -\sum\limits_{i=1}^d\sum\limits_{\substack{p\nmid j\\-l\leq j\leq pm}} c_{i,j}\varepsilon^{jp^r}u^{j+pm}g_{i,j}\in u^{pm-l}\mathscr{D}_{u,\tau}[\eta^{1/p^n}].\]
	By construction, we have $\psi(x_0)=0,$ which implies $\psi\big((\tau_D^{p^r}-1)(x_0)\big)=0:$ as $\psi(z)=0,$ we have $\psi(z_1)=0$ as well. Note also that $\tau_D^{p^r}-1$ maps $D$ into $D_{u,\tau^{p^r},0}:$ as $z\in D_{u,\tau^{p^r},0},$ we also have $z_1\in D_{u,\tau^{p^r},0}.$ This shows that we can carry on the preceding construction, and build sequences $(z_s)_{s\in\NN}$ in $D_{u,\tau^{p^r},0}$ and $(x_s)_{s\in\NN}$ in $D$ such that $z_0=z,$ $z_s\in u^{spm-l}\mathscr{D}_{u,\tau}[\eta^{1/p^n}],$ $x_s\in u^{spm-l}\mathscr{D}[\eta^{1/p^n}]$  and $(\tau_D^{p^r}-1)(x_\ell)=z_s-z_{s+1}$ for all $s\in\NN.$ The series $x=\sum\limits_{s=0}^\infty x_s$ converges in $\frac{1}{u^{l}}\mathscr{D},$ and summing all the equalities gives $(\tau_D^{p^r}-1)(x)=z.$
	
	This shows that $\frac{\tau_D^{p^r}-1}{\tau_D-1}\big((\tau_D-1)(x)\big)=(\tau_D^{p^r}-1)(x)=\frac{\tau_D^{p^r}-1}{\tau_D-1}(y):$ as $\frac{\tau_D^{p^r}-1}{\tau_D-1}\colon D_{u,\tau,0}^{\psi=0}\to D_{u,\tau^{p^r},0}^{\psi=0}$ is injective by proposition \ref{prop tau-1 high power inj}, we get $y=(\tau_D-1)(x),$ showing that $y$ belongs to the image of $\tau_D-1.$ As this holds for all $y\in D_{u,\tau,0}^{\psi=0},$ this proves the surjectivity of $\tau_D-1: D^{\psi=0}\to D_{u,\tau,0}^{\psi=0}$. Together with corollary \ref{cor injective before "the trivial case"}, we finish the proof of theorem \ref{prop quais-iso psi} for $\FF_p$-representations.
\end{proof}

\begin{corollary}\label{coro Fp bijection thm} For all $r\in\NN_{\geq0},$ the maps $\tau_D^{p^r}-1\colon D^{\psi=0}\to D_{u,\tau^{p^r},0}^{\psi=0}$ and $\frac{\tau_D^{p^r}-1}{\tau_D-1}\colon D_{u,\tau,0}^{\psi=0}\to D_{u,\tau^{p^r},0}^{\psi=0}$ are bijective.
\end{corollary}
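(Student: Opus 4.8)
The plan is to deduce Corollary \ref{coro Fp bijection thm} from Theorem \ref{prop quais-iso psi} (which, for $\FF_p$-representations, was just proved to amount to the bijectivity of $\tau_D-1\colon D^{\psi=0}\to D_{u,\tau,0}^{\psi=0}$) together with the injectivity result of Proposition \ref{prop tau-1 high power inj}. The key point is a factorization: for any $r\in\NN_{\geq0}$ we have, on $D^{\psi=0}$,
\[
\tau_D^{p^r}-1=\frac{\tau_D^{p^r}-1}{\tau_D-1}\circ(\tau_D-1),
\]
where $\frac{\tau_D^{p^r}-1}{\tau_D-1}=1+\tau_D+\cdots+\tau_D^{p^r-1}$ is the operator that (by Lemma \ref{lemm psi D u tau 0}, Lemma \ref{lemm 1.1.11 well-defined tau-1 varphi-1} and the fact that $\psi$ commutes with $\tau_D$) sends $D_{u,\tau,0}^{\psi=0}$ into $D_{u,\tau^{p^r},0}^{\psi=0}$, while $\tau_D-1$ sends $D^{\psi=0}$ into $D_{u,\tau,0}^{\psi=0}$ and $\tau_D^{p^r}-1$ sends $D^{\psi=0}$ into $D_{u,\tau^{p^r},0}^{\psi=0}$.

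First I would record that $\tau_D-1\colon D^{\psi=0}\to D_{u,\tau,0}^{\psi=0}$ is bijective: injectivity is Corollary \ref{cor injective before "the trivial case"}, and surjectivity is exactly what was established at the end of the proof of Theorem \ref{prop quais-iso psi} for $\FF_p$-representations. Next, $\frac{\tau_D^{p^r}-1}{\tau_D-1}\colon D_{u,\tau,0}^{\psi=0}\to D_{u,\tau^{p^r},0}^{\psi=0}$ is injective by Proposition \ref{prop tau-1 high power inj}. For surjectivity of this second map, take $z\in D_{u,\tau^{p^r},0}^{\psi=0}$; the argument in the proof of Theorem \ref{prop quais-iso psi} (the successive-approximation construction producing $x=\sum_s x_s\in D$ with $(\tau_D^{p^r}-1)(x)=z$) in fact shows that every element of $D_{u,\tau^{p^r},0}^{\psi=0}$ lies in the image of $\tau_D^{p^r}-1\colon D^{\psi=0}\to D_{u,\tau^{p^r},0}^{\psi=0}$. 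Hence $\tau_D^{p^r}-1$ is surjective on $D^{\psi=0}$; combined with the factorization and the injectivity of $\frac{\tau_D^{p^r}-1}{\tau_D-1}$, one gets that $\frac{\tau_D^{p^r}-1}{\tau_D-1}$ is surjective, hence bijective, and then that $\tau_D^{p^r}-1=\frac{\tau_D^{p^r}-1}{\tau_D-1}\circ(\tau_D-1)$ is a composite of two bijections, hence bijective.

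To pass from $\FF_p$-representations to $\ZZ_p$-representations I would use d\'evissage, exactly as elsewhere in this chapter. For $T$ killed by $p^n$, write the exact sequence $0\to pT\to T\to T/pT\to 0$, which by Corollary \ref{coro D u tau Zp case} (and exactness of $\mathcal{D}(-)$, Corollary \ref{coro D and D tau are exact}) induces short exact sequences of the relevant $\psi=0$ parts — here one must check that taking $\psi=0$ invariants preserves exactness, which follows since $\psi$ is surjective on $D$ and on $D_{u,\tau,0}$ (Proposition \ref{prop Zp-case psi surjective}, Lemma \ref{lemm psi D u tau 0}) and commutes with the maps in the sequences. Then the snake lemma applied to the endomorphisms $\tau_D^{p^r}-1$ (resp. $\frac{\tau_D^{p^r}-1}{\tau_D-1}$) on the three-term exact sequences, with the outer maps bijective by the induction hypothesis, gives bijectivity in the middle; finally pass to the $p$-adic limit using $\mathcal{D}(T)=\varprojlim_n\mathcal{D}(T/p^n)$ and $p$-adic completeness, as in the proof of the proposition preceding Corollary \ref{coro prepare complex over non-perfect ring works also}. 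The main obstacle I anticipate is the bookkeeping in the surjectivity claim: one must make sure the approximation argument in the proof of Theorem \ref{prop quais-iso psi} genuinely produces a preimage in $D^{\psi=0}$ under $\tau_D^{p^r}-1$ (and not merely under $\tau_D-1$ after applying $\frac{\tau_D^{p^r}-1}{\tau_D-1}$), and that the $\psi=0$ and $D_{u,\tau^{p^r},0}$ conditions are preserved at each stage — but this is already essentially contained in the displayed construction there, so it is a matter of quoting it at the right level of generality rather than redoing it.
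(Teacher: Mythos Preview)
Your factorisation $\tau_D^{p^r}-1=\frac{\tau_D^{p^r}-1}{\tau_D-1}\circ(\tau_D-1)$ and the use of Proposition~\ref{prop tau-1 high power inj} for injectivity are fine, but the surjectivity step contains a genuine gap. In the successive-approximation argument inside the proof of Theorem~\ref{prop quais-iso psi}, the integer $r$ is \emph{not} fixed in advance: one first picks $m$ (the pole order of $y$) and \emph{then} chooses $r$ large enough so that $\tau_D^{p^r}(\varphi(e_i))\equiv\varphi(e_i)\bmod u^{pm}\mathscr{D}_{u,\tau}[\eta^{1/p^n}]$. It is this congruence that makes each step of the approximation gain $pm$ in $u$-adic valuation. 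For a prescribed $r$ there is no reason to have $\tau_D^{p^r}(\varphi(e_i))\equiv\varphi(e_i)\bmod u^N$ for any $N>0$, so the construction need not converge, and you cannot conclude that an arbitrary $z\in D_{u,\tau^{p^r},0}^{\psi=0}$ lies in the image of $\tau_D^{p^r}-1$ this way.

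The paper avoids this by a base-change trick: restrict $T$ to $\mathscr{G}_{K_r}$ and view $(D,D_{u,\tau})$ as a $(\varphi,\tau^{p^r})$-module (cf.\ Definition~\ref{def (phi, tau_p^r)-modules} and \cite[Remarque~1.15]{Car13}). The whole $\FF_p$-proof of Theorem~\ref{prop quais-iso psi} then runs verbatim with $\tau_D$ replaced by $\tau_D^{p^r}$; inside that rerun one is free to pick a further auxiliary exponent $r'$ so that $\tau_D^{p^{r+r'}}(\varphi(e_i))\equiv\varphi(e_i)\bmod u^{pm}$, and the output is directly the bijectivity of $\tau_D^{p^r}-1\colon D^{\psi=0}\to D_{u,\tau^{p^r},0}^{\psi=0}$. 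The claim for $\frac{\tau_D^{p^r}-1}{\tau_D-1}$ then follows from your factorisation together with the already-established bijectivity of $\tau_D-1$. (Your d\'evissage paragraph is superfluous here: the corollary is stated in the $\FF_p$-subsection and is only asserted for $\FF_p$-representations; the passage to $\ZZ_p$ is carried out afterwards in Proposition~\ref{prop Zp-case kernel complex trivial}.)
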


\begin{proof}
	Replacing $T$ by its restriction to $\mathscr{G}_{K_r}$ and replacing its $(\varphi,\tau)$-module by the corresponding $(\varphi,\tau^{p^r})$-module, $\ie\tau_D$ by $\tau_D^{p^r},$ the $\FF_p$-case of theorem \ref{prop quais-iso psi} thus implies that the map $\tau_D^{p^r}-1\colon D^{\psi=0}\to D_{u,\tau^{p^r},0}^{\psi=0}$ is bijective. The statement about $\frac{\tau_D^{p^r}-1}{\tau_D-1}$ follows.
\end{proof}

\bigskip

\subsection{Proof of theorem \ref{prop quais-iso psi}: the quasi-isomophism}

\begin{lemma}\label{lemm tue par psi is exact}
	Let $0\to T^{\prime}\to T\to T^{\prime\prime}\to 0$ be a short exact sequence in $\Rep_{\ZZ_p}(\mathscr{G}_K),$ we then have a short exact sequence
	\[ 0\to \mathcal{D}(T^{\prime})^{\psi=0}\to \mathcal{D}(T)^{\psi=0}\to \mathcal{D}(T^{\prime\prime})^{\psi=0}\to 0.  \]
\end{lemma}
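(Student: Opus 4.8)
The statement is that the functor $T \mapsto \mathcal{D}(T)^{\psi=0}$ is exact. Since $\mathcal{D}(-)$ is exact (corollary \ref{coro D and D tau are exact}, or equivalently the equivalence of categories of theorem \ref{thm equivalence of cats}), we already have a short exact sequence $0 \to \mathcal{D}(T') \to \mathcal{D}(T) \to \mathcal{D}(T'') \to 0$ of étale $\varphi$-modules over $\Oo_{\Ee}.$ Applying the left-exact functor $\Ker(\psi)$ (note $\psi$ is additive and commutes with the inclusions $\mathcal{D}(T') \hookrightarrow \mathcal{D}(T)$ and the projection $\mathcal{D}(T) \to \mathcal{D}(T'')$, by the unicity clause in proposition \ref{prop Zp-case psi surjective} applied compatibly), the only thing left to prove is that the induced map $\mathcal{D}(T)^{\psi=0} \to \mathcal{D}(T'')^{\psi=0}$ is surjective. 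Equivalently, by the snake lemma applied to the diagram with rows $0 \to \mathcal{D}(T') \to \mathcal{D}(T) \to \mathcal{D}(T'') \to 0$ and vertical maps $\psi$, surjectivity of $\psi$ on each term (proposition \ref{prop Zp-case psi surjective}) gives the exact sequence $0 \to \mathcal{D}(T')^{\psi=0} \to \mathcal{D}(T)^{\psi=0} \to \mathcal{D}(T'')^{\psi=0} \to \Coker(\psi\colon \mathcal{D}(T') \to \mathcal{D}(T')) = 0$, so the result follows immediately.

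More precisely, I would set up the commutative diagram
\[
\xymatrix{
0 \ar[r] & \mathcal{D}(T') \ar[r]\ar[d]^{\psi} & \mathcal{D}(T) \ar[r]\ar[d]^{\psi} & \mathcal{D}(T'') \ar[r]\ar[d]^{\psi} & 0\\
0 \ar[r] & \mathcal{D}(T') \ar[r] & \mathcal{D}(T) \ar[r] & \mathcal{D}(T'') \ar[r] & 0
}
\]
whose rows are exact by exactness of $\mathcal{D}$, and whose vertical maps are surjective by proposition \ref{prop Zp-case psi surjective}. The snake lemma then yields the exact sequence
\[
0 \to \Ker(\psi|_{\mathcal{D}(T')}) \to \Ker(\psi|_{\mathcal{D}(T)}) \to \Ker(\psi|_{\mathcal{D}(T'')}) \to \Coker(\psi|_{\mathcal{D}(T')}) = 0,
\]
which is exactly the asserted short exact sequence $0 \to \mathcal{D}(T')^{\psi=0} \to \mathcal{D}(T)^{\psi=0} \to \mathcal{D}(T'')^{\psi=0} \to 0$.

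The one genuine verification needed is that the horizontal maps in the diagram are compatible with $\psi$, i.e. that $\psi_{\mathcal{D}(T)}$ restricts to $\psi_{\mathcal{D}(T')}$ on the submodule $\mathcal{D}(T')$ and descends to $\psi_{\mathcal{D}(T'')}$ on the quotient. This follows from the construction of $\psi$ in proposition \ref{prop Zp-case psi surjective}: the operator is induced by $\psi \otimes 1$ on $\Oo_{\widehat{\Ee^{\ur}}}\otimes_{\ZZ_p}(-)$, and the inclusion $T' \hookrightarrow T$ and surjection $T \twoheadrightarrow T''$ induce the corresponding maps on $\Oo_{\widehat{\Ee^{\ur}}}\otimes_{\ZZ_p}(-)$ that are $\psi \otimes 1$-equivariant; taking $\mathscr{G}_{K_\pi}$-invariants preserves this equivariance. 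I do not expect any real obstacle here: the whole argument is a formal consequence of the exactness of $\mathcal{D}$, the surjectivity of $\psi$, and the snake lemma. The only care required is bookkeeping of the compatibility of $\psi$ with morphisms of representations, which is immediate from its definition.
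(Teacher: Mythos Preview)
Your proof is correct and matches the paper's approach exactly: the paper also invokes exactness of $\mathcal{D}(-)$ to get the short exact sequence of $(\varphi,\tau)$-modules, sets up the same commutative diagram with vertical maps $\psi$, and applies the snake lemma together with surjectivity of $\psi$ (proposition \ref{prop Zp-case psi surjective}) to conclude. Your additional remark on the compatibility of $\psi$ with the horizontal maps is a welcome clarification but not a departure from the paper's argument.
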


\begin{proof}
	We know by lemma \ref{Exactness D-tau-0 Zp} that $\mathcal{D}(-)$ is an exact functor and hence we have the short exact sequence $0\to \mathcal{D}(T^{\prime})\to \mathcal{D}(T)\to \mathcal{D}(T^{\prime\prime})\to 0.$ Now we consider the following diagram of complexes:
	
	\[\xymatrix{ & \mathcal{D}(T^{\prime})^{\psi=0} \ar[r] \ar[d]& \mathcal{D}(T)^{\psi=0} \ar[r] \ar[d]& \mathcal{D}(T^{\prime\prime})^{\psi=0} \ar[d]& \\
		0  \ar[r] & \mathcal{D}(T^{\prime}) \ar[r] \ar[d]^{\psi}& \mathcal{D}(T) \ar[r] \ar[d]^{\psi}& \mathcal{D}(T^{\prime\prime}) \ar[r] \ar[d]^{\psi}& 0\\
		0  \ar[r] & \mathcal{D}(T^{\prime}) \ar[r] & \mathcal{D}(T) \ar[r] & \mathcal{D}(T^{\prime\prime}) \ar[r] & 0.
	}\]
	Since $\psi$ is surjective, by snake lemma we get the following short exact sequence: \[ 0\to \mathcal{D}(T^{\prime})^{\psi=0} \to \mathcal{D}(T)^{\psi=0} \to \mathcal{D}(T^{\prime\prime})^{\psi=0}\to 0.  \]
\end{proof}

\begin{proposition}\label{prop Zp-case kernel complex trivial}
	Let $T\in \Rep_{\ZZ_p}(\mathscr{G}_{K}),$ we then have a bijection $\mathcal{D}(T)^{\psi=0}\xrightarrow{\tau_D-1} \mathcal{D}(T)_{u,\tau, 0}^{\psi=0}.$
\end{proposition}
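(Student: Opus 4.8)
The plan is to reduce the general $\ZZ_p$-case to the $\FF_p$-case already established in the previous subsection by the standard d\'evissage, using the exactness of the functors $T\mapsto \mathcal{D}(T)$, $T\mapsto \mathcal{D}(T)_{u,\tau}$ and $T\mapsto \mathcal{D}(T)^{\psi=0}$. First I would observe that it suffices to treat the case where $T$ is killed by $p^n$ for some $n\in\NN$, since $\mathcal{D}(T)=\plim_n \mathcal{D}(T/p^nT)$ and $\mathcal{D}(T)_{u,\tau}=\plim_n \mathcal{D}(T/p^nT)_{u,\tau}$ are $p$-adically complete, and both $\psi$ and $\tau_D-1$ are continuous for the $p$-adic topology, so a compatible system of bijections at finite level yields the bijection for $T$ by passing to the limit. (One should note in passing that $\mathcal{D}(T)^{\psi=0}$ and $\mathcal{D}(T)_{u,\tau,0}^{\psi=0}$ are closed, hence $p$-adically complete, because $\psi$, the Galois action and $\tau_D$ are all continuous.)

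Next, for $T$ killed by $p^n$, I would induct on $n$. The base case $n=1$ is exactly the $\FF_p$-case, proved in the previous subsection (the proof of theorem \ref{prop quais-iso psi} for $\FF_p$-representations, together with corollary \ref{cor injective before "the trivial case"}). For the inductive step, write $T'=pT$, $T''=T/pT$ and consider the short exact sequence $0\to T'\to T\to T''\to 0$ in $\Rep_{\ZZ_p}(\mathscr{G}_K)$, with $T'$ killed by $p^{n-1}$ and $T''$ killed by $p$. By corollary \ref{coro D u tau Zp case} the sequence $0\to \mathcal{D}(T')_{u,\tau}\to \mathcal{D}(T)_{u,\tau}\to \mathcal{D}(T'')_{u,\tau}\to 0$ is exact, and by lemma \ref{lemm tue par psi is exact} (applied to both $\mathcal{D}(-)$ and, via the same snake-lemma argument with $D_{u,\tau}$ in place of $D$, to $\mathcal{D}(-)_{u,\tau}$) one gets short exact sequences on the $\psi=0$ parts:
\[
0\to \mathcal{D}(T')^{\psi=0}\to \mathcal{D}(T)^{\psi=0}\to \mathcal{D}(T'')^{\psi=0}\to 0,
\]
\[
0\to \mathcal{D}(T')_{u,\tau,0}^{\psi=0}\to \mathcal{D}(T)_{u,\tau,0}^{\psi=0}\to \mathcal{D}(T'')_{u,\tau,0}^{\psi=0}\to 0.
\]
For the second one I need that $\psi$ is surjective on $\mathcal{D}(-)_{u,\tau,0}$, which is lemma \ref{lemm psi D u tau 0} together with the remark after proposition \ref{prop Zp-case psi surjective}, and that the relevant subgroups $D_{u,\tau,0}$ behave well in exact sequences — this should follow by combining the snake lemma with the surjectivity of $\delta-\gamma$ as in corollary \ref{coro prepare complex over non-perfect ring works also}, or directly from the description $D_{u,\tau,0}=\{x:(\gamma\otimes1)x=(1+\tau_D+\cdots+\tau_D^{\chi(\gamma)-1})x\}$ and exactness of $\mathcal{D}(-)_{u,\tau}$.

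Then I would set up the morphism of short exact sequences whose vertical maps are $\tau_D-1\colon \mathcal{D}(-)^{\psi=0}\to \mathcal{D}(-)_{u,\tau,0}^{\psi=0}$ on $T'$, $T$, $T''$:
\[
\xymatrix{
0\ar[r] & \mathcal{D}(T')^{\psi=0}\ar[r]\ar[d]^{\tau_D-1} & \mathcal{D}(T)^{\psi=0}\ar[r]\ar[d]^{\tau_D-1} & \mathcal{D}(T'')^{\psi=0}\ar[r]\ar[d]^{\tau_D-1} & 0\\
0\ar[r] & \mathcal{D}(T')_{u,\tau,0}^{\psi=0}\ar[r] & \mathcal{D}(T)_{u,\tau,0}^{\psi=0}\ar[r] & \mathcal{D}(T'')_{u,\tau,0}^{\psi=0}\ar[r] & 0.
}
\]
By the induction hypothesis the left vertical map is an isomorphism, and by the base case the right one is too (it is the $\FF_p$-case applied to $T''$); the five lemma then gives that the middle map is an isomorphism, completing the induction and hence, after passing to the $p$-adic limit, the proof. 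I do not expect a serious obstacle here: the only point requiring a little care is verifying that the functor $T\mapsto \mathcal{D}(T)_{u,\tau,0}^{\psi=0}$ is exact on short exact sequences of $p$-power-torsion representations, i.e. that taking the subgroup ``$(\ )_0$'' and then the kernel of $\psi$ both preserve exactness — but this is a routine diagram chase using the surjectivity of $\psi$ (proposition \ref{prop Zp-case psi surjective}, lemma \ref{lemm psi D u tau 0}) and of $\delta-\gamma$ (proposition \ref{propdeltagamma Zp}, corollary \ref{coro prepare complex over non-perfect ring works also}) together with the snake lemma, exactly parallel to the arguments already carried out in corollaries \ref{Exactness D-tau-0 Zp} and \ref{coro prepare complex over non-perfect ring works also}.
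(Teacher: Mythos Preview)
Your proposal is essentially the same d\'evissage as the paper's proof: reduce to $p^n$-torsion by $p$-adic completeness, induct on $n$ with the $\FF_p$-case as base, and use the five lemma on the diagram with vertical maps $\tau_D-1$. The only differences are cosmetic or involve unnecessary extra work.

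First, the choice of filtration: you take $T'=pT$ (killed by $p^{n-1}$) and $T''=T/pT$ (killed by $p$), while the paper takes $T'=p^{n-1}T$ (killed by $p$) and $T''=T/T'$ (killed by $p^{n-1}$). Both are valid d\'evissages.

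Second, and more to the point: you spend effort arguing that the bottom row
\[
0\to \mathcal{D}(T')_{u,\tau,0}^{\psi=0}\to \mathcal{D}(T)_{u,\tau,0}^{\psi=0}\to \mathcal{D}(T'')_{u,\tau,0}^{\psi=0}\to 0
\]
is a short exact sequence, which requires checking surjectivity of $\delta-\gamma$ on $D_{u,\tau}$ (an analogue of proposition~\ref{propdeltagamma Zp} in the unperfected setting, not stated in the paper). The paper avoids this entirely: it only uses that $T\mapsto \mathcal{D}(T)_{u,\tau,0}$ and $(-)^{\psi=0}$ are \emph{left} exact, which is automatic since both are kernels. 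Left exactness of the bottom row, together with the top row being short exact and the outer vertical maps being isomorphisms, already forces the middle map to be an isomorphism by a direct diagram chase. So your argument is correct but does more than necessary; you can simply drop the ``$\to 0$'' on the right of the second row and the paragraph justifying it.
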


\begin{proof}
	Notice that $\mathcal{D}(T/(p^n))=\mathcal{D}(T)/(p^n)$ and $T,$ $\mathcal{D}(T)$ are both $p$-adically complete. Hence it suffices to prove the cases when $T$ is killed by $p^n$ with $n\in \NN.$ We use induction on $n,$ the case $n=1$ being corollary \ref{coro Fp bijection thm} with $r=1.$ Suppose $T$ is killed by $p^n.$ Put $T^{\prime}=p^{n-1}T,$ $T^{\prime\prime}=T/T^{\prime}$ and consider the exact sequence in $\Rep_{\ZZ_p}(\mathscr{G}_K)$ \[0\to T^{\prime}\to T\to T^{\prime\prime}\to 0. \]
	We then have the following commutative diagram
	
	\[ \xymatrix{  0 \ar[r] & \mathcal{D}(T^{\prime})^{\psi=0} \ar[r] \ar[d]_{\tau_D-1}& \mathcal{D}(T)^{\psi=0} \ar[r] \ar[d]_{\tau_D-1}& \mathcal{D}(T^{\prime\prime})^{\psi=0} \ar[d]_{\tau_D-1} \ar[r]& 0 \\
		0 \ar[r] & \mathcal{D}(T^{\prime})_{u, \tau, 0}^{\psi=0} \ar[r]& \mathcal{D}(T)_{u, \tau, 0}^{\psi=0} \ar[r]& \mathcal{D}(T^{\prime\prime})_{u, \tau, 0}^{\psi=0} &
	}   \]
	
	The first line is exact by lemma \ref{lemm tue par psi is exact} and the second from the fact $\mathcal{D}(-)_{u, \tau, 0}$ is left exact and then we apply the functor $(-)^{\psi=0}$ which is also left exact. As the first and third vertical maps are isomorphisms by induction hypothesis, so is that in the middle. We then finish the proof by passing to the limit. 
\end{proof}

\begin{remark}
	The theorem \ref{prop quais-iso psi} follows from proposition \ref{prop Zp-case kernel complex trivial}.
\end{remark}

\section{The \texorpdfstring{$(\varphi, \tau^{p^r})$}{(varphi, tau p powers)}-modules}\label{section 3.4 phi tau}

\begin{notation}\label{not Kr} 	
	We put $K_r=K(\pi_{r})$\index{$K_r$} for $r\in \NN$ and we have the following diagram:
	\[  \begin{tikzcd}[every arrow/.append style={dash}]
	&&\overline{K} \arrow[ldd, bend right, "\mathscr{G}_{K_{\pi}}"] \arrow[llddd, bend right, "\mathscr{G}_{K_{r}}"'] \arrow[d, "\mathscr{G}_L"]\arrow[rrddd, bend left, "\mathscr{G}_{K_{\zeta}}"]&&\\
	&&L\arrow[ld, "\overline{\la \gamma \ra}"]\arrow[ddd]\arrow[rd,"\overline{\la \tau^{p^r} \ra}"']&&\\		
	&K_{\pi}\arrow[ld]& &  K_{\zeta}K_r\arrow[rd]&\\
	K_r \arrow[rrd]&&&&K_{\zeta}\arrow[lld, "\Gamma=\overline{\la\gamma \ra}"]\\
	&& K &
	\end{tikzcd}
	\]
\end{notation}

\begin{definition}\label{def (phi, tau_p^r)-modules normal version}
	Let $r\in \NN$ and then a \emph{$(\varphi, \tau^{p^r})$-module over $(F_0, F_{\tau})$} is the data:
	\item (1) an \'etale $\varphi$-module $D$ over $F_0$;
	\item (2) a $\tau^{p^r}$-semi-linear map $\tau_D^{p^r}$ on $D_{\tau}:=F_{\tau}\otimes_{F_0}D$ which commutes with $\varphi_{F_{\tau}}\otimes \varphi_D$ (where $\varphi_{F_{\tau}}$ is the Frobenius map on $F_{\tau}$ and $\varphi_D$ the Frobenius map on $D$) and such that 
	\[ (\forall x\in M)  \,   (g\otimes 1)\circ \tau_D^{p^r}(x)=(\tau_D^{p^r})^{\chi(g)}(x), \] 
	for all $g\in \mathscr{G}_{K_{\pi}}/\mathscr{G}_L$ such that $\chi(g)\in\NN.$\\
	
	We denote $\Mod_{F_0, F_{\tau}}(\varphi,\tau^{p^r})$\index{$\Mod_{F_0, F_{\tau}}(\varphi,\tau^{p^r})$} the corresponding category. 
\end{definition}

By \cite[Remark 1.15]{Car13}, we have an equivalence of categories between $\Rep_{\FF_p}(\mathscr{G}_{K_r})$ and the category of $(\varphi, \tau^{p^r})$-modules over $(F_0, F_{\tau}).$

\begin{notation}\label{not D u tau normla version}
	Let $(D, D_{\tau})\in \Mod_{F_0, F_{\tau}}(\varphi,\tau^{p^r}).$ We put
	\[D_{\tau^{p^r}, 0}:=\big\{ x\in D_{\tau};\ (\forall g\in \mathscr{G}_{K_{\pi}})\ \chi(g)\in \ZZ_{>0} \Rightarrow (g\otimes 1)(x)=x+\tau_D^{p^r}(x)+\tau_D^{2p^r}(x)+\cdots +\tau_D^{p^r(\chi(g)-1)}(x) \big\}. \]\index{$D_{\tau^{p^r}, 0}$}
	By similar arguments as that of lemma \ref{Replace g witi gamma}, we have 
	\[D_{\tau^{p^r}, 0}=\big\{ x\in D_{\tau} ;\  (\gamma\otimes 1)x=(1+\tau_D^{p^r}+\tau_D^{2p^r}+\cdots + \tau_D^{p^{r}(\chi(\gamma)-1)})(x)   \big\}.\]
\end{notation}

\begin{definition}\label{def complex phi tau n normal version}
	Let $(D, D_{\tau})\in \Mod_{F_0, F_{\tau}}(\varphi,\tau^{p^r}).$ We define a complex $\mathcal{C}_{\varphi, \tau^{p^r}}(D)$\index{$\mathcal{C}_{\varphi, \tau^{p^r}}(D)$} as follows:
	\[ \xymatrix{
		0\ar[rr] && D \ar[r]  & D\oplus D_{ \tau^{p^r}, 0} \ar[r]  & D_{\tau^{p^r}, 0}\ar[r]  & 0\\
		&&x\ar@{|->}[r] &((\varphi-1)(x), (\tau^{p^r}-1)(x))&&\\
		&&& (y, z) \ar@{|->}[r] &(\tau^{p^r}-1)(y)-(\varphi-1)(z).&}
	\]
	If $T\in \Rep_{\FF_p}(\mathscr{G}_K),$ we have in particular the complex $\mathcal{C}_{\varphi, \tau^{p^r}}(\mathcal{D}(T)),$ which will also be simply denoted $\mathcal{C}_{\varphi, \tau^{p^r}}(T).$\index{$\mathcal{C}_{\varphi, \tau^{p^r}}(T)$}
	
\end{definition}

\begin{proposition}\label{prop tau-puissance complex normal version}
	The complex $\mathcal{C}_{\varphi, \tau^{p^r}}(T)$ computes the continuous Galois cohomology $\H^i(\mathscr{G}_{K_r}, T)$ for $i\in \NN.$
\end{proposition}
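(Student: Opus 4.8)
The statement is the exact analogue, for the Breuil--Kisin extension $K_r = K(\pi_r)$ in place of $K$, of Theorem \ref{thm main result}, with $\tau$ replaced by $\tau^{p^r}$. The plan is to recycle the entire machinery of Chapter \ref{chapter The complex C varphi tau} verbatim, observing that nothing in that argument used the precise value of $c(\tau)=1$: only that $\tau^{p^r}$ is a topological generator of $\Gal(L/K_\zeta K_r)$ (equivalently of $\overline{\la\tau^{p^r}\ra}$), that $\gamma$ topologically generates $\Gal(K_\zeta K_r/K_r)=\Gamma$, and that $\Gal(L/K_r)\simeq \overline{\la\tau^{p^r}\ra}\rtimes\overline{\la\gamma\ra}$ with the braiding relation $\gamma\tau^{p^r}\gamma^{-1}=(\tau^{p^r})^{\chi(\gamma)}$. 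The diagram in notation \ref{not Kr} records exactly this: $\mathscr{G}_L\subset\mathscr{G}_{K_r}$, $\mathscr{G}_{K_r}/\mathscr{G}_L\simeq\overline{\la\gamma\ra}\ltimes\overline{\la\tau^{p^r}\ra}$, and $\Rep_{\FF_p}(\mathscr{G}_{K_r})$ is equivalent to $\Mod_{F_0,F_\tau}(\varphi,\tau^{p^r})$ by \cite[Remark 1.15]{Car13}.

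First I would set up the $\delta$-functor formalism: define $\Ff^i(T)=\H^i(\mathcal{C}_{\varphi,\tau^{p^r}}(T))$ on $\Ind\Rep_{\FF_p}(\mathscr{G}_{K_r})$ (later extended by the usual d\'evissage and passage to the limit, exactly as in section \ref{section functorial}). Step one is the degree-zero identification $\Ff^0(T)=T^{\mathscr{G}_{K_r}}$: this is the computation of lemma \ref{lemm H0}, reading off $(\mathcal{O}_{\widehat{\mathcal{E}^{\ur}}})^{\varphi=1}=\ZZ_p$ and then $T^{\la\mathscr{G}_{K_\pi},\tau^{p^r},\gamma\ra}$, where now $\la\mathscr{G}_{K_\pi},\tau^{p^r},\gamma\ra$ cuts out precisely $\mathscr{G}_{K_r}$ rather than $\mathscr{G}_K$ (since $\tau^{p^r}$ fixes $\pi_r$ and generates the relevant inertia). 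Step two is exactness of $T\mapsto\mathcal{C}_{\varphi,\tau^{p^r}}(T)$: this is corollary \ref{Exactness D-tau-0 Zp}, whose proof rests only on the vanishing of $\H^1(\mathscr{G}_L,-)$ and $\H^1(\mathscr{G}_{K_\pi},-)$ (corollary \ref{lemmcoho0 Zp} and lemma \ref{lemm H1 G infty vanishes}) together with the surjectivity of $\delta-\gamma\otimes 1$ on $D_\tau$; that surjectivity is proposition \ref{propdeltagamma Zp}, which factors through the surjectivity of $\gamma-1$ and of $\tau_D^{\chi(\gamma)}-1$ (lemma \ref{lemm surjective gamma tau}) via the identity of lemma \ref{lemm amazing but easy equation} --- and the analogue $(\delta'-\gamma\otimes 1)\circ(\tau_D^{p^r}-1)=(1-(\tau_D^{p^r})^{\chi(\gamma)})\circ(\gamma\otimes 1-1)$ with $\delta'=1+\tau_D^{p^r}+\cdots+\tau_D^{p^r(\chi(\gamma)-1)}$ is the same one-line computation. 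Together these give that $\{\Ff^i\}$ is a $\delta$-functor agreeing with $\H^0(\mathscr{G}_{K_r},-)$ in degree zero.

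Step three is the $\H^1$ identification: translating lemma \ref{Extension} and proposition \ref{prop Caruso H1 Zp} gives $\H^1(\mathcal{C}_{\varphi,\tau^{p^r}}(T))\simeq\Ext_{\Mod_{F_0,F_\tau}(\varphi,\tau^{p^r})}(F_0,\mathcal{D}(T))\simeq\Ext_{\Rep(\mathscr{G}_{K_r})}(\FF_p,T)=\H^1(\mathscr{G}_{K_r},T)$, the middle step being Caruso's equivalence of categories for $K_r$. Step four is effaceability: embed $T$ into $U=\Ind_{\mathscr{G}_K}T$; then $\Ff^1(U)=\H^1(\mathscr{G}_{K_r},U)=0$ by lemma \ref{lemmcoho0nonGalois Zp} applied with $K'=K_r$ (which is the whole point of that lemma being stated for an arbitrary subfield $K'$), and $\Ff^2(U)=0$ follows from surjectivity of $\varphi-1$ on $\mathcal{D}(U)_{\tau^{p^r},0}$, which is lemma \ref{Surjectivity of varphi-1 on D-tau-0 Zp} --- its proof needs only the snake-lemma argument from the short exact sequence $0\to U^{\mathscr{G}_L}\to\mathcal{D}(U)_\tau\xrightarrow{\varphi-1}\mathcal{D}(U)_\tau\to 0$ and the surjectivity of $\delta'-\gamma\otimes 1$ on $U^{\mathscr{G}_L}$, the latter reducing via lemma \ref{lemm amazing but easy equation}'s analogue to $\H^1(\overline{\la\gamma\ra},U^{\mathscr{G}_L})=\H^1(\mathscr{G}_{K_\pi},U)=0$ and $\H^1(\overline{\la\tau^{p^r\chi(\gamma)}\ra},U^{\mathscr{G}_L})=\H^1(\mathscr{G}_{K_{\zeta,p^r\chi(\gamma)}},U)=0$. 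A standard $\delta$-functor argument then yields the isomorphism in all degrees.

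The only genuinely new point --- and the step I expect to require the most care --- is verifying that the subgroup $\mathscr{G}_{K_r}$ is the correct one, i.e. that in each place where the original argument produced $\mathscr{G}_K=\la\mathscr{G}_{K_\pi},\tau,\gamma\ra$ one now gets exactly $\mathscr{G}_{K_r}=\la\mathscr{G}_{K_\pi},\tau^{p^r},\gamma\ra$, and correspondingly that $L^{\overline{\la\tau^{p^r}\ra}}=K_\zeta K_r$ and that the relevant finite subextensions used in the surjectivity arguments (e.g. $L^{\overline{\la\tau^{p^r\chi(\gamma)}\ra}}$) still give perfectoid completions so that the cohomology-vanishing inputs (lemma \ref{lemmcoho0 Zp tor}, corollary \ref{lemmcoho0 Zp}, lemma \ref{lemmcoho0nonGalois Zp}) apply. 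All of these are immediate from the field-of-norms description and the diagram of notation \ref{not Kr}; there is no new analytic or algebraic content, so once the bookkeeping of subgroups is pinned down the proof is a direct transcription of the proof of theorem \ref{thm main result}.
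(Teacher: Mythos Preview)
Your proposal is correct and takes essentially the same approach as the paper: the paper's proof is the single line ``This follows from theorem \ref{thm main result} by replacing $K$ by $K_r$ ($\cf$ \cite[Remarque 1.15]{Car13}),'' and what you have written is precisely a careful unpacking of what that replacement entails---identifying $(K_r)_\pi=K_\pi$, $L$ unchanged, $\tau'=\tau^{p^r}$, $\Gal(L/K_r)\simeq\overline{\la\tau^{p^r}\ra}\rtimes\overline{\la\gamma\ra}$, and then rerunning the $\delta$-functor/effaceability argument of Chapter~\ref{chapter The complex C varphi tau}. One minor redundancy: since $\gamma\in\mathscr{G}_{K_\pi}/\mathscr{G}_L$, writing $\mathscr{G}_{K_r}=\la\mathscr{G}_{K_\pi},\tau^{p^r},\gamma\ra$ is the same as $\la\mathscr{G}_{K_\pi},\tau^{p^r}\ra$; and note that the present proposition is stated only for $\FF_p$-coefficients (Definition~\ref{def (phi, tau_p^r)-modules normal version}), so the d\'evissage you mention is not needed here but rather for the generalizations noted at the end of \S\ref{section 3.4 phi tau}.
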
	

\begin{proof}
	This follows from theorem \ref{thm main result} by replacing $K$ by $K_r$ ($\cf$ \cite[Remarque 1.15]{Car13}).
\end{proof}

\begin{corollary}\label{coro precise computation of map between different tau power -1}
	For any $r\in \NN,$ we have the following morphism of complexes:
	\[ 
	\xymatrix{
		\Cc_{\varphi,\tau^{p^r}}\colon & 0\ar[r]  & D \ar[r] \ar@{=}[d] & D\oplus D_{\tau^{p^r},0} \ar[r] \ar[d]^{\id\oplus \frac{\tau_D^{p^{r+1}}-1}{\tau_D^{p^{r}}-1}} & D_{\tau^{p^r}, 0}\ar[r] \ar[d]^{\frac{\tau_D^{p^{r+1}}-1}{\tau_D^{p^{r}}-1}} & 0 \\
		\Cc_{\varphi,\tau^{p^{r+1}}}\colon & 0\ar[r]  & D \ar[r]  & D\oplus D_{\tau^{p^{r+1}}, 0} \ar[r]& D_{\tau^{p^{r+1}}, 0}\ar[r] & 0.
	} \] 
\end{corollary}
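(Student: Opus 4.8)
The statement to establish is that the displayed diagram is indeed a morphism of complexes, i.e. that the three candidate vertical maps $\mathrm{id}$, $\mathrm{id}\oplus\tfrac{\tau_D^{p^{r+1}}-1}{\tau_D^{p^r}-1}$ and $\tfrac{\tau_D^{p^{r+1}}-1}{\tau_D^{p^r}-1}$ are well defined and commute with the horizontal differentials of $\Cc_{\varphi,\tau^{p^r}}$ and $\Cc_{\varphi,\tau^{p^{r+1}}}$. The first task is to explain why $\tfrac{\tau_D^{p^{r+1}}-1}{\tau_D^{p^r}-1}$ makes sense as an operator: writing $\sigma=\tau_D^{p^r}$, this is simply $1+\sigma+\sigma^2+\cdots+\sigma^{p-1}$, a polynomial in $\sigma$, so no division or convergence issue arises; it is the analogue of $\delta$ from the notation following Lemma~\ref{lemm iso for D-tau}, with $\chi(\gamma)$ replaced by $p$ and $\tau_D$ by $\tau_D^{p^r}$.

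\textbf{Key steps.} First I would check that $\tfrac{\tau_D^{p^{r+1}}-1}{\tau_D^{p^r}-1}$ maps $D_{\tau^{p^r},0}$ into $D_{\tau^{p^{r+1}},0}$: given $x\in D_{\tau^{p^r},0}$, by (the $\tau^{p^r}$-analogue of) Lemma~\ref{Replace g witi gamma} one has $(\gamma\otimes1)x=(1+\tau_D^{p^r}+\cdots+\tau_D^{p^r(\chi(\gamma)-1)})x$; applying the operator $1+\sigma+\cdots+\sigma^{p-1}$, which commutes with $\gamma\otimes1$ (since $\gamma$ acts on $\tau_D^{p^r}$ by the automorphism $\tau_D^{p^r}\mapsto\tau_D^{p^r\chi(\gamma)}$, exactly as in Lemma~\ref{lemm amazing but easy equation}), one obtains $(\gamma\otimes1)(\tfrac{\tau_D^{p^{r+1}}-1}{\tau_D^{p^r}-1}x)=(1+\tau_D^{p^{r+1}}+\cdots+\tau_D^{p^{r+1}(\chi(\gamma)-1)})(\tfrac{\tau_D^{p^{r+1}}-1}{\tau_D^{p^r}-1}x)$, which is precisely membership in $D_{\tau^{p^{r+1}},0}$. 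The same commutation argument, restricted to $g\in\mathscr{G}_{K_\pi}$, shows $\tfrac{\tau_D^{p^{r+1}}-1}{\tau_D^{p^r}-1}(\tau_D^{p^r}-1)(D)\subset D_{\tau^{p^{r+1}},0}$, so that the middle vertical map is well defined on the summand $D$ too (it is the identity there). Second, I would verify the two commuting squares. For the left square: $\alpha^{(r)}(x)=((\varphi-1)x,(\tau_D^{p^r}-1)x)$, and applying the middle vertical map gives $((\varphi-1)x,\tfrac{\tau_D^{p^{r+1}}-1}{\tau_D^{p^r}-1}(\tau_D^{p^r}-1)x)=((\varphi-1)x,(\tau_D^{p^{r+1}}-1)x)=\alpha^{(r+1)}(x)$, using the telescoping identity $(1+\sigma+\cdots+\sigma^{p-1})(\sigma-1)=\sigma^p-1$. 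For the right square: $\beta^{(r)}(y,z)=(\tau_D^{p^r}-1)y-(\varphi-1)z$; applying $\tfrac{\tau_D^{p^{r+1}}-1}{\tau_D^{p^r}-1}$ and using that this operator commutes with $\varphi$ (it is a polynomial in $\tau_D^{p^r}$, and $\varphi$ commutes with $\tau_D$) gives $(\tau_D^{p^{r+1}}-1)y-(\varphi-1)(\tfrac{\tau_D^{p^{r+1}}-1}{\tau_D^{p^r}-1}z)=\beta^{(r+1)}(y,\tfrac{\tau_D^{p^{r+1}}-1}{\tau_D^{p^r}-1}z)$, again by the telescoping identity applied to $y$. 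This is exactly commutativity of the right square.

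\textbf{Main obstacle.} The only genuinely non-formal point is the well-definedness claim, i.e. that $\tfrac{\tau_D^{p^{r+1}}-1}{\tau_D^{p^r}-1}$ sends $D_{\tau^{p^r},0}$ into $D_{\tau^{p^{r+1}},0}$ rather than merely into $D_\tau$; everything else is telescoping and the commutation of polynomials in $\tau_D$ with $\varphi$. That point reduces, as above, to the interplay $(\gamma\otimes1)\circ\tau_D^{p^r}=\tau_D^{p^r\chi(\gamma)}\circ(\gamma\otimes1)$ together with the characterization of $D_{\tau^{p^r},0}$ in terms of $\gamma$ alone (Lemma~\ref{Replace g witi gamma}, applied with $\tau_D$ replaced by $\tau_D^{p^r}$ and the group $K$ replaced by $K_r$ via \cite[Remark 1.15]{Car13}); I would isolate this as a short sublemma generalizing Lemma~\ref{lemm amazing but easy equation}, namely $(\delta_{r+1}-\gamma\otimes1)\circ(\tau_D^{p^r}-1)=(1-\tau_D^{p^{r+1}\chi(\gamma)})\circ(\gamma\otimes1-1)$ where $\delta_{r}$ denotes the relevant telescoping sum, and then the assertion follows by the same two-line computation as in that lemma. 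I expect the proof to be essentially a reference back to the already-established Lemmas~\ref{Replace g witi gamma}, \ref{lemm 1.1.11 well-defined tau-1 varphi-1}, \ref{lemm amazing but easy equation} with the substitution $\tau_D\leadsto\tau_D^{p^r}$, plus the elementary telescoping identities, so no serious difficulty remains.
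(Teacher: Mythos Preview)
Your proposal is correct and follows essentially the same approach as the paper: the paper also says ``this follows from direct computations'' and then concentrates on the only non-formal point, namely that $\tfrac{\tau_D^{p^{r+1}}-1}{\tau_D^{p^r}-1}$ maps $D_{\tau^{p^r},0}$ into $D_{\tau^{p^{r+1}},0}$, carrying out the same fraction-manipulation you sketch (computing $\gamma(y)$ and $(1+\tau_D^{p^{r+1}}+\cdots)(y)$ separately and showing both equal $\tfrac{\tau_D^{\chi(\gamma)p^{r+1}}-1}{\tau_D^{p^r}-1}(x)$). One phrasing quibble: your sentence ``which commutes with $\gamma\otimes1$'' is not literally true---the operator is \emph{intertwined} with $\gamma\otimes1$ via $\gamma\circ\tau_D^{p^r}=\tau_D^{p^r\chi(\gamma)}\circ\gamma$, exactly as you then clarify in the parenthetical---so you should just drop the word ``commutes'' and use the conjugation relation directly, as the paper does.
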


\begin{proof}
	This follows from direct computations. Recall that for any $g\in \mathscr{G}_K$ and $T\in \Rep_{\ZZ_p}(\mathscr{G}_K),$ the action $g\otimes 1$ over $\mathcal{D}(T)_{\tau}$ is induced by that of $g\otimes g$ over $\W(C^{\flat})\otimes_{\ZZ_p}T.$ For any $x\in D_{\tau^{p^r},0}:$ we have $(\gamma\otimes 1)x=\big(1+\tau_D^{p^r}+\tau_D^{2p^r}+\cdots + \tau_D^{p^{r}(\chi(\gamma)-1)}\big)(x).$ Now we verify that $y:=\frac{\tau_D^{p^{r+1}}-1}{\tau_D^{p^{r}}-1}(x)$ is in $D_{\tau^{p^{r+1}}, 0}.$ Indeed, by the relation $\gamma \tau=\tau^{\chi(\gamma)}\gamma$ we have 
	\begin{align*}
	\gamma(y)&=\gamma\Big(\frac{\tau_D^{p^{r+1}}-1}{\tau_D^{p^{r}}-1}\Big)(x)\\
	&=\frac{\tau_D^{\chi(\gamma)p^{r+1}}-1}{\tau_D^{\chi(\gamma)p^r}-1}(\gamma(x))\\
	&=\frac{\tau_D^{\chi(\gamma)p^{r+1}}-1}{\tau_D^{\chi(\gamma)p^r}-1}\big(\big(1+\tau_D^{p^r}+\tau_D^{2p^r}+\cdots + \tau_D^{p^{r}(\chi(\gamma)-1)}\big)(x)\big)\\
	&=\frac{\tau_D^{\chi(\gamma)p^{r+1}}-1}{\tau_D^{\chi(\gamma)p^r}-1}\cdot\frac{\tau_D^{\chi(\gamma)p^r}-1}{\tau_D^{p^r}-1}(x)\\
	&=\frac{\tau_D^{\chi(\gamma)p^{r+1}}-1}{\tau_D^{p^r}-1}(x).
	\end{align*}
	On the other hand
	\begin{align*}
	&\big(1+\tau_D^{p^{r+1}}+\tau_D^{2p^{r+1}}+\cdots + \tau_D^{p^{r+1}(\chi(\gamma)-1)}\big)(y)\\
	=&\big(1+\tau_D^{p^{r+1}}+\tau_D^{2p^{r+1}}+\cdots + \tau_D^{p^{r+1}(\chi(\gamma)-1)}\big) \Big(\frac{\tau_D^{p^{r+1}}-1}{\tau_D^{p^{r}}-1}\Big)(x)\\
	=&\frac{\tau_D^{\chi(\gamma)p^{r+1}}-1}{\tau_D^{p^{r+1}}-1}\cdot \frac{\tau_D^{p^{r+1}}-1}{\tau_D^{p^{r}}-1}(x)\\
	=&\frac{\tau_D^{\chi(\gamma)p^{r+1}}-1}{\tau_D^{p^{r}}-1}(x).\\
	\end{align*} 
	This shows that
	\[\gamma(y)=\big(1+\tau_D^{p^{r+1}}+\tau_D^{2p^{r+1}}+\cdots + \tau_D^{p^{r+1}(\chi(\gamma)-1)}\big)(y), \]
	and we conclude that $y\in D_{\tau^{p^{r+1}}, 0}.$
\end{proof}

\begin{lemma}\label{lemm map for different tau powers normal version}
	Let $n, m\in \NN$ with $n\leq m.$ There is a natural  map $\frac{\tau_D^{p^{m}}-1}{\tau_D^{p^{n}}-1} \colon D_{\tau^{p^n},0}\to D_{\tau^{p^{m}}, 0}.$ 
\end{lemma}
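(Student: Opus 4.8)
The statement claims that for $n \le m$ there is a natural map $\frac{\tau_D^{p^m}-1}{\tau_D^{p^n}-1}\colon D_{\tau^{p^n},0}\to D_{\tau^{p^m},0}$. First I would make precise what the operator $\frac{\tau_D^{p^m}-1}{\tau_D^{p^n}-1}$ means: since $\tau_D$ is topologically nilpotent after subtracting $1$ (as $\Lim{\ell\to\infty}\tau_D^{p^\ell}=1$), the quotient is the convergent series $1+\tau_D^{p^n}+\tau_D^{2p^n}+\cdots+\tau_D^{(p^{m-n}-1)p^n}$, which is a well-defined additive endomorphism of $D_\tau$ (indeed it equals $\sum_{k=0}^{p^{m-n}-1}\tau_D^{kp^n}$). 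So the content is: this endomorphism of $D_\tau$ carries the subgroup $D_{\tau^{p^n},0}$ into $D_{\tau^{p^m},0}$.

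The proof is then a direct computation entirely parallel to corollary \ref{coro precise computation of map between different tau power -1} (which is exactly the case $m=n+1$), using the commutation relation $\gamma\tau_D=\tau_D^{\chi(\gamma)}\gamma$ (more precisely $(\gamma\otimes1)\circ\tau_D=\tau_D^{\chi(\gamma)}\circ(\gamma\otimes1)$ on $D_\tau$). Let $x\in D_{\tau^{p^n},0}$, so by notation \ref{not D u tau normla version} (or its analogue) we have $(\gamma\otimes1)x=\frac{\tau_D^{\chi(\gamma)p^n}-1}{\tau_D^{p^n}-1}(x)$. Set $y=\frac{\tau_D^{p^m}-1}{\tau_D^{p^n}-1}(x)$. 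On one hand, using that $\gamma$ acts on the "coefficient" operator by raising $\tau_D$ to the power $\chi(\gamma)$,
\begin{align*}
\gamma(y)&=\frac{\tau_D^{\chi(\gamma)p^m}-1}{\tau_D^{\chi(\gamma)p^n}-1}(\gamma(x))\\
&=\frac{\tau_D^{\chi(\gamma)p^m}-1}{\tau_D^{\chi(\gamma)p^n}-1}\cdot\frac{\tau_D^{\chi(\gamma)p^n}-1}{\tau_D^{p^n}-1}(x)\\
&=\frac{\tau_D^{\chi(\gamma)p^m}-1}{\tau_D^{p^n}-1}(x).
\end{align*}
On the other hand,
\begin{align*}
\Big(\frac{\tau_D^{\chi(\gamma)p^m}-1}{\tau_D^{p^m}-1}\Big)(y)&=\frac{\tau_D^{\chi(\gamma)p^m}-1}{\tau_D^{p^m}-1}\cdot\frac{\tau_D^{p^m}-1}{\tau_D^{p^n}-1}(x)=\frac{\tau_D^{\chi(\gamma)p^m}-1}{\tau_D^{p^n}-1}(x).
\end{align*}
Comparing, $\gamma(y)=\frac{\tau_D^{\chi(\gamma)p^m}-1}{\tau_D^{p^m}-1}(y)=\big(1+\tau_D^{p^m}+\cdots+\tau_D^{p^m(\chi(\gamma)-1)}\big)(y)$, which is exactly the condition for $y\in D_{\tau^{p^m},0}$. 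One should also remark that $\frac{\tau_D^{p^m}-1}{\tau_D^{p^n}-1}$ preserves $D$ itself (it is $\sum_k\tau_D^{kp^n}$ and $\tau_D^{p^n}$ is defined on $D$ via the $(\varphi,\tau)$-module structure), so the map is genuinely defined on the relevant subobjects and is natural (functorial) in $D$.

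The only mild subtlety — and the step I would be most careful about — is the manipulation of the "fractional" operators $\frac{\tau_D^{a}-1}{\tau_D^{b}-1}$ as honest elements of the completed group ring $\ZZ_p[\![\tau_D-1]\!]$: one must check they commute with each other and with $\gamma$ in the way used, i.e. that the identities $\frac{\tau_D^{ac}-1}{\tau_D^{c}-1}\cdot\frac{\tau_D^{c}-1}{\tau_D^{b}-1}=\frac{\tau_D^{ac}-1}{\tau_D^{b}-1}$ (valid whenever $b\mid c\mid ac$ among the exponents in play here, all powers of $p$) hold as convergent series, and that applying $\gamma$ to such a series replaces $\tau_D$ by $\tau_D^{\chi(\gamma)}$ throughout because $\gamma\tau_D\gamma^{-1}=\tau_D^{\chi(\gamma)}$ and the series converges. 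These are the same formal facts already used implicitly in lemma \ref{lemm amazing but easy equation} and corollary \ref{coro precise computation of map between different tau power -1}, so no new difficulty arises; the lemma is essentially a bookkeeping generalization of the $m=n+1$ case, obtained by iterating or by the same one-line computation.
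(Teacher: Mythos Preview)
Your proposal is correct and follows exactly the approach the paper intends: the paper's proof simply reads ``By direct computations as in the proof of corollary \ref{coro precise computation of map between different tau power -1}'', and you have written out precisely that computation for general $n\le m$ (using the commutation $(\gamma\otimes1)\circ\tau_D=\tau_D^{\chi(\gamma)}\circ(\gamma\otimes1)$ and the telescoping identities for the fractional operators). One small remark: your aside that the operator ``preserves $D$ itself'' is not quite right and not needed---$\tau_D^{p^n}$ need not map $D$ into $D$, but the lemma only concerns subgroups of $D_\tau$, where the finite sum $\sum_{k=0}^{p^{m-n}-1}\tau_D^{kp^n}$ is manifestly defined.
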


\begin{proof}
	By direct computations as in the proof of corollary \ref{coro precise computation of map between different tau power -1}.
\end{proof}

\subsubsection{The \texorpdfstring{$(\varphi, \tau^{p^r})$}{(phi, tau p powers)}-modules over partially unperfected coefficients}

Similarly, we have results for $(\varphi, \tau^{p^r})$-modules over partially unperfected coefficients.

\begin{definition}\label{def (phi, tau_p^r)-modules}
	Let $r\in \NN$ and then a \emph{$(\varphi, \tau^{p^r})$-module over $(F_0, F_{u, \tau})$} is the data:
	\item (1) an \'etale $\varphi$-module $D$ over $F_0$;
	\item (2) a $\tau^{p^r}$-semilinear map $\tau_D^{p^r}$ on $D_{u, \tau}:=F_{u, \tau}\otimes_{F_0}D$ which commutes with $\varphi_{F_{u, \tau}}\otimes \varphi_D$ (where $\varphi_{F_{u, \tau}}$ is the Frobenius map on $F_{u, \tau}$ and $\varphi_D$ the Frobenius map on $D$) and such that 
	\[ (\forall x\in M)  \quad   (g\otimes 1)\circ \tau_D^{p^r}(x)=(\tau_D^{p^r})^{\chi(g)}(x), \] 
	for all $g\in \mathscr{G}_{K_{\pi}}/\mathscr{G}_L$ such that $\chi(g)\in\NN.$\\
	
	We denote $\Mod_{F_0, F_{u, \tau}}(\varphi,\tau^{p^r})$\index{$\Mod_{F_0, F_{u, \tau}}(\varphi,\tau^{p^r})$} the corresponding category. 
\end{definition}

\begin{remark}
	We have an equivalence of categories between $\Rep_{\FF_p}(\mathscr{G}_{K_r})$ and $\Mod_{F_0, F_{u, \tau}}(\varphi,\tau^{p^r})$ ($\cf$ \cite[Remark 1.15]{Car13}). 
\end{remark}

\begin{notation}\label{not D u tau }
	Let $(D, D_{u, \tau})\in \Mod_{F_0, F_{u, \tau}}(\varphi,\tau^{p^r}).$ We put
	\[D_{u, \tau^{p^r}, 0}:=\big\{ x\in D_{u, \tau};\ (\forall g\in \mathscr{G}_{K_{\pi}})\ \chi(g)\in \ZZ_{>0} \Rightarrow (g\otimes 1)(x)=x+\tau_D^{p^r}(x)+\tau_D^{2p^r}(x)+\cdots +\tau_D^{p^r(\chi(g)-1)}(x) \big\}. \]\index{$D_{u, \tau^{p^r}, 0}$}
	By similar arguments as that of lemma \ref{Replace g witi gamma}, we see that 
	
	\[D_{u, \tau^{p^r}, 0}=\big\{ x\in D_{u,\tau} ;\  (\gamma\otimes 1)x=(1+\tau_D^{p^r}+\tau_D^{2p^r}+\cdots + \tau_D^{p^{r}(\chi(\gamma)-1)})(x)  \big\}.\]
\end{notation}

\begin{definition}\label{def complex phi tau n}
	Let $(D, D_{u, \tau})\in\Mod_{F_0, F_{u, \tau}}(\varphi,\tau^{p^r}).$ We define a complex $\mathcal{C}^{u}_{\varphi, \tau^{p^r}}(D)$\index{$\mathcal{C}^{u}_{\varphi, \tau^{p^r}}(D)$} as follows:
	\[ \xymatrix{
		0\ar[rr] && D \ar[r]  & D\oplus D_{u, \tau^{p^r}, 0} \ar[r]  & D_{u, \tau^{p^r}, 0}\ar[r]  & 0\\
		&&x\ar@{|->}[r] &((\varphi-1)(x), (\tau^{p^r}-1)(x))&&\\
		&&& (y, z) \ar@{|->}[r] &(\tau^{p^r}-1)(y)-(\varphi-1)(z).&}
	\]
	If $T\in \Rep_{\FF_p}(\mathscr{G}_K),$ we have in particular the complex $\mathcal{C}^u_{\varphi, \tau^{p^r}}(\mathcal{D}(T)),$ which will also be simply denoted $\mathcal{C}^u_{\varphi, \tau^{p^r}}(T).$\index{$\mathcal{C}^u_{\varphi, \tau^{p^r}}(T)$}
\end{definition}

\begin{proposition}\label{prop tau-puissance complex}
	The complex $\mathcal{C}^{u}_{\varphi, \tau^{p^r}}(T)$ computes the continuous Galois cohomology $\H^i(\mathscr{G}_{K_r}, T)$ for $i\in \NN.$
\end{proposition}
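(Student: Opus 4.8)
The plan is to deduce Proposition \ref{prop tau-puissance complex} directly from the already-established results about the complex $\Cc_{\varphi,\tau}^u$, by the same change-of-base-field trick used in the proof of Proposition \ref{prop tau-puissance complex normal version} (namely \cite[Remarque 1.15]{Car13}). First I would recall that $\Rep_{\FF_p}(\mathscr{G}_K)\subset\Rep_{\FF_p}(\mathscr{G}_{K_r})$ by restriction, and that under Caruso's dictionary the $(\varphi,\tau)$-module theory over $(F_0,F_{u,\tau})$ for $\mathscr{G}_{K}$ becomes the $(\varphi,\tau^{p^r})$-module theory over $(F_0,F_{u,\tau})$ for $\mathscr{G}_{K_r}$: indeed $K_r$ plays for $K$ the role that $K$ played, the Breuil-Kisin extension $K_{r,\pi}=K_\pi$ is unchanged, the cyclotomic tower is unchanged, but the chosen topological generator $\tau$ of $\Gal(L/K_{r,\zeta})$ is replaced by $\tau^{p^r}$ (since $c(\tau^{p^r})=p^r$ and $\mathscr{G}_{K_r}$ is the preimage of $p^r\ZZ_p\rtimes\Gamma$). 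Under this identification the subgroup $\mathscr{G}_{K_{r,\pi}}/\mathscr{G}_L$ is still $\overline{\langle\gamma\rangle}$, so Notation \ref{not D u tau } is literally Notation \ref{not D u tau normla version at the beginning} for the group $\mathscr{G}_{K_r}$ with $\tau_D$ replaced by $\tau_D^{p^r}$, and Definition \ref{def complex phi tau n} is Definition \ref{def complex varphi tau} in this new setting.

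Having set up this dictionary, the argument is then: apply Theorem \ref{coro complex over non-perfect ring works also Zp-case} (or rather its $\FF_p$-version, which is the content proved for torsion representations before the d\'evissage, and which holds for any base field of the type considered) with $K$ replaced by $K_r$. This immediately gives that $\Cc_{\varphi,\tau}^u(T)$ for the group $\mathscr{G}_{K_r}$—which by the dictionary is exactly $\mathcal{C}^u_{\varphi,\tau^{p^r}}(T)$—has homology canonically isomorphic to $\H^i(\mathscr{G}_{K_r},T)$. One should check that nothing in the proof of Theorem \ref{coro complex over non-perfect ring works also Zp-case} used special features of $K$ as opposed to $K_r$: the key inputs were Lemma \ref{lemm tau action natural}, the vanishing statements $\H^1(\mathscr{G}_L,C^\flat_{u-\np}\otimes T)=0$ and $\H^1(\mathscr{G}_{K_\pi},\text{(\'etale stuff)})=0$, and the surjectivity of $\delta-\gamma$; all of these involve only $\mathscr{G}_L$, $\mathscr{G}_{K_\pi}$ and $\gamma$, which are the same for $K$ and $K_r$. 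The element $\delta=1+\tau_D^{p^r}+\cdots+\tau_D^{p^r(\chi(\gamma)-1)}$ now appears with $\tau_D^{p^r}$ in place of $\tau_D$, but Lemma \ref{lemm amazing but easy equation} and Lemma \ref{lemm surjective gamma tau} go through verbatim with $\tau_D$ replaced by $\tau_D^{p^r}$ (the point being that $\gamma\tau_D^{p^r}\gamma^{-1}=\tau_D^{p^r\chi(\gamma)}$ still holds).

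I expect the only real point requiring care—and thus the "main obstacle", though it is more a matter of bookkeeping than of genuine difficulty—is verifying that the passage $K\rightsquigarrow K_r$ is compatible with all the structures in the precise way stated, in particular that $\mathcal{D}(T)$ (for $T\in\Rep_{\FF_p}(\mathscr{G}_{K_r})$ viewed via Caruso's equivalence over $(F_0,F_{u,\tau})$ with $\tau$-operator $\tau_D^{p^r}$) coincides with the restriction to $\mathscr{G}_{K_r}$ of the $(\varphi,\tau)$-module over $(F_0,F_{u,\tau})$ when $T$ comes from $\mathscr{G}_K$, and that the two notions of $D_{u,\tau^{p^r},0}$ agree. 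This is exactly the substance of \cite[Remark 1.15]{Car13} and needs only to be cited. Therefore the proof is:

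\begin{proof}
	Apply Theorem \ref{coro complex over non-perfect ring works also Zp-case} with $K$ replaced by $K_r.$ By \cite[Remark 1.15]{Car13}, the category $\Rep_{\FF_p}(\mathscr{G}_{K_r})$ is equivalent to $\Mod_{F_0,F_{u,\tau}}(\varphi,\tau^{p^r}),$ and under this equivalence the complex $\Cc^u_{\varphi,\tau}$ attached (relatively to the field $K_r$) to a representation $T$ is precisely the complex $\mathcal{C}^u_{\varphi,\tau^{p^r}}(T)$ of Definition \ref{def complex phi tau n}: the group $\mathscr{G}_L,$ the subquotient $\mathscr{G}_{K_\pi}/\mathscr{G}_L=\overline{\langle\gamma\rangle},$ and the element $\gamma$ are unchanged, while the distinguished generator $\tau$ of $\Gal(L/(K_r)_\zeta)$ is replaced by $\tau^{p^r}$ (since $\mathscr{G}_{K_r}$ is the preimage of $p^r\ZZ_p\rtimes\Gamma$ under $\chi_\infty,$ $\cf$ remark \ref{rem def tau}), so that $\tau_D$ becomes $\tau_D^{p^r}$ and $D_{u,\tau,0}$ becomes $D_{u,\tau^{p^r},0}$ of Notation \ref{not D u tau }. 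One checks that every ingredient in the proof of Theorem \ref{coro complex over non-perfect ring works also Zp-case} (the isomorphism of Lemma \ref{lemm tau action natural}, the vanishing of $\H^1(\mathscr{G}_L,C^\flat_{u-\np}\otimes_{\FF_p}T),$ the surjectivity results of Lemma \ref{lemm surjective gamma tau} and of $\delta-\gamma$ via Lemma \ref{lemm amazing but easy equation}) depends only on $\mathscr{G}_L,$ $\mathscr{G}_{K_\pi}$ and $\gamma,$ and that the identities of Lemma \ref{lemm amazing but easy equation} and Lemma \ref{Replace g witi gamma} remain valid with $\tau_D$ replaced by $\tau_D^{p^r}$ (using $\gamma\tau_D^{p^r}\gamma^{-1}=\tau_D^{p^r\chi(\gamma)}$). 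Hence the cited theorem, applied to $K_r,$ yields canonical and functorial isomorphisms $\H^i(\mathcal{C}^u_{\varphi,\tau^{p^r}}(T))\simeq\H^i(\mathscr{G}_{K_r},T)$ for all $i\in\NN.$
\end{proof}
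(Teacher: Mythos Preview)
Your proof is correct and takes exactly the same approach as the paper: the paper's proof is the single sentence ``This follows from theorem \ref{coro complex over non-perfect ring works also Zp-case} replacing $K$ by $K_r$.'' Your version simply unpacks this sentence, spelling out via \cite[Remark 1.15]{Car13} why the replacement $K\rightsquigarrow K_r$ turns $\tau_D$ into $\tau_D^{p^r}$ while leaving $\mathscr{G}_L$, $\mathscr{G}_{K_\pi}$ and $\gamma$ unchanged, and checking that the ingredients of the cited theorem survive this substitution; this extra detail is helpful but not a different argument.
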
	

\begin{proof}
	This follows from theorem \ref{coro complex over non-perfect ring works also Zp-case} replacing $K$ by $K_r.$
\end{proof}

\begin{lemma}\label{lemm map for different tau powers}
	Let $n, m\in \NN$ with $n<m.$ There is a natural  map $D_{u, \tau^{p^n},0}\xrightarrow{\frac{\tau_D^{p^{m}}-1}{\tau_D^{p^{n}}-1}} D_{u, \tau^{p^{m}}, 0}.$ 
\end{lemma}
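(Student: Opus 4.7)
The plan is to reduce the statement to a formal operator identity on $D_{u,\tau}$, entirely analogous to the computation carried out in the proof of corollary~\ref{coro precise computation of map between different tau power -1}. The starting point is that on $D_{u,\tau}$ the operators $\gamma\otimes 1$ and $\tau_D$ satisfy
\[(\gamma\otimes 1)\circ\tau_D^k=\tau_D^{k\chi(\gamma)}\circ(\gamma\otimes 1)\]
for all $k\in\ZZ_{\geq 0}$. This follows from the Galois relation $\gamma\tau=\tau^{\chi(\gamma)}\gamma$ combined with the identification of lemma~\ref{lemm tau action natural} (or its integral analogue), under which $\tau_D$ and $\gamma\otimes 1$ correspond to $\tau\otimes\tau$ and $\gamma\otimes\gamma$ on $(C_{u-\np}^\flat\otimes T)^{\mathscr{G}_L}$ (resp.\ on $\Oo_{\widehat{\Ff^{\ur}_{u}}}\otimes T$). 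This is the same relation that was used in the perfect case.

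Since $m>n$, the expression $\dfrac{\tau_D^{p^m}-1}{\tau_D^{p^n}-1}=\sum_{i=0}^{p^{m-n}-1}\tau_D^{ip^n}$ is genuinely a polynomial in $\tau_D$, hence defines an endomorphism of $D_{u,\tau}$. Given $x\in D_{u,\tau^{p^n},0}$, I would set $y:=\dfrac{\tau_D^{p^m}-1}{\tau_D^{p^n}-1}(x)$ and, using the analogue of lemma~\ref{Replace g witi gamma} for the $(\varphi,\tau^{p^m})$-setting, reduce membership of $y$ in $D_{u,\tau^{p^m},0}$ to the single identity
\[(\gamma\otimes 1)(y)=\tfrac{\tau_D^{\chi(\gamma)p^m}-1}{\tau_D^{p^m}-1}(y).\]
Pushing $\gamma\otimes 1$ past the polynomial in $\tau_D$ via the commutation relation above, and then using $(\gamma\otimes 1)(x)=\tfrac{\tau_D^{\chi(\gamma)p^n}-1}{\tau_D^{p^n}-1}(x)$ (which is exactly $x\in D_{u,\tau^{p^n},0}$), one computes
\[(\gamma\otimes 1)(y)=\tfrac{\tau_D^{\chi(\gamma)p^m}-1}{\tau_D^{\chi(\gamma)p^n}-1}(\gamma\otimes 1)(x)=\tfrac{\tau_D^{\chi(\gamma)p^m}-1}{\tau_D^{\chi(\gamma)p^n}-1}\cdot\tfrac{\tau_D^{\chi(\gamma)p^n}-1}{\tau_D^{p^n}-1}(x)=\tfrac{\tau_D^{\chi(\gamma)p^m}-1}{\tau_D^{p^n}-1}(x),\]
while directly
\[\tfrac{\tau_D^{\chi(\gamma)p^m}-1}{\tau_D^{p^m}-1}(y)=\tfrac{\tau_D^{\chi(\gamma)p^m}-1}{\tau_D^{p^m}-1}\cdot\tfrac{\tau_D^{p^m}-1}{\tau_D^{p^n}-1}(x)=\tfrac{\tau_D^{\chi(\gamma)p^m}-1}{\tau_D^{p^n}-1}(x),\]
and the two sides coincide.

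There is no genuine obstacle here: the argument is a purely formal manipulation of polynomials (and telescoping fractions) in the single operator $\tau_D$, and the passage from the perfect coefficients of lemma~\ref{lemm map for different tau powers normal version} / corollary~\ref{coro precise computation of map between different tau power -1} to the partially unperfected ones requires no modification, because the only structural input is the commutation $\gamma\tau_D^k=\tau_D^{k\chi(\gamma)}\gamma$, which holds equally on $D_\tau$ and on $D_{u,\tau}$. The generalisation from $m=n+1$ to arbitrary $m>n$ is just a single application of the same identity, since $\tau_D^{p^m}-1$ factors through $\tau_D^{p^n}-1$.
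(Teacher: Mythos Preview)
Your proposal is correct and follows exactly the approach the paper intends: the paper's proof is simply ``By direct computations similar as in corollary~\ref{coro precise computation of map between different tau power -1}'', and you have spelled out precisely those computations (generalized from $m=n+1$ to arbitrary $m>n$), using the same commutation relation $(\gamma\otimes1)\circ\tau_D^k=\tau_D^{k\chi(\gamma)}\circ(\gamma\otimes1)$ and the same telescoping identity to show both sides equal $\tfrac{\tau_D^{\chi(\gamma)p^m}-1}{\tau_D^{p^n}-1}(x)$. Your observation that the passage from perfect to partially unperfected coefficients requires no change is also the point of the paper's treatment.
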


\begin{proof}
	By direct computations similar as in corollary \ref{coro precise computation of map between different tau power -1}.
\end{proof}

\begin{corollary}
	There is a morphism between complexes:
	\[ 
	\xymatrix{
		\Cc^{u}_{\varphi,\tau^{p^r}}\colon & 0\ar[r]  & D \ar[r] \ar@{=}[d] & D\oplus D_{u, \tau^{p^r},0} \ar[r] \ar[d]^{\id\oplus \frac{\tau_D^{p^{r+1}}-1}{\tau_D^{p^{r}}-1}} & D_{u, \tau^{p^r}, 0}\ar[r] \ar[d]^{\frac{\tau_D^{p^{r+1}}-1}{\tau_D^{p^{r}}-1}} & 0 \\
		\Cc^{u}_{\varphi,\tau^{p^{r+1}}}\colon & 0\ar[r]  & D \ar[r]  & D\oplus D_{u, \tau^{p^{r+1}}, 0} \ar[r]& D_{u, \tau^{p^{r+1}}, 0}\ar[r] & 0.
	} \] 
\end{corollary}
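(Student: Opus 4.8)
The statement to be proved is that the displayed triple of vertical arrows, namely $\id$ in degree $-1$, $\id\oplus\frac{\tau_D^{p^{r+1}}-1}{\tau_D^{p^{r}}-1}$ in degree $0$, and $\frac{\tau_D^{p^{r+1}}-1}{\tau_D^{p^{r}}-1}$ in degree $1$, assemble into a genuine morphism of complexes $\Cc^{u}_{\varphi,\tau^{p^r}}\to\Cc^{u}_{\varphi,\tau^{p^{r+1}}}$. There are two things to check: first, that each vertical arrow is well-defined as a map into the indicated target (this is exactly the content of the preceding lemma \ref{lemm map for different tau powers}, which guarantees that $\frac{\tau_D^{p^{r+1}}-1}{\tau_D^{p^r}-1}$ sends $D_{u,\tau^{p^r},0}$ into $D_{u,\tau^{p^{r+1}},0}$); second, that the two resulting squares commute.

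\textbf{Commutativity of the left square.} The left square reads: starting from $x\in D$, the top horizontal map of $\Cc^u_{\varphi,\tau^{p^r}}$ sends $x\mapsto\big((\varphi-1)(x),(\tau_D^{p^r}-1)(x)\big)$, then we apply $\id\oplus\frac{\tau_D^{p^{r+1}}-1}{\tau_D^{p^r}-1}$; going the other way, we first apply $\id$, then the top horizontal map of $\Cc^u_{\varphi,\tau^{p^{r+1}}}$, which sends $x\mapsto\big((\varphi-1)(x),(\tau_D^{p^{r+1}}-1)(x)\big)$. The first coordinates agree trivially. For the second coordinates, commutativity amounts to the identity
\[
\frac{\tau_D^{p^{r+1}}-1}{\tau_D^{p^r}-1}\circ(\tau_D^{p^r}-1)=\tau_D^{p^{r+1}}-1
\]
on $D$, which is immediate from the telescoping factorization $\tau_D^{p^{r+1}}-1=\big(1+\tau_D^{p^r}+\cdots+\tau_D^{p^r(p-1)}\big)(\tau_D^{p^r}-1)$, where $\frac{\tau_D^{p^{r+1}}-1}{\tau_D^{p^r}-1}$ denotes precisely the operator $1+\tau_D^{p^r}+\cdots+\tau_D^{p^r(p-1)}$.

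\textbf{Commutativity of the right square.} Here we start from $(y,z)\in D\oplus D_{u,\tau^{p^r},0}$. Along the top then down, we get $\frac{\tau_D^{p^{r+1}}-1}{\tau_D^{p^r}-1}\big((\tau_D^{p^r}-1)(y)-(\varphi-1)(z)\big)$; along the down then top, using that the middle vertical map is $\id$ on the first factor and $\frac{\tau_D^{p^{r+1}}-1}{\tau_D^{p^r}-1}$ on the second, we get $(\tau_D^{p^{r+1}}-1)(y)-(\varphi-1)\big(\frac{\tau_D^{p^{r+1}}-1}{\tau_D^{p^r}-1}(z)\big)$. The $y$-terms match by the same telescoping identity as above; the $z$-terms match because $\varphi$ commutes with $\tau_D$, hence with any polynomial in $\tau_D^{p^r}$, so $\frac{\tau_D^{p^{r+1}}-1}{\tau_D^{p^r}-1}\circ(\varphi-1)=(\varphi-1)\circ\frac{\tau_D^{p^{r+1}}-1}{\tau_D^{p^r}-1}$. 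That completes the verification. The only genuinely substantive input is lemma \ref{lemm map for different tau powers} (well-definedness of the vertical maps into the $D_{u,\tau^{p^{r+1}},0}$-pieces), and nothing here is more than a direct computation — so I would simply state "this follows from direct computations similar to those in corollary \ref{coro precise computation of map between different tau power -1}", exactly as the proof of lemma \ref{lemm map for different tau powers} already does; no real obstacle arises.
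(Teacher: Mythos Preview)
Your proposal is correct and matches the paper's approach exactly: the paper's proof is the single sentence ``This follows from direct computations similar as in corollary \ref{coro precise computation of map between different tau power -1}'', and you have simply written out those direct computations (well-definedness via lemma \ref{lemm map for different tau powers}, plus commutativity of the two squares using the telescoping identity and the commutation of $\varphi$ with $\tau_D$). Nothing is missing and nothing is different.
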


\begin{proof}
	This follows from direct computations similar as in corollary \ref{coro precise computation of map between different tau power -1}.
\end{proof}

\begin{remark}
	In subsection \ref{section 3.4 phi tau}, $(\varphi, \tau^{p^r})$-modules with $r\in\NN$ are only defined for $\Rep_{\FF_p}(\mathscr{G}_K).$ One can easily define the categories $\Mod_{\Oo_{\Ee}, \Oo_{\Ee_{\tau}}}(\varphi,\tau^{p^r}),$\index{$\Mod_{\Oo_{\Ee}, \Oo_{\Ee_{\tau}}}(\varphi,\tau^{p^r})$} $\Mod_{\Oo_{\Ee}, \Oo_{\Ee_{u,\tau}}}(\varphi,\tau^{p^r}),$\index{$\Mod_{\Oo_{\Ee}, \Oo_{\Ee_{u,\tau}}}(\varphi,\tau^{p^r})$} $\Mod_{\Ee, \Ee_{\tau}}(\varphi,\tau^{p^r}),$\index{$\Mod_{\Ee, \Ee_{\tau}}(\varphi,\tau^{p^r})$}  $\Mod_{\Ee, \Ee_{u,\tau}}(\varphi,\tau^{p^r}),$\index{$\Mod_{\Ee, \Ee_{u,\tau}}(\varphi,\tau^{p^r})$} and generalize the related notions to $\Rep_{\ZZ_p}(\mathscr{G}_K)$ and $\Rep_{\QQ_p}(\mathscr{G}_K).$ The results given for $\Mod_{F_0, F_{\tau}}(\varphi,\tau^{p^r})$ and $\Mod_{F_0, F_{u, \tau}}(\varphi,\tau^{p^r})$ hold similarly for the other categories.
\end{remark}

\chapter{Complexes over overconvergent rings}

In this chapter, we introduce $(\varphi, \tau)$-modules over overconvergent rings $(\Ee^\dagger, \Ee_{\tau}^\dagger)$ and $(\Ee^\dagger, \Ee_{u,\tau}^\dagger).$ Then we will define complexes $\Cc_{\varphi, \tau}^{u}(D^{\dagger})$ and $\Cc_{\psi, \tau}^{u}(D^{\dagger}),$ which embed respectively into the complexes $\Cc_{\varphi, \tau}^{u}(D)$ and $\Cc_{\psi, \tau}^{u}(D)$ (defined in chapter \ref{chapter Complex with psi-operator}). We will show that they are quasi-isomorphic and calculate the correct $\H^0$ and $\H^1$.

\section{Locally analytic vectors}

In this section, we will use results of Poyeton ($\cf$ \cite{Poy19}), hence also some notations of \textit{loc. cit.}  However we made some modifications for the sake of consistency with our notations.

\medskip

Since we concentrate on $(\varphi, \tau)$-modules, we remove subscripts $\tau$ in the notations of Poyeton when it is used to distinguish $(\varphi, \tau)$-modules from $(\varphi, \Gamma)$-modules. We also replace subscripts $K$ in the notations of Poyeton with $K_{\pi}$ when it corresponds to invariants under $\mathscr{G}_{K_{\pi}}$ (this is also consistent with our subscripts $K_{\zeta},$ $\cf$ notation \ref{not dictionary} for details).

\begin{notation}\label{not dictionary}
	We have the following dictionary between our notations used in the first three chapters and the notations from Poyeton ($\cf$ \cite{Poy19}). 
	
	\begin{align*}
	&\widetilde{\EE}^{+}=\Oo_{C^\flat},\ \widetilde{\EE}=C^\flat,\  \widetilde{\EE}_u=C^{\flat}_{u-\np}   ,\  \EE_{K_{\pi}}=F_0,\ \EE=F_0^{\sep},\ \widetilde{\EE}_L=F_\tau, \ \widetilde{\EE}_{u,L}=F_{u,\tau}\\
	&\widetilde{\AA}^{+}=\W(\Oo_{C^\flat}),\ \widetilde{\AA}=\W(C^\flat),\  \widetilde{\AA}_{u}=\Oo_{\widehat{\Ff^{\ur}_{u}}},\ \AA_{K_{\pi}}^{+}=\W(k)[\![u]\!],\ \AA_{K_{\pi}}=\mathcal{O}_{\mathcal{E}},\ \AA=\mathcal{O}_{\widehat{\mathcal{E}^{\ur}}},\ \widetilde{\AA}_L=\mathcal{O}_{\mathcal{E}_\tau},\ \widetilde{\AA}_{u, L}=\mathcal{O}_{\mathcal{E}_{u,\tau}}\\
	&\widetilde{\BB}^{+}=\W(\Oo_{C^\flat})[1/p],\ \widetilde{\BB}=\W(C^\flat)[1/p],\ \widetilde{\BB}_u=\widehat{\Ff^{\ur}_{u}},\ \BB_{K_{\pi}}^{+}=\AA_{K_{\pi}}^{+}[1/p],\ \BB_{K_{\pi}}=\mathcal{E},\ \BB=\widehat{\mathcal{E}^{\ur}},\ \widetilde{\BB}_L=\mathcal{E}_\tau, \ \widetilde{\BB}_{u,L}=\mathcal{E}_{u,\tau}
	\end{align*}\index{$\widetilde{\EE}^{+}$}\index{$\widetilde{\EE}$}\index{$\widetilde{\EE}_u$}\index{$\EE_{K_{\pi}}$}\index{$\widetilde{\EE}_L$}
	\index{$\widetilde{\EE}_{u,L}$}\index{$\widetilde{\AA}^{+}$}\index{$\widetilde{\AA}$}\index{$\widetilde{\AA}_{u}$}\index{$\AA_{K_{\pi}}^{+}$}\index{$\AA_{K_{\pi}}$} \index{$\AA$} \index{$\widetilde{\AA}_L$} \index{$ \widetilde{\AA}_{u, L}$} \index{$\widetilde{\BB}^{+}$} \index{$\widetilde{\BB}$} \index{$\widetilde{\BB}_u$} \index{$\BB_{K_{\pi}}^{+}$} \index{$\BB_{K_{\pi}}$} \index{$\BB$} \index{$\widetilde{\BB}_L$} \index{$\widetilde{\BB}_{u,L}$}
	Notice that we can read information from the notation itself: if $A$ is an algebra endowed with an action of $\mathscr{G}_L$ (resp. $\mathscr{G}_{K_{\pi}}$), we put $A_{L}=A^{\mathscr{G}_L}$ (resp. $A_{K_{\pi}}=A^{\mathscr{G}_{K_{\pi}}}$). If $A$ is a perfect ring of characteristic $p,$ $A_{u}$ (or $A_{u-\np}$) is the partially unperfected subring of it, and we also put subscript $u$ for a Cohen ring of $A_u$ and also the fraction field of this Cohen ring (for example $ \widetilde{\EE}_u,$ $\widetilde{\AA}_{u}$ and $\widetilde{\BB}_u$). Notice that the above rule works also for double-subscripts, for example $\widetilde{\AA}_{u, L}$ is an unperfected version of $\widetilde{\AA}_{L},$ while the latter is $\widetilde{\AA}^{\mathscr{G}_L}.$  
\end{notation}

\begin{definition}($\cf$ \cite[\S 1.1]{Poy21}\label{def BB dagger r})
	For $r> 0,$ we define the set of \emph{overconvergent elements of $\widetilde{\BB}$ of radius $r$} by
	\[ \widetilde{\BB}^{\dagger, r}= \Big\{	\sum\limits_{n\gg-\infty}p^n[x_n]\in \widetilde{\BB}, \quad   \Lim{n \to + \infty}v^{\flat}(x_n) + \frac{pr}{p-1}n = +\infty \Big\} \]\index{$\widetilde{\BB}^{\dagger, r}$}
	and we denote $\widetilde{\BB}^{\dagger} =\bigcup\limits_{r> 0} \widetilde{\BB}^{\dagger, r}\subset\widetilde{\BB}$\index{$\widetilde{\BB}^{\dagger}$} the set of overconvergent elements (we have $r_1\leq r_2\Rightarrow\widetilde{\BB}^{\dagger,r_1}\subset\widetilde{\BB}^{\dagger,r_2}$). We put $\BB^\dagger=\BB\cap\,\widetilde{\BB}^{\dagger},$\index{$\BB^\dagger$} and similarly $\widetilde{\AA}^{\dagger}=\widetilde{\BB}^{\dagger}\cap\, \widetilde{\AA}$\index{$\widetilde{\AA}^{\dagger}$} and $\AA^{\dagger}=\widetilde{\AA}^{\dagger}\cap \AA.$\index{$\AA^{\dagger}$} We put $\Ee^{\ur, \dagger}:=\BB^{\dagger}$\index{$\Ee^{\ur, \dagger}$} and $\Oo_{\Ee^{\ur, \dagger}}:=\AA^{\dagger}.$\index{$\Oo_{\Ee^{\ur, \dagger}}$}
\end{definition}

\begin{remark}
	The subrings $\widetilde{\BB}^{\dagger,r}$ and $\widetilde{\BB}^\dagger$ of $\widetilde{\BB}$ are stable by $\mathscr{G}_K,$ and $\varphi(\widetilde{\BB}^{\dagger,r})\subset\widetilde{\BB}^{\dagger,pr}$ for all $r>0,$ so that $\widetilde{\BB}^\dagger$ is stable by $\varphi$ in $\widetilde{\BB}.$ Recall that $\widetilde{\BB}^\dagger$ and $\BB^\dagger$ are fields ($\cf$ \cite[Proposition 3.2]{Mat95}). 
\end{remark}

We recall that ($\cf$ definition \ref{Phi-tau-module}) for $V\in \Rep_{\QQ_p}(\mathscr{G}_K),$ the $(\varphi, \tau)$-module associated to $V$ over $(\BB_{K_{\pi}}, \widetilde{\BB}_{L})$ $(\ie  (\Ee, \Ee_{\tau}))$ is the $\varphi$-module 
\[\mathcal{D}(V)=(\BB\otimes_{\QQ_p}V)^{\mathscr{G}_{K_{\pi}}}\] and a semi-linear $\tau$-action over 
\[ \mathcal{D}(V)_{\tau}=(\widetilde{\BB}\otimes_{\QQ_p}V)^{\mathscr{G}_L}.\]

\begin{definition}($\cf$ \cite[D\'efinition 4.1.16]{Poy19}\label{def D dagger V})
	For $r>0,$ we define 
	\[\mathcal{D}^{\dagger, r}(V)_{\tau}=(\widetilde{\BB}^{\dagger, r}\otimes_{\QQ_p}V)^{\mathscr{G}_L}\]\index{$\mathcal{D}^{\dagger, r}(V)_{\tau}$} and \[\mathcal{D}^{\dagger,r}(V)=\mathcal{D}(V)\cap \mathcal{D}^{\dagger,r}(V)_{\tau}=(\mathbf{B}^{\dagger,r}\otimes_{\QQ_p}V)^{\mathscr{G}_{K_\pi}},\]\index{$\mathcal{D}^{\dagger,r}(V)$}
	where $\mathbf{B}^{\dagger,r}=\widetilde{\mathbf{B}}^{\dagger,r}\cap\BB.$\index{$\mathbf{B}^{\dagger,r}$} We put \[\mathcal{D}^{\dagger}(V)_{\tau}=(\widetilde{\BB}^{\dagger}\otimes_{\QQ_p}V)^{\mathscr{G}_L}=\bigcup\limits_{r>0}\mathcal{D}^{\dagger,r}(V)_{\tau}\]\index{$\mathcal{D}^{\dagger}(V)_{\tau}$} and \[ \mathcal{D}^{\dagger}(V)=(\BB^\dagger\otimes_{\QQ_p}V)^{\mathscr{G}_{K_\pi}}=\mathcal{D}(V)\cap \mathcal{D}^\dagger(V)_{\tau}=\bigcup\limits_{r>0}\mathcal{D}^{\dagger,r}(V).\]\index{$\mathcal{D}^{\dagger}(V)$}

	We say that a $(\varphi, \tau)$-module $D$ associated to $V$ is \emph{overconvergent} if there exists $r>0$ such that we have
	\[ \mathcal{D}(V)=\BB_{K_{\pi}}\otimes_{\BB_{K_{\pi}}^{\dagger, r}}\mathcal{D}^{\dagger, r}(V) \]
	and 
	\[ \mathcal{D}(V)_{\tau}= \widetilde{\BB}_{L}\otimes_{\widetilde{\BB}_{L}^{\dagger, r}} \mathcal{D}^{\dagger, r}(V)_{\tau} \]
	where $\BB_{K_{\pi}}^{\dagger,r}=\BB_{K_{\pi}}\cap\, \widetilde{\BB}^{\dagger, r}$\index{$\BB_{K_{\pi}}^{\dagger,r}$} and $\widetilde{\BB}_{L}^{\dagger, r}=\widetilde{\BB}_{L}\cap \widetilde{\BB}^{\dagger, r}.$\index{$\widetilde{\BB}_{L}^{\dagger, r}$} 
	
	\medskip
	
	When this holds, we have in particular 
	\[\mathcal{D}(V)=\BB_{K_{\pi}}\otimes_{\BB_{K_{\pi}}^\dagger}\mathcal{D}^\dagger(V)\]
	and
	\[\mathcal{D}(V)_{\tau}=\widetilde{\BB}_{L}\otimes_{\widetilde{\BB}_{L}^{\dagger}}\mathcal{D}^\dagger(V)_{\tau},\]
	where $\BB_{K_{\pi}}^\dagger=\bigcup\limits_{r>0}\BB_{K_{\pi}}^{\dagger,r}$\index{$\Ee^{\dagger}$} \index{$\BB_{K_{\pi}}^\dagger$} and $\widetilde{\BB}_{L}^{\dagger}=\bigcup\limits_{r>0}\widetilde{\BB}_{L}^{\dagger, r}.$ \index{$\Ee_{\tau}^{\dagger}$} \index{$\widetilde{\BB}_{L}^{\dagger}$} We put $\Ee^{\dagger}:=\BB_{K_{\pi}}^\dagger$ and $\Ee_{\tau}^{\dagger}:=\widetilde{\BB}_{L}^{\dagger}$ to match to the notations used in the first three chapters ($\cf$ notation \ref{not dictionary}).
\end{definition}

\begin{remark}
	(1)	As we mentioned, our notations are slightly different from those of Poyeton in \cite{Poy19} as we do not work with $(\varphi,\Gamma)$-modules: we denote by $\mathcal{D}(V)$ and $\mathcal{D}^\dagger(V)$ for what are denoted by $\mathcal{D}_\tau(V)$ and $\mathcal{D}^\dagger_\tau(V)$ in \it{loc. cit.}, while our $\mathcal{D}(V)_{\tau}$ comes from the pair $(\mathcal{D}(V), \mathcal{D}(V)_{\tau}),$ which is part of the data of a $(\varphi, \tau)$-module.
	\item (2) We have $\mathcal{D}^{\dagger}(V)_{\tau} =\widetilde{\BB}_{L}^{\dagger}\otimes \mathcal{D}^{\dagger}(V) \simeq (\widetilde{\BB}^{\dagger}\otimes V)^{\mathscr{G}_L}.$ 
\end{remark}

\begin{theorem}\label{thm surconvergence phi-tau version}
	Let $V$ be a $\QQ_p$-representation of $\mathscr{G}_K.$ Then the associated $(\varphi, \tau)$-module over $(\BB_{K_{\pi}}, \widetilde{\BB}_{L})$ is overconvergent.
\end{theorem}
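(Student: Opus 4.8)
The statement is precisely the overconvergence theorem of Gao--Poyeton (\cite{GP21}), so the plan is to reduce to their result and then bootstrap from the rational case via a standard devissage for integral representations. More precisely, I would proceed as follows. First, recall that by \cite{GP21} (or by the locally analytic vector techniques of Poyeton \cite{Poy19}, whose notations we have imported in notation \ref{not dictionary}), any $\QQ_p$-representation $V$ of $\mathscr{G}_K$ has an overconvergent $(\varphi,\tau)$-module: there exists $r>0$ and a finite free $\BB_{K_\pi}^{\dagger,r}$-submodule $\mathcal{D}^{\dagger,r}(V)\subset\mathcal{D}(V)$ stable under $\varphi$ such that $\mathcal{D}(V)=\BB_{K_\pi}\otimes_{\BB_{K_\pi}^{\dagger,r}}\mathcal{D}^{\dagger,r}(V)$, and similarly after base change to $\widetilde{\BB}_L^{\dagger,r}$ that $\mathcal{D}(V)_\tau=\widetilde{\BB}_L\otimes_{\widetilde{\BB}_L^{\dagger,r}}\mathcal{D}^{\dagger,r}(V)_\tau$, with $\tau_D$ preserving $\mathcal{D}^{\dagger,r}(V)_\tau$ for $r$ small enough. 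The key inputs are: (i) the overconvergence of the underlying $\varphi$-module $\mathcal{D}(V)$ (this is essentially Cherbonnier--Colmez transported to the Breuil--Kisin setting, and is the heart of \cite{GP21}); and (ii) the fact that the $\tau$-action, being given by a matrix with entries in $\widetilde{\BB}_L$ that one controls via the decompletion/locally analytic machinery, actually has overconvergent entries after possibly shrinking $r$.

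\textbf{Key steps in order.} Step 1: Treat the $\QQ_p$-case by directly invoking \cite{GP21} (or \cite{Poy19, Poy21}), checking that the hypotheses there match our setting ($p$ odd, $K$ complete discretely valued with perfect residue field, $K_\pi$ a Breuil--Kisin extension, $L=K_\pi K_\zeta$ the Galois closure). This gives overconvergence of $\mathcal{D}(V)$ and of the pair $(\mathcal{D}(V),\mathcal{D}(V)_\tau)$ exactly in the sense of definition \ref{def D dagger V}. Step 2: Verify the compatibility of $\tau_D$ with the overconvergent lattice. Here one writes $\tau_D$ in a basis of $\mathcal{D}^{\dagger,r}(V)$ over $\BB_{K_\pi}^{\dagger,r}$; the entries a priori lie in $\widetilde{\BB}_L$, and one must show they lie in $\widetilde{\BB}_L^{\dagger,r'}$ for some $0<r'\le r$. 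This uses that $\tau_D$ commutes with $\varphi$ (so its matrix satisfies a $\varphi$-conjugation relation that forces overconvergence of the entries, by the usual Frobenius-regularization argument as in Cherbonnier--Colmez), together with the continuity of the $\tau$-action and the structure of $\widetilde{\BB}_L^\dagger$. Step 3 (if one wants the integral statement as well, though the theorem as stated is only the $\QQ_p$-version): for $T\in\Rep_{\ZZ_p}(\mathscr{G}_K)$, apply the rational result to $V=\QQ_p\otimes_{\ZZ_p}T$ and intersect with the integral structures; one obtains $\mathcal{D}^{\dagger,r}(T)=\mathcal{D}(T)\cap\mathcal{D}^{\dagger,r}(V)$ and checks it generates $\mathcal{D}(T)$ over $\Oo_{\Ee}$ — this is the content of the refinement alluded to in proposition \ref{prop Zp surconvergence}, and follows by a $p$-adic approximation argument together with the fact that $\mathcal{D}^{\dagger,r}$ is a finitely generated module over the (Bezout, or at least coherent) ring $\BB_{K_\pi}^{\dagger,r}$.

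\textbf{Main obstacle.} The genuinely hard part is Step 1 itself, i.e. the overconvergence of $(\varphi,\tau)$-modules, which is a deep theorem and not something to reprove here; in this paper it is simply cited from \cite{GP21}. Granting that, the only real work left is Step 2 — propagating overconvergence from the $\varphi$-module to the $\tau$-operator — and there the delicate point is that $\widetilde{\BB}_L$ is the completion of a \emph{perfect} field's Witt vectors, so "overconvergence" of the $\tau$-matrix entries is a nontrivial constraint; one needs the commutation $\varphi\tau_D=\tau_D\varphi$ to run the Frobenius-iteration argument that improves the radius and lands the entries in $\widetilde{\BB}_L^{\dagger,r'}$. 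Since all of this is already packaged in the cited references, the proof in the text will be short: it is essentially a citation together with a remark reconciling notations.

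\begin{proof}
	$\cf$ \cite{GP21}, together with \cite[\S 4]{Poy19} for the reconciliation of notations (see notation \ref{not dictionary}).
\end{proof}
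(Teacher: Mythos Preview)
Your proposal is correct and matches the paper's approach: the paper's proof is a one-line citation to \cite[Th\'eor\`eme 4.3.29]{Poy19}, and your final proof is likewise a citation (to \cite{GP21} with \cite{Poy19} for notations), which amounts to the same thing since both references contain the overconvergence theorem. Your extended discussion of Steps 1--3 is accurate background but unnecessary here, as the paper treats this result purely as an input from the literature.
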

\begin{proof}
	$\cf$ \cite[Th\'eor\`eme 4.3.29]{Poy19}. 
\end{proof}

\begin{definition}\label{def overconvergent phi tau}
	A \emph{$(\varphi, \tau)$-module over $(\BB_{K_{\pi}}^\dagger,\widetilde{\BB}_{L}^\dagger)$} consists of
	\item (i) an \'etale $\varphi$-module $D^{\dagger}$ over $\BB_{K_{\pi}}^\dagger$;
	\item (ii) a $\tau$-semi-linear endomorphism $\tau_D$ on $D^{\dagger}_\tau:=\widetilde{\BB}_{L}^\dagger\otimes_{\BB_{K_{\pi}}^\dagger}D^{\dagger}$ which commutes with $\varphi_{\widetilde{\BB}_{L}^\dagger}\otimes \varphi_D$ (where $\varphi_{\widetilde{\BB}_{L}^\dagger}$ is the Frobenius map on $\widetilde{\BB}_{L}^\dagger$ and $\varphi_D$ is the Frobenius map on $D$) and that satisfies
	\[ (\forall x\in D^{\dagger})\ (g\otimes 1)\circ \tau_D(x) = \tau_D^{\chi(g)}(x),  \]
	for all $g\in \mathscr{G}_{K_{\pi}}/\mathscr{G}_L$ such that $\chi(g)\in \ZZ_{>0}.$ The corresponding category is denoted $\Mod_{\BB_{K_{\pi}}^\dagger,\widetilde{\BB}_{L}^\dagger}(\varphi,\tau).$\index{$\Mod_{\BB_{K_{\pi}}^\dagger,\widetilde{\BB}_{L}^\dagger}(\varphi,\tau)$}
	
	\medskip
	
	One defines similarly the notion of $(\varphi,\tau)$-module over $(\AA_{K_{\pi}}^\dagger,\widetilde{\AA}_{L}^\dagger)=:(\Oo_{\Ee^{\dagger}}, \Oo_{\Ee_{\tau}^{\dagger}}),$\index{$\Oo_{\Ee^{\dagger}}$} \index{$\Oo_{\Ee_{\tau}^{\dagger}}$} and the corresponding category is denoted $\Mod_{\AA_{K_{\pi}}^\dagger,\widetilde{\AA}_{L}^\dagger}(\varphi,\tau).$\index{$\Mod_{\AA_{K_{\pi}}^\dagger,\widetilde{\AA}_{L}^\dagger}(\varphi,\tau)$} 
\end{definition}

\begin{theorem}\label{prop equivalence cat overconvergent: normal case}
	The functors
	\begin{align*}
	\mathcal{D}^\dagger\colon\Rep_{\QQ_p}(\mathscr{G}_K)&\simeq\Mod_{\BB_{K_{\pi}}^\dagger,\widetilde{\BB}_{L}^\dagger}(\varphi,\tau)\\
	V&\mapsto \mathcal{D}^{\dagger}(V)=(\BB^\dagger\otimes_{\QQ_p}V)^{\mathscr{G}_{K_\pi}}\\
	\mathcal{V}(D^{\dagger})=(\BB^{\dagger} \otimes_{\BB_{K_{\pi}}^\dagger} D^{\dagger})^{\varphi=1}   & \mapsfrom D^{\dagger}
	\end{align*}
	$($with the natural $\tau$-semi-linear endomorphism $\tau_D$ over $\mathcal{D}^{\dagger}(V)_{\tau}=\widetilde{\BB}_{L}^{\dagger}\otimes_{\BB_{K_{\pi}}^\dagger} \mathcal{D}^{\dagger}(V))$ establish quasi-inverse equivalences of categories, which are refinements of the equivalences of \cite[Th\'eor\`eme 1.14]{Car13}.
\end{theorem}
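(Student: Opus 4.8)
\textbf{Proof plan for Theorem \ref{prop equivalence cat overconvergent: normal case}.}

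The plan is to reduce everything to what is already available: the (non-overconvergent) equivalence of categories in Theorem \ref{thm equivalence of cats} together with Poyeton's overconvergence result (Theorem \ref{thm surconvergence phi-tau version}), and to check that passing to overconvergent coefficients does not lose or create information. First I would verify that the two functors are well-defined. For $\mathcal{D}^\dagger$ this is essentially the content of Definition \ref{def D dagger V} and Theorem \ref{thm surconvergence phi-tau version}: given $V\in\Rep_{\QQ_p}(\mathscr{G}_K)$, the module $\mathcal{D}^{\dagger}(V)=(\BB^\dagger\otimes_{\QQ_p}V)^{\mathscr{G}_{K_\pi}}$ is a finite free $\BB_{K_{\pi}}^\dagger$-module, it is étale (the natural map $\BB_{K_{\pi}}\otimes_{\BB_{K_{\pi}}^\dagger}\mathcal{D}^\dagger(V)\to\mathcal{D}(V)$ is an isomorphism, by Theorem \ref{thm surconvergence phi-tau version}, and étaleness descends because $\BB_{K_{\pi}}$ is faithfully flat over $\BB_{K_{\pi}}^\dagger$ and $\varphi$ is compatible), and the $\tau$-action on $\mathcal{D}^{\dagger}(V)_\tau=(\widetilde{\BB}^\dagger\otimes_{\QQ_p}V)^{\mathscr{G}_L}$ is the restriction of the action on $\mathcal{D}(V)_\tau$, hence inherits the commutation with $\varphi$ and the cocycle-type relation $(g\otimes1)\circ\tau_D=\tau_D^{\chi(g)}$ for $g$ with $\chi(g)\in\ZZ_{>0}$ directly from Theorem \ref{thm equivalence of cats}. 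For $\mathcal{V}$, I would observe that $\mathcal{V}(D^\dagger)=(\BB^\dagger\otimes_{\BB_{K_{\pi}}^\dagger}D^\dagger)^{\varphi=1}$ carries a continuous $\mathscr{G}_K$-action: the $\mathscr{G}_{K_\pi}$-action is clear from that on $\BB^\dagger$, and the $\tau$-datum together with the relation $g\tau g^{-1}\equiv\tau^{\chi(g)}$ in $\mathscr{G}_K/\mathscr{G}_L$ lets one glue in the $\tau$-direction, exactly as in Caruso's reconstruction over $(\Ee,\Ee_\tau)$.

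The second and main step is to show the two functors are quasi-inverse. The cleanest route is to factor through the known equivalence: for $V\in\Rep_{\QQ_p}(\mathscr{G}_K)$ I would extend scalars along $\BB_{K_{\pi}}^\dagger\hookrightarrow\BB_{K_{\pi}}$, getting $\BB_{K_{\pi}}\otimes_{\BB_{K_{\pi}}^\dagger}\mathcal{D}^\dagger(V)\cong\mathcal{D}(V)$ by Theorem \ref{thm surconvergence phi-tau version}, compatibly with the $\tau$-data after further base change to $\widetilde{\BB}_L$; then Theorem \ref{thm equivalence of cats} gives $\mathcal{T}(\mathcal{D}(V))=V$, and since $\BB^\dagger=\BB\cap\widetilde{\BB}^\dagger$ one checks $\mathcal{V}(\mathcal{D}^\dagger(V))=(\BB^\dagger\otimes_{\BB_{K_{\pi}}^\dagger}\mathcal{D}^\dagger(V))^{\varphi=1}$ coincides with $(\BB\otimes_{\BB_{K_{\pi}}}\mathcal{D}(V))^{\varphi=1}=V$, because taking $\varphi$-invariants of an étale module already lands inside the overconvergent locus (Frobenius is bijective on the étale module, so a $\varphi$-fixed vector, expressed in a fixed basis of $\mathcal{D}^\dagger(V)$, has bounded denominators, forcing its coordinates into $\BB^\dagger$). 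Conversely, for $D^\dagger\in\Mod_{\BB_{K_{\pi}}^\dagger,\widetilde{\BB}_{L}^\dagger}(\varphi,\tau)$, set $D=\BB_{K_{\pi}}\otimes_{\BB_{K_{\pi}}^\dagger}D^\dagger$ with its induced $\tau$-structure; this is an object of $\Mod_{\Ee,\Ee_\tau}(\varphi,\tau)$, so $V:=\mathcal{T}(D)=\mathcal{V}(D^\dagger)$ is a representation, and by Theorem \ref{thm surconvergence phi-tau version} its overconvergent $(\varphi,\tau)$-module $\mathcal{D}^\dagger(V)$ is an overconvergent $\BB_{K_{\pi}}^\dagger$-lattice in $D$ stable by $\varphi$ and $\tau$; one then invokes the uniqueness of such a lattice — the overconvergent $\varphi$-module attached to a $p$-adic representation is canonical, a standard fact in the $(\varphi,\Gamma)$-setting that Poyeton establishes in the $(\varphi,\tau)$-setting (\cf \cite[Th\'eor\`eme 4.3.29]{Poy19}) — to conclude $\mathcal{D}^\dagger(\mathcal{V}(D^\dagger))\cong D^\dagger$ compatibly with all structures. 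Functoriality of both natural isomorphisms is then routine, as they are all restrictions or base changes of the isomorphisms already known in Theorem \ref{thm equivalence of cats}.

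Finally I would indicate the integral analogue: the equivalence $\Rep_{\ZZ_p}(\mathscr{G}_K)\simeq\Mod_{\AA_{K_{\pi}}^\dagger,\widetilde{\AA}_{L}^\dagger}(\varphi,\tau)$ follows by the same argument, using Proposition \ref{prop Zp surconvergence} (the refinement of Gao--Poyeton to integral representations announced in the introduction) in place of Theorem \ref{thm surconvergence phi-tau version}, and the integral equivalence of Theorem \ref{thm equivalence of cats} in place of the rational one; the only extra point is that $\AA^\dagger=\widetilde{\AA}^\dagger\cap\AA$ and that $\varphi$-invariants of an étale $\AA_{K_{\pi}}$-module lie in $\AA^\dagger$, which is again the bounded-denominators observation. \textbf{The hard part} is not the abstract diagram-chase but making sure the overconvergence input is used correctly on \emph{both} the $\varphi$-module $\mathcal{D}^\dagger(V)$ \emph{and} its $\tau$-companion $\mathcal{D}^\dagger(V)_\tau$ — i.e. that a single radius $r>0$ works simultaneously so that the pair $(\mathcal{D}^{\dagger,r}(V),\mathcal{D}^{\dagger,r}(V)_\tau)$ is genuinely an object of the overconvergent category — and then that the reconstruction functor $\mathcal{V}$ really recovers $D^\dagger$ on the nose rather than merely up to isogeny; both of these are exactly what Poyeton's theorem and the canonicity of the overconvergent lattice are designed to give, so the proof is short once those are cited.
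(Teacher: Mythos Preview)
Your overall strategy is sound and close to the paper's, but the organization differs in a way that matters. The paper does \emph{not} directly verify that $\mathcal{V}$ and $\mathcal{D}^\dagger$ are quasi-inverse. Instead it first proves that the base-change functor
\[
\Mod_{\BB_{K_{\pi}}^\dagger,\widetilde{\BB}_{L}^\dagger}(\varphi,\tau)\;\longrightarrow\;\Mod_{\BB_{K_{\pi}},\widetilde{\BB}_{L}}(\varphi,\tau),\qquad D^\dagger\mapsto \BB_{K_\pi}\otimes_{\BB_{K_\pi}^\dagger}D^\dagger
\]
is fully faithful: one has a commutative triangle whose top edge is the known equivalence $\mathcal{D}$, whose left edge is $\mathcal{D}^\dagger$, and whose right edge is base change; the right edge is surjective on $\Hom$ because the triangle commutes, and injective on $\Hom$ simply because $D^\dagger\hookrightarrow\BB_{K_\pi}\otimes D^\dagger$. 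This two-line argument gives full faithfulness of $\mathcal{D}^\dagger$. Essential surjectivity then follows: given $D^\dagger$, base change to get some $\mathcal{D}(V)$; then $\mathcal{D}^\dagger(V)$ and $D^\dagger$ have isomorphic base changes, and full faithfulness forces them to be isomorphic.

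Your route hides this step inside the phrase ``uniqueness of such a lattice \dots\ that Poyeton establishes''. But \cite[Th\'eor\`eme 4.3.29]{Poy19} is the \emph{existence} of the overconvergent structure (Theorem~\ref{thm surconvergence phi-tau version} here), not its uniqueness; the uniqueness you need is exactly the full faithfulness of base change, which is not what that citation gives. So either isolate and prove that full faithfulness (it is genuinely short, as above), or find a correct reference for it. Likewise, your ``bounded denominators'' justification for $\mathcal{V}(\mathcal{D}^\dagger(V))=V$ is more cleanly argued as: $V\subset\BB^\dagger\otimes_{\QQ_p}V\simeq\BB^\dagger\otimes_{\BB_{K_\pi}^\dagger}\mathcal{D}^\dagger(V)$ by overconvergence, hence $V\subset(\BB^\dagger\otimes\mathcal{D}^\dagger(V))^{\varphi=1}\subset(\BB\otimes\mathcal{D}(V))^{\varphi=1}=V$. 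Your remark on the integral case is correct and matches how the paper proceeds (Proposition~\ref{prop Zp surconvergence} plus faithful flatness of $\AA_{K_\pi}$ over $\AA_{K_\pi}^\dagger$).
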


\begin{proof}
	Recall that we have the category equivalence
	\[ \mathcal{D}\colon \Rep_{\QQ_p}(\mathscr{G}_K)\simeq \Mod_{\BB_{K_{\pi}},\widetilde{\BB}_{L}}(\varphi,\tau). \]
	For any $V_1, V_2\in \Rep_{\QQ_p}(\mathscr{G}_K),$ we have $\Hom_{\Rep_{\QQ_p}(\mathscr{G}_K)}(V_1, V_2)\simeq\Hom_{\Mod_{\BB_{K_{\pi}},\widetilde{\BB}_{L}}(\varphi,\tau)}(\mathcal{D}(V_1), \mathcal{D}(V_2))$ by \cite[Th\'eor\`eme 1.14]{Car13}. By theorem \ref{thm surconvergence phi-tau version} we have  $$\Hom_{\Mod_{\BB_{K_{\pi}},\widetilde{\BB}_{L}}(\varphi,\tau)}(\mathcal{D}(V_1),\mathcal{D}(V_2))=\Hom_{\Mod_{\BB_{K_{\pi}},\widetilde{\BB}_{L}}(\varphi,\tau)}(\BB_{K_{\pi}}\otimes_{\BB_{K_{\pi}}^{\dagger}}\mathcal{D}^{\dagger}(V_1), \BB_{K_{\pi}}\otimes_{\BB_{K_{\pi}}^{\dagger}}\mathcal{D}^{\dagger}(V_2)).$$
	We prove this is isomorphic to $\Hom_{\Mod_{\BB_{K_{\pi}}^\dagger,\widetilde{\BB}_{L}^\dagger}(\varphi,\tau)}(\mathcal{D}^{\dagger}(V_1), \mathcal{D}^{\dagger}(V_2)),$ whence the fully-faithfullness. We have the commutative diagram
	
	\[ \xymatrix{
		\Hom_{\Rep_{\QQ_p}(\mathscr{G}_K)}(V_1, V_2) \ar[rr]_{\sim}^{\mathcal{D}} \ar@{^(->}[rdd]^{\mathcal{D}^{\dagger}}&& \Hom_{\Mod_{\BB_{K_{\pi}},\widetilde{\BB}_{L}}(\varphi,\tau)}(\mathcal{D}(V_1),\mathcal{D}(V_2))\\
		&&\\
		& \Hom_{\Mod_{\BB_{K_{\pi}}^\dagger,\widetilde{\BB}_{L}^\dagger}(\varphi,\tau)}(\mathcal{D}^{\dagger}(V_1), \mathcal{D}^{\dagger}(V_2)) \ar@{->>}[ruu]^{(\ast)}_{\otimes\BB_{K_{\pi}}, \, \otimes\widetilde{\BB}_{L}}  &
	}   \]
	It is enough to show the map $(\ast)$ is injective, and hence an isomorphism. For any map $f\colon D_1^{\dagger} \to D_2^{\dagger}$ in $\Mod_{\BB_{K_{\pi}}^\dagger,\widetilde{\BB}_{L}^\dagger}(\varphi,\tau),$ we have the commutative square:
	
	\[ \xymatrix{
		D_1^{\dagger} \ar[r]^{f} \ar@{_(->}[d] & D_2{^\dagger} \ar@{_(->}[d]\\
		\BB_{K_{\pi}}\otimes_{\BB_{K_{\pi}}^{\dagger}}D_1^{\dagger} \ar[r]^{1\otimes f} & \BB_{K_{\pi}}\otimes_{\BB_{K_{\pi}}^{\dagger}}D^{\dagger}_2
	}  \]
	which shows that $1\otimes f=0$ implies that $f=0.$
	
	\medskip
	
	Let $(D^{\dagger}, D_{\tau}^{\dagger})\in \Mod_{\BB_{K_{\pi}}^\dagger,\widetilde{\BB}_{L}^\dagger}(\varphi,\tau).$ Tensoring with $\BB_{K_{\pi}}$ and $\widetilde{\BB}_{L}$ respectively, we have
	\[(\BB_{K_{\pi}}\otimes_{\BB_{K_{\pi}}^{\dagger}} D^{\dagger}, \widetilde{\BB}_{L} \otimes_{\widetilde{\BB}_{L}^{\dagger}} D_{\tau}^{\dagger})\in \Mod_{\BB_{K_{\pi}},\widetilde{\BB}_{L}}(\varphi,\tau)\]
	by theorem \ref{thm surconvergence phi-tau version}. By the equivalence induced by the functor $\mathcal{D},$ the $(\varphi, \tau)$ module $(\BB_{K_{\pi}}\otimes_{\BB_{K_{\pi}}^{\dagger}} D^{\dagger}, \widetilde{\BB}_{L} \otimes_{\widetilde{\BB}_{L}^{\dagger}} D_{\tau}^{\dagger})$ corresponds to a $p$-adic representation $V\in \Rep_{\QQ_p}(\mathscr{G}_K).$ We show that $\mathcal{D}^{\dagger}(V)$ (more precisely $(\mathcal{D}^{\dagger}(V), \mathcal{D}^{\dagger}(V)_{\tau})$) equals the original object $(D^{\dagger}, D_{\tau}^{\dagger})\in \Mod_{\BB_{K_{\pi}}^\dagger,\widetilde{\BB}_{L}^\dagger}(\varphi,\tau).$ Indeed, $\mathcal{D}^{\dagger}(V)$ and $(D^{\dagger}, D_{\tau}^{\dagger})$ map to the same $(\varphi, \tau)$-module in $\Mod_{\BB_{K_{\pi}},\widetilde{\BB}_{L}}(\varphi,\tau),$ and hence they are isomorphic in $\Mod_{\BB_{K_{\pi}}^\dagger,\widetilde{\BB}_{L}^\dagger}(\varphi,\tau)$ as the map $(\ast)$ in the diagram above is an isomorphism. This shows that $\mathcal{D}^{\dagger}$ is essentially surjective and hence estabilishes the equivalence of categories.
\end{proof}

\begin{proposition}\label{prop Zp surconvergence}
	Let $T\in \Rep_{\ZZ_p}(\mathscr{G}_K)$, then its $(\varphi, \tau)$-module $(D, D_{\tau})\in \Mod_{\AA_{K_{\pi}},\widetilde{\AA}_{L}}(\varphi,\tau)$ is overconvergent.
\end{proposition}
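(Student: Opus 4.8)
The statement to prove is that the integral $(\varphi,\tau)$-module attached to $T\in\Rep_{\ZZ_p}(\mathscr{G}_K)$ is overconvergent, given the rational analogue (Theorem \ref{thm surconvergence phi-tau version}, equivalently Theorem \ref{prop equivalence cat overconvergent: normal case} after \cite{Poy19}). The plan is to deduce the integral statement from the rational one by a lattice argument, in the spirit of Cherbonnier--Colmez's proof that overconvergence of $V$ upgrades to overconvergence of a $\mathscr{G}_K$-stable lattice. First I would reduce to the free case: a general $T$ is an extension built from a free $\ZZ_p$-representation and a torsion one, and torsion representations are handled separately (or directly, since $\mathcal{O}_{\mathcal{E}}^\dagger$ has residue field $F_0$ and killing $p$ puts us over $F_0$, where every $\varphi$-module is trivially ``overconvergent'' as $F_0=F_0^\dagger$ mod $p$ in the appropriate sense); so the crux is $T$ free of rank $d$. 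Then $V=T[1/p]$ is overconvergent by Theorem \ref{thm surconvergence phi-tau version}, so there is $r_0>0$ and a $\BB_{K_\pi}^{\dagger,r_0}$-basis of $\mathcal{D}^{\dagger,r_0}(V)$ spanning $\mathcal{D}(V)$ over $\BB_{K_\pi}$, and compatibly a $\widetilde{\BB}_L^{\dagger,r_0}$-basis of $\mathcal{D}^{\dagger,r_0}(V)_\tau$.

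The key steps are then: (1) Observe $\mathcal{D}(T)$ is a $\varphi$-stable $\mathcal{O}_{\mathcal{E}}$-lattice in $\mathcal{D}(V)$, and $\mathcal{D}(T)_\tau=\mathcal{O}_{\mathcal{E}_\tau}\otimes_{\mathcal{O}_{\mathcal{E}}}\mathcal{D}(T)$ the corresponding lattice in $\mathcal{D}(V)_\tau$, both stable under the relevant operators. (2) Pick any $\mathcal{O}_{\mathcal{E}}$-basis $e_1,\dots,e_d$ of $\mathcal{D}(T)$; expressing it in the overconvergent basis of $\mathcal{D}^{\dagger,r_0}(V)$ shows $e_i\in \mathcal{D}^{\dagger,r_1}(V)$ for some $r_1\ge r_0$, i.e. the lattice $\mathcal{D}(T)$ is contained in $\mathcal{D}^{\dagger,r_1}(V)$; set $\mathcal{D}^{\dagger,r_1}(T):=\mathcal{D}(T)\cap\mathcal{D}^{\dagger,r_1}(V)$ and likewise $\mathcal{D}^{\dagger,r_1}(T)_\tau:=\mathcal{D}(T)_\tau\cap\mathcal{D}^{\dagger,r_1}(V)_\tau=(\widetilde{\AA}^{\dagger,r_1}\otimes_{\ZZ_p}T)^{\mathscr{G}_L}$. (3) The essential point is to show that for $r$ large enough $\mathcal{D}^{\dagger,r}(T)$ is finite free over $\AA_{K_\pi}^{\dagger,r}=\mathcal{O}_{\mathcal{E}^\dagger}\cap\widetilde{\BB}^{\dagger,r}$, with $\mathcal{O}_{\mathcal{E}}\otimes_{\AA_{K_\pi}^{\dagger,r}}\mathcal{D}^{\dagger,r}(T)\overset{\sim}{\to}\mathcal{D}(T)$, and similarly on the $\tau$-side. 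The strategy here is the classical one: using $\varphi$-stability and étaleness, apply $\varphi$ iteratively to push the entries of the change-of-basis matrix (between the $e_i$ and an overconvergent basis of $\mathcal{D}^{\dagger,r_1}(V)$) into smaller and smaller radius, exploiting $\varphi(\widetilde{\BB}^{\dagger,r})\subset\widetilde{\BB}^{\dagger,pr}$ and the contraction properties; this produces, after finitely many steps, a $\varphi$-basis of $\mathcal{D}(T)$ lying in $\mathcal{D}^{\dagger,r}(T)$ for a fixed $r$, which then generates $\mathcal{D}^{\dagger,r}(T)$ over $\AA_{K_\pi}^{\dagger,r}$ by a successive-approximation/Nakayama-type argument $p$-adically combined with the Frobenius estimates. (4) Finally, check the $\tau$-compatibility: since $\tau_D$ on $\mathcal{D}(V)_\tau$ already preserves $\mathcal{D}^{\dagger,r}(V)_\tau$ for $r\gg0$ by the rational theory, and $\mathcal{D}(T)_\tau$ is $\tau_D$-stable, the intersection $\mathcal{D}^{\dagger,r}(T)_\tau$ is $\tau_D$-stable; together with step (3) one gets $\mathcal{D}(T)_\tau=\widetilde{\AA}_L\otimes_{\widetilde{\AA}_L^{\dagger,r}}\mathcal{D}^{\dagger,r}(T)_\tau$, which is the definition of overconvergence for the integral $(\varphi,\tau)$-module.

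The main obstacle is step (3): upgrading ``the lattice $\mathcal{D}(T)$ sits inside some $\mathcal{D}^{\dagger,r_1}(V)$'' to ``$\mathcal{D}^{\dagger,r}(T)$ is free of the right rank over $\AA_{K_\pi}^{\dagger,r}$ and generates $\mathcal{D}(T)$ after base change.'' This is exactly where the Cherbonnier--Colmez machinery is needed and where one must be careful: the ring $\AA_{K_\pi}^{\dagger,r}$ is not a PID and one cannot naively invoke structure theorems; instead one runs the Frobenius-descent argument producing a basis whose defect from being overconvergent of a fixed radius is killed by applying $\varphi$ enough times, using the étale condition $1\otimes\varphi\colon\mathcal{O}_{\mathcal{E}}\otimes_{\varphi}\mathcal{D}(T)\overset{\sim}{\to}\mathcal{D}(T)$ and the norm estimates on $\widetilde{\BB}^{\dagger,r}$. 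I would either adapt the argument of \cite[\S 4.3]{Poy19} (which proves the rational case) keeping track of integrality throughout — the key extra input being that the change-of-basis matrices and their inverses can be arranged to have entries in $\AA^{\dagger,r}$ rather than just $\BB^{\dagger,r}$ — or invoke a general ``overconvergent lattice'' lemma. The $\tau$-side adds only bookkeeping once the $\varphi$-side is settled, since $\widetilde{\BB}_L^{\dagger,r}$ is already handled in the rational theory and the operator $\tau_D$ is built to commute with $\varphi$.
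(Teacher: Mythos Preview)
Your approach is sound but takes a substantially harder route than the paper. You propose to rerun the Cherbonnier--Colmez Frobenius-contraction machinery integrally at fixed radius, which works in principle but is technical precisely because, as you note, $\AA_{K_\pi}^{\dagger,r}$ is not a PID. The paper sidesteps this entirely by working over $\AA_{K_\pi}^\dagger=\bigcup_r\AA_{K_\pi}^{\dagger,r}$, which \emph{is} a DVR (it is the ring of integers of the field $\BB_{K_\pi}^\dagger$), and arguing as follows for $T$ torsion-free: the natural map $f\colon\AA_{K_\pi}\otimes_{\AA_{K_\pi}^\dagger}\mathcal{D}^\dagger(T)\to\mathcal{D}(T)$ is an isomorphism after inverting $p$ by the rational theorem, so $f$ is injective, $\mathcal{D}^\dagger(T)$ is free of rank $d$, and $\Coker(f)\simeq\bigoplus_i\AA_{K_\pi}/(p^{n_i})$ by the structure theorem over the DVR $\AA_{K_\pi}$. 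Then a single commutative diagram comparing $f$ with multiplication by $p$ (using that $\AA_{K_\pi}\otimes_{\AA_{K_\pi}^\dagger}\mathcal{D}^\dagger(T/pT)\to\mathcal{D}(T/pT)$ is an isomorphism, since for torsion representations $\mathcal{D}^\dagger=\mathcal{D}$) shows that multiplication by $p$ is bijective on $\Coker(f)$, forcing all $n_i=0$. The $\tau$-side then follows by tensoring, and the general case by the d\'evissage $0\to T_{\mathrm{tors}}\to T\to T_{\mathrm{free}}\to 0$ you also identified.

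So the key difference is: rather than producing an overconvergent basis by an iterative Frobenius-descent argument, the paper observes that the cokernel of the comparison map is $p$-torsion and $p$-divisible, hence zero. This buys a much shorter proof that cleanly leverages the rational result as a black box, whereas your approach essentially re-proves it with integral bookkeeping.
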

\begin{proof}
	\begin{enumerate}
		\item[(1)]	We first suppose $T$ is $p$-torsion free. Recall that $\mathcal{D}^{\dagger}(T)=(\AA^{\dagger}\otimes_{\ZZ_p}T)^{\mathscr{G}_{K_{\pi}}}$ and we have a natural map
		\[ \AA_{K_{\pi}}\otimes_{\AA_{K_{\pi}}^{\dagger}} \mathcal{D}^{\dagger}(T) \xrightarrow{f} \mathcal{D}(T), \]
		such that tensoring $\otimes_{\ZZ_p} \QQ_p$ is an isomorphism ($\cf$ \cite[Th\'eor\`eme 4.3.29]{Poy19}). This implies that $f$ is injective and $\mathcal{D}^{\dagger}(T)$ is free of rank $d:=\rank_{\ZZ_p}T$ over $\AA_{K_{\pi}}^{\dagger}.$ In particular, there are integers $(n_i)_{1\leq i \leq d}$ such that 
		\[ \Coker(f)\simeq \bigoplus_{i=1}^d \AA_{K_{\pi}}/(p^{n_i}). \]
		We have the commutative diagram 
		\[ \xymatrix{
			0\ar[r] & \AA_{K_{\pi}}\otimes_{\AA_{K_{\pi}}^{\dagger}}\mathcal{D}^{\dagger}(T) \ar[r]^{p} \ar[d]^{f}& \AA_{K_{\pi}}\otimes_{\AA_{K_{\pi}}^{\dagger}}\mathcal{D}^{\dagger}(T) \ar[r] \ar[d]^{f} & \AA_{K_{\pi}}\otimes_{\AA_{K_{\pi}}^{\dagger}}\mathcal{D}^{\dagger}(T/pT)\ar[d]^{\sim}\ar[r] & 0\\
			0 \ar[r] & \mathcal{D}(T) \ar[r]^{p} & \mathcal{D}(T) \ar[r] & \mathcal{D}(T/pT)\ar[r] & 0
		} \] 
		which shows that multiplication by $p$ is bijective over $\Coker(f)$ and hence bijective over $\AA_{K_{\pi}}/(p^{n_i})$ for $i\in \{ 1, \ldots, d \}.$ This implies $n_i=0$ for all $i\in \{ 1, \ldots, d \}$, and hence $\Coker(f)=0$. Thus
		\[ \AA_{K_{\pi}}\otimes_{\AA_{K_{\pi}}^{\dagger}} \mathcal{D}^{\dagger}(T) \xrightarrow{\simeq} \mathcal{D}(T) \]		
		is an isomorphism and the $\varphi$-module $\mathcal{D}^{\dagger}(T)$ is overconvergent. Recall that $\mathcal{D}(T)_{\tau}=\widetilde{\AA}_{L}\otimes_{\AA_{K_{\pi}}}\mathcal{D}(T)$ and  $\mathcal{D}^{\dagger}(T)_{\tau}=\widetilde{\AA}^{\dagger}_{L}\otimes_{\AA_{K_{\pi}}^{\dagger}}\mathcal{D}^{\dagger}(T):$ we have \[ \mathcal{D}(T)_{\tau}=\widetilde{\AA}_{L}\otimes_{\widetilde{\AA}^{\dagger}_{L}}\mathcal{D}^{\dagger}(T)_{\tau}. \]
		\item [(2)] In general, for any $T\in \Rep_{\ZZ_p}(\mathscr{G}_K)$, we have the short exact sequence:
		\[ 0 \to T^{\prime} \to T \to T^{\prime\prime}\to 0,   \]
		where $T^{\prime}$  is the $p$-torsion submodule of $T$ and $T^{\prime\prime}=T/T^{\prime}$ is $p$-torsion free. Notice that $\mathcal{D}^{\dagger}(T^{\prime})=\mathcal{D}(T^{\prime})$, and we have the exact sequence
		\[ 0\to \mathcal{D}(T^{\prime}) \to \mathcal{D}^{\dagger}(T) \to \mathcal{D}^{\dagger}(T^{\prime\prime}) \to \H^1(\mathscr{G}_{K_{\pi}}, \AA\otimes T^{\prime})=0,   \]
		where the last equality follows from Hilbert 90 ($\AA$ is endowed with $p$-adic topology). We thus have the commutative diagram:
		\[ \xymatrix{
			0\ar[r] & \mathcal{D}(T^{\prime}) \ar[r] \ar@{=}[d]& \AA_{K_{\pi}}\otimes_{\AA_{K_{\pi}}^{\dagger}}\mathcal{D}^{\dagger}(T) \ar[r] \ar[d]^{f} & \AA_{K_{\pi}}\otimes_{\AA_{K_{\pi}}^{\dagger}}\mathcal{D}^{\dagger}(T^{\prime\prime})\ar[d]^{\sim}\ar[r] & 0\\
			0 \ar[r] &\mathcal{D}(T^{\prime}) \ar[r] & \mathcal{D}(T) \ar[r] & \mathcal{D}(T^{\prime\prime})\ar[r] & 0
		} \] 
		which shows that $f$ is an isomorphism.
	\end{enumerate}		 
\end{proof}

\begin{corollary}
	The functors
	\begin{align*}
	\mathcal{D}^\dagger\colon\Rep_{\ZZ_p}(\mathscr{G}_K)&\simeq\Mod_{\AA_{K_{\pi}}^\dagger,\widetilde{\AA}_{L}^\dagger}(\varphi,\tau)\\
	T&\mapsto \mathcal{D}^{\dagger}(T)=(\AA^\dagger\otimes_{\ZZ_p}T)^{\mathscr{G}_{K_\pi}}\\
	\mathcal{V}(D^{\dagger})=(\AA^{\dagger} \otimes_{\AA_{K_{\pi}}^\dagger} D^{\dagger})^{\varphi=1}   & \mapsfrom D^{\dagger}
	\end{align*}
	$($with the natural $\tau$-semi-linear endomorphism $\tau_D$ over $\mathcal{D}^{\dagger}(T)_{\tau}=\widetilde{\AA}_{L}^{\dagger}\otimes_{\AA_{K_{\pi}}^\dagger} \mathcal{D}^{\dagger}(T))$ establish quasi-inverse equivalences of categories.
\end{corollary}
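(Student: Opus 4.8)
The plan is to deduce the integral equivalence from the rational one (Theorem \ref{prop equivalence cat overconvergent: normal case}) together with the overconvergence of integral $(\varphi,\tau)$-modules just established in Proposition \ref{prop Zp surconvergence}, mimicking the structure of the proof of that theorem. First I would note that the functor $\mathcal{D}^\dagger\colon \Rep_{\ZZ_p}(\mathscr{G}_K)\to \Mod_{\AA_{K_{\pi}}^\dagger,\widetilde{\AA}_{L}^\dagger}(\varphi,\tau)$ is well-defined: for $T\in\Rep_{\ZZ_p}(\mathscr{G}_K)$, the module $\mathcal{D}^\dagger(T)=(\AA^\dagger\otimes_{\ZZ_p}T)^{\mathscr{G}_{K_\pi}}$ carries an \'etale $\varphi$-structure, and by Proposition \ref{prop Zp surconvergence} one has $\mathcal{D}(T)=\AA_{K_{\pi}}\otimes_{\AA_{K_{\pi}}^\dagger}\mathcal{D}^\dagger(T)$ and $\mathcal{D}(T)_\tau=\widetilde{\AA}_L\otimes_{\widetilde{\AA}_L^\dagger}\mathcal{D}^\dagger(T)_\tau$, so the semi-linear $\tau_D$ on $\mathcal{D}(T)_\tau$ restricts to $\mathcal{D}^\dagger(T)_\tau$ (it commutes with $\varphi$, hence preserves the overconvergent part, which can be recovered from $\mathcal{D}(T)_\tau$ by intersecting with $\widetilde{\BB}^\dagger\otimes T$), and the relation $(g\otimes 1)\circ\tau_D=\tau_D^{\chi(g)}$ descends because it already holds on $\mathcal{D}(T)_\tau$.

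Next I would prove full faithfulness exactly as in the rational case. Given $T_1,T_2\in\Rep_{\ZZ_p}(\mathscr{G}_K)$, one has $\Hom_{\Rep_{\ZZ_p}}(T_1,T_2)\simeq\Hom_{\Mod_{\AA_{K_{\pi}},\widetilde{\AA}_L}(\varphi,\tau)}(\mathcal{D}(T_1),\mathcal{D}(T_2))$ by Theorem \ref{thm equivalence of cats}, and using Proposition \ref{prop Zp surconvergence} the right-hand side rewrites as $\Hom(\AA_{K_{\pi}}\otimes_{\AA_{K_{\pi}}^\dagger}\mathcal{D}^\dagger(T_1),\AA_{K_{\pi}}\otimes_{\AA_{K_{\pi}}^\dagger}\mathcal{D}^\dagger(T_2))$. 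A morphism $\mathcal{D}^\dagger(T_1)\to\mathcal{D}^\dagger(T_2)$ of overconvergent $(\varphi,\tau)$-modules that becomes zero after $\otimes\AA_{K_{\pi}}$ is already zero, since $\mathcal{D}^\dagger(T_i)\hookrightarrow\AA_{K_{\pi}}\otimes_{\AA_{K_{\pi}}^\dagger}\mathcal{D}^\dagger(T_i)$ ($\AA_{K_{\pi}}$ is faithfully flat, indeed free, over $\AA_{K_{\pi}}^\dagger$); and conversely a morphism of $(\varphi,\tau)$-modules over $(\AA_{K_{\pi}},\widetilde{\AA}_L)$ between the scalar extensions preserves the overconvergent lattices because it is compatible with $\varphi$ and $\varphi$ determines the overconvergent structure. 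Hence $\mathcal{D}^\dagger$ induces a bijection on Hom-sets, with the triangle of functors $\mathcal{D}\cong(\otimes\AA_{K_{\pi}})\circ\mathcal{D}^\dagger$ commuting as in the diagram of the proof of Theorem \ref{prop equivalence cat overconvergent: normal case}.

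For essential surjectivity, start from $(D^\dagger,D_\tau^\dagger)\in\Mod_{\AA_{K_{\pi}}^\dagger,\widetilde{\AA}_L^\dagger}(\varphi,\tau)$. Scalar extension gives $(\AA_{K_{\pi}}\otimes_{\AA_{K_{\pi}}^\dagger}D^\dagger,\ \widetilde{\AA}_L\otimes_{\widetilde{\AA}_L^\dagger}D_\tau^\dagger)\in\Mod_{\AA_{K_{\pi}},\widetilde{\AA}_L}(\varphi,\tau)$, which by Theorem \ref{thm equivalence of cats} is $\mathcal{D}(T)$ for some $T\in\Rep_{\ZZ_p}(\mathscr{G}_K)$. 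Then $\mathcal{D}^\dagger(T)$ and $(D^\dagger,D_\tau^\dagger)$ have isomorphic scalar extensions to $(\AA_{K_{\pi}},\widetilde{\AA}_L)$, and by the full faithfulness just proved (the analogue of the isomorphism $(\ast)$ in the preceding proof) this isomorphism descends, so $\mathcal{D}^\dagger(T)\simeq(D^\dagger,D_\tau^\dagger)$. Finally one checks that $\mathcal{V}(D^\dagger)=(\AA^\dagger\otimes_{\AA_{K_{\pi}}^\dagger}D^\dagger)^{\varphi=1}$ is a quasi-inverse: this follows by tensoring up to $(\AA_{K_{\pi}},\widetilde{\AA}_L)$ and invoking that $\mathcal{T}(-)=(\AA\otimes_{\AA_{K_{\pi}}}-)^{\varphi=1}$ is the quasi-inverse in Theorem \ref{thm equivalence of cats}, together with the fact that $(\AA^\dagger)^{\varphi=1}=(\AA)^{\varphi=1}=\ZZ_p$.

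I expect the main obstacle to be purely bookkeeping rather than conceptual: verifying carefully that the overconvergent $\tau$-structure is intrinsic — i.e. that $D_\tau^\dagger$ is exactly the set of elements of $\widetilde{\AA}_L\otimes_{\widetilde{\AA}_L^\dagger}D_\tau^\dagger$ that lie in some $\widetilde{\AA}_L^{\dagger,r}\otimes D^\dagger$, so that morphisms and the $\tau$-action automatically respect it — and that the module $\mathcal{D}^\dagger(T)$ is genuinely free over $\AA_{K_{\pi}}^\dagger$ (needed for faithful flatness), which for $p$-torsion-free $T$ is part of Proposition \ref{prop Zp surconvergence}'s proof and for general $T$ follows by the same dévissage. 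Everything else is a formal transcription of the rational argument with $\QQ_p$, $\BB_{K_{\pi}}^\dagger$, $\widetilde{\BB}_L^\dagger$ replaced by $\ZZ_p$, $\AA_{K_{\pi}}^\dagger$, $\widetilde{\AA}_L^\dagger$.
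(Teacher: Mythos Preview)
Your proposal is correct and follows essentially the same route as the paper: mimic the proof of Theorem~\ref{prop equivalence cat overconvergent: normal case}, replacing the overconvergence input by Proposition~\ref{prop Zp surconvergence} and using faithful flatness of $\AA_{K_{\pi}}$ over $\AA_{K_{\pi}}^\dagger$ (and of $\widetilde{\AA}_L$ over $\widetilde{\AA}_L^\dagger$) to show that $1\otimes f=0$ forces $f=0$. One minor point: your parenthetical ``indeed free'' is not obviously justified (and not needed); the paper simply cites faithful flatness of a flat local extension of DVRs, which suffices, and your extra argument that $\varphi$ determines the overconvergent structure is superfluous since the triangle $\mathcal{D}\simeq(\otimes\AA_{K_{\pi}})\circ\mathcal{D}^\dagger$ together with injectivity of $(\ast)$ already forces $(\ast)$ to be bijective on Hom-sets.
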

\begin{proof}
	The proof is almost the same as that of theorem \ref{prop equivalence cat overconvergent: normal case}, with the following modifications. For any map $f\colon D_1^{\dagger} \to D_2^{\dagger}$ in $\Mod_{\AA_{K_{\pi}}^\dagger,\widetilde{\AA}_{L}^\dagger}(\varphi,\tau)$ we consider the commutative square:
	\[ \xymatrix{
		D_1^{\dagger} \ar[r]^{f} \ar[d] & D_2{^\dagger} \ar[d]\\
		\AA_{K_{\pi}}\otimes_{\AA_{K_{\pi}}^{\dagger}}D_1^{\dagger} \ar[r]^{1\otimes f} & \AA_{K_{\pi}}\otimes_{\AA_{K_{\pi}}^{\dagger}}D^{\dagger}_2.
	}  \]
	As $\AA_{K_{\pi}}$ is faithfully flat over $\AA_{K_{\pi}}^{\dagger}$ and $\widetilde{\AA}_{L}$ is faithfully flat over $\widetilde{\AA}_{L}^\dagger$ ($\cf$ \cite[Theorem 7.2 (3)]{Mat89}), $1\otimes f=0$ implies $f=0.$ For essential surjectivity, we use proposition \ref{prop Zp surconvergence} instead of theorem \ref{thm surconvergence phi-tau version}.
\end{proof}

\begin{notation}
	Let $(D^{\dagger}, D^{\dagger}_{\tau})\in \Mod_{\BB_{K_{\pi}}^\dagger,\widetilde{\BB}_{L}^\dagger}(\varphi,\tau).$ Put 
	\[ D^{\dagger}_{\tau, 0}:=\big\{ x\in D^{\dagger}_{\tau};\ (\forall g\in \mathscr{G}_{K_{\pi}})\ \chi(g)\in \ZZ_{>0} \Rightarrow (g\otimes 1)(x)=\big(1+\tau_D+\cdots +\tau_D^{\chi(g)-1}\big)(x) \big\}. \]\index{$D^{\dagger}_{\tau, 0}$}
	By similar arguments as that of lemma \ref{Replace g witi gamma}, we see that 
	
	\[D^{\dagger}_{\tau, 0}:=\big\{ x\in D^{\dagger}_{\tau} ;\  (\gamma\otimes 1)x=\big(1+\tau_D+\cdots +\tau_D^{\chi(g)-1}\big)(x)  \big\}.\]
\end{notation}

\subsection{Locally analytic vectors with partially unperfected coefficients}

\begin{definition}($\cf$ \cite[\S 1.1]{Poy21})
	We define for $r> 0,$ $\widetilde{\BB}_u^{\dagger, r}=\widetilde{\BB}^{\dagger, r}\cap\, \widetilde{\BB}_u$\index{$\widetilde{\BB}_u^{\dagger, r}$} and we denote $\widetilde{\BB}_u^{\dagger}=\bigcup\limits_{r>0} \widetilde{\BB}_u^{\dagger, r}\subset\widetilde{\BB}_u$\index{$\widetilde{\BB}_u^{\dagger}$} the set of overconvergent elements (we have $r_1\leq r_2\Rightarrow\widetilde{\BB}_u^{\dagger,r_1}\subset\widetilde{\BB}_u^{\dagger,r_2}$). We put $\BB^\dagger_u=\BB\cap\widetilde{\BB}_u^{\dagger}.$\index{$\BB^\dagger_u$}
\end{definition}

\begin{remark}\label{rem dagger has a GK action}
	The subrings $\widetilde{\BB}_u^{\dagger,r}$ and $\widetilde{\BB}_u^\dagger$ of $\widetilde{\BB}_u$ are stable by $\mathscr{G}_K,$ and $\varphi(\widetilde{\BB}_u^{\dagger,r})\subset\widetilde{\BB}_u^{\dagger,pr}$ for all $r>0,$ so that $\widetilde{\BB}_u^\dagger$ is stable by $\varphi$ in $\widetilde{\BB}_u.$ Remark that $\widetilde{\BB}_u^\dagger$ and $\BB^\dagger_u$ are fields ($\cf$ \cite[Proposition 3.2]{Mat95}).
\end{remark}

Recall that (\cf remark \ref{rem 3.1.16 e e u tau}) if $V$ is a $\QQ_p$-representation of $\mathscr{G}_K,$ then its associated $(\varphi, \tau)$-module over $(\BB_{K_{\pi}}, \widetilde{\BB}_{u, L})$ $(\ie  (\Ee, \Ee_{u,\tau}))$ is the $\varphi$-module $\mathcal{D}(V)=(\BB\otimes_{\QQ_p}V)^{\mathscr{G}_{K_{\pi}}}$ together with a $\tau$-semi-linear action over $\mathcal{D}(V)_{u,\tau}=\widetilde{\BB}_{u, L}\otimes_{\BB_{K_{\pi}}}\mathcal{D}(V).$

\begin{definition}($\cf$ \cite[D\'efinition 4.1.16]{Poy19})
	
	For $r>0,$ we define 
	\[\mathcal{D}^{\dagger, r}(V)_{u,\tau}=(\widetilde{\BB}_u^{\dagger, r}\otimes_{\QQ_p}V)^{\mathscr{G}_L}\]\index{$\mathcal{D}^{\dagger, r}(V)_{u,\tau}$}
	and 
	\[\mathcal{D}^{\dagger,r}(V)=\mathcal{D}(V)\cap \mathcal{D}^{\dagger,r}(V)_{u,\tau}=(\mathbf{B}^{\dagger,r}\otimes_{\QQ_p}V)^{\mathscr{G}_{K_\pi}},\]\index{$\mathcal{D}^{\dagger,r}(V)$}
	observing that $\mathbf{B}^{\dagger,r}=\widetilde{\mathbf{B}}_u^{\dagger,r}\cap\BB.$\index{$\mathbf{B}^{\dagger,r}$} We put \[\mathcal{D}^{\dagger}(V)_{u,\tau}=(\widetilde{\BB}_u^{\dagger}\otimes_{\QQ_p}V)^{\mathscr{G}_L}=\bigcup\limits_{r>0}\mathcal{D}^{\dagger,r}(V)_{u, \tau}\]\index{$\mathcal{D}^{\dagger}(V)_{u,\tau}$}
	and 
	\[\mathcal{D}^{\dagger}(V)=(\BB^\dagger\otimes_{\QQ_p}V)^{\mathscr{G}_{K_\pi}}=\mathcal{D}(V)\cap \mathcal{D}^\dagger(V)_{u,\tau}=\bigcup\limits_{r>0}\mathcal{D}^{\dagger,r}(V).\]\index{$\mathcal{D}^{\dagger}(V)$}

	We say that a $(\varphi, \tau)$-module $D$ associated to $V$ is \emph{overconvergent} if there exists $r>0$ such that we have
	\[ \mathcal{D}(V)=\BB_{K_{\pi}}\otimes_{\BB_{K_{\pi}}^{\dagger, r}}\mathcal{D}^{\dagger, r}(V) \]
	and 
	\[ \mathcal{D}(V)_{u,\tau}= \widetilde{\BB}_{u, L}\otimes_{\widetilde{\BB}_{u, L}^{\dagger, r}} \mathcal{D}^{\dagger, r}(V)_{u,\tau} \]
	where $\widetilde{\BB}_{u, L}^{\dagger, r}=\widetilde{\BB}_{u, L}\cap \widetilde{\BB}_u^{\dagger, r}.$\index{$\widetilde{\BB}_{u, L}^{\dagger, r}$} 
	
	\medskip
	
	When this holds, we have in particular 
	\[\mathcal{D}(V)=\BB_{K_{\pi}}\otimes_{\BB_{K_{\pi}}^\dagger}\mathcal{D}^\dagger(V)\] and
	\[\mathcal{D}(V)_{u,\tau}=\widetilde{\BB}_{u, L}\otimes_{\widetilde{\BB}_{u, L}^{\dagger}}\mathcal{D}^\dagger(V)_{u,\tau},\]
	where $\widetilde{\BB}_{u, L}^{\dagger}=\bigcup\limits_{r>0}\widetilde{\BB}_{u, L}^{\dagger, r}.$\index{$\widetilde{\BB}_{u, L}^{\dagger}$} 
\end{definition}

\begin{remark}
	(1) We have $\mathcal{D}^{\dagger}(V)_{u,\tau} = (\widetilde{\BB}_u^{\dagger}\otimes_{\QQ_p} V)^{\mathscr{G}_L}\simeq \widetilde{\BB}_{u, L}^{\dagger}\otimes_{\BB_{K_{\pi}}^{\dagger}}\mathcal{D}^{\dagger}(V).$
	\item (2) Notice that for any $r>0$ we have:
	$$\mathbf{B}^{\dagger,r}=\widetilde{\mathbf{B}}_u^{\dagger,r}\cap\BB=\widetilde{\mathbf{B}}^{\dagger,r}\cap\BB,$$
	and hence $$\mathbf{B}^{\dagger}=\widetilde{\mathbf{B}}_u^{\dagger}\cap\BB=\widetilde{\mathbf{B}}^{\dagger}\cap\BB.$$
	This is simply because we have
	\[ \BB\subset \widetilde{\mathbf{B}}_u \subset \widetilde{\mathbf{B}}. \]
\end{remark}

\begin{definition}\label{def non-perfect phi tau}
	
	A \emph{$(\varphi, \tau)$-module over} $(\BB_{K_{\pi}}^\dagger,\widetilde{\BB}_{u, L}^\dagger)=:(\Ee^{\dagger}, \Ee_{u, \tau}^{\dagger})$ consists of
	\item (i) an \'etale $\varphi$-module $D^{\dagger}$ over $\BB_{K_{\pi}}^\dagger$;
	\item (ii) a $\tau$-semi-linear endomorphism $\tau_D$ on $D^{\dagger}_{u,\tau}:=\widetilde{\BB}_{u, L}^\dagger\otimes_{\BB_{K_{\pi}}^\dagger}D^{\dagger}$ which commutes with $\varphi_{\widetilde{\BB}_{u,L}^\dagger}\otimes \varphi_{D^{\dagger}}$ (where $\varphi_{\widetilde{\BB}_{u,L}^\dagger}$ is the Frobenius map on $\widetilde{\BB}_{u,L}^\dagger$ and $\varphi_{D^{\dagger}}$ is the Frobenius map on $D^{\dagger}$) and which satisfies
	\[ (\forall x\in D^{\dagger})\ (g\otimes 1)\circ \tau_D(x) = \tau_D^{\chi(g)}(x),  \]
	for all $g\in \mathscr{G}_{K_{\pi}}/\mathscr{G}_L$ such that $\chi(g)\in \ZZ_{>0}.$ The corresponding category is denoted $\Mod_{{\BB_{K_{\pi}}^\dagger,\widetilde{\BB}_{u, L}^\dagger}}(\varphi,\tau).$\index{$\Mod_{{\BB_{K_{\pi}}^\dagger,\widetilde{\BB}_{u, L}^\dagger}}(\varphi,\tau)$}
	
	\medskip
	
	One defines similarly the notion of $(\varphi,\tau)$-module over $(\AA_{K_{\pi}}^\dagger,\widetilde{\AA}_{u,L}^\dagger)=:(\Oo_{\Ee^{\dagger}}, \Oo_{\Ee_{u, \tau}^{\dagger}}),$ and the corresponding category is denoted $\Mod_{\AA_{K}^\dagger,\widetilde{\AA}_{u, L}^\dagger}(\varphi,\tau).$\index{$\Mod_{\AA_{K}^\dagger,\widetilde{\AA}_{u, L}^\dagger}(\varphi,\tau)$} 
	
\end{definition}

\subsection{The overconvergence for \texorpdfstring{$(\varphi, \tau)$}{(varphi, tau)}-modules over partially unperfected coefficients}

For $(\varphi, \tau)$-modules over $(\Ee, \Ee_{\tau}),$ we have seen the overconvergence result in theorem \ref{thm surconvergence phi-tau version}. In this subsection we will prove the overconvergence result for $(\varphi, \tau)$-modules over $(\BB_{K_{\pi}},\widetilde{\BB}_{u,L}) (\ie (\Ee, \Ee_{u,\tau})).$

\begin{theorem}
	\label{thm overconvergence with non-perfect rings}
	Let $V\in \Rep_{\QQ_p}(\mathscr{G}_K),$ then its $(\varphi, \tau)$-module $(D, D_{u, \tau})\in \Mod_{\BB_{K_{\pi}},\widetilde{\BB}_{u,L}}(\varphi,\tau)$ is overconvergent.
\end{theorem}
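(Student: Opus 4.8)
The plan is to deduce the overconvergence over the partially unperfected coefficients $(\BB_{K_{\pi}},\widetilde{\BB}_{u,L})$ from the already established overconvergence over $(\BB_{K_{\pi}},\widetilde{\BB}_{L})$ of Theorem \ref{thm surconvergence phi-tau version}. The key observation is that we have a chain of inclusions $\BB_{K_{\pi}}^{\dagger}\subset\widetilde{\BB}_{u,L}^{\dagger}\subset\widetilde{\BB}_{L}^{\dagger}$ (and similarly without daggers), together with the compatibility $\mathbf{B}^{\dagger,r}=\widetilde{\mathbf{B}}_u^{\dagger,r}\cap\BB=\widetilde{\mathbf{B}}^{\dagger,r}\cap\BB$ noted in the remark preceding the statement. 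Hence for any $V\in\Rep_{\QQ_p}(\mathscr{G}_K)$ the $\varphi$-module part $\mathcal{D}^{\dagger}(V)=(\mathbf{B}^{\dagger}\otimes_{\QQ_p}V)^{\mathscr{G}_{K_\pi}}$ is literally the same object whether we regard $V$ as giving a $(\varphi,\tau)$-module over $(\BB_{K_{\pi}},\widetilde{\BB}_{L})$ or over $(\BB_{K_{\pi}},\widetilde{\BB}_{u,L})$, and Theorem \ref{thm surconvergence phi-tau version} already gives $r>0$ with $\mathcal{D}(V)=\BB_{K_{\pi}}\otimes_{\BB_{K_{\pi}}^{\dagger,r}}\mathcal{D}^{\dagger,r}(V)$. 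So the only thing left to prove is the statement concerning the $\tau$-part, namely that (after possibly enlarging $r$)
\[ \mathcal{D}(V)_{u,\tau}=\widetilde{\BB}_{u,L}\otimes_{\widetilde{\BB}_{u,L}^{\dagger,r}}\mathcal{D}^{\dagger,r}(V)_{u,\tau}. \]

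The first step would be to fix a basis $(e_1,\ldots,e_d)$ of the free $\BB_{K_{\pi}}^{\dagger,r}$-module $\mathcal{D}^{\dagger,r}(V)$ (free by Poyeton's result, as recalled in the proof of Proposition \ref{prop Zp surconvergence}), which is then simultaneously a $\BB_{K_{\pi}}$-basis of $\mathcal{D}(V)$, a $\widetilde{\BB}_{L}$-basis of $\mathcal{D}(V)_{\tau}$ and a $\widetilde{\BB}_{u,L}$-basis of $\mathcal{D}(V)_{u,\tau}$. Under this basis the matrix of $\tau_D$ has entries in $\widetilde{\BB}_{L}$; the overconvergence over $(\BB_{K_{\pi}},\widetilde{\BB}_{L})$ tells us, after enlarging $r$, that these entries lie in $\widetilde{\BB}_{L}^{\dagger,r}$, $\ie$ in $\widetilde{\BB}_{L}\cap\widetilde{\BB}^{\dagger,r}$. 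The point is then that they also lie in $\widetilde{\BB}_{u,L}$: indeed $\tau_D$ is $\tau$-semilinear over $\widetilde{\BB}_{u,L}$ by the very definition of $\mathcal{D}(V)_{u,\tau}$ as an object of $\Mod_{\BB_{K_{\pi}},\widetilde{\BB}_{u,L}}(\varphi,\tau)$, so the matrix of $\tau_D$ in a basis coming from $\mathcal{D}(V)$ necessarily has entries in $\widetilde{\BB}_{u,L}$. Combining, the entries lie in $\widetilde{\BB}_{u,L}\cap\widetilde{\BB}^{\dagger,r}=\widetilde{\BB}_{u,L}^{\dagger,r}$ (here using $\widetilde{\BB}_u^{\dagger,r}=\widetilde{\BB}^{\dagger,r}\cap\widetilde{\BB}_u$ and $\widetilde{\BB}_{u,L}^{\dagger,r}=\widetilde{\BB}_{u,L}\cap\widetilde{\BB}_u^{\dagger,r}$). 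Thus $\tau_D$ preserves the $\widetilde{\BB}_{u,L}^{\dagger,r}$-lattice $\widetilde{\BB}_{u,L}^{\dagger,r}\otimes_{\BB_{K_{\pi}}^{\dagger,r}}\mathcal{D}^{\dagger,r}(V)$ inside $\mathcal{D}(V)_{u,\tau}$, and this lattice is exactly $\mathcal{D}^{\dagger,r}(V)_{u,\tau}$ once one checks the latter equals $\widetilde{\BB}_{u,L}^{\dagger,r}\otimes_{\BB_{K_{\pi}}^{\dagger,r}}\mathcal{D}^{\dagger,r}(V)$.

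That last identification is where I would spend the most care. One has by definition $\mathcal{D}^{\dagger,r}(V)_{u,\tau}=(\widetilde{\BB}_u^{\dagger,r}\otimes_{\QQ_p}V)^{\mathscr{G}_L}$, and one wants to compare it with $\widetilde{\BB}_{u,L}^{\dagger,r}\otimes_{\BB_{K_{\pi}}^{\dagger,r}}\mathcal{D}^{\dagger,r}(V)$. The containment $\supseteq$ is clear. For $\subseteq$, given $x\in(\widetilde{\BB}_u^{\dagger,r}\otimes_{\QQ_p}V)^{\mathscr{G}_L}$, it lies in $\mathcal{D}(V)_{u,\tau}=\widetilde{\BB}_{u,L}\otimes\mathcal{D}(V)$, so write $x=\sum_i \lambda_i\otimes e_i$ with $\lambda_i\in\widetilde{\BB}_{u,L}$; one must show $\lambda_i\in\widetilde{\BB}_{u,L}^{\dagger,r}$ (possibly after enlarging $r$). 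This is the analogue, in the partially unperfected setting, of the corresponding descent lemma in Poyeton's work, and I expect it to follow the same pattern: use that $(e_i)$ also forms a basis of $\widetilde{\BB}_u^{\dagger,r'}\otimes_{\mathbf{B}^{\dagger,r'}}\mathcal{D}^{\dagger,r'}(V)=(\widetilde{\BB}_u^{\dagger,r'}\otimes V)^{\mathscr{G}_L}$ for $r'$ large (which itself needs an argument: that $\widetilde{\BB}_u$ is "big enough" relative to $\mathbf{B}$ for the standard dévissage, exactly as $\widetilde{\BB}$ is), extract the $\lambda_i$ by a Cramer/linear-algebra argument from the overconvergence already known over $\widetilde{\BB}_{L}$, and intersect. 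The main obstacle is thus purely this: transporting the overconvergence of the change-of-basis matrix between $\mathcal{D}(V)$ and $(\widetilde{\BB}_u\otimes V)^{\mathscr{G}_L}$ — which is available over the perfect $\widetilde{\BB}_{L}$ thanks to Poyeton — down to the unperfected $\widetilde{\BB}_{u,L}$, using only that $\widetilde{\BB}_{u,L}=\widetilde{\BB}_{L}\cap\widetilde{\BB}_u$ inside $\widetilde{\BB}$ and the corresponding compatibility at finite radius $r$. Once this lemma is in place the theorem follows formally, and one gets as a consequence the analogue of Theorem \ref{prop equivalence cat overconvergent: normal case} for $\Mod_{\BB_{K_{\pi}}^{\dagger},\widetilde{\BB}_{u,L}^{\dagger}}(\varphi,\tau)$ and its integral counterpart.
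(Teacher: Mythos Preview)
Your reduction is correct: the $\varphi$-module condition is immediate from Theorem~\ref{thm surconvergence phi-tau version}, and everything comes down to the identification $\mathcal{D}^{\dagger}(V)_{u,\tau}\simeq\widetilde{\BB}_{u,L}^{\dagger}\otimes_{\BB_{K_{\pi}}^{\dagger}}\mathcal{D}^{\dagger}(V)$. Your intersection idea for this can be made to work, but the outline you wrote became circular: you propose using that $(e_i)$ is already a basis of $(\widetilde{\BB}_u^{\dagger,r'}\otimes V)^{\mathscr{G}_L}$ over $\widetilde{\BB}_{u,L}^{\dagger,r'}$, which is precisely the statement in question. The fix is to invoke the \emph{perfect} analogue instead: for suitable $r$, $(e_i)$ is a $\widetilde{\BB}_{L}^{\dagger,r}$-basis of $(\widetilde{\BB}^{\dagger,r}\otimes V)^{\mathscr{G}_L}=\mathcal{D}^{\dagger,r}(V)_{\tau}$, and \emph{that} is Theorem~\ref{thm surconvergence phi-tau version}. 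Given $x\in(\widetilde{\BB}_u^{\dagger,r}\otimes V)^{\mathscr{G}_L}$, read its coordinates with respect to $(e_i)$ once inside $\mathcal{D}(V)_{u,\tau}$ (giving $\lambda_i\in\widetilde{\BB}_{u,L}$) and once inside $\mathcal{D}^{\dagger,r}(V)_{\tau}$ (giving $\lambda_i\in\widetilde{\BB}_{L}^{\dagger,r}$); uniqueness of coordinates in the ambient $\widetilde{\BB}_{L}\otimes_{\BB_{K_\pi}}\mathcal{D}(V)$ together with $\widetilde{\BB}_{u,L}\cap\widetilde{\BB}_{L}^{\dagger,r}=\widetilde{\BB}_{u,L}^{\dagger,r}$ finishes it. The preliminary paragraph about the matrix of $\tau_D$ is not needed for this.

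The paper takes a different, basis-free route. It rephrases the target identification as the assertion that the natural map $f\colon\Ee_{u,\tau}^{\dagger}\otimes_{\Ee^{\dagger}}D^{\dagger}\to D_{u,\tau}^{\dagger}$ is an isomorphism, and then checks this \emph{after} base-change along the faithfully flat field extension $\Ee_{u,\tau}^{\dagger}\hookrightarrow\Ee_{\tau}^{\dagger}$. Over $\Ee_{\tau}^{\dagger}$ the source identifies with $D_{\tau}^{\dagger}$ by Theorem~\ref{thm surconvergence phi-tau version}, the target maps naturally to $D_{\tau}^{\dagger}$, and a one-line dimension comparison of $\Ee_{\tau}^{\dagger}$-vector spaces forces $\Ee_{\tau}^{\dagger}\otimes f$ to be an isomorphism. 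This avoids any explicit ring-intersection bookkeeping; your approach, once the circular step is replaced as above, is more hands-on but equally valid.
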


\begin{proof}
	Let $(D, D_{u,\tau})\in \Mod_{\Ee, \Ee_{u,\tau}}(\varphi, \tau)$ be the $(\varphi, \tau)$-module over $(\Ee, \Ee_{u,\tau})$ associated to $V\in \Rep_{\QQ_p}(\mathscr{G}_K).$ By definition, it suffices to show that 	
	\begin{equation}\label{equ surconvergence varphi}
	D=\Ee\otimes_{\Ee^{\dagger}}D^{\dagger}
	\end{equation} 
	and 
	\begin{equation}\label{equ surconvergence varphi tau}
	D_{u,\tau}= \Ee_{u, \tau}\otimes_{\Ee_{u, \tau}^{\dagger}} D^{\dagger}_{u, \tau}.
	\end{equation}
	
	The first equality (\ref{equ surconvergence varphi}) follows directly from theorem \ref{thm surconvergence phi-tau version}. For the second equality (\ref{equ surconvergence varphi tau}), it suffices to show that the following map is an isomorphism
	\begin{equation}\label{equ surconvergence f}
	f \colon \Ee^{\dagger}_{u,\tau}\otimes_{\Ee^{\dagger}} D^{\dagger}  \to D^{\dagger}_{u, \tau}.
	\end{equation}
	Indeed, this impies that 
	\[ \Ee_{u,\tau}\otimes_{\Ee^{\dagger}}D^{\dagger} \simeq \Ee_{u,\tau}\otimes_{\Ee_{u,\tau}^{\dagger}}D_{u,\tau}^{\dagger}, \]
	$\ie$ 
	\[ \Ee_{u,\tau}\otimes_{\Ee}(\Ee\otimes_{\Ee^{\dagger}}D^{\dagger}) \simeq \Ee_{u,\tau}\otimes_{\Ee_{u,\tau}^{\dagger}}D_{u,\tau}^{\dagger}. \]
	By equality (\ref{equ surconvergence varphi}) the left hand side is exactly $\Ee_{u,\tau}\otimes_{\Ee}D=D_{u,\tau},$ hence this gives equality (\ref{equ surconvergence varphi tau}).
	To prove that (\ref{equ surconvergence f}) is an isomorphism, it suffices to prove it is so after tensoring $\Ee^{\dagger}_{\tau}\otimes_{\Ee^{\dagger}_{u,\tau}}, \ie$that
	\[\Ee^{\dagger}_{\tau}\otimes_{\Ee^{\dagger}_{u,\tau}} f\colon \Ee^{\dagger}_{\tau}\otimes_{\Ee^{\dagger}} D^{\dagger}\to \Ee_{\tau}^{\dagger}\otimes_{\Ee^{\dagger}_{u,\tau}}D^{\dagger}_{u, \tau} \] is an isomorphic. We consider the commutative diagram:
	
	\[\xymatrix{ \Ee^{\dagger}_{\tau}\otimes_{\Ee^{\dagger}} D^{\dagger} \ar[rr]^{\Ee^{\dagger}_{\tau}\otimes f}\ar[rd]_{\text{ theorem } \ref{thm surconvergence phi-tau version}}^{\simeq} && \Ee_{\tau}^{\dagger}\otimes_{\Ee^{\dagger}_{u,\tau}}D^{\dagger}_{u, \tau}\ar[ld] \\
		&D_{\tau}^{\dagger}&	}	\]
	Notice that this is a digram of linear-transformations of $\Ee^{\dagger}_{\tau}$-vector spaces: it suffices to use dimensions. Notice that by theorem \ref{thm surconvergence phi-tau version} the arrow on the left is an isomorphism, and
	\[\dim \Ee_{\tau}^{\dagger}\otimes_{\Ee^{\dagger}_{u,\tau}}D^{\dagger}_{u, \tau}\leq \dim \Ee^{\dagger}_{\tau}\otimes D^{\dagger}. \] We hence conclude that $\Ee^{\dagger}_{\tau}\otimes f$ is an isomoprhism.
\end{proof}

\begin{proposition}
	Let $T\in \Rep_{\ZZ_p}(\mathscr{G}_K)$, then its $(\varphi, \tau)$-module $(D, D_{u,\tau})\in \Mod_{\AA_{K_{\pi}},\widetilde{\AA}_{u, L}}(\varphi,\tau)$ is overconvergent.
\end{proposition}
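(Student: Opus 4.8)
The strategy is to mimic the argument for the $\QQ_p$-case (Theorem \ref{thm overconvergence with non-perfect rings}), combined with the $p$-torsion-free $\leftrightarrow$ torsion d\'evissage already used in Proposition \ref{prop Zp surconvergence}. First I would reduce to the two equalities that constitute overconvergence: for $(D,D_{u,\tau})$ the $(\varphi,\tau)$-module of $T$ over $(\AA_{K_\pi},\widetilde{\AA}_{u,L})$, it suffices to show that there is $r>0$ with
\[ D=\AA_{K_\pi}\otimes_{\AA_{K_\pi}^{\dagger,r}}\mathcal{D}^{\dagger,r}(T),\qquad D_{u,\tau}=\widetilde{\AA}_{u,L}\otimes_{\widetilde{\AA}_{u,L}^{\dagger,r}}\mathcal{D}^{\dagger,r}(T)_{u,\tau}. \]
The first equality is exactly the overconvergence of the $\varphi$-module $\mathcal{D}^{\dagger}(T)$, which is already established in Proposition \ref{prop Zp surconvergence} (it only concerns the pair $(\mathcal{D}(T),\mathcal{D}^\dagger(T))$ and does not involve $\tau$ or the perfect/imperfect distinction). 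So the whole content is the second equality, involving the partially unperfected coefficients $\widetilde{\AA}_{u,L}$.

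Next I would treat the $p$-torsion-free case. As in Proposition \ref{prop Zp surconvergence}(1), the natural map
\[ f\colon \AA_{K_\pi}^\dagger\text{-module version}\colon \widetilde{\AA}_{u,L}^{\dagger}\otimes_{\AA_{K_\pi}^{\dagger}}\mathcal{D}^{\dagger}(T)\longrightarrow \mathcal{D}^{\dagger}(T)_{u,\tau}=(\widetilde{\AA}_u^{\dagger}\otimes_{\ZZ_p}T)^{\mathscr{G}_L} \]
becomes an isomorphism after inverting $p$ by Theorem \ref{thm overconvergence with non-perfect rings}, hence is injective with cokernel a finitely generated torsion $\widetilde{\AA}_{u,L}^{\dagger}$-module of the form $\bigoplus_i \widetilde{\AA}_{u,L}^{\dagger}/(p^{n_i})$ (here one uses that $\widetilde{\AA}_{u,L}^{\dagger}=\Oo_{\Ee_{u,\tau}^{\dagger}}$ is a suitable coefficient ring and $\mathcal{D}^{\dagger}(T)$ is free of finite rank over $\AA_{K_\pi}^{\dagger}$, by the torsion-free part of Proposition \ref{prop Zp surconvergence}). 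Reducing mod $p$ and using the commutative diagram with rows $0\to \bullet\xrightarrow{p}\bullet\to \bullet/p\to 0$ exactly as in Proposition \ref{prop Zp surconvergence}(1) — together with the fact, from Proposition \ref{prop Zp surconvergence} applied to $T/pT$, that the corresponding map for $T/pT$ is already an isomorphism — shows multiplication by $p$ is bijective on $\Coker(f)$, forcing all $n_i=0$, i.e. $f$ is an isomorphism. Then tensoring up to $\widetilde{\AA}_{u,L}$ gives $D_{u,\tau}=\widetilde{\AA}_{u,L}\otimes_{\widetilde{\AA}_{u,L}^{\dagger}}\mathcal{D}^{\dagger}(T)_{u,\tau}$, as wanted.

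For a general $T\in\Rep_{\ZZ_p}(\mathscr{G}_K)$, I would use the short exact sequence $0\to T'\to T\to T''\to 0$ with $T'$ the $p$-torsion submodule and $T''$ torsion-free, noting $\mathcal{D}^{\dagger}(T')=\mathcal{D}(T')$ since a torsion module is trivially overconvergent. Applying $(\widetilde{\AA}_u^{\dagger}\otimes_{\ZZ_p}-)^{\mathscr{G}_L}$ and using the vanishing of $\H^1(\mathscr{G}_L,\widetilde{\AA}_u\otimes_{\ZZ_p}T')$ (the partially unperfected analogue of Corollary \ref{lemmcoho0 Zp non-perfect case}, i.e. Hilbert 90 over $C^\flat_{u\text{-}\np}$, cf.\ Lemma \ref{lemmcoho0 Zp tor non-perfect case}) yields a short exact sequence $0\to \mathcal{D}(T')_{u,\tau}\to \mathcal{D}^{\dagger}(T)_{u,\tau}\to \mathcal{D}^{\dagger}(T'')_{u,\tau}\to 0$; the five lemma applied to the comparison diagram with the analogous sequence for $D_{u,\tau}$ (whose exactness is Corollary \ref{coro D u tau Zp case}) reduces the claim to the torsion and torsion-free cases, which are settled. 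The main obstacle is bookkeeping rather than conceptual: one must make sure the ``partially unperfected'' overconvergent ring $\widetilde{\AA}_{u,L}^{\dagger}$ genuinely behaves like a henselian/complete discrete-valuation-type coefficient ring so that the cokernel of $f$ really is of the stated shape $\bigoplus_i \widetilde{\AA}_{u,L}^{\dagger}/(p^{n_i})$ and Nakayama/the mod-$p$ argument applies; this is exactly where one invokes that $\mathcal{D}^{\dagger}(T)$ is finite free over $\AA_{K_\pi}^{\dagger}$ and the faithful flatness of $\widetilde{\AA}_{u,L}$ over $\widetilde{\AA}_{u,L}^{\dagger}$, parallel to the use of \cite[Theorem 7.2 (3)]{Mat89} in the $\QQ_p$-case.
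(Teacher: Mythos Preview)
Your proposal is correct and follows the same d\'evissage strategy the paper intends (the paper's own proof is the single line ``A similar proof as that of proposition~\ref{prop Zp surconvergence} works''). One remark: you work harder than necessary on the $D_{u,\tau}$ part. In the paper's treatment of Proposition~\ref{prop Zp surconvergence}, once the $\varphi$-module equality $D=\AA_{K_\pi}\otimes_{\AA_{K_\pi}^\dagger}\mathcal{D}^\dagger(T)$ is established, the $\tau$-side follows \emph{formally} by tensoring, since $D_{u,\tau}=\widetilde{\AA}_{u,L}\otimes_{\AA_{K_\pi}}D$ and one takes $\mathcal{D}^\dagger(T)_{u,\tau}=\widetilde{\AA}_{u,L}^\dagger\otimes_{\AA_{K_\pi}^\dagger}\mathcal{D}^\dagger(T)$; no separate Nakayama argument at the $\dagger$-level is needed. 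Your route instead proves that the natural map $f\colon\widetilde{\AA}_{u,L}^\dagger\otimes_{\AA_{K_\pi}^\dagger}\mathcal{D}^\dagger(T)\to(\widetilde{\AA}_u^\dagger\otimes_{\ZZ_p}T)^{\mathscr{G}_L}$ is an isomorphism, which is a slightly stronger statement; to make the cokernel argument rigorous you should note that the target is finitely generated over $\widetilde{\AA}_{u,L}^\dagger$ --- this follows because any element lies in $D_{u,\tau}$ and in $\widetilde{\BB}_{u,L}^\dagger\otimes\mathcal{D}^\dagger(T)$ simultaneously, so its coordinates in a basis of $\mathcal{D}^\dagger(T)$ lie in $\widetilde{\AA}_{u,L}\cap\widetilde{\BB}_{u,L}^\dagger=\widetilde{\AA}_{u,L}^\dagger$ (in fact this already shows $f$ is surjective, bypassing Nakayama).
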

\begin{proof}
	A similar proof as that of proposition \ref{prop Zp surconvergence} works.
\end{proof}

\begin{remark}
	One might also prove proposition \ref{prop Zp surconvergence} using similar arguments as Gao and Poyeton ($\cf$ \cite{GP21}), based on Tate-Sen's method ($\cf$ \cite[\S 3]{BC08}).
\end{remark}

\subsection{The categorical equivalence}

\begin{proposition}\label{prop equivalence cat}
	The functors 	
	\begin{align*}
	\mathcal{D}^\dagger\colon\Rep_{\QQ_p}(\mathscr{G}_K)&\simeq\Mod_{\BB_{K_{\pi}}^\dagger,\widetilde{\BB}_{u, L}^\dagger}(\varphi,\tau)	\\
	V&\mapsto \mathcal{D}^{\dagger}(V)=(\BB^\dagger\otimes_{\QQ_p}V)^{\mathscr{G}_{K_\pi}}\\
	\mathcal{V}(D^{\dagger})=(\BB^{\dagger} \otimes_{\BB_{K_{\pi}}^\dagger} D^{\dagger})^{\varphi=1}   & \mapsfrom D^{\dagger}
	\end{align*}
	$($with the $\tau$-semi-linear endomorphism $\tau_D=\tau\otimes \tau$ over $\mathcal{D}^{\dagger}(V)_{u,\tau}=(\widetilde{\BB}_u^{\dagger}\otimes_{\QQ_p}V)^{\mathscr{G}_L})$ establish quasi-inverse equivalences of categories, which refines the equivalence of \cite[Th\'eor\`eme 1.14]{Car13}. 
\end{proposition}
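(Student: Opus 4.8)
The plan is to mimic the proof of Theorem \ref{prop equivalence cat overconvergent: normal case}, replacing the ring $\widetilde{\BB}_L^\dagger$ by its partially unperfected analogue $\widetilde{\BB}_{u,L}^\dagger$, and using Theorem \ref{thm overconvergence with non-perfect rings} in place of Theorem \ref{thm surconvergence phi-tau version}. The three things to check are: (1) the two functors are well-defined; (2) full faithfulness; (3) essential surjectivity.

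First I would note that $\mathcal{D}^\dagger$ is well-defined as a functor into $\Mod_{\BB_{K_{\pi}}^\dagger,\widetilde{\BB}_{u, L}^\dagger}(\varphi,\tau)$: given $V\in\Rep_{\QQ_p}(\mathscr{G}_K)$, the underlying \'etale $\varphi$-module $\mathcal{D}^\dagger(V)=(\BB^\dagger\otimes_{\QQ_p}V)^{\mathscr{G}_{K_\pi}}$ over $\BB_{K_{\pi}}^\dagger$ is the same one appearing in Theorem \ref{prop equivalence cat overconvergent: normal case} (since $\mathbf{B}^{\dagger}=\widetilde{\mathbf{B}}_u^{\dagger}\cap\BB=\widetilde{\mathbf{B}}^{\dagger}\cap\BB$, as remarked just before Definition \ref{def non-perfect phi tau}); the extra data is the $\tau$-action $\tau\otimes\tau$ on $\mathcal{D}^\dagger(V)_{u,\tau}=(\widetilde{\BB}_u^{\dagger}\otimes_{\QQ_p}V)^{\mathscr{G}_L}$, which makes sense because $\widetilde{\BB}_u^\dagger$ is stable under $\mathscr{G}_K$ (Remark \ref{rem dagger has a GK action}), and the cocycle relation $(g\otimes1)\circ\tau_D=\tau_D^{\chi(g)}$ for $g\in\mathscr{G}_{K_\pi}/\mathscr{G}_L$ with $\chi(g)\in\ZZ_{>0}$ is inherited from the corresponding relation on $\W(C^\flat)\otimes V$ exactly as in the proof of Theorem \ref{thm equivalence of cats}. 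In the other direction $\mathcal{V}(D^\dagger)=(\BB^\dagger\otimes_{\BB_{K_{\pi}}^\dagger}D^\dagger)^{\varphi=1}$ recovers a $\QQ_p$-representation; that $\mathcal{V}$ and $\mathcal{D}^\dagger$ are quasi-inverse at the level of $\varphi$-modules is already part of Theorem \ref{prop equivalence cat overconvergent: normal case}, so one only needs to track that the $\tau$-structures match.

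For full faithfulness, I would run the same diagram chase as in the proof of Theorem \ref{prop equivalence cat overconvergent: normal case}: using the equivalence $\mathcal{D}\colon\Rep_{\QQ_p}(\mathscr{G}_K)\simeq\Mod_{\Ee,\Ee_{u,\tau}}(\varphi,\tau)$ (Remark \ref{rem 3.1.16 e e u tau}) together with the overconvergence Theorem \ref{thm overconvergence with non-perfect rings}, one gets
\[
\Hom_{\Rep_{\QQ_p}}(V_1,V_2)\xrightarrow{\ \sim\ }\Hom_{\Mod_{\Ee,\Ee_{u,\tau}}(\varphi,\tau)}(\BB_{K_{\pi}}\otimes_{\BB_{K_{\pi}}^\dagger}\mathcal{D}^\dagger(V_1),\ \BB_{K_{\pi}}\otimes_{\BB_{K_{\pi}}^\dagger}\mathcal{D}^\dagger(V_2)),
\]
which factors through $\Hom_{\Mod_{\BB_{K_{\pi}}^\dagger,\widetilde{\BB}_{u,L}^\dagger}(\varphi,\tau)}(\mathcal{D}^\dagger(V_1),\mathcal{D}^\dagger(V_2))$, and injectivity of base change $-\otimes\BB_{K_{\pi}}$ (resp. $-\otimes\widetilde{\BB}_{u,L}$) on morphisms of overconvergent modules follows because $\mathcal{D}^\dagger(V)$ is a lattice-like object sitting inside $\BB_{K_{\pi}}\otimes_{\BB_{K_{\pi}}^\dagger}\mathcal{D}^\dagger(V)$ (the square with vertical inclusions commutes, so $1\otimes f=0\Rightarrow f=0$). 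Essential surjectivity is then handled as before: given $(D^\dagger,D^\dagger_{u,\tau})$, base-change to $(\Ee,\Ee_{u,\tau})$, invoke the equivalence $\mathcal{D}$ to produce $V$, and then use full faithfulness (the isomorphism $(\ast)$) to identify $\mathcal{D}^\dagger(V)$ with $(D^\dagger,D^\dagger_{u,\tau})$.

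The main obstacle I anticipate is purely bookkeeping: making sure that the $\tau$-semilinear endomorphism transported along the various tensor products is genuinely the one appearing in Definition \ref{def non-perfect phi tau}, i.e. that $\tau_D$ on $D^\dagger_{u,\tau}=\widetilde{\BB}_{u,L}^\dagger\otimes_{\BB_{K_{\pi}}^\dagger}D^\dagger$ agrees with $\tau\otimes\tau$ on $(\widetilde{\BB}_u^\dagger\otimes_{\QQ_p}V)^{\mathscr{G}_L}$ under the identification $\mathcal{D}^\dagger(V)_{u,\tau}\simeq\widetilde{\BB}_{u,L}^\dagger\otimes_{\BB_{K_{\pi}}^\dagger}\mathcal{D}^\dagger(V)$; this is the analogue of Lemma \ref{lemm iso for D-tau}/Lemma \ref{lemm tau action natural} in the overconvergent unperfected setting and requires knowing that the natural map $\widetilde{\BB}_{u,L}^\dagger\otimes_{\BB_{K_{\pi}}^\dagger}\mathcal{D}^\dagger(V)\to(\widetilde{\BB}_u^\dagger\otimes_{\QQ_p}V)^{\mathscr{G}_L}$ is an isomorphism, which is exactly Remark (1) following the definition of $\mathcal{D}^\dagger(V)_{u,\tau}$ above. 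Everything else is a formal transcription of the proof of Theorem \ref{prop equivalence cat overconvergent: normal case}.
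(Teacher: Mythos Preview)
Your proposal is correct and is exactly the approach the paper takes: the paper's own proof is a one-liner saying that the proof of Theorem \ref{prop equivalence cat overconvergent: normal case} carries over verbatim, with Theorem \ref{thm overconvergence with non-perfect rings} substituted for Theorem \ref{thm surconvergence phi-tau version}. Your write-up in fact spells out more of the bookkeeping than the paper does, but the strategy is identical.
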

\begin{proof}
	The same proof as that of theorem \ref{prop equivalence cat overconvergent: normal case} works, except that for the overconvergence result we have to use theorem \ref{thm overconvergence with non-perfect rings}.
\end{proof}

\begin{notation}
	Let $(D^{\dagger}, D_{u,\tau}^{\dagger})\in \Mod_{{\BB_{K_{\pi}}^\dagger,\widetilde{\BB}_{u, L}^\dagger}}(\varphi,\tau).$ We put 
	\[D^{\dagger}_{u, \tau, 0}:=\big\{ x\in D^{\dagger}_{u, \tau};\ (\forall g\in \mathscr{G}_{K_{\pi}})\ \chi(g)\in \ZZ_{>0} \Rightarrow (g\otimes 1)(x)=x+\tau_D(x)+\cdots +\tau_D^{\chi(g)-1}(x) \big\}. \]\index{$D^{\dagger}_{u, \tau, 0}$} Let $n\in \NN$ and we put 
	\[ D^{\dagger}_{u, \tau^{p^n}, 0}:=\big \{ x\in D^{\dagger}_{u, \tau};\ (\forall g\in \mathscr{G}_{K_{\pi}})\ \chi(g)\in \ZZ_{>0} \Rightarrow (g\otimes 1)(x)=x+\tau_D^{p^n}(x)+ \tau_D^{2p^n}(x) + \cdots +\tau_D^{(\chi(g)-1)p^n}(x) \big \}. \]\index{$D^{\dagger}_{u, \tau^{p^n}, 0}$}
	Notice that these are all subgroups of $D^{\dagger}_{u,\tau}.$
\end{notation}

\section{The complexes \texorpdfstring{$\Cc_{\varphi, \tau}^{u, \dagger}$}{C\textpinferior\textsinferior\textiinferior , \texttinferior\textainferior\textuinferior \unichar{"1D58}, \unichar{"207A}} and \texorpdfstring{$\Cc_{\psi, \tau}^{u, \dagger}$}{C\textpinferior\texthinferior\textiinferior , \texttinferior\textainferior\textuinferior \unichar{"1D58}, \unichar{"207A}}}
\begin{lemma}\label{lemma well-defined psi in the complex}
		Let $(D^{\dagger}, D_{u,\tau}^{\dagger})\in \Mod_{{\AA_{K_{\pi}}^\dagger,\widetilde{\AA}_{u, L}^\dagger}}(\varphi, \tau)$ $($resp. $\Mod_{{\BB_{K_{\pi}}^\dagger,\widetilde{\BB}_{u, L}^\dagger}}(\varphi, \tau))$. Then there are maps $\tau_D-1\colon D^{\dagger} \to D^{\dagger}_{u, \tau, 0},$ $\varphi-1\colon D^{\dagger}_{u,\tau, 0}\to D^{\dagger}_{u,\tau, 0}$ and $\psi\colon D_{u,\tau, 0}^{\dagger}\to D_{u,\tau, 0}^{\dagger}.$ 
\end{lemma}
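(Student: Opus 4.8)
The statement asserts that three maps $\tau_D-1$, $\varphi-1$, and $\psi$ restrict appropriately to the overconvergent analogues $D^\dagger$ and $D^\dagger_{u,\tau,0}$ of the objects considered in Chapter \ref{chapter Complex with psi-operator}. The strategy is to reduce each claim to its non-overconvergent counterpart, already proved in lemmas \ref{lemm 1.1.11 well-defined tau-1 varphi-1}, \ref{lemm complex u well defined}, \ref{lemm psi D u tau 0} and proposition \ref{prop Zp-case psi surjective}, together with the fact (remark \ref{rem dagger has a GK action}) that $\widetilde{\BB}_u^\dagger$ and $\BB^\dagger_u$ are stable under $\mathscr{G}_K$ and under $\varphi$, and the observation that $\psi$ preserves overconvergence.

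First I would treat $\varphi-1\colon D^\dagger_{u,\tau,0}\to D^\dagger_{u,\tau,0}$. Since $\varphi$ is an endomorphism of $\widetilde{\BB}_u^\dagger$ (remark \ref{rem dagger has a GK action}) that commutes with $\tau_D$ and with the $\mathscr{G}_{K_\pi}$-action, exactly as in the proof of lemma \ref{lemm 1.1.11 well-defined tau-1 varphi-1}, if $x\in D^\dagger_{u,\tau}$ satisfies the defining relation $(g\otimes1)(x)=\sum_{k=0}^{\chi(g)-1}\tau_D^k(x)$ then so does $\varphi(x)$; hence $\varphi-1$ maps $D^\dagger_{u,\tau,0}$ into itself. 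Next, for $\tau_D-1\colon D^\dagger\to D^\dagger_{u,\tau,0}$, I would copy the computation of lemma \ref{lemm 1.1.11 well-defined tau-1 varphi-1} verbatim: for $x\in D^\dagger$ and $g\in\mathscr{G}_{K_\pi}$ with $\chi(g)=n\in\ZZ_{>0}$, one checks $(1+\tau_D+\cdots+\tau_D^{n-1})\big((\tau_D-1)(1\otimes x)\big)=(\tau_D^n-1)(1\otimes x)=(g\otimes1)\big((\tau_D-1)(1\otimes x)\big)$ using remark \ref{remark on defi varphi-tau module gamma tau relation}(2), so $(\tau_D-1)(1\otimes x)\in D^\dagger_{u,\tau,0}$; the only thing to add is that $(\tau_D-1)(1\otimes x)$ lies in $D^\dagger_{u,\tau}$, which is immediate since $\tau_D$ is an endomorphism of $D^\dagger_{u,\tau}=\widetilde{\BB}_{u,L}^\dagger\otimes_{\BB_{K_\pi}^\dagger}D^\dagger$ by definition \ref{def non-perfect phi tau}.

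For the $\psi$ operator, I would first construct $\psi$ on $D^\dagger$ and $D^\dagger_{u,\tau}$. By definition \ref{def psi operator}, $\psi$ on $\widehat{\Ff^{\ur}_u}$ is $\frac1p\varphi^{-1}\circ\Tr_{\widehat{\Ff^{\ur}_u}/\varphi(\widehat{\Ff^{\ur}_u})}$; one checks that $\psi$ preserves overconvergence, i.e. $\psi(\widetilde{\BB}_u^\dagger)\subset\widetilde{\BB}_u^\dagger$ and $\psi(\BB^\dagger)\subset\BB^\dagger$. This follows because $\varphi(\widetilde{\BB}_u^{\dagger,r})\subset\widetilde{\BB}_u^{\dagger,pr}$ (remark \ref{rem dagger has a GK action}) implies $\varphi^{-1}$ maps $\widetilde{\BB}_u^\dagger$ to $\widetilde{\BB}_u^\dagger$, and the trace over the degree-$p$ extension $\widetilde{\BB}_u^\dagger/\varphi(\widetilde{\BB}_u^\dagger)$ (which has the same degree $p$ as in the non-overconvergent case by the argument of lemma \ref{lemm Zp-version degree of sep clo}, since $\BB^\dagger_u$ is a field) stays within $\varphi(\widetilde{\BB}_u^\dagger)$. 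Then the operator $\psi\otimes1$ on $\Oo_{\widehat{\Ee^{\ur,\dagger}}}\otimes_{\ZZ_p}T$ and on $\Oo_{\widehat{\Ff^{\ur,\dagger}_u}}\otimes_{\ZZ_p}T$ induces, via the equivalence of categories, operators $\psi$ on $D^\dagger$ and $D^\dagger_{u,\tau}$ satisfying $\psi\circ\varphi=\id$; this parallels proposition \ref{prop Zp-case psi surjective}. Finally, since $\psi$ commutes with the $\mathscr{G}_K$-action (the same argument as remark \ref{rem Zp-case psi commutes with g}, because $\varphi^{-1}$ and the trace both commute with $g$), applying $\psi$ to the defining relation $(\gamma\otimes1)(x)=(1+\tau_D+\cdots+\tau_D^{\chi(\gamma)-1})(x)$ of $D^\dagger_{u,\tau,0}$ shows $\psi(x)\in D^\dagger_{u,\tau,0}$, exactly as in lemma \ref{lemm psi D u tau 0}.

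\textbf{Main obstacle.} The only genuinely new point is verifying that $\psi$ preserves overconvergence — i.e. the estimate relating $v^\flat$ of the Witt coordinates before and after applying $\varphi^{-1}$ and the trace. Everything else is a transcription of already-established arguments with $\BB$, $\W(C^\flat)$ etc. replaced by their $\dagger$-versions, using that the latter are $\varphi$-stable, $\mathscr{G}_K$-stable subfields with $[\widetilde{\BB}_u^\dagger:\varphi(\widetilde{\BB}_u^\dagger)]=p$. I would phrase the proof as: ``The statements follow from lemmas \ref{lemm 1.1.11 well-defined tau-1 varphi-1}, \ref{lemm psi D u tau 0} and proposition \ref{prop Zp-case psi surjective}, replacing the relevant rings by their overconvergent counterparts; one uses remark \ref{rem dagger has a GK action} and the fact that $\psi$ preserves overconvergence (which follows from $\varphi(\widetilde{\BB}_u^{\dagger,r})\subset\widetilde{\BB}_u^{\dagger,pr}$).''
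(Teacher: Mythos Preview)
Your proposal is correct and follows essentially the same approach as the paper: reduce to the non-overconvergent lemmas (\ref{lemm complex u well defined} and \ref{lemm psi D u tau 0}) and observe that the operators $\tau_D$, $\varphi$, and $\psi$ all preserve overconvergence in $D_{u,\tau}$. The paper's proof is terser—it simply asserts this preservation and says ``same arguments work for $\varphi$ and $\psi$''—whereas you supply the additional justification for why $\psi$ preserves overconvergence, which is indeed the one point not entirely formal.
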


\begin{proof}
	Notice that for any $(D, D_{u,\tau})\in \Mod_{\BB_u, \widetilde{\BB}_{u, L}}(\varphi, \tau),$ we have $\tau_D-1\colon D \to D_{u, \tau, 0},$  $\varphi-1\colon D_{u,\tau, 0}\to D_{u,\tau, 0}$ ($\cf$ lemma \ref{lemm complex u well defined}), and $\psi\colon D_{u,\tau, 0}\to D_{u,\tau, 0}$ ($\cf$ lemma \ref{lemm psi D u tau 0}). The lemma then follows from the fact that operator $\tau_D$ on $D_{\tau}$ preserves overconvergence in $D_{u,\tau}.$ Same arguments work for $\varphi$ and $\psi.$
\end{proof}

\begin{definition}
	Let $(D^{\dagger}, D_{u,\tau}^{\dagger})\in \Mod_{{\AA_{K_{\pi}}^\dagger,\widetilde{\AA}_{u, L}^\dagger}}(\varphi, \tau)$ (resp.  $\Mod_{{\BB_{K_{\pi}}^\dagger,\widetilde{\BB}_{u, L}^\dagger}}(\varphi, \tau)$). 
	\item (1) We define the complex $\Cc_{\varphi, \tau}^{u, \dagger}(D^{\dagger})$\index{$\Cc_{\varphi, \tau}^{u, \dagger}(D^{\dagger})$} as follows:
	\[ \xymatrix{
		0\ar[rr]  &&  D^{\dagger} \ar[r]  &  D^{\dagger} \oplus D^{\dagger}_{u,\tau, 0}  \ar[r]& \ar[r]  D^{\dagger}_{u, \tau, 0}  \ar[r] & 0   \\
		&&x  \ar@{|->}[r] &((\varphi-1)(x), (\tau_D-1)(x))  &{}\\
		&&& (y,z) \ar@{|->}[r]& (\tau_D-1)(y)-(\varphi-1)(z).
	}    \]

	If $T\in \Rep_{\ZZ_p}(\mathscr{G}_K)$ (resp. $V\in \Rep_{\QQ_p}(\mathscr{G}_K)$), we have in particular the complex $\Cc_{\varphi, \tau}^{u, \dagger}(\mathcal{D}^{\dagger}(T))$ (resp. $\Cc_{\varphi, \tau}^{u, \dagger}(\mathcal{D}^{\dagger}(V))$), which will also be simply denoted $\Cc_{\varphi, \tau}^{u, \dagger}(T)$\index{$\Cc_{\varphi, \tau}^{u, \dagger}(T)$} (resp. $\Cc_{\varphi, \tau}^{u, \dagger}(V)$).
	
	\item (2) Similarly, we define the complex $\Cc_{\psi, \tau}^{u, \dagger}(D^{\dagger})$\index{$\Cc_{\psi, \tau}^{u, \dagger}(D^{\dagger})$} as follows:
	\[  \xymatrix{    0 \ar[rr] && D^{\dagger} \ar[r] & D^{\dagger}\oplus D^{\dagger}_{u,\tau, 0} \ar[r] & D^{\dagger}_{u,\tau, 0} \ar[r] & 0\\
		&&x \ar@{|->}[r] &  ((\psi-1)(x), (\tau_D-1)(x)) &&\\
		&&              &    (y,z) \ar@{|->}[r]& (\tau_D-1)(y)-(\psi-1)(z).&
	}
	\]
	If $T\in \Rep_{\ZZ_p}(\mathscr{G}_K)$ (resp. $V\in \Rep_{\QQ_p}(\mathscr{G}_K)$), we have in particular the complex $\Cc_{\psi, \tau}^{u, \dagger}(\mathcal{D}^{\dagger}(T))$\index{$\Cc_{\psi, \tau}^{u, \dagger}(\mathcal{D}^{\dagger}(T))$} (resp. $\Cc_{\psi, \tau}^{u, \dagger}(\mathcal{D}^{\dagger}(V))$), which will also be simply denoted $\Cc_{\psi, \tau}^{u, \dagger}(T)$\index{$\Cc_{\psi, \tau}^{u, \dagger}(T)$} (resp. $\Cc_{\psi, \tau}^{u, \dagger}(V)$)\index{$\Cc_{\psi, \tau}^{u, \dagger}(V)$}.
\end{definition}

\begin{remark}
	Notice that for $T\in \Rep_{\ZZ_p}(\mathscr{G}_K),$ we use superscripts $\dagger$ in $\Cc_{\psi, \tau}^{u, \dagger}(T)$ and $\Cc_{\varphi, \tau}^{u, \dagger}(T)$ to distinguish from $\Cc_{\psi, \tau}^{u}(T)$ and $\Cc_{\varphi, \tau}^{u}(T)$ ($\cf$ definition \ref{def complex varphi tau} and definition \ref{def complex psi tau}).
\end{remark}



\subsection{Quasi-isomorphism between \texorpdfstring{$\Cc_{\psi, \tau}^{u, \dagger}$}{C\textpinferior\textsinferior\textiinferior , \texttinferior\textainferior\textuinferior \unichar{"1D58}, \unichar{"207A}} and \texorpdfstring{$\Cc_{\varphi, \tau}^{u, \dagger}$}{C\textpinferior\texthinferior\textiinferior , \texttinferior\textainferior\textuinferior \unichar{"1D58}, \unichar{"207A}}}

Let $T\in\Rep_{\ZZ_p}(\mathscr{G}_K)$ and $(D,D_{u,\tau})$ (resp. $(D^\dagger,D^\dagger_{u,\tau})$) the associated $(\varphi,\tau)$-module over $(\Oe,\Oo_{\Ee_{u,\tau}})$ (resp. $(\Oo_{\Ee^\dagger},\Oo_{\Ee^\dagger_{u,\tau}}))$. We consider the following morphism of complexes:

\[ 
\xymatrix{
	\Cc^{u, \dagger}_{\varphi,\tau}(T) \colon & 0\ar[rr]  && D^{\dagger} \ar[rr]^-{(\varphi-1, \tau_D-1)} \ar@{=}[d] && D^{\dagger}\oplus D^{\dagger}_{u, \tau, 0} \ar[rr]^-{(\tau_D-1)\ominus(\varphi-1)} \ar[d]^{-\psi\ominus \id} && D^{\dagger}_{u, \tau, 0}\ar[rr] \ar[d]^{-\psi} && 0 \\
	\Cc^{u, \dagger}_{\psi,\tau}(T) \colon & 0\ar[rr]  && D^{\dagger} \ar[rr]^-{(\psi-1, \tau_D-1)}  && D^{\dagger} \oplus D^{\dagger}_{u, \tau,0} \ar[rr]^-{(\tau_D-1)\ominus(\psi-1)} && D^{\dagger}_{u, \tau, 0}\ar[rr] && 0.
} \]

\begin{theorem}\label{thm quasi-iso psi varphi u dagger}
	The morphism just defined is a quasi-isomorphism.
\end{theorem}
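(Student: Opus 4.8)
The strategy mirrors exactly the proof of Theorem \ref{prop quais-iso psi} (the non-overconvergent analogue). As in Remark \ref{rem psi embedding Zp case} (2), the vertical maps $-\psi\ominus\id$ and $-\psi$ are surjective (since $\psi$ is surjective on $D^\dagger$ and on $D^\dagger_{u,\tau,0}$, which follows from Lemma \ref{lemma well-defined psi in the complex} together with the surjectivity of $\psi$ on $D$ and $D_{u,\tau,0}$, the latter being deduced from Proposition \ref{prop Zp-case psi surjective} and Lemma \ref{lemm psi D u tau 0}), so the cokernel complex of this morphism is trivial. Hence the morphism is a quasi-isomorphism if and only if the kernel complex is acyclic, i.e. if and only if the map
\[ (D^\dagger)^{\psi=0}\xrightarrow{\ \tau_D-1\ } (D^\dagger_{u,\tau,0})^{\psi=0} \]
is an isomorphism. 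So the whole proof reduces to establishing this single bijection.

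First I would reduce to the already-known non-overconvergent case. By Proposition \ref{prop Zp-case kernel complex trivial}, the map $D^{\psi=0}\xrightarrow{\tau_D-1}D_{u,\tau,0}^{\psi=0}$ is a bijection, so it suffices to show that under this bijection the subspace $(D^\dagger)^{\psi=0}$ corresponds precisely to $(D^\dagger_{u,\tau,0})^{\psi=0}$; equivalently, that $(\tau_D-1)$ restricts to a bijection between the overconvergent parts. Injectivity is immediate from the injectivity of $D^{\psi=0}\xrightarrow{\tau_D-1}D_{u,\tau,0}^{\psi=0}$. For surjectivity one must show: if $x\in D^{\psi=0}$ satisfies $(\tau_D-1)(x)\in D^\dagger_{u,\tau,0}$, then $x\in D^\dagger$. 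This is the content of the statement that $\tau_D-1$ \emph{reflects} overconvergence, and it is the analogue of the Cherbonnier--Colmez type argument that in the cyclotomic theory shows $(\gamma-1)$ reflects overconvergence on $\psi=0$ parts.

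The main obstacle is precisely this overconvergence-reflection step, and I expect to handle it by a direct estimate analogous to the proof of Theorem \ref{prop quais-iso psi} for $\FF_p$-representations (the ``general case'' paragraph there). Concretely, reduce by d\'evissage to $T$ killed by $p$; then $D^{\psi=0}$ decomposes over $F_0^{\psi=0}=\bigoplus_{1\le i\le p-1}u^i\varphi(F_0)$ along a basis $(\varphi(e_1),\dots,\varphi(e_d))$ (Corollary after Lemma \ref{lemm psi semi-linearity in another sense}), and similarly $D_{u,\tau,0}^{\psi=0}$ decomposes over $F_{u,\tau}^{\psi=0}$; the element $x=\sum_i\lambda_i\varphi(e_i)$ with $\lambda_i\in F_0^{\psi=0}$ has $(\tau_D-1)(x)$ overconvergent, and one must read off from the explicit description of $F_{u,\tau,0}^{\psi=0}$ (Corollary \ref{coro form of F u tau 0}, giving coefficients $c_{ij}\in\W(k)$ with $\Lim{j\to-\infty}c_{ij}=0$, truncated to $\FF_p$) combined with the relation $(\tau-1)(b_{ij}u^{i+pj})=b_{ij}u^{i+pj}(\varepsilon^{i+pj}-1)$ that the $\lambda_i$ must already lie in $F_0^{\dagger,\psi=0}$; the point is that applying $(\tau-1)^{-1}$ (which simply deletes the factor $\varepsilon^{i+pj}-1$, cf. Lemma \ref{lemm inverse of tau-1}) does not damage the growth condition defining overconvergence, since $\varepsilon-1\in\mathcal O_{C^\flat}$ has positive valuation and $v^\flat(\varepsilon^m-1)$ is bounded in terms of $v_p$ of the $p$-part of $m$. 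One then passes from $\FF_p$ to $\ZZ_p$-representations by the usual d\'evissage, using that $\mathcal{D}^\dagger(-)$ and $\mathcal{D}^\dagger(-)_{u,\tau,0}$ are compatible with the short exact sequences (as in Lemma \ref{lemm tue par psi is exact} and Proposition \ref{prop Zp-case kernel complex trivial}), and from $\ZZ_p$ to $\QQ_p$ by inverting $p$. The bijection $(D^\dagger)^{\psi=0}\xrightarrow{\tau_D-1}(D^\dagger_{u,\tau,0})^{\psi=0}$ being thus established, the kernel complex is acyclic and the morphism of complexes is a quasi-isomorphism.
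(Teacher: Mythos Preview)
Your reduction to proving that $\tau_D-1\colon (D^\dagger)^{\psi=0}\to (D^\dagger_{u,\tau,0})^{\psi=0}$ is bijective is correct and matches the paper, as does your injectivity argument via the non-overconvergent case. The gap is in the surjectivity step.

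First, the d\'evissage to $T$ killed by $p$ does not help here: for $p$-torsion $T$ one has $\AA^\dagger\otimes_{\ZZ_p}T=\AA\otimes_{\ZZ_p}T$, so $D^\dagger=D$ and the overconvergent statement collapses to the already-known non-overconvergent one (Proposition \ref{prop Zp-case kernel complex trivial}). The content of the theorem lies entirely in the torsion-free case, where overconvergence is a genuine growth condition on Witt-vector components and cannot be detected modulo $p$. So your induction on $p^n$-torsion never gets off the ground.

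Second, even setting d\'evissage aside, your sketch uses a basis $(\varphi(e_1),\ldots,\varphi(e_d))$ of $D$ coming from the $K_\pi$-side. But then $\tau_D$ does \emph{not} fix the basis vectors, so $(\tau_D-1)^{-1}$ does not ``simply delete the factor $\varepsilon^{i+pj}-1$'': that description is valid only for the trivial representation (Lemma \ref{lemm inverse of tau-1}). In the proof of Theorem \ref{prop quais-iso psi} this difficulty was handled by a successive-approximation argument with $\tau_D^{p^r}$ for large $r$; running that argument while simultaneously controlling overconvergence of the intermediate terms is not straightforward.

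The paper sidesteps both issues by switching to the \emph{cyclotomic} side. It takes a basis $(e_1,\ldots,e_d)$ of the overconvergent $(\varphi,\Gamma)$-module $\DD^\dagger(T)=(\AA^\dagger\otimes_{\ZZ_p} T)^{\mathscr{G}_{K_\zeta}}$; since $\tau\in\mathscr{G}_{K_\zeta}$, one has $\tau_D(\varphi(e_i))=\varphi(e_i)$ exactly. Writing $D^\dagger_{u,\tau}\simeq\bigoplus_i\widetilde{\AA}^\dagger_{u,L}\varphi(e_i)$, the operator $\tau_D-1$ then acts coefficient-wise as $\tau-1$ on $\widetilde{\AA}^\dagger_{u,L}$, and the surjectivity reduces to a scalar statement (Lemma \ref{lemmaultildepsi0}): $\tau-1$ is bijective on $\widetilde{\AA}_{u,L}^{\psi=0}$ and its inverse preserves overconvergence. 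The latter is proved by an explicit estimate on the functions $w_n$, using that $\frac{[\varepsilon^m-1]}{[\varepsilon]^m-1}$ has $w_n\geq -n$ (\cite[Corollaire II.1.5]{CC98}). The general $T$ is then handled by splitting into its torsion part (where the statement is trivial) and its torsion-free quotient.
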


\begin{remark}
	We have the following diagram:
	
	\[ 
	\xymatrix{
		&0 \ar[r] & 0 \ar[rr]\ar[d] && (D^{\dagger})^{\psi=0} \ar[rr]^{\tau_D-1} \ar[d] && (D^{\dagger}_{u,\tau, 0})^{\psi=0 }   \ar[r]\ar[d]& 0\\
		\Cc^{u, \dagger}_{\varphi,\tau}(T) \colon & 0\ar[r]  & D^{\dagger} \ar[rr]^-{(\varphi-1, \tau_D-1)} \ar@{=}[d] && D^{\dagger}\oplus D^{\dagger}_{u, \tau,0} \ar[rr]^{(\tau_D-1)\ominus(\varphi-1)} \ar[d]^{-\psi\ominus \id} && D^{\dagger}_{u, \tau, 0}\ar[r] \ar[d]^{-\psi} & 0 \\
		\Cc^{u, \dagger}_{\psi,\tau}(T) \colon & 0  \ar[r] & D^{\dagger} \ar[rr]^-{(\psi-1, \tau_D-1)}  \ar[d]&& D^{\dagger}\oplus D^{\dagger}_{u, \tau,0} \ar[rr]^-{(\tau_D-1)\ominus(\psi-1)} \ar[d]&& D^{\dagger}_{u, \tau, 0}\ar[r]  \ar[d]& 0\\
		& 0 \ar[r]& 0 \ar[rr] &&0 \ar[rr] &&0 \ar[r] &0.	
	} \]
	
	Hence to prove theorem \ref{thm quasi-iso psi varphi u dagger}, it suffices to show that $\tau_D-1\colon (D^{\dagger})^{\psi=0}\to (D^{\dagger}_{u, \tau, 0})^{\psi=0}$ is bijective. 
\end{remark}

\subsubsection{Proof of theorem \ref{thm quasi-iso psi varphi u dagger}}

As the operator $\psi$ is surjective (since $\psi\circ\varphi=\Id$), there is an exact sequence of complexes
$$0\to\Cc^{u, \dagger}(T)\to\Cc^{u, \dagger}_{\varphi,\tau}(T)\to\Cc^{u, \dagger}_{\psi,\tau}(T)\to0$$
where $\Cc^{u, \dagger}(T)$ is the complex $[(D^\dagger)^{\psi=0}\xrightarrow{\tau_D-1}(D^\dagger_{u,\tau,0})^{\psi=0}]$ in which the first term is in degree $1$. 

Similarly, we denote $\Cc(D)$ the complex $[D^{\psi=0}\xrightarrow{\tau_D-1}D_{u,\tau,0}^{\psi=0}]$, which is acyclic by proposition \ref{prop Zp-case kernel complex trivial}. There is a natural morphism of complexes $\Cc^{u, \dagger}(T)\to\Cc(D)$ induced by the commutative square
$$\xymatrix{(D^\dagger)^{\psi=0}\ar[r]^-{\tau_D-1}\ar[d] & (D^\dagger_{u,\tau,0})^{\psi=0}\ar[d]\\
	D^{\psi=0}\ar[r]^-{\tau_D-1}_-\sim & D_{u,\tau,0}^{\psi=0}}$$
which implies the injectivity of $\tau_D-1\colon(D^\dagger)^{\psi=0}\to(D^\dagger_{u,\tau,0})^{\psi=0}$.

\medskip

If $x=\sum\limits_{m=0}^\infty p^m[x_m]\in\W(C^\flat)$ and $n\in\NN$, we put $w_n(x)=\inf\limits_{0\leq m\leq n}v^\flat(x_m)$. Reacll that if $x,y\in\W(C^\flat)$, we have $w_n(x+y)\geq\inf\{w_n(x),w_n(y)\}$ and $w_n(xy)\geq\inf\limits_{n_1+n_2\leq n}\big(w_{n_1}(x)+w_{n_2}(y)\big)$ (\cf \cite[p. 584]{CC98}). Also, an element $x\in\W(C^\flat)$ is overconvergent if there exists $r\in\RR_{\geq0}$ such that the sequence $\big(\big(w_n(x)+\frac{pr}{p-1}n\big)\big)_{n\in\NN}$ is bounded below.

\begin{lemma}\label{lemmaultildepsi0}
	The map $\tau-1\colon\widetilde{\AA}_{u,L}^{\psi=0}\to\widetilde{\AA}_{u,L}^{\psi=0}$ is bijective. If $y\in\big(\widetilde{\AA}^\dagger_{u,L}\big)^{\psi=0}$, then $(\tau-1)^{-1}(y)\in\big(\widetilde{\AA}^\dagger_{u,L}\big)^{\psi=0}$.
\end{lemma}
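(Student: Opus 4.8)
The plan is to analyze the action of $\tau-1$ on $\widetilde{\AA}_{u,L}$ by first understanding the structure of $\widetilde{\AA}_{u,L}^{\psi=0}$ as a module, and then exhibiting an explicit inverse that respects overconvergence. Recall that $\widetilde{\AA}_{u,L}=\W(F_{u,\tau})$ with $F_{u,\tau}=k(\!(u,\eta^{1/p^\infty})\!)$, and that $\psi$ here is the operator attached to $\widehat{\Ff^{\ur}_u}$ restricted to $\widetilde{\AA}_{u,L}$; concretely, modulo $p$ it acts on $F_{u,\tau}$ by $\psi(f)=0$ iff the coefficients of $f$ in degrees divisible by $p$ (with respect to $u$) vanish, exactly as in Lemma \ref{lemm5.18}. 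So the first step is to record that an element $x\in\widetilde{\AA}_{u,L}^{\psi=0}$ can be written, by dévissage from the $\FF_p$-case (the analogue of Corollary \ref{coro form of F u tau 0} but without the $D_{u,\tau,0}$ constraint), as a $p$-adically convergent sum $x=\sum_{i=1}^{p-1}\sum_{j\in\ZZ}c_{ij}\,u^i[\varepsilon]^{j}$ — more precisely a sum over monomials $u^m$ with $p\nmid m$, with Witt-vector coefficients and appropriate decay — so that the structure is transparent.

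Next I would compute $(\tau-1)$ on such monomials: since $\tau(u)=u[\varepsilon]=u(1+\eta)$ and $\tau$ fixes $[\varepsilon]$ (as $\tau(\eta^{1/p^n})=\eta^{1/p^n}$), we get $\tau(u^m z)=u^m[\varepsilon]^m\tau(z)$ for $z$ in the subring fixed by $\tau$ up to... — actually the cleanest formulation, mirroring equation (\ref{eq tau-1}) and the trivial-case computation in Lemma \ref{lemm inverse of tau-1}, is that on the monomial $u^m\cdot(\text{coefficient in }\W(k(\!(\eta^{1/p^\infty})\!)))$ the operator $\tau-1$ acts by multiplication by $[\varepsilon]^m-1$, which is a unit times $\eta$-power, hence invertible in the relevant ring when $p\nmid m$ (so $m\neq 0$). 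This shows $\tau-1$ preserves $\psi=0$ (clear, since $\psi$ commutes with $\tau$ by Remark \ref{rem Zp-case psi commutes with g}) and is bijective on $\widetilde{\AA}_{u,L}^{\psi=0}$, with explicit inverse $(\tau-1)^{-1}(u^m\cdot(-))=u^m\cdot\frac{1}{[\varepsilon]^m-1}(-)$ extended linearly and $p$-adically. The surjectivity requires checking that dividing each monomial coefficient by $[\varepsilon]^m-1$ still yields an element of $\widetilde{\AA}_{u,L}$, i.e. that the decay conditions are preserved; since $v^\flat([\varepsilon]^m-1)$ is bounded independently of $m$ ranging over a congruence class, this is routine.

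The main obstacle — and the real content — is the overconvergence statement. Given $y\in(\widetilde{\AA}^\dagger_{u,L})^{\psi=0}$, so $w_n(y)+\frac{pr}{p-1}n$ is bounded below for some $r$, I must show the explicit inverse $x=(\tau-1)^{-1}(y)$ still satisfies such a bound, possibly for a larger radius. Here I would use the multiplicative estimates for $w_n$ recalled just before the lemma: writing $x$ and $y$ in terms of their monomial expansions and using that $(\tau-1)^{-1}$ amounts to multiplying the coefficient of $u^m$ by $([\varepsilon]^m-1)^{-1}$, I need a uniform lower bound on $w_{n}\big(([\varepsilon]^m-1)^{-1}\big)$ — equivalently an upper bound on the "slopes" of $([\varepsilon]^m-1)^{-1}$ — valid for all $m$ with $p\nmid m$. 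Since $[\varepsilon]-1$ is a specific element with known Newton polygon and $[\varepsilon]^m-1=([\varepsilon]-1)\cdot(1+[\varepsilon]+\cdots+[\varepsilon]^{m-1})$, one controls $([\varepsilon]^m-1)^{-1}$ up to a bounded loss, and this loss can be absorbed by enlarging $r$. I expect the technical heart to be making this uniformity in $m$ precise, but it is a by-now-standard type of estimate in the overconvergent $(\varphi,\Gamma)$- and $(\varphi,\tau)$-module literature (compare \cite{CC98}), so I would organize it as: (i) reduce mod $p^N$ and argue by dévissage on $N$ exactly as in the $\FF_p\to\ZZ_p$ passage elsewhere in this chapter, and (ii) for the mod-$p$ case perform the monomial computation with explicit valuation bookkeeping, then lift.
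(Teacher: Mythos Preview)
Your treatment of bijectivity is correct and matches the paper: d\'evissage to $k(\!(u,\eta^{1/p^\infty})\!)^{\psi=0}=\bigoplus_{i=1}^{p-1}u^ik(\!(u^p,\eta^{1/p^\infty})\!)$, then observe that $\tau-1$ acts on the monomial $u^m\cdot(\text{coeff})$ with $p\nmid m$ by multiplication by $\varepsilon^m-1\neq0$, giving an explicit inverse.

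For the overconvergence statement, however, your proposed organization via ``reduce mod $p^N$ and argue by d\'evissage on $N$'' is a genuine misstep. Overconvergence is a condition on the entire $p$-adic expansion --- a lower bound on $w_n(x)+\tfrac{pr}{p-1}n$ as $n\to\infty$ --- and becomes vacuous modulo any fixed $p^N$. So there is nothing to lift from the mod-$p$ case; the overconvergent estimate has to be established directly in characteristic zero.

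The paper does exactly the direct $w_n$ computation you sketch before your d\'evissage remark, but with a sharper device than your factorization $[\varepsilon]^m-1=([\varepsilon]-1)(1+\cdots+[\varepsilon]^{m-1})$. Writing $y=\sum_{i,j}u^{i+pj}y_{i,j}$ with $y_{i,j}\in\W(k(\!(\eta^{1/p^\infty})\!))$, the inverse is $(\tau-1)^{-1}(y)=\sum u^{i+pj}\,\frac{y_{i,j}}{[\varepsilon]^{i+pj}-1}$. The key trick is to factor
\[
\frac{1}{[\varepsilon]^m-1}=\frac{1}{[\varepsilon^m-1]}\cdot\frac{[\varepsilon^m-1]}{[\varepsilon]^m-1},
\]
i.e.\ compare against the \emph{Teichm\"uller} lift of $\varepsilon^m-1$. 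The first factor has $w_n=-v^\flat(\varepsilon^m-1)=-\tfrac{p}{p-1}$ for all $n$ (uniformly in $m$ since $p\nmid m$), and the second satisfies $w_n\big(\tfrac{[\varepsilon^m-1]}{[\varepsilon]^m-1}\big)\geq -n$ by \cite[Corollaire II.1.5]{CC98}. Feeding these into the submultiplicativity of $w_n$ and enlarging $r$ so that $\tfrac{pr}{p-1}\geq1$ absorbs the linear loss $-n$, one gets $w_n\big(u^{i+pj}\tfrac{y_{i,j}}{[\varepsilon]^{i+pj}-1}\big)+\tfrac{pr}{p-1}n\geq c-\tfrac{p}{p-1}$, uniformly. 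Your factorization could be made to work too --- the sum $1+[\varepsilon]+\cdots+[\varepsilon]^{m-1}$ is a unit in $\W(\mathcal{O}_{C^\flat})$ when $p\nmid m$, so its inverse has $w_n\geq0$ --- but you would still need the same \cite{CC98} estimate to handle $([\varepsilon]-1)^{-1}$, so the Teichm\"uller comparison is the unavoidable core, not the d\'evissage.
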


\begin{proof}
	By d\'evissage, the first assertion can be checked by modulo $p$: we have $\widetilde{\AA}_{u,L}/(p)\simeq k(\!(u,\eta^{1/p^\infty})\!)$, so that $\big(\widetilde{\AA}_{u,L}\big)^{\psi=0}/(p)\simeq k(\!(u,\eta^{1/p^\infty})\!)^{\psi=0}=\bigoplus\limits_{i=1}^{p-1}u^ik(\!(u^p,\eta^{1/p^\infty})\!)$. As element $x\in k(\!(u,\eta^{1/p^\infty})\!)^{\psi=0}$ can be written in a unique way
$$x=\sum\limits_{i=1}^{p-1}\sum\limits_{j\in\ZZ} u^{i+pj}x_{i,j}$$
where $x_{i,j}\in k(\!(\eta^{1/p^\infty})\!)$ is zero when $j\ll0$. Then we have
$$(\tau-1)(x)=\sum\limits_{i=1}^{p-1}\sum\limits_{j\in\ZZ} u^{i+pj}(\varepsilon^{i+pj}-1)x_{i,j}.$$
As $\varepsilon^{i+pj}\neq1$ for all $i\in\{1,\ldots,p-1\}$ and $j\in\ZZ$, this implies that $\tau-1$ is injective. If $y=\sum\limits_{i=1}^{p-1}\sum\limits_{j\in\ZZ} u^{i+pj}y_{i,j}$ is an element in $k(\!(u,\eta^{1/p^\infty})\!)^{\psi=0}$, then
$$(\tau-1)^{-1}(y)=\sum\limits_{i=1}^{p-1}\sum\limits_{j\in\ZZ} u^{i+pj}\tfrac{y_{i,j}}{\varepsilon^{i+pj}-1}.$$ 

For the second assertion, let $y\in\big(\widetilde{\AA}^\dagger_{u,L}\big)^{\psi=0}$. We can write
$$y=\sum\limits_{i=1}^{p-1}\sum\limits_{j\in\ZZ}u^{i+pj}y_{i,j}$$ (where $u=[\tilde{\pi}]$ here)
with $y_{i,j}\in\W\big(k(\!(\eta^{1/p^\infty})\!)\big)$, where $\Lim{j\to-\infty}y_{i,j}=0$ (for the $p$-adic topology) for all $i\in\{1,\ldots,p-1\}$. Then $(\tau-1)^{-1}(y)=\sum\limits_{i=1}^{p-1}\sum\limits_{j\in\ZZ}u^{i+pj}\frac{y_{i,j}}{[\varepsilon]^{i+pj}-1}$. As $y$ is overconvergent, there exists $r\in\RR_{\geq0}$ and $c\in\RR$ such that $w_n(u^{i+pj}y_{i,j})+\frac{pr}{p-1}n\geq c$ for all $i\in\{1,\ldots,p-1\}$, $j\in\ZZ$ and $n\in\NN$. If $n\in\NN$ we have
\begin{align*}
w_n\big(u^{i+pj}\frac{y_{i,j}}{[\varepsilon]^{i+pj}-1}\big)+\tfrac{pr}{p-1}n &= w_n\big(\tfrac{u^{i+pj}}{[\varepsilon^{i+pj}-1]}y_{i,j}\tfrac{[\varepsilon^{i+pj}-1]}{[\varepsilon]^{i+pj}-1}\big)+\tfrac{pr}{p-1}n\\
&= -\tfrac{p}{p-1}+w_n\big(u^{i+pj}y_{i,j}\tfrac{[\varepsilon^{i+pj}-1]}{[\varepsilon]^{i+pj}-1}\big)+\tfrac{pr}{p-1}n\\
&\geq -\tfrac{p}{p-1}+\inf\limits_{n_1+n_2\leq n}\Big(w_{n_1}\big(u^{i+pj}y_{i,j})+w_{n_2}\big(\tfrac{[\varepsilon^{i+pj}-1]}{[\varepsilon]^{i+pj}-1}\big)\Big)+\tfrac{pr}{p-1}n\\
&\geq -\tfrac{p}{p-1}+\inf\limits_{n_1+n_2\leq n}\Big(w_{n_1}\big(u^{i+pj}y_{i,j})+\tfrac{pr}{p-1}n_1+\big(\tfrac{pr}{p-1}-1)n_2\Big)
\end{align*}
since $v^\flat(\varepsilon^{i+pj}-1)=\frac{p}{p-1}$ (because $p\nmid i+pj$) and $w_n\big(\tfrac{[\varepsilon^{i+pj}-1]}{[\varepsilon]^{i+pj}-1}\big)\geq -n$ (by \cite[Corollaire II.1.5]{CC98}). Taking $r$ larger if necessary, we may assume that $\tfrac{pr}{p-1}-1\geq0$. Then $w_n\big(u^{i+pj}\frac{y_{i,j}}{[\varepsilon]^{i+pj}-1}\big)+\tfrac{pr}{p-1}n\geq-\tfrac{p}{p-1}+c$ for all $i\in\{1,\ldots,p-1\}$, $j\in\ZZ$ and $n\in\NN$. This shows that $(\tau-1)^{-1}(y)$ is overconvergent.
\end{proof}

\begin{proposition}
	The map $\tau_D-1\colon(D^\dagger)^{\psi=0}\to(D^\dagger_{u,\tau,0})^{\psi=0}$ is bijective.
\end{proposition}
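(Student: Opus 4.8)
The goal is to show that $\tau_D-1\colon(D^\dagger)^{\psi=0}\to(D^\dagger_{u,\tau,0})^{\psi=0}$ is bijective. Injectivity is already in hand: it follows from the commutative square relating $\Cc^{u,\dagger}(T)$ to $\Cc(D)$ together with the bijectivity of $\tau_D-1\colon D^{\psi=0}\to D^{\psi=0}_{u,\tau,0}$ from proposition \ref{prop Zp-case kernel complex trivial}. So the real content is surjectivity, and the plan is to reduce it to lemma \ref{lemmaultildepsi0} by choosing a good $\Oo_{\Ee^\dagger}$-basis of $D^\dagger$ adapted to $\varphi$, mimicking the way the non-overconvergent case (proof of theorem \ref{prop quais-iso psi} for $\FF_p$-representations) was handled, but keeping track of overconvergence radii throughout.

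\textbf{Main steps.} First I would fix $r>0$ small enough that $D^\dagger=\Oo_{\Ee^{\dagger,r}}\otimes_{\varphi,\Oo_{\Ee^{\dagger,pr}}}\cdots$ is defined over $\Oo_{\Ee^{\dagger,r}}$ and $\varphi^\ast$ is an isomorphism at that level, and pick a basis $(e_1,\ldots,e_d)$ of $D^{\dagger,r}$ so that $(\varphi(e_1),\ldots,\varphi(e_d))$ is a basis of $D^{\dagger,pr}$. As in lemma \ref{lemm psi semi-linearity in another sense}, writing an element of $D^\dagger_{u,\tau}$ in the form $\sum_i\mu_i\varphi(e_i)$ with $\mu_i\in\widetilde{\BB}^\dagger_{u,L}$, the condition $\psi=0$ becomes $\psi(\mu_i)=0$ for all $i$, i.e.\ $\mu_i\in(\widetilde{\BB}^\dagger_{u,L})^{\psi=0}$. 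Given $y\in(D^\dagger_{u,\tau,0})^{\psi=0}$, I would solve $(\tau_D-1)(x)=y$ by successive approximation: apply the "constant coefficient" inverse $(\tau-1)^{-1}$ of lemma \ref{lemmaultildepsi0} componentwise to produce a first candidate $x_0$, then estimate the error $(\tau_D-1)(x_0)-y$, which lies in a strictly smaller overconvergent subspace because $\tau_D(\varphi(e_i))$ differs from $\varphi(e_i)$ by something with positive $u$-valuation (and $\tau$ raises $u$-adic order when iterated enough, as in the proof of theorem \ref{prop quais-iso psi}). Then iterate, producing a $p$-adically and $u$-adically convergent series $x=\sum_s x_s$ in $(D^\dagger)^{\psi=0}$ with $(\tau_D-1)(x)=y$; one has to check at each stage that the error stays in the image of $\psi=0$ and keeps the membership in $D^\dagger_{u,\tau^{p^r},0}$-type subspaces, which is exactly the bookkeeping done in the $\FF_p$-proof, now decorated with Witt-vector weight functions $w_n$. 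Finally, as in corollary \ref{coro Fp bijection thm} and proposition \ref{prop Zp-case kernel complex trivial}, pass from $\FF_p$ to $\ZZ_p$ (hence to $\QQ_p$) by d\'evissage using lemma \ref{lemm tue par psi is exact} and the exactness/left-exactness of the functors $\mathcal{D}^\dagger(-)$ and $\mathcal{D}^\dagger(-)_{u,\tau,0}$, reducing to the torsion case and then completing.

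\textbf{Where the difficulty lies.} The main obstacle is controlling overconvergence uniformly through the iteration. In the non-overconvergent (characteristic $p$, then $p$-adic) setting one only needed $u$-adic convergence; here one must simultaneously show that the approximants $x_s$ and errors $z_s$ stay in a \emph{fixed} ring $\widetilde{\BB}^{\dagger,r}$ (or at worst a ring of radius bounded independently of $s$), and that the limit series converges there. Lemma \ref{lemmaultildepsi0} is precisely the prototype of this estimate for the trivial coefficient ring: it shows $(\tau-1)^{-1}$ only costs a bounded shift $-\tfrac{p}{p-1}$ in the weight, using $v^\flat(\varepsilon^{i+pj}-1)=\tfrac{p}{p-1}$ and $w_n\big(\tfrac{[\varepsilon^m-1]}{[\varepsilon]^m-1}\big)\geq-n$. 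The work is to propagate this through the semilinear correction terms coming from $\tau_D$ acting on the $\varphi(e_i)$ — i.e.\ to produce a lemma analogous to \ref{lem cf-1 proof of thm 3.32} but with the Witt weight functions in place of $u$-adic order — and to verify that enlarging $r$ finitely many times (once) suffices. Everything else (injectivity, the d\'evissage, passage to the limit) is routine given the earlier results.
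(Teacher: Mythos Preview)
Your injectivity argument is fine and matches the paper. For surjectivity, however, your approach diverges from the paper's, and the divergence hides a real difficulty you have not resolved. You propose to choose a basis $(e_1,\ldots,e_d)$ of $D^\dagger$ (on the Breuil--Kisin side), so that $\tau_D(\varphi(e_i))\neq\varphi(e_i)$ in general, and then run a successive approximation as in the $\FF_p$ proof of theorem \ref{prop quais-iso psi}, tracking overconvergence via the Witt weights $w_n$. The obstacle you yourself flag --- showing that all approximants $x_s$ and errors $z_s$ stay in a fixed $\widetilde{\AA}^{\dagger,r}$ and that the series converges there --- is genuine: each application of $(\tau-1)^{-1}$ from lemma \ref{lemmaultildepsi0} costs a bounded weight shift, but the error terms involve the coefficients of $\tau_D(\varphi(e_i))-\varphi(e_i)$, and you have not established the analogue of lemma \ref{lem cf-1 proof of thm 3.32} for the $w_n$-weights that would guarantee the radius does not degrade along the iteration.

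The paper sidesteps this entirely by a different choice of basis. It passes to the \emph{cyclotomic} overconvergent $(\varphi,\Gamma)$-module $\DD^\dagger(T)=(\AA^\dagger\otimes_{\ZZ_p}T)^{\mathscr{G}_{K_\zeta}}$ and picks a basis $(e_1,\ldots,e_d)$ there. Since $\tau\in\mathscr{G}_{K_\zeta}$, one has $\tau_D(\varphi(e_i))=\varphi(e_i)$ \emph{exactly}, so writing $x=\sum_i\lambda_i\varphi(e_i)$ with $\lambda_i\in(\widetilde{\AA}^\dagger_{u,L})^{\psi=0}$, a single application of lemma \ref{lemmaultildepsi0} gives $\mu_i:=(\tau-1)^{-1}(\lambda_i)$ overconvergent and $y=\sum_i\mu_i\varphi(e_i)$ satisfies $(\tau_D-1)y=x$ with no error term and no iteration. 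One then checks $y\in D^\dagger$ using the bijectivity of $\tau_D-1$ on $D^{\psi=0}$. This handles the torsion-free case directly; the general case is then a short d\'evissage splitting off the torsion part (not reducing to it, as you suggest). Your iteration scheme might be salvageable, but as written it is a plan rather than a proof, and the paper's route is both shorter and avoids the uniform-radius estimate altogether.
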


\begin{proof}

	Put $\DD^\dagger(T)=(\AA^\dagger\otimes_{\ZZ_p}T)^{\mathscr{G}_{K_\zeta}}$ (this is the overconvergent $(\varphi,\Gamma)$-module associated to $T$). We have a $\mathscr{G}_K$-equivariant isomorphism $\AA^\dagger\otimes_{\AA^\dagger_{K_\zeta}}\DD^\dagger(T)\simeq\AA^\dagger\otimes_{\ZZ_p}T$: extending the scalars to $\widetilde{\AA}^\dagger_u:=\Oo_{\widehat{\mathcal{F}_u^{\ur}}} \cap \, \widetilde{\AA}^\dagger$, we deduce a $\mathscr{G}_K$-equivariant isomorphism $\widetilde{\AA}^\dagger_u\otimes_{\AA^\dagger_{K_\zeta}}\DD^\dagger(T)\simeq\widetilde{\AA}^\dagger_u\otimes_{\ZZ_p}T$. Taking $\mathscr{G}_L$ invariants provides a $\mathscr{G}_K/\mathscr{G}_L$-equivariant isomorphism
	$$D^\dagger_{u,\tau}=\big(\widetilde{\AA}^\dagger_u\otimes_{\ZZ_p}T\big)^{\mathscr{G}_L}\simeq\widetilde{\AA}^\dagger_{u,L}\otimes_{\AA^\dagger_{K_\zeta}}\DD^\dagger(T).$$
	
	\begin{enumerate}
    \item [(1)]	Assume $T$ is torsion-free: so is $\DD^\dagger(T)$. Let $(e_1,\ldots,e_d)$ be a basis of $\DD^\dagger(T)$ so that $(\varphi(e_1),\ldots,\varphi(e_d))$ is a basis of $\DD^\dagger(T)$ over $\AA^\dagger_{K_\zeta}$. Then $D^\dagger_{u,\tau}=\bigoplus\limits_{i=1}^d\widetilde{\AA}^\dagger_{u,L}\varphi(e_i)$. If $x=\sum\limits_{i=1}^d\lambda_i\varphi(e_i)\in D^\dagger_{u,\tau}$ with $\lambda_1,\ldots,\lambda_d\in\widetilde{\AA}^\dagger_{u,L}$, we have $\psi(x)=\sum\limits_{i=1}^d\psi(\lambda_i)e_i$. This implies that $x\in(D^\dagger_{u,\tau})^{\psi=0}$ if and only if $\psi(\lambda_i)=0$ \ie $\lambda_i\in(\widetilde{\AA}^\dagger_{u,L})^{\psi=0}$. Lemma \ref{lemmaultildepsi0} implies that $\mu_i:=(\tau-1)^{-1}(\lambda_i)\in(\widetilde{\AA}^\dagger_{u,L})^{\psi=0}$. As $\tau_D(\varphi(e_i))=\varphi(e_i)$, this implies that $x=(\tau_D-1)y$ with $y=\sum\limits_{i=1}^d\mu_i\varphi(e_i)\in D^\dagger_{u,\tau}$. As we know that $\tau_D-1\colon D^{\psi=0}\to D_{u,\tau,0}^{\psi=0}$ is bijective, this implies that in fact $y\in D\cap D^\dagger_{u,\tau}=D^\dagger$, finishing the proof in that case.
	
   \item [(2)]	In the general case, there is an exact sequence of representations
	$$0\to T^\prime\to T\to T^{\prime\prime}\to0$$
	where $T^\prime$ is finite and $T^{\prime\prime}$ torsion-free. Tensoring with $\AA^\dagger$ and taking $\mathscr{G}_{K_\pi}$ invariants provides the exact sequence
	$$0\to\mathcal{D}(T^\prime)\to\mathcal{D}^\dagger(T)\to\mathcal{D}^\dagger(T^{\prime\prime})\to\H^1(\mathscr{G}_{K_\pi},\AA\otimes_{\ZZ_p}T^\prime)=0$$
	(because $\AA^\dagger\otimes_{\ZZ_p}T^\prime=\AA\otimes_{\ZZ_p}T^\prime$ since $T^\prime$ is torsion). As the maps $\psi$ are surjective, the snake lemma applied to the commutative diagram
	$$\xymatrix{0\ar[r] &\mathcal{D}(T^\prime)\ar[r]\ar[d]^\psi & \mathcal{D}^\dagger(T)\ar[r]\ar[d]^\psi & \mathcal{D}^\dagger(T^{\prime\prime})\ar[r]\ar[d]^\psi & 0\\
		0\ar[r] &\mathcal{D}(T^\prime)\ar[r] & \mathcal{D}^\dagger(T)\ar[r] & \mathcal{D}^\dagger(T^{\prime\prime})\ar[r] & 0}$$
	shows that the sequence
	$$0\to\mathcal{D}(T^\prime)^{\psi=0}\to\mathcal{D}^\dagger(T)^{\psi=0}\to\mathcal{D}^\dagger(T^{\prime\prime})^{\psi=0}\to0$$
	is exact. Similarly, tensoring the short exact sequence $0\to\mathcal{D}(T^\prime)\to\mathcal{D}^\dagger(T)\to\mathcal{D}^\dagger(T^{\prime\prime})\to0$ by $\widetilde{\AA}^\dagger_{u,L}$ provides the short exact sequence
	$$0\to\mathcal{D}(T^\prime)_{u,\tau}\to\mathcal{D}^\dagger(T)_{u,\tau}\to\mathcal{D}^\dagger(T^{\prime\prime})_{u,\tau}\to0$$
	(by flatness of $\widetilde{\AA}^\dagger_{u,L}$ over the discrete valuation ring $\AA^\dagger_{K_\pi}$). Consider the following commutative diagram
	$$\xymatrix{0\ar[r] & \mathcal{D}(T^\prime)_{u,\tau}\ar[r]\ar[d]^{\delta-\gamma\otimes1} & \mathcal{D}^\dagger(T)_{u,\tau}\ar[r]\ar[d]^{\delta-\gamma\otimes1} & \mathcal{D}^\dagger(T^{\prime\prime})_{u,\tau}\ar[r]\ar[d]^{\delta-\gamma\otimes1} & 0\\
		0\ar[r] & \mathcal{D}(T^\prime)_{u,\tau}\ar[r] & \mathcal{D}^\dagger(T)_{u,\tau}\ar[r] & \mathcal{D}^\dagger(T^{\prime\prime})_{u,\tau}\ar[r] & 0,}$$
	 and we have the exact sequence
	$$0\to\mathcal{D}(T^\prime)_{u,\tau,0}\to\mathcal{D}^\dagger(T)_{u,\tau,0}\to\mathcal{D}^\dagger(T^{\prime\prime})_{u,\tau,0}.$$
 Now consider the following commutative diagram
		$$\xymatrix{0\ar[r] & \mathcal{D}(T^\prime)_{u,\tau,0}\ar[r]\ar[d]^{\psi} & \mathcal{D}^\dagger(T)_{u,\tau,0}\ar[r]^{f}\ar[d]^{\psi} & \mathcal{D}^\dagger(T^{\prime\prime})_{u,\tau,0}\ar[d]^{\psi}\\
		0\ar[r] & \mathcal{D}(T^\prime)_{u,\tau,0}\ar[r] & \mathcal{D}^\dagger(T)_{u,\tau,0}\ar[r]^{f} & \mathcal{D}^\dagger(T^{\prime\prime})_{u,\tau,0}}.$$ The composition $\mathcal{D}(T^\prime)_{u,\tau,0}^{\psi=0} \subset \mathcal{D}(T^\prime)_{u,\tau,0}\to \mathcal{D}^\dagger(T)_{u,\tau,0}$ is injective, hence $\mathcal{D}(T^\prime)_{u,\tau,0}^{\psi=0}\to \mathcal{D}^{\dagger}(T)_{u,\tau,0}^{\psi=0}$ is injective. Suppose $x\in \mathcal{D}^\dagger(T)_{u,\tau,0}^{\psi=0}$ is mapped to $0$ in $\mathcal{D}^\dagger(T^{\prime\prime})_{u,\tau,0}^{\psi=0},$ then $x\in \Ker(f)\cap \mathcal{D}^\dagger(T)_{u,\tau,0}^{\psi=0}$ and hence $x$ is in the image of $\mathcal{D}(T^\prime)_{u,\tau,0}^{\psi=0}.$
	This implies that the sequence
	$$0\to\mathcal{D}(T^\prime)_{u,\tau,0}^{\psi=0}\to\mathcal{D}^\dagger(T)_{u,\tau,0}^{\psi=0}\to\mathcal{D}^\dagger(T^{\prime\prime})_{u,\tau,0}^{\psi=0}$$
	is exact.
	Now we consider the following commutative diagram
	$$\xymatrix{0\ar[r] & \mathcal{D}(T^\prime)^{\psi=0}\ar[r]\ar[d]^{\tau_{D^\prime}-1} & \mathcal{D}^\dagger(T)^{\psi=0}\ar[r]\ar[d]^{\tau_D-1} & \mathcal{D}^\dagger(T^{\prime\prime})^{\psi=0}\ar[r]\ar[d]^{\tau_{D^{\prime\prime}}-1} & 0\\
		0\ar[r] & \mathcal{D}(T^\prime)_{u,\tau,0}^{\psi=0}\ar[r] & \mathcal{D}^\dagger(T)_{u,\tau,0}^{\psi=0}\ar[r] & \mathcal{D}^\dagger(T^{\prime\prime})_{u,\tau,0}^{\psi=0} & }.$$
As the maps $\tau_{D^\prime}-1$ and $\tau_{D^{\prime\prime}}-1$ are bijective (by proposition \ref{prop Zp-case kernel complex trivial} for the first, and by the special case above for the second), so is $\tau_D-1$ in the middle.
\end{enumerate}

\end{proof}

This shows in particular that the complex $\Cc^{u, \dagger}(T)$ is acyclic, so that the complexes $\Cc^{u, \dagger}_{\varphi,\tau}(T)$ and $\Cc^{u, \dagger}_{\psi,\tau}(T)$ are quasi-isomorphic.

\section{ \texorpdfstring{$\H^i(\Cc^{u, \dagger}_{\psi,\tau}(T))$}{H\unichar{"2071}(C\textpinferior\textsinferior\textiinferior , \texttinferior\textainferior\textuinferior \unichar{"1D58}, \unichar{"207A}(T))} and \texorpdfstring{$\H^i(\Cc^{u, \dagger}_{\varphi,\tau}(T))$}{H\unichar{"2071}(C\textpinferior\texthinferior\textiinferior , \texttinferior\textainferior\textuinferior \unichar{"1D58}, \unichar{"207A}(T))} for \texorpdfstring{$i\in \{0, 1\}$}{i=0, 1}}

\begin{proposition}\label{prop H1 varphi tau u dagger}
Let $T\in \Rep_{\ZZ_p}(\mathscr{G}_k).$ We have $\H^i(\Cc_{\varphi, \tau}^{u, \dagger}(T))\simeq \H^i(\mathscr{G}_{K}, T)$ for $i\in \{0, 1\}.$
\end{proposition}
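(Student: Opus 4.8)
The strategy is to compare the overconvergent complex $\Cc_{\varphi,\tau}^{u,\dagger}(T)$ with the non-overconvergent one $\Cc_{\varphi,\tau}^{u}(T)$, for which we already know (theorem \ref{coro complex over non-perfect ring works also Zp-case}) that $\H^i(\Cc_{\varphi,\tau}^{u}(T))\simeq\H^i(\mathscr{G}_K,T)$. By definition there is an inclusion of complexes $\Cc_{\varphi,\tau}^{u,\dagger}(T)\hookrightarrow\Cc_{\varphi,\tau}^{u}(T)$, term by term: $D^\dagger\hookrightarrow D$, $D^\dagger\oplus D^\dagger_{u,\tau,0}\hookrightarrow D\oplus D_{u,\tau,0}$, and $D^\dagger_{u,\tau,0}\hookrightarrow D_{u,\tau,0}$. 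The point is then to show that the induced maps on $\H^0$ and $\H^1$ are isomorphisms. (For $\H^2$ the argument will break down, which is why the statement is restricted to $i\in\{0,1\}$; this is consistent with the remarks in the introduction.)

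First I would treat $\H^0$: this is $\Ker\big(D^\dagger\xrightarrow{(\varphi-1,\tau_D-1)}D^\dagger\oplus D^\dagger_{u,\tau,0}\big)$, and since $D^\dagger\subset D$ with compatible operators this sits inside $\H^0(\Cc_{\varphi,\tau}^{u}(T))=T^{\mathscr{G}_K}$. Conversely any element of $D^{\varphi=1,\tau_D=1}$ is automatically overconvergent because $D^{\varphi=1}\subset D^\dagger$ — this is essentially the statement that the lattice $T$ itself lies in $\AA^\dagger\otimes_{\ZZ_p}T$, i.e. $(\Oo_{\widehat{\Ee^{\ur}}}\otimes_{\ZZ_p}T)^{\varphi=1}=(\Oo_{\Ee^{\ur,\dagger}}\otimes_{\ZZ_p}T)^{\varphi=1}$, which follows from proposition \ref{prop Zp surconvergence} (the overconvergence of the $(\varphi,\tau)$-module) together with the fact that $\varphi$-fixed vectors are computed the same way with or without completion. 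So $\H^0(\Cc_{\varphi,\tau}^{u,\dagger}(T))=T^{\mathscr{G}_K}$.

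For $\H^1$ the main tool is the analogue of the Cherbonnier--Colmez comparison between overconvergent and non-overconvergent cohomology. Using the quasi-isomorphism of theorem \ref{thm quasi-iso psi varphi u dagger}, it is equivalent to compute $\H^1(\Cc_{\psi,\tau}^{u,\dagger}(T))$ and $\H^1(\Cc_{\psi,\tau}^{u}(T))$ and compare them; working with $\psi$ rather than $\varphi$ is advantageous because $\psi$ is surjective and the kernel complexes $[(D^\dagger)^{\psi=0}\to(D^\dagger_{u,\tau,0})^{\psi=0}]$ and $[D^{\psi=0}\to D_{u,\tau,0}^{\psi=0}]$ are both acyclic (proved in the previous section). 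Concretely I would: (i) describe a $1$-cocycle class in $\H^1(\Cc_{\varphi,\tau}^{u}(T))$ by a pair $(\lambda,\mu)\in D\oplus D_{u,\tau,0}$ with $(\varphi-1)\mu=(\tau_D-1)\lambda$, modulo coboundaries $((\varphi-1)d,(\tau_D-1)d)$; (ii) show injectivity of the map on $\H^1$: if $(\lambda,\mu)\in D^\dagger\oplus D^\dagger_{u,\tau,0}$ becomes a coboundary over $D$, say $\lambda=(\varphi-1)d$ and $\mu=(\tau_D-1)d$ with $d\in D$, then $(\varphi-1)d=\lambda\in D^\dagger$ forces $d\in D^\dagger$ (an overconvergence-of-solutions statement, provable by the same Witt-vector norm estimates as in lemma \ref{lemmaultildepsi0}, or by reduction to the $\psi$-version where one uses $d=-\sum_{k\geq0}\psi^k(\cdots)$ type formulas and the acyclicity of the kernel complex); (iii) show surjectivity: given $(\lambda,\mu)\in D\oplus D_{u,\tau,0}$ a cocycle, adjust it by a coboundary to land in $D^\dagger\oplus D^\dagger_{u,\tau,0}$ — after replacing $\lambda$ by $\varphi$-iterating / applying $\psi$ one can decompose $\lambda=\lambda^\dagger+(\varphi-1)d$ with $\lambda^\dagger\in D^\dagger$ using the density of $D^\dagger$ in $D$ together with the surjectivity of $\psi-1$ on the $\psi=0$ part, and then $\mu-(\tau_D-1)d$ is overconvergent because $(\varphi-1)$ (equivalently $(\psi-1)$) of it is, again by the solution-overconvergence lemma applied in $D_{u,\tau,0}$.

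\textbf{Main obstacle.} The crux is step (iii), the surjectivity on $\H^1$: one must produce, from a non-overconvergent cocycle, an overconvergent representative in its class. This is exactly where the classical proof in the cyclotomic case (Cherbonnier--Colmez, and the treatments in \cite{RLiu08}, \cite{Ben14}) is delicate, and it is why only $\H^0$ and $\H^1$ — not $\H^2$ — are accessible. The key technical input is a quantitative overconvergence statement: that a solution $x$ to $(\psi-1)x=y$ (or $(\tau_D-1)x=y$) with $y$ overconvergent is itself overconvergent, with controlled radius; the norm estimates of lemma \ref{lemmaultildepsi0} (using $w_n$, the bounds $w_n(xy)\geq\inf_{n_1+n_2\leq n}(w_{n_1}(x)+w_{n_2}(y))$, and $v^\flat(\varepsilon^{i+pj}-1)=\tfrac{p}{p-1}$) are the model, and must be propagated through the $\varphi$-module structure and through the d\'evissage $0\to T'\to T\to T''\to0$ exactly as in the proof of the preceding proposition. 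Once this overconvergence-of-solutions is in hand, the $\delta$-functor formalism and the snake-lemma diagrams reduce everything to the $p$-torsion case, which is handled by the explicit Laurent-series computations already carried out for $k(\!(u,\eta^{1/p^\infty})\!)$.
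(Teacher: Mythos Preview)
Your $\H^0$ argument is fine and matches the paper's. For $\H^1$, however, you are taking a much harder route than necessary, and the paper's proof is essentially a one-liner that sidesteps your ``main obstacle'' entirely.

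The paper does not compare cocycles directly. Instead it uses the description of $\H^1$ as an $\Ext$ group (lemma~\ref{Extension} and its obvious overconvergent analogue): $\H^1(\Cc_{\varphi,\tau}^{u,\dagger}(T))$ classifies extensions of $\AA_{K_\pi}^\dagger$ by $\mathcal{D}^\dagger(T)$ in $\Mod_{\AA_{K_\pi}^\dagger,\widetilde{\AA}_{u,L}^\dagger}(\varphi,\tau)$, while $\H^1(\Cc_{\varphi,\tau}(T))$ classifies extensions in $\Mod_{\AA_{K_\pi},\widetilde{\AA}_L}(\varphi,\tau)$. But both categories are already known to be equivalent to $\Rep_{\ZZ_p}(\mathscr{G}_K)$ --- the first by proposition~\ref{prop equivalence cat} (which rests on the overconvergence theorem, proposition~\ref{prop Zp surconvergence}), the second by theorem~\ref{thm equivalence of cats}. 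Hence both $\Ext$ groups identify with $\Ext^1_{\Rep_{\ZZ_p}(\mathscr{G}_K)}(\ZZ_p,T)\simeq\H^1(\mathscr{G}_K,T)$, and the comparison map is an isomorphism. No norm estimates, no d\'evissage, no $\psi$-complex gymnastics are needed at this stage: all the analytic work was already absorbed into the proof of overconvergence.

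Your approach --- a direct Cherbonnier--Colmez-style comparison of the two complexes --- could in principle be made to work, but note that your step~(ii) is already problematic as stated: the implication ``$(\varphi-1)d\in D^\dagger\Rightarrow d\in D^\dagger$'' is false in general (e.g.\ $d=\sum_{n\geq0}u^{-p^n}$ in $\Oo_\Ee$ has $(\varphi-1)d=-u^{-1}$), so you would need to use the second constraint $(\tau_D-1)d\in D^\dagger_{u,\tau,0}$ as well, and this is not a triviality. Likewise your step~(iii) is, as you acknowledge, the genuine hard part, and your sketch does not resolve it. The point is that the categorical argument makes all of this unnecessary: once the equivalence of categories is in hand, the $\H^1$ comparison is formal.
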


\begin{proof}
	We have $\H^0(\Cc_{\varphi, \tau}^{u, \dagger}(T))=(\AA^{\dagger}\otimes_{\ZZ_p}T)^{\mathscr{G}_{K_{\pi}}, \varphi=1, \tau_D=1}=T^{\la \mathscr{G}_{K_{\pi}}, \tau\ra}=T^{\mathscr{G}_K}=\H^0(\mathscr{G}_{K}, T).$
	
	\medskip
	
	We claim that the natural $\ZZ_p$-linear map $\H^1(\Cc_{\varphi, \tau}^{u, \dagger}(T))\to \H^1(\Cc_{\varphi, \tau}(T))$ is an isomorphism. Recall that $\H^1(\Cc_{\varphi, \tau}(T))$ classifies all extensions of $\mathcal{D}(T)$ by $\AA_{K_{\pi}}$ in the category $\Mod_{\AA_{K_{\pi}}, \widetilde{\AA}_{L}}(\varphi, \tau).$ Similarly, $\H^1(\Cc_{\varphi, \tau}^{u, \dagger}(T))$ classifies all extensions of $\mathcal{D}^{\dagger}(T)$ by $\AA_{K_{\pi}}^{\dagger}$ in the category $\Mod_{\AA_{K_{\pi}}^{\dagger}, \widetilde{\AA}_{u, L}^{\dagger}}(\varphi, \tau).$ Since both categories are equivalent to $\Rep_{\ZZ_p}(\mathscr{G}_K),$ we conclude that $\H^1(\Cc_{\varphi, \tau}^{u, \dagger}(T))\simeq \H^1(\Cc_{\varphi, \tau}(T)),$ which is isomorphic to $\H^1(\mathscr{G}_K, T)$ by proposition \ref{prop Caruso H1 Zp}.
\end{proof}

\begin{corollary}
We have $\H^i(\Cc_{\psi, \tau}^{u, \dagger}(T))\simeq \H^i(\mathscr{G}_{K}, T)$ for $i\in \{0, 1\}.$
\end{corollary}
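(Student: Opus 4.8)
The final statement to prove is the corollary that $\H^i(\Cc_{\psi, \tau}^{u, \dagger}(T))\simeq \H^i(\mathscr{G}_K, T)$ for $i \in \{0, 1\}$. The plan is to combine the two immediately preceding results: Theorem \ref{thm quasi-iso psi varphi u dagger}, which asserts that the natural morphism of complexes $\Cc_{\varphi, \tau}^{u, \dagger}(T) \to \Cc_{\psi, \tau}^{u, \dagger}(T)$ is a quasi-isomorphism, and Proposition \ref{prop H1 varphi tau u dagger}, which computes $\H^i(\Cc_{\varphi, \tau}^{u, \dagger}(T)) \simeq \H^i(\mathscr{G}_K, T)$ for $i \in \{0,1\}$. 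Since a quasi-isomorphism of complexes induces isomorphisms on all cohomology groups, we obtain $\H^i(\Cc_{\psi, \tau}^{u, \dagger}(T)) \simeq \H^i(\Cc_{\varphi, \tau}^{u, \dagger}(T)) \simeq \H^i(\mathscr{G}_K, T)$ for $i \in \{0, 1\}$.

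Concretely, I would write: by Theorem \ref{thm quasi-iso psi varphi u dagger} the morphism of complexes displayed just before that theorem is a quasi-isomorphism, hence induces isomorphisms $\H^i(\Cc_{\varphi, \tau}^{u, \dagger}(T)) \xrightarrow{\sim} \H^i(\Cc_{\psi, \tau}^{u, \dagger}(T))$ for every $i$. Combining with Proposition \ref{prop H1 varphi tau u dagger}, which gives $\H^i(\Cc_{\varphi, \tau}^{u, \dagger}(T)) \simeq \H^i(\mathscr{G}_K, T)$ for $i \in \{0,1\}$, yields the claim. One should also note that these isomorphisms are functorial in $T$, since both the quasi-isomorphism of complexes and the comparison isomorphism of Proposition \ref{prop H1 varphi tau u dagger} (which ultimately rests on the functorial equivalences of categories of Theorem \ref{prop equivalence cat} and the interpretation of $\H^1$ via extension groups) are compatible with morphisms of representations.

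There is essentially no obstacle here: the corollary is a formal consequence of the two results that precede it, and the only thing to be careful about is making sure the direction of the quasi-isomorphism and the degrees match up — the morphism goes from $\Cc_{\varphi, \tau}^{u, \dagger}$ to $\Cc_{\psi, \tau}^{u, \dagger}$, so the induced maps on cohomology go the same way, and since they are isomorphisms the inverse gives the identification in either direction. The restriction to $i \in \{0, 1\}$ is inherited directly from Proposition \ref{prop H1 varphi tau u dagger} (the case $i = 2$ being left open there, as remarked in the introduction), so nothing extra needs to be said about higher degrees.

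\begin{proof}
	By theorem \ref{thm quasi-iso psi varphi u dagger}, the morphism of complexes
	\[ \Cc_{\varphi, \tau}^{u, \dagger}(T) \to \Cc_{\psi, \tau}^{u, \dagger}(T) \]
	displayed before that theorem is a quasi-isomorphism, hence induces isomorphisms
	\[ \H^i(\Cc_{\varphi, \tau}^{u, \dagger}(T)) \xrightarrow{\ \sim\ } \H^i(\Cc_{\psi, \tau}^{u, \dagger}(T)) \]
	for all $i \in \NN.$ Combining with proposition \ref{prop H1 varphi tau u dagger}, which gives $\H^i(\Cc_{\varphi, \tau}^{u, \dagger}(T)) \simeq \H^i(\mathscr{G}_K, T)$ for $i \in \{0, 1\},$ we obtain
	\[ \H^i(\Cc_{\psi, \tau}^{u, \dagger}(T)) \simeq \H^i(\mathscr{G}_K, T) \]
	for $i \in \{0, 1\}.$ These isomorphisms are functorial in $T,$ as both the quasi-isomorphism of complexes and the comparison isomorphism of proposition \ref{prop H1 varphi tau u dagger} are.
\end{proof}
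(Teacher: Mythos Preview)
Your proof is correct and takes essentially the same approach as the paper: apply the quasi-isomorphism of theorem \ref{thm quasi-iso psi varphi u dagger} and then invoke proposition \ref{prop H1 varphi tau u dagger}. The paper's version is just a terser one-line statement of the same two steps.
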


\begin{proof}
	By theorem \ref{thm quasi-iso psi varphi u dagger}, we have $\H^i(\Cc_{\psi, \tau}^{u, \dagger}(T))\simeq \H^i(\Cc_{\varphi, \tau}^{u, \dagger}(V))$ for $i\in \NN:$ the result follows from proposition \ref{prop H1 varphi tau u dagger}.
\end{proof}

\section{Some remarks on the morphism between \texorpdfstring{$\Cc_{\psi, \tau}^{u, \dagger}$}{C\textpinferior\textsinferior\textiinferior , \texttinferior\textainferior\textuinferior \unichar{"1D58}, \unichar{"207A}} and \texorpdfstring{$\Cc_{\psi, \tau}^{u}$}{C\textpinferior\textsinferior\textiinferior , \texttinferior\textainferior\textuinferior \unichar{"1D58}}}
Let $T\in \Rep_{\ZZ_p}(\mathscr{G}_K)$ and $(D, D_{u, \tau})\in \Mod_{\AA_{K_{\pi}},\widetilde{\AA}_{u, L}}(\varphi, \tau)$  $($resp. $(D^{\dagger}, D^{\dagger}_{u, \tau})\in \Mod_{{\AA_{K_{\pi}}^\dagger,\widetilde{\AA}_{u, L}^\dagger}}(\varphi, \tau))$ its associated $(\varphi, \tau)$-modules. We then have a natural map from $\Cc_{\psi, \tau}^{u, \dagger}(T)$ to $\Cc_{\psi, \tau}^{u}(T):$
\[ 
\xymatrix{
	\Cc_{\psi, \tau}^{u, \dagger}(T)\colon & 0\ar[r]  & D^{\dagger} \ar[rr]^-{(\psi-1, \tau_D-1)} \ar@{^(->}[d] && D^{\dagger}\oplus D^{\dagger}_{u, \tau,0} \ar[rr]^-{(\tau_D-1)\ominus(\psi-1)} \ar@{^(->}[d] && D^{\dagger}_{u, \tau, 0}\ar[r] \ar@{^(->}[d] & 0 \\
	\Cc_{\psi, \tau}^{u}(T) \colon & 0  \ar[r] & D \ar[rr]^-{(\psi-1, \tau_D-1)}  && D\oplus D_{u, \tau,0} \ar[rr]^-{(\tau_D-1)\ominus(\psi-1)} && D_{u, \tau, 0}\ar[r]  & 0.\\
} \]

\begin{remark}
(1)	We summarize the relations between the different complexes:
	\[\xymatrix{ \Cc_{\varphi, \tau}^{u, \dagger}(T) \ar@{<->}[rr]^{\text{ theorem }\ref{thm quasi-iso psi varphi u dagger}}_{\text{quasi-iso}} && \Cc_{\psi, \tau}^{u, \dagger}(T) \ar[r] & \Cc_{\psi, \tau}^{u}(T) \ar@{<->}[rr]^{\text{ theorem }\ref{prop quais-iso psi}}_{\text{quasi-iso}} && \Cc_{\varphi, \tau}^{u}(T) \ar@{<->}[rr]^{\text{ theorem }\ref{coro complex over non-perfect ring works also Zp-case}}_{\text{quasi-iso}} && \Cc_{\varphi, \tau}(T)}.   \]
	In the first chapter we have showed that $\Cc_{\varphi, \tau}(T)$ computes the continuous Galois cohomology of $T$ ($\cf$ theorem \ref{thm main result}). It is hence natural to study whether the morphism between $\Cc_{\psi, \tau}^{u, \dagger}$ and $\Cc_{\psi, \tau}^{u}$ is a quasi-isomorphism (we expect it is).
\item (2) The complex  $\Cc_{\psi, \tau}^{u}(D)$ is the total complex of the double complex
\[ \xymatrix{  
	D \ar[r]^{\psi-1} \ar[d]_{\tau_D-1} & D\ar[d]^{\tau_D-1}\\
	D_{u,\tau, 0} \ar[r]_{\psi-1} &D_{u,\tau, 0}
	}   \]

\medskip

Similarly,  the complex $\Cc^{u, \dagger}_{\psi, \tau}(D^{\dagger})$ is the total complex of the double complex 
\[ \xymatrix{  
	D^{\dagger} \ar[r]^{\psi-1} \ar[d]_{\tau_D-1} & D^{\dagger}\ar[d]^{\tau_D-1}\\
	D^{\dagger}_{u,\tau, 0} \ar[r]_{\psi-1} &D^{\dagger}_{u,\tau, 0}
}  \]
\end{remark}

\begin{lemma}
	The complex $\Cc_{\psi, \tau}^{u}(D)$ is quasi-isomorphic to $\Cc^{u, \dagger}_{\psi, \tau}(D^{\dagger})$ if the following two morphisms of complexes are both quasi-isomorphisms:
	
	\[ \xymatrix{    [D^{\dagger} \ar[r]^{\psi-1} \ar@{^(->}[d]&   D^{\dagger}] \ar@{^(->}[d]   && [D_{u, \tau, 0}^{\dagger} \ar[r]^{\psi-1} \ar@{^(->}[d]&   D_{u,\tau, 0}^{\dagger}] \ar@{^(->}[d] \\
		[D \ar[r]^{\psi-1} &   D] &&  [D_{u, \tau, 0} \ar[r]^{\psi-1} &   D_{u, \tau, 0}].
	}  \]
\end{lemma}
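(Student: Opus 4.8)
The statement says: if the two vertical morphisms of ``horizontal $\psi-1$'' two-term complexes (the first on $D^\dagger\hookrightarrow D$, the second on $D^\dagger_{u,\tau,0}\hookrightarrow D_{u,\tau,0}$) are quasi-isomorphisms, then the inclusion $\Cc^{u,\dagger}_{\psi,\tau}(D^\dagger)\to\Cc^u_{\psi,\tau}(D)$ is a quasi-isomorphism. The plan is to exploit the double-complex description recalled just above the lemma: both $\Cc^u_{\psi,\tau}(D)$ and $\Cc^{u,\dagger}_{\psi,\tau}(D^\dagger)$ are total complexes of $2\times 2$ double complexes, and the vertical inclusion of complexes is exactly the morphism of total complexes induced by the commutative square of double complexes
\[
\xymatrix{
D^\dagger\ar[r]^{\psi-1}\ar[d]_{\tau_D-1}\ar@{^(->}[dd] & D^\dagger\ar[d]^{\tau_D-1}\ar@{^(->}[dd]\\
D^\dagger_{u,\tau,0}\ar[r]_{\psi-1}\ar@{^(->}[dd] & D^\dagger_{u,\tau,0}\ar@{^(->}[dd]\\
D\ar[r]^{\psi-1}\ar[d]_{\tau_D-1} & D\ar[d]^{\tau_D-1}\\
D_{u,\tau,0}\ar[r]_{\psi-1} & D_{u,\tau,0}
}
\]
so everything will follow from a standard spectral-sequence (or iterated mapping-cone) argument.

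\textbf{Key steps.} First I would set up the two filtrations of the total complexes by the ``$\psi$-columns'', i.e. view each total complex as the mapping cone of $\psi-1$ acting from the $\tau$-complex $[D\xrightarrow{\tau_D-1}D_{u,\tau,0}]$ to itself (in the overconvergent case, from $[D^\dagger\xrightarrow{\tau_D-1}D^\dagger_{u,\tau,0}]$ to itself). Concretely, $\Cc^u_{\psi,\tau}(D)=\mathrm{Cone}\big(\psi-1\colon\Cc_\tau(D)\to\Cc_\tau(D)\big)[-1]$ where $\Cc_\tau(D)$ denotes the two-term complex $[D\xrightarrow{\tau_D-1}D_{u,\tau,0}]$, and similarly with daggers; the inclusion $\Cc^{u,\dagger}_{\psi,\tau}(D^\dagger)\to\Cc^u_{\psi,\tau}(D)$ is the cone of the commutative square relating these. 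Then I would invoke the hypothesis in the following repackaged form: the two given vertical morphisms of horizontal complexes say precisely that $[D^\dagger\xrightarrow{\psi-1}D^\dagger]\to[D\xrightarrow{\psi-1}D]$ and $[D^\dagger_{u,\tau,0}\xrightarrow{\psi-1}D^\dagger_{u,\tau,0}]\to[D_{u,\tau,0}\xrightarrow{\psi-1}D_{u,\tau,0}]$ are quasi-isomorphisms. Now run the first-quadrant spectral sequence of the double complex, taking homology first in the $\psi$-direction: the $E_1$-page of the map of spectral sequences is, column by column, exactly the map on the homology of these $\psi-1$-complexes, hence an isomorphism on every $E_1$-entry by hypothesis. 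Since the spectral sequences are those of bounded ($2\times2$) double complexes they converge, and by the comparison theorem for spectral sequences (Weibel \cite{Wei95}, Thm.\ 5.2.12) the induced map on abutments $\H^i\big(\Cc^{u,\dagger}_{\psi,\tau}(D^\dagger)\big)\to\H^i\big(\Cc^u_{\psi,\tau}(D)\big)$ is an isomorphism for all $i$. Alternatively, and perhaps more transparently given the $2\times2$ shape, I would phrase it as a double application of the five-lemma to the long exact sequences of the horizontal mapping cones, using the horizontal hypothesis together with the (automatic) fact that $\tau_D-1$ commutes with the inclusions; the two arguments are equivalent and I would present whichever is shorter.

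\textbf{Main obstacle.} Honestly there is essentially no deep obstacle here: this is a formal homological statement about total complexes of a $2\times2$ bicomplex, and the only care needed is bookkeeping of degrees (matching the degree conventions of Definition~\ref{def complex psi tau} and Definition~\ref{def complex varphi tau}, where the first term sits in degree $-1$) and checking that all four squares of the displayed bicomplex genuinely commute — which they do, because $\psi$ commutes with the Galois action and hence preserves $D_{u,\tau,0}$ (lemma~\ref{lemm psi D u tau 0}), $\psi$ commutes with $\tau_D$ (remark~\ref{rem Zp-case psi commutes with g}), and all the maps $\tau_D-1$, $\psi-1$ restrict compatibly to the overconvergent subspaces (lemma~\ref{lemma well-defined psi in the complex}). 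The one genuinely content-bearing input is the \emph{hypothesis} itself, which is not proved in this lemma; so the proof will simply reduce the desired quasi-isomorphism to that hypothesis via the spectral sequence / five-lemma comparison, and nothing more.
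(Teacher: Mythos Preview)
Your proposal is correct and matches the paper's approach: the paper's proof is simply a citation ``\emph{cf.}\ \cite[\S 1.3]{Wei95}'', deferring entirely to the standard homological-algebra fact that a morphism of bounded double complexes inducing quasi-isomorphisms on rows (or columns) induces a quasi-isomorphism on total complexes. Your spectral-sequence/five-lemma elaboration is exactly the content behind that citation, and your commutativity checks are appropriate bookkeeping.
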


\begin{proof}
	$\cf$ \cite[\S 1.3]{Wei95}. 
\end{proof}

\begin{remark}  (1) When the residue field $k$ is finite. The morphism 
\[ \xymatrix{    [D^{\dagger} \ar[r]^{\psi-1} \ar@{^(->}[d]&   D^{\dagger}]\ar@{^(->}[d]\\
	[D \ar[r]^{\psi-1} &   D] }  \]
is a quasi-isomorphism. Indeed, this can be proved using classical methods: for $h^0,$  it suffices to show that $x\in D^{\psi=1}$ implies $x\in D^{\dagger}$ (\cf \cite[Lemma I.6.4; Proposition III.2.1 (ii)]{CC99}); for $h^1,$ it suffices to show that $D^{\dagger}/(\psi-1)\simeq D/(\psi-1)$ (\cf \cite[Lemma 2.6]{RLiu08}, \cite[Corollaire I.7.4]{CC99} and \cite[Proposition 3.6 (2)]{Her98}).
\item (2) As $k(\!(\eta)\!)$ is never finite, it seems hard to use the same method to show that 
	\[ \xymatrix{  [ D_{u,\tau, 0}^{\dagger} \ar[r]^{\psi-1} \ar@{^(->}[d]&   D_{u, \tau, 0}^{\dagger}]\ar@{^(->}[d] \\
	[D_{u, \tau, 0} \ar[r]^{\psi-1} &   D_{u, \tau, 0}] } \]
is a quasi-isomorphism. In particular the structure of $D_{u, \tau, 0}/(\psi-1)$ is not clear.
\end{remark}

\subsubsection{Some remarks on \texorpdfstring{$\H^2(\Cc^{u, \dagger}_{\psi, \tau}(T))$}{H\textonesuperior(C\textpinferior\textsinferior\textiinferior , \texttinferior\textainferior\textuinferior \unichar{"1D58}, \unichar{"207A}(T))} and \texorpdfstring{$\H^2(\Cc^{u, \dagger}_{\varphi, \tau}(T))$}{H\textonesuperior(C\textpinferior\texthinferior\textiinferior , \texttinferior\textainferior\textuinferior \unichar{"1D58}, \unichar{"207A}(T))}} 

Let $T\in \Rep_{\ZZ_p}(\mathscr{G}_K)$ and $(D, D_{u, \tau})\in \Mod_{\Oo_{\Ee}, \Oo_{\Ee_{u, \tau}}}(\varphi, \tau)$ its associated $(\varphi, \tau)$-module. Notice that by theorem \ref{thm quasi-iso psi varphi u dagger}, $\H^2(\Cc^{u, \dagger}_{\psi, \tau}(V))\simeq \H^2(\Cc^{u, \dagger}_{\varphi, \tau}(V)).$ We have 
\[ \H^2(\Cc^{u,\dagger}_{\psi, \tau}(V))\simeq  D^{\dagger}_{u, \tau, 0}\big/ \big( (\psi-1)D^{\dagger}_{u, \tau, 0}+(\tau_D-1)D^{\dagger} \big), \]
and 
\[ \H^2(\Cc_{\psi, \tau}(V)) \simeq D_{u, \tau, 0}\big/ \big( (\psi-1)D_{u, \tau, 0}+(\tau_D-1)D \big)\simeq \H^2(\mathscr{G}_{K}, V). \]

\begin{lemma}\label{lemm structure for D u tau 0}
Let $(D, D_{u, \tau})\in \Mod_{\Oo_{\Ee}, \Oo_{\Ee_{u, \tau}}}(\varphi, \tau)$ $($resp. $\Mod_{\Ee, \Ee_{u, \tau}}(\varphi, \tau)),$ we then have 
	\begin{align*}
		D&=\varphi(D)\oplus D^{\psi=0}, \\
	 D_{u, \tau}&=\varphi(D_{u, \tau})\oplus D_{u, \tau}^{\psi=0},\\
 D_{u, \tau, 0}&=\varphi(D_{u, \tau, 0})\oplus D_{u, \tau, 0}^{\psi=0}.
	\end{align*}
There are similar equalities with $D^{\dagger}, D^{\dagger}_{u, \tau}$ and $D_{u, \tau, 0}^{\dagger}.$
\end{lemma}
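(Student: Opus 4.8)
The statement to prove is Lemma~\ref{lemm structure for D u tau 0}: for a $(\varphi,\tau)$-module $(D,D_{u,\tau})$ over $(\Oo_{\Ee},\Oo_{\Ee_{u,\tau}})$ (or over $(\Ee,\Ee_{u,\tau})$), one has the direct sum decompositions
\[
D=\varphi(D)\oplus D^{\psi=0},\quad D_{u,\tau}=\varphi(D_{u,\tau})\oplus D_{u,\tau}^{\psi=0},\quad D_{u,\tau,0}=\varphi(D_{u,\tau,0})\oplus D_{u,\tau,0}^{\psi=0},
\]
together with the analogous statements on the overconvergent side. The approach is entirely formal once we have the key fact that $\psi\circ\varphi=\id$ on each of these modules: this was established for $D$ and $D_{u,\tau}$ in Proposition~\ref{prop Zp-case psi surjective}, for $D_{u,\tau,0}$ in Lemma~\ref{lemm psi D u tau 0} together with Proposition~\ref{prop Zp-case psi surjective}, and on the overconvergent modules $D^\dagger$, $D^\dagger_{u,\tau}$, $D^\dagger_{u,\tau,0}$ via Lemma~\ref{lemma well-defined psi in the complex} (which gives $\psi\colon D^\dagger_{u,\tau,0}\to D^\dagger_{u,\tau,0}$ by restriction from the non-overconvergent case, where $\psi\circ\varphi=\id$ still holds).

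First I would record the general fact: if $M$ is an abelian group and $\varphi,\psi\colon M\to M$ are additive maps with $\psi\circ\varphi=\id_M$, then $e:=\varphi\circ\psi$ is an idempotent endomorphism of $M$, so $M=e(M)\oplus\ker(e)=\varphi(\psi(M))\oplus\ker(\varphi\circ\psi)$. Since $\psi$ is injective on $\varphi(M)$ (being a left inverse of $\varphi$), we have $\ker(\varphi\circ\psi)=\ker\psi=M^{\psi=0}$; and since $\psi$ is surjective (it has a right inverse $\varphi$), $\psi(M)=M$, whence $\varphi(\psi(M))=\varphi(M)$. Thus $M=\varphi(M)\oplus M^{\psi=0}$. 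Concretely, for $x\in M$ write $x=\varphi(\psi(x))+\bigl(x-\varphi(\psi(x))\bigr)$; the first summand lies in $\varphi(M)$ and the second is killed by $\psi$ because $\psi(x-\varphi\psi(x))=\psi(x)-\psi(x)=0$; and if $\varphi(y)\in M^{\psi=0}$ then $y=\psi\varphi(y)=0$, giving the directness.

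Then I would simply apply this lemma six times: to $M=D$, $M=D_{u,\tau}$, $M=D_{u,\tau,0}$, $M=D^\dagger$, $M=D^\dagger_{u,\tau}$, $M=D^\dagger_{u,\tau,0}$, in each case using that $\varphi$ and $\psi$ are well-defined additive endomorphisms with $\psi\circ\varphi=\id$ (invoking the propositions and lemmas cited above; for $D_{u,\tau,0}$ and $D^\dagger_{u,\tau,0}$ one also uses that $\varphi$ maps these subgroups to themselves, which is part of Lemma~\ref{lemm 1.1.11 well-defined tau-1 varphi-1} and Lemma~\ref{lemma well-defined psi in the complex}). The $\QQ_p$-linear variants over $(\Ee,\Ee_{u,\tau})$ are identical, the only change being that $M$ is now a $\QQ_p$-vector space, which does not affect any of the arguments. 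There is essentially no obstacle here: the whole content is the identity $\psi\circ\varphi=\id$, already proved, and the elementary idempotent argument. The only minor point requiring a word of care is checking that $\varphi(D_{u,\tau,0})\subseteq D_{u,\tau,0}$ (and similarly with the dagger) so that the decomposition takes place inside $D_{u,\tau,0}$ rather than merely inside $D_{u,\tau}$; but this follows because $\varphi$ commutes with $\tau_D$ and with the $\mathscr{G}_{K_\pi}$-action, which is exactly the defining property of membership in $D_{u,\tau,0}$ (see the proof of Lemma~\ref{lemm 1.1.11 well-defined tau-1 varphi-1}).
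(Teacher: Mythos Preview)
Your proposal is correct and follows essentially the same approach as the paper: write $z=\varphi(\psi(z))+(z-\varphi(\psi(z)))$, use $\psi\circ\varphi=\id$ to see the second summand lies in the $\psi=0$ part, and check directness via $\varphi(y)\in M^{\psi=0}\Rightarrow y=\psi\varphi(y)=0$. The paper's proof is precisely this concrete decomposition (without the idempotent language), together with the observation that $\psi$ preserves $D_{u,\tau,0}$ so that $\varphi(\psi(z))\in\varphi(D_{u,\tau,0})$, and that $\varphi,\psi$ respect overconvergence for the dagger case.
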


\begin{proof}
For any $z\in D_{u, \tau},$ put $z_0=\varphi(\psi(z))$ and write $z=z_0+(z-z_0).$ Notice that $z_0\in \varphi(D_{u, \tau})$ and $z-z_0\in D_{u, \tau}^{\psi=0}.$ Suppose $x\in \varphi(D_{u, \tau})\cap D_{u, \tau}^{\psi=0},$ then $x=\varphi(y)$ and $0=\psi(z)=y,$ hence $x=0.$ This proves $ D_{u, \tau}=\varphi(D_{u, \tau})+D_{u, \tau}^{\psi=0},$ and similarly we have $D=\varphi(D)\oplus D^{\psi=0}.$ 

\medskip

For any $z\in D_{u, \tau,0},$ again put $z_0=\varphi(\psi(z))$ and write $z=z_0+(z-z_0).$ Then $z_0\in \varphi(D_{u, \tau, 0})$ and $z-z_0\in D_{u, \tau, 0}^{\psi=0}.$ Indeed, as $\psi$ commutes with the action of $\mathscr{G}_K,$ $z\in D_{u, \tau, 0}$ implies $\psi(z)\in D_{u, \tau, 0}.$

\medskip

The last statement follows from the fact that the operator $\varphi$ and $\psi$ respect overconvergence.
\end{proof}

\begin{remark}\label{rem for H2 dagger not yet clear}
	Let $f\colon D_{u, \tau, 0}\to \H^2(\Cc^{u}_{\psi, \tau}(T))$ and $f^{\dagger}\colon D_{u, \tau, 0}^{\dagger}\to \H^2(\Cc_{\psi, \tau}^{u, \dagger}(T))$ be the canonical surjections. If $x\in D_{u, \tau, 0},$ we can write $x=\varphi(y)+z$ with $y\in D_{u, \tau, 0}$ and $z\in D_{u, \tau, 0}^{\psi=0}$ by lemma \ref{lemm structure for D u tau 0}. By proposition \ref{prop Zp-case kernel complex trivial}, we have $D_{u, \tau, 0}^{\psi=0}=(\tau_D-1)(D^{\psi=0}),$ so that the image of $z$ in $\H^2(\Cc_{\psi,\tau}^{u}(T))$ is zero, $\ie$$f(x)=f(\varphi(y)).$ Iterating, this shows that $f$ induces a surjective map 
	\[ f\colon \bigcap\limits_{n=0}^{\infty} \varphi^n(D_{u, \tau, 0})\to \H^2(\Cc_{\psi, \tau}^{u}(T)).  \]
	We have a similar statement for $f^{\dagger}\colon \bigcap\limits_{n=0}^{\infty} \varphi^n(D_{u, \tau, 0}^{\dagger})\to \H^2(\Cc_{\psi, \tau}^{u, \dagger}(T)).$ A strategy to prove the bijectivity of 
	\[ \H^2(\Cc_{\psi, \tau}^{u}(T)) \to \H^2(\Cc_{\psi, \tau}^{u, \dagger}(T)) \]
	could be to compare $\bigcap\limits_{n=0}^{\infty} \varphi^n(D_{u, \tau, 0})$ and $\bigcap\limits_{n=0}^{\infty} \varphi^n(D_{u, \tau, 0}^{\dagger}),$ or more precisely the cokernels of $\psi-1$ on these.
\end{remark}

\chapter{Complex over the Robba ring with \texorpdfstring{$(\varphi, N_{\nabla})$}{(varphi, N nabla)}{}-modules}

In this chapter, for any Galois representation $V\in \Rep_{\QQ_p}(\mathscr{G}_K),$ we construct a three-term complex $\Cc_{\varphi, N_{\nabla}}(V),$ using its corresponding $(\varphi,N_\nabla)$-module over the Robba ring. We show (\cf proposition \ref{prop H0 H1 of complex C_R(V)}) that its $\H^i$ is isomorphic to $\iLim{n}\H^i(\mathscr{G}_{K_n},V)$ for $i\in \{0,1\},$ and we construct a pairing analogous to the one which gives rise to the Tate duality when $k$ is finite.
	
	\medskip
If $V$ is a crystalline representation with Hodge-Tate weights $\HT(V)\subset\{2,\ldots,h\}$ for some $h\in\NN_{\geq2}$, we construct similarly a three-term complex $\Cc_{\varphi, \partial_{\tau}}(V)$ from its corresponding $(\varphi,\partial_{\tau})$-module over the Robba ring, which has similar results for $\H^i, i\in \{ 0, 1\}$ (\cf proposition \ref{prop H0 H1 of complex C_R(V) partial tau}) and construct similarly a pairing when $k$ is finite.

\section{The Robba ring}

\subsection{The rings \texorpdfstring{$\widetilde{\BB}^{I}$}{BI} and \texorpdfstring{$\BB^{I}$}{BI tilde}}\label{subsection BI BI tilde}

\begin{definition}(\cf \cite[\S 2.1]{Ber02})
Let $r$ and $s$ be two elements of $\NN_{\geq 0}\big[1/p\big]\cup \{+\infty\}$ such that $r\leq s$, we put
	\begin{align*}
	\widetilde{\AA}^{[r, s]}&= \widetilde{\AA}^{+} \bigg\{ \frac{p}{[\eta]^r}, \frac{[\eta]^s}{p}  \bigg\} \\
	&=\widetilde{\AA}^{+}\big\{ X, Y  \big\}\big/ \big( [\eta]^rX-p, pY-[\eta]^s, XY-[\eta]^{s-r}\big)\\
	 \widetilde{\BB}^{[r, s]}&=\widetilde{\AA}^{[r, s]}\big[1/p\big],
	\end{align*}

 where $[x]^r$ is defined to be $[x^r]$ when $r$ is not an integer and $\widetilde{\AA}^{+}\big\{ X, Y  \big\}$ is the $p$-adic completion of $\widetilde{\AA}^{+}[X, Y].$ As convention, we put $\frac{p}{[\eta]^{+\infty}}=\frac{1}{[\eta]}$ and $\frac{[\eta]^{+\infty}}{p}=0$. \ft{Recall that $\eta=\varepsilon -1\in \Oo_{C^{\flat}}$.}
\end{definition}

\begin{remark} 
\item (1) (\cf \cite[Lemma 2.4]{Ber02})
All elements of $\widetilde{\AA}^{[r, s]}$ can be written in the form 
$$ \sum_{k\geq 0}\Bigg( \frac{p}{[\eta]^{r}} \Bigg)^k a_k + \sum_{k\geq 0} \Bigg( \frac{[\eta]^s}{p} \Bigg)^k b_k.$$
        with $(a_k), (b_k)$ two sequences in $\widetilde{\AA}^{+}$ that tend to $0$.
\item (2) (\cf \cite[Lemma 2.5]{Ber02})
For any $r_1 \leq r_2 \leq s_2 \leq s_2$, we have an injective map $\widetilde{\AA}^{[r_1, s_1]}\to \widetilde{\AA}^{[r_2, s_2]}$. 
\end{remark}

\begin{definition} (\cf \cite[\S 2.1]{Ber02})
For any interval $I$ of $\RR \cup \, \{ +\infty \}$, we define 
\[\widetilde{\BB}^{I} :=\bigcap_{[r, s]\subset I}  \widetilde{\BB}^{[r, s]}. \]
\end{definition}

\begin{remark}
\item (1) (\cf \cite[\S 2.1]{Ber02}) For any two closed intervals $I \subset J$, we define a $p$-adic valuation $V_I$ over $\widetilde{\BB}^{J}$ (notice $\widetilde{\BB}^{J}\subset \widetilde{\BB}^{I}$) by putting $V_I(x)=0$ if and only if $x\in \widetilde{\AA}^{I}\setminus p\widetilde{\AA}^{I}$ and that $\im V_I=\ZZ$. Remark that $\widetilde{\BB}^I$ is a $p$-adic Banach space and the completion of $\widetilde{\BB}^J$ with respect to $V_I$ is $\widetilde{\BB}^I$.
\item (2) (\cf \cite[\S 1.3]{Poy21})
	For $r\geq 0,$ we define a valuation $V(\cdot, r)$\index{$V(\cdot, r)$} on $\widetilde{\BB}^{+}\big[1/[\eta]\big]$ by 
	\[ V(x, r)=\inf \limits_{k\in \ZZ}\Big(k+\frac{p-1}{pr}v^{\flat}(x_k)\Big) \]
	for $x=\sum\limits_{\substack{{k\gg -\infty}}}p^k[x_k].$ If now $I$ is a closed subinterval of $[0, +\infty),$ we put $V(x, I)=\inf \limits_{\substack{{r\in I}}}V(x, r).$\index{$V(\cdot, I)$} Then $\widetilde{\BB}^{I}$\index{$\widetilde{\BB}^{I}$} is also the completion of $\widetilde{\BB}^{+}\big[1/[\eta]\big]$ for the valuation $V(\cdot, I)$ if $0\not\in I,$ and the completion of $\widetilde{\BB}^{+}$ for $V(\cdot, I)$ if $I=[0,r].$ In both cases, we can recover $\widetilde{\AA}^{I}$ as the ring of integer of $\widetilde{\BB}^{I}$ with respect to the valuation $V(\cdot, I)$.
	\item (3) (\cf \cite[Lemma 2.7]{Ber02}) If $I\subset [r, +\infty]$ then $\widetilde{\BB}^{\dagger, r}\subset \widetilde{\BB}^{I}$ and if $x\in \widetilde{\BB}^{\dagger, r}$ is written $x=\sum p^k[x_k]$, then $V_I(x)=\lfloor {V(x, I)} \rfloor$.
\end{remark}

\begin{definition}($\cf$ \cite[Definition 2.1.8]{GP21})
	Suppose $r\in \ZZ_{\geq 0}[1/p],$ and let $x=\sum\limits_{i\geq i_0}p^i[x_i]\in \widetilde{\BB}^{[r, +\infty]}$ with $i_0\in \ZZ$ and $x_i\in C^\flat$ for all $i\geq i_0$. Denote $w_k(x)=\inf \limits_{i<k}\{ v^{\flat}(x_i) \}.$\index{$w_k$} Put
	\[ W^{[s, s]}(x)=\inf \limits_{k\geq i_0} \Big\{ k, \frac{p-1}{ps}\cdot v^{\flat}(x_k) \Big\} =\inf \limits_{k\geq i_0}\Big\{ k, \frac{p-1}{ps}\cdot w_k(x)  \Big\};  \]\index{$W^{[s, s]}$}this is a well-defined valuation ($\cf$ \cite[Proposition 5.4]{Col08}). For $I\subset [r, +\infty)$ a non-empty closed interval such that $I\neq [0, 0],$ let 
	\[ W^I(x):=\inf \limits_{\alpha\in I, \alpha\neq 0}\big\{ W^{[\alpha, \alpha]}(x)  \big\}. \]\index{$W^I$}
\end{definition}

\begin{remark}
	Notice that $W^{[s, s]}$ is a variant of $p$-adic valuation, as the latter is $\inf \limits_{\{k\in \ZZ; x_k\neq 0 \}}\{k\}.$
\end{remark}

\begin{definition} (1) When $r\in \ZZ_{\geq 0}\big[1/p\big],$ we put
	\[ \AA^{[r, +\infty]}:=\AA \cap   \widetilde{\AA}^{[r, +\infty]}, \]\index{$\AA^{[r, +\infty]}$}
	\[ \BB^{[r, +\infty]}:=\BB \cap\ \widetilde{\BB}^{[r, +\infty]}. \]\index{$\BB^{[r, +\infty]}$}
	\item (2) When $r, s\in \ZZ_{\geq0}\big[1/p\big]$ and $s\neq 0,$ put $\BB^{[r, s]}$\index{$\BB^{[r, s]}$} the closure of $\BB^{[r, +\infty]}$ in $\widetilde{\BB}^{[r, s]}$ with respect to the topology given by the valuation $W^{[r, s]}.$ Put $\AA^{[r, s]}:=\BB^{[r, s]}\cap\ \widetilde{\AA}^{[r, s]},$ this is the ring of integers of $\BB^{[r, s]}.$
	\item (3) When $r\in \ZZ_{\geq 0}\big[1/p\big],$ put 
	\[ \BB^{[r, +\infty)}=\bigcap_{n\geq 0} \BB^{[r, s_n]},  \]\index{$\BB^{[r, +\infty)}$}
	where $s_n\in \ZZ_{>0}\big[1/p\big]$ and $\Lim{n\to +\infty}s_n = +\infty$ (\cf \cite[Definition 2.1.4]{GP21}).
\end{definition}

\begin{notation}($\cf$ \cite[\S 1.3]{Poy21} and \cite[Definition 3.4.6]{GP21})
	\item (1)	Let $I$ be an interval. When $\widetilde{\BB}^{I}$ (resp. $\widetilde{\AA}^{I}$) is defined, put $\widetilde{\BB}^{I}_{K_{\pi}}:=(\widetilde{\BB}^{I})^{\mathscr{G}_{K_{\pi}}}$\index{$\widetilde{\BB}^{I}_{K_{\pi}}$} (resp. $\widetilde{\AA}^{I}_{K_{\pi}}:=(\widetilde{\AA}^{I})^{\mathscr{G}_{K_{\pi}}}$).\index{$\widetilde{\AA}^{I}_{K_{\pi}}$}
	Similarly, put $\BB^{I}_{K_{\pi}}:=(\BB^{I})^{\mathscr{G}_{K_{\pi}}}$\index{$\BB^{I}_{K_{\pi}}$} (resp. $\AA^{I}_{K_{\pi}}:=(\AA^{I})^{\mathscr{G}_{K_{\pi}}}$).\index{$\AA^{I}_{K_{\pi}}$}
	\item (2) We put
	\[\widetilde{\BB}_{\rig}^{\dagger, r}=\widetilde{\BB}^{[r,+\infty)}, \quad \widetilde{\BB}^{\dagger}_{\rig}=\bigcup\limits_{r\geq 0}\widetilde{\BB}_{\rig}^{\dagger, r},  \]\index{$\widetilde{\BB}_{\rig}^{\dagger, r}$} \index{$\widetilde{\BB}^{\dagger}_{\rig}$}
	\[ \BB^{\dagger, r}_{\rig}:=\BB^{[r, +\infty)}, \quad  \BB^{\dagger}_{\rig}:=\bigcup\limits_{r\geq 0}\BB^{\dagger, r}_{\rig}, \]\index{$\BB^{\dagger, r}_{\rig}$} \index{$\BB^{\dagger}_{\rig}$}
	\[ \BB_{\rig, K_{\pi}}^{\dagger, r}=\BB_{K_{\pi}}^{ [r, +\infty )},  \quad  \BB_{\rig, K_{\pi}}^{\dagger}=\bigcup\limits_{r\geq0}\BB_{\rig, K_{\pi}}^{\dagger, r}. \]\index{$\BB_{\rig, K_{\pi}}^{\dagger, r}$} \index{$\BB_{\rig, K_{\pi}}^{\dagger}$}
	 
\end{notation}

\begin{remark}\label{rem robba algebraic def}	
	We have \[ \BB^{\dagger}_{\rig, K_{\pi}}=\bigcup\limits_{r\geq 0}\bigcap_{\substack{s>r\\ s\in \ZZ[1/p]}}\BB_{K_{\pi}}^{ [r, s ] }. \]	
\end{remark}

\subsection{Relation with Laurent series}

\begin{definition}\label{def Laurent series with T}\label{def Robba ring power series}
(1)	Let $I$ be as in the last section. We put
	\[ \BB^{I}(K_{\pi})=\Big\{ \sum\limits_{k\in\ZZ} a_kT^k ;\,  a_k\in\W(k)\big[1/p\big], \,  \big(\forall \rho\in I \big)\ \,  \Lim{|k|\to \infty}v_p(a_k)+\frac{p-1}{pe}\cdot\frac{k}{\rho} =+\infty    \Big\}.   \]\index{$\BB^{I}(K_{\pi})$}
\item (2) Let $r\geq 0$ and we put	$$ \mathcal{R}_r=\bigg\{  \sum\limits_{i\in \mathbf{Z}}a_iu^i; \, a_i\in \W(k)\big[1/p\big], \, \big(\forall  \rho\in  [p^{-r}, 1) \big) \, \Lim{i\to \pm \infty} |a_i|\rho^i=0   \bigg\}. $$\index{$\mathcal{R}_r$}
		In other words, elements of $\mathcal{R}_{r}$ are Laurent series $ \sum\limits_{i\in \mathbf{Z}}a_iu^i$ that converge on the annulus $p^{-r}\leq |u|<1.$ We define \emph{Robba ring} to be $$ \mathcal{R}=\bigcup\limits_{r>0}\mathcal{R}_r.  $$\index{$\mathcal{R}$}
	In other words, elements of $\mathcal{R}$ are Laurent series $ \sum\limits_{i\in \mathbf{Z}}a_iu^i$ that converge on the annulus $p^{-r}\leq |u|<1$ for some $r\geq0.$ 
\item (3) For any $0< \rho < 1$ and $x= \sum\limits_{i\in \mathbf{Z}}a_iu^i\in \Rr,$ we define the \emph{$\rho$-Gauss norm} over $\mathcal{R}$ as follows:
	$$  \big|  x \big|_{\rho}:=\sup_i\big\{ |a_i|\rho^i \big\}.  $$\index{$\big|  \cdot  \big|_{\rho}$}
\end{definition}

\begin{remark}
	\item (1) The ring $\mathcal{R}_r$ carries a Fréchet topology, in which a sequence converges if and only if it converges under the $\rho$-Gauss norm for all $\rho \in {[}p^{-r}, 1{)}.$ (It is complete for this topology.)
	\item (2) The ring $\mathcal{R}$ carries a \emph{limit-of-Fréchet topology}, or \emph{LF topology}. This topology is defined on $\mathcal{R}$ by taking the locally convex direct limit of the $\mathcal{R}_{r}$ (each equipped with the Fréchet topology). In particular, a sequence converges in $\mathcal{R}$ if it is a convergent sequence in $\mathcal{R}_r$ for some $r>0.$
\end{remark}

\begin{proposition}
	If the endpoints of an interval $I$ lie in $\ZZ_{>0}\big[1/p\big],$ the map $T\mapsto u$ induces an isomorphism 
	\[ \BB^{I}(K_{\pi})\simeq \BB_{K_{\pi}}^{I}  \]
	hence we have an equality $\Rr=\BB^{\dagger}_{\rig, K_{\pi}}$ by remark \ref{rem robba algebraic def}. 
\end{proposition}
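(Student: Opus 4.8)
The plan is to unwind the definitions on both sides and show directly that the correspondence $T\mapsto u$ (extended $\W(k)[1/p]$-linearly and continuously to Laurent series) carries $\BB^{I}(K_{\pi})$ bijectively and bicontinuously onto $\BB_{K_{\pi}}^{I}$. First I would recall that $\mathcal{O}_{\mathcal{E}}$, hence $\BB_{K_\pi}=\mathcal{E}$, embeds in $\W(C^{\flat})$ via $u\mapsto[\widetilde{\pi}]$, and that by definition $\BB_{K_\pi}^{[r,s]}$ is the closure of $\BB_{K_\pi}^{[r,+\infty]}=\BB_{K_\pi}\cap\widetilde{\BB}^{[r,+\infty]}$ inside $\widetilde{\BB}^{[r,s]}$ for the $W^{[r,s]}$-topology, and $\BB_{K_\pi}^{I}=\bigcap_{[r,s]\subset I}\BB_{K_\pi}^{[r,s]}$. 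So the task splits into two parts: identify the image of the formal Laurent series under the embedding, and match the convergence conditions with the Gauss-norm/valuation conditions defining $\BB^{I}(K_\pi)$.

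The key computation is the valuation estimate: for a monomial $a_k u^k$ with $a_k\in\W(k)[1/p]$, one computes $W^{[\rho,\rho]}\big([\,\overline{a_k}\,]u^k\big)$ (or rather $V(\cdot,\rho)$ in Poyeton's normalization) and checks that it equals, up to the normalization constant, $v_p(a_k)+\tfrac{p-1}{pe}\cdot\tfrac{k}{\rho}$ — this is exactly the quantity appearing in Definition \ref{def Robba ring power series}, using $v^\flat(\widetilde{\pi})=1/e$. Granting this, a Laurent series $\sum a_k u^k$ lies in $\widetilde{\BB}^{[r,s]}$ (equivalently converges for all $\rho\in[r,s]$, equivalently the tails go to $+\infty$ in $W^{[\rho,\rho]}$ for each such $\rho$) precisely when $\Lim_{|k|\to\infty} v_p(a_k)+\tfrac{p-1}{pe}\cdot\tfrac{k}{\rho}=+\infty$ for all $\rho\in I$. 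One then checks the image actually lands in $\BB_{K_\pi}$ (i.e. is $\mathscr{G}_{K_\pi}$-invariant and in $\BB=\mathcal{O}_{\widehat{\mathcal{E}^{\ur}}}[1/p]$): invariance is clear since $\mathscr{G}_{K_\pi}$ fixes $[\widetilde{\pi}]$, and membership in $\BB$ follows because $\mathcal{O}_{\mathcal{E}}$ already sits inside $\BB$ and finite truncations are dense. For surjectivity, I would use that $\BB_{K_\pi}^{[r,s]}$ is by definition the closure of $\BB_{K_\pi}^{[r,+\infty]}$, reduce to approximating elements of $\BB_{K_\pi}^{[r,+\infty]}=\mathcal{E}\cap\widetilde{\BB}^{[r,+\infty]}$ by Laurent polynomials in $u$ (which is the defining property of $\mathcal{O}_{\mathcal{E}}$ as a Cohen ring for $k(\!(u)\!)$), and observe that the approximating sequences are Cauchy for every $W^{[\rho,\rho]}$, $\rho\in I$, with limits in $\BB^{I}(K_\pi)$ by the estimate above. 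Injectivity is immediate from the injectivity of $\mathcal{O}_{\mathcal{E}}\hookrightarrow\W(C^\flat)$ together with the separatedness of $W^{[\rho,\rho]}$.

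Finally I would deduce $\Rr=\BB^{\dagger}_{\rig,K_\pi}$ by taking the appropriate union and intersection: unwinding Definition \ref{def Robba ring power series}(2), $\Rr=\bigcup_{r>0}\mathcal{R}_r$ and $\mathcal{R}_r=\bigcap_{\rho\in[p^{-r},1)}\{\text{series converging at }\rho\}$, while by Remark \ref{rem robba algebraic def} $\BB^{\dagger}_{\rig,K_\pi}=\bigcup_{r\geq0}\bigcap_{s>r}\BB^{[r,s]}_{K_\pi}$; matching the two indexing conventions for the radius (the $\rho$ of the Gauss norm versus the interval endpoint $s$, related by the usual reciprocal rescaling) and invoking the isomorphism $\BB^{I}(K_\pi)\simeq\BB^{I}_{K_\pi}$ for $I$ with endpoints in $\ZZ_{>0}[1/p]$ gives the claim; one checks that restricting the intersection to rational endpoints $s\in\ZZ[1/p]$ is harmless since these are cofinal.

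The main obstacle I anticipate is the bookkeeping around the two competing normalizations of the valuations — Berger's $V_I$, Poyeton's $V(\cdot,I)$, and the $W^{[s,s]}$ of Gao--Poyeton — and verifying that the monomial estimate $W^{[\rho,\rho]}([\overline{a_k}]u^k)\leftrightarrow v_p(a_k)+\tfrac{p-1}{pe}\cdot\tfrac{k}{\rho}$ holds with the exact constants, including the interplay of the Teichmüller lift of a general element of $\W(k)[1/p]$ (not just a Teichmüller element) with $w_k(\cdot)$. Once that dictionary is pinned down, the surjectivity via density of Laurent polynomials and the passage to $\Rr=\BB^{\dagger}_{\rig,K_\pi}$ are essentially formal.
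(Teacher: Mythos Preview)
The paper does not actually prove this proposition: its entire proof is the single line ``$\cf$ \cite[Lemma 2.2.7 (2)]{GP21}.'' Your proposal is therefore not competing with an argument in the paper but rather sketching what one would find upon chasing that citation, and in outline it is the standard proof of this fact (going back to Berger and Colmez in the cyclotomic setting): compute the valuation $V(\cdot,\rho)$ on a monomial $a_k[\widetilde{\pi}]^k$ using $v^\flat(\widetilde{\pi})=1/e$, match it with the Gauss-type condition in the definition of $\BB^I(K_\pi)$, and pass to limits of Laurent polynomials.

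One point to tighten: you write that the image ``actually lands in $\BB_{K_\pi}$'' and justify membership in $\BB$. But $\BB_{K_\pi}^{[r,s]}$ is by definition the \emph{closure} of $\BB_{K_\pi}^{[r,+\infty]}$ inside $\widetilde{\BB}^{[r,s]}$, so a general element need not lie in $\BB_{K_\pi}=\mathcal{E}$ at all; what you must show is that the image of a convergent Laurent series lies in that closure, which is exactly what your density-of-Laurent-polynomials argument gives. So drop the intermediate claim about $\BB$ and go directly to the closure statement. With that adjustment (and the valuation bookkeeping you already flagged), the argument goes through and is essentially the content of the cited lemma in Gao--Poyeton.
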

\begin{proof}
$\cf$ \cite[Lemma 2.2.7 (2)]{GP21}.
\end{proof}

\begin{notation}($\cf$ \cite[Proposition 2.2.5]{Poy21})\label{not R-tau}	
	We put $\Rr_{\tau}=(\widetilde{\BB}^{\dagger}_{\rig})^{\mathscr{G}_L}=\widetilde{\BB}^{\dagger}_{\rig, L}.$ \index{$\Rr_{\tau}$} Notice that we 
	have 
		$$\BB_{K_{\pi}}^\dagger\subset\mathcal{R}\subset\mathcal{R}_\tau\subset\widetilde{\BB}_{\rig}^\dagger.$$
\end{notation}

\section{The \texorpdfstring{$(\varphi, \tau)$}{(phi,tau)}-modules over \texorpdfstring{$(\Rr, \Rr_{\tau})$}{(R,R\texttinferior\textainferior\textuinferior)}}

\begin{definition}\label{def R,R-rau}
	A \emph{$(\varphi, \tau)$-module over $(\Rr, \Rr_{\tau})$} consists of
	\item (i) an \'etale $\varphi$-module $D^{\dagger}_{\rig}$ over $\Rr$;
	\item (ii) a $\tau$-semi-linear endomorphism $\tau_D$ on $D^{\dagger}_{\rig,\tau}:=\Rr_{\tau}\otimes_{\Rr} D^{\dagger}_{\rig}$ which commutes with $\varphi_{\Rr_{\tau}}\otimes \varphi_{D^{\dagger}_{\rig}}$ (where $\varphi_{\Rr_{\tau}}$ is the Frobenius map on $\Rr_{\tau}$ and $\varphi_{D^{\dagger}_{\rig}}$ is the Frobenius map on $D^{\dagger}_{\rig}$) and which satisfies:
	\[ (\forall x\in D^{\dagger}_{\rig})\quad  (g\otimes 1)\circ \tau_D(x) = \tau_D^{\chi(g)}(x), \]
	for all $g\in \mathscr{G}_{K_{\pi}}/\mathscr{G}_L$ such that $\chi(g)\in \ZZ_{>0}.$
	The corresponding category is denoted $\Mod_{\Rr, \Rr_{\tau}}(\varphi, \tau).$ \index{$\Mod_{\Rr, \Rr_{\tau}}(\varphi, \tau)$}
\end{definition}

\begin{theorem}\label{thm D dagger rig}
		The functors 
		\begin{align*}
		\mathcal{D}^{\dagger}_{\rig}\colon	\Rep_{\QQ_p}(\mathscr{G}_K) &\to \Mod_{\Rr, \Rr_{\tau}}(\varphi, \tau)\\
		V   & \mapsto \mathcal{D}^{\dagger}_{\rig}(V)=\Rr \otimes_{\Ee^{\dagger}} \mathcal{D}^{\dagger}(V)\\
		\mathcal{V}(D^{\dagger}_{\rig})=(\BB_{\rig}^{\dagger} \otimes_{\Rr} D_{\rig}^{\dagger})^{\varphi=1}& \mapsfrom D^{\dagger}_{\rig}
		\end{align*}
		$($with the natural $\tau$-semi-linear endomorphism $\tau_D$ over $\mathcal{D}^{\dagger}_{\rig}(V)_{\tau}=\Rr_{\tau}\otimes_{\Rr} \mathcal{D}^{\dagger}_{\rig}(V))$ establish quasi-inverse equivalences of categories.
\end{theorem}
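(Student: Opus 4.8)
The statement asserts that the functor $\mathcal{D}^\dagger_{\rig}$ of "extension of scalars to the Robba ring" from $\Rep_{\QQ_p}(\mathscr{G}_K)$ to $\Mod_{\Rr,\Rr_\tau}(\varphi,\tau)$ is an equivalence of categories, with the displayed quasi-inverse. The plan is to reduce this to the overconvergent equivalence already established in Theorem \ref{prop equivalence cat overconvergent: normal case} (i.e.\ $\mathcal{D}^\dagger\colon\Rep_{\QQ_p}(\mathscr{G}_K)\simeq\Mod_{\BB_{K_\pi}^\dagger,\widetilde{\BB}_L^\dagger}(\varphi,\tau)$), so the only genuinely new content is that passing from the bounded-Robba coefficients $\BB_{K_\pi}^\dagger=\Ee^\dagger$ (together with $\Ee_\tau^\dagger=\widetilde{\BB}_L^\dagger$) to the Robba coefficients $\Rr=\BB_{\rig,K_\pi}^\dagger$ (together with $\Rr_\tau=\widetilde{\BB}_{\rig,L}^\dagger$) is harmless, i.e.\ induces an equivalence on the categories of étale $(\varphi,\tau)$-modules. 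Recall $\Ee^\dagger\subset\Rr$ and $\Ee_\tau^\dagger\subset\Rr_\tau$ by Notation \ref{not R-tau}.

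First I would treat the $\varphi$-module part. This is the classical theorem of Kedlaya (slope filtration/Frobenius descent over the Robba ring): the base-change functor $M\mapsto\Rr\otimes_{\Ee^\dagger}M$ from étale $\varphi$-modules over $\Ee^\dagger$ to étale $\varphi$-modules over $\Rr$ is an equivalence of categories, a quasi-inverse being $N\mapsto N^{\mathrm{bd}}$, the sub-$\Ee^\dagger$-module of bounded elements (equivalently, one recovers $M$ inside $\Rr\otimes_{\Ee^\dagger}M$ as the part where $\varphi$ has slope zero). Concretely, full faithfulness follows because $\Hom$ of étale $\varphi$-modules is computed by $\varphi$-invariants and $(\Rr\otimes_{\Ee^\dagger}\Ee^\dagger)^{\varphi=1}=(\Ee^\dagger)^{\varphi=1}=\QQ_p$ (no new $\varphi=1$ vectors appear over $\Rr$, since $\widetilde{\BB}_{\rig}^{\dagger,\varphi=1}=\widetilde{\BB}^{\dagger,\varphi=1}$); essential surjectivity is exactly Kedlaya's descent. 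I would cite this and also invoke the analogous statement over $\Rr_\tau$ versus $\Ee_\tau^\dagger$ — here one can either quote a relative version or, more in the spirit of the paper, deduce it from the perfect-coefficient theory by the same flat/faithfully-flat base change arguments used in Theorem \ref{prop equivalence cat overconvergent: normal case} (using that $\Rr_\tau$ is faithfully flat over $\Rr$, and $\widetilde{\BB}_{\rig}^\dagger$ over $\Rr_\tau$, paralleling the role of $\widetilde{\AA}_L^\dagger$ over $\AA_{K_\pi}^\dagger$).

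Next I would assemble the $(\varphi,\tau)$-module structure. Given $(D^\dagger,D^\dagger_\tau)\in\Mod_{\Ee^\dagger,\Ee_\tau^\dagger}(\varphi,\tau)$, set $D^\dagger_{\rig}=\Rr\otimes_{\Ee^\dagger}D^\dagger$ and $D^\dagger_{\rig,\tau}=\Rr_\tau\otimes_{\Ee_\tau^\dagger}D^\dagger_\tau$; the operator $\tau_D$ and the cocycle relation $(g\otimes1)\circ\tau_D=\tau_D^{\chi(g)}$ extend uniquely by $\Rr_\tau$-semilinearity (and continuity for the LF-topology), because $\tau$ and $\mathscr{G}_{K_\pi}$ act on $\widetilde{\BB}_{\rig}^\dagger$ compatibly with the action on $\widetilde{\BB}^\dagger$ (cf.\ Remark \ref{rem dagger has a GK action} and the stability of $\widetilde{\BB}_{\rig}^\dagger$). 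Conversely, given $(D^\dagger_{\rig},D^\dagger_{\rig,\tau})\in\Mod_{\Rr,\Rr_\tau}(\varphi,\tau)$, Kedlaya descent produces an étale $\varphi$-module $D^\dagger=(D^\dagger_{\rig})^{\mathrm{bd}}$ over $\Ee^\dagger$ with $\Rr\otimes_{\Ee^\dagger}D^\dagger\simeq D^\dagger_{\rig}$; one then checks $\tau_D$ preserves $D^\dagger_\tau:=\Ee_\tau^\dagger\otimes_{\Ee^\dagger}D^\dagger$ inside $D^\dagger_{\rig,\tau}$ — this is where one must verify that $\tau_D$ carries bounded/slope-zero elements to bounded ones, which holds because $\tau_D$ commutes with $\varphi$ and hence preserves the Frobenius slope filtration. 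Finally, chaining with Theorem \ref{prop equivalence cat overconvergent: normal case} gives the desired equivalence, and the composite quasi-inverse is $D^\dagger_{\rig}\mapsto\big(\BB^\dagger\otimes_{\Rr}D^\dagger_{\rig}\big)^{\varphi=1}$ after identifying $\BB^\dagger\otimes_{\Rr}D^\dagger_{\rig}\simeq\BB^\dagger\otimes_{\Ee^\dagger}D^\dagger$, which recovers $\mathcal{V}$ as stated.

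\textbf{Main obstacle.} The heart of the argument is the Frobenius-descent (Kedlaya) step over the Robba ring, and in particular checking that it is compatible with the auxiliary operator $\tau_D$: one needs $\tau_D$ to stabilise the canonical étale (unit-root) sub-$\varphi$-module inside $\Rr_\tau\otimes_{\Rr}D^\dagger_{\rig}$, which I would justify via the functoriality/uniqueness of the slope filtration under the $\varphi$-equivariant, $\Rr_\tau$-semilinear map $\tau_D$. Everything else (full faithfulness, the bookkeeping with $\Ee_\tau^\dagger\subset\Rr_\tau\subset\widetilde{\BB}_{\rig}^\dagger$, unique extension of $\tau_D$ by continuity) is routine base-change and is closely modelled on the proof of Theorem \ref{prop equivalence cat overconvergent: normal case}.
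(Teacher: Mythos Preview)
Your proposal is correct and follows essentially the same route as the paper: reduce to the overconvergent equivalence of Theorem~\ref{prop equivalence cat overconvergent: normal case}, invoke Kedlaya's theorem for the \'etale $\varphi$-module part, and then check that $\tau_D$ descends. The only cosmetic difference is in the justification of the last step: where you argue that $\tau_D$ preserves the slope filtration because it commutes with $\varphi$, the paper phrases this more directly as ``$D_\tau^\dagger=\Ee_\tau^\dagger\otimes_{\Ee^\dagger}D^\dagger$ is the unique $\varphi$-stable $\Ee_\tau^\dagger$-lattice in $D_{\rig,\tau}^\dagger$,'' which avoids any appeal to a relative Kedlaya statement over $\Rr_\tau$ and makes the stability under $\tau_D$ immediate.
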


\begin{proof}
	By theorem \ref{prop equivalence cat overconvergent: normal case}, it is enough to show that the functor
	\begin{align*}
		\Mod_{\Ee^{\dagger}, \Ee^{\dagger}_{\tau}}(\varphi, \tau) &\to \Mod_{\Rr, \Rr_{\tau}}(\varphi, \tau)\\
		(D^{\dagger}, D_{\tau}^{\dagger}) &\mapsto (\Rr\otimes_{\Ee^{\dagger}}D^{\dagger}, \Rr_{\tau}\otimes_{\Ee^{\dagger}}D^{\dagger})
	\end{align*}
	is an equivalence of category. By \cite[Theorem 6.33]{Ked05}, the functor
	\begin{align*}
		\Mod_{\Ee^{\dagger}}^{\text{\'et}}(\varphi) &\xrightarrow{(\ast)} \Mod_{\Rr}^{\text{\'et}}(\varphi)\\
		D^{\dagger} &\mapsto \Rr\otimes_{\Ee^{\dagger}}D^{\dagger}
	\end{align*}
	is an equivalence of categories, where $\Mod_{\Ee^{\dagger}}^{\text{\'et}}(\varphi)$ is the category of \'etale $\varphi$-modules over $\Ee^{\dagger}$ and $\Mod_{\Rr}^{\text{\'et}}(\varphi)$ is the category of \'etale $\varphi$-modules over $\Rr$ ($\ie$pure of slope $0$). Let $(D_1^{\dagger}, D_{1, \tau}^{\dagger}), (D_2^{\dagger}, D_{2, \tau}^{\dagger})\in \Mod_{\Ee^{\dagger}, \Ee^{\dagger}_{\tau}}(\varphi, \tau)$ and 
	\[f: \Rr\otimes_{\Ee^{\dagger}}D_1^{\dagger} \to \Rr\otimes_{\Ee^{\dagger}}D_2^{\dagger}\] be a morphism in $\Mod_{\Rr, \Rr_{\tau}}(\varphi, \tau).$ By fully-faithfullness of $(\ast),$ $f$ comes from a unique map $f_0: D_1^{\dagger}\to D_2^{\dagger}$ in $\Mod_{\Ee^{\dagger}, \Ee^{\dagger}_{\tau}}(\varphi, \tau)$ by extension of scalars. The latter induces a map $1\otimes f_0: D_{1, \tau}^{\dagger}\to D_{2, \tau}^{\dagger}$ that inserts in the commutative square:
	\[ \xymatrix{ D_{1, \tau}^{\dagger} \ar[r]^{1\otimes f_0}\ar@{^(->}[d] & D_{2, \tau}^{\dagger}\ar@{^(->}[d]\\
		\Rr_{\tau}\otimes_{\Ee^{\dagger}}D_{1}^{\dagger} \ar[r]^{1\otimes f}& 	\Rr_{\tau}\otimes_{\Ee^{\dagger}}D_{2}^{\dagger},
	}  \]
which shows that $1\otimes f_0$ is compatible with the operator $\tau_{D_1}$ and $\tau_{D_2},$ $\ie$that $f_0$ is a morphism in $\Mod_{\Ee^{\dagger}, \Ee^{\dagger}_{\tau}}(\varphi, \tau).$ 

\medskip

Let $(D_{\rig}^{\dagger}, D_{\rig, \tau}^{\dagger})\in \Mod_{\Rr, \Rr_{\tau}}(\varphi, \tau).$ By the essential surjectivity of $(\ast),$ there exists $D^{\dagger}\in\Mod_{\Ee^{\dagger}}^{\text{\'et}}(\varphi),$ such that $D_{\rig}^{\dagger}\simeq \Rr\otimes_{\Ee^{\dagger}}D^{\dagger}$ as a $\varphi$-module. We have 
\[D_{\tau}^{\dagger}=\Ee_{\tau}^{\dagger}\otimes_{\Ee^{\dagger}} D^{\dagger} \subset D_{\rig, \tau}^{\dagger}.  \]
Notice that $D_{\tau}^{\dagger}$ is the unique $\varphi$-stable $\Ee_{\tau}^{\dagger}$-lattice of $D_{\rig, \tau}^{\dagger}$ (as $\Rr_{\tau}$ is faithfully flat over $\Ee^{\dagger}$): this implies that the $\tau_D$ map over $D_{\rig, \tau}^{\dagger}$ maps $D_{\tau}^{\dagger}$ into itself, so that $(D^{\dagger}, D_{\tau}^{\dagger})\in \Mod_{\Ee^{\dagger}, \Ee^{\dagger}_{\tau}}(\varphi, \tau).$ 
\end{proof}

\begin{remark}
		We can define similarly the category $\Mod_{\Rr, \Rr_{\tau}}(\varphi, \tau^{p^r})$\index{$\Mod_{\Rr, \Rr_{\tau}}(\varphi, \tau^{p^r})$} for $r\in \NN.$ Notice that for any $(D_{\rig}^{\dagger}, D^{\dagger}_{\rig, \tau})\in \Mod_{\Rr, \Rr_{\tau}}(\varphi, \tau^{p^r})$, there is a $\tau^{p^r}$-semilinear endomorphism $\tau_D^{p^r}$ on $D^{\dagger}_{\rig,\tau}$ rather than a $\tau$-semilinear endomorphism $\tau_D$ (which is the case in $\Mod_{\Rr, \Rr_{\tau}}(\varphi, \tau)).$
\end{remark}

\begin{definition}(\cf \cite[\S 1.2]{Poy21}) Let $G$ be a $p$-adic Lie group and let $W$ be a Banach $\QQ_p$-representation of $G.$ Let $H$ be an open subgroup of $G$ that admits a system of coordinates that induces an analytic bijection $H\to \ZZ_p^{d}.$
\item (1) We say that $x\in W$ is \emph{$H$-analytic} if the orbit map $H\to W; \, g\mapsto g(x)$ is analytic (\cf \cite[\S2]{Ber16}). 
\item (2) We say that $x\in W$ is \emph{locally analytic} $($for $G)$, if the orbit map $G\to W; \, g\mapsto g(x)$ is locally analytic, $\ie$there exists some open $H\leqslant G$ such that $x$ is $H$-analytic. We denote by $W^{\textsf{la}}$\index{$W^{\textsf{la}}$} the set of locally analytic elements in $W.$
\item (3) Let $W$ be a Fr\'echet space, whose topology is defined by a sequence of semi-norms $(p_i)_{i\geq 1}.$ We then denote $W_i$ the completion of $W$ for $p_i,$ hence we have $W=\pLim{i\geq 1}W_i.$ We say that $w\in W$ is \emph{pro-analytic} if its image $\pi_i(w)\in W_i$ is locally analytic for all $i.$ We denote by $W^{\textsf{pa}}$\index{$W^{\textsf{pa}}$} the set of pro-analytic elements in $W.$
\end{definition}

\begin{notation} 
\item (1)
($\cf$ \cite{Kis06})\label{nota O}
	Consider the open rigid analytic disc $D{[} 0, 1 {)}$ over $\W(k)\big[1/p \big]$ of outer radius 1 with coordinate $u.$ For any interval $I$ in  ${[} 0, 1 {)},$ we consider the admissible open subspace $D(I)\subset D{[} 0, 1 {)}$. We denote 
	\[ \Oo_{I}=\Gamma (D(I), \Oo_{D(I)} ),\]
	 the ring of rigid analytic functions. Notice this is a $\W(k)\big[1/p \big]$-subalgebra of $\W(k)\big[1/p \big][\![u]\!]$. We put \[ \Oo=\Oo_{{[} 0, 1 {)}},\] \index{$\Oo$}\index{$\Oo_{[0, 1)}$} the ring of rigid analytic functions on the disc. Remark that we have $\Oo_{[0, 1)}=\BB_{K_{\pi}}^{[0, +\infty)}$ (\cf \cite[\S 3.4]{GP21}).
\item (2) ($\cf$ \cite{Kis06}) Let $E(u)\in \W(k)[u]$ be the Eisenstein polynomial of $\pi.$ We put 
	\[ c:=p\frac{E(u)}{E(0)}\in \W(k)[u]   \]\index{$c$}
	\[\lambda:=\prod_{n=0}^{\infty}\varphi^n(E(u)/E(0))\in \Oo \]\index{$\lambda$}
	\[t=\log([\varepsilon]):=\sum\limits_{n=1}^{\infty}(-1)^{n-1}\frac{([\varepsilon]-1)^{n}}{n}\in \widetilde{\BB}^{\dagger}_{\rig, L}.\]\index{$t$}
\item (3) For any locally analytic $\mathscr{G}_{K_{\pi}}$-representation ($\cf$ \cite[D\'efinition 1.2.12]{Poy21}), we define the following operators:
	\[ \log (\tau^{p^n}) := -\sum \limits_ {k = 1}^\infty \frac{(1-\tau^{p^n})^k} {k}, \text{ for } n\in \NN \]\index{$\log (\tau^{p^r})$}
	\[ \nabla_\tau := \frac {1} {p^n} \log (\tau^{p^n}), \text{ for } n\gg 0. \] \index{$\nabla_\tau$}
	Remark that the operator $\nabla_{\tau}$ is well-defined on $(\widetilde{\BB}^{\dagger}_{\rig, L})^{\text{pa}}$ (\cf \cite[\S 2.2]{Poy21}). 
\item (4) (\cf \cite[\S 2.2]{Poy21}) We put 
	\[ N_{\nabla}:=\frac{-\lambda}{t}\nabla_\tau. \]\index{$N_{\nabla}$}
	It is showed in \textit{loc. cit.} that $N_{\nabla}$ induces an operator on $\Rr$ (whereas $\nabla_{\tau}$ does not). Remark that later we will extend it to certain $\Rr$-modules, and then to $\mathcal{D}^{\dagger}_{\rig}(V)$ for $V\in \Rep_{\QQ_p}(\mathscr{G}_K).$  
\end{notation}

\begin{remark} (1)  We have $\lambda=\frac{E(u)}{E(0)}\varphi(\lambda).$
	\item (2) The element $c$ we just defined is different from that in \cite{Car13}, there $c=\frac{E(0)}{p}\in \W(k)^{\times}.$
\item (3)
	For any $n\in \NN,$ the element $\prod\limits_{i=0}^n\varphi^i(\frac{E(u)}{E(0)})$ is invertible in $\Rr.$
\begin{proof}
	Notice that $\frac{E(u)}{E(0)}=\frac{E(u)}{pa}$ with $a\in W(k)^{\times}:$ it is hence in $\Ee^{\dagger}.$ Hence $\frac{E(u)}{E(0)}\in \Rr$ is a unit of the Robba ring (\cf \cite[diagram (10.4.3)]{BC09}), thus $\varphi^{n}(\frac{E(u)}{E(0)})$ are units of the Robba ring  for all $n\in\NN,$ and so is $\prod\limits_{i=0}^n\varphi^i(\frac{E(u)}{E(0)}).$
\end{proof}
\item (4) \label{rmq N nabla }($\cf$ \cite[\S 2.2]{Poy21})
	We have $N_{\nabla}=\frac{-\lambda}{t}\nabla_\tau=-u\lambda \frac{d}{du}$ as operators over $\Rr:$ the operator $N_{\nabla}$ we defined coincides with that in \cite[\S  1]{Kis06}.
	\begin{proof}
		By direct computation we have 
		\[ -u\lambda \frac{d}{du}(u^n)=-n\lambda u^n. \]
	On the other hand $\exp(p^r\nabla_{\tau})(u^n)=\tau^{p^r}(u^n)$ for $r\gg 0$ and $\tau^{p^r}(u^n)=[\varepsilon]^{np^r}u^n.$ Using the expansion $\exp(p^r\nabla_{\tau})(u^n)=u^n+p^r\nabla_{\tau}(u^n)+O(p^{2r})$ for $r\gg 0,$ we have  
		\begin{align*}
		\nabla_{\tau}(u^n)&=\Lim{r\to \infty}\frac{[\varepsilon]^{np^r}-1}{p^r}u^n\\
		&=\Lim{r\to \infty}\frac{e^{tnp^r}-1}{p^r}u^n\\
		&=ntu^n.
		\end{align*}
		Hence we have 
		\[	
		\frac{-\lambda}{t}\nabla_\tau(u^n)=-\frac{\lambda}{t}ntu^n=-n\lambda u^n.
		\]
		This finishes the proof.
	\end{proof} 
\end{remark}

\section{The complex \texorpdfstring{$\Cc_{\varphi, N_\nabla}$}{C \textpinferior\texthinferior\textiinferior, N\textninferior\textainferior}}

\begin{definition}(\cf \cite[2.2.1]{Poy21})\label{def N nabla module}
	We define a \emph{$(\varphi, N_{\nabla})$-module over $\Rr$} to be a free $\Rr$-module $D$ endowed with a Frobenius map $\varphi$ and a connection $N_{\nabla}\colon D_{}\to D$ over $N_{\nabla}\colon \Rr\to \Rr,$ $\ie$an additive map such that 
	\[(\forall m\in D)\ (\forall x\in \Rr)\quad  N_{\nabla}(x\cdot m)=N_{\nabla}(x)\cdot m+x\cdot N_{\nabla}(m),\]
	that satisfies $N_{\nabla}\circ \varphi= c\varphi N_{\nabla}.$ The corresponding category is denoted $\Mod_{\Rr}(\varphi, N_{\nabla}).$\index{$\Mod_{\Rr}(\varphi, N_{\nabla})$}
\end{definition}

\begin{proposition}\textup{(\cf \cite[Proposition 2.2.2]{Poy21})}
	Let $V\in \Rep_{\QQ_p}(\mathscr{G}_K),$ then 
	\[ N_{\nabla}(\mathcal{D}^{\dagger}_{\rig}(V))\subset \mathcal{D}^{\dagger}_{\rig}(V).  \]
\end{proposition}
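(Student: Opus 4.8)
The goal is to show that the operator $N_\nabla$, defined on $\Rr$ (and, by extension of scalars, on $\Rr_\tau = \widetilde{\BB}^\dagger_{\rig,L}$) via $N_\nabla = \tfrac{-\lambda}{t}\nabla_\tau$, preserves the sub-$\Rr$-module $\mathcal{D}^\dagger_{\rig}(V) \subset \mathcal{D}^\dagger_{\rig}(V)_\tau = \Rr_\tau \otimes_\Rr \mathcal{D}^\dagger_{\rig}(V)$. The overall strategy is: first observe that $N_\nabla$ is visibly defined on the ambient object $\mathcal{D}^\dagger_{\rig}(V)_\tau$ (or on $\widetilde{\BB}^\dagger_{\rig}\otimes_{\QQ_p} V$), because $\nabla_\tau$ is the normalized logarithm of the $\tau$-action and $V$ is a locally/pro-analytic representation once we pass to $\widetilde{\BB}^\dagger_{\rig}$-coefficients; then one must show that $N_\nabla$ does not move us outside the $\Rr$-structure, i.e.\ that it commutes appropriately with the $\mathscr{G}_{K_\pi}$-action that cuts out $\mathcal{D}^\dagger_{\rig}(V)$ inside $\mathcal{D}^\dagger_{\rig}(V)_\tau$.

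First I would set up $\nabla_\tau$ precisely: on $(\widetilde{\BB}^\dagger_{\rig}\otimes_{\QQ_p}V)^{\mathsf{pa}}$ (pro-analytic vectors for the $\mathscr{G}_L$- or $\mathscr{G}_{K_\zeta}$-action), the operator $\nabla_\tau = \tfrac{1}{p^n}\log(\tau^{p^n})$ for $n\gg 0$ is well-defined, $\QQ_p$-linear, and satisfies the Leibniz rule over $(\widetilde{\BB}^\dagger_{\rig,L})^{\mathsf{pa}}$. Since $\mathcal{D}^\dagger_{\rig}(V) = (\BB^\dagger_{\rig}\otimes_{\QQ_p}V)^{\mathscr{G}_{K_\pi}}$ and $\mathcal{D}^\dagger_{\rig}(V)_\tau = (\widetilde{\BB}^\dagger_{\rig}\otimes_{\QQ_p}V)^{\mathscr{G}_L}$, every element of $\mathcal{D}^\dagger_{\rig}(V)_\tau$ is pro-analytic (this is the content of Poyeton's results on locally analytic vectors, \cite{Poy21}), so $\nabla_\tau$, hence $N_\nabla = \tfrac{-\lambda}{t}\nabla_\tau$, acts on $\mathcal{D}^\dagger_{\rig}(V)_\tau$; here one uses that $\tfrac{\lambda}{t}\in\Rr_\tau$ so the prefactor keeps us in $\Rr_\tau$-coefficients. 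The main point is then: for $x\in \mathcal{D}^\dagger_{\rig}(V)$ and $g\in\mathscr{G}_{K_\pi}$, I want $(g\otimes 1)(N_\nabla x) = N_\nabla x$. Since $g$ fixes $x$, and $g$ conjugates $\tau$ to $\tau^{\chi(g)}$ up to $\mathscr{G}_L$ (cf.\ the remark after the definition of $\tau$: $g\tau g^{-1}$ and $\tau^{\chi(g)}$ have the same image in $\mathscr{G}_K/\mathscr{G}_L$), one computes $g\,\nabla_\tau\, g^{-1} = \chi(g)\,\nabla_\tau$ on pro-analytic vectors — this is the infinitesimal form of $\gamma\tau\gamma^{-1}=\tau^{\chi(\gamma)}$. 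On the other hand, $\lambda\in\Rr\subset\mathcal{R}_\tau$ is $\mathscr{G}_{K_\pi}$-invariant (it is a power series in $u$ with coefficients in $\W(k)[1/p]$), while $t = \log([\varepsilon])$ satisfies $g(t) = \chi(g)\,t$. Hence $g(\tfrac{-\lambda}{t}) = \tfrac{1}{\chi(g)}\cdot\tfrac{-\lambda}{t}$, and the two factors of $\chi(g)$ cancel:
$$(g\otimes 1)(N_\nabla x) = g\!\left(\tfrac{-\lambda}{t}\right)\cdot (g\,\nabla_\tau\, g^{-1})(g x) = \tfrac{1}{\chi(g)}\cdot\tfrac{-\lambda}{t}\cdot \chi(g)\,\nabla_\tau(x) = N_\nabla x.$$
This shows $N_\nabla x \in (\widetilde{\BB}^\dagger_{\rig}\otimes V)^{\mathscr{G}_{K_\pi}} \cap \mathcal{D}^\dagger_{\rig}(V)_\tau$. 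It remains to check that this intersection is exactly $\mathcal{D}^\dagger_{\rig}(V)$, i.e.\ that $\mathcal{D}^\dagger_{\rig}(V)_\tau \cap (\text{stuff fixed by }\mathscr{G}_{K_\pi}) = \mathcal{D}^\dagger_{\rig}(V)$; this follows from the tensor-product description $\mathcal{D}^\dagger_{\rig}(V)_\tau = \Rr_\tau\otimes_\Rr \mathcal{D}^\dagger_{\rig}(V)$ together with $\Rr_\tau^{\mathscr{G}_{K_\pi}} = \Rr$ (the analogue of $F_\tau^{\mathscr{G}_{K_\pi}}=\widehat{F_0^{\rad}}$ and lemma-type statements already used, e.g.\ lemma \ref{lemm gamma=1}), applied after choosing a basis of $\mathcal{D}^\dagger_{\rig}(V)$ over $\Rr$.

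The step I expect to be the main obstacle is the verification that $g\,\nabla_\tau\, g^{-1} = \chi(g)\,\nabla_\tau$ on $\mathcal{D}^\dagger_{\rig}(V)_\tau$ rigorously — this requires that the logarithm series $\log(\tau^{p^n})$ converges on the relevant pro-analytic vectors and that conjugation by $g$ can be passed through the (infinite) series, which is where one must invoke the fine structure of locally analytic vectors in $\widetilde{\BB}^\dagger_{\rig}$ from \cite{Poy21} rather than argue formally; the identity $g\tau^{p^n}g^{-1}\equiv\tau^{p^n\chi(g)}\bmod\mathscr{G}_L$ gives $\log(g\tau^{p^n}g^{-1}) = \chi(g)\log(\tau^{p^n})$ on $\mathscr{G}_L$-invariants once convergence is in hand. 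Everything else — the Leibniz rule, the invariance of $\lambda$, the transformation $g(t)=\chi(g)t$, and the cancellation — is a routine bookkeeping of the $\chi(g)$-factors. One could alternatively bypass the analytic subtleties by using the explicit description $N_\nabla = -u\lambda\tfrac{d}{du}$ on $\Rr$ (remark \ref{rmq N nabla }): the derivation $-u\lambda\tfrac{d}{du}$ on $\Rr$ extends to $\mathcal{D}^\dagger_{\rig}(V) = \Rr\otimes_{\Ee^\dagger}\mathcal{D}^\dagger(V)$ by choosing an $\Rr$-basis and declaring it to act via the connection coming from the $\varphi$-module structure plus the Leibniz rule, and then one checks this extension agrees with $\tfrac{-\lambda}{t}\nabla_\tau$ on $\mathcal{D}^\dagger_{\rig}(V)_\tau$; I would present whichever of these two routes makes the compatibility with $N_\nabla\circ\varphi = c\varphi N_\nabla$ cleanest, probably citing \cite[Proposition 2.2.2]{Poy21} for the bulk and only spelling out the $(\varphi,\tau)$-module bookkeeping.
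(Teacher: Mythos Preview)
The paper itself gives no proof of this proposition: it is simply quoted from \cite[Proposition 2.2.2]{Poy21}. So there is nothing to compare against beyond Poyeton's original argument.

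Your sketch is broadly along the right lines, but there is a genuine gap at the final step, and it is precisely the step that carries all the content. You correctly show that $N_\nabla x$ is $\mathscr{G}_{K_\pi}$-invariant (equivalently $\gamma$-invariant) inside $\mathcal{D}^\dagger_{\rig}(V)_\tau$ via the cancellation of the two $\chi(g)$ factors. But you then assert that $(\mathcal{D}^\dagger_{\rig}(V)_\tau)^{\mathscr{G}_{K_\pi}} = \mathcal{D}^\dagger_{\rig}(V)$, claiming this follows from $\Rr_\tau^{\mathscr{G}_{K_\pi}} = \Rr$. This equality is \emph{false}: the very lemma you cite as the analogue, lemma \ref{lemm gamma=1}, says that $(\mathcal{D}(T)_\tau)^{\gamma=1} = \W(\widehat{F_0^{\rad}})\otimes\mathcal{D}(T)$, which is strictly larger than $\mathcal{D}(T)$. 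The same phenomenon occurs here: $(\widetilde{\BB}^\dagger_{\rig,L})^{\mathscr{G}_{K_\pi}} = \widetilde{\BB}^\dagger_{\rig,K_\pi}$ contains, for instance, $[\widetilde{\pi}^{1/p}]$, which is not in $\Rr = \BB^\dagger_{\rig,K_\pi}$. So $\gamma$-invariance alone cannot force $N_\nabla x$ back into $\mathcal{D}^\dagger_{\rig}(V)$.

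The missing ingredient, which is the real substance of Poyeton's proposition, is a \emph{locally analytic descent} statement: among the \emph{pro-analytic} vectors in $\widetilde{\BB}^\dagger_{\rig,L}$ (or in $\widetilde{\BB}^I_L$ for suitable $I$), the $\Gal(L/K_\pi)$-invariants are exactly $\BB^\dagger_{\rig,K_\pi}$, not the larger perfected ring. This is a nontrivial theorem going back to Berger--Colmez and extended by Poyeton, and it is what makes the argument work. Since $N_\nabla x$ is pro-analytic by construction (the series $\log(\tau^{p^n})$ lands in pro-analytic vectors), applying this descent gives $N_\nabla x \in \mathcal{D}^\dagger_{\rig}(V)$. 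You flagged the convergence of $\log(\tau^{p^n})$ as ``the main obstacle'', but in fact that part is relatively soft; the hard analytic input is the identification of the $\gamma$-invariant pro-analytic vectors, and your write-up treats that as routine bookkeeping when it is the crux. Your alternative route via $-u\lambda\tfrac{d}{du}$ does not avoid this either: declaring a connection on a basis is not the same as checking it agrees with $\tfrac{-\lambda}{t}\nabla_\tau$ coming from the $\tau$-action, and that compatibility again needs the descent.
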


This implies that if $V$ is in $\Rep_{\QQ_p}(\mathscr{G}_K),$ the operator $N_{\nabla}$ associated to its $(\varphi, \tau)$-module $\mathcal{D}^{\dagger}_{\rig}(V)$ provides a $(\varphi, N_{\nabla})$-module structure.

\begin{definition}\label{def complex N nabla 1st}
		Let $D_{\rig}^{\dagger}\in \Mod_{\Rr}(\varphi, N_{\nabla}),$ we define the complex $\Cc_{\varphi, N_{\nabla}}(D_{\rig}^{\dagger})$\index{$\Cc_{\varphi, N_{\nabla}}(D_{\rig}^{\dagger})$} as follows:	
		\[ \xymatrix{
			0\ar[rr] && D_{\rig}^{\dagger} \ar[r] & D_{\rig}^{\dagger} \oplus D_{\rig}^{\dagger} \ar[r] & D_{\rig}^{\dagger} \ar[r] & 0   \\
			&& x  \ar@{|->}[r] & ((\varphi-1)(x), N_{\nabla}(x))  &{}\\
			&&& {} (y,z) \ar@{|->}[r] & N_{\nabla}(y)-(c\varphi-1)(z).
		}    \]	
		If $V\in \Rep_{\QQ_p}(\mathscr{G}_K),$  we have in particular the complex $\Cc_{\varphi, N_{\nabla}}(\mathcal{D}^{\dagger}_{\rig}(V)),$ which will also be simply denoted  $\Cc_{\varphi, N_{\nabla}}(V).$\index{$\Cc_{\varphi, N_{\nabla}}(V)$}
\end{definition}

\begin{remark}
	The complex $\Cc_{\varphi, N_{\nabla}}(V)$ above is well-defined. Indeed we have 
	\[ \Big(N_{\nabla}\ominus(c\varphi-1)\Big)\circ (\varphi-1, N_{\nabla})=N_{\nabla}\circ (\varphi-1) - (c\varphi-1)\circ N_{\nabla}=0, \]
	as	$N_{\nabla}\circ \varphi= c\varphi N_{\nabla}$ by definition of a $(\varphi, N_{\nabla})$-module over $\Rr.$
\end{remark}

\begin{lemma}\label{lemm H1 Ext N nabla}
	Let $V\in \Rep_{\QQ_p}(\mathscr{G}_K)$ and $D=\mathcal{D}^{\dagger}_{\rig}(V).$ We have $\Ext^1_{\Mod_{\Rr}(\varphi, N_{\nabla})}(D, \Rr) \simeq \H^1(\Cc_{\varphi, N_{\nabla}}(D)).$
\end{lemma}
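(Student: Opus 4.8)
The plan is to exhibit an explicit bijection between equivalence classes of extensions $0\to\Rr\to E\to D\to 0$ in $\Mod_{\Rr}(\varphi,N_\nabla)$ and the cohomology group $\H^1(\Cc_{\varphi,N_\nabla}(D))$, in complete analogy with the proof of Lemma \ref{Extension} (the computation of $\Ext(\Oo_{\Ee},D)$ as $M/N$) and its application in Proposition \ref{prop Caruso H1 Zp}. The only structural difference is that here the ``second operator'' is the additive connection $N_\nabla$ (satisfying a Leibniz rule and the twisted commutation $N_\nabla\circ\varphi=c\varphi N_\nabla$) rather than the semilinear $\tau_D$, and there is no auxiliary subspace of the type $D_{\tau,0}$ to worry about, which makes the bookkeeping lighter.

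First I would fix an extension $0\to\Rr\xrightarrow{}E\xrightarrow{\epsilon}D\to 0$ in $\Mod_{\Rr}(\varphi,N_\nabla)$. Since $D$ is free over $\Rr$, the underlying sequence of $\Rr$-modules splits: choose an $\Rr$-linear section, so that $E\cong\Rr\cdot x\oplus D$ with $\epsilon(x)=\ldots$ a chosen lift — wait, more precisely one does this on a basis of $D$; but it is cleaner to observe that giving the $(\varphi,N_\nabla)$-module structure on $E$ extending the given ones on $\Rr$ and $D$ amounts to specifying, for a fixed $\Rr$-linear splitting $D\hookrightarrow E$, the ``defects'' $\varphi_E|_D-\varphi_D$ and $N_{\nabla,E}|_D-N_{\nabla,D}$, which are $\Rr$-linear (resp. additive) maps $D\to\Rr$. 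By $\Rr$-linearity of the first and the Leibniz rule for the second, these are determined by two elements $\lambda\in D^\vee\otimes\Rr$ — concretely, writing things on a basis, by a pair $(\lambda,\mu)$ living in a product of copies of $\Rr$; I will instead phrase it invariantly as follows, mirroring Lemma \ref{Extension}: the extension is encoded by a pair of maps measuring the failure of a lift $\widetilde x$ of a local generator to be fixed, giving $\varphi_E(\widetilde x)=\widetilde x+\lambda$ and $N_{\nabla,E}(\widetilde x)=\mu$ with $\lambda,\mu\in\Rr$ when $D$ has rank relevant to $\Rr$; in general one runs the argument componentwise in a basis of $D$, so that $(\lambda,\mu)\in D\oplus D$. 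Then the compatibility $N_{\nabla,E}\circ\varphi_E=c\varphi_E\circ N_{\nabla,E}$ forces exactly the relation $N_\nabla(\lambda)-(c\varphi-1)(\mu)=0$, i.e. $(\lambda,\mu)\in\Ker\big(N_\nabla\ominus(c\varphi-1)\big)$; this is the single cocycle condition, and there is no second condition (unlike the $(\varphi,\tau)$-case), which is why the target is $D_{\rig}^\dagger$ and not a subgroup thereof. Finally, changing the lift $\widetilde x\mapsto\widetilde x-d$ for $d\in D$ changes $(\lambda,\mu)$ by $\big((\varphi-1)(d),N_\nabla(d)\big)$, so that the extension is split iff $(\lambda,\mu)\in\im\big((\varphi-1,N_\nabla)\big)$. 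Hence $\Ext^1\simeq\Ker\big(N_\nabla\ominus(c\varphi-1)\big)\big/\im\big((\varphi-1,N_\nabla)\big)=\H^1(\Cc_{\varphi,N_\nabla}(D))$.

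The remaining points are routine: one checks that the assignment extension $\mapsto[(\lambda,\mu)]$ is well-defined on equivalence classes (a morphism $E_1\to E_2$ of extensions translates a choice of lift in $E_1$ to one in $E_2$ with the same $(\lambda,\mu)$ up to a coboundary), that it is additive for the Baer sum, and that it is surjective (any cocycle $(\lambda,\mu)$ defines a $(\varphi,N_\nabla)$-module structure on $\Rr\oplus D$ by the formulas above, the cocycle relation guaranteeing the twisted commutation and the Leibniz rule being automatic since $N_\nabla$ acts diagonally plus $\mu$ on the generator). I would reuse verbatim the argument of Lemma \ref{Extension} for the injectivity/triviality criterion.

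The main obstacle, or rather the only point requiring care, is the bookkeeping when $D$ has rank $>1$: the ``lift of a generator'' picture is cleanest for rank one, and in general one should either phrase the extension data via the $\Rr$-module $\Hom_{\Rr}(D,\Rr)$ — noting $\Ext^1_{\Mod_\Rr(\varphi,N_\nabla)}(D,\Rr)\simeq\Ext^1_{\Mod_\Rr(\varphi,N_\nabla)}(\Rr,\Hom_\Rr(D,\Rr))$ using that $\Hom$ and $\otimes$ are compatible with $\varphi$ and $N_\nabla$ (analogue of Proposition \ref{prop internal hom and tensor product}, with the connection on the internal Hom defined by the Leibniz rule $N_\nabla(f)=N_{\nabla}\circ f-f\circ N_\nabla$) — or simply run the whole argument with an $\Rr$-basis of $D$ fixed throughout, so that $\lambda,\mu$ become vectors of elements of $\Rr$ and all identities are checked coordinate-wise. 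Either way the verification that $N_\nabla\circ\varphi=c\varphi N_\nabla$ on $E$ is \emph{equivalent} to the single displayed cocycle relation is a short direct computation using $\varphi_E(\widetilde x)=\widetilde x+\lambda$, $N_{\nabla,E}(\widetilde x)=\mu$ and $N_\nabla\varphi=c\varphi N_\nabla$ on $D$ and on $\Rr$; I expect no surprises there.
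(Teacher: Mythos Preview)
Your overall strategy is the right one and matches the paper's, but you have read the direction of the extension the wrong way round, and this is what creates all the ``rank $>1$'' complications you flag. The paper's proof considers extensions
\[
0\to D\to E\xrightarrow{\epsilon}\Rr\to 0
\]
(i.e.\ what is usually written $\Ext^1(\Rr,D)$; the order of arguments in the statement is a slip, and is consistent with the earlier Lemma~\ref{Extension} where $\Ext(\Oo_\Ee,D)$ is computed by the same recipe). One then lifts $1\in\Rr$ to $e\in E$, sets $\lambda:=\varphi(e)-e\in D$ and $\mu:=N_\nabla(e)\in D$, and checks directly that $N_\nabla\varphi=c\varphi N_\nabla$ on $E$ is equivalent to $N_\nabla(\lambda)=(c\varphi-1)(\mu)$, while changing the lift $e\mapsto e-d$ with $d\in D$ shifts $(\lambda,\mu)$ by $((\varphi-1)d,N_\nabla d)$. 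No basis of $D$ is needed and $(\lambda,\mu)\in D\oplus D$ falls out immediately.

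Taking the stated direction $0\to\Rr\to E\to D\to 0$ literally, as you do, the extension data naturally live in $\Hom_\Rr(D,\Rr)$ (or $\Rr^d$ after a choice of basis), not in $D$; your assertion that ``$(\lambda,\mu)\in D\oplus D$'' does not typecheck in that setup. The internal--$\Hom$ duality you sketch at the end does fix this and gives an isomorphic answer, but it is a detour compared with simply reversing the roles of $D$ and $\Rr$. A smaller slip: the defect $\varphi_E|_D-\varphi_D$ is $\varphi$-semilinear, not $\Rr$-linear as you write.
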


\begin{proof}
	To give an extension
	\[ 0\to D \to E \xrightarrow{\epsilon} \Rr \to 0 \]
	of $\mathcal{R}$ by $D$ in $\Mod_{\mathcal{R}}(\varphi,N_\nabla)$ is equivalent to giving a $(\varphi, N_{\nabla})$-module structure to the $\Rr$-module 
	\[E=D\oplus \Rr\cdot e, \]
	where $e\in E$ is a preimage of 1 under $\epsilon.$ Since $D$ is already a $(\varphi, N_{\nabla})$-module, it suffices to specify the image of $e$ by $\varphi$ and $N_{\nabla}.$ Since $\epsilon(\varphi(e)-e)=0,$ we must have $\varphi(e)-e\in D$ and write: 
	\[ \varphi(e)=e+\lambda \text{ with } \lambda\in D.\]
	For $N_{\nabla},$ we have $\epsilon(N_{\nabla}(e))=N_{\nabla}(1)=0:$ put $\mu=N_\nabla(e)\in D$ (this completely defines $N_\nabla$ on $E$).
	In order to have an extension, the only condition that $(\lambda, \mu)\in D\oplus D$ has to satisfy is $N_{\nabla}\circ\varphi(e)=c\varphi\circ N_{\nabla}(e),$ which by construction is $N_{\nabla}(\lambda)+\mu=c\varphi(\mu), \ie$
	\[N_{\nabla}(\lambda)=(c\varphi-1)(\mu).\]
	Notice that the submodule of $D\oplus D$
	\[M:=\big\{ (\lambda, \mu)\in D\oplus D;  N_{\nabla}(\lambda)=(c\varphi-1)(\mu) \big\} \]
	is exactly $\Ker \beta$ in the following complex (\cf definition \ref{def complex N nabla 1st})
	\[\Cc_{\varphi, N_{\nabla}}(D) \ \colon  0\to D \xrightarrow{\alpha} D\oplus D \xrightarrow{\beta} D \to 0. \]
	One checks that $(\lambda, \mu)$ corresponds to the trivial extension if and only if it lies in the $\im \alpha,$ $\ie$there exists $d\in D$ such that $\lambda= (\varphi-1)(d)$ and $\mu= N_{\nabla}(d).$ We define another submodule of $M$  \[N:= \Big\{ \big((\varphi-1)d, (\tau_D-1)d\big);\ d\in D \Big\}. \]
	Then there is an isomorphism  \[ \Ext(\Oo_{\Ee}, D)\simeq M/N \simeq \Ker \beta/ \im \alpha \simeq \H^1(\Cc_{\varphi, N_{\nabla}}(D)). \]
\end{proof}

\begin{notation}
	Let $(D_{\rig}^{\dagger}, D_{\rig, \tau}^{\dagger})\in \Mod_{\Rr, \Rr_{\tau}}(\varphi, \tau).$  We put
	\[ D^{\dagger}_{\rig, \tau^{p^n},0}=\big\{  x\in D^{\dagger}_{\rig,\tau} ;  (\gamma\otimes 1)x=(1+\tau_D^{p^n}+ \tau_D^{2p^n}\cdots +\tau_D^{(\chi(\gamma)-1)p^n})(x) \big \}  \]
	for any $n\in \NN.$\index{$D^{\dagger}_{\rig, \tau^{p^n},0}$}
\end{notation}

\begin{definition}\label{def complex phi tau n modified with R tau} 
Let $n\in \NN$ and $(D^{\dagger}_{\rig}, D^{\dagger}_{\rig, \tau})\in \Mod_{\Rr, \Rr_{\tau}}(\varphi, \tau^{p^n}).$ We define the complex $\mathcal{C}_{\varphi, \tau^{p^n}}^{\rig}(D^{\dagger}_{\rig})$\index{$\mathcal{C}_{\varphi, \tau^{p^n}}^{\rig}(D^{\dagger}_{\rig})$} as follows:	\[ \xymatrix{
			0\ar[rr] && D^{\dagger}_{\rig} \ar[r]  & D^{\dagger}_{\rig} \oplus D^{\dagger}_{\rig,\tau^{p^n}, 0}  \ar[r] &  D^{\dagger}_{\rig,\tau^{p^n}, 0} \ar[r] & 0   \\
			&&x  \ar@{|->}[r] &((\varphi-1)(x), (\tau_D^{p^n}-1)(x))  &{}\\
			&&&{} (y,z) \ar@{|->}[r] & (\tau_D^{p^n}-1)(y)-(\varphi-1)(z).
		} \]
		If $V\in \Rep_{\QQ_p}(\mathscr{G}_K),$  we have in particular the complex $\mathcal{C}_{\varphi, \tau^{p^n}}^{\rig}(\mathcal{D}^{\dagger}_{\rig}(V)),$ which will also be simply denoted  $\mathcal{C}_{\varphi, \tau^{p^n}}^{\rig}(V).$\index{$\mathcal{C}_{\varphi, \tau^{p^n}}^{\rig}(V)$}	
\end{definition}

\begin{lemma}\label{lemm complex C R r computes H0 H1 }
	Let $r\in \NN$ and $V\in \Rep_{\QQ_p}(\mathscr{G}_K),$ then $\H^i(\mathcal{C}^{\rig}_{\varphi, \tau^{p^r}}(V))\simeq \H^i(\mathscr{G}_{K_r}, V)$ for $i\in \{0, 1 \}.$
\end{lemma}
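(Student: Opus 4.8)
The statement to prove is Lemma \ref{lemm complex C R r computes H0 H1}: for $r\in\NN$ and $V\in\Rep_{\QQ_p}(\mathscr{G}_K)$, the complex $\mathcal{C}^{\rig}_{\varphi,\tau^{p^r}}(V)$ computes $\H^i(\mathscr{G}_{K_r},V)$ for $i\in\{0,1\}$. The natural strategy is to compare $\mathcal{C}^{\rig}_{\varphi,\tau^{p^r}}(V)$ with the overconvergent complex $\Cc^{u,\dagger}_{\varphi,\tau^{p^r}}(V)$ (built from the $(\varphi,\tau^{p^r})$-module over partially unperfected overconvergent coefficients), which by Proposition \ref{prop tau-puissance complex} together with the quasi-isomorphism results of the preceding chapter is known to compute $\H^i(\mathscr{G}_{K_r},V)$ for $i\in\{0,1\}$. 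Concretely, since $\mathcal{D}^{\dagger}_{\rig}(V)=\Rr\otimes_{\Ee^{\dagger}}\mathcal{D}^{\dagger}(V)$ and $\mathcal{D}^{\dagger}_{\rig,\tau}(V)=\Rr_{\tau}\otimes_{\widetilde{\BB}^\dagger_{u,L}}\mathcal{D}^{\dagger}(V)_{u,\tau}$, there is a natural morphism of complexes $\Cc^{u,\dagger}_{\varphi,\tau^{p^r}}(V)\to\mathcal{C}^{\rig}_{\varphi,\tau^{p^r}}(V)$ induced by the inclusions $\Ee^{\dagger}\hookrightarrow\Rr$ and $\widetilde{\BB}^\dagger_{u,L}\hookrightarrow\Rr_{\tau}$ (compatibly with $D^{\dagger}_{u,\tau^{p^r},0}\hookrightarrow D^{\dagger}_{\rig,\tau^{p^r},0}$, which makes sense because $\tau_D^{p^r}$ and $\varphi$ are continuous and the defining conditions for the subscript-$0$ submodules are preserved under scalar extension). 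The plan is to show this morphism induces isomorphisms on $\H^0$ and $\H^1$.

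First I would handle $\H^0$: as in Lemma \ref{lemm H0} one computes directly $\H^0(\mathcal{C}^{\rig}_{\varphi,\tau^{p^r}}(V))=(\mathcal{D}^{\dagger}_{\rig}(V))^{\varphi=1,\tau_D^{p^r}=1}$. Since $\mathcal{D}^{\dagger}_{\rig}(V)^{\varphi=1}$ is contained in $\mathcal{D}^{\dagger}(V)^{\varphi=1}$ (an element of the Robba ring fixed by $\varphi$ already lies in $\Ee^{\dagger}$, because $\varphi$ is bijective with the right slope properties; equivalently $(\BB^{\dagger}_{\rig}\otimes V)^{\varphi=1}=V=(\BB^{\dagger}\otimes V)^{\varphi=1}$ by Theorem \ref{thm D dagger rig} and Theorem \ref{prop equivalence cat overconvergent: normal case}), we get $\H^0(\mathcal{C}^{\rig}_{\varphi,\tau^{p^r}}(V))=V^{\langle\mathscr{G}_{K_\pi},\tau^{p^r}\rangle}=V^{\mathscr{G}_{K_r}}$, matching $\H^0(\mathscr{G}_{K_r},V)$. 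For $\H^1$, I would argue as in Proposition \ref{prop Caruso H1 Zp} and Lemma \ref{lemm H1 Ext N nabla}: $\H^1(\mathcal{C}^{\rig}_{\varphi,\tau^{p^r}}(V))$ classifies extensions of $\Rr$ by $\mathcal{D}^{\dagger}_{\rig}(V)$ in $\Mod_{\Rr,\Rr_{\tau}}(\varphi,\tau^{p^r})$ — one writes down the $M/N$ description exactly as in Lemma \ref{Extension}, the two conditions being $(\varphi-1)\mu=(\tau_D^{p^r}-1)\lambda$ and $\mu\in D^{\dagger}_{\rig,\tau^{p^r},0}$. By the equivalence of categories of Theorem \ref{thm D dagger rig} (applied to $\mathscr{G}_{K_r}$, i.e. to $(\varphi,\tau^{p^r})$-modules, using the analogue of \cite[Remarque 1.15]{Car13}), this $\Ext^1$ group is isomorphic to $\Ext^1_{\Rep_{\QQ_p}(\mathscr{G}_{K_r})}(\QQ_p,V)\simeq\H^1(\mathscr{G}_{K_r},V)$, and one checks the comparison morphism realizes this identification.

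The main obstacle I anticipate is making the $\H^1$ argument fully rigorous via the $\Ext$-interpretation, because passing from "extensions of $(\varphi,\tau^{p^r})$-modules over $(\Rr,\Rr_{\tau})$" to "extensions of Galois representations" requires knowing the equivalence of Theorem \ref{thm D dagger rig} is exact and compatible with the unit object ($\mathcal{D}^{\dagger}_{\rig}(\QQ_p)=\Rr$), and that the Yoneda $\Ext^1$ in the module category is computed by the three-term complex — this last point is the content of the $M/N$ computation, which is the same bookkeeping as Lemma \ref{Extension} but must be redone with $\tau_D$ replaced by $\tau_D^{p^r}$ and with the Robba coefficients. An alternative, more robust route for $\H^1$ (which I would fall back on if the $\Ext$ argument hits a snag over $\Rr$) is to prove directly that the comparison morphism $\Cc^{u,\dagger}_{\varphi,\tau^{p^r}}(V)\to\mathcal{C}^{\rig}_{\varphi,\tau^{p^r}}(V)$ is a quasi-isomorphism in degrees $\leq 1$: injectivity on $\H^1$ follows from $(\mathcal{D}^{\dagger}_{\rig}(V))^{\varphi=1}=(\mathcal{D}^{\dagger}(V))^{\varphi=1}$ as above (an element of $D^{\dagger}_{\rig}$ whose image under $\varphi-1$ lies in $D^{\dagger}$ must lie in $D^{\dagger}$, by the étale/slope-$0$ structure), and surjectivity on $\H^1$ would use that, by Kedlaya's equivalence \cite[Theorem 6.33]{Ked05} invoked in the proof of Theorem \ref{thm D dagger rig}, every extension over the Robba ring descends to one over $\Ee^{\dagger}$. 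The case $i=2$ is deliberately excluded from the statement and I would not attempt it. Once $\H^0$ and $\H^1$ of $\mathcal{C}^{\rig}_{\varphi,\tau^{p^r}}(V)$ are identified with those of $\Cc^{u,\dagger}_{\varphi,\tau^{p^r}}(V)$, Proposition \ref{prop tau-puissance complex} (with $K$ replaced by $K_r$) finishes the proof.
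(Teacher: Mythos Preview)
Your proposal is correct and your primary arguments for $\H^0$ (direct computation using $(\BB^{\dagger}_{\rig}\otimes V)^{\varphi=1}=V$) and for $\H^1$ (the $M/N$ description identifying $\H^1$ with $\Ext^1$ in $\Mod_{\Rr,\Rr_{\tau}}(\varphi,\tau^{p^r})$, then invoking the equivalence of categories with $\Rep_{\QQ_p}(\mathscr{G}_{K_r})$) are exactly the approach the paper takes, which simply cites Proposition~\ref{prop Caruso H1 Zp} and the analogue of Theorem~\ref{thm D dagger rig} for $(\varphi,\tau^{p^r})$-modules. The comparison-with-$\Cc^{u,\dagger}_{\varphi,\tau^{p^r}}$ framing and the fallback route are unnecessary overhead here: the paper bypasses them entirely.
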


\begin{proof}
	For $i=0$ this follows by direct computation. For $i=1,$ similar argument as in proposition \ref{prop Caruso H1 Zp} works, as we have an equivalence of categories between $\Mod_{\Rr, \Rr_{\tau}}(\varphi, \tau^{p^r})$ and $\Rep_{\QQ_p}(\mathscr{G}_{K_r})$ (the proof is similar as that of theorem \ref{thm D dagger rig}). 
\end{proof}

\begin{proposition}\label{prop N Nabla connection R-tau}
	Let $D\in \Mod_{\Rr}(\varphi, N_{\nabla}).$ Then there exists $n\geq 0$ and $D^{\dagger}_{\rig}\in \Mod_{\Rr, \Rr_{\tau}}(\varphi, \tau^{p^n})$ such that $D\simeq D^{\dagger}_{\rig}$ as $(\varphi, N_{\nabla})$-modules over $\Rr.$
\end{proposition}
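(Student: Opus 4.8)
The strategy is to reverse-engineer a $\tau$-action (or rather a $\tau^{p^n}$-action) out of the connection $N_\nabla$, imitating the passage from a $(\varphi,N_\nabla)$-module back to a $(\varphi,\tau)$-module that underlies Poyeton's results (\cite[\S 2.2]{Poy21}). Concretely, given $D\in\Mod_{\Rr}(\varphi,N_\nabla)$, I would first extend scalars to $\Rr_\tau$, setting $D_\tau:=\Rr_\tau\otimes_\Rr D$, and extend $N_\nabla$ to $D_\tau$ by the Leibniz rule using the action of $N_\nabla$ on $\Rr_\tau$ (which exists since $\Rr_\tau\subset(\widetilde\BB_{\rig,L}^\dagger)^{\pa}$, where $\nabla_\tau=-\tfrac{t}{\lambda}N_\nabla$ is defined). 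The point is then to recover $\nabla_\tau$ on $D_\tau$ as $-\tfrac{t}{\lambda}N_\nabla$, which makes sense because $t/\lambda$ is a unit times a well-understood element, and to exponentiate: for $n$ large enough, $\tau^{p^n}:=\exp(p^n\nabla_\tau)$ should converge on $D_\tau$ and define a $\tau^{p^n}$-semilinear automorphism. One must choose $n$ large enough that the $p$-adic estimates on $\nabla_\tau$ acting on a chosen basis of $D$ guarantee convergence of the exponential series; this is where the "there exists $n$" in the statement comes from.

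The next step is to check that $(D,D_\tau)$ with this $\tau_D^{p^n}:=\exp(p^n\nabla_\tau)$ genuinely lies in $\Mod_{\Rr,\Rr_\tau}(\varphi,\tau^{p^n})$. This requires three verifications: (i) $\tau_D^{p^n}$ is $\tau^{p^n}$-semilinear over $\Rr_\tau$, which follows formally from the Leibniz property of $\nabla_\tau$ and the fact that $\exp(p^n\nabla_\tau)$ acts on $\Rr_\tau$ as $\tau^{p^n}$ (for $n\gg0$); (ii) $\tau_D^{p^n}$ commutes with $\varphi$, which I would derive from the defining relation $N_\nabla\circ\varphi=c\varphi N_\nabla$ of a $(\varphi,N_\nabla)$-module together with the analogous relation $\nabla_\tau\circ\varphi=\tfrac{E(u)}{E(0)}\varphi\circ\nabla_\tau$ (rewriting $c=p\tfrac{E(u)}{E(0)}$ and $t/\lambda$ appropriately, using $\lambda=\tfrac{E(u)}{E(0)}\varphi(\lambda)$ and $\varphi(t)=pt$), so that conjugating the exponential series by $\varphi$ reproduces $\exp(p^n\nabla_\tau)$; (iii) the cocycle-type compatibility $(g\otimes1)\circ\tau_D^{p^n}=(\tau_D^{p^n})^{\chi(g)}\circ(g\otimes1)$ for $g\in\mathscr{G}_{K_\pi}/\mathscr{G}_L$ with $\chi(g)\in\ZZ_{>0}$, which should follow from the corresponding relation $g\nabla_\tau g^{-1}=\chi(g)\nabla_\tau$ on $\Rr_\tau$ (reflecting $g\tau g^{-1}=\tau^{\chi(g)}$ at the infinitesimal level) propagated through the exponential.

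Finally one must check that this construction inverts the passage $D_{\rig}^\dagger\mapsto N_\nabla$, i.e. that the $(\varphi,N_\nabla)$-module attached to the newly built $(\varphi,\tau^{p^n})$-module $D_{\rig}^\dagger:=(D,D_\tau)$ is isomorphic to $D$ as a $(\varphi,N_\nabla)$-module over $\Rr$. This is essentially immediate: the operator $N_\nabla$ recovered from $\tau_D^{p^n}=\exp(p^n\nabla_\tau)$ is $-\tfrac{\lambda}{t}\cdot\tfrac1{p^n}\log(\tau_D^{p^n})=-\tfrac{\lambda}{t}\nabla_\tau=N_\nabla$ by construction, and the underlying $\varphi$-module is unchanged, so the identity map is the desired isomorphism.

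\textbf{Main obstacle.} I expect the crux to be the convergence and well-definedness of $\exp(p^n\nabla_\tau)$ on $D_\tau$ for suitable $n$: one needs a careful choice of an $\Rr$-basis of $D$ and precise bounds on the matrix entries of $\nabla_\tau$ in the Fréchet semi-norms of $\Rr_\tau$ (equivalently, controlling denominators introduced by the factor $\lambda/t$ relative to the integral structure), so that the exponential series converges in the LF-topology and lands back in $D_\tau$ rather than a completion. The semilinearity and $\varphi$-commutation identities, once convergence is secured, are formal manipulations with the relations $\varphi(\lambda)=\tfrac{E(0)}{E(u)}\lambda$, $\varphi(t)=pt$, $c=p\tfrac{E(u)}{E(0)}$ and $N_\nabla\varphi=c\varphi N_\nabla$; the $\mathscr{G}_{K_\pi}$-equivariance is likewise formal given $g\nabla_\tau g^{-1}=\chi(g)\nabla_\tau$. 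I would isolate the analytic estimate as a separate lemma (possibly citing \cite[\S 2.2]{Poy21} or \cite{Ber16} for the existence of $\nabla_\tau$ and its basic properties on pro-analytic vectors) and treat the rest as bookkeeping.
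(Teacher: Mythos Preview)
Your plan is essentially the content of Poyeton's \cite[Proposition 2.2.5]{Poy21}, which is exactly what the paper invokes: the paper gives no argument of its own and simply cites that reference. So you are not diverging from the paper's approach; you are reconstructing the proof the paper outsources.

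One small correction: you write $\Rr_\tau\subset(\widetilde{\BB}_{\rig,L}^\dagger)^{\pa}$, but in this paper $\Rr_\tau=\widetilde{\BB}_{\rig,L}^\dagger$ by definition, and not every element of $\widetilde{\BB}_{\rig,L}^\dagger$ is pro-analytic. This does not break your argument, because the only coefficients that appear when you iterate $\nabla_\tau$ on an $\Rr$-basis of $D$ are generated (under the ring operations) by elements of $\Rr$ and by $t/\lambda=p\mathfrak t$, all of which are pro-analytic; so $\nabla_\tau$ is defined on everything you need. In practice one fixes an $\Rr$-basis $(e_i)$ of $D$, defines $\tau_D^{p^n}(e_i):=\exp(p^n\nabla_\tau)(e_i)$ for $n$ large (the convergence estimate you flag as the main obstacle), and then extends to $D_\tau$ by $\tau^{p^n}$-semilinearity rather than by first extending $N_\nabla$ to all of $D_\tau$. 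With that adjustment your verifications (i)--(iii) and the final compatibility check go through as you describe.
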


\begin{proof}
	$\cf$ \cite[Proposition 2.2.5]{Poy21}.
\end{proof}

\begin{lemma}\label{lemm Ext 1 nabla bij limit Ext 1}
		Let $V\in \Rep_{\QQ_p}(\mathscr{G}_K).$  We have an isomorphism of groups 
		\[\iLim{n} \Ext^1_{\Mod_{\mathcal{R},\mathcal{R}_\tau}(\varphi, \tau^{p^n})}(\mathcal{D}^{\dagger}_{\rig}(V), \mathcal{R}) \to \Ext^1_{\Mod_{\Rr}(\varphi, N_{\nabla})}(\mathcal{D}_{\rig}^{\dagger}(V), \mathcal{R}).\]
\end{lemma}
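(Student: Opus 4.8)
The plan is to exploit the dictionary between $(\varphi,N_\nabla)$-modules over $\mathcal{R}$ and the $(\varphi,\tau^{p^n})$-modules over $(\mathcal{R},\mathcal{R}_\tau)$ provided by Proposition \ref{prop N Nabla connection R-tau}, together with the description of $\H^1$ as an $\Ext^1$ group given in Lemma \ref{lemm H1 Ext N nabla} and its analogue for $\mathcal{C}^{\rig}_{\varphi,\tau^{p^n}}$ coming from Lemma \ref{lemm complex C R r computes H0 H1}. Concretely, set $D=\mathcal{D}^\dagger_{\rig}(V)$. By Proposition \ref{prop N Nabla connection R-tau} there is some $n_0$ such that $D$, viewed as a $(\varphi,N_\nabla)$-module, arises from a $(\varphi,\tau^{p^{n_0}})$-module structure on the same underlying $\varphi$-module over $\mathcal{R}$; since raising $\tau^{p^n}$ to higher $p$-powers stays inside the same framework, $D$ carries a compatible $(\varphi,\tau^{p^n})$-module structure for every $n\geq n_0$, and the transition maps in the inductive system on the left are the natural restriction-of-structure maps (each $(\varphi,\tau^{p^n})$-module is \emph{a fortiori} a $(\varphi,\tau^{p^{n+1}})$-module). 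So the left-hand side is a genuine filtered colimit over $n\geq n_0$.

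First I would make the comparison at the level of explicit $2$-cocycles. For each $n\geq n_0$, Lemma \ref{lemm complex C R r computes H0 H1} (or rather its proof, which mimics Proposition \ref{prop Caruso H1 Zp}) identifies $\Ext^1_{\Mod_{\mathcal{R},\mathcal{R}_\tau}(\varphi,\tau^{p^n})}(D,\mathcal{R})$ with $\H^1(\mathcal{C}^{\rig}_{\varphi,\tau^{p^n}}(V))$, i.e. with pairs $(\lambda,\mu)\in D\oplus D^\dagger_{\rig,\tau^{p^n},0}$ satisfying $(\varphi-1)\mu=(\tau_D^{p^n}-1)\lambda$ modulo the image of $D$ under $d\mapsto((\varphi-1)d,(\tau_D^{p^n}-1)d)$. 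On the other side, Lemma \ref{lemm H1 Ext N nabla} identifies $\Ext^1_{\Mod_{\mathcal{R}}(\varphi,N_\nabla)}(D,\mathcal{R})$ with pairs $(\lambda,\mu)\in D\oplus D$ with $N_\nabla(\lambda)=(c\varphi-1)\mu$ modulo the image of $d\mapsto((\varphi-1)d,N_\nabla(d))$. The map I would construct sends the class of an extension $0\to\mathcal{R}\to E\to D\to 0$ in $\Mod_{\mathcal{R},\mathcal{R}_\tau}(\varphi,\tau^{p^n})$ to the class of the same $E$ equipped with the operator $N_\nabla$ built from its $\tau^{p^n}$-action via $N_\nabla=-\tfrac{\lambda}{t}\nabla_\tau$ (using that $N_\nabla$ is canonically recovered from the pro-analytic $\tau$-action, \cf Notation \ref{rmq N nabla }); this is well-defined and compatible with the transition maps because $\nabla_\tau$, hence $N_\nabla$, depends only on the $p$-adic Lie group $\overline{\langle\tau\rangle}$ and not on which $\tau^{p^n}$ is chosen as topological generator. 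Functoriality and additivity are then formal.

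For surjectivity and injectivity I would argue as follows. Given an extension $E$ in $\Mod_{\mathcal{R}}(\varphi,N_\nabla)$, Proposition \ref{prop N Nabla connection R-tau} applied to $E$ itself (not just to $D$) yields some $n$ and a $(\varphi,\tau^{p^n})$-module structure on $E$ over $(\mathcal{R},\mathcal{R}_\tau)$ inducing the given $(\varphi,N_\nabla)$-structure; enlarging $n$ if necessary, one can assume this structure is compatible with the chosen one on $D$ and on $\mathcal{R}$, so that $E$ defines a class in the left-hand colimit mapping to $[E]$ — this gives surjectivity. For injectivity, suppose an extension $E$ in $\Mod_{\mathcal{R},\mathcal{R}_\tau}(\varphi,\tau^{p^n})$ becomes split as a $(\varphi,N_\nabla)$-module; a splitting is an element $d\in D$ with $\lambda=(\varphi-1)d$ and $\mu=N_\nabla(d)$. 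One must upgrade this to a splitting respecting $\tau^{p^m}$ for some $m\geq n$, i.e. show $\mu=(\tau_D^{p^m}-1)d$ after possibly enlarging $m$. Here one uses that $N_\nabla(d)=-\tfrac{\lambda}{t}\nabla_\tau(d)$ together with $\nabla_\tau=\tfrac{1}{p^m}\log(\tau^{p^m})$ for $m\gg0$: knowing $N_\nabla(d)$ and the relation imposed by the cocycle condition forces $\log(\tau^{p^m})(d)$ to be the expected value, and exponentiating recovers $(\tau_D^{p^m}-1)d$. The main obstacle will be precisely this last point — controlling the passage between the infinitesimal operator $N_\nabla$ (equivalently $\nabla_\tau$) and the group-level operator $\tau^{p^m}-1$ on the module $D^\dagger_{\rig}$, making sure that the $\tau^{p^m}$-action one reconstructs genuinely lands in $D^\dagger_{\rig,\tau^{p^m},0}$ and is compatible with the $\mathscr{G}_{K_\pi}$-constraint defining that subspace. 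This requires unwinding the pro-analyticity estimates of Poyeton (\cf \cite[\S 2.2]{Poy21}) and checking that the filtered colimit genuinely absorbs the ambiguity in the choice of $n$.
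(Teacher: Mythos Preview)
Your approach is essentially the paper's: build the map by sending a $(\varphi,\tau^{p^n})$-extension to its underlying $(\varphi,N_\nabla)$-extension, obtain surjectivity by applying Proposition~\ref{prop N Nabla connection R-tau} to the extension $E$ itself, and argue injectivity by reconstructing the $\tau^{p^m}$-action from $N_\nabla$ via exponentiation. The paper dispatches injectivity by citing \cite[Remark~2.2.4]{Poy21}, which packages exactly the pro-analytic reconstruction you sketch --- note that the argument is cleaner if phrased as ``the lift $e-d\in E$ satisfies $N_\nabla(e-d)=0$, hence $\tau^{p^m}(e-d)=e-d$ for $m\gg0$'' (this is the $\H^0$ computation of Proposition~\ref{prop H0 H1 of complex C_R(V)} applied inside $E$) rather than trying to match the cocycle components $\mu$ directly.
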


\begin{proof}
		Let $V\in \Rep_{\QQ_p}(\mathscr{G}_K),$ and we associate its $(\varphi, N_{\nabla})$-module $\mathcal{D}_{\rig}^{\dagger}(V)$ and $(\varphi, \tau^{p^n})$-module $\mathcal{D}^{\dagger}_{\rig}(V)_n$ (notice that $\mathcal{D}^{\dagger}_{\rig}(V)_n=\mathcal{D}^{\dagger}_{\rig}(V)$ as $\varphi$-modules, here we just use the subscript $n$ to indicate it is an object of $\Mod_{\Rr, \Rr_{\tau}}(\varphi, \tau^{p^n})$). If $n\in\NN,$ an extension of its associated $(\varphi,\tau^{p^n})$-module over $\mathcal{R}$ is naturally equipped with a $(\varphi,N_\nabla)$-module structure. This provides a map from $\Ext^1_{\Mod_{\mathcal{R},\mathcal{R}_\tau}(\varphi,\tau^{p^n})}(\mathcal{D}(V)_n,\mathcal{R})$ to $\Ext^1_{\Mod_{\mathcal{R}}(\varphi,N_\nabla)}(\mathcal{D}^{\dagger}_{\rig}(V),\mathcal{R}).$ Those maps are compatible as $n$ grows: this provides the map \[\iLim{n}\Ext^1_{\Mod_{\mathcal{R},\mathcal{R}_\tau}(\varphi,\tau^{p^n})}(\mathcal{D}^{\dagger}_{\rig}(V)_n,\mathcal{R})\to \Ext^1_{\Mod_{\mathcal{R}}(\varphi,N_\nabla)}(\mathcal{D}_{\rig}^{\dagger}(V),\mathcal{R}).\]
		An extension $E$ of $\mathcal{D}_{\rig}^{\dagger}(V)$ in $\Mod_{\mathcal{R}}(\varphi, N_{\nabla})$ has a $(\varphi, \tau^{p^n})$-module structure for some $n\gg 0$ by proposition \ref{prop N Nabla connection R-tau}. This shows that the extension $E$ comes from an extension of $\mathcal{D}^{\dagger}_{\rig}(V)$ in the category of $\Mod_{\mathcal{R}, \Rr_{\tau}}(\varphi, \tau^{p^n})$ for some $n\gg 0,$ which implies the map is surjective. Moreover, it is also injective by \cite[remark 2.2.4]{Poy21}. 
\end{proof}

\begin{lemma}
Let $n\in \NN$ and $(D_{\rig}^{\dagger}, D_{\rig, \tau}^{\dagger})\in \Mod_{\Rr, \Rr_{\tau}}(\varphi, \tau).$ We have an operator \[\frac{N_{\nabla}}{\tau_D^{p^n}-1}: (D_{\rig, \tau}^{\dagger})^{\pa}\to (D_{\rig, \tau}^{\dagger})^{\pa},\]
such that $N_{\nabla}=\frac{N_{\nabla}}{\tau_D^{p^n}-1}\circ (\tau_D^{p^n}-1)=(\tau_D^{p^n}-1)\circ \frac{N_{\nabla}}{\tau_D^{p^n}-1}$ on $(D_{\rig, \tau}^{\dagger})^{\pa}.$
\end{lemma}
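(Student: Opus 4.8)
The point is that $N_\nabla = \frac{-\lambda}{t}\nabla_\tau$ and that, up to a unit in $\mathcal{R}_\tau$, the operator $\nabla_\tau$ is a ``normalized logarithm'' of $\tau_D^{p^n}$: recall that for $n$ large enough $\nabla_\tau = \frac{1}{p^n}\log(\tau_D^{p^n}) = \frac{1}{p^n}\log\bigl(1-(1-\tau_D^{p^n})\bigr)$ on pro-analytic vectors, since $1-\tau_D^{p^n}$ is topologically nilpotent there. So the first step is to choose $n$ large enough that $\tau_D^{p^n}$ acts on $(D^\dagger_{\rig,\tau})^{\pa}$ through a pro-analytic action (this is exactly the input of Proposition \ref{prop N Nabla connection R-tau}, giving a $(\varphi,\tau^{p^n})$-module structure underlying the $(\varphi,N_\nabla)$-module), and then to write, purely formally in the variable $s = 1-\tau_D^{p^n}$,
\[
\frac{\log(1-s)}{-s} = \sum_{k\geq 0}\frac{s^k}{k+1} =: h(s),
\]
a power series with $p$-adically bounded (indeed $p$-integral after clearing denominators, but boundedness suffices) coefficients and $h(0)=1$. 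Since $s = 1-\tau_D^{p^n}$ is topologically nilpotent on $(D^\dagger_{\rig,\tau})^{\pa}$, the series $h(s)$ converges there to an invertible operator, and one has the identity $\log(\tau_D^{p^n}) = -(1-\tau_D^{p^n})\,h(1-\tau_D^{p^n})$, i.e. $p^n\nabla_\tau = (\tau_D^{p^n}-1)\,h(1-\tau_D^{p^n})$ on pro-analytic vectors. Multiplying by the scalar $\frac{-\lambda}{p^n t}\in \mathcal{R}_\tau$ gives
\[
N_\nabla = \frac{-\lambda}{t}\nabla_\tau = (\tau_D^{p^n}-1)\circ\Bigl(\tfrac{-\lambda}{p^n t}\,h(1-\tau_D^{p^n})\Bigr).
\]

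The second step is to \emph{define} $\frac{N_\nabla}{\tau_D^{p^n}-1} := \frac{-\lambda}{p^n t}\,h(1-\tau_D^{p^n})$ and to check it lands in $(D^\dagger_{\rig,\tau})^{\pa}$: here I would use that $\mathcal{R}_\tau = \widetilde{\mathbf{B}}^\dagger_{\rig,L}$ and that multiplication by $\frac{-\lambda}{p^n t}$ preserves pro-analyticity (both $\lambda$ and $1/t$ live in appropriate overconvergent rings, and $t$ differs from a unit of $\widetilde{\mathbf{B}}^\dagger_{\rig,L}$ only by the locally analytic vector structure — this is the content of the normalization in \cite[\S 2.2]{Poy21}), together with the fact that each $\tau_D^{p^n}$ preserves $(D^\dagger_{\rig,\tau})^{\pa}$ and that $h(1-\tau_D^{p^n})$, being a convergent series in $\tau_D^{p^n}$, does too. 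The commutation $N_\nabla = (\tau_D^{p^n}-1)\circ\frac{N_\nabla}{\tau_D^{p^n}-1} = \frac{N_\nabla}{\tau_D^{p^n}-1}\circ(\tau_D^{p^n}-1)$ is then immediate from the displayed identity, since $\tau_D^{p^n}-1$ commutes with any power series in $\tau_D^{p^n}$ and with multiplication by the scalar $\frac{-\lambda}{p^n t}$ (the scalar commutes because $N_\nabla$ was already built from it and $\tau_D$ is $\tau$-semilinear in a way compatible with the $\mathcal{R}_\tau$-structure — one must be mildly careful that $\tau_D$ is only semilinear, but $\tau_D^{p^n}-1$ and the scalar both commute \emph{on the relevant subspace} because $\frac{-\lambda}{t}$ is $\tau^{p^n}$-invariant for $n$ large, which is again part of Poyeton's normalization).

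The third, essentially bookkeeping, step is independence of the auxiliary $n$: if $m\geq n$ then $\frac{1}{p^m}\log(\tau_D^{p^m}) = \frac{1}{p^n}\log(\tau_D^{p^n})$ as operators on pro-analytic vectors (both equal $\nabla_\tau$), so the two candidate operators $\frac{N_\nabla}{\tau_D^{p^n}-1}$ and $\frac{N_\nabla}{\tau_D^{p^m}-1}$ satisfy $(\tau_D^{p^n}-1)\circ\frac{N_\nabla}{\tau_D^{p^n}-1} = (\tau_D^{p^m}-1)\circ\frac{N_\nabla}{\tau_D^{p^m}-1}$; dividing by the injective operator $\frac{\tau_D^{p^m}-1}{\tau_D^{p^n}-1}$ (injectivity on pro-analytic vectors because $1-\tau_D^{p^n}$ is topologically nilpotent, so $\frac{\tau_D^{p^m}-1}{\tau_D^{p^n}-1} = \sum_{i=0}^{p^{m-n}-1}\tau_D^{ip^n}$ is $1$ plus a topologically nilpotent operator) identifies them.

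\textbf{Main obstacle.} The genuinely delicate point is not the formal power-series manipulation but keeping track of \emph{where} everything converges and acts: one must ensure that passing from $D^\dagger_{\rig,\tau}$ to its pro-analytic subspace $(D^\dagger_{\rig,\tau})^{\pa}$ is compatible with all the operators in play, that the infinite series $h(1-\tau_D^{p^n})$ really converges \emph{in the Fréchet/LF topology} on the pro-analytic vectors (not merely formally), and that the scalar $\frac{-\lambda}{t}$ — which is a unit only after the normalization of \cite[\S 2.2]{Poy21}, and whose interaction with the semilinear $\tau_D$ is the subtle part — behaves as claimed. I expect most of this to be extractable directly from Poyeton's construction of $N_\nabla$ (\cite[\S 2.2]{Poy21}), so the proof should be short, with the bulk of the work being the verification that $h(1-\tau_D^{p^n})$ preserves pro-analyticity and converges there.
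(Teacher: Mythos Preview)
Your formal manipulation is the same as the paper's: write $\nabla_\tau=\frac{1}{p^m}\log(\tau_D^{p^m})$, expand the logarithm, and extract one factor of $\tau_D^{p^n}-1$. Your $h(s)=\sum_{k\geq 0}s^k/(k+1)$ is exactly the series $\sum_{i\geq 1}(1-\tau_D^{p^m})^{i-1}/i$ that the paper writes down.

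There is however a genuine muddle in your handling of indices. In the lemma $n\in\NN$ is \emph{given}; you are not free to enlarge it, yet your Step~1 (``choose $n$ large enough'') and Step~3 (``independence of the auxiliary $n$'') both treat it as adjustable. As written, your construction defines $\frac{N_\nabla}{\tau_D^{p^n}-1}$ only for $n\gg 0$. The paper separates the two roles cleanly: keep $n$ fixed, introduce an auxiliary $m\geq n$ with $m\gg 0$ so that $\log(\tau_D^{p^m})$ converges on pro-analytic vectors, and then observe that
\[
\frac{\tau_D^{p^m}-1}{\tau_D^{p^n}-1}=1+\tau_D^{p^n}+\cdots+\tau_D^{(p^{m-n}-1)p^n}
\]
is a \emph{finite} sum of powers of $\tau_D$, hence a perfectly well-defined operator on $(D^\dagger_{\rig,\tau})^{\pa}$. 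This yields directly
\[
\frac{N_\nabla}{\tau_D^{p^n}-1}=-\frac{\lambda}{p^m t}\,\Bigl(\sum_{i\geq 1}\frac{(1-\tau_D^{p^m})^{i-1}}{i}\Bigr)\cdot\bigl(1+\tau_D^{p^n}+\cdots+\tau_D^{(p^{m-n}-1)p^n}\bigr)
\]
for the given $n$, with no separate independence check needed. Your Step~3 contains the right ingredient (the polynomial $\frac{\tau_D^{p^m}-1}{\tau_D^{p^n}-1}$) but uses it backwards: rather than verifying that two large choices agree, you should multiply by it to \emph{define} the operator for arbitrary $n$ from the one you already have for large $m$.

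A smaller point: your assertion that $\frac{-\lambda}{t}$ is $\tau^{p^n}$-invariant for large $n$ is not correct, since $\lambda\in\Oo$ depends on $u$ and $\tau(u)=[\varepsilon]u\neq u$. The identity $\frac{N_\nabla}{\tau_D^{p^n}-1}\circ(\tau_D^{p^n}-1)=N_\nabla$ is nonetheless immediate, because the scalar $-\lambda/t$ sits on the \emph{left} of a power series in $\tau_D$ and $(\tau_D^{p^n}-1)$ is composed on the right; do not try to justify the other ordering via invariance of the scalar.
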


\begin{proof}
We have

\begin{align*}
\frac{N_{\nabla}}{\tau_D^{p^n}-1}&= -\frac{\lambda}{t}\cdot \frac{\nabla_{\tau}}{\tau_D^{p^n}-1}\\
                                 &= -\frac{\lambda}{p^m t}\cdot \frac{\log(\tau_D^{p^m})}{\tau_D^{p^n}-1} \quad (\text{for } m\gg 0)   \\
                                 &= \frac{\lambda}{p^mt}\cdot \sum\limits_{i=1}^{\infty}\frac{(1-\tau_D^{p^m})^{i}}{i(\tau_D^{p^n}-1)}\\
  &= -\frac{\lambda}{p^m t}\cdot \sum\limits_{i=1}^{\infty}\frac{(1-\tau_D^{p^m})^{i-1}}{i}\cdot\bigg( \frac{\tau_D^{p^m}-1}{\tau_D^{p^n}-1} \bigg) \quad (\text{we can assume } n|m)\\
  &= -\frac{\lambda}{p^m t}\cdot \sum\limits_{i=1}^{\infty}\frac{(1-\tau_D^{p^m})^{i-1}}{i}\cdot (1+\tau_D^{p^{n}}+\cdots + \tau_D^{(\frac{m}{n}-1)p^n})
\end{align*}
which shows it is well defined over $(D_{\rig, \tau}^{\dagger})^{\pa}$ ($\cf$ \cite[\S 2.2]{Poy21}).
\end{proof}

\begin{definition}
Let $(D_{\rig}^{\dagger}, D_{\rig, \tau}^{\dagger})\in \Mod_{\Rr, \Rr_{\tau}}(\varphi, \tau).$ For any $n\in \NN,$ we put 
\[D_{\rig, \tau^{p^n}, 0}^{\dagger, \pa}=D_{\rig, \tau^{p^n}, 0}^{\dagger} \cap (D_{\rig, \tau}^{\dagger})^{\pa}.\]\index{$D_{\rig, \tau^{p^n}, 0}^{\dagger, \pa}$}
We then define a complex 
	\[ \Cc^{\pa}_{\varphi, \tau^{p^n}}(D^{\dagger}_{\rig})\colon \quad	0\to D^{\dagger}_{\rig} \xrightarrow{(\varphi-1, \tau_D^{p^n}-1)} D^{\dagger}_{\rig} \oplus D^{\dagger, \pa}_{\rig, \tau^{p^n}, 0} \xrightarrow{(\tau_D^{p^n}-1)\ominus(\varphi-1)}  D^{\dagger, \pa}_{\rig, \tau^{p^n}, 0}  \to  0.\]\index{$\Cc^{\pa}_{\varphi, \tau^{p^n}}(D^{\dagger}_{\rig})$}
If $V\in \Rep_{\QQ_p}(\mathscr{G}_{K}),$ we have in particular the complex $\Cc^{\pa}_{\varphi, \tau^{p^n}}(\mathcal{D}(V)^{\dagger}_{\rig}),$ which will also be simply denoted $\Cc^{\pa}_{\varphi, \tau^{p^n}}(V).$\index{$\Cc^{\pa}_{\varphi, \tau^{p^n}}(V)$}
	
\end{definition}

\begin{remark}
Notice that $D^{\dagger}_{\rig}\subset (D_{\rig, \tau}^{\dagger})^{\pa}$ ($\cf$ \cite[Lemma 1.3.4]{Poy21}), and the complex $\Cc^{\pa}_{\varphi, \tau^{p^n}}(D^{\dagger}_{\rig})$ is well-defined as $\varphi$ and $\tau_D$ preserve pro-analyticity.
\end{remark}

\begin{lemma}\label{lemm gamma commutes N nabla}
We have $\gamma\circ N_{\nabla}=N_{\nabla}\circ \gamma.$
\end{lemma}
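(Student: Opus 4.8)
The statement to prove is the commutation $\gamma\circ N_\nabla = N_\nabla\circ\gamma$ on $(\widetilde{\BB}^\dagger_{\rig,L})^{\pa}$ (and hence on $\mathcal{D}^\dagger_{\rig}(V)_\tau$ for $V\in\Rep_{\QQ_p}(\mathscr{G}_K)$). The plan is to unwind the definition $N_\nabla = -\tfrac{\lambda}{t}\nabla_\tau$ and reduce the claim to three separate facts: that $\gamma$ scales $t$ correctly, that $\gamma$ fixes $\lambda$, and that $\gamma$ intertwines $\nabla_\tau$ with itself up to the scalar $\chi(\gamma)$. The point is that the individual factors are \emph{not} $\gamma$-invariant, but the scalars they pick up cancel.

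First I would record the action of $\gamma$ on the relevant elements. We have $\gamma(t)=\gamma(\log[\varepsilon])=\log([\varepsilon]^{\chi(\gamma)})=\chi(\gamma)\,t$, since $\gamma([\varepsilon])=[\varepsilon]^{\chi(\gamma)}$ and $\log$ is additive on $1$-units. Next, $\lambda=\prod_{n=0}^\infty\varphi^n(E(u)/E(0))\in\Oo\subset\BB^\dagger_{\rig,K_\pi}$ is fixed by $\mathscr{G}_{K_\pi}$, hence in particular by $\gamma$ (recall $\gamma$ acts trivially on $\BB_{K_\pi}^\dagger$ and on $\Rr=\BB^\dagger_{\rig,K_\pi}$; this is also used implicitly whenever $N_\nabla$ is said to act on $\Rr$). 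So $\gamma(\lambda)=\lambda$. The remaining ingredient is the relation between $\gamma$ and $\nabla_\tau$: since $\gamma\tau\gamma^{-1}=\tau^{\chi(\gamma)}$ (remark \ref{rem def tau}), for $n\gg0$ we have $\gamma\,\tau^{p^n}\gamma^{-1}=\tau^{\chi(\gamma)p^n}$, hence $\gamma\,\log(\tau^{p^n})\,\gamma^{-1}=\log(\tau^{\chi(\gamma)p^n})=\chi(\gamma)\log(\tau^{p^n})$ (expanding the logarithm as a convergent series in $1-\tau^{p^n}$ and using $\gamma(1-\tau^{p^n})^k\gamma^{-1}=(1-\tau^{\chi(\gamma)p^n})^k$ together with $\log(\tau^{\chi(\gamma)p^n})=\chi(\gamma)\log(\tau^{p^n})$, valid because $\tau^{p^n}$ is pro-unipotent for $n\gg0$). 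Dividing by $p^n$ gives $\gamma\circ\nabla_\tau=\chi(\gamma)\,\nabla_\tau\circ\gamma$ on $(\widetilde{\BB}^\dagger_{\rig,L})^{\pa}$.

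Combining these: for $x\in(\widetilde{\BB}^\dagger_{\rig,L})^{\pa}$ (or $x\in\mathcal{D}^\dagger_{\rig}(V)_\tau^{\pa}$, extending $N_\nabla$ to the module by the Leibniz rule),
\[
\gamma(N_\nabla(x)) = \gamma\big(-\tfrac{\lambda}{t}\nabla_\tau(x)\big) = -\tfrac{\gamma(\lambda)}{\gamma(t)}\gamma(\nabla_\tau(x)) = -\tfrac{\lambda}{\chi(\gamma)t}\cdot\chi(\gamma)\nabla_\tau(\gamma(x)) = -\tfrac{\lambda}{t}\nabla_\tau(\gamma(x)) = N_\nabla(\gamma(x)),
\]
using that $\gamma$ is a ring automorphism (so it respects the division by $t$ inside the localized ring where $N_\nabla$ naturally lives) and that $N_\nabla$ is $\Rr$-semilinear via the Leibniz rule with $\gamma(\lambda/t)$ being the relevant scalar. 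This is the desired identity.

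The only genuinely delicate point is justifying the manipulation $\gamma\circ\nabla_\tau=\chi(\gamma)\nabla_\tau\circ\gamma$ rigorously: one must check that the series defining $\log(\tau^{p^n})$ converges on pro-analytic vectors, that $\gamma$ acts continuously so that it commutes with the infinite sum, and that $\chi(\gamma)\in\ZZ_p^\times$ (which it is, being in the image of the cyclotomic character) so that the identity $\log(\tau^{\chi(\gamma)p^n})=\chi(\gamma)\log(\tau^{p^n})$ makes sense on the pro-unipotent group generated by $\tau^{p^n}$. All of these are standard in Poyeton's framework (\cf \cite[\S 1.2, \S 2.2]{Poy21}), so I would cite \emph{loc. cit.} for the analytic bookkeeping rather than reproving it, and present only the formal computation above as the body of the proof.
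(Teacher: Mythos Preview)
Your proof is correct and follows essentially the same approach as the paper: both arguments reduce to the identity $\gamma\circ\nabla_\tau=\chi(\gamma)\,\nabla_\tau\circ\gamma$ (derived from $\gamma\tau\gamma^{-1}=\tau^{\chi(\gamma)}$) together with $\gamma(t)=\chi(\gamma)t$ and $\gamma(\lambda)=\lambda$, so that the two $\chi(\gamma)$ factors cancel. The paper presents only the bare computation, whereas you add some helpful commentary on the analytic bookkeeping, but the substance is the same.
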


\begin{proof}
We have $\gamma\circ\tau=\tau^{\chi(\gamma)}\circ\gamma,$ hence $\gamma\circ\tau^{p^m}=\tau^{\chi(\gamma)p^m}\circ\gamma.$ We have
\begin{align*}
	\gamma\circ\log(\tau^{p^m})&=\log(\tau^{\chi(\gamma)p^m})\circ\gamma\\
	                      &=\chi(\gamma)\log(\tau^{p^m})\circ\gamma,
\end{align*}
hence $\gamma\circ\nabla_{\tau}=\chi(\gamma)\nabla_{\tau}\circ\gamma.$ We then have
\begin{align*}
	\gamma\circ N_{\nabla} &= \gamma \big( -\frac{\lambda}{t}\nabla_{\tau} \big)\\
                           &= -\frac{\lambda}{\gamma(t)}\gamma\circ \nabla_{\tau}\\
                           &= -\frac{\lambda}{\chi(\gamma)t}\chi(\gamma)\nabla_{\tau}\circ\gamma\\
                           &=N_{\nabla}\circ \gamma.	
\end{align*}
\end{proof}

\begin{proposition}\label{prop N over tau-1}
Let $n\in \NN,$ then the operator $\frac{N_{\nabla}}{\tau_D^{p^n}-1}$ induces a map $\frac{N_{\nabla}}{\tau_D^{p^n}-1}: D_{\rig, \tau^{p^n}, 0}^{\dagger, \pa}\to D_{\rig}^{\dagger}.$
\end{proposition}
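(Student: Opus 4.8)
The plan is to show that if $x\in D_{\rig,\tau^{p^n},0}^{\dagger,\pa}$, then $y:=\frac{N_\nabla}{\tau_D^{p^n}-1}(x)$ actually lies in $D_{\rig}^{\dagger}$, i.e. that it is fixed by $\mathscr{G}_{K_\pi}$ (equivalently, by $\gamma$, since $\Gal(L/K_\pi)=\overline{\la\gamma\ra}$ and the $\mathscr{G}_L$-invariants already give $D_{\rig,\tau}^{\dagger}$). We already know from the preceding lemma that $\frac{N_\nabla}{\tau_D^{p^n}-1}$ is a well-defined operator on $(D_{\rig,\tau}^{\dagger})^{\pa}$, so $y$ is at least a pro-analytic vector of $D_{\rig,\tau}^{\dagger}$; the only thing to check is its $\gamma$-invariance.

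First I would record the two commutation facts I need: $N_\nabla\circ\gamma=\gamma\circ N_\nabla$ on pro-analytic vectors (this is lemma \ref{lemm gamma commutes N nabla}, applied to the $(\varphi,N_\nabla)$-module structure, the same computation working on $D_{\rig,\tau}^{\dagger,\pa}$ because $\gamma$ acts $\chi(\gamma)$-homogeneously on $t$ and $\nabla_\tau$ and trivially on $\lambda$), and the relation $\gamma\circ\tau_D^{p^n}=\tau_D^{\chi(\gamma)p^n}\circ\gamma$ coming from $\gamma\tau\gamma^{-1}=\tau^{\chi(\gamma)}$. The membership $x\in D_{\rig,\tau^{p^n},0}^{\dagger}$ means precisely
\[
(\gamma\otimes1)(x)=\big(1+\tau_D^{p^n}+\tau_D^{2p^n}+\cdots+\tau_D^{(\chi(\gamma)-1)p^n}\big)(x)=\tfrac{\tau_D^{\chi(\gamma)p^n}-1}{\tau_D^{p^n}-1}(x).
\]
Now apply $\gamma$ to $y$: using $N_\nabla\gamma=\gamma N_\nabla$ and the identity $\gamma\circ\frac{1}{\tau_D^{p^n}-1}=\frac{1}{\tau_D^{\chi(\gamma)p^n}-1}\circ\gamma$ (valid after composing with $(\tau_D^{p^n}-1)$ and $(\tau_D^{\chi(\gamma)p^n}-1)$ on the appropriate pro-analytic spaces, since these operators are invertible on the relevant subspaces by the usual formal-series argument as in the preceding lemma), I get
\[
\gamma(y)=\gamma\Big(\tfrac{N_\nabla}{\tau_D^{p^n}-1}(x)\Big)=\tfrac{N_\nabla}{\tau_D^{\chi(\gamma)p^n}-1}(\gamma(x))=\tfrac{N_\nabla}{\tau_D^{\chi(\gamma)p^n}-1}\Big(\tfrac{\tau_D^{\chi(\gamma)p^n}-1}{\tau_D^{p^n}-1}(x)\Big)=\tfrac{N_\nabla}{\tau_D^{p^n}-1}(x)=y,
\]
where the second-to-last equality is the telescoping identity $\frac{N_\nabla}{\tau_D^{\chi(\gamma)p^n}-1}\circ\frac{\tau_D^{\chi(\gamma)p^n}-1}{\tau_D^{p^n}-1}=\frac{N_\nabla}{\tau_D^{p^n}-1}$ (all three operators commute, being built from $N_\nabla$ and powers of $\tau_D$, and they compose formally the way the notation suggests on $(D_{\rig,\tau}^{\dagger})^{\pa}$). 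Thus $\gamma(y)=y$, so $y\in(D_{\rig,\tau}^{\dagger})^{\gamma=1}\cap(D_{\rig,\tau}^{\dagger})^{\pa}$. Since $(D_{\rig,\tau}^{\dagger})^{\mathscr{G}_{K_\pi}}=D_{\rig}^{\dagger}$ (invariants under the whole $\mathscr{G}_{K_\pi}/\mathscr{G}_L=\overline{\la\gamma\ra}$ of $D_{\rig,\tau}^{\dagger}=\mathscr{G}_L$-invariants), and $D_{\rig}^{\dagger}$ consists of pro-analytic vectors (\cite[Lemma 1.3.4]{Poy21}), we conclude $y\in D_{\rig}^{\dagger}$.

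The main obstacle I anticipate is making the manipulation of the symbolic fractions $\frac{N_\nabla}{\tau_D^{p^n}-1}$ rigorous: one must be careful that these are genuinely defined operators on $(D_{\rig,\tau}^{\dagger})^{\pa}$ and that the "telescoping" and "conjugation by $\gamma$" identities hold as operator identities there, not merely formally. The cleanest way is to work with the explicit expansion of $\frac{N_\nabla}{\tau_D^{p^n}-1}$ in terms of $\lambda/(p^mt)$, the series $\sum_i\frac{(1-\tau_D^{p^m})^{i-1}}{i}$, and the finite sum $1+\tau_D^{p^n}+\cdots+\tau_D^{(\frac{m}{n}-1)p^n}$ (for $m\gg0$ with $n\mid m$) given in the preceding lemma, and to verify each identity on that expansion, using that $\tau_D-1$ is topologically nilpotent on pro-analytic vectors so all the series converge. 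Once that bookkeeping is in place, the $\gamma$-equivariance computation above goes through verbatim and the proposition follows.
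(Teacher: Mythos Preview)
Your proposal is correct and follows essentially the same argument as the paper: compute $\gamma(y)$ using $\gamma\circ N_\nabla=N_\nabla\circ\gamma$ and $\gamma\circ\tau_D^{p^n}=\tau_D^{\chi(\gamma)p^n}\circ\gamma$, insert the defining relation $(\gamma\otimes1)(x)=\frac{\tau_D^{\chi(\gamma)p^n}-1}{\tau_D^{p^n}-1}(x)$, telescope, and conclude $y\in(D_{\rig,\tau}^{\dagger})^{\gamma=1}=D_{\rig}^{\dagger}$. Your added discussion of how to make the symbolic fractions rigorous via the explicit series expansion is a useful supplement, but the core computation is line-for-line the paper's.
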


\begin{proof}
Let $x\in D_{\rig, \tau^{p^n}, 0}^{\dagger, \pa},$ then by lemma \ref{lemm gamma commutes N nabla} we have 
\begin{align*}
	\gamma\bigg(\frac{N_{\nabla}}{\tau_D^{p^n}-1}(x)  \bigg)&=\frac{N_{\nabla}}{\tau_D^{p^n\chi(\gamma)}-1}(\gamma(x))\\
	                                                        &=\frac{N_{\nabla}}{\tau_D^{p^n\chi(\gamma)}-1}\big(1+\tau_D^{p^n} +\cdots + \tau_D^{p^n(\chi(\gamma)-1)} \big)(x)\\
	                                                      &=\frac{N_{\nabla}}{\tau_D^{p^n\chi(\gamma)}-1}\cdot \frac{\tau_D^{p^n\chi(\gamma)}-1}{\tau_D^{p^n}-1}(x)\\
	                                                      &= \frac{N_{\nabla}}{\tau_D^{p^n}-1}(x).
\end{align*}
Hence $\gamma\Big(\frac{N_{\nabla}}{\tau_D^{p^n}-1}(x)  \Big)\in (D_{\rig, \tau}^{\dagger})^{\gamma=1}=D_{\rig}^{\dagger}.$
\end{proof}

\begin{remark}
	Let $V\in \Rep_{\QQ_p}(\mathscr{G}_K)$ and $(D^{\dagger}_{\rig}, D^{\dagger}_{\rig, \tau})$ be its $(\varphi, \tau^{p^n})$-module over $(\Rr, \Rr_{\tau}),$ then in particular $D^{\dagger}_{\rig}$ is equipped with a $(\varphi, N_{\nabla})$-module structure. When $n\gg 0,$ we have the following morphism of complexes:
\[ \xymatrix{ 
	\mathcal{C}_{\rig, \tau^{p^n}}^{\pa}(V)\colon & 0 \ar[r] & D^{\dagger}_{\rig} \ar[rr]^-{(\varphi-1, \tau_D^{p^n}-1)} \ar@{=}[d]&& D^{\dagger}_{\rig} \oplus D^{\dagger, \pa}_{\rig, \tau^{p^n}, 0} \ar[rr]^-{(\tau_D^{p^n}-1)\ominus (\varphi-1)} \ar[d]_{\id\oplus \frac{N_{\nabla}}{\tau_D^{p^n}-1}}&& D^{\dagger, \pa}_{\rig, \tau^{p^n}, 0} \ar[d]^{\frac{N_{\nabla}}{\tau_D^{p^n}-1}} \ar[r]& 0 \\
	\mathcal{C}_{\varphi, N_{\nabla}}(V)\colon & 0 \ar[r] & D^{\dagger}_{\rig} \ar[rr]^-{(\varphi-1, N_{\nabla})}&& D^{\dagger}_{\rig}\oplus D^{\dagger}_{\rig} \ar[rr]^-{N_{\nabla}\ominus (\frac{pE(u)}{E(0)}\varphi-1)}&& D^{\dagger}_{\rig} \ar[r]&0.
}   \]	
\end{remark}

\begin{proposition}\label{prop H0 H1 of complex C_R(V)}
	We have a group isomorphism $\H^i(\Cc_{\varphi, N_{\nabla}}(V))\simeq \iLim{n}\H^i(\mathscr{G}_{K_n}, V)$ for $i\in \{0, 1 \}.$
\end{proposition}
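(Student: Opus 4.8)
The plan is to compute $\H^i(\Cc_{\varphi, N_\nabla}(V))$ for $i \in \{0,1\}$ by separately identifying the $\H^0$ by a direct calculation and the $\H^1$ via an $\Ext$-group interpretation, then matching both against the inductive limit over $n$ of the cohomology of the $(\varphi,\tau^{p^n})$-module complexes, which in turn computes $\iLim{n}\H^i(\mathscr{G}_{K_n},V)$ by Lemma \ref{lemm complex C R r computes H0 H1 }.

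For $\H^0$: by definition $\H^0(\Cc_{\varphi, N_\nabla}(V)) = \{x \in D_{\rig}^\dagger : (\varphi-1)(x) = 0,\ N_\nabla(x) = 0\}$. I would argue $D_{\rig}^{\dagger, \varphi=1} = (\BB_{\rig}^\dagger \otimes_{\Rr} D_{\rig}^\dagger)^{\varphi=1, \mathscr{G}_{K_\pi}} \cap D_{\rig}^\dagger$, which by the equivalence of categories (Theorem \ref{thm D dagger rig}) together with Lemma \ref{lemm complex C R r computes H0 H1 } applied at each level $n$ lands us inside $V^{\mathscr{G}_{K_\pi}}$-type invariants; the extra condition $N_\nabla(x) = 0$ is equivalent to $\nabla_\tau(x) = 0$ (since $-\lambda/t$ is a unit in $\widetilde{\BB}_{\rig,L}^\dagger$), hence to $x$ being fixed by an open subgroup of $\overline{\langle\tau\rangle}$, i.e. by some $\tau^{p^n}$. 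Thus $\H^0(\Cc_{\varphi,N_\nabla}(V)) = \bigcup_n (D_{\rig}^\dagger)^{\varphi=1, \tau_D^{p^n}=1} = \iLim{n} \H^0(\mathscr{G}_{K_n},V)$, where the last identification uses $\H^0(\mathcal{C}^{\rig}_{\varphi,\tau^{p^n}}(V)) \simeq \H^0(\mathscr{G}_{K_n},V)$ from Lemma \ref{lemm complex C R r computes H0 H1 } and the compatibility of the morphism of complexes $\mathcal{C}^{\pa}_{\varphi,\tau^{p^n}}(V) \to \mathcal{C}_{\varphi,N_\nabla}(V)$ (which is an isomorphism on $\H^0$ since both the $\varphi$-module $D_{\rig}^\dagger$ and the identifications of kernels are literally the same on degree $-1$ and $0$ terms, modulo the $N_\nabla$ vs $\tau_D^{p^n}-1$ bookkeeping).

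For $\H^1$: I would invoke Lemma \ref{lemm H1 Ext N nabla}, which identifies $\H^1(\Cc_{\varphi,N_\nabla}(V))$ with $\Ext^1_{\Mod_{\Rr}(\varphi,N_\nabla)}(\mathcal{D}^\dagger_{\rig}(V), \Rr)$. Then Lemma \ref{lemm Ext 1 nabla bij limit Ext 1} identifies this $\Ext$-group with $\iLim{n} \Ext^1_{\Mod_{\Rr,\Rr_\tau}(\varphi,\tau^{p^n})}(\mathcal{D}^\dagger_{\rig}(V), \Rr)$. Finally, the same Yoneda/extension argument as in Lemma \ref{lemm H1 Ext N nabla} — but for $(\varphi,\tau^{p^n})$-modules over $(\Rr,\Rr_\tau)$ — should identify $\Ext^1_{\Mod_{\Rr,\Rr_\tau}(\varphi,\tau^{p^n})}(\mathcal{D}^\dagger_{\rig}(V),\Rr)$ with $\H^1(\mathcal{C}^{\rig}_{\varphi,\tau^{p^n}}(V))$, which by Lemma \ref{lemm complex C R r computes H0 H1 } is $\H^1(\mathscr{G}_{K_n},V)$. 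Taking the inductive limit over $n$ and checking that the transition maps on the $\Ext$-side correspond under all these identifications to the restriction maps $\H^1(\mathscr{G}_{K_n},V) \to \H^1(\mathscr{G}_{K_{n+1}},V)$ (which follows from the explicit morphisms of complexes $\mathcal{C}^{\rig}_{\varphi,\tau^{p^n}} \to \mathcal{C}^{\rig}_{\varphi,\tau^{p^{n+1}}}$ given by Corollary \ref{coro precise computation of map between different tau power -1} in the rigid setting) yields $\H^1(\Cc_{\varphi,N_\nabla}(V)) \simeq \iLim{n}\H^1(\mathscr{G}_{K_n},V)$.

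The main obstacle I anticipate is the careful bookkeeping of the transition maps and their compatibility: one must verify that the natural map $\iLim{n}\H^i(\mathcal{C}^{\rig}_{\varphi,\tau^{p^n}}(V)) \to \H^i(\Cc_{\varphi,N_\nabla}(V))$ induced by the morphisms of complexes $\mathcal{C}^{\pa}_{\varphi,\tau^{p^n}}(V) \to \mathcal{C}_{\varphi,N_\nabla}(V)$ (using $\frac{N_\nabla}{\tau_D^{p^n}-1}$, well-defined by Proposition \ref{prop N over tau-1}) is an isomorphism on $\H^0$ and $\H^1$. For $\H^0$ this is essentially formal. For $\H^1$ the cleanest route is via the $\Ext$-interpretation, which sidesteps having to analyze the map of complexes directly; the remaining subtlety there is confirming that the isomorphism of Lemma \ref{lemm Ext 1 nabla bij limit Ext 1} is compatible with the $\Ext \simeq \H^1$ dictionary on both sides, i.e. that an extension of $(\varphi,N_\nabla)$-modules that happens to carry a $(\varphi,\tau^{p^n})$-structure gives, under Lemma \ref{lemm H1 Ext N nabla} and the analogous statement for $\mathcal{C}^{\rig}_{\varphi,\tau^{p^n}}$, the same cohomology class. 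I would also need to note that $\H^2$ is deliberately excluded, since the pairing constructed later is not shown to be non-degenerate, so no claim is made there.
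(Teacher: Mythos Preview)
Your proposal is correct and follows essentially the same approach as the paper: for $\H^0$ you reduce to the equivalence between $N_\nabla(x)=0$ and $\tau^{p^n}(x)=x$ for some $n\gg0$ (the paper phrases this via $\exp(p^n\tfrac{t}{-\lambda}N_\nabla)=\tau^{p^n}$), and for $\H^1$ you chain together Lemma~\ref{lemm H1 Ext N nabla}, Lemma~\ref{lemm Ext 1 nabla bij limit Ext 1}, and the identification $\Ext^1_{(\varphi,\tau^{p^n})}\simeq\H^1(\mathscr{G}_{K_n},V)$ exactly as the paper does. Your extra discussion of the compatibility of transition maps goes beyond what the paper verifies here (the paper only claims a group isomorphism; naturality under the morphisms of complexes is treated separately in the subsequent proposition under an additional pro-analyticity hypothesis), but this does not affect correctness.
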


\begin{proof}
	When $i=0,$ it suffices to show that $N_{\nabla}(x)=0$ if and only if $\tau^{p^n}(x)=x$ for some $n\gg 0.$ This is indeed the case as  $\exp(p^n\frac{t}{-\lambda}N_{\nabla})=\tau^{p^n}$ for $n\gg 0.$ When $i=1,$ this follows from the bijectivity of $\iLim{m}\Ext^1_{(\varphi,\tau^{p^m})}(\mathcal{D}^{\dagger}_{\rig}(V),\mathcal{R})\to\Ext^1_{(\varphi,N_\nabla)}(\mathcal{D}(V),\mathcal{R})$ ($\cf$ lemmas \ref{lemm H1 Ext N nabla} and \ref{lemm Ext 1 nabla bij limit Ext 1}), noting that $\Ext^1_{(\varphi,\tau^{p^n})}(\mathcal{D}^{\dagger}_{\rig}(V),\mathcal{R})\simeq \H^1(K_n, V)$ for all $n\in \NN.$ 
\end{proof}

\begin{proposition}
	If $D_{\rig, \tau^{p^n}, 0}^{\dagger}\subset D^{\dagger, \pa}_{\rig, \tau}$ for all $n\in \NN,$ then the map in proposition \ref{prop H0 H1 of complex C_R(V)} is a natural $\QQ_p$-linear isomorphism.
\end{proposition}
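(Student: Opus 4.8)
The statement asserts that if each inclusion $D_{\rig,\tau^{p^n},0}^\dagger\subset D_{\rig,\tau}^{\dagger,\pa}$ holds (so that $D_{\rig,\tau^{p^n},0}^{\dagger,\pa}=D_{\rig,\tau^{p^n},0}^\dagger$ and the complex $\Cc^{\pa}_{\varphi,\tau^{p^n}}(V)$ coincides with $\mathcal{C}^{\rig}_{\varphi,\tau^{p^n}}(V)$), then the canonical map $\H^i(\Cc_{\varphi,N_\nabla}(V))\to\iLim{n}\H^i(\mathscr{G}_{K_n},V)$ of Proposition~\ref{prop H0 H1 of complex C_R(V)} upgrades to a natural $\QQ_p$-linear isomorphism; the point is that the abstract group isomorphism obtained via the $\Ext$-interpretation becomes a genuine map of complexes, hence $\QQ_p$-linear and natural, once the two families of complexes are literally comparable. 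The plan is to exhibit the comparison as an honest morphism of complexes and then invoke the already-established identifications on cohomology.

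First I would fix $n\gg 0$ large enough that, by Proposition~\ref{prop N Nabla connection R-tau} and Proposition~\ref{prop N over tau-1}, the operator $\frac{N_\nabla}{\tau_D^{p^n}-1}$ is defined and maps $D_{\rig,\tau^{p^n},0}^{\dagger,\pa}$ into $D_{\rig}^\dagger$; under the running hypothesis $D_{\rig,\tau^{p^n},0}^{\dagger,\pa}=D_{\rig,\tau^{p^n},0}^\dagger$, so $\mathcal{C}^{\pa}_{\varphi,\tau^{p^n}}(V)=\mathcal{C}^{\rig}_{\varphi,\tau^{p^n}}(V)$. The morphism of complexes displayed just before Proposition~\ref{prop H0 H1 of complex C_R(V)}, namely
\[
\xymatrix@C=18pt{
0\ar[r] & D^{\dagger}_{\rig} \ar[r]\ar@{=}[d] & D^{\dagger}_{\rig}\oplus D^{\dagger}_{\rig,\tau^{p^n},0} \ar[r]\ar[d]^{\id\oplus\frac{N_\nabla}{\tau_D^{p^n}-1}} & D^{\dagger}_{\rig,\tau^{p^n},0}\ar[r]\ar[d]^{\frac{N_\nabla}{\tau_D^{p^n}-1}} & 0\\
0\ar[r] & D^{\dagger}_{\rig} \ar[r] & D^{\dagger}_{\rig}\oplus D^{\dagger}_{\rig} \ar[r] & D^{\dagger}_{\rig}\ar[r] & 0
}
\]
is then $\QQ_p$-linear in each vertical arrow (all the operators $\varphi-1$, $\tau_D^{p^n}-1$, $N_\nabla$, $c\varphi-1$ and $\frac{N_\nabla}{\tau_D^{p^n}-1}$ are additive $\QQ_p$-linear maps of $\QQ_p$-vector spaces), and it is compatible with the transition maps $\mathcal{C}^{\rig}_{\varphi,\tau^{p^n}}(V)\to\mathcal{C}^{\rig}_{\varphi,\tau^{p^{n+1}}}(V)$ of Corollary~\ref{coro precise computation of map between different tau power -1} because $\frac{N_\nabla}{\tau_D^{p^n}-1}=\frac{N_\nabla}{\tau_D^{p^{n+1}}-1}\circ\frac{\tau_D^{p^{n+1}}-1}{\tau_D^{p^n}-1}$ (a direct computation of the same flavour as in that corollary). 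Passing to the colimit over $n$ therefore produces a natural $\QQ_p$-linear morphism of complexes $\iLim{n}\mathcal{C}^{\rig}_{\varphi,\tau^{p^n}}(V)\to\Cc_{\varphi,N_\nabla}(V)$.

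Next I would identify the cohomology on both sides. By Lemma~\ref{lemm complex C R r computes H0 H1 } we have $\H^i(\mathcal{C}^{\rig}_{\varphi,\tau^{p^n}}(V))\simeq\H^i(\mathscr{G}_{K_n},V)$ for $i\in\{0,1\}$, naturally and $\QQ_p$-linearly in the obvious sense, and since filtered colimits are exact these identifications are compatible with the transition maps; hence $\H^i\big(\iLim{n}\mathcal{C}^{\rig}_{\varphi,\tau^{p^n}}(V)\big)\simeq\iLim{n}\H^i(\mathscr{G}_{K_n},V)$ for $i\in\{0,1\}$, $\QQ_p$-linearly. On the target side, the argument of Proposition~\ref{prop H0 H1 of complex C_R(V)} — resting on Lemma~\ref{lemm H1 Ext N nabla}, Lemma~\ref{lemm Ext 1 nabla bij limit Ext 1}, and for $i=0$ on the identity $\exp(p^n\tfrac{t}{-\lambda}N_\nabla)=\tau^{p^n}$ for $n\gg0$ — shows that the induced map on $\H^i$, $i\in\{0,1\}$, is precisely the isomorphism $\iLim{n}\H^i(\mathscr{G}_{K_n},V)\xrightarrow{\sim}\H^i(\Cc_{\varphi,N_\nabla}(V))$. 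Combining, the map on $\H^i$ induced by the morphism of complexes above is an isomorphism for $i\in\{0,1\}$; it is $\QQ_p$-linear and natural in $V$ because it is induced by a $\QQ_p$-linear, functorial morphism of complexes. For $i\geq 2$ one uses that $\H^i(\mathscr{G}_{K_n},V)=0$ (cohomological dimension $\leq 1$ after base change to $K_n$, as $K_n/\QQ_p$ is still a complete discretely valued field with perfect residue field, $\cf$ the remark after the proof of Theorem~\ref{thm main result}) together with the fact that the bottom complex $\Cc_{\varphi,N_\nabla}(V)$ has three terms and (by the slope arguments implicit in $\mathcal{R}$ being a Bézout/PID-like ring, i.e. $N_\nabla\ominus(c\varphi-1)$ surjective since $c\varphi-1$ is) vanishing $\H^2$, so both sides are $0$; alternatively one simply restricts the statement to $i\in\{0,1\}$ as in the proposition being refined.

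\textbf{Main obstacle.} The genuinely delicate point is verifying that the hypothesis $D_{\rig,\tau^{p^n},0}^\dagger\subset D_{\rig,\tau}^{\dagger,\pa}$ really does make the two families of complexes \emph{equal} (not merely quasi-isomorphic) in a way compatible with all transition maps and with the $\Ext$-theoretic identification of $\H^1$; concretely, one must check that the square relating $\frac{N_\nabla}{\tau_D^{p^n}-1}$ to the map $\id\oplus\frac{\tau_D^{p^{n+1}}-1}{\tau_D^{p^n}-1}$ commutes strictly, and that under $\H^1\simeq\Ext^1$ the colimit of the $(\varphi,\tau^{p^n})$-extension classes maps to the $(\varphi,N_\nabla)$-extension class attached to the same $E$ — this is where Lemma~\ref{lemm Ext 1 nabla bij limit Ext 1} and the remark \cite[remark 2.2.4]{Poy21} it quotes are doing the real work. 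Once that bookkeeping is in place, everything else is formal: naturality and $\QQ_p$-linearity are inherited from the morphism of complexes, and the isomorphism on $\H^0,\H^1$ is exactly Proposition~\ref{prop H0 H1 of complex C_R(V)} read off at the level of complexes rather than just groups.
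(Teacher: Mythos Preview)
Your core approach is exactly that of the paper: under the hypothesis, $\frac{N_\nabla}{\tau_D^{p^n}-1}$ is defined on all of $D^{\dagger}_{\rig,\tau^{p^n},0}$ (not just the pro-analytic part), so the displayed morphism of complexes has source $\mathcal{C}^{\rig}_{\varphi,\tau^{p^n}}(V)$ itself; this yields a natural $\QQ_p$-linear map $\iLim{n}\H^1(\mathscr{G}_{K_n},V)\to\H^1(\Cc_{\varphi,N_\nabla}(V))$ which identifies with the $\Ext$-bijection of Lemma~\ref{lemm Ext 1 nabla bij limit Ext 1}.

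There is, however, a genuine error in your digression on $i\geq 2$. The absolute Galois group $\mathscr{G}_{K_n}$ has $p$-cohomological dimension $2$, not $1$ (the remark after Theorem~\ref{thm main result} concerns vanishing for $i\geq 3$, not $i\geq 2$), so $\H^2(\mathscr{G}_{K_n},V)$ is typically nonzero; and the surjectivity of $N_\nabla\ominus(c\varphi-1)$ on $D^\dagger_{\rig}$ is precisely the open question discussed later in the paper (see Example~\ref{example Robba for H2=0} and the surrounding remarks). The statement being proved only concerns $i\in\{0,1\}$, so simply drop that paragraph rather than attempt it.
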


\begin{proof}
If $D_{\rig, \tau^{p^n}, 0}^{\dagger}\subset D^{\dagger, \pa}_{\rig, \tau},$ then we have $\frac{N_{\nabla}}{\tau_D^{p^n}-1}: D_{\rig, \tau^{p^n}, 0}^{\dagger}\to D_{\rig}^{\dagger}$ by proposition \ref{prop N over tau-1}. We then have the following map of complexes
\[ \xymatrix{ 
	\mathcal{C}_{\varphi, \tau^{p^n}}^{\rig}(V)\colon & 0 \ar[r] & D^{\dagger}_{\rig} \ar[rr]^-{(\varphi-1, \tau_D^{p^n}-1)} \ar@{=}[d]&& D^{\dagger}_{\rig} \oplus D^{\dagger}_{\rig, \tau^{p^n}, 0} \ar[rr]^-{(\tau_D^{p^n}-1)\ominus (\varphi-1)} \ar[d]_{\id\oplus \frac{N_{\nabla}}{\tau_D^{p^n}-1}}&& D^{\dagger}_{\rig, \tau^{p^n}, 0} \ar[d]^{\frac{N_{\nabla}}{\tau_D^{p^n}-1}} \ar[r]& 0 \\
	\mathcal{C}_{\varphi, N_{\nabla}}(V)\colon &	0 \ar[r] & D^{\dagger}_{\rig} \ar[rr]^-{(\varphi-1, N_{\nabla})}&& D^{\dagger}_{\rig}\oplus D^{\dagger}_{\rig} \ar[rr]^-{N_{\nabla}\ominus (\frac{pE(u)}{E(0)}\varphi-1)}&& D^{\dagger}_{\rig} \ar[r]&0
}   \]
this induces a natural $\QQ_p$-linear map $\iLim{m} \H^1(\mathscr{G}_{K_m},V)\to \H^1(\mathcal{C}_{\varphi, N_{\nabla}}(V)),$ which identifies with the bijective map $\iLim{m}\Ext^1_{(\varphi,\tau^{p^m})}(\mathcal{D}^{\dagger}_{\rig}(V),\mathcal{R})\to\Ext^1_{(\varphi,N_\nabla)}(\mathcal{D}(V),\mathcal{R})$ mentioned in proposition \ref{prop H0 H1 of complex C_R(V)}.
\end{proof}

\begin{remark}
	As $V$ has finite dimension, the increasing sequence $(\H^0(\mathscr{G}_{K_n},V))_{n\in\NN}$ is stationary, so that $\H^0(\mathcal{C}_{\varphi, N_{\nabla}}(V))=\H^0(\mathscr{G}_{K_n},V)$ for $n\gg0.$
	The analogue is not true for $\H^1.$ For instance, (suppose $k$ is finite) if $V=\QQ_p(1),$ we have $\dim_{\QQ_p}(\H^1(\mathscr{G}_{K_n},V))\geq1+[K_n:\QQ_p]=1+p^n[K:\QQ_p],$ so that the sequence $(\H^1(\mathscr{G}_{K_n},V))_{n\in\NN}$ is not stationary.
\end{remark}

\subsection{Examples: the \texorpdfstring{$(\varphi, \tau)$}{(phi, tau)}-module of \texorpdfstring{$\ZZ_p(n)$}{Zp(n)}}\label{section D of tate twist}

Let $\mathfrak{t}\in \W(\Oo_{C^{\flat}})$ be an element that is not divisible by $p$ and satisfies $\varphi(\mathfrak{t})=c\mathfrak{t}.$ In fact $\mathfrak{t}$ is unique up to multiplication by an element in $\ZZ_p^{\times}.$ By \cite[Propositions 1.3.5, 1.3.6]{Poy21}, we can normalize $\mathfrak{t}$ such that $\mathfrak{t}=\frac{t}{p\lambda}\in (\widetilde{\AA}^{+})^{\mathscr{G}_L}$\index{$\mathfrak{t}$} (\cf also \cite[Example 3.2.3]{Liu08}, this element is denoted $b_{\gamma}$ in \textit{loc. cit.}).

By \cite[1.3.4]{Car13} we see that the $(\varphi, \tau)$-module structure over $(\Oo_{\Ee}, \Oo_{\Ee_{\tau}})$ associated to the representation $\ZZ_p(n)$ is given by:
\begin{align*}
&\mathcal{D}(\ZZ_p(n))=\Oo_{\Ee}\mathfrak{t}^{-n}\\
&\varphi(\mathfrak{t}^{-n})=c^{-n}\mathfrak{t}^{-n}\\
&\tau(\mathfrak{t}^{-n})=\Big(\frac{\lambda}{\tau(\lambda)}\Big)^{-n}\mathfrak{t}^{-n}. 
\end{align*}
Remark that the minus sign in the first equality follows from the fact that our constructions are covariant (compare with \cite[1.3.4]{Car13}).

\begin{remark}\label{rem galois over t}
	We have $g(\mathfrak{t})=\chi(g)\frac{\lambda}{g(\lambda)}\mathfrak{t}$ for $g\in \mathscr{G}_{K},$ and in particular $g(\mathfrak{t})=\chi(g)\mathfrak{t}$ for $g\in \mathscr{G}_{K_{\pi}}.$ 
\end{remark}
\begin{proof}
	$\cf$ \cite[Proposition 1.3.6]{Poy21}.
\end{proof}

\begin{lemma}\label{lemm N nabla t =0}
	We have $N_{\nabla}(t)=0.$
\end{lemma}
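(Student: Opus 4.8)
The statement $N_\nabla(t)=0$ should follow immediately from the two descriptions of $N_\nabla$ available to us: the "Lie-theoretic" one $N_\nabla=\frac{-\lambda}{t}\nabla_\tau$ and the explicit computation of $\nabla_\tau$ on the relevant element. First I would recall that $t=\log([\varepsilon])$ satisfies $\tau(t)=\chi(\tau)\,t$ more generally $g(t)=\chi(g)\,t$ for $g\in\mathscr{G}_K$ (this is the standard property of $t$, and it is the $n=1$, $\pi$-free shadow of remark \ref{rem galois over t}; alternatively it is immediate from $[\varepsilon]\mapsto[\varepsilon]^{\chi(g)}$ and the definition of the logarithm). In particular $\tau^{p^m}(t)=\chi(\tau^{p^m})\,t=t$ since $\chi(\tau)=1$ (recall $\tau\in\Gal(\overline{K}/K_\zeta)$, so $\chi(\tau)=1$; more precisely $\tau$ acts trivially on $K_\zeta$, hence on $\varepsilon$, hence fixes $t$).

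From $\tau^{p^m}(t)=t$ for all $m$ I would deduce $\log(\tau^{p^m})(t)=-\sum_{k\geq1}\frac{(1-\tau^{p^m})^k}{k}(t)=0$, hence $\nabla_\tau(t)=\frac{1}{p^m}\log(\tau^{p^m})(t)=0$ for $m\gg0$, and therefore $N_\nabla(t)=\frac{-\lambda}{t}\nabla_\tau(t)=0$. This is really the whole argument; the only point requiring a word of care is that $t$ is an element of $\widetilde{\BB}^\dagger_{\rig,L}$ on which $\nabla_\tau$ is genuinely defined (it is pro-analytic, being fixed by $\tau$ and lying in $(\widetilde{\AA}^+)$ up to the factor $p\lambda$ via $t=p\lambda\mathfrak{t}$ with $\mathfrak{t}\in(\widetilde{\AA}^+)^{\mathscr{G}_L}$), so that the formula $N_\nabla=\frac{-\lambda}{t}\nabla_\tau$ applies to it. I would cite \cite[\S 2.2]{Poy21} for the fact that $\nabla_\tau$ is well-defined on $(\widetilde{\BB}^\dagger_{\rig,L})^{\pa}$ and that $t$ lies in this space.

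Alternatively, and perhaps more transparently for the reader, one can use the description $N_\nabla=-u\lambda\frac{d}{du}$ from remark \ref{rmq N nabla } — but this presentation is only valid as an operator on $\Rr$, whereas $t\notin\Rr$ (it lies in $\Rr_\tau$), so I would not rely on it here and would instead present the $\nabla_\tau$-based computation above. A third route: differentiate the relation $\varphi(t)=p\,t$ (which holds since $\varphi([\varepsilon])=[\varepsilon]^p$ gives $\varphi(t)=pt$) against $N_\nabla\circ\varphi=c\varphi N_\nabla$: we get $N_\nabla(pt)=c\varphi(N_\nabla(t))$, i.e. $p\,N_\nabla(t)=c\varphi(N_\nabla(t))$; combined with the fact that $N_\nabla$ lowers "$t$-degree" this forces $N_\nabla(t)=0$, but making "$t$-degree" precise is more work than the direct computation, so I would keep this only as a remark if at all.

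\textbf{Main obstacle.} There is essentially no obstacle: the content is entirely the observation $\chi(\tau)=1\Rightarrow\tau$ fixes $t\Rightarrow\nabla_\tau(t)=0$. The only thing to be careful about is bookkeeping — confirming that $t$ is in the domain of $\nabla_\tau$ (pro-analyticity, via $t=p\lambda\mathfrak{t}$ and $\mathfrak{t}\in(\widetilde{\AA}^+)^{\mathscr{G}_L}$, cf. the discussion preceding lemma \ref{lemm N nabla t =0} and \cite[Propositions 1.3.5, 1.3.6]{Poy21}) — and making sure the normalization of $t$ used is the one for which these formulas hold. So the write-up will be three or four lines.
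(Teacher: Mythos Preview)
Your proposal is correct and is exactly the argument the paper has in mind: the paper's proof is a single sentence saying it ``follows directly from the definition of the element $t$ and the operator $N_{\nabla}$ over $(\widetilde{\BB}^{\dagger}_{\rig, L})^{\pa}$,'' and your expansion via $\tau\in\mathscr{G}_{K_\zeta}\Rightarrow\tau(\varepsilon)=\varepsilon\Rightarrow\tau(t)=t\Rightarrow\nabla_\tau(t)=0$ is precisely what this means. Your care about the domain of $\nabla_\tau$ is appropriate and your dismissal of the $-u\lambda\frac{d}{du}$ route (since $t\notin\Rr$) is correct.
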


\begin{proof}
	This follows directly from the definition of the element $t$ and the operator $N_{\nabla}$ over $(\widetilde{\BB}^{\dagger}_{\rig, L})^{\pa}.$ 
\end{proof}

\begin{lemma}\label{lemm N nabla t^-1}
	Let $n\in \NN, $ then we have $N_{\nabla}(\mathfrak{t}^{-n})=n\mathfrak{t}^{-n}\frac{N_{\nabla}(\lambda)}{\lambda}.$ 
\end{lemma}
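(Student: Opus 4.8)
\textbf{Proof plan for Lemma \ref{lemm N nabla t^-1}.}

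The plan is to combine the Leibniz rule for $N_\nabla$ (which is a connection over $N_\nabla\colon\mathcal{R}\to\mathcal{R}$, extended to the relevant $\mathcal{R}$-modules and in particular to $\mathcal{D}^{\dagger}_{\rig}(V)$ for $V=\QQ_p(n)$) with the relation $\mathfrak{t}=\frac{t}{p\lambda}$, and with the computation $N_\nabla(t)=0$ from Lemma \ref{lemm N nabla t =0}. First I would observe that it suffices to treat $n=1$, i.e.\ to show $N_\nabla(\mathfrak{t}^{-1})=\mathfrak{t}^{-1}\frac{N_\nabla(\lambda)}{\lambda}$; the general case then follows by the Leibniz rule applied to $\mathfrak{t}^{-n}=(\mathfrak{t}^{-1})^n$, which gives $N_\nabla(\mathfrak{t}^{-n})=n\mathfrak{t}^{-(n-1)}N_\nabla(\mathfrak{t}^{-1})=n\mathfrak{t}^{-n}\frac{N_\nabla(\lambda)}{\lambda}$ (an easy induction, or directly from $N_\nabla$ being a derivation on the subring/module generated by $\mathfrak{t}^{\pm1}$).

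For $n=1$: from $\mathfrak{t}=\frac{t}{p\lambda}$ we get $\mathfrak{t}^{-1}=\frac{p\lambda}{t}$, so $\mathfrak{t}^{-1}t=p\lambda$. Applying $N_\nabla$ and the Leibniz rule, $N_\nabla(\mathfrak{t}^{-1})\,t+\mathfrak{t}^{-1}N_\nabla(t)=pN_\nabla(\lambda)$. By Lemma \ref{lemm N nabla t =0} the second term on the left vanishes, so $N_\nabla(\mathfrak{t}^{-1})=\frac{pN_\nabla(\lambda)}{t}=\frac{p\lambda}{t}\cdot\frac{N_\nabla(\lambda)}{\lambda}=\mathfrak{t}^{-1}\frac{N_\nabla(\lambda)}{\lambda}$. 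I should check that each manipulation is legitimate: $t$, $\lambda$ are (pro-)analytic so $N_\nabla$ is defined on them; $\mathfrak{t}^{-1}=\frac{p\lambda}{t}\in\widetilde{\BB}^{\dagger}_{\rig,L}$ lies where $N_\nabla$ makes sense (it is the generator of $\mathcal{D}^{\dagger}_{\rig}(\QQ_p(1))$, on which $N_\nabla$ acts by the preceding proposition); and division by $t$ and by $\lambda$ is permitted in the ring $\widetilde{\BB}^{\dagger}_{\rig,L}[1/t]$, noting $\lambda$ is already a unit of $\mathcal{R}$ (Remark \ref{rmq N nabla }(3) and the cited fact that $E(u)/E(0)$ is a unit). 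The Leibniz rule for $N_\nabla$ on products involving $\mathfrak{t}^{-1}$ and $t$ is the one built into the definition of a $(\varphi,N_\nabla)$-module together with the compatibility of the extension of $N_\nabla$ to $\mathcal{D}^{\dagger}_{\rig}(\QQ_p(1))$ with multiplication by elements of $\widetilde{\BB}^{\dagger}_{\rig,L}$.

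There is essentially no hard part here: the only point requiring a little care is book-keeping about which ring the element $\mathfrak{t}^{-n}$ and the intermediate expressions live in, and the verification that $N_\nabla$ extends as a derivation to that ring (or module) so that the Leibniz computation above is valid. Alternatively, one could avoid the $n=1$ reduction entirely and compute directly: write $\mathfrak{t}^{-n}=p^n\lambda^n t^{-n}$, apply $N_\nabla$ by Leibniz to the product $p^n\lambda^n\cdot t^{-n}$, use $N_\nabla(t^{-n})=-nt^{-n-1}N_\nabla(t)=0$ and $N_\nabla(\lambda^n)=n\lambda^{n-1}N_\nabla(\lambda)$, obtaining $N_\nabla(\mathfrak{t}^{-n})=p^n\cdot n\lambda^{n-1}N_\nabla(\lambda)\cdot t^{-n}=n\cdot p^n\lambda^n t^{-n}\cdot\frac{N_\nabla(\lambda)}{\lambda}=n\mathfrak{t}^{-n}\frac{N_\nabla(\lambda)}{\lambda}$. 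I would present whichever of the two is cleaner in context, probably the direct one.
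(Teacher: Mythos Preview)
Your proof is correct and takes essentially the same approach as the paper: both use the relation $t=p\lambda\mathfrak{t}$, the vanishing $N_\nabla(t)=0$ from Lemma~\ref{lemm N nabla t =0}, and the Leibniz rule. The paper first computes $N_\nabla(\mathfrak{t})=-\frac{\mathfrak{t}}{\lambda}N_\nabla(\lambda)$ from $N_\nabla(p\lambda\mathfrak{t})=0$ and then applies $N_\nabla(\mathfrak{t}^{-n})=-n\mathfrak{t}^{-n-1}N_\nabla(\mathfrak{t})$, while you work with $\mathfrak{t}^{-1}$ (or $\mathfrak{t}^{-n}$ directly); these are trivial rearrangements of the same computation. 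One small inaccuracy: $\lambda$ itself is \emph{not} a unit in $\mathcal{R}$ (only the finite partial products $\prod_{i=0}^n\varphi^i(E(u)/E(0))$ are, cf.\ Remark~\ref{rmq N nabla }(3)), but this does not affect your argument since $\frac{N_\nabla(\lambda)}{\lambda}=-u\frac{d\lambda}{du}\in\mathcal{O}$ and all expressions live in $(\widetilde{\BB}^{\dagger}_{\rig,L})^{\pa}$ where the manipulations are valid.
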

\begin{proof}
	
	Recall that $t=p\lambda\mathfrak{t}.$ By lemma \ref{lemm N nabla t =0} we have $N_{\nabla}(t)=0.$ By Leibniz's rule we have \[ N_{\nabla}(\lambda)\mathfrak{t}+\lambda N_{\nabla}(\mathfrak{t})=0, \] \ie
	\[ N_{\nabla}(\mathfrak{t})=-\frac{\mathfrak{t}}{\lambda}N_{\nabla}(\lambda). \]
	Again, by Leibniz's rule we have 
	\begin{align*}
	N_{\nabla}(\mathfrak{t}^{-n})&=-n\mathfrak{t}^{-n-1}N_{\nabla}(\mathfrak{t})\\
	&=-n\mathfrak{t}^{-n-1}\big(-\frac{\mathfrak{t}}{\lambda}\big)N_{\nabla}(\lambda)\\
	&=n\mathfrak{t}^{-n}\frac{N_{\nabla}(\lambda)}{\lambda}.
	\end{align*}
\end{proof}

\begin{remark}
Similarly, the $(\varphi, \tau)$-module structure over $(\Rr, \Rr_{\tau})$ associated to the representation $\QQ_p(n)$ is given by:
\[ \mathcal{D}_{\rig}^{\dagger}(\QQ_p(n))=\Rr\mathfrak{t}^{-n}, \]
with the operators $\varphi$ and $N_{\nabla}$ as mentioned above. 
\end{remark}

\subsection{Construction of a pairing}

\begin{lemma}\label{lemm internal hom and tensor product}
	Let $T_1, T_2\in \Rep_{\ZZ_p}(\mathscr{G}_K),$ then we have 
	\begin{align*}
	&\mathcal{D}^{\dagger}(T_1\otimes_{\ZZ_p} T_2)\simeq \mathcal{D}^{\dagger}(T_1)\otimes_{\Oo_{\Ee}^{\dagger}}\mathcal{D}^{\dagger}(T_2)\\
	&\mathcal{D}^{\dagger}(\Hom_{\ZZ_p}(T_1, T_2))\simeq \Hom_{\Oo_{\Ee}^{\dagger}}(\mathcal{D}^{\dagger}(T_1), \mathcal{D}^{\dagger}(T_2)).
	\end{align*}
\end{lemma}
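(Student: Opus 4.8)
The statement is the overconvergent analogue of Proposition \ref{prop internal hom and tensor product}, and the plan is to copy that proof almost verbatim, replacing the ring $\Oo_{\widehat{\Ee^{\ur}}}$ by $\AA^\dagger$ throughout and invoking the overconvergent comparison isomorphism in place of its integral counterpart. The one genuinely new ingredient needed is the $\mathscr{G}_{K_\pi}$-equivariant isomorphism
\[ \mathcal{D}^{\dagger}(T)\otimes_{\Oo_{\Ee}^{\dagger}}\AA^\dagger\simeq T\otimes_{\ZZ_p}\AA^\dagger, \]
which follows from Proposition \ref{prop Zp surconvergence} (overconvergence of the $(\varphi,\tau)$-module of $T$), since by that proposition $\AA_{K_\pi}\otimes_{\AA_{K_\pi}^\dagger}\mathcal{D}^\dagger(T)\xrightarrow{\sim}\mathcal{D}(T)$, and base-changing the integral comparison isomorphism $\mathcal{D}(T)\otimes_{\Oo_{\Ee}}\Oo_{\widehat{\Ee^{\ur}}}\simeq T\otimes_{\ZZ_p}\Oo_{\widehat{\Ee^{\ur}}}$ down along $\AA^\dagger\hookrightarrow\Oo_{\widehat{\Ee^{\ur}}}$ (using that $\AA^\dagger=\Oo_{\Ee^{\ur,\dagger}}$ is, by definition, $\widetilde{\AA}^\dagger\cap\AA$ and that $\mathcal{D}^\dagger(T)=(\AA^\dagger\otimes_{\ZZ_p}T)^{\mathscr{G}_{K_\pi}}$) recovers it.

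First I would establish the displayed $\mathscr{G}_{K_\pi}$-equivariant isomorphism as a preliminary remark. Then, for the tensor product statement, I would write the chain of $\mathscr{G}_{K_\pi}$-equivariant isomorphisms
\begin{align*}
\AA^\dagger\otimes_{\Oo_{\Ee}^\dagger}\mathcal{D}^\dagger(T_1\otimes_{\ZZ_p}T_2)
&\simeq T_1\otimes_{\ZZ_p}T_2\otimes_{\ZZ_p}\AA^\dagger\\
&\simeq (T_1\otimes_{\ZZ_p}\AA^\dagger)\otimes_{\AA^\dagger}(T_2\otimes_{\ZZ_p}\AA^\dagger)\\
&\simeq (\mathcal{D}^\dagger(T_1)\otimes_{\Oo_{\Ee}^\dagger}\AA^\dagger)\otimes_{\AA^\dagger}(\mathcal{D}^\dagger(T_2)\otimes_{\Oo_{\Ee}^\dagger}\AA^\dagger)\\
&\simeq \mathcal{D}^\dagger(T_1)\otimes_{\Oo_{\Ee}^\dagger}\mathcal{D}^\dagger(T_2)\otimes_{\Oo_{\Ee}^\dagger}\AA^\dagger,
\end{align*}
and then take $\mathscr{G}_{K_\pi}$-invariants on both ends, using that $\mathscr{G}_{K_\pi}$ acts trivially on $\mathcal{D}^\dagger(-)$ and that $(\AA^\dagger)^{\mathscr{G}_{K_\pi}}=\Oo_{\Ee}^\dagger$ (this last equality is the content of $\AA^\dagger_{K_\pi}=\Oo_{\Ee^\dagger}$ in Notation \ref{not dictionary}; one should note that one needs $\mathcal{D}^\dagger(T_i)$ to be finite free over $\Oo_{\Ee}^\dagger$, which holds by the overconvergence results, so that the $\mathscr{G}_{K_\pi}$-invariants of the tensor product commute with extension of scalars). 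For the internal Hom statement I would mirror the second half of the proof of Proposition \ref{prop internal hom and tensor product}: the chain
\begin{align*}
\AA^\dagger\otimes_{\Oo_{\Ee}^\dagger}\mathcal{D}^\dagger(\Hom_{\ZZ_p}(T_1,T_2))
&\simeq \AA^\dagger\otimes_{\ZZ_p}\Hom_{\ZZ_p}(T_1,T_2)\\
&\simeq \Hom_{\ZZ_p}(T_1,T_2\otimes_{\ZZ_p}\AA^\dagger)\\
&\simeq \Hom_{\AA^\dagger}(T_1\otimes_{\ZZ_p}\AA^\dagger,T_2\otimes_{\ZZ_p}\AA^\dagger)\\
&\simeq \Hom_{\AA^\dagger}(\mathcal{D}^\dagger(T_1)\otimes_{\Oo_{\Ee}^\dagger}\AA^\dagger,\mathcal{D}^\dagger(T_2)\otimes_{\Oo_{\Ee}^\dagger}\AA^\dagger)\\
&\simeq \AA^\dagger\otimes_{\Oo_{\Ee}^\dagger}\Hom_{\Oo_{\Ee}^\dagger}(\mathcal{D}^\dagger(T_1),\mathcal{D}^\dagger(T_2)),
\end{align*}
again with Galois action $g(f)=g\circ f\circ g^{-1}$ on the Hom-modules, followed by taking $\mathscr{G}_{K_\pi}$-invariants.

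The steps that require a little care — and where I expect the main (though still modest) obstacle to lie — are the finiteness/flatness bookkeeping: the isomorphisms $\Hom_{\ZZ_p}(T_1,T_2)\otimes_{\ZZ_p}\AA^\dagger\simeq\Hom_{\ZZ_p}(T_1,T_2\otimes_{\ZZ_p}\AA^\dagger)$ and the passage between $\Hom_{\ZZ_p}$ and $\Hom_{\AA^\dagger}$ after base change need $T_1$ to be finitely presented over $\ZZ_p$, which is automatic, while the final identification of $\mathscr{G}_{K_\pi}$-invariants requires that $\mathcal{D}^\dagger(T_i)$ be finite projective (indeed finite free) over $\Oo_{\Ee}^\dagger$, which is exactly what Proposition \ref{prop Zp surconvergence} and the categorical equivalence $\Rep_{\ZZ_p}(\mathscr{G}_K)\simeq\Mod_{\AA_{K_\pi}^\dagger,\widetilde{\AA}_L^\dagger}(\varphi,\tau)$ guarantee. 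Since $\Oo_{\Ee}^\dagger=\AA_{K_\pi}^\dagger$ is a henselian (in fact, a field after inverting $p$, and a ``nice'' local ring integrally) coefficient ring over which $\AA^\dagger$ is faithfully flat (as in the proof of the corollary after Theorem \ref{prop equivalence cat overconvergent: normal case}), descent of the invariants is unproblematic. No torsion subtleties arise because all the functors involved ($\mathcal{D}^\dagger(-)$, $-\otimes\AA^\dagger$, $\Hom$) behave well here; in particular one does not even need the torsion-free case separately, as $T_i$ finitely generated over $\ZZ_p$ suffices for each tensor/Hom identity above.
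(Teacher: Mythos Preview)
Your proposal is correct and is exactly the approach the paper intends: the paper's own proof consists of the single sentence ``The proof is similar to that of proposition \ref{prop internal hom and tensor product},'' and you have spelled out precisely that translation, replacing $\Oo_{\widehat{\Ee^{\ur}}}$ by $\AA^\dagger$ and supplying the overconvergent comparison isomorphism $\mathcal{D}^\dagger(T)\otimes_{\Oo_{\Ee}^\dagger}\AA^\dagger\simeq T\otimes_{\ZZ_p}\AA^\dagger$ (via Proposition \ref{prop Zp surconvergence}) as the needed input. Your care about finiteness/flatness bookkeeping is appropriate and more explicit than what the paper records.
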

\begin{proof}
	The proof is similar to that of proposition \ref{prop internal hom and tensor product}.
\end{proof}

\begin{proposition}\label{prop pairing}
	Suppose the residue field $k$ is finite. We have a pairing of groups
	\[\H^0(\Cc_{\varphi, N_{\nabla}}(V^{\vee}(1))) \times \H^2(\Cc_{\varphi, N_{\nabla}}(V)) \to \QQ_p.  \]
\end{proposition}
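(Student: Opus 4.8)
The goal is to build a $\QQ_p$-bilinear pairing
\[\H^0\big(\Cc_{\varphi,N_\nabla}(V^\vee(1))\big)\times\H^2\big(\Cc_{\varphi,N_\nabla}(V)\big)\to\QQ_p,\]
by mimicking the classical cup-product construction in the Herr complex (as in \cite{Her01}). First I would set $W=V^\vee(1)$ and write $D=\mathcal{D}^\dagger_{\rig}(V)$, $D'=\mathcal{D}^\dagger_{\rig}(W)$. By lemma \ref{lemm internal hom and tensor product} (extended to the Robba ring via theorem \ref{thm D dagger rig}, exactly as in proposition \ref{prop internal hom and tensor product}) there is a $(\varphi,N_\nabla)$-equivariant map
\[D'\otimes_{\Rr}D=\mathcal{D}^\dagger_{\rig}(W\otimes_{\QQ_p}V)\longrightarrow\mathcal{D}^\dagger_{\rig}(\QQ_p(1))=\Rr\mathfrak{t}^{-1},\]
coming from the evaluation $V^\vee(1)\otimes V\to\QQ_p(1)$. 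So a pairing $D'\times D\to\Rr\mathfrak{t}^{-1}$ compatible with $\varphi$ (up to the factor $c^{-1}$, since $\varphi(\mathfrak{t}^{-1})=c^{-1}\mathfrak{t}^{-1}$) and with $N_\nabla$ (with the Leibniz twist: $N_\nabla$ acts on $\mathfrak{t}^{-1}$ by multiplication by $N_\nabla(\lambda)/\lambda$, cf. lemma \ref{lemm N nabla t^-1}). Then I need a trace/residue map $\Rr\mathfrak{t}^{-1}\to\QQ_p$ killing the relevant boundary terms; when $k$ is finite this is the residue $\operatorname{res}\colon\Rr\,\frac{du}{u}\to\QQ_p$, and one checks $N_\nabla=-u\lambda\frac{d}{du}$ (remark \ref{rmq N nabla }) and $c\varphi$ interact with $\operatorname{res}$ correctly — this is the analogue of the computation in \cite{Her01} that $\operatorname{res}\circ(\psi-1)=0$ and $\operatorname{res}\circ N_\nabla$ is a total derivative.

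\textbf{Carrying it out.} Concretely, represent a class in $\H^0(\Cc_{\varphi,N_\nabla}(W))$ by $x\in D'$ with $(\varphi-1)(x)=0$ and $N_\nabla(x)=0$; represent a class in $\H^2(\Cc_{\varphi,N_\nabla}(V))$ by $z\in D$ modulo the image of $(y_1,y_2)\mapsto N_\nabla(y_1)-(c\varphi-1)(y_2)$. The pairing sends $(x,z)$ to $\operatorname{res}\big(\langle x,z\rangle\,\frac{du}{u}\big)\in\QQ_p$ where $\langle x,z\rangle\in\Rr\mathfrak{t}^{-1}$ is the image of $x\otimes z$ under the evaluation map above, re-interpreted as an element of $\Rr$ via $\mathfrak{t}^{-1}\leftrightarrow$ a fixed generator (one must track the $\frac{du}{u}$ versus $\mathfrak{t}^{-1}$ bookkeeping carefully — this is where the twist by $1$ is used). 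Well-definedness: if $z=N_\nabla(y_1)-(c\varphi-1)(y_2)$, then $\langle x,z\rangle=\langle x,N_\nabla(y_1)\rangle-\langle x,(c\varphi-1)(y_2)\rangle$. Using $N_\nabla(x)=0$ and the Leibniz rule, $\langle x,N_\nabla(y_1)\rangle=N_\nabla\langle x,y_1\rangle$ (up to the $\mathfrak{t}^{-1}$-twist, which is exactly absorbed by identifying $\Rr\mathfrak{t}^{-1}$ with $\Rr\frac{du}{u}$, on which $N_\nabla$ becomes $-u\lambda\frac{d}{du}$, a total derivative); and $\operatorname{res}$ of a total derivative is $0$. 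Using $(\varphi-1)(x)=0$, i.e. $\varphi(x)=x$, one gets $\langle x,(c\varphi-1)(y_2)\rangle=\langle x,c\varphi(y_2)\rangle-\langle x,y_2\rangle=(c\varphi)\langle x,y_2\rangle-\langle x,y_2\rangle$, and $\operatorname{res}\circ(c\varphi-1)=0$ on $\Rr\frac{du}{u}$ by the standard residue computation for the Robba ring. $\QQ_p$-bilinearity is clear from bilinearity of $\langle-,-\rangle$ and $\QQ_p$-linearity of $\operatorname{res}$.

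\textbf{The main obstacle.} The genuinely delicate point is the residue computation: one must verify that $\operatorname{res}\colon\Rr\,\frac{du}{u}\to\QQ_p$ kills both $N_\nabla(\Rr\,\frac{du}{u})$ and $(c\varphi-1)(\Rr\,\frac{du}{u})$, and that the identification of $\mathcal{D}^\dagger_{\rig}(\QQ_p(1))=\Rr\mathfrak{t}^{-1}$ with $\Rr\,\frac{du}{u}$ is $(\varphi,N_\nabla)$-equivariant in the precise sense needed (i.e. $\frac{du}{u}$ transforms under $\varphi$ by $c$ and under $N_\nabla$ as a logarithmic derivative). This is the analogue of \cite[\S I.2]{Her01} but over the Robba ring; the finiteness of $k$ enters to ensure $\W(k)[1/p]=\QQ_p\otimes_{\ZZ_p}\W(k)$ is finite over $\QQ_p$ so that one can compose with a trace $\W(k)[1/p]\to\QQ_p$ to land in $\QQ_p$ rather than in $\W(k)[1/p]$. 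I would expect the calculation for $N_\nabla$ to follow from remark \ref{rmq N nabla } ($N_\nabla=-u\lambda\frac{d}{du}$, and $u\lambda\frac{d}{du}(x)\frac{du}{u}=\lambda\,dx$, whose residue vanishes since $\lambda\in\Oo$ is a unit times a product of $\varphi^n(E(u)/E(0))$ — actually one needs the slightly finer statement that $\operatorname{res}(\lambda\,dx)=0$, which holds because $\lambda\,dx$ has no pole at $u=0$ contributing, or by reducing to the $\varphi$-case), and the $\varphi$-calculation to be the standard one. Non-degeneracy is explicitly \emph{not} claimed in the proposition, so I would not attempt it.
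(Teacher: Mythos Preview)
Your approach is essentially the paper's: evaluate into $\Rr\mathfrak{t}^{-1}$ via the internal-$\Hom$ identification, apply a residue, compose with $\Tr_{\W(k)[1/p]/\QQ_p}$, and check that the images of $N_\nabla$ and $c\varphi-1$ are killed using $N_\nabla(\mathfrak{t}^{-1})=\mathfrak{t}^{-1}N_\nabla(\lambda)/\lambda$ and $\varphi(\mathfrak{t}^{-1})=c^{-1}\mathfrak{t}^{-1}$. The one simplification in the paper is that it never passes through $\Rr\tfrac{du}{u}$: the residue is defined directly as $\mathfrak{t}^{-1}\sum_n a_nu^n\mapsto a_0$, and then $N_\nabla(\mathfrak{t}^{-1}r)=\mathfrak{t}^{-1}\cdot\big(-u\tfrac{d}{du}\big)(\lambda r)$ has vanishing constant term while $(c\varphi-1)(\mathfrak{t}^{-1}r)=\mathfrak{t}^{-1}(\varphi-1)(r)$ has constant term $\varphi(a_0)-a_0$, killed by $\Tr$ --- so the equivariance bookkeeping you flag as the main obstacle simply disappears.
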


\begin{proof}
	By lemma \ref{lemm internal hom and tensor product}, we have $\mathcal{D}^{\dagger}(V^{\vee}(1))=\Hom_{\Ee^{\dagger}}(\mathcal{D}^{\dagger}(V), \mathcal{D}^{\dagger}(\QQ_p(1))).$ Hence we have
	\begin{align*}
	\H^0(\Cc_{\varphi, N_{\nabla}}(V^{\vee}(1)))&=(\Rr\otimes_{\Ee^{\dagger}} (\mathcal{D}^{\dagger}(V^{\vee}(1)))^{\varphi=1, N_{\nabla}=0}\\
	&=(\Rr\otimes_{\Ee^{\dagger}}\Hom_{\Ee^{\dagger}}(\mathcal{D}^{\dagger}(V), \mathcal{D}^{\dagger}(\QQ_p(1))))^{\varphi=1, N_{\nabla}=0}\\
	&=(\Hom_{\Rr}(\Rr\otimes_{\Ee^{\dagger}}\mathcal{D}^{\dagger}(V), \Rr \mathfrak{t}^{-1}))^{\varphi=1, N_{\nabla}=0}.
	\end{align*}
	Indeed, we used the  $\mathscr{G}_K$-equivariant isomorphism of $\Rr$-modules $\Rr\otimes_{\Ee^{\dagger}}{\mathcal{D}}^{\dagger}(\QQ_p(1))\simeq \Rr \mathfrak{t}^{-1}$ (\cf section \ref{section D of tate twist}).
	We start with the pairing:
	
	\begin{align*}
	\H^0(\Cc_{\varphi, N_{\nabla}}(V^{\vee}(1))) \times (\mathcal{R}\otimes_{\Ee^{\dagger}} \mathcal{D}^{\dagger}(V)) &\to \Rr \mathfrak{t}^{-1} \\
	(f,  \ x) &\mapsto f(x).
	\end{align*}	
	
	Recall that elements in $\Rr$ can be written uniquely as series $\sum\limits_{n\in\ZZ} a_n u^n$ with $a_n\in \W(k)\big[1/p\big].$ We define a residue map  over $\Rr \mathfrak{t}^{-1}$ as follows:
	\begin{align*}
	\res\colon \Rr \mathfrak{t}^{-1}&\to \W(k)\big[1/p\big]\\
	\mathfrak{t}^{-1}\sum\limits_{n\in\ZZ} a_n u^n&\mapsto a_0. 
	\end{align*}
	
	Composing with the trace map $\Tr=\Tr_{\W(k)\big[1/p\big]/\QQ_p}\colon \W(k)\big[1/p\big]\to \QQ_p,$ we obtain a map

	\begin{align*}
	\H^0(\Cc_{\varphi, N_{\nabla}}(V^{\vee}(1))) \times (\mathcal{R}\otimes_{\Ee^{\dagger}} \mathcal{D}^{\dagger}(V)) &\to \QQ_p\\
	(f,  \ x) &\mapsto \Tr (\res f(x)).
	\end{align*}	
	
	Recall that $\H^2(\Cc_{\varphi, N_{\nabla}}(V))\simeq (\mathcal{R}\otimes_{\Ee^{\dagger}} \mathcal{D}^{\dagger}(V))/\im((c\varphi-1)\oplus N_{\nabla})).$ 
	To construct the claimed pairing, we must show that the above map factors through $\im((c\varphi-1)\oplus N_{\nabla})\subset \mathcal{R}\otimes_{\Ee^{\dagger}} \mathcal{D}^{\dagger}(V).$ In other words, for any $x\in\im((c\varphi-1)\oplus N_{\nabla}),$ we have to show that $\Tr (\res f(x))=0.$
	
	Notice that for any $f\in\Hom_{\Rr}(\Rr\otimes_{\Ee^{\dagger}}\mathcal{D}^{\dagger}(V), \Rr\otimes_{\Ee^{\dagger}}\mathcal{D}^{\dagger}(\QQ_p(1)))^{ \varphi=1, N_{\nabla}=0},$ the condition $N_{\nabla}(f)=0$ means that  $N_{\nabla}\circ f=f\circ N_{\nabla},$ and $\varphi(f)=f$ means that $\varphi\circ f=f\circ \varphi.$ Indeed, the actions are $N_{\nabla}(f)=N_{\nabla}\circ f-f\circ N_{\nabla}$ and $\varphi(f)=\varphi\circ f\circ \varphi^{-1}.$
	
	For any $x\in \mathcal{R}\otimes_{\Ee^{\dagger}} \mathcal{D}^{\dagger}(V)$ and $f\in \H^0(\Cc_{\varphi, N_{\nabla}}(V^{\vee}(1))),$ write $f(x)=\mathfrak{t}^{-1} r=\mathfrak{t}^{-1}\sum\limits_{n\in\ZZ} a_n u^n \in \Rr \mathfrak{t}^{-1}$ with $a_n\in \W(k)\big[1/p\big].$ By remark \ref{rmq N nabla } and lemma \ref{lemm N nabla t^-1},  we have
	\begin{align*}
	\Tr\circ \res (f(N_{\nabla}(x)))&=\Tr\circ \res \big(N_{\nabla}(f(x))\big)\\
	&=\Tr\circ \res \big(N_{\nabla}(\mathfrak{t}^{-1} r)\big)\\
	&=\Tr\circ \res \big(N_{\nabla}({\mathfrak{t}^{-1}})r+N_{\nabla}(r)\mathfrak{t}^{-1}\big)\\
	&=\Tr\circ \res \big(\frac{N_{\nabla}(\lambda)}{\lambda}{\mathfrak{t}^{-1}}r+N_{\nabla}(r)\mathfrak{t}^{-1}\big)\\
	&=\Tr\circ \res\big(\frac{N_{\nabla}(\lambda r)}{\lambda}\mathfrak{t}^{-1}\big)\\
	&=  \Tr\circ \res\big(-u\frac{d}{du}(\lambda r)\mathfrak{t}^{-1}\big)\\
	&=\Tr(0)\\
	&=0.      
	\end{align*} 
	
	By section \ref{section D of tate twist} we have $\varphi(\mathfrak{t}^{-1})=c^{-1}\mathfrak{t}^{-1},$ hence 
	
	\begin{align*}
	\Tr\circ \res (f((c\varphi-1)x))&=\Tr\circ \res ((c\varphi-1)f(x))\\
	&=\Tr\circ \res \big((c\varphi-1)(\mathfrak{t}^{-1}\sum\limits_{n\in\ZZ} a_n u^n)\big)\\
	&=\Tr\circ \res \big(cc^{-1}\mathfrak{t}^{-1} \varphi(\sum\limits_{n\in\ZZ} a_n u^n)-\mathfrak{t}^{-1}\sum\limits_{n\in\ZZ} a_n u^n\big)\\
	&=\Tr\circ \res \big(\mathfrak{t}^{-1}(\varphi-1)(\sum\limits_{n\in\ZZ} a_n u^n)\big) \\
	&=\Tr\circ \res \big(\mathfrak{t}^{-1}( \sum\limits_{n\in\ZZ} \varphi(a_n) u^{pn}-\sum\limits_{n\in\ZZ} a_n u^n)\big)\\
	&=\Tr(\varphi(a_0)-a_0)\\
	&=(\varphi-1)\Tr(a_0)\\
	&=0.
	\end{align*} 
	
	Hence we have the desired pairing 
	\begin{align*}	\H^0(\Cc_{\varphi, N_{\nabla}}(V^{\vee}(1))) \times \H^2(\Cc_{\varphi, N_{\nabla}}(V)) &\to \QQ_p\\
	(f, x) &\mapsto B(f, x)=\Tr (\res f(x)).
	\end{align*} 
	
\end{proof}

\begin{proposition}\label{prop perfect pairing}
The pairing just constructed is nondegenerate on the left.
\end{proposition}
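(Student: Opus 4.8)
The claim is that the pairing
\[
\H^0(\Cc_{\varphi, N_{\nabla}}(V^{\vee}(1))) \times \H^2(\Cc_{\varphi, N_{\nabla}}(V)) \to \QQ_p,\qquad (f,x)\mapsto \Tr(\res f(x)),
\]
has trivial left kernel. So I must show: if $f\in\Hom_{\Rr}(\Rr\otimes_{\Ee^\dagger}\mathcal{D}^\dagger(V),\,\Rr\mathfrak{t}^{-1})^{\varphi=1,N_\nabla=0}$ is nonzero, then there is $x\in\Rr\otimes_{\Ee^\dagger}\mathcal{D}^\dagger(V)$ with $\Tr(\res f(x))\neq 0$. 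First I would reduce to a concrete coordinate computation: choose a basis $(e_1,\dots,e_d)$ of $\mathcal{D}^\dagger(V)$ over $\Ee^\dagger$, so that a homomorphism $f$ is determined by the column $(f(e_1),\dots,f(e_d))\in(\Rr\mathfrak{t}^{-1})^d$, and the problem becomes: given such a column not all of whose entries vanish, produce $x=\sum r_i e_i$ with $r_i\in\Rr$ such that $\Tr\big(\res\sum_i r_i f(e_i)\big)\neq 0$. Since $f$ is nonzero, some $f(e_{i_0})=\mathfrak{t}^{-1}g$ with $g=\sum_n a_n u^n\in\Rr\setminus\{0\}$; pick $n_0$ with $a_{n_0}\neq 0$ and set $r_{i_0}=u^{-n_0}$, $r_i=0$ otherwise. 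Then $\res f(x)=a_{n_0}$, and because $\Tr_{\W(k)[1/p]/\QQ_p}$ is a nondegenerate trace form on the finite unramified extension $\W(k)[1/p]/\QQ_p$ (using $k$ finite), there is a scaling $c\in\W(k)[1/p]$ with $\Tr(ca_{n_0})\neq 0$; replacing $r_{i_0}$ by $cu^{-n_0}$ finishes it. The one subtlety is that $a_{n_0}\in\W(k)[1/p]$ may be a $p$-power multiple of a unit, but multiplying by a power of $p$ keeps $\Tr(\cdot)\neq 0$ since $\Tr$ is $\QQ_p$-linear and $\Tr(a_{n_0})\neq 0$ for at least one valid choice — more carefully, nondegeneracy of the trace form gives some $c$ with $\Tr(ca_{n_0})\neq 0$ outright.

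The main structural point to verify — and the place where I expect the real work — is that this construction is \emph{legitimate at the level of cohomology classes}, i.e.\ that the class of $x$ in $\H^2(\Cc_{\varphi, N_{\nabla}}(V))=(\Rr\otimes_{\Ee^\dagger}\mathcal{D}^\dagger(V))/\im\big((c\varphi-1)\oplus N_\nabla\big)$ is not forced to be zero, and dually that $f$ genuinely pairs nontrivially \emph{on $\H^2$}. The pairing as constructed in Proposition \ref{prop pairing} already descends to $\H^2$, so what remains is: for nonzero $f$, the linear functional $\bar x\mapsto B(f,\bar x)$ on $\H^2(\Cc_{\varphi, N_{\nabla}}(V))$ is nonzero. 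Equivalently, the $x$ produced above must not lie in $\im\big((c\varphi-1)\oplus N_\nabla\big)$ — but this is automatic: if it did, then $B(f,x)=0$ by the very computation in Proposition \ref{prop pairing}, contradicting $\Tr(\res f(x))\neq 0$. So in fact no further check is needed: once I exhibit $x$ (an actual element of the module, not a class) with $\Tr(\res f(x))\neq 0$, its image in $\H^2$ automatically pairs nontrivially with $f$, because $B$ is well-defined on $\H^2$ and $B(f,\bar x)=\Tr(\res f(x))$.

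Let me therefore organize the argument as follows. \emph{Step 1:} recall from Proposition \ref{prop pairing} the identification $\H^0(\Cc_{\varphi, N_{\nabla}}(V^{\vee}(1)))\simeq\Hom_{\Rr}(\Rr\otimes_{\Ee^\dagger}\mathcal{D}^\dagger(V),\Rr\mathfrak{t}^{-1})^{\varphi=1,N_\nabla=0}$ and that $B$ is well-defined on $\H^0\times\H^2$. \emph{Step 2:} take $0\neq f$ in this Hom-group; choose a basis $(e_i)$ of $\mathcal{D}^\dagger(V)$ over $\Ee^\dagger$ and an index $i_0$ with $f(e_{i_0})\neq 0$, writing $f(e_{i_0})=\mathfrak{t}^{-1}\sum_{n}a_n u^n$ with some $a_{n_0}\neq 0$. \emph{Step 3:} using that $\Tr_{\W(k)[1/p]/\QQ_p}$ is a nondegenerate $\QQ_p$-bilinear trace form (here $k$ finite is essential, as it makes $\W(k)[1/p]$ a finite separable — indeed unramified — extension of $\QQ_p$), pick $c\in\W(k)[1/p]$ with $\Tr(c\,a_{n_0})\neq 0$, and set $x=c\,u^{-n_0}e_{i_0}\in\Rr\otimes_{\Ee^\dagger}\mathcal{D}^\dagger(V)$. \emph{Step 4:} compute $\res f(x)=\res\big(c u^{-n_0}\mathfrak{t}^{-1}\sum_n a_n u^n\big)=c\,a_{n_0}$, whence $B(f,\bar x)=\Tr(c\,a_{n_0})\neq 0$, so $f$ does not lie in the left kernel. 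Since $f$ was an arbitrary nonzero element, the pairing is nondegenerate on the left. The only genuinely delicate bookkeeping is making sure the $\mathfrak{t}^{-1}$ bookkeeping and the residue convention are compatible with the normalization $\mathfrak{t}=\frac{t}{p\lambda}$ fixed in Section \ref{section D of tate twist}, and checking that $u^{-n_0}$ indeed lands in $\Rr$ (it does, $\Rr$ containing all Laurent tails); these are routine. The \emph{expected main obstacle} is purely presentational: phrasing Step 3 so that the choice of $c$ is uniform when $a_{n_0}$ has mixed $p$-adic valuation across a basis of $\W(k)$ over $\QQ_p$ — but nondegeneracy of the trace form handles this cleanly, since for any $0\neq a\in\W(k)[1/p]$ the map $c\mapsto\Tr(ca)$ is a nonzero linear functional on $\W(k)[1/p]$.
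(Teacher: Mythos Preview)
Your proposal is correct and follows essentially the same approach as the paper: fix a basis of $\Rr\otimes_{\Ee^\dagger}\mathcal{D}^\dagger(V)$, pick a nonzero coefficient $a_{n_0}$ of some $f(e_{i_0})=\mathfrak{t}^{-1}\sum_n a_n u^n$, and test against $x=c\,u^{-n_0}e_{i_0}$ so that $\res f(x)=c\,a_{n_0}$, then invoke nondegeneracy of $\Tr_{\W(k)[1/p]/\QQ_p}$ to choose $c$ with $\Tr(c\,a_{n_0})\neq0$. The paper phrases it as a contrapositive (assume $B(f,\bar z)=0$ for all $\bar z$ and deduce each $a_{i,j}=0$), but the computation and the key inputs are identical to yours.
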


\begin{proof}
	Recall that $\H^0(\Cc_{\varphi, N_{\nabla}}(V^{\vee}(1)))=\Hom_{\Rr,\varphi, N_{\nabla}}(\Rr\otimes \mathcal{D}^{\dagger}(V), \Rr \mathfrak{t}^{-1})$ ($\cf$ the proof of proposition \ref{prop pairing}).
	Suppose $\Rr\otimes \mathcal{D}^{\dagger}(V)$ has rank $d\in \NN_{>0},$ and fix a $\Rr$-basis $\{e_1, \ldots, e_d\}.$ Then $f\in \Hom_{\Rr,\varphi, N_{\nabla}}(\Rr\otimes \mathcal{D}^{\dagger}(V), \Rr \mathfrak{t}^{-1})$ is determined by the image of the basis (by $\Rr$-linearity), $\ie$it is determined by a $d\times 1$ matrix with coefficients in $\Rr \mathfrak{t}^{-1}$ (it also satisfies certain conditions arising from the compatibility with $\varphi$ and $N_{\nabla}$). 
	Denote
	\begin{align*}
	B\colon	\H^0(\Cc_{\varphi, N_{\nabla}}(V^{\vee}(1))) \times \H^2(\Cc_{\varphi, N_{\nabla}}(V)) &\to \QQ_p\\
	(f, x) &\mapsto B(f, x)=\Tr (\res f(x))
	\end{align*} 
	the pairing constructed in proposition \ref{prop pairing}. Let $f\in \Hom_{\Rr,\varphi, N_{\nabla}}(\Rr\otimes \mathcal{D}^{\dagger}(V), \Rr \mathfrak{t}^{-1})$ be such that $B(f, z)=0$ for all $z\in \H^2(\Cc_{\varphi, N_{\nabla}}(V)),$ we claim that $f=0.$
	
	\medskip
	
	Suppose $f$ corresponds to $(r_1\mathfrak{t}^{-1}, \ldots, r_d\mathfrak{t}^{-1})\in \mathsf{M}_{d\times 1}(\Rr \mathfrak{t}^{-1})$ under the fixed basis with $r_i\in \Rr$ for $i\in \{1, \ldots, d\}.$ Write $r_i=\sum\limits_{j\in \ZZ}a_{i, j}u^j$ with $a_{i, j}\in \W(k)\big[1/p\big].$ We first prove $r_1=0$ and the others follow similarly. For any $x\in \mathcal{R}\otimes_{\Ee^{\dagger}} \mathcal{D}^{\dagger}(V)$ with its image  $\overline{x}\in \H^2(\Cc_{\varphi, N_{\nabla}}(V))\simeq (\mathcal{R}\otimes_{\Ee^{\dagger}} \mathcal{D}^{\dagger}(V))/\im((c\varphi-1)\oplus N_{\nabla}),$ we have $B(f, \overline{x})=0$ by assumption. To prove $r_1=0,$ it suffices to prove $a_{1,j}=0$ for any $j\in \ZZ.$ For any $j\in \ZZ,$ consider $x$ whose coordinates under the fixed basis is $(\alpha u^{-j}, 0, \ldots, 0)\in \Rr^d,$ with $\alpha\in \W(k)\big[1/p\big].$  Then we have 
	\[f(\overline{x})=\alpha u^{-j}\mathfrak{t}^{-1}r_1=\mathfrak{t}^{-1}\alpha u^{-j}\sum\limits_{m\in \ZZ}a_{1, m}u^{m}=\mathfrak{t}^{-1}\sum\limits_{m\in \ZZ}\alpha a_{1, m}u^{m-j}.\]
	We have $\res(f(\overline{x}))=\alpha a_{1, j}.$ If $a_{1,j}\ne 0,$ then we can always find $\alpha\in \W(k)\big[1/p\big]$ such that $\Tr(\alpha a_{1, j})\ne 0,$ which then contradicts the hypothesis that $B(f, \overline{x})=0$ for any $x\in \mathcal{R}\otimes_{\Ee^{\dagger}} \mathcal{D}^{\dagger}(V).$ Hence $a_j^{1}=0$ for all $j\in \ZZ$ and hence $r_1=0.$ Similarly we can show that $r_i=0$ for all $i\in \{1, \ldots, d\}$ and hence $f=0.$ This shows that $B$ is nondegenerate on the left.
\end{proof}

\begin{remark} Assume the previous pairing is perfect, then for any $V\in \Rep_{\QQ_p}(\mathscr{G}_{K}),$ we have 
	\[  \H^2(\mathcal{C}_{\varphi, N_{\nabla}}(V))=\H^2(\mathscr{G}_{K_{n}}, V),\ n\gg 0. \]

	\begin{proof}
		The perfect pairing 
		\[\H^0(\Cc_{\varphi, N_{\nabla}}(V^{\vee}(1))) \times \H^2(\Cc_{\varphi, N_{\nabla}}(V)) \to \QQ_p \]
	implies
		\[ \H^2(\mathcal{C}_{\varphi, N_{\nabla}}(V)) = \Hom_{\QQ_p}(\iLim{m} \H^{0}(\mathscr{G}_{K_{m}},V^{\vee}(1)), \QQ_p).  \]
		Notice that the sequence $ \big( \H^{0}(\mathscr{G}_{K_{m}},V^{\vee}(1)) \big)_{m\geq 0}$ is stationary (it is an increasing sequence of sub $\QQ_p$-vector spaces of $V^{\vee}(1)$), hence for $n\gg 0,$ we have
		\[  \H^2(\mathcal{C}_{\varphi, N_{\nabla}}(V)) = \Hom_{\QQ_p}(\H^{0}(\mathscr{G}_{K_{n}},V^{\vee}(1)), \QQ_p)=\H^{0}(\mathscr{G}_{K_{n}},V^{\vee}(1))^{\vee}. \]
		Since $\H^{0}(\mathscr{G}_{K_{n}},V^{\vee}(1))^{\vee}$ is isomorphic to $\H^2(\mathscr{G}_{K_{n}}, V)$  by Tate duality,  we hence conclude.	
	\end{proof}
\end{remark}

\subsection{Examples with \texorpdfstring{$\QQ_p(n)$}{Qp(n)}}

Assume $k$ is finite.

\begin{remark}\label{rem Tate duality}
	Let $n, r\in \NN,$ then by Tate duality we have 
	
	\[  \H^2(\mathscr{G}_{K_{r}}, \QQ_p(n))\simeq \H^0(\mathscr{G}_{K_r}, \QQ_p(1-n)).  \]
	
	\item As
	
	\[ \H^0(\mathscr{G}_{K_{r}}, \QQ_p(n))\simeq
	\begin{cases}
	\QQ_p &\text{if}\ n=0\\
	0  &\text{if}\ n\neq 0,
	\end{cases}
	\]
	
	\item we have 
	\[ \H^2(\mathscr{G}_{K_{r}}, \QQ_p(n))\simeq 
	\begin{cases}
	\QQ_p &\text{if}\ n=1\\
	0  &\text{if}\ n\neq 1.
	\end{cases}
	\]
\end{remark}

We have computed $\H^i(\Cc_{\varphi, N_{\nabla}}(V))$ for $i\in \{ 0, 1 \}$ in proposition \ref{prop H0 H1 of complex C_R(V)}. Let's see some examples for $\H^2(\Cc_{\varphi, N_{\nabla}}(V)).$

\begin{example}\label{example Robba for H2=0} (1) For $V=\QQ_p(n)$ we have $\mathcal{D}^{\dagger}_{\rig}(\QQ_p(n))\simeq \Rr\mathfrak{t}^{-n}$ and the complex $\mathcal{C}_{\varphi, N_{\nabla}}(\QQ_p)$ is 
	
	\[  \xymatrix{
		0\ar[rr] && \Rr\mathfrak{t}^{-n} \ar[r] & \Rr\mathfrak{t}^{-n} \oplus \Rr\mathfrak{t}^{-n}  \ar[r]&  \Rr\mathfrak{t}^{-n} \ar[r] &0   \\
		&&x  \ar@{|->}[r] &((\varphi-1)(x), N_{\nabla}(x))  &{}\\
		&&&{} (y,z)  \ar@{|->}[r]& N_{\nabla}(y)-(c\varphi-1)(z).
	}
	\]
For any $f, g\in \Rr,$ we have 
\begin{align*}
	N_{\nabla}(f\mathfrak{t}^{-n})&=N_{\nabla}(f)\mathfrak{t}^{-n}+fn\mathfrak{t}^{-n}\frac{N_{\nabla}(\lambda)}{\lambda}=(N_{\nabla}+n\frac{N_{\nabla}(\lambda)}{\lambda})(f)\cdot\mathfrak{t}^{-n},\\
	(c\varphi-1)(g\mathfrak{t}^{-n})&=c\varphi(g)c^{-n}\mathfrak{t}^{-n}-g\mathfrak{t}^{-n}=(c^{1-n}\varphi-1)(g)\cdot\mathfrak{t}^{-n}.
\end{align*}	
Hence \[ \H^2(\Cc_{\varphi, N_{\nabla}}(\QQ_p(n)))\simeq \Rr\big/\big( \im(N_{\nabla}+n\frac{N_{\nabla}(\lambda)}{\lambda})+\im(c^{1-n}\varphi-1)  \big).  \]
\item (2) In particular, for $V=\QQ_p,$ the complex $\mathcal{C}_{\varphi, N_{\nabla}}(\QQ_p)$ is 
\[  \xymatrix{
	0\ar[rr] && \Rr \ar[r] & \Rr \oplus \Rr  \ar[r]&  \Rr \ar[r] &0   \\
&&x  \ar@{|->}[r] &((\varphi-1)(x), N_{\nabla}(x))  &{}\\
&&&{} (y,z)  \ar@{|->}[r]& N_{\nabla}(y)-(c\varphi-1)(z),
}
\]
and we have $\H^2(\Cc_{\varphi, N_{\nabla}}(\QQ_p))\simeq \Rr\big/\big( \im(N_{\nabla})+\im(c\varphi-1)  \big). $
\end{example}

\begin{notation}
By definition \ref{def Robba ring power series}, any element of $\Rr$ can be written in the form $f(u)=\sum\limits_{i\in \ZZ}a_i u^i$ with $a_i\in \W(k)\big[1/p\big].$ We put $f^{+}(u)=\sum\limits_{i\geq 0}a_i u^i$ and $f^{-}(u)=\sum\limits_{i<0}a_i u^i,$ then $f(u)=f^{+}(u)+f^{-}(u).$ 
\end{notation}

\begin{lemma}\label{lemm topology in O}\label{lemm topology in O N-operator}
	With notations as above, $\sum\limits_{n=0}^{\infty}(c\varphi)^n(f^{+}(u))$ converges in $\Oo.$
\end{lemma}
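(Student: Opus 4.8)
The point is that the operator $c\varphi = p\tfrac{E(u)}{E(0)}\varphi$ is topologically nilpotent on the ring $\mathcal{O} = \mathcal{O}_{[0,1)}$ of rigid analytic functions on the open unit disk, so that the series $\sum_{n\geq 0}(c\varphi)^n(f^+(u))$ converges for the Fréchet topology on $\mathcal{O}$. First I would recall that the Fréchet topology on $\mathcal{O}$ is defined by the $\rho$-Gauss norms $|\cdot|_\rho$ for $\rho\in[0,1)$ (or, equivalently, for $\rho$ in a sequence tending to $1$), and that it suffices to check that for each fixed $\rho<1$, the sequence $\big((c\varphi)^n(f^+(u))\big)_{n}$ tends to $0$ for $|\cdot|_\rho$ fast enough to make the series converge; since $\mathcal{O}$ is complete for this topology, convergence then follows. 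Because $f^+(u)\in\mathcal{O}$ has non-negative exponents only, $\varphi(f^+(u))=f^+(u^p)$ again lies in $\mathcal{O}$ with non-negative exponents, and more generally $(c\varphi)^n(f^+(u))$ lies in the subring $W(k)[1/p][\![u]\!]\cap\mathcal{O}$ of power series.

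The key estimate is on the $p$-adic size of the constant $c = p E(u)/E(0)$. Writing $E(0) = pa$ with $a\in W(k)^\times$ (the Eisenstein condition), we get $c = E(u)/a$, and $E(u)$ is a monic polynomial congruent to $u^e$ modulo $p$, so $|c|_\rho = \max\{\rho^e, |p|\,(\text{lower terms})\}$; in any case there is a constant and, crucially, $|\varphi(g)|_\rho = |g|_{\rho^p} \leq |g|_\rho$ for $g$ a power series and $\rho<1$. The real gain comes from the factor $p$: I would show that for any power series $g\in W(k)[1/p][\![u]\!]$ and any $\rho\in[0,1)$, one has $|c\varphi(g)|_\rho \leq |p|\cdot|g|_\rho \cdot C_\rho$ where $C_\rho$ depends only on $\rho$ and $E$; iterating, $|(c\varphi)^n(f^+)|_\rho \leq |p|^n C_\rho^n |f^+|_\rho$, which tends to $0$ geometrically. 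Actually a cleaner route, used in Berger's work and likely intended here: $\varphi(\mathcal{O}_{[0,r]})\subset \mathcal{O}_{[0,r/p]}$ improves the radius, and combined with the contraction coming from the explicit shape of $c$ one gets that $(c\varphi)$ maps a bounded set into a set shrinking to $0$. I would set up whichever of these is least painful — probably the direct Gauss-norm estimate on each annulus $[0,\rho]$ — and conclude that the partial sums form a Cauchy sequence in each $\mathcal{O}_{[0,\rho]}$.

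The main obstacle is getting the estimate uniform enough across all $\rho<1$ simultaneously: a naive bound $|c\varphi(g)|_\rho\leq |p|\,|g|_\rho$ might fail to be strict near $\rho=1$ because $|E(u)/E(0)|_\rho\to 1/|p|$ as $\rho\to 1$, so the two factors $|p|$ and $|E(u)/E(0)|_\rho$ exactly cancel and one loses the contraction. This is precisely why the statement is about $\mathcal{O}$ (functions on the \emph{open} disk, where each individual $\rho$ is bounded away from $1$) and not about some ring extending to the boundary: for each fixed $\rho<1$ one does have $|E(u)/E(0)|_\rho < 1/|p|$, hence $|c|_\rho<1$ is immaterial — what matters is $|c\varphi(g)|_\rho \leq |c|_\rho\,|\varphi(g)|_\rho = |c|_\rho\,|g|_{\rho^p}$, and since $|c|_\rho<\infty$ is just some finite constant while the radius-improvement $\rho\mapsto\rho^p$ does not help a power series (its norm can only decrease or stay the same), the honest contraction must come from the $p$ in $c=pE(u)/E(0)$ together with $|E(u)|_\rho\leq 1$ for $\rho\leq 1$: thus $|c|_\rho \leq |p|\cdot|1/E(0)|_\rho = |p|\cdot|p|^{-1} = 1$, which is still not strict. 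The resolution is that for $g$ a \emph{power series} (no negative part), $|\varphi(g)|_\rho = |g|_{\rho^p}$ and one iterates: $|(c\varphi)^n(g)|_\rho \leq \prod_{j=0}^{n-1}|c|_{\rho^{p^j}}\cdot|g|_{\rho^{p^n}}$, and $\rho^{p^n}\to 0$ so $|g|_{\rho^{p^n}}\to |a_0|$ stays bounded, while each $|c|_{\rho^{p^j}}\leq 1$; to get genuine decay I would instead observe $c\equiv p u^e/a \pmod{p^2,\ \text{higher }u}$ roughly, more honestly that $c = p E(u)/E(0)$ has $u$-adic valuation contributions forcing $|c\varphi(g)|_\rho \leq |p|^{1-\epsilon}|g|_\rho$ after absorbing the $E(u)$ factor into the $u$-adic filtration — I would carry out this bookkeeping carefully, splitting $c = p\cdot(\text{unit in }\mathcal{O})$ and using that multiplication by $p$ contracts by $|p|<1$. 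Once the geometric decay $|(c\varphi)^n(f^+)|_\rho \leq |p|^{n}M_\rho$ is established for every $\rho<1$, completeness of $\mathcal{O}$ for the Fréchet topology gives convergence and finishes the proof.
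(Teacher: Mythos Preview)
You have the right iterated estimate
\[
|(c\varphi)^n(f^+)|_\rho \leq \prod_{j=0}^{n-1}|c|_{\rho^{p^j}}\cdot|f^+|_{\rho^{p^n}},
\]
but you abandon it one step too early. The missing observation is an explicit computation of $|c|_r$: since $E(0)=pa$ with $a\in W(k)^\times$, we have $c=E(u)/a$, and $E$ being Eisenstein gives $|c|_r=\max\{r^e,\,1/p\}$. Hence $|\varphi^j(c)|_\rho=|c|_{\rho^{p^j}}=\max\{\rho^{p^je},\,1/p\}$, and since $\rho<1$ there is some $j_0=j_0(\rho)$ beyond which every factor equals $1/p$ exactly. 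The product therefore decays like $O_\rho(p^{-n})$, and since $|f^+|_{\rho^{p^n}}\leq |f^+|_\rho$ is bounded, the $n$-th term tends to $0$ in $|\cdot|_\rho$. This is precisely the paper's proof.

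Your fallback route --- ``splitting $c=p\cdot(\text{unit in }\mathcal{O})$'' and extracting a contraction of size $|p|$ per step from that factor alone --- does not work. The element $c=E(u)/a$ is \emph{not} $p$ times a unit in $\mathcal{O}$: $E(u)$ vanishes at $u=\pi$, which lies in the open disk, so $E(u)/E(0)$ is not a unit there, and in fact $|E(u)/E(0)|_\rho=\rho^e/|p|$ for $\rho$ close to $1$, which exactly cancels the $|p|$ you hoped to gain. The genuine contraction comes not from the coefficient $p$ but from the radius shrinkage $\rho\mapsto\rho^{p^j}$ under iterated Frobenius, which eventually forces $\rho^{p^je}<1/p$ and hence $|\varphi^j(c)|_\rho=1/p$. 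Once you plug this into your own product formula, the argument is complete.
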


\begin{proof}
	Recall that any $h(u)=\sum\limits_{i=0}^{\infty}\lambda_i u^i\in \W(k)\big[1/p\big][\![u]\!]$ belongs to $\Oo$ if and only if for any $r\in [0, 1).$ $\Lim{i\to \infty} |\lambda_i|r^i = 0$ ($|\cdot|$ denotes the $p$-adic absolute value), which is equivalent to $|h|_r:=\sup_i | \lambda_i|r^i$ being finite for any $r\in [0, 1)$. Recall $\frac{p}{E(0)}\in \W(k)^{\times}$ and $c=\frac{p E(u)}{E(0)},$ we hence have $|c|_{r}=\max\{ 1/p, r^{e}\}$ and $|\varphi^k (c)|=\max\{1/p, r^{p^k e} \},$ which is $1/p$ when $k\gg 0.$ Notice also that $|\varphi(h(u))|_r=|h(u)|_{r^p}$ . We have (for $k\gg0$)
	\[  \big|c\varphi(c)\varphi^2(c)\cdots \varphi^{k-1}(c)\varphi^k(f^{+}(u))\big|_r\leq O_r(\frac{1}{p^k})\big|f^{+}(u)\big|_{r^{p^k}}=O_r(\frac{1}{p^k}). \]
	Hence $\sum\limits_{n=0}^{\infty}(c\varphi)^n(f^{+}(u))$ converges in $\Oo$ as its $k$-th term $c\varphi(c)\varphi^2(c)\cdots \varphi^{k}(c)\varphi^{k+1}(f^{+}(u))$ tends to $0$ when $k$ tends to $+\infty.$
\end{proof}

\begin{remark}
	To check $\H^2(\Cc_{\varphi, N_{\nabla}}(\QQ_p))=0$ (hence $\Cc_{\varphi, N_{\nabla}}(\QQ_p)$ computes the Galois cohomology by remark \ref{rem Tate duality}), it is equivalent to showing that $\Rr\oplus \Rr\xrightarrow{c\varphi-1\oplus N_{\nabla}} \Rr$ is surjective. Although we cannot prove it, we can see from the following lemma that the image contains a lot of elements.
\end{remark}

\begin{lemma}\label{lemma H2=0 first step}
	 We have
	\[ \Big\{ \sum\limits_{i\geq m}^{+\infty}a_i u^i\in \Rr; \, a_i\in \W(k)\big[1/p\big], m\in \ZZ \Big\} \subset \im(c\varphi-1\oplus N_{\nabla}). \]
\end{lemma}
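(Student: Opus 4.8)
The statement asserts that every Laurent series in $\mathcal{R}$ with bounded-below support (i.e.\ every element of $\bigcup_{m\in\ZZ}u^m\Oo$, which is $\BB^\dagger_{K_\pi}\cap\Oo$ up to inverting $u$) lies in the image of $(c\varphi-1)\oplus N_\nabla\colon \mathcal{R}\oplus\mathcal{R}\to\mathcal{R}$. The natural strategy is to handle the two summands $(c\varphi-1)$ and $N_\nabla$ separately, since a bounded-below Laurent series $f=f^+ + f^-$ where $f^+\in\Oo$ and $f^-$ is a finite-tail negative part (a Laurent polynomial in $1/u$). First I would show $f^+\in\im(c\varphi-1)$: by Lemma \ref{lemm topology in O N-operator}, the series $g:=-\sum_{n=0}^\infty (c\varphi)^n(f^+)$ converges in $\Oo\subset\mathcal{R}$, and then a direct telescoping computation gives $(c\varphi-1)(g) = -\sum_{n=0}^\infty(c\varphi)^{n+1}(f^+) + \sum_{n=0}^\infty(c\varphi)^n(f^+) = f^+$. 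So $f^+$ is in the image of the $c\varphi-1$ component alone.

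The remaining task is to show that the finitely many negative-degree monomials making up $f^-$ lie in $\im((c\varphi-1)\oplus N_\nabla)$; by linearity it suffices to treat a single monomial $au^{-j}$ with $j>0$, $a\in\W(k)[1/p]$. Here I would use the $N_\nabla$ component: recall from Remark \ref{rmq N nabla } that $N_\nabla = -u\lambda\frac{d}{du}$ on $\mathcal{R}$. Since $\lambda\in\Oo$ is a unit of $\mathcal{R}$ (Remark \ref{rem robba algebraic def} and the discussion around $c$), and $-u\frac{d}{du}(u^{-j}) = j u^{-j}$, we get $N_\nabla(u^{-j}) = j\lambda u^{-j}$, hence $N_\nabla\big(\tfrac{a}{j}\lambda^{-1}u^{-j}\big) = a u^{-j}$, provided $\lambda^{-1}u^{-j}$ still lies in $\mathcal{R}$ — which it does, as $\lambda^{-1}\in\mathcal{R}$ and $\mathcal{R}$ contains $u^{-j}$. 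Actually one must be slightly careful: $\lambda^{-1}u^{-j}$ is a bounded-below Laurent series times a unit, so it is genuinely in $\mathcal{R}$, and applying $N_\nabla$ to it (via $-u\lambda\tfrac{d}{du}$) lands back in $\mathcal{R}$ and equals $au^{-j}$ plus possibly lower-order correction terms from differentiating $\lambda^{-1}$; I would organize the computation so that the correction is a bounded-below series, which we have already shown is in the image of $c\varphi-1$. Assembling: write $f = f^+ + \sum (\text{monomials } a_iu^{-j})$, put each monomial in the image of $N_\nabla$ modulo a bounded-below remainder, collect all remainders into one bounded-below series, and absorb that via the $c\varphi-1$ argument of the first paragraph.

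The step I expect to be the main (minor) obstacle is the bookkeeping around $\lambda^{-1}$: one needs $\lambda^{-1}u^{-j}\in\mathcal{R}$ and needs to check that $N_\nabla(\lambda^{-1}u^{-j})$ has its ``singular part'' exactly $ja u^{-j}$ up to a series supported in degrees $\geq$ some fixed bound, so that the two mechanisms ($N_\nabla$ for negative degrees, $c\varphi-1$ for nonnegative degrees) interlock cleanly. This is purely a matter of expanding $\frac{d}{du}(\lambda^{-1}u^{-j}) = -j\lambda^{-1}u^{-j-1} - \lambda^{-2}\lambda' u^{-j}$ and multiplying by $-u\lambda$, giving $j u^{-j} + \lambda^{-1}\lambda' u^{-j+1}\cdot(\text{stuff})$ — wait, more precisely $N_\nabla(\lambda^{-1}u^{-j}) = ju^{-j} - u\lambda'\lambda^{-1}u^{-j} = ju^{-j}$ times a unit correction, since $u\lambda'\lambda^{-1}u^{-j}$ has a pole of order $j-1$. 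Iterating the two-step reduction on the strictly-decreasing pole order terminates, and the convergence of the resulting sums is controlled exactly by Lemma \ref{lemm topology in O N-operator}. No deep input beyond the already-established unit property of $\lambda$, the formula $N_\nabla=-u\lambda\frac{d}{du}$, and the telescoping convergence lemma is needed.
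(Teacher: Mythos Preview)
Your treatment of $f^+$ is correct and matches the paper's. The gap is in the negative-degree part: you assert that $\lambda$ is a unit of $\mathcal{R}$, but this is false. The remark you cite only says the \emph{finite} partial products $\prod_{i=0}^n\varphi^i(E(u)/E(0))$ are units. The infinite product $\lambda$ has zeros at every radius $|u|=|\pi|^{1/p^n}$, and these accumulate at $|u|=1$; consequently $\lambda^{-1}$ is not analytic on any annulus $r\le|u|<1$ and does not lie in $\mathcal{R}$. (This is precisely why the paper must formally adjoin $1/\lambda$ in Theorem~\ref{thm Poyeton thm 3.4.10 varphi N module}, and why $\partial_\tau=-\tfrac{1}{\lambda}N_\nabla$ only stabilises $\mathcal{D}^\dagger_{\rig}(V)$ for special $V$, \cf Corollary~\ref{coro semistable}.) So your proposed preimage $\tfrac{a}{j}\lambda^{-1}u^{-j}$ is not an element of $\mathcal{R}$, and the $N_\nabla$-step collapses. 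Even if one could make sense of $\lambda^{-1}$, the ``correction'' $\lambda^{-1}\lambda'\,u^{1-j}$ would not be bounded below, so your induction on pole order would not terminate.

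The paper avoids $\lambda^{-1}$ entirely by applying $N_\nabla$ to $a_j u^{-j}$ itself. Since $N_\nabla(u^{-j})=j\lambda u^{-j}$ and $\lambda=1+\lambda_1 u+\lambda_2 u^2+\cdots\in\Oo$ (constant term $1$), one obtains
\[
N_\nabla(a_j u^{-j})=a_j j\,u^{-j}+a_j j\lambda_1 u^{1-j}+\cdots+a_j j\lambda_{j-1}u^{-1}+\big(\text{element of }\Oo\big).
\]
The left side is in $\im N_\nabla$; the $\Oo$-tail lies in $\im(c\varphi-1)$ by the $f^+$ argument; the intermediate monomials have strictly smaller pole order and are handled by induction on $j$. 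Rearranging gives $a_j j\,u^{-j}\in\im\big((c\varphi-1)\oplus N_\nabla\big)$, hence $a_j u^{-j}$ as well. The moral is to expand $\lambda$ (a genuine power series) rather than $\lambda^{-1}$, so that all corrections stay bounded below.
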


	\begin{proof}
	Let $f(u)=\sum\limits_{i\in \ZZ}a_i u^i\in \Rr,$ then we claim $f^{+}(u)\in \im(c\varphi-1).$ Put $f_1(u)=\sum\limits_{m\geq 0} (c\varphi)^m(f^{+}(u)),$ it is well-defined in $\Rr$ by lemma \ref{lemm topology in O} and satisfies $(c\varphi-1)(f_1(u))=f^{+}(u).$ 
	
	\medskip
	
	We now study $f^{-}(u).$ Observe that $a_ju^{-j}\in \im(c\varphi-1\oplus N_{\nabla})$ for any $j\in\ZZ_{+}$ and $a_j\in \W(k)\big[1/p\big].$ We prove by induction as follows. As $\lambda\in \Oo,$ we can write $\lambda=\sum\limits_{i=0}^{\infty} \lambda_i u^i$ and then 
	\begin{align*}
	N_{\nabla}(a_1u^{-1})	=a_1\lambda u^{-1}=a_1\sum\limits_{i=0}^{\infty}\lambda_iu^{i-1}=a_1\lambda_0 u^{-1}+  a_1\sum\limits_{i=1}^{\infty}\lambda_iu^{i-1}.
	\end{align*} 
	Observe that $\lambda_0=1$ and $a_1\sum\limits_{i=1}^{\infty}\lambda_iu^{i-1}\in \Oo \subset \im(c\varphi-1),$ hence $a_1u^{-1}\in \im(c\varphi-1\oplus N_{\nabla}).$ For any $j\geq 2,$ we have 
	\begin{align*}
	N_{\nabla}(a_ju^{-j})&=a_jj\lambda u^{-j}\\
	&=a_jj\sum\limits_{i=0}^{\infty}\lambda_iu^{i-j}\\
	&=a_jj u^{-j}+ a_jj\lambda_1 u^{1-j}+\cdots +a_jj\lambda_{j-1} u^{-1}+ a_jj\sum\limits_{i=j}^{\infty}\lambda_iu^{i-j}.
	\end{align*} 
	We have $a_jj\lambda_1 u^{1-j}+\cdots +a_jj\lambda_{j-1} u^{-1}\in\im(c\varphi-1\oplus N_{\nabla})$ by induction hypothesis and $a_jj\sum\limits_{i=j}^{\infty}\lambda_iu^{i-j}\in \Oo,$ hence in $\im(c\varphi-1\oplus N_{\nabla}).$ Hence $a_jj u^{-j}\in \im(c\varphi-1\oplus N_{\nabla})$ and then $a_j u^{-j}\in \im(c\varphi-1\oplus N_{\nabla}).$
\end{proof}

\section{The complex \texorpdfstring{$\Cc_{\varphi, \partial_{\tau}}$}{C\textpinferior\texthinferior\textiinferior, \unichar{"2202}\texttinferior\textainferior\textuinferior}}

\begin{lemma}\label{lemm N preserves surconvergence}
	For any $x=\sum\limits_{n\in\ZZ\setminus\{ 0\}}x_nu^n$ with $x_n\in \W(k)\big[1/p\big],$ we have $u\frac{d}{du}(x)\in \Rr$ if and only if $x\in \Rr.$
\end{lemma}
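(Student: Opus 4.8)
The statement to prove is: for $x=\sum_{n\in\ZZ\setminus\{0\}}x_nu^n$ with $x_n\in\W(k)[1/p]$, the formal series $u\frac{d}{du}(x)=\sum_{n\in\ZZ\setminus\{0\}}nx_nu^n$ lies in $\Rr$ if and only if $x$ does. The key observation is that multiplying the coefficient of $u^n$ by the integer $n$ changes the $p$-adic absolute value $|nx_n|$ by a bounded factor in a way that cannot affect convergence on a half-open annulus. Concretely, I would set up the criterion from Definition \ref{def Robba ring power series}: $x\in\Rr$ means there is $r>0$ such that for every $\rho\in[p^{-r},1)$, $\Lim{|n|\to\infty}|x_n|\rho^n=0$ (equivalently $\sup_n|x_n|\rho^n<\infty$ for every such $\rho$, by the usual reindexing argument), and similarly for $u\frac{d}{du}(x)$ with coefficients $nx_n$.

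First I would record the elementary estimate $|n|_p\leq 1$ for all $n\in\ZZ$, which immediately gives $|nx_n|\rho^n\leq|x_n|\rho^n$; hence if $x\in\Rr$ then $u\frac{d}{du}(x)\in\Rr$ with the same radius $r$ and the same $\rho$-Gauss norms as upper bounds — this is the easy direction. For the converse, the point is a lower bound on $|n|_p$ that is \emph{not} uniform in $n$ but grows slowly enough: $v_p(n)\leq\log_p|n|$, so $|n|_p\geq|n|^{-c}$ is false in general, but what we actually need is that for any fixed $\rho<1$ and any $\epsilon>0$, $|n|_p^{-1}\rho^{\epsilon n}\to 0$ as $n\to\infty$, because $|n|_p^{-1}=p^{v_p(n)}\leq n$ grows at most linearly while $\rho^{\epsilon n}$ decays exponentially. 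So I would argue: given $x$ with $u\frac{d}{du}(x)\in\Rr$ of radius $r$, fix $\rho\in[p^{-r},1)$; choose $\rho'\in[p^{-r},1)$ with $\rho<\rho'$ (shrinking to radius $r$ if one prefers, or just noting $[p^{-r},1)$ is an interval); then $|x_n|\rho^n=|nx_n|\rho'^n\cdot|n|_p^{-1}(\rho/\rho')^n$, and since $|nx_n|\rho'^n$ is bounded (indeed tends to $0$) and $|n|_p^{-1}(\rho/\rho')^n\leq n(\rho/\rho')^n\to 0$, we get $|x_n|\rho^n\to 0$. The same reasoning handles $n\to-\infty$ using $(\rho/\rho')^{n}$ replaced by the analogous factor at the other end, or by treating $f^+$ and $f^-$ separately as in the notation just introduced before the lemma. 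Since $\rho\in[p^{-r},1)$ was arbitrary, $x\in\Rr_r\subset\Rr$.

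The main (and really only) subtlety is making the inequality $p^{v_p(n)}\leq|n|$ — or whatever slowly-growing bound one uses — interact correctly with the two-sided nature of the Laurent expansion and with the requirement that convergence hold for \emph{all} $\rho$ in the half-open interval, not just one. I expect this to be routine once the splitting $x=f^+(u)+f^-(u)$ is in place: for $f^+$ the relevant limit is $n\to+\infty$ with a factor $\rho^n\to 0$, for $f^-$ it is $n\to-\infty$ where one rewrites $u^n=u^{-|n|}$ and the convergence condition near $\rho\to 1$ is automatic while near $\rho=p^{-r}$ the polynomial factor $|n|$ is again dominated. No deep input is needed; the lemma is essentially the statement that $u\frac{d}{du}$ is a continuous operator on $\Rr$ preserving each $\Rr_r$, and the proof is just the two Gauss-norm estimates above together with the observation that the interval $[p^{-r},1)$ of convergence radii is unchanged.
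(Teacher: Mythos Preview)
Your proposal is correct and follows essentially the same approach as the paper's proof: both directions rest on $|n|_p\leq1$ for the easy implication, and on comparing Gauss norms at two nearby radii $\rho<\rho'$ (resp.\ $\rho'<\rho$ for $n\to-\infty$) together with the estimate $|n|_p^{-1}\leq|n|$ so that the polynomial factor is absorbed by the geometric factor $(\rho/\rho')^{|n|}$. The paper writes this out separately for $n\to+\infty$ and $n\to-\infty$ exactly as you outline.
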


\begin{proof}
	For light notation, we put $\partial_{\tau}=u\frac{d}{du}$ and we have $\partial_{\tau}(x)=\sum\limits_{ n\in\ZZ\setminus\{0\} }nx_nu^n.$
	
	\medskip
	
	Assume $\partial_{\tau}(x)\in \Rr:$ we show that $x\in \Rr.$ For any $0<r<1,$ we have $|nx_n|r^{n}\xrightarrow[n\to +\infty]{} 0$ and in particular for $1>\rho^{\prime}>\rho$ we have \[|nx_n|(\rho^{\prime})^n=|nx_n|\big(\frac{\rho^{\prime}}{\rho}\big)^n\rho^n \xrightarrow[n\to +\infty]{} 0. \] 
	Notice that $|\frac{1}{n}|\leq \big(\frac{\rho^{\prime}}{\rho}\big)^n$ for $n\gg 0,$ so that $|x_n|\rho^n\leq |nx_n|(\rho^{\prime})^n$ for $n\gg 0.$ Hence $|x_n| \rho^n\to 0$ as $n\to +\infty.$ 
	On the other hand, as $\partial_{\tau}(x)\in \Rr,$ there exists $0<r_0<1$ such that  $|nx_n|r^n\xrightarrow[n\to -\infty]{}0$ for any $r\in (r_0, 1).$ We claim that $|x_n|\rho^n\xrightarrow[n\to -\infty]{}0$ for any $\rho\in (r_0, 1).$ There exists $\rho^{\prime}$ such that $r_0< \rho^{\prime}<\rho$ and \[|nx_n|(\rho^{\prime})^n=|nx_n|\big(\frac{\rho^{\prime}}{\rho} \big)^n(\rho)^n\xrightarrow[n\to -\infty]{} 0. \]
	We then conclude similarly from the observation $|\frac{1}{n}|\leq \big(\frac{\rho^{\prime}}{\rho} \big)^n$ for $n\ll 0.$
	
	\medskip
	
	To show $x\in \Rr$ implies $\partial_{\tau}(x)\in\Rr$ is direct, as $|nx_n|\leq |x_n|$ for $n\in\ZZ.$ 
\end{proof}

\begin{notation}
	Put $\partial_{\tau}=\frac{1}{t}\nabla_{\tau}=-\frac{1}{\lambda}N_{\nabla}=u\frac{d}{du}$ ($\cf$ remark \ref{rmq N nabla }).\index{$\partial_{\tau}$} Remark that this is an operator over $\Rr$ by lemma \ref{lemm N preserves surconvergence}, and we will extend it to certain $\Rr$-modules.
\end{notation}

	\begin{definition}
		We define a \emph{$(\varphi, \partial_{\tau})$-module over $\Rr$} to be a free $\Rr$-module $D$ endowed with a Frobenius map $\varphi$ and a connection $\partial_{\tau}\colon D\to D$ over $\partial_{\tau}\colon \Rr\to \Rr,$ $\ie$an additive map such that 
		\[ (\forall m\in D)\ (\forall x\in \Rr)\quad \partial_{\tau}(x\cdot m)=\partial_{\tau}(x)\cdot m+x\cdot \partial_{\tau}(m), \]
		that satisfies $\partial_{\tau}\circ \varphi= p\varphi \partial_{\tau}.$ The corresponding category is denoted $\Mod_{\Rr}(\varphi, \partial_{\tau}).$\index{$\Mod_{\Rr}(\varphi, \partial_{\tau})$}
\end{definition}

\begin{definition}($\cf$ \cite[\S 1.3]{Poy21})
\item (1)	Let $I$ be a sub-interval of $[0, +\infty)$ and $V\in \Rep_{\QQ_p}(\mathscr{G}_K)$, we define $\widetilde{D}^I_L(V)$\index{$\widetilde{D}^I_L(V)$} by 
	\[ \widetilde{D}^I_L(V)=(\widetilde{\BB}^{I}\otimes_{\QQ_p}V)^{\mathscr{G}_L}.  \]
\item (2) Let $V\in \Rep_{\QQ_p}(\mathscr{G}_K),$ we put 
	\[ \widetilde{D}^{\dagger, r}_{\rig, L}(V)=(\widetilde{\BB}^{\dagger, r}_{\rig}\otimes_{\QQ_p}V)^{\mathscr{G}_L}. \]\index{$\widetilde{D}^{\dagger, r}_{\rig, L}(V)$}
\end{definition}

\begin{lemma}\label{lemm Poyeton lanalytic lemma 1.3.4}
	Let $V\in \Rep_{\QQ_p}(\mathscr{G}_K)$ with $(D^{\dagger}, D^{\dagger}_{\tau})\in \Mod_{{\BB_{K_{\pi}}^\dagger,\widetilde{\BB}_{L}^\dagger}}(\varphi, \tau)$ its associated $(\varphi, \tau)$-module over $({\BB_{K_{\pi}}^\dagger,\widetilde{\BB}_{L}^\dagger})$, and $(D^{\dagger}_{\rig}, D^{\dagger}_{\rig, \tau})\in \Mod_{\Rr, \Rr_{\tau}}(\varphi, \tau)$ its associated $(\varphi, \tau)$-module over $(\Rr, \Rr_{\tau})$. Suppose $r\geq 0$ is such that $D^{\dagger}=\BB_{K_{\pi}}^{\dagger}\otimes_{\BB_{K_{\pi}}^{\dagger, r}}(\BB^{\dagger, r}\otimes_{\QQ_p}V)^{\mathscr{G}_{K_{\pi}}}$. Then for any compact interval $I$ such that $r\leq \min(I)$, the elements of $D^{\dagger}$ and $D^{\dagger}_{\rig}$, seen as elements of $\widetilde{D}^I_L(V)$ are locally analytic for the group $\Gal(L/K).$ 
\end{lemma}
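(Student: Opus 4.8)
The plan is to reduce the statement, via the description of $D^\dagger$ and $D^\dagger_{\rig}$ in terms of the rings $\BB_{K_\pi}^{\dagger,r}$ and $\BB^{\dagger,r}_{\rig,K_\pi}$, to the known local analyticity of the ring elements themselves. First I would recall from the excerpt (\cf notation \ref{not R-tau} and the preceding material, ultimately \cite[\S 1.3]{Poy21}) that for a compact interval $I$ with $r\le\min(I)$ one has continuous inclusions $\BB^{\dagger,r}_{K_\pi}\hookrightarrow\widetilde{\BB}^I_{K_\pi}$ and $\BB^{\dagger,r}_{\rig,K_\pi}\hookrightarrow\widetilde{\BB}^I_{K_\pi}$, and that $\Gal(L/K)\simeq\overline{\la\tau\ra}\rtimes\overline{\la\gamma\ra}$ acts on $\widetilde{\BB}^I$ preserving the valuation $V(\cdot,I)$ (hence by isometries on $\widetilde{D}^I_L(V)$, once a lattice is chosen). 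The key input is that the generators $u$ (resp. $T\mapsto u$) and the basic elements of $\widetilde{\BB}^I$ are locally analytic for $\Gal(L/K)$: this is exactly the content invoked in \cite[Lemma 1.3.4]{Poy19} (our lemma \ref{lemm Poyeton lanalytic lemma 1.3.4} is a transcription of it into the notations of notation \ref{not dictionary}).

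The main steps, in order, are: (1) fix $r\ge0$ as in the hypothesis so that $D^\dagger=\BB_{K_\pi}^\dagger\otimes_{\BB_{K_\pi}^{\dagger,r}}(\BB^{\dagger,r}\otimes_{\QQ_p}V)^{\mathscr{G}_{K_\pi}}$, and correspondingly $D^\dagger_{\rig}=\Rr\otimes_{\BB^{\dagger,r}_{\rig,K_\pi}}(\BB^{\dagger,r}_{\rig}\otimes_{\QQ_p}V)^{\mathscr{G}_{K_\pi}}$; pick a $\BB_{K_\pi}^{\dagger,r}$-basis $(e_1,\dots,e_d)$ of $(\BB^{\dagger,r}\otimes_{\QQ_p}V)^{\mathscr{G}_{K_\pi}}$, which is simultaneously a $\BB^{\dagger,r}_{\rig,K_\pi}$-basis of $(\BB^{\dagger,r}_{\rig}\otimes_{\QQ_p}V)^{\mathscr{G}_{K_\pi}}$ and a $\widetilde{\BB}^I_{K_\pi}$-basis of $\widetilde{D}^I_L(V)$ (using $\widetilde{D}^I_L(V)=\widetilde{\BB}^I\otimes_{\BB^{\dagger,r}}(\BB^{\dagger,r}\otimes_{\QQ_p}V)^{\mathscr{G}_{K_\pi}}$ for $r\le\min(I)$, which one gets from faithful flatness of the ring extensions). (2) Observe that the $\Gal(L/K)$-action on $\widetilde{D}^I_L(V)$ is semilinear over the action on $\widetilde{\BB}^I_{K_\pi}$, and that for each $g$ the matrix of $g$ in the basis $\underline e$ has entries in $\BB^{\dagger,r}_{K_\pi}$ (resp. $\Rr$), since $\underline e$ is defined over $\BB^{\dagger,r}$ which is $\Gal(L/K)$-stable. (3) Apply the structural fact that $\BB^{\dagger,r}_{K_\pi}$, $\Rr=\BB^\dagger_{\rig,K_\pi}$ and $\widetilde{\BB}^I_{K_\pi}$ consist of locally analytic vectors for $\Gal(L/K)$ — concretely, that $g\mapsto g(u)=[\varepsilon]^{c(g)}u$ is locally analytic and that the Fréchet/Banach topology on these rings makes the orbit maps analytic on a suitable open subgroup — so that each matrix coefficient $g\mapsto (\text{entry of }g)$ is locally analytic. (4) Conclude: an element $x=\sum_i \lambda_i e_i$ with $\lambda_i\in\BB^{\dagger,r}_{K_\pi}$ (resp. $\Rr$) has orbit map $g\mapsto g(x)=\sum_i g(\lambda_i)\,g(e_i)=\sum_{i}\big(\sum_j g(\lambda_i)U_g{}_{ji}\big) e_j$, a finite sum of products of locally analytic $\widetilde{\BB}^I_{K_\pi}$-valued functions, hence locally analytic in $\widetilde{D}^I_L(V)$; shrinking to a common open subgroup of $\Gal(L/K)$ on which all the finitely many relevant orbit maps are analytic finishes it.

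The step I expect to be the main obstacle — or at least the one requiring the most care rather than being purely formal — is step (3): verifying that the elements of $\BB^{\dagger,r}_{K_\pi}$ and of the Robba ring $\Rr$ really are locally (resp. pro-)analytic for $\Gal(L/K)$, with the analyticity uniform enough on a fixed compact interval $I$ to survive the base change to $\widetilde{\BB}^I_{K_\pi}$. This is where one must control $g(u)=[\varepsilon]^{c(g)}u$ and the convergence of $[\varepsilon]^{c(g)}=\exp(c(g)t)$ in the Banach spaces $\widetilde{\BB}^I$, uniformly for $g$ in a small open subgroup; but since our lemma is explicitly stated as a reformulation of \cite[Lemme 1.3.4]{Poy19}, I would simply cite that result for this input, so that the present proof is essentially the bookkeeping of steps (1), (2) and (4) — choosing bases compatibly across the three rings, checking the matrices lie in the right subring, and assembling the finite products — together with a final invocation of \emph{loc. cit.} The other minor point to address is that local analyticity of vectors is preserved under finite $\widetilde{\BB}^I_{K_\pi}$-linear combinations and products with locally analytic scalars, which is standard (\cf \cite[\S 2]{Ber16}).
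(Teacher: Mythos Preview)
Your proposal is correct and ultimately rests on the same input as the paper, but the paper's proof is much terser: it simply records the chain of inclusions
\[
D^{\dagger,r}\subset D^{\dagger,r}_{\rig}\subset\big(\widetilde{D}^{\dagger,r}_{\rig,L}\big)^{\mathsf{la}}\subset\big(\widetilde{D}^{[r,s]}_L\big)^{\mathsf{la}}
\]
for $s\ge r$, and cites \cite[Lemma~1.3.4]{Poy21} (not \cite{Poy19}) for the middle inclusion. Your steps (1)--(4) amount to an outline of \emph{why} that cited lemma holds --- choosing a basis, tracking matrix entries, and reducing to the local analyticity of ring elements --- whereas the paper treats the whole statement as a black box from Poyeton. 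Since your own step~(3) already concedes that the substantive analytic input must be cited from Poyeton, the extra bookkeeping in (1), (2), (4) buys you nothing here: the inclusion $D^{\dagger,r}_{\rig}\subset(\widetilde{D}^{\dagger,r}_{\rig,L})^{\mathsf{la}}$ is precisely the content of the cited lemma, and the outer inclusions are formal.
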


\begin{proof}
	Notice that for $s\geq r$ we have 
	\[ D^{\dagger, r}\subset D_{\rig}^{\dagger, r}\subset (\widetilde{D}^{\dagger, r}_{\rig, L})^{\mathsf{la}}\subset (\widetilde{D}^{[r, s]}_{L})^{\mathsf{la}} \]
	($\cf$ \cite[Lemma 1.3.4]{Poy21}).
\end{proof}

\begin{theorem}\label{thm Poyeton thm 3.4.10 varphi N module}
	Let ${M}$ be a $(\varphi, \tau)$-module over $\Rr$ whose $N_{\nabla}$-action is locally trivial \textup{(\cf \cite[D\'efinition 3.4.1]{Poy21})}, then there exists a unique $(\varphi, \tau)$-module ${D}\subset {M}[1/\lambda]$ such that ${D}[1/\lambda]\subset {M}[1/\lambda]$ and such that $\partial_{\tau}({D})\subset D.$ 
\end{theorem}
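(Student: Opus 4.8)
The statement is the $N_\nabla$-version of \cite[Th\'eor\`eme 3.4.10]{Poy21} (which is phrased for the operator $\partial_\tau$), so the plan is to reduce to that result by the change of operator $N_\nabla = -\lambda\,\partial_\tau$, or else to mimic its proof directly. First I would recall that $M$ being a $(\varphi,\tau)$-module over $\mathcal R$ with locally trivial $N_\nabla$-action means that, after inverting $\lambda$ and passing to a suitable extension of scalars to $\widetilde{\mathbf B}^{[r,s]}_L$, the operator $\nabla_\tau = \frac{-t}{\lambda}N_\nabla$ acts through a locally constant (hence, on small enough subgroups, trivial) action of $\mathrm{Gal}(L/K)$; this is exactly the hypothesis under which the overconvergent/rigid descent of \cite[\S 3.4]{Poy21} applies. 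The target is a $(\varphi,\tau)$-submodule $D\subset M[1/\lambda]$ with $D[1/\lambda]=M[1/\lambda]$ and $\partial_\tau(D)\subset D$.

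\textbf{Key steps.} (1) \emph{Uniqueness.} Suppose $D_1,D_2$ are two such modules. Then $D_1,D_2$ are both $\varphi$-stable $\mathcal R$-lattices inside the common $\mathcal R[1/\lambda]$-module $M[1/\lambda]$, stable under $\partial_\tau$; I would show that any two $\varphi$-stable $\mathcal R$-lattices in an \'etale $\varphi$-module over $\mathcal R[1/\lambda]$ that are in addition $\partial_\tau$-stable must coincide, using that $\varphi$ is contracting on $\lambda$ (since $\varphi(\lambda)=\frac{E(0)}{E(u)}\lambda$ and $\frac{E(u)}{E(0)}$ is a unit of $\mathcal R$, one controls the powers of $\lambda$ appearing), together with the Leibniz rule $\partial_\tau(\lambda^n x)=\lambda^n(\partial_\tau + n\frac{\partial_\tau\lambda}{\lambda})(x)$ to show no lattice strictly between them can be $\partial_\tau$-stable; this is the standard slope-filtration style argument in \cite[\S 3.4]{Poy21}. (2) \emph{Existence.} Using local triviality of $N_\nabla$, I would first produce, over each $\widetilde{\mathbf B}^{[r,s]}_L$, a basis of $M[1/\lambda]$ on which $N_\nabla$ (equivalently $\partial_\tau$ up to the factor $\lambda$) acts by an explicit "Frobenius-regular" matrix; then descend this basis back to $\mathcal R[1/\lambda]$ using the faithfully flat descent along $\mathcal R_\tau/\mathcal R$ (Theorem \ref{thm D dagger rig}) and the fact that $\partial_\tau$ preserves pro-analyticity and overconvergence (Lemma \ref{lemm N preserves surconvergence} and Lemma \ref{lemm Poyeton lanalytic lemma 1.3.4}); finally, define $D$ as the $\mathcal R$-span inside $M[1/\lambda]$ of the descended basis, and check $\varphi$-stability (from $\partial_\tau\circ\varphi=p\varphi\partial_\tau$ combined with $N_\nabla\circ\varphi = c\varphi N_\nabla$ and the unit $\frac{E(u)}{E(0)}$) and $\partial_\tau$-stability (from the explicit matrix). (3) \emph{Compatibility with the $\tau$-structure.} One checks that the $\tau_D$-action on $M_\tau[1/\lambda]=\mathcal R_\tau\otimes_{\mathcal R}M[1/\lambda]$ preserves $D_\tau=\mathcal R_\tau\otimes_{\mathcal R}D$, since $\tau_D$ is recovered from $\nabla_\tau$ (hence from $\partial_\tau$) via exponentiation $\tau_D^{p^n}=\exp(p^n\nabla_\tau)$ for $n\gg0$, using Remark \ref{rmq N nabla } and the commutation $\gamma\circ N_\nabla=N_\nabla\circ\gamma$ (Lemma \ref{lemm gamma commutes N nabla}).

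\textbf{Main obstacle.} The hardest step is the existence of the descended $\partial_\tau$-stable lattice (step 2): making the local-triviality hypothesis yield a basis over $\widetilde{\mathbf B}^{[r,s]}_L$ with controlled $N_\nabla$-matrix, and then showing this basis can be chosen to lie in $M[1/\lambda]$ rather than only in its completed/rigid-analytic versions, is the technical heart — it is where one must invoke the analyticity and overconvergence results of Poyeton (\cite[\S 1.3, \S 3.4]{Poy21}, Lemma \ref{lemm Poyeton lanalytic lemma 1.3.4}) in an essential way. Once the lattice exists, $\varphi$- and $\partial_\tau$-stability and uniqueness are formal consequences of the Leibniz rule and the unit property of $\frac{E(u)}{E(0)}$ in $\mathcal R$. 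I expect the cleanest writeup to simply invoke \cite[Th\'eor\`eme 3.4.10]{Poy21} for the $\partial_\tau$-statement and then transfer to $N_\nabla$ via $N_\nabla=-\lambda\partial_\tau$, noting that $\lambda$ is a unit in $\mathcal R[1/\lambda]$ so the two notions of "stable submodule of $M[1/\lambda]$" with the respective Leibniz rules are interchanged by rescaling, and that the conditions $D[1/\lambda]\subset M[1/\lambda]$ and $D\subset M[1/\lambda]$ are symmetric in the two operators.
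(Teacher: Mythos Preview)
Your proposal is correct and arrives at the same conclusion as the paper: the paper's proof is simply the one-line citation ``$\cf$ \cite[Th\'eor\`eme 3.4.10]{Poy21}'', which is exactly what you anticipate in your final sentence. All the intermediate reduction you sketch (uniqueness via $\varphi$-stable lattices, existence via descent of a $\partial_\tau$-stable basis, compatibility with $\tau$) is the content of Poyeton's proof rather than something the present paper reproduces, so while your outline is reasonable as a summary of \emph{loc.\ cit.}, none of it is needed here---the statement as written is already Poyeton's theorem verbatim (the hypothesis is on $N_\nabla$ and the conclusion is $\partial_\tau$-stability, just as in \cite{Poy21}), so no translation between operators is required.
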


\begin{proof}
	$\cf$ \cite[Th\'eor\`eme 3.4.10]{Poy21}.
\end{proof}

\begin{corollary}\label{coro semistable}
	Let $V\in \Rep_{\QQ_p}(\mathscr{G}_{K})$ be a semi-stable representation and $(D^{\dagger}_{\rig}, D^{\dagger}_{\rig, \tau})\in \Mod_{\Rr, \Rr_{\tau}}(\varphi, \tau)$ the corresponding $(\varphi, \tau)$-module over $(\Rr, \Rr_{\tau})$. Then we have
	\[ \partial_{\tau}(D_{\rig}^{\dagger})\subset D_{\rig}^{\dagger}\subset D_{\rig}^{\dagger}[1/\lambda], \]
 hence there is a $(\varphi, \partial_{\tau})$-module structure on $D_{\rig}^{\dagger}$. 
\end{corollary}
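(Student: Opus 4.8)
The statement to prove is Corollary \ref{coro semistable}: for a semi-stable $V\in\Rep_{\QQ_p}(\mathscr{G}_K)$ with $(\varphi,\tau)$-module $(D_{\rig}^\dagger,D_{\rig,\tau}^\dagger)$ over $(\Rr,\Rr_\tau)$, we have $\partial_\tau(D_{\rig}^\dagger)\subset D_{\rig}^\dagger$, so that $D_{\rig}^\dagger$ acquires a $(\varphi,\partial_\tau)$-module structure. The plan is to deduce this from Theorem \ref{thm Poyeton thm 3.4.10 varphi N module} (Poyeton's \cite[Théorème 3.4.10]{Poy21}) applied to $M=D_{\rig}^\dagger$ itself, once we verify that the hypotheses of that theorem hold, namely that $D_{\rig}^\dagger$ is a $(\varphi,\tau)$-module over $\Rr$ whose $N_\nabla$-action (equivalently $\partial_\tau=-\frac1\lambda N_\nabla$) is \emph{locally trivial} in the sense of \cite[Définition 3.4.1]{Poy21}.

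First I would recall that semi-stability of $V$ is exactly the condition which forces the infinitesimal $\tau$-action on $D_{\rig}^\dagger$ to be controlled: by Poyeton's theory (and the earlier discussion around Proposition \ref{prop N Nabla connection R-tau} and the locally/pro-analytic vectors of Section on locally analytic vectors), $V$ is semi-stable if and only if the operator $N_\nabla$ acting on $\mathcal{D}^\dagger_{\rig}(V)_\tau$ is locally trivial, i.e. kills a sub-$\Rr$-module generating $D_{\rig,\tau}^\dagger$ after inverting $\lambda$, or more precisely the orbit maps are locally trivial in the precise sense of \cite[Définition 3.4.1]{Poy21}. With that input, $M:=D_{\rig}^\dagger$ satisfies the hypothesis of Theorem \ref{thm Poyeton thm 3.4.10 varphi N module}, which then produces a unique $(\varphi,\tau)$-submodule $D\subset M[1/\lambda]$ with $D[1/\lambda]=M[1/\lambda]$ and $\partial_\tau(D)\subset D$.

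The second step is to identify this $D$ with $D_{\rig}^\dagger$ itself. Here the point is uniqueness: both $D_{\rig}^\dagger$ and $D$ are $(\varphi,\tau)$-modules over $\Rr$ sitting inside $M[1/\lambda]=D_{\rig}^\dagger[1/\lambda]$ with the same $\lambda$-localization, so it suffices to check that $D_{\rig}^\dagger$ is already $\partial_\tau$-stable — but that is precisely what we want, so instead I would argue the other direction: the theorem's $D$ is the unique such object, and $D_{\rig}^\dagger$ is an étale $(\varphi,\tau)$-module (pure of slope $0$, by Kedlaya's slope theory as used in Theorem \ref{thm D dagger rig}) with $D_{\rig}^\dagger[1/\lambda]=M[1/\lambda]$; by the maximality/uniqueness clause in \cite[Théorème 3.4.10]{Poy21}, which is stated for étale objects, $D=D_{\rig}^\dagger$, whence $\partial_\tau(D_{\rig}^\dagger)\subset D_{\rig}^\dagger$. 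The relation $\partial_\tau\circ\varphi=p\varphi\partial_\tau$ follows from $\partial_\tau=-\frac1\lambda N_\nabla$, $N_\nabla\circ\varphi=c\varphi N_\nabla$, and $\lambda=\frac{E(u)}{E(0)}\varphi(\lambda)$, $c=p\frac{E(u)}{E(0)}$ (Remark \ref{rem robba algebraic def} neighbourhood), a short computation; and the Leibniz rule for $\partial_\tau$ over the Robba ring is inherited from that of $N_\nabla$ together with $\partial_\tau$ being an operator on $\Rr$ (Lemma \ref{lemm N preserves surconvergence}). Thus $D_{\rig}^\dagger\in\Mod_{\Rr}(\varphi,\partial_\tau)$.

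The main obstacle I anticipate is the first step: making precise, and citing correctly, that semi-stability of $V$ implies the $N_\nabla$-action on $D_{\rig}^\dagger$ is locally trivial in the exact technical sense required by \cite[Définition 3.4.1]{Poy21}, and that Poyeton's Theorem \ref{thm Poyeton thm 3.4.10 varphi N module} applies verbatim to $M=D_{\rig}^\dagger$ (rather than to some auxiliary module). One must check the local analyticity bookkeeping — that elements of $D_{\rig}^\dagger$, viewed in $\widetilde{D}^{I}_L(V)$, are locally analytic for $\Gal(L/K)$ (Lemma \ref{lemm Poyeton lanalytic lemma 1.3.4}), which is what lets $\partial_\tau=\frac1t\nabla_\tau$ and $N_\nabla=-\frac\lambda t\nabla_\tau$ even make sense on $D_{\rig,\tau}^\dagger$ — and that the étale hypothesis in Poyeton's uniqueness statement is met. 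Everything else is formal manipulation of the operators $\varphi$, $\lambda$, $t$, $\mathfrak{t}$ already set up in the excerpt.
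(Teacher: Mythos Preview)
Your approach is essentially the paper's: invoke Poyeton's results to get that semi-stability forces the $N_\nabla$-action on $D_{\rig}^\dagger$ to be locally trivial, then apply Theorem~\ref{thm Poyeton thm 3.4.10 varphi N module}. The paper's proof is a two-line version of your plan, citing \cite[Lemma 3.4.5, Th\'eor\`eme 3.4.12]{Poy21} for precisely the implication you flag as the main obstacle (semi-stable $\Rightarrow$ $N_\nabla$ locally trivial), and then concluding directly from Theorem~\ref{thm Poyeton thm 3.4.10 varphi N module} without spelling out the identification $D=D_{\rig}^\dagger$ or the verification of $\partial_\tau\circ\varphi=p\varphi\circ\partial_\tau$.

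One caution about your identification step: as written, your ``argue the other direction'' via uniqueness is still circular --- the uniqueness clause in Theorem~\ref{thm Poyeton thm 3.4.10 varphi N module} (as stated here) singles out the unique $(\varphi,\tau)$-module $D\subset M[1/\lambda]$ \emph{satisfying} $\partial_\tau(D)\subset D$, so you cannot use it to conclude $D=D_{\rig}^\dagger$ without already knowing $\partial_\tau(D_{\rig}^\dagger)\subset D_{\rig}^\dagger$. The paper does not address this either; the resolution lies in the finer content of Poyeton's cited results (in particular \cite[Th\'eor\`eme 3.4.12]{Poy21}), which for semi-stable $V$ give the stability of $D_{\rig}^\dagger$ under $\partial_\tau$ directly, not merely the existence of some modification. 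So your instinct that the crux is the precise citation to Poyeton is exactly right; the paper simply defers to those references rather than reproving the point.
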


\begin{proof}
	Notice that the $\varphi$-module $D^{\dagger}_{\rig}$ of a semi-stable representation has locally trivial $N_{\nabla}$-action by \cite[Lemma 3.4.5, Th\'eor\`em 3.4.12]{Poy21}, hence it has a connection $\partial_{\tau}$ by theorem \ref{thm Poyeton thm 3.4.10 varphi N module}.
\end{proof}

\begin{definition}\label{def complex N nabla}
	Let $D\in \Mod_{\Rr}(\varphi, \partial_{\tau})$. We define a complex $\Cc_{\varphi, \partial_{\tau}}(D)$\index{$\Cc_{\varphi, \partial_{\tau}}(D)$} as follows:
	\[\xymatrix{	0\ar[rr]&& D \ar[r] & D\oplus D  \ar[r]  & D  \ar[r] &0   \\
		&& x \ar@{|->}[r]  &((\varphi-1)(x), \partial_{\tau}(x))  & &\\
		&&&   (y,z)  \ar@{|->}[r]  &  \partial_{\tau}(y)-(p\varphi-1)(z).&
	}  \]
	If $V\in \Rep_{\QQ_p}(\mathscr{G}_K),$  we have in particular the complex $\Cc_{\varphi, \partial_{\tau}}(\mathcal{D}^{\dagger}_{\rig}(V)),$ which will be simply denoted  $\Cc_{\varphi, \partial_{\tau}}(V).$\index{$\Cc_{\varphi, \partial_{\tau}}(V)$}
\end{definition}

\subsection{\texorpdfstring{$\H^0(\Cc_{\varphi, \partial_{\tau}}(V))$}{H\textzerosuperior(C\textpinferior\texthinferior\textiinferior, \unichar{"2202}\texttinferior\textainferior\textuinferior(V))} and \texorpdfstring{$\H^1(\Cc_{\varphi, \partial_{\tau}}(V))$}{H\textonesuperior(C\textpinferior\texthinferior\textiinferior, \unichar{"2202}\texttinferior\textainferior\textuinferior(V))}}

\begin{proposition}\textup{(\cf \cite[Proposition 2.2.3]{Poy21})}\label{prop injective on the extensino argument partial case}
	Let $V, V^{\prime}\in \Rep_{\QQ_p}(\mathscr{G}_K)$ be crystalline representations, then $\mathcal{D}^{\dagger}_{\rig}(V)$ and $\mathcal{D}^{\dagger}_{\rig}(V^{\prime})$ define the same $(\varphi, \partial_{\tau})$-module in $\Mod_{\Rr}(\varphi, \partial_{\tau})$ if and only if there exist some $n\geq 0$ such that $V$ and $V^{\prime}$ are isomorphic in $\Rep_{\QQ_p}(\mathscr{G}_{K_n})$.
\end{proposition}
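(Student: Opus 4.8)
The plan is to reduce the statement to the category equivalence $\Mod_{\Rr,\Rr_{\tau}}(\varphi,\tau^{p^n})\simeq\Rep_{\QQ_p}(\mathscr{G}_{K_n})$ (the $(\varphi,\tau^{p^n})$-over-$(\Rr,\Rr_{\tau})$ analogue of Theorem~\ref{thm D dagger rig}, \cf the proof of Lemma~\ref{lemm complex C R r computes H0 H1}), exactly in the spirit of Lemma~\ref{lemm Ext 1 nabla bij limit Ext 1}. The crucial structural observation is that the connection $\partial_{\tau}=\tfrac{1}{t}\nabla_{\tau}=-\tfrac1\lambda N_{\nabla}$ is \emph{intrinsic}: on pro-analytic vectors one has $\nabla_{\tau}=\tfrac1{p^m}\log(\tau^{p^m})$ for $m\gg0$ independently of $m$, so the $(\varphi,\partial_{\tau})$-module underlying $\mathcal{D}^{\dagger}_{\rig}(V)$ depends only on $V$ and not on which level $K_n$ one works at. Note also that crystalline (hence semi-stable) is what guarantees, via Corollary~\ref{coro semistable} (applied through Theorem~\ref{thm Poyeton thm 3.4.10 varphi N module}), that $\mathcal{D}^{\dagger}_{\rig}(V)$ really carries a $(\varphi,\partial_{\tau})$-module structure, so that the statement makes sense.

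For the ``if'' direction, suppose $V$ and $V'$ are isomorphic in $\Rep_{\QQ_p}(\mathscr{G}_{K_n})$. Transporting through the equivalence, the $(\varphi,\tau)$-modules $\mathcal{D}^{\dagger}_{\rig}(V)$ and $\mathcal{D}^{\dagger}_{\rig}(V')$ over $(\Rr,\Rr_{\tau})$ become isomorphic as $(\varphi,\tau^{p^n})$-modules; call the isomorphism $f$, which after extension of scalars to $\Rr_{\tau}$ is $\Rr_{\tau}$-linear and commutes with $\tau_D^{p^n}$. Since the relevant modules consist of pro-analytic vectors (Lemma~\ref{lemm Poyeton lanalytic lemma 1.3.4}), $f$ then commutes with $\log(\tau_D^{p^n})$, hence with $\nabla_{\tau}$, hence with $\partial_{\tau}=\tfrac1t\nabla_{\tau}$ (using $t\in\Rr_{\tau}$); as $\mathcal{D}^{\dagger}_{\rig}(V)$ and $\mathcal{D}^{\dagger}_{\rig}(V')$ are $\partial_{\tau}$-stable and $f$ identifies them, it is a morphism in $\Mod_{\Rr}(\varphi,\partial_{\tau})$, and an isomorphism.

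For the ``only if'' direction, let $f\colon\mathcal{D}^{\dagger}_{\rig}(V)\to\mathcal{D}^{\dagger}_{\rig}(V')$ be an isomorphism in $\Mod_{\Rr}(\varphi,\partial_{\tau})$. By Theorem~\ref{thm surconvergence phi-tau version} both sides already underlie $(\varphi,\tau)$-modules over $(\Rr,\Rr_{\tau})$ (for the full group $\Gal(L/K)$), hence $(\varphi,\tau^{p^m})$-modules for every $m\in\NN$. Extend $f$ to an $\Rr_{\tau}$-linear map $1\otimes f$ on the $\Rr_{\tau}$-scalar extensions; it commutes with $\partial_{\tau}$, hence with $\nabla_{\tau}=t\,\partial_{\tau}$, hence with $\tau_D^{p^m}=\exp(p^m\nabla_{\tau})$ for $m\gg0$ on pro-analytic vectors, which contain $\mathcal{D}^{\dagger}_{\rig}(V)$ and $\mathcal{D}^{\dagger}_{\rig}(V')$ by Lemma~\ref{lemm Poyeton lanalytic lemma 1.3.4}. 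Thus $f$ is an isomorphism of $(\varphi,\tau^{p^m})$-modules over $(\Rr,\Rr_{\tau})$, and applying the equivalence $\Mod_{\Rr,\Rr_{\tau}}(\varphi,\tau^{p^m})\simeq\Rep_{\QQ_p}(\mathscr{G}_{K_m})$ yields $V\simeq V'$ in $\Rep_{\QQ_p}(\mathscr{G}_{K_m})$.

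The main obstacle is the analytic bookkeeping that I have glossed over in both directions: one must verify that $1\otimes f$ preserves pro-analytic vectors, that the series $\exp(p^m\nabla_{\tau})$ genuinely converges to $\tau_D^{p^m}$ on the modules at hand once $m$ is large, and that the integer $m$ (resp. $n$) can be chosen simultaneously for $V$, $V'$ and $f$. All of this is precisely the content of Poyeton's framework (\cf \cite[\S 2.2, \S 3.4]{Poy21}, and in particular \cite[Proposition 2.2.3]{Poy21}); granting it, the argument is the one sketched above.
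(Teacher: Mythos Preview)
Your proposal is correct and follows essentially the same approach as the paper: both directions hinge on the identity $\tau_D^{p^n}=\exp(p^n t\,\partial_{\tau})$ on locally/pro-analytic vectors together with the equivalence $\Mod_{\Rr,\Rr_{\tau}}(\varphi,\tau^{p^n})\simeq\Rep_{\QQ_p}(\mathscr{G}_{K_n})$. The paper merely makes your ``analytic bookkeeping'' explicit by fixing a radius $r$, a compact interval $I=[r,s]$, and a basis $(e_1,\ldots,e_d)\subset\mathcal{D}^{\dagger,r}(V)\cap\mathcal{D}^{\dagger,r}(V')$, then invoking Lemma~\ref{lemm Poyeton lanalytic lemma 1.3.4} to guarantee convergence of $\exp(p^n t\,\partial_{\tau})(e_i)$ in $(\widetilde{D}^I_L(V))^{\mathsf{la}}$ and $(\widetilde{D}^I_L(V'))^{\mathsf{la}}$ for $n\gg0$, and using the injection $\widetilde{D}^{\dagger,r}_L(V)\hookrightarrow\widetilde{D}^I_L(V)$ to transport the conclusion back.
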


\begin{proof}
	The proof is almost the same as that of \cite[Proposition 2.2.3]{Poy21}. If there exists $n\geq 0$ such that $V\simeq V^{\prime}$ in $\Rep_{\QQ_p}(\mathscr{G}_{K_n}),$ then in particular $V\simeq V^{\prime}$ in $\Rep_{\QQ_p}(\mathscr{G}_{K_{\pi}}),$ and we have $\mathcal{D}^{\dagger}_{\rig}(V)=\mathcal{D}^{\dagger}_{\rig}(V^{\prime})$ as $\varphi$-modules. Moreover, the action $\tau^{p^n}$ is the same over $(\widetilde{\BB}^{\dagger}\otimes_{\QQ_p}V)^{\mathscr{G}_L}$ and $(\widetilde{\BB}^{\dagger}\otimes_{\QQ_p}V^{\prime})^{\mathscr{G}_L}$, hence $(\mathcal{D}^{\dagger}(V), \mathcal{D}^{\dagger}(V)_{\tau})$ and $(\mathcal{D}^{\dagger}(V^{\prime}), \mathcal{D}^{\dagger}(V^{\prime})_{\tau})$ are isomorphic in $\Mod_{{\BB_{K_{\pi}}^\dagger,\widetilde{\BB}_{L}^\dagger}}(\varphi, \tau^{p^n}).$ By the definition of the operator $\partial_{\tau},$ we have $\mathcal{D}^{\dagger}_{\rig}(V)=\mathcal{D}^{\dagger}_{\rig}(V^{\prime})$ in $\Mod_{\Rr}(\varphi, \partial_{\tau}).$
	
	\medskip
	
	Conversely, if $V$ and $V^{\prime}$ are two representations such that $\mathcal{D}^{\dagger}_{\rig}(V)=\mathcal{D}^{\dagger}_{\rig}(V^{\prime})$ in $\Mod_{\Rr}(\varphi, \partial_{\tau}),$ we prove that there exists $n\geq 0$ such that $(\mathcal{D}^{\dagger}(V), \mathcal{D}^{\dagger}(V)_{\tau})=(\mathcal{D}^{\dagger}(V^{\prime}), \mathcal{D}^{\dagger}(V^{\prime})_{\tau})$ in $\Mod_{{\BB_{K_{\pi}}^\dagger,\widetilde{\BB}_{L}^\dagger}}(\varphi, \tau^{p^n}).$ Let $r\geq 0$ and $(e_1, \ldots, e_d)\subset\mathcal{D}^{\dagger, r}(V)\cap \mathcal{D}^{\dagger, r}(V^{\prime})$ be such that $(e_1, \ldots, e_d)$ is a basis of the $\varphi$-module $\mathcal{D}^{\dagger}(V)= \mathcal{D}^{\dagger}(V^{\prime}).$ Let $s\geq r$ and let $I=[r, s]$. By lemma \ref{lemm Poyeton lanalytic lemma 1.3.4}, the $e_i$ are locally analytic vectors in $\widetilde{D}^{I}_L(V)=\widetilde{\BB}^{I}_L\otimes_{\BB^{\dagger, r}}\mathcal{D}^{\dagger, r}(V)$ and $\widetilde{D}^{I}_L(V^{\prime})=\widetilde{\BB}^{I}_L\otimes_{\BB^{\dagger, r}}\mathcal{D}^{\dagger, r}(V^{\prime})$ (remark that $\widetilde{D}^{I}_L(V)=\widetilde{D}^{I}_L(V^{\prime})$ as $\varphi$-modules by hypothesis). In particular, there exists $n\geq 0$ such that for all $i,$ $\exp(p^nt\partial_{\tau})(e_i)$ converges in $(\widetilde{D}^{I}_L(V))^{\mathsf{la}}$ and $(\widetilde{D}^{I}_L(V^{\prime}))^{\mathsf{la}}.$ Hence for all $i,$ the action of $\tau^{p^n}$ on $e_i$ inside $\widetilde{D}^{I}_L(V)$ and $\widetilde{D}^{I}_L(V^{\prime})$ are the same. As we have an injection $\widetilde{D}^{\dagger, r}_L(V)\inj \widetilde{D}^{I}_L(V)$ ($\cf$ \cite[Lemma 2.7]{Ber02}), we conclude that the $\tau^{p^n}$-actions over $\widetilde{D}^{\dagger}_{\rig, L}(V)$ and $\widetilde{D}^{\dagger}_{\rig, L}(V^{\prime})$ coincide (hence the $\tau_D^{p^n}$ endomorphisms over $\mathcal{D}^{\dagger}_{\rig}(V)_{\tau}$ and $\mathcal{D}^{\dagger}_{\rig}(V^{\prime})_{\tau}$ coincide by the proof of \ref{lemm Poyeton lanalytic lemma 1.3.4}), which finishes the proof.
\end{proof}

We now show how to construct a $(\varphi, \tau^{p^n})$-module (with $n\gg 0$) from a given $(\varphi, \partial_{\tau})$-module.

\begin{proposition}\label{prop N Nabla connection R-tau partial case} Let $D\in \Mod_{\Rr}(\varphi, \partial_{\tau}).$ Then there exist $n\geq 0$ and $D^{\prime}\in \Mod_{\Rr, \Rr_{\tau}}(\varphi, \tau^{p^n})$ such that $D\simeq D^{\prime}$ in $\Mod_{\Rr}(\varphi, \partial_{\tau}).$
\end{proposition}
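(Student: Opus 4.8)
The plan is to reverse the construction in Corollary \ref{coro semistable} and Theorem \ref{thm Poyeton thm 3.4.10 varphi N module}: given a $(\varphi,\partial_\tau)$-module $D$ over $\Rr$, we want to recover a $\tau^{p^n}$-semi-linear action for some $n\gg0$. The key observation is that $\partial_\tau=\frac1t\nabla_\tau$, so $t\partial_\tau=\nabla_\tau$, and formally $\tau^{p^n}=\exp(p^n\nabla_\tau)=\exp(p^n t\partial_\tau)$. So the first step is to make sense of $\exp(p^n t\partial_\tau)$ as an operator on $D$ (or rather on the extension of scalars $D_\tau^\dagger:=\Rr_\tau\otimes_\Rr D$, after suitable completion to an interval version $\widetilde D_L^I$). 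Since $\partial_\tau$ is a connection over $\partial_\tau$ on $\Rr$, the operator $p^n t\partial_\tau$ is topologically nilpotent on the relevant Fréchet/Banach pieces once $n$ is large (this is exactly the mechanism in \cite[\S3.4]{Poy21} invoked in Theorem \ref{thm Poyeton thm 3.4.10 varphi N module}, where $D\subset M[1/\lambda]$ and $\partial_\tau(D)\subset D$): one must check that for a basis $(e_1,\dots,e_d)$ of $D$ over $\Rr$, the series $\exp(p^n t\partial_\tau)(e_i)=\sum_{k\ge0}\frac{(p^nt\partial_\tau)^k}{k!}(e_i)$ converges in $\widetilde D_L^I(V)$ for some compact interval $I$ and $n\gg0$, using the explicit growth estimates on Gauss norms (as in Lemma \ref{lemm topology in O} and the estimates of \cite[\S1.3, \S2.2]{Poy21}).

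Once convergence is established, I would \emph{define} $\tau_D^{p^n}:=\exp(p^n t\partial_\tau)$ on $D_\tau^\dagger$. The next step is to verify the three axioms of Definition \ref{def (phi, tau_p^r)-modules} (over $(\Rr,\Rr_\tau)$): first, $\tau_D^{p^n}$ is $\tau^{p^n}$-semi-linear — this follows from the Leibniz rule for $\partial_\tau$ over $\partial_\tau$ on $\Rr$, since $\exp$ of a derivation is a ring homomorphism, and $\exp(p^n t\partial_\tau)$ restricted to $\Rr$ must be shown to coincide with $\tau^{p^n}$ (here one uses that $\partial_\tau=u\frac{d}{du}$ by Remark \ref{rmq N nabla } and that $\tau^{p^n}(u^m)=[\varepsilon]^{mp^n}u^m$, matching the computation $\nabla_\tau(u^m)=mtu^m$ already carried out in Remark \ref{rmq N nabla }). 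Second, commutation with $\varphi$: from $\partial_\tau\circ\varphi=p\varphi\partial_\tau$ and $\varphi(t)=pt$ (wait — more precisely $\varphi(t)=pt$ so $\varphi(t\partial_\tau)=t\partial_\tau\circ(\cdot)$ on the nose after conjugation), one gets $\varphi\circ\exp(p^n t\partial_\tau)=\exp(p^n t\partial_\tau)\circ\varphi$ by a direct power-series manipulation. Third, the cocycle-type relation $(g\otimes1)\circ\tau_D^{p^n}=(\tau_D^{p^n})^{\chi(g)}$ for $g\in\mathscr{G}_{K_\pi}/\mathscr{G}_L$ with $\chi(g)\in\NN$: this uses $g(t)=\chi(g)t$ for $g\in\mathscr{G}_{K_\pi}$ (Remark \ref{rem galois over t}) and the fact that $g$ commutes with $\partial_\tau$ up to the scalar $\chi(g)$ (the analogue of Lemma \ref{lemm gamma commutes N nabla}), so that $g\circ\exp(p^n t\partial_\tau)\circ g^{-1}=\exp(\chi(g)p^n t\partial_\tau)=(\exp(p^n t\partial_\tau))^{\chi(g)}$.

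With these verified, $(D,D_\tau^\dagger)$ with the endomorphism $\tau_D^{p^n}$ is an object $D'$ of $\Mod_{\Rr,\Rr_\tau}(\varphi,\tau^{p^n})$, and by construction $\partial_\tau$ recovered from $\tau_D^{p^n}$ via $\partial_\tau=\frac1{p^nt}\log(\tau_D^{p^n})$ agrees with the original $\partial_\tau$ on $D$, so $D\simeq D'$ in $\Mod_\Rr(\varphi,\partial_\tau)$. The main obstacle is the convergence/estimate step: one must show that for $n$ large enough the exponential series genuinely converges in an interval-version Banach module $\widetilde D_L^I$ and that the resulting operator preserves the lattice $D_\tau^\dagger=\Rr_\tau\otimes_\Rr D$ (not just $D_\tau^\dagger[1/\lambda]$); this is the technical heart and mirrors the proof of \cite[Th\'eor\`eme 3.4.10]{Poy21} and the discussion in the proof of Proposition \ref{prop injective on the extensino argument partial case}, where exactly the convergence of $\exp(p^nt\partial_\tau)(e_i)$ in $(\widetilde D_L^I)^{\mathsf{la}}$ is used. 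I would structure the write-up by first reducing to the case where $D$ arises as $\mathcal{D}_{\rig}^\dagger(V)$ for a crystalline (or semi-stable) $V$ via \cite[Th\'eor\`em 3.4.12]{Poy21} is \emph{not} available for a general $(\varphi,\partial_\tau)$-module, so instead I would argue directly with the locally-trivial $N_\nabla$-action criterion: any $(\varphi,\partial_\tau)$-module over $\Rr$ has locally trivial $\nabla_\tau$-action (the connection being $\Rr$-linearly built from $\partial_\tau$), and then Theorem \ref{thm Poyeton thm 3.4.10 varphi N module} together with the convergence estimate produces the $\tau^{p^n}$-action, completing the proof.
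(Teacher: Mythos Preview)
Your direct approach---defining $\tau_D^{p^n}:=\exp(p^n t\,\partial_\tau)$ on a suitable Banach completion and then verifying semi-linearity, $\varphi$-commutation, and the cocycle relation---is in principle sound, and is essentially what underlies the proof of \cite[Proposition~2.2.5]{Poy21} (the $N_\nabla$-version, Proposition~\ref{prop N Nabla connection R-tau}). But the paper takes a much shorter route: since $N_\nabla=-\lambda\,\partial_\tau$, any $(\varphi,\partial_\tau)$-module $D$ is automatically a $(\varphi,N_\nabla)$-module; one then invokes Proposition~\ref{prop N Nabla connection R-tau} directly to produce $D'\in\Mod_{\Rr,\Rr_\tau}(\varphi,\tau^{p^n})$ with $D\simeq D'$ in $\Mod_\Rr(\varphi,N_\nabla)$, and finally observes that because $N_\nabla$ on $D$ (hence on $D'$) is divisible by $\lambda$, the module $D'$ also carries a $\partial_\tau$-structure and the isomorphism upgrades to $\Mod_\Rr(\varphi,\partial_\tau)$. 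This reduces the whole proposition to three lines.

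What your approach buys is self-containedness: you do not appeal to the $N_\nabla$ result as a black box. What it costs is the entire convergence analysis you flag as ``the main obstacle'', which the paper sidesteps completely. Two minor points: your final paragraph cites Theorem~\ref{thm Poyeton thm 3.4.10 varphi N module}, but that result goes the other direction (from a $(\varphi,\tau)$-module with locally trivial $N_\nabla$ to a $\partial_\tau$-stable lattice), so it is not the tool you want here; and you would still need to check that $\exp(p^n t\,\partial_\tau)$ preserves $\Rr_\tau\otimes_\Rr D$ itself rather than only a localization, which is precisely the kind of delicate step the paper's reduction avoids.
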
 

\begin{proof}
	For any $D\in \Mod_{\Rr}(\varphi, \partial_{\tau}),$ it has a natural $(\varphi, N_{\nabla})$-module structure (as $N_{\nabla}=-\lambda\partial_{\tau}).$ Then there exist $n\gg0$ and $D^{'}\in \Mod_{\Rr,\Rr_{\tau}}(\varphi, \tau^{p^n})$ such that $D\simeq D^{'}$ in $\Mod_{\Rr}(\varphi, N_{\nabla})$ ($\cf$ \cite[Proposition 2.2.5]{Poy21}). Hence the operator $N_{\nabla}$  over $D^{'}$ is also divisible by $\lambda$ (since it is for $D$), so that $D^{'}$ has a $(\varphi, \partial_{\tau})$-module structure and $D\simeq D^{'}$ in $\Mod_{\Rr}(\varphi, \partial_{\tau}).$
\end{proof}

\begin{lemma}\label{lemm Ext 1 nabla bij limit Ext 1 partial tau}
		If $V\in \Rep_{\QQ_p}(\mathscr{G}_K)$ is crystalline with Hodge-Tate weights $\HT(V)\subset\{2,\ldots,h\}$ for some $h\in\NN_{\geq2}$, then we have an isomorphism of groups 
			\[\iLim{n} \Ext^1_{\Mod_{\mathcal{R},\mathcal{R}_\tau}(\varphi, \tau^{p^n})}(\mathcal{D}^{\dagger}_{\rig}(V), \mathcal{R}) \to \Ext^1_{\Mod_{\Rr}(\varphi, \partial_{\tau})}(\mathcal{D}_{\rig}^{\dagger}(V), \mathcal{R}).\]
\end{lemma}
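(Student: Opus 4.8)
The statement is the exact analogue of Lemma \ref{lemm Ext 1 nabla bij limit Ext 1} (which was proved for $N_\nabla$) with $N_\nabla$ replaced by $\partial_\tau$, so the plan is to transcribe that proof, using the $\partial_\tau$-versions of the auxiliary results that are already in hand: Proposition \ref{prop N Nabla connection R-tau partial case} in place of Proposition \ref{prop N Nabla connection R-tau}, and Proposition \ref{prop injective on the extensino argument partial case} in place of \cite[Remark 2.2.4]{Poy21}. Throughout one uses the equivalence of Theorem \ref{thm D dagger rig} together with the fact (stated after Proposition \ref{prop tau-puissance complex normal version}, $\cf$ also the analogous equivalence $\Mod_{\Rr,\Rr_\tau}(\varphi,\tau^{p^n})\simeq \Rep_{\QQ_p}(\mathscr{G}_{K_n})$ used in Lemma \ref{lemm complex C R r computes H0 H1 }) that $\Ext^1_{\Mod_{\mathcal{R},\mathcal{R}_\tau}(\varphi,\tau^{p^n})}(\mathcal{D}^{\dagger}_{\rig}(V),\mathcal{R})\simeq \H^1(\mathscr{G}_{K_n},V)$.

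\textbf{Construction of the map.} First I would note that if $n\in\NN$, then any extension
\[0\to\mathcal{R}\to E\to\mathcal{D}^{\dagger}_{\rig}(V)\to 0\]
in the category $\Mod_{\mathcal{R},\mathcal{R}_\tau}(\varphi,\tau^{p^n})$ carries a canonical $(\varphi,\partial_\tau)$-module structure: the $\tau^{p^n}$-action is pro-analytic (by Lemma \ref{lemm Poyeton lanalytic lemma 1.3.4}, since $E=\mathcal{D}^\dagger_{\rig}(W)$ for the representation $W$ corresponding to $E$ under the equivalence over $\mathscr{G}_{K_n}$), so one may form $\partial_\tau=\frac1t\nabla_\tau$ via the logarithm of $\tau^{p^n}$; one must check that $\partial_\tau$ lands in $E$ and not merely in $E[1/\lambda]$, which for an extension of $\mathcal{R}$ by $\mathcal{D}^\dagger_{\rig}(V)$ with $V$ crystalline of Hodge--Tate weights in $\{2,\dots,h\}$ follows from Corollary \ref{coro semistable} applied to $W$ (one first checks $W$ is again crystalline, being an extension of a crystalline representation by the trivial one). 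This yields a homomorphism $\Ext^1_{\Mod_{\mathcal{R},\mathcal{R}_\tau}(\varphi,\tau^{p^n})}(\mathcal{D}^{\dagger}_{\rig}(V),\mathcal{R})\to \Ext^1_{\Mod_{\mathcal{R}}(\varphi,\partial_\tau)}(\mathcal{D}^\dagger_{\rig}(V),\mathcal{R})$, and these maps are compatible with the transition maps $\tau^{p^n}\mapsto\tau^{p^{n+1}}$ of the inductive system, hence induce the asserted map on the colimit.

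\textbf{Surjectivity and injectivity.} For surjectivity, given an extension $E$ of $\mathcal{D}^\dagger_{\rig}(V)$ by $\mathcal{R}$ in $\Mod_{\mathcal{R}}(\varphi,\partial_\tau)$, Proposition \ref{prop N Nabla connection R-tau partial case} produces $n\gg0$ and a $(\varphi,\tau^{p^n})$-module $D'$ over $(\mathcal{R},\mathcal{R}_\tau)$ with $E\simeq D'$ in $\Mod_{\mathcal{R}}(\varphi,\partial_\tau)$; one checks the sub-$(\varphi,\tau^{p^n})$-module $\mathcal{R}\subset E$ and the quotient $\mathcal{D}^\dagger_{\rig}(V)$ are respected, so $E$ comes from an extension in $\Mod_{\mathcal{R},\mathcal{R}_\tau}(\varphi,\tau^{p^n})$, which gives a preimage in the colimit. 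For injectivity, suppose an extension $E$ in $\Mod_{\mathcal{R},\mathcal{R}_\tau}(\varphi,\tau^{p^n})$ becomes split in $\Mod_{\mathcal{R}}(\varphi,\partial_\tau)$; writing $E=\mathcal{D}^\dagger_{\rig}(W)$ and $\mathcal{D}^\dagger_{\rig}(V)\oplus\mathcal{R}=\mathcal{D}^\dagger_{\rig}(V\oplus\QQ_p)$, Proposition \ref{prop injective on the extensino argument partial case} (whose hypotheses hold because $W$, $V\oplus\QQ_p$ are crystalline) forces $W\simeq V\oplus\QQ_p$ in $\Rep_{\QQ_p}(\mathscr{G}_{K_m})$ for some $m\geq n$, i.e. the image of $E$ in $\Ext^1_{\Mod_{\mathcal{R},\mathcal{R}_\tau}(\varphi,\tau^{p^m})}$ is split; hence $E=0$ in the colimit.

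\textbf{Main obstacle.} The routine parts are the construction of the map and its compatibility with transition maps. The delicate point is the surjectivity step, specifically the verification that the $(\varphi,\tau^{p^n})$-module $D'$ produced by Proposition \ref{prop N Nabla connection R-tau partial case} can be chosen so that the given short exact sequence of $(\varphi,\partial_\tau)$-modules upgrades to a short exact sequence of $(\varphi,\tau^{p^n})$-modules — that is, that both the sub $\mathcal{R}$ and the quotient $\mathcal{D}^\dagger_{\rig}(V)$ inherit compatible $\tau^{p^n}$-structures making the sequence exact in $\Mod_{\mathcal{R},\mathcal{R}_\tau}(\varphi,\tau^{p^n})$. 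One also needs that $V$ crystalline with $\HT(V)\subset\{2,\dots,h\}$ guarantees $\partial_\tau$ preserves $E$ (not just $E[1/\lambda]$), which is where the Hodge--Tate weight hypothesis is genuinely used (as it is in Corollary \ref{coro semistable} and Theorem \ref{thm Poyeton thm 3.4.10 varphi N module}); I would handle this by reducing to Corollary \ref{coro semistable}/Theorem \ref{thm Poyeton thm 3.4.10 varphi N module} for the crystalline representation $W$ sitting in the extension.
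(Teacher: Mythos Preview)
Your overall strategy matches the paper's: build the map by equipping each extension with a $\partial_\tau$-structure, then invoke Proposition~\ref{prop N Nabla connection R-tau partial case} for surjectivity and Proposition~\ref{prop injective on the extensino argument partial case} for injectivity. But there is a real gap in the construction step, and you have mislocated the role of the Hodge--Tate weight hypothesis.

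You assert that the representation $W$ corresponding to the extension $E$ is crystalline ``being an extension of a crystalline representation by the trivial one.'' This is false as a general principle: extensions of crystalline representations need not be crystalline, nor even de~Rham (the Tate curve gives a semi-stable non-crystalline extension of $\QQ_p$ by $\QQ_p(1)$). The paper does \emph{not} argue this way. Instead it invokes \cite[Lemmas~6.5 and~6.6]{Ber02}, which show that under the hypothesis $\HT(V)\subset\{2,\ldots,h\}$ every extension of $\QQ_p$ by $V$ is \emph{semi-stable}. This is exactly where the weight hypothesis enters --- not in Corollary~\ref{coro semistable} or Theorem~\ref{thm Poyeton thm 3.4.10 varphi N module} as you suggest (neither mentions Hodge--Tate weights). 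Once $W$ is known to be semi-stable, Corollary~\ref{coro semistable} (which requires only semi-stability) yields $\partial_\tau(E)\subset E$.

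By contrast, your designated ``main obstacle'' --- upgrading the short exact sequence of $(\varphi,\partial_\tau)$-modules to one of $(\varphi,\tau^{p^n})$-modules in the surjectivity step --- is not where the difficulty lies. The paper passes over this in one line: since $\tau^{p^n}=\exp(p^n t\,\partial_\tau)$ for $n\gg0$ and the inclusion and projection are already $\partial_\tau$-equivariant, compatibility with the $\tau^{p^n}$-action on the sub and quotient is automatic.
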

\begin{proof}
	We associate $V$ with its $(\varphi, \partial_{\tau})$-module $\mathcal{D}_{\rig}^{\dagger}(V)$ and $(\varphi, \tau^{p^n})$-module $\mathcal{D}^{\dagger}_{\rig}(V)_n$ (notice that $\mathcal{D}^{\dagger}_{\rig}(V)_n=\mathcal{D}^{\dagger}_{\rig}(V)$ as $\varphi$-modules, here we just use the subscript $n$ to indicate it is an object of $\Mod_{\Rr, \Rr_{\tau}}(\varphi, \tau^{p^n})$). If $n\in\NN,$ an extension of $\Rr$ by $\mathcal{D}^{\dagger}_{\rig}(V)_n$ is associated to an extension of $\QQ_p$ by $V$ (as representations of $\mathscr{G}_{K_{n}}$), which is semi-stable by \cite[Lemmas 6.5 and 6.6]{Ber02}, whence is equipped with an operator $\partial_{\tau}$ by corollary \ref{coro semistable}. This provides a map from $\Ext^1_{\Mod_{\mathcal{R},\mathcal{R}_\tau}(\varphi,\tau^{p^n})}(\mathcal{D}_{\rig}^{\dagger}(V)_n,\mathcal{R})$ to $\Ext^1_{\Mod_{\mathcal{R}}(\varphi,\partial_{\tau})}(\mathcal{D}^{\dagger}_{\rig}(V),\mathcal{R}).$ Those maps are compatible as $n$ grows: this provides the map \[\iLim{n}\Ext^1_{\Mod_{\mathcal{R},\mathcal{R}_\tau}(\varphi,\tau^{p^n})}(\mathcal{D}^{\dagger}_{\rig}(V)_n,\mathcal{R})\to \Ext^1_{\Mod_{\mathcal{R}}(\varphi,\partial_{\tau})}(\mathcal{D}_{\rig}^{\dagger}(V),\mathcal{R}).\]
	By proposition \ref{prop N Nabla connection R-tau partial case}, an extension $E$ of $\Rr$ by $\mathcal{D}_{\rig}^{\dagger}(V)$ in $\Mod_{\mathcal{R}}(\varphi, \partial_{\tau})$ has a $(\varphi, \tau^{p^n})$-module structure for some $n\gg 0.$ This shows that the extension $E$ comes from an extension of $\Rr$ by $\mathcal{D}^{\dagger}_{\rig}(V)$ in the category $\Mod_{\mathcal{R}, \Rr_{\tau}}(\varphi, \tau^{p^n})$ for some $n\gg 0.$ This implies the map is surjective. Moreover, it is also injective by proposition \ref{prop injective on the extensino argument partial case}. 
\end{proof}

\begin{proposition}\label{prop H0 H1 of complex C_R(V) partial tau} If $V\in \Rep_{\QQ_p}(\mathscr{G}_K)$ is crystalline with Hodge-Tate weights $\HT(V)\subset\{2,\ldots,h\}$ for some $h\in\NN_{\geq2}$, we have a group isomorphism 
	\[\H^i(\Cc_{\varphi, \partial_{\tau}}(V))\simeq \iLim{n}\H^i(\mathscr{G}_{K_n}, V) \text{ for } i\in \{0, 1 \}.\]
\end{proposition}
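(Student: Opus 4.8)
The plan is to mimic the proof of Proposition \ref{prop H0 H1 of complex C_R(V)}, replacing $N_\nabla$ by $\partial_\tau$ and $c\varphi$ by $p\varphi$ throughout, and using the crystalline hypothesis to guarantee the existence of the operator $\partial_\tau$ on the relevant $(\varphi,\tau)$-modules (via Corollary \ref{coro semistable}). For $i=0$ the computation is direct: an element $x\in\mathcal{D}^\dagger_{\rig}(V)$ satisfies $\partial_\tau(x)=0$ if and only if $\tau^{p^n}(x)=x$ for $n\gg0$, because $\exp(p^n t\partial_\tau)=\exp(p^n\nabla_\tau)=\tau^{p^n}$ for $n\gg0$ (the same identity already used for $N_\nabla$ in Proposition \ref{prop H0 H1 of complex C_R(V)}, since $\partial_\tau=\tfrac1t\nabla_\tau$ differs from $N_\nabla=-\tfrac\lambda t\nabla_\tau$ only by the invertible-on-pro-analytic-vectors factor, but more simply $t\partial_\tau=\nabla_\tau$). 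Hence $\H^0(\Cc_{\varphi,\partial_\tau}(V))=\mathcal{D}^\dagger_{\rig}(V)^{\varphi=1,\partial_\tau=0}=\varinjlim_n \mathcal{D}^\dagger_{\rig}(V)^{\varphi=1,\tau^{p^n}=1}=\varinjlim_n\H^0(\mathscr{G}_{K_n},V)$, the last equality by Lemma \ref{lemm complex C R r computes H0 H1 } (the $i=0$ case, applied at each level $n$).

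For $i=1$, the key identification is $\H^1(\Cc_{\varphi,\partial_\tau}(V))\simeq \Ext^1_{\Mod_\Rr(\varphi,\partial_\tau)}(\mathcal{D}^\dagger_{\rig}(V),\Rr)$, proved exactly as Lemma \ref{lemm H1 Ext N nabla}: an extension $0\to\mathcal{D}^\dagger_{\rig}(V)\to E\to\Rr\to0$ in $\Mod_\Rr(\varphi,\partial_\tau)$ corresponds to a pair $(\lambda,\mu)\in\mathcal{D}^\dagger_{\rig}(V)^{\oplus2}$ with $\partial_\tau(\lambda)=(p\varphi-1)(\mu)$ (this is the compatibility $\partial_\tau\circ\varphi=p\varphi\circ\partial_\tau$ applied to a lift of $1$), modulo the coboundaries $\{((\varphi-1)d,\partial_\tau(d))\}$, which is precisely $\Ker\beta/\im\alpha=\H^1(\Cc_{\varphi,\partial_\tau}(V))$. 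Then I would invoke Lemma \ref{lemm Ext 1 nabla bij limit Ext 1 partial tau}, which gives $\varinjlim_n\Ext^1_{\Mod_{\Rr,\Rr_\tau}(\varphi,\tau^{p^n})}(\mathcal{D}^\dagger_{\rig}(V),\Rr)\xrightarrow{\sim}\Ext^1_{\Mod_\Rr(\varphi,\partial_\tau)}(\mathcal{D}^\dagger_{\rig}(V),\Rr)$ — this is where the crystalline hypothesis with $\HT(V)\subset\{2,\dots,h\}$ is genuinely used, since the proof of that lemma relies on extensions of $\Rr$ by $\mathcal{D}^\dagger_{\rig}(V)_n$ being semi-stable (by \cite[Lemmas 6.5, 6.6]{Ber02}, which need the Hodge–Tate weights to stay in the correct range) so that Corollary \ref{coro semistable} applies to equip them with $\partial_\tau$. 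Finally, $\Ext^1_{\Mod_{\Rr,\Rr_\tau}(\varphi,\tau^{p^n})}(\mathcal{D}^\dagger_{\rig}(V),\Rr)\simeq\H^1(\mathcal{C}^{\rig}_{\varphi,\tau^{p^n}}(V))\simeq\H^1(\mathscr{G}_{K_n},V)$ by (the argument behind) Lemma \ref{lemm complex C R r computes H0 H1 }, and these isomorphisms are compatible with the transition maps as $n$ grows, so passing to the colimit yields $\H^1(\Cc_{\varphi,\partial_\tau}(V))\simeq\varinjlim_n\H^1(\mathscr{G}_{K_n},V)$.

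I expect the main obstacle to be bookkeeping the crystalline/Hodge–Tate hypothesis correctly through the chain of reductions: one must check that each extension appearing in $\Ext^1_{\Mod_{\Rr,\Rr_\tau}(\varphi,\tau^{p^n})}(\mathcal{D}^\dagger_{\rig}(V),\Rr)$ does correspond to a crystalline (or at least semi-stable with the right weights) representation of $\mathscr{G}_{K_n}$, so that $\partial_\tau$ is available on it and Proposition \ref{prop injective on the extensino argument partial case} can be used for injectivity in Lemma \ref{lemm Ext 1 nabla bij limit Ext 1 partial tau}. Granting Lemmas \ref{lemm Ext 1 nabla bij limit Ext 1 partial tau}, \ref{lemm complex C R r computes H0 H1 } and Corollary \ref{coro semistable}, the remaining content is essentially formal: the $\Ext$-interpretation of $\H^1$ of the three-term complex, and the compatibility of the colimit maps, both of which are routine diagram-chases parallel to the $N_\nabla$ case. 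A minor point to verify is that the operator $\partial_\tau$ genuinely lands in $\mathcal{D}^\dagger_{\rig}(V)$ (not just $\mathcal{D}^\dagger_{\rig}(V)[1/\lambda]$), which is exactly the content of Corollary \ref{coro semistable} under the crystalline hypothesis, so that the complex $\Cc_{\varphi,\partial_\tau}(V)$ is well-defined to begin with.
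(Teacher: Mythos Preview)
Your proposal is correct and follows essentially the same approach as the paper: for $i=0$ you use $\exp(p^n t\partial_\tau)=\tau^{p^n}$ exactly as the paper does, and for $i=1$ you identify $\H^1$ of the complex with the $\Ext^1$ group in $\Mod_\Rr(\varphi,\partial_\tau)$ and then invoke Lemma~\ref{lemm Ext 1 nabla bij limit Ext 1 partial tau} together with Lemma~\ref{lemm complex C R r computes H0 H1 }, which is precisely the paper's (more tersely stated) argument. Your expanded discussion of where the crystalline hypothesis enters is accurate and simply makes explicit what the paper leaves implicit in its citation of Lemma~\ref{lemm Ext 1 nabla bij limit Ext 1 partial tau}.
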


\begin{proof}
	When $i=0,$ it suffices to show that $\partial_{\tau}(x)=0$ is equivalent to $\tau^{p^n}(x)=x$ for some $n\gg 0.$ This is indeed the case as  $\exp(p^nt\partial_{\tau})=\tau^{p^n}$ for $n\gg 0.$ When $i=1,$ the isomorphism of lemma \ref{lemm Ext 1 nabla bij limit Ext 1 partial tau}
	identifies with an isomorphism $\H^1(\mathcal{C}_{\varphi, \partial_{\tau}}(V))\simeq \iLim{n} \H^1(\mathscr{G}_{K_n},V).$  
\end{proof}

\subsection{\texorpdfstring{$\H^2(\Cc_{\varphi, \partial_{\tau}}(V))$}{H\texttwosuperior (C\textpinferior\texthinferior\textiinferior, \unichar{"2202}\texttinferior\textainferior\textuinferior(V))}}
We assume $k$ is finite.
\begin{proposition}\label{prop pairing partial}
	Let $V\in \Rep_{\QQ_p}(\mathscr{G}_K),$ then we have a pairing of groups
	\[\H^0(\Cc_{\varphi, \partial_{\tau}}(V^{\vee}(1))) \times \H^2(\Cc_{\varphi, \partial_{\tau}}(V)) \to \QQ_p.  \]
\end{proposition}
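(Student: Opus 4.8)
The idea is to mimic exactly the construction of the pairing $\Cc_{\varphi, N_{\nabla}}$ given in the proof of Proposition \ref{prop pairing}, replacing $N_{\nabla}$ with $\partial_{\tau}$, the element $c$ with $p$, and the twist $\mathfrak{t}^{-1}$ with the appropriate object. First I would observe, using Lemma \ref{lemm internal hom and tensor product} (applied to $\QQ_p$-representations, which works verbatim since $\mathcal{D}^{\dagger}$ is a tensor-compatible equivalence), that $\mathcal{D}^{\dagger}(V^{\vee}(1))\simeq \Hom_{\Ee^{\dagger}}(\mathcal{D}^{\dagger}(V), \mathcal{D}^{\dagger}(\QQ_p(1)))$. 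Extending scalars to $\Rr$ and taking $\varphi$-invariant, $\partial_{\tau}$-flat elements gives
\[
\H^0(\Cc_{\varphi, \partial_{\tau}}(V^{\vee}(1)))=\big(\Hom_{\Rr}(\mathcal{D}^{\dagger}_{\rig}(V), \Rr\mathfrak{t}^{-1})\big)^{\varphi=1,\ \partial_{\tau}=0},
\]
using the $\mathscr{G}_K$-equivariant isomorphism $\Rr\otimes_{\Ee^{\dagger}}\mathcal{D}^{\dagger}(\QQ_p(1))\simeq \Rr\mathfrak{t}^{-1}$ from Section \ref{section D of tate twist}. Then I would define the evaluation pairing $(f,x)\mapsto f(x)\in\Rr\mathfrak{t}^{-1}$, compose with the residue map $\res\colon \Rr\mathfrak{t}^{-1}\to \W(k)[1/p]$ (sending $\mathfrak{t}^{-1}\sum a_nu^n$ to $a_0$) and with the trace $\Tr=\Tr_{\W(k)[1/p]/\QQ_p}$.

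The one real point to check is that the map $(f,x)\mapsto \Tr(\res f(x))$ factors through $\H^2(\Cc_{\varphi, \partial_{\tau}}(V))\simeq (\Rr\otimes_{\Ee^{\dagger}}\mathcal{D}^{\dagger}(V))/\im\big((p\varphi-1)\oplus \partial_{\tau}\big)$, i.e.\ that $\Tr(\res f(x))=0$ whenever $x\in \im\big((p\varphi-1)\oplus\partial_{\tau}\big)$. For $x=\partial_{\tau}(y)$: since $\partial_{\tau}(f)=0$ means $\partial_{\tau}\circ f=f\circ\partial_{\tau}$, we get $f(\partial_{\tau}(y))=\partial_{\tau}(f(y))$; writing $f(y)=\mathfrak{t}^{-1}r$ with $r\in\Rr$ and using $\partial_{\tau}(\mathfrak{t})=-\frac{\mathfrak{t}}{\lambda}N_{\nabla}(\mathfrak{t})/(-\lambda)$— more directly, from $\partial_{\tau}=u\frac{d}{du}$ and $\partial_{\tau}(\mathfrak{t}^{-1})=\mathfrak{t}^{-1}\frac{N_{\nabla}(\lambda)}{\lambda}$ (combine $N_{\nabla}=-\lambda\partial_{\tau}$ with Lemma \ref{lemm N nabla t^-1})— one finds $\partial_{\tau}(\mathfrak{t}^{-1}r)=\mathfrak{t}^{-1}\big(\frac{N_{\nabla}(\lambda)}{\lambda}r+\partial_{\tau}(r)\big)$, and the residue of this is $u\frac{d}{du}$ of something divided through, which has zero constant term (a derivative $u\frac{d}{du}(\sum b_nu^n)=\sum nb_nu^n$ has vanishing $0$-coefficient). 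For $x=(p\varphi-1)(z)$: using $\varphi(f)=f$, i.e.\ $\varphi\circ f=f\circ\varphi$, and $\varphi(\mathfrak{t}^{-1})=p^{-1}\mathfrak{t}^{-1}$ (from $\varphi(\mathfrak{t})=c\mathfrak{t}$ and... rather, note that for the twist $\QQ_p(1)$ one has $\varphi$ acting on $\mathfrak{t}^{-1}$ by $c^{-1}$, and the factor $p$ in $p\varphi-1$ is chosen so that $p\varphi$ on $\Rr\mathfrak{t}^{-1}$ becomes $\varphi$ on the $\Rr$-coefficient via $pc^{-1}\ne$— one must double-check the normalization; in fact with $\partial_{\tau}\circ\varphi=p\varphi\partial_{\tau}$ the natural twist is by $\mathfrak{t}$ with $\varphi(\mathfrak{t})=p\mathfrak{t}$, but here $\varphi(\mathfrak{t})=c\mathfrak{t}$, so one should instead use a different element or accept $\HT$-shift conventions). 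Modulo pinning down this normalization, $(p\varphi-1)(\mathfrak{t}^{-1}\sum a_nu^n)=\mathfrak{t}^{-1}(\varphi-1)(\sum a_nu^n)$ up to the unit, and $\res\circ(\varphi-1)=(\varphi-1)\circ(\text{const.\ term})$, which is killed by $\Tr$ since $\Tr\circ(\varphi-1)=0$ on $\W(k)[1/p]$ (because $\varphi$ acts on $k$ and $\Tr$ is $\varphi$-invariant).

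Thus the plan is: (1) identify $\H^0(\Cc_{\varphi,\partial_{\tau}}(V^{\vee}(1)))$ with a $\Hom$-space via Lemma \ref{lemm internal hom and tensor product} and Section \ref{section D of tate twist}; (2) define $B(f,x)=\Tr(\res f(x))$; (3) verify factorization through $\H^2$ by the two computations above, using $\Tr\circ(\varphi-1)=0$ for the $\varphi$-part and the vanishing-constant-term property of $u\frac{d}{du}$ for the $\partial_{\tau}$-part. The main obstacle — and the thing I would be most careful about — is getting the normalization of the Frobenius eigenvalue on the relevant Tate-twist object correct: the relation $\partial_{\tau}\circ\varphi=p\varphi\partial_{\tau}$ forces the ``$p\varphi-1$'' in the complex, and one needs the dualizing object in $\Mod_{\Rr}(\varphi,\partial_{\tau})$ to have exactly the matching Frobenius slope so that $\res$ intertwines $p\varphi-1$ on $\Rr\mathfrak{t}^{-?}$ with $\varphi-1$ on $\W(k)[1/p]$; this is why the hypothesis restricting Hodge-Tate weights (hence controlling which twist of $V^{\vee}$ has an honest $(\varphi,\partial_{\tau})$-module) appears in the surrounding propositions, and the cleanest route is probably to reduce to that case or to work with $V^{\vee}(1)$ after an explicit Tate twist to land in $\HT\subset\{2,\ldots,h\}$. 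Everything else is a routine transcription of Proposition \ref{prop pairing}.
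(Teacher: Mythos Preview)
Your overall strategy is exactly the paper's: identify $\H^0$ with a Hom-space, pair by evaluation, compose with a residue and a trace, and check that the result kills $\im\big((p\varphi-1)\oplus\partial_\tau\big)$. The gap is precisely the point where you hesitate, namely the normalization of the residue map. You propose to reuse the residue from Proposition~\ref{prop pairing}, sending $\mathfrak{t}^{-1}\sum_n a_n u^n\mapsto a_0$. With this choice the Frobenius step does not go through: one has $(p\varphi-1)(\mathfrak{t}^{-1}r)=\mathfrak{t}^{-1}\big(pc^{-1}\varphi(r)-r\big)$, and in $\Rr$ the element $c^{-1}=E(0)/(pE(u))$ is a genuine Laurent series (expanded on an annulus near $|u|=1$), so the $u^0$-coefficient of $pc^{-1}\varphi(r)$ is not $\varphi(a_0)$ and there is no reason for the trace to kill it.

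The paper's fix is to change the residue map: write $\mathfrak{t}^{-1}r=\tfrac{p\lambda}{t}\,r=\tfrac{1}{t}\sum_n b_n u^n$ and set $\res(\mathfrak{t}^{-1}r)=b_0$, i.e.\ take the constant term of $p\lambda r$ rather than of $r$. This is exactly tailored to the identity $pc^{-1}\lambda=\varphi(\lambda)$ (equivalent to $\lambda=\tfrac{E(u)}{E(0)}\varphi(\lambda)$), which gives
\[
(p\varphi-1)(\mathfrak{t}^{-1}r)=\tfrac{p}{t}\,\varphi(\lambda)\varphi(r)-\tfrac{p}{t}\,\lambda r=\tfrac{p}{t}\,(\varphi-1)(\lambda r),
\]
so the new residue is $(\varphi-1)(b_0)$, killed by $\Tr$. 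The $\partial_\tau$-step then reads $\partial_\tau(\mathfrak{t}^{-1}r)=\tfrac{p}{t}\,u\tfrac{d}{du}(\lambda r)$, whose $u^0$-coefficient vanishes. In short, your plan is correct once you replace the framing $\mathfrak{t}^{-1}$ by $t^{-1}$ in the definition of $\res$; without that change the argument for the $(p\varphi-1)$-part does not close. (Also note a small slip: $\partial_\tau(\mathfrak{t}^{-1})=\mathfrak{t}^{-1}\tfrac{\partial_\tau(\lambda)}{\lambda}$, not $\mathfrak{t}^{-1}\tfrac{N_\nabla(\lambda)}{\lambda}$.)
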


\begin{proof}
	We have $\mathcal{D}^{\dagger}(V^{\vee}(1))=\Hom_{\Ee^{\dagger}}(\mathcal{D}^{\dagger}(V), \mathcal{D}^{\dagger}(\QQ_p(1))).$ Hence we have
	\begin{align*}
	\H^0(\Cc_{\varphi, \partial_{\tau}}(V^{\vee}(1)))&= (\mathcal{D}^{\dagger}(V^{\vee}(1)))^{\varphi=1, \partial_{\tau}=0}\\
	&=(\Hom_{\Ee^{\dagger}}(\mathcal{D}^{\dagger}(V), \mathcal{D}^{\dagger}(\QQ_p(1))))^{\varphi=1, \partial_{\tau}=0}\\
	&=(\Hom_{\Rr}(\mathcal{D}^{\dagger}(V), \Ee^{\dagger} \mathfrak{t}^{-1}))^{\varphi=1, \partial_{\tau}=0}
	\end{align*}
	
	We start with the following pairing:
	
	\begin{align*}
	\H^0(\Cc_{\varphi, \partial_{\tau}}(V^{\vee}(1))) \times (\mathcal{R}\otimes_{\Ee^{\dagger}} \mathcal{D}^{\dagger}(V)) &\to \Rr \mathfrak{t}^{-1} \\
	(f,  \ x) &\mapsto f(x).
	\end{align*}	
	
	Recall that elements in $\Rr$ can be written uniquely as series $\sum\limits_{n\in\ZZ} a_n u^n$ with $a_n\in \W(k)\big[1/p\big].$ We define a residue map over $\Rr \mathfrak{t}^{-1}$ as follows: for any $z=\mathfrak{t}^{-1}\sum\limits_{n\in\ZZ} a_n u^n=\frac{p\lambda}{t}\sum\limits_{n\in\ZZ}a_nu^n=\frac{1}{t}\sum\limits_{n\in\ZZ}b_nu^n,$
	\begin{align*}
	\res\colon \Rr \mathfrak{t}^{-1}&\to \W(k)\big[1/p\big]\\
	z&\mapsto b_0. 
	\end{align*}
	Compose with the trace map $\Tr=\Tr_{\W(k)\big[1/p\big]/\QQ_p}\colon \W(k)\big[1/p\big]\to \QQ_p,$ we have the following map	
	\begin{align*}
\H^0(\Cc_{\varphi, \partial_{\tau}}(V^{\vee}(1))) \times (\mathcal{R}\otimes_{\Ee^{\dagger}} \mathcal{D}^{\dagger}(V)) &\to \QQ_p\\
(f,  \ x) &\mapsto \Tr (\res f(x)).
\end{align*}	
	Recall that $\H^2(\Cc_{\varphi, \partial_{\tau}}(V))\simeq (\mathcal{R}\otimes_{\Ee^{\dagger}} \mathcal{D}^{\dagger}(V))/\im((p\varphi-1)\oplus \partial_{\tau})).$ 
	To construct the claimed pairing, we must show that the above map factors through $\im((p\varphi-1)\oplus \partial_{\tau})\subset \mathcal{R}\otimes_{\Ee^{\dagger}} \mathcal{D}^{\dagger}(V).$ In other words, for any $x\in\im((p\varphi-1)\oplus \partial_{\tau},$ we have to show that $\Tr (\res f(x))=0.$
	
	Notice that for any $f\in\Hom_{\Rr}(\Rr\otimes_{\Ee^{\dagger}}\mathcal{D}^{\dagger}(V), \Rr\otimes_{\Ee^{\dagger}}\mathcal{D}^{\dagger}(\QQ_p(1)))^{ \varphi=1, \partial_{\tau}=0},$ the condition $\partial_{\tau}(f)=0$ means that  $\partial_{\tau}\circ f=f\circ \partial_{\tau},$ and $\varphi(f)=f$ means that $\varphi\circ f=f\circ \varphi.$ Indeed, the actions are $\partial_{\tau}(f)=\partial_{\tau}\circ f-f\circ \partial_{\tau}$ and $\varphi(f)=\varphi\circ f\circ \varphi^{-1}.$
	
	For any $x\in \mathcal{R}\otimes_{\Ee^{\dagger}} \mathcal{D}^{\dagger}(V)$ and $f\in \H^0(\Cc_{\varphi, \partial_{\tau}}(V^{\vee}(1))),$ write $f(x)=\mathfrak{t}^{-1} r=\mathfrak{t}^{-1}\sum\limits_{n\in\ZZ} a_n u^n \in \Rr \mathfrak{t}^{-1}$ with $a_n\in \W(k)\big[1/p\big].$ We have
	\begin{align*}
	\Tr\circ \res (f(\partial_{\tau}(x)))&=\Tr\circ \res \big(\partial_{\tau}(f(x))\big)\\
	&=\Tr\circ \res \big(\partial_{\tau}(\mathfrak{t}^{-1} r)\big)\\
	&=\Tr\circ \res \Big(\partial_{\tau}({\mathfrak{t}^{-1}})r+\partial_{\tau}(r)\mathfrak{t}^{-1}\Big)\\
	&=\Tr\circ \res \Big(\frac{\partial_{\tau}(\lambda)}{\lambda}{\mathfrak{t}^{-1}}r+\partial_{\tau}(r)\mathfrak{t}^{-1}\Big)\\
	&=\Tr\circ \res\Big(\frac{\partial_{\tau}(\lambda r)}{\lambda}\mathfrak{t}^{-1}\Big)\\
	&=  \Tr\circ \res\Big(\frac{p}{t} \partial_{\tau}(\lambda r) \Big)\\
	&=  \Tr\circ \res\Big(\frac{p}{t} \cdot u \frac{d}{du}(\lambda r) \Big)\\
	&=\Tr(0)\\
	&=0.      
	\end{align*} 	
	We have $\varphi(\mathfrak{t}^{-1})=c^{-1}\mathfrak{t}^{-1},$ hence 
	\begin{align*}
	\Tr\circ \res (f((p\varphi-1)x))&=\Tr\circ \res ((p\varphi-1)f(x))\\
	&=\Tr\circ \res \bigg((p\varphi-1)(\mathfrak{t}^{-1}\sum\limits_{n\in\ZZ} a_n u^n)\bigg)\\
	&=\Tr\circ \res \bigg(pc^{-1}\mathfrak{t}^{-1} \varphi(\sum\limits_{n\in\ZZ} a_n u^n)-\mathfrak{t}^{-1}\sum\limits_{n\in\ZZ} a_n u^n\bigg)\\
	&=\Tr\circ \res \bigg(pc^{-1}\frac{p\lambda}{t} \varphi(\sum\limits_{n\in\ZZ} a_n u^n)-\frac{p\lambda}{t}\sum\limits_{n\in\ZZ} a_n u^n \bigg)\\
	&=\Tr\circ \res \bigg(pc^{-1}\frac{p\lambda}{t} \varphi(\sum\limits_{n\in\ZZ} a_n u^n)-\frac{p\lambda}{t}\sum\limits_{n\in\ZZ} a_n u^n \bigg)\\
	&=\Tr\circ \res \bigg(\frac{p}{t} \varphi(\lambda)\varphi(\sum\limits_{n\in\ZZ} a_n u^n)-\frac{p}{t}\lambda\sum\limits_{n\in\ZZ} a_n u^n \bigg)\\
	&=\Tr\circ \res \bigg(\frac{p}{t}\cdot (\varphi-1)\big(\lambda\cdot \sum\limits_{n\in\ZZ} a_n u^n\big) \bigg)\\
	&=\Tr(\varphi(b_0)-b_0)\\
	&=(\varphi-1)\Tr(b_0)\\
	&=0.
	\end{align*} 
	where $b_0$ is the constant term of $p\cdot \lambda\cdot \sum\limits_{n\in\ZZ} a_n u^n.$\\
	
	Hence we have the desired pairing 
	\begin{align*}	\H^0(\Cc_{\varphi, \partial_{\tau}}(V^{\vee}(1))) \times \H^2(\Cc_{\varphi, \partial_{\tau}}(V)) &\to \QQ_p\\
	(f, x) &\mapsto B(f, x)=\Tr (\res f(x)).
	\end{align*} 
	
\end{proof}

\begin{remark}
	As $t=p\lambda\mathfrak{t}$ and $\partial_{\tau}(t)=0,$ we have 
	\[ \partial_{\tau}(\lambda)\mathfrak{t}+\lambda \partial_{\tau}(\mathfrak{t})=0.  \]
	Hence \[ \partial_{\tau}(\mathfrak{t})=-\frac{\mathfrak{t}}{\lambda} \partial_{\tau}(\lambda) \]
	and
	\[\partial_{\tau}(\mathfrak{t}^n)=n\mathfrak{t}^{n-1}\partial_{\tau}(\mathfrak{t})=-n\mathfrak{t}^n\frac{\partial_{\tau}(\lambda)}{\lambda}.\]
\end{remark}

\begin{example}\label{example Robba for H2=0 N-operator}
	For $V=\QQ_p,$ we have $\mathcal{D}_{\varphi, \partial_{\tau}}(\QQ_p)\simeq \Rr$ and the complex $\mathcal{C}_{\varphi, \partial_{\tau}}(\QQ_p)$ is 
	
	\[ \xymatrix{ 
		0\ar[rr]&& \Rr \ar[r] & \Rr \oplus \Rr  \ar[r]&  \Rr \ar[r] & 0 \\
		&&x  \ar@{|->}[r] & ((\varphi-1)(x), \partial_{\tau}(x))  &{}\\
		&&&{} (y,z) \ar@{|->}[r] & \partial_{\tau}(y)-(p\varphi-1)(z).
	} \]

\medskip

	Let's check that $\H^2(\Cc_{\varphi, \partial_{\tau}}(\QQ_p))=0.$ For any $x=\sum\limits_{n\in\ZZ} x_n u^n\in \Rr,$ we have $a_0\in \im(c\varphi-1)$ by lemma \ref{lemm topology in O N-operator}. Hence it suffices to show that 
		\[ x^{\prime}:=\sum\limits_{n\in\ZZ\setminus\{0 \}} x_n u^n\in \im(\partial_{\tau}).\]
		As $y:=\sum\limits_{n\in\ZZ\setminus\{ 0\}} \frac{x_n}{n} u^n$ satisfies $\partial_{\tau}(y)=x^{\prime},$ it is enough to show that $y\in \Rr,$ which follows from lemma \ref{lemm N preserves surconvergence}.
\end{example}
	
\begin{remark}
	Let $n\in \ZZ$, we then have 
	\[\H^2(\Cc_{\varphi, \partial_{\tau}}(\QQ_p)(n))=\Rr\mathfrak{t}^{-n}\big/(p\varphi-1)(\Rr\mathfrak{t}^{-n})+ \partial_{\tau}(\Rr\mathfrak{t}^{-n})\simeq \Rr\big/\big(\im(pc^{-n}\varphi-1)+\im(\partial_{\tau}+n\frac{\partial_{\tau}(\lambda)}{\lambda})\big).\]
\end{remark}

\chapter{Applications to $p$-divisible groups}

In this chapter, we apply the results in the previous chapters to $p$-divisible groups, which allows us to compute the Galois cohomology of (the dual of) the Tate module of some $p$-divisible group using its associated Breuil-Kisin module. 

\section{$p$-divisible groups and Tate modules}

\begin{definition}($\cf$ \cite{Tat67})
	A \emph{Barsotti-Tate group} (or \emph{$p$-divisible group}) of \emph{height} $h$ over a commutative ring $R$ is an inductive system $(G_{n},i_{n})_{i \geq 1}$ in which:
	\item (1) $G_{n}$ is a finite, commutative group scheme over $R$ of order $p^{nh}$;
	\item (2) for each $n$, we have an exact sequence
	\[ 0 \to  G_{n} \xrightarrow{i_{n}}  G_{n+1}\xrightarrow{p^{n}} G_{n+1} \]
	(that is, $i_{n}$ is a closed immersion and identifies $G_{n}$ with the kernel of multiplication by $p^{n}$ on $G_{n+1}$).
	
	\medskip
	
	The corresponding category of Barsotti-Tate groups over $R$ is denoted $\BT_{R}.$\index{$\BT_{R}$}
\end{definition}

\begin{remark}
In particular, $\BT_{\Oo_K}$ is the category of Barsotti-Tate groups over the ring of integers of $K.$ 
\end{remark}

\begin{definition}
Let $G=\iLim{n}G_n$ be a $p$-divisible group. The \emph{Tate module} of $G$ is $\pLim{n}G_n( \overline{K}),$ denoted $\T_pG.$\index{$\T_pG$}	
\end{definition}
It carries a natural continuous $\mathscr{G}_K$-action on it and hence $\T_p G\in \Rep_{\ZZ_p}(\mathscr{G}_K),$ inducing a functor:

\begin{align*}
\T_p\colon \BT_{\Oo_K}&\to \Rep_{\ZZ_p}(\mathscr{G}_K)\\
G &\mapsto\T_p G.
\end{align*}

\begin{theorem} (Fontaine, Kisin, Raynaud, Tate)
	The functor $\T_p$ induces an equivalence of categories between $\BT_{\Oo_K}$ and $\Rep^{\cris, \{0, 1\}}_{\ZZ_p}(\mathscr{G}_K)$\index{$\Rep^{\cris, \{0, 1\}}_{\ZZ_p}(\mathscr{G}_K)$} $($the category of crystalline representations of $\mathscr{G}_K$ over $\ZZ_p$ with Hodge-Tate weights in $\{0,1 \}).$
\end{theorem}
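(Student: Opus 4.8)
The plan is to prove this classification theorem by decomposing it into its two halves: faithful essential surjectivity (the hard, geometric part, due to Kisin and building on Fontaine, Raynaud, Tate) and fully faithfulness together with the identification of the essential image. First I would recall the classical construction of the Tate module functor and reduce the statement to showing two things: (a) $\T_pG$ is indeed crystalline with Hodge--Tate weights in $\{0,1\}$ for every $G\in\BT_{\Oo_K}$, and (b) every such representation arises, uniquely up to isomorphism, from a $p$-divisible group. For (a), one uses that a finite flat group scheme killed by $p$ gives a crystalline torsion representation, that the generic fiber of $G$ is an abelian-variety-like object whose $p$-adic Tate module has the expected Hodge--Tate decomposition via the Hodge--Tate comparison for $p$-divisible groups (the connected-\'etale sequence gives the weight-$1$ and weight-$0$ pieces respectively), and that crystallinity passes to the inverse limit over the $G_n$.

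The main step is the equivalence itself, which I would obtain by factoring the functor $\T_p$ through Breuil--Kisin modules. Concretely: by Kisin's theorem (\cite{Kis06}), the category $\BT_{\Oo_K}$ is equivalent to the category of finite free $\mathfrak{S}=\W[\![u]\!]$-modules $\mathfrak{M}$ equipped with a Frobenius $\varphi$ such that $\coker(\varphi^\ast\mathfrak{M}\to\mathfrak{M})$ is killed by $E(u)$ (the Eisenstein polynomial of $\pi$), i.e.\ Breuil--Kisin modules of height $\leq 1$; moreover under this equivalence one recovers $\T_pG$ (or its dual, depending on normalization) from $\mathfrak{M}$ by extending scalars to a suitable period ring and taking $\varphi$-invariants. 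On the other side, a Breuil--Kisin module of height $\leq 1$ extends scalars along $\mathfrak{S}\to\Oo_{\Ee}$ (resp.\ $\mathfrak{S}[1/p]\cap\Oo_{\Ee}\to\Oo_{\Ee}$) to an \'etale $\varphi$-module, hence by Theorem \ref{thm equivalence of cats} to a $\ZZ_p$-representation; the height condition is precisely what cuts out the crystalline representations with Hodge--Tate weights in $\{0,1\}$ inside all of $\Rep_{\ZZ_p}(\mathscr{G}_K)$. Thus I would assemble the diagram $\BT_{\Oo_K}\simeq\{\text{BK modules of height}\leq 1\}\hookrightarrow\Mod_{\Oo_{\Ee}}^{\text{\'et}}(\varphi)\simeq\Rep_{\ZZ_p}(\mathscr{G}_K)$ and check that the composite is $\T_p$ (up to duality) with essential image exactly $\Rep^{\cris,\{0,1\}}_{\ZZ_p}(\mathscr{G}_K)$.

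The hard part will be the identification of the essential image, i.e.\ showing that a crystalline $\ZZ_p$-representation with Hodge--Tate weights in $\{0,1\}$ genuinely comes from a Breuil--Kisin module of height $\leq 1$ (equivalently, that its associated $(\varphi,\tau)$-module or Kisin module has the bounded ramification/height property). This is where Kisin's comparison between the Breuil--Kisin module and the crystalline filtered $\varphi$-module (via the ring $\Oo=\BB^{[0,+\infty)}_{K_\pi}$ and the elements $\lambda$, $c=pE(u)/E(0)$ appearing in the excerpt) is essential: one must relate the $\varphi$-module over the open unit disk to Fontaine's $D_{\cris}$ and check that admissibility with Hodge--Tate weights $\{0,1\}$ forces the height bound. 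I would invoke \cite{Kis06} for this rather than reprove it. Full faithfulness is comparatively routine: it follows from full faithfulness of $\T_p$ on generic fibers (a representation of $\mathscr{G}_K$ determines the $p$-divisible group up to isogeny, and then the integral structure pins it down exactly) combined with the faithfulness built into Theorem \ref{thm equivalence of cats}. Finally I would note that the torsion-free hypothesis causes no trouble since $\T_pG$ is always a finite free $\ZZ_p$-module, so one never leaves the subcategory where the Kisin correspondence is cleanest.
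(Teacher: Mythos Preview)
The paper does not actually prove this theorem: its entire ``proof'' is a one-line citation to \cite[Theorem 2.2.1]{Liu13}. So your proposal is not competing against an argument in the paper but against a pointer to the literature.

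Your sketch is broadly correct and is in fact the Kisin strategy that underlies the cited reference: factor through Breuil--Kisin modules of height $\leq 1$, use Kisin's equivalence $\BT_{\Oo_K}\simeq\BT_{\Ss}^\varphi$ (which is exactly Theorem \ref{thm Kisin} in this paper), and identify the essential image in $\Rep_{\ZZ_p}(\mathscr{G}_K)$ via the comparison with filtered $\varphi$-modules over $\Oo$. A couple of points are slightly loose. First, full faithfulness is not as routine as you suggest: Tate's theorem gives full faithfulness on generic fibers, but passing to $\Oo_K$ (i.e.\ uniqueness of the integral model) is genuinely nontrivial outside the low-ramification range $e<p-1$ and is part of what Kisin/Liu prove, not something you get for free from Theorem \ref{thm equivalence of cats}. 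Second, your argument for (a) via ``crystalline torsion representations'' and the connected--\'etale sequence is heuristic; the clean statement that $\T_pG\otimes\QQ_p$ is crystalline with Hodge--Tate weights in $\{0,1\}$ is Fontaine's theorem on $p$-divisible groups, and you should just cite it rather than assemble it from torsion levels. With those two fixes, your outline matches the standard proof that \cite{Liu13} packages.
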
 

\begin{proof}
\cf \cite[Theorem 2.2.1]{Liu13}.
\end{proof}

\section{Some inputs from Kisin's work}\label{section N nabla action O times M}

\begin{definition}
(1)	We define a \emph{$(\varphi, N)$-module} over $\Ss=\W(k)[\![u]\!]$\index{$\Ss$} to be a finite free $\Ss$-module $\Mm,$ equipped with a $\varphi$-similinear Frobenius $\varphi\colon \Mm \to \Mm,$ and a linear endomorphism 
	\[N\colon (\Mm/u\Mm) \otimes_{\ZZ_p} \QQ_p \to (\Mm/u\Mm) \otimes_{\ZZ_p} \QQ_p, \]
	such that $N\varphi=p\varphi N$ on $(\Mm/u\Mm) \otimes_{\ZZ_p} \QQ_p.$ 
	
	\medskip
	
	We say that $\Mm$ is of \emph{finite $E$-height} if the cokernel of  $1\otimes \varphi\colon \varphi^{\ast}\Mm \to \Mm$ is killed by some power of $E(u).$ We denote by $\Mod_{\Ss}(\varphi, N)$\index{$\Mod_{\Ss}(\varphi, N)$} the category of $(\varphi, N)$-modules over $\Ss$ of finite $E$-height.
\item (2) ($\cf$ \cite[\S 2.1.3]{Kis06}) We denote by $\Mod_{\Ss}(\varphi)$\index{$\Mod_{\Ss}(\varphi)$} the category of finite free $\Ss$-modules equipped with an $\Ss$-linear map $1\otimes \varphi\colon \varphi^{\ast}\Mm \to \Mm$ whose cokernel is killed by some power of $E(u).$ 
\end{definition}

\begin{remark}
	The category $\Mod_{\Ss}(\varphi)$ is a full subcategory of $\Mod_{\Ss}(\varphi, N)$ by taking the operator $N$ to be 0 on an object of $\Mod_{\Ss}(\varphi).$
\end{remark}

\begin{definition}($\cf$ \cite[\S 2.2.1]{Kis06})
	We denote by $\BT_{\Ss}^{\varphi}$\index{$\BT_{\Ss}^{\varphi}$} the category consisting of objects $\Mm\in \Mod_{\Ss}(\varphi)$ such that $\Mm/\varphi^{\ast}(\Mm)$ is killed by $E(u),$ where $\varphi^{\ast}=1\otimes \varphi\colon \varphi^{\ast}\Mm \to \Mm.$ Objects of $\BT^{\varphi}_{\Ss}$ are also called \emph{Breuil-Kisin modules} (or \emph{Kisin modules of height 1}). 
\end{definition}

\begin{remark}
	The category $\BT^{\varphi}_{\Ss}$ is a full subcategory of $\Mod_{\Ss}(\varphi).$
\end{remark}

\begin{theorem}\label{thm Kisin}
	There exists an equivalence of categories between $\BT^{\varphi}_{\Ss}$ and $\BT_{\Oo_K}.$
\end{theorem}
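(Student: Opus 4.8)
The statement is Kisin's classification theorem: the category $\BT^{\varphi}_{\Ss}$ of Breuil-Kisin modules of height $1$ is equivalent to the category $\BT_{\Oo_K}$ of $p$-divisible groups over $\Oo_K$. Since the excerpt has already recorded the equivalence $\T_p\colon\BT_{\Oo_K}\simeq\Rep^{\cris,\{0,1\}}_{\ZZ_p}(\mathscr{G}_K)$, the most efficient route is to factor the desired equivalence through the category of $\ZZ_p$-representations; concretely, I would construct a fully faithful exact functor $\BT^{\varphi}_{\Ss}\to\Rep_{\ZZ_p}(\mathscr{G}_K)$, identify its essential image with $\Rep^{\cris,\{0,1\}}_{\ZZ_p}(\mathscr{G}_K)$, and then compose with (a quasi-inverse of) $\T_p$. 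As this is a theorem of Kisin, the cleanest write-up is simply to cite \cite{Kis06} (and \cite{Liu13} for the integral refinement); but a genuine proof sketch is as follows.

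\textbf{Key steps.} First I would send a Breuil-Kisin module $\Mm\in\BT^{\varphi}_{\Ss}$ to a $\varphi$-module over $\Oo_{\Ee}$ by base change along $\Ss=\W(k)[\![u]\!]\hookrightarrow\Oo_{\Ee}$ (recall $\Oo_{\Ee}$ is the $p$-adic completion of $\Ss[1/u]$), obtaining an \'etale $\varphi$-module $\mathcal{D}:=\Oo_{\Ee}\otimes_{\Ss}\Mm$; the $E(u)$-height $1$ condition is exactly what guarantees \'etaleness after inverting $u$. By the equivalence of \cite[Th\'eor\`eme 3.2.2]{Win83} / Fontaine, \'etale $\varphi$-modules over $\Oo_{\Ee}$ correspond to $\ZZ_p$-representations of $\mathscr{G}_{K_\pi}$, so $\Mm$ determines $T|_{\mathscr{G}_{K_\pi}}$ for a well-defined $T=\mathcal{T}(\mathcal{D})$. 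The heart of the matter is to promote this $\mathscr{G}_{K_\pi}$-representation to a genuine $\mathscr{G}_K$-representation: this is where one uses that $\Mm$ has finite height together with the crystalline condition. Kisin's argument here is to build, from $\Mm$, a $(\varphi,N_\nabla)$-module over the Robba ring $\Rr=\BB^{\dagger}_{\rig,K_\pi}$ — namely $\Rr\otimes_{\Ss}\Mm$ equipped with the operator $N_\nabla=-u\lambda\frac{d}{du}$ of remark \ref{rmq N nabla } — and to recognize, via the theory developed in Chapter 5 (in particular theorems \ref{thm Poyeton thm 3.4.10 varphi N module} and \ref{coro semistable} and proposition \ref{prop injective on the extensino argument partial case}), that such a module with Hodge-Tate weights in $\{0,1\}$ comes from a crystalline $\mathscr{G}_K$-representation, and that the $\tau$-action (hence the full $\mathscr{G}_K$-action, \cf remark \ref{rem def tau}) can be reconstructed from $N_\nabla$. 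Conversely, given $T\in\Rep^{\cris,\{0,1\}}_{\ZZ_p}(\mathscr{G}_K)$, one produces the Breuil-Kisin module as $\Mm=(\Oo_{\Ee^{\ur}}\otimes_{\ZZ_p}T)^{\mathscr{G}_{K_\pi}}\cap(\text{the bounded part})$, i.e. the maximal finite-height $\Ss$-lattice inside $\mathcal{D}(T)$ stable under $\varphi$; showing this is free of the right rank and has cokernel killed by $E(u)$ uses the crystalline hypothesis and a lattice/slope argument. Finally one checks full faithfulness: a morphism of Breuil-Kisin modules extends to a morphism of \'etale $\varphi$-modules, hence to a $\mathscr{G}_{K_\pi}$-equivariant map of representations, and one verifies it is automatically $\mathscr{G}_K$-equivariant because $\Ss$-linearity forces compatibility with the reconstructed $\tau$-action; and exactness on both sides follows from flatness of the relevant base changes ($\Oo_{\Ee}$ over $\Ss$, $\Oo_{\Ee^{\ur}}$ over $\Oo_{\Ee}$, and corollary \ref{coro D and D tau are exact}). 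Composing the resulting equivalence $\BT^{\varphi}_{\Ss}\simeq\Rep^{\cris,\{0,1\}}_{\ZZ_p}(\mathscr{G}_K)$ with the quasi-inverse of $\T_p$ yields $\BT^{\varphi}_{\Ss}\simeq\BT_{\Oo_K}$.

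\textbf{Main obstacle.} The genuinely hard step is the descent of the Galois action from $\mathscr{G}_{K_\pi}$ to $\mathscr{G}_K$, i.e. showing that the \'etale $\varphi$-module attached to a finite-height $\Mm$ carries a canonical $\tau$-action making it a $(\varphi,\tau)$-module of the correct (crystalline, Hodge-Tate weights $\{0,1\}$) type, and that this sets up an equivalence rather than merely a functor. In Kisin's original treatment this is done through $\mathcal{O}$-modules on the open unit disk and comparison with Breuil's $\mathcal{S}$-modules and Fontaine's $D_{\cris}$; in the framework of this manuscript it can instead be extracted from the $(\varphi,\partial_\tau)$-module theory of Chapter 5 (proposition \ref{prop N Nabla connection R-tau partial case} and proposition \ref{prop injective on the extensino argument partial case}), but either way it is the substantive input and not a formal manipulation. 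Everything else — base change, freeness of lattices, exactness, full faithfulness — is routine once this identification is in hand. Given that the statement is Kisin's theorem, in the actual text I would state it and give the one-line reference $\cf$ \cite{Kis06}, \cite{Liu13}, reserving the above sketch for a remark.

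\begin{proof}
	$\cf$ \cite[Theorem 2.3.5]{Kis06} and \cite[Theorem 2.2.1]{Liu13}.
\end{proof}
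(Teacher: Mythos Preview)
Your approach matches the paper's: both simply cite the original sources, as this is Kisin's theorem and not something proved anew here. The paper's precise references are \cite[Theorem 2.2.7]{Kis06} for $p>2$ and \cite[Theorem 1.0.1]{Liu13} for $p=2$ (your numbers 2.3.5 and 2.2.1 point to different statements); note also that your sketch's suggestion to invoke the Chapter~5 results on $(\varphi,\partial_\tau)$-modules would be circular, since those results already presuppose the representation is crystalline and the present theorem is used as input in Chapter~6.
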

\begin{proof}
	This is \cite[Theorem 2.2.7]{Kis06} if $p>2$ and \cite[Theorem 1.0.1]{Liu13} when $p=2$.
\end{proof}

\begin{definition}
(1) ($\cf$ \cite[\S 1.1.4]{Kis06}) A \emph{$\varphi$-module over $\Oo$} is a finite free $\Oo$-module $\CMcal{M}$ endowed with $\varphi$-semilinear injective map $\varphi\colon \CMcal{M}\to \CMcal{M}.$ We denote $\Mod_{\Oo}(\varphi)$\index{$\Mod_{\Oo}(\varphi)$} the corresponding category.
\item (2) A \emph{$(\varphi, N_{\nabla})$-module over $\Oo$} is a $\varphi$-module $\CMcal{M}$ equipped with a differential operator $N_{\nabla}\colon \CMcal{M}\to \CMcal{M}$ over $N_{\nabla}\colon \Oo\to \Oo,$ $\ie$a map such that 
\[ (\forall f\in \Oo) (\forall m\in \CMcal{M})\quad N_{\nabla}(fm)=N_{\nabla}(f)m+fN_{\nabla}(m),  \]
satisfying the condition $N_{\nabla}\circ \varphi=c\varphi\circ N_{\nabla}.$ We denote $\Mod_{\Oo}(\varphi, N_{\nabla})$\index{$\Mod_{\Oo}(\varphi, N_{\nabla})$} the corresponding category.	
\item (3) ($\cf$ \cite[\S 1.3.9]{Kis06})
	A \emph{$(\varphi, N)$-module over $\Oo$} is a $\varphi$-module $\CMcal{M}$ over $\Oo$ together with a $\W(k)[1/p]$-linear map 
	\[N\colon \CMcal{M}/u\CMcal{M} \to \CMcal{M}/u\CMcal{M}\]
	which satisfies $N\varphi=p\varphi N,$ where we have written $\varphi$ for the endomorphism of $\CMcal{M}/u\CMcal{M}$ obtained by reducing $\varphi\colon \CMcal{M} \to \CMcal{M}$ modulo $u.$ We denote by $\Mod_{\Oo}(\varphi, N)$\index{$\Mod_{\Oo}(\varphi, N)$} the category of $(\varphi, N)$-modules over $\Oo$ of finite $E$-height.
\item (4) ($\cf$ \cite[\S 1.2.5]{Kis06})
	Let $\CMcal{M}\in\Mod_{\Oo}(\varphi).$ The associated \emph{filtered $\varphi$-module} $\mathcal{D}(\CMcal{M})$ is defined to be the $\W(k)[1/p]$-vector space $\CMcal{M}/u\CMcal{M}$ together with the operator $\varphi$ induced from $\CMcal{M}$ (the filtration is described in \cite[\S 1.2.7]{Kis06}). 
\end{definition}

\begin{lemma}\label{lemm 1.2.6}
	Let $\CMcal{M}$ be a $\varphi$-module over $\Oo.$ There is a unique $\Oo$-linear, $\varphi$-equivariant morphism 
	\[ \xi\colon \mathcal{D}(\CMcal{M})\otimes_{\W(k)[1/p]}\Oo \to \CMcal{M},  \]
	whose reduction modulo $u$ induces the identity on $\mathcal{D}(\CMcal{M}).$ The map $\xi$ is injective and its cokernel is killed by a finite power of $\lambda.$ If $r\in (|\pi|, |\pi|^{1/p}),$ then the image of the map $\xi_{{[}0, r)}$  induced by $\xi$ over $D([0, r])$ coincides with the image of $1\otimes \varphi\colon (\varphi^{\ast}\CMcal{M})_{{[} 0, r)} \to \CMcal{M}_{{[} 0, r)}.$ 
\end{lemma}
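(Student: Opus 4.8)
\textbf{Proof plan for Lemma \ref{lemm 1.2.6}.}

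The plan is to follow Kisin's construction in \cite[\S 1.2.6]{Kis06}, adapting it to our covariant normalization. First I would construct $\xi$ by successive approximation modulo powers of $u$. Start with the identity map $\mathcal{D}(\CMcal{M}) \to \CMcal{M}/u\CMcal{M}$; the key point is that since $\CMcal{M}$ is a $\varphi$-module and $\varphi(u) = u^p$ is topologically nilpotent on $\Oo$, one can lift this to a $\varphi$-equivariant $\Oo$-linear map uniquely. Concretely, write $\CMcal{M} = \bigoplus \Oo m_i$ for a basis $(m_i)$ lifting a basis $(\bar m_i)$ of $\mathcal{D}(\CMcal{M})$ on which $\varphi$ acts by a matrix $A \in \GL(\Oo[1/E(u)])$; one wants a change of basis matrix $B$ over $\Oo$ with $B \equiv 1 \bmod u$ such that $B^{-1}A\varphi(B)$ has entries in $\W(k)[1/p]$ (i.e.\ is $u$-independent). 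Solve for $B = 1 + uB_1 + u^2 B_2 + \cdots$ recursively: at each stage the equation for $B_n$ is linear and solvable because $\varphi$ multiplies the $u$-adic filtration degree by $p$. Convergence holds in $\Oo$ because of the Fréchet topology on $\Oo = \BB_{K_\pi}^{[0,+\infty)}$; here one must check that the recursively-defined series actually converges on the full disc $D[0,1)$, not just $u$-adically, which is where a norm estimate is needed — this is the routine but slightly delicate analytic input. Uniqueness is immediate: the difference of two such lifts is $\varphi$-equivariant, $\Oo$-linear, and $\equiv 0 \bmod u$, hence $\equiv 0 \bmod u^n$ for all $n$ by the same degree-doubling argument, hence zero.

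Next, injectivity of $\xi$: since $\xi$ is an isomorphism modulo $u$ and both sides are finite free $\Oo$-modules of the same rank, $\det \xi$ is a nonzero element of $\Oo$ (nonzero because it is a unit mod $u$... wait, more carefully: $\det\xi \equiv 1 \bmod u$ up to units, so in particular $\det\xi \neq 0$), so $\xi$ is injective and $\Coker \xi$ is a torsion $\Oo$-module annihilated by $\det\xi$. To identify the annihilator with a power of $\lambda$, use that $\CMcal{M}$ has finite $E$-height: $1\otimes\varphi\colon \varphi^*\CMcal{M} \to \CMcal{M}$ has cokernel killed by $E(u)^h$ for some $h$. Applying $\varphi^n$ and using $\lambda = \prod_{n\geq 0}\varphi^n(E(u)/E(0))$ together with $\varphi$-equivariance of $\xi$, one sees that $\xi$ becomes an isomorphism after inverting $\lambda$ — the standard argument is that $\mathcal{D}(\CMcal{M})\otimes\Oo$ is the unique $\varphi$-stable lattice of slope $0$ in $\CMcal{M}[1/\lambda]$ with the prescribed reduction mod $u$, and $\CMcal{M}$ differs from it only by factors of $E(u)$ and its $\varphi$-iterates, all of which divide a power of $\lambda$.

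Finally, the statement about $\xi_{[0,r)}$ for $r \in (|\pi|, |\pi|^{1/p})$: here the point is that on the small disc $D([0,r])$, only the factor $E(u)$ itself (and not its $\varphi$-iterates $\varphi^n(E(u))$ for $n\geq 1$, which are units on this disc since their zeros $\pi_n$ have absolute value $|\pi|^{1/p^{n+1}} < r$... one needs to check the inequalities) is non-invertible. I would argue that $\varphi^n(E(u)/E(0))$ is a unit in $\Oo_{[0,r]}$ for all $n \geq 1$ (its unique zero on the disc $|u| < 1$ is at $u = \pi_n$ with $|\pi_n| = |\pi|^{1/p^{n+1}}$, and $|\pi|^{1/p^{n+1}} \le |\pi|^{1/p^2} < r$), so that $\lambda/\bigl(E(u)/E(0)\bigr)$ is a unit on $D([0,r])$. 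Combined with the relation $\lambda = \frac{E(u)}{E(0)}\varphi(\lambda)$ from Remark (after the definition of $c$, $\lambda$, $t$), this forces the image of $\xi_{[0,r)}$ to coincide, up to units, with the image of $1\otimes\varphi\colon (\varphi^*\CMcal{M})_{[0,r)} \to \CMcal{M}_{[0,r)}$, since both are the $\Oo_{[0,r]}$-submodule of $\CMcal{M}_{[0,r)}$ cut out by the single Weierstrass factor $E(u)$.

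The main obstacle I anticipate is the convergence estimate in the construction of $\xi$: showing the recursively-built change-of-basis series converges in the Fréchet topology on the whole open disc (not merely formally or $u$-adically) requires controlling the $\rho$-Gauss norms $|B_n|_\rho$ uniformly, using that $E(u)$ has no zeros in $|u| < |\pi|$ and the slope-$0$ hypothesis on $\varphi$. Everything else is either formal ($\varphi$-equivariance, uniqueness via degree-doubling) or a standard slope/Weierstrass-factor bookkeeping that can be cited from \cite{Kis06} and \cite{BC09}.
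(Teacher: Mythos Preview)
The paper's own proof is simply the citation ``\cf \cite[Lemma 1.2.6]{Kis06}'', so your plan to reconstruct Kisin's argument is exactly the intended approach; the outline (successive approximation for $\xi$, uniqueness by $u$-adic degree-doubling, injectivity and $\lambda$-torsion cokernel from finite $E$-height, and localization to $D([0,r])$) matches \cite{Kis06}.

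One slip to fix in your last paragraph: the zeros of $\varphi^n(E(u))=E(u^{p^n})$ lie at $|u|=|\pi|^{1/p^n}$ (not $|\pi|^{1/p^{n+1}}$), and for $\varphi^n(E(u)/E(0))$ to be a unit on $D([0,r])$ you need these zeros \emph{outside} the disc, i.e.\ $|\pi|^{1/p^n}>r$, not inside. This holds for $n\geq 1$ precisely because $r<|\pi|^{1/p}\leq|\pi|^{1/p^n}$; your inequality $|\pi|^{1/p^{n+1}}<r$ would place the zeros inside and give the opposite conclusion. The correct chain is $|\pi|<r<|\pi|^{1/p}\leq|\pi|^{1/p^n}$ for $n\geq1$, so only $E(u)$ itself contributes a non-unit factor of $\lambda$ on $D([0,r])$, which is what you want.
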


\begin{proof}
	$\cf$ \cite[Lemma 1.2.6]{Kis06}.
\end{proof}

\begin{lemma} \label{lemm 1.3.11}
Let $\CMcal{M}\in\Mod_{\Oo}(\varphi, N_{\nabla}),$ the morphism
\[\xi\colon \mathcal{D}(\CMcal{M})\otimes_{\W(k)[1/p]}\Oo \to \CMcal{M}\]
is compatible with the differential operator $N_{\nabla}$ $($the one on the left hand side being given by $1\otimes N_{\nabla})$.
\end{lemma}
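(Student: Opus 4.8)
The statement to prove is Lemma \ref{lemm 1.3.11}: the canonical comparison morphism $\xi\colon \mathcal{D}(\CMcal{M})\otimes_{\W(k)[1/p]}\Oo \to \CMcal{M}$ attached to a $(\varphi, N_\nabla)$-module $\CMcal{M}$ over $\Oo$ intertwines the operator $1\otimes N_\nabla$ on the source with $N_\nabla$ on the target. The strategy is to exploit the \emph{uniqueness} clause in Lemma \ref{lemm 1.2.6}: $\xi$ is the \emph{unique} $\Oo$-linear, $\varphi$-equivariant morphism reducing to the identity modulo $u$. So instead of computing with $\xi$ directly, I would compare the two maps $N_\nabla\circ\xi$ and $\xi\circ(1\otimes N_\nabla)$, show their difference is again $\Oo$-linear and $\varphi$-equivariant (in an appropriate twisted sense), vanishes modulo $u$, and conclude it is zero.

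First I would set $\delta := N_\nabla\circ\xi - \xi\circ(1\otimes N_\nabla)\colon \mathcal{D}(\CMcal{M})\otimes_{\W(k)[1/p]}\Oo \to \CMcal{M}$. Using the Leibniz rule for $N_\nabla$ on both $\CMcal{M}$ and on $\mathcal{D}(\CMcal{M})\otimes_{\W(k)[1/p]}\Oo$ (where it is $1\otimes N_\nabla$ acting as $d\otimes f\mapsto d\otimes N_\nabla(f)$, since $N_\nabla$ is trivial on $\mathcal{D}(\CMcal{M})$ which carries no $u$), a direct computation shows that $\delta$ is $\Oo$-\emph{linear}: the two Leibniz correction terms $N_\nabla(f)\cdot\xi(-)$ cancel because $\xi$ is already $\Oo$-linear. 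Next, from $N_\nabla\circ\varphi = c\varphi\circ N_\nabla$ (which holds on $\CMcal{M}$, on $\Oo$, hence on $\mathcal{D}(\CMcal{M})\otimes\Oo$) together with the $\varphi$-equivariance of $\xi$, one checks that $\delta$ satisfies $\delta\circ\varphi = c\,\varphi\circ\delta$, i.e.\ $\delta$ is a morphism of the underlying $\varphi$-modules after the standard twist by $c$ — equivalently, $c^{-1}\delta$ or the appropriate normalization is genuinely $\varphi$-equivariant on the relevant localization where $c$ is invertible (recall $c = pE(u)/E(0)$ and $E(u)/E(0)\in\Rr^\times$, and more relevantly $c$ becomes a unit after inverting $\lambda$, cf.\ Remark on units in the Robba ring). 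Finally, reducing modulo $u$: since $\xi$ induces the identity on $\mathcal{D}(\CMcal{M})$ modulo $u$, and $N_\nabla$ kills $\mathcal{D}(\CMcal{M}) = \CMcal{M}/u\CMcal{M}$ in the sense that the target operator of the statement is $1\otimes N_\nabla$ which reduces to $0$ mod $u$ on the source, one gets $\delta \equiv 0 \pmod{u}$.

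To conclude $\delta = 0$, I would argue as in the uniqueness part of Lemma \ref{lemm 1.2.6}: an $\Oo$-linear map $\mathcal{D}(\CMcal{M})\otimes_{\W(k)[1/p]}\Oo\to\CMcal{M}$ that vanishes mod $u$ has image in $u\CMcal{M}$; combined with the $\varphi$-equivariance (up to the unit $c$) and the fact that $\varphi$ is topologically contracting on $u\Oo$ (it sends $u\mapsto u^p$), an inductive/approximation argument forces $\delta$ to land in $\bigcap_n u^n\CMcal{M} = 0$. Concretely: if $\delta(\mathcal{D}(\CMcal{M})\otimes\Oo)\subset u^n\CMcal{M}$, then applying $\varphi$-equivariance and the relation with $c$ (a unit on the localization, with $c\equiv$ unit$\cdot u^0$ so it does not help worsen $u$-adic valuation, actually $E(0)\in\W(k)^\times$ so $c$ has the same $u$-adic order as $E(u)$, namely $0$) shows the image is in $u^{pn}\CMcal{M}$, hence in $\bigcap_n u^n\CMcal{M}=0$. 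The main obstacle I anticipate is bookkeeping the twist by $c$ correctly: one must be careful that $c$ is a unit exactly where needed (after inverting $\lambda$, or on $D([0,r])$ for suitable $r$) so that the $\varphi$-equivariance-up-to-$c$ of $\delta$ can be leveraged for the $u$-adic approximation, and that the conclusion $\delta=0$ over $\Oo$ then follows because $\Oo\hookrightarrow\Oo[1/\lambda]$ is injective and $\CMcal{M}$ is $u$-adically separated. This is exactly parallel to how Kisin proves uniqueness in \cite[Lemma 1.2.6]{Kis06}, so the argument should be light once the setup is in place.
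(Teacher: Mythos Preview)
Your strategy is sound and is essentially the argument behind Kisin's \cite[Lemma~1.2.12]{Kis06}, which is what the paper simply cites (noting $N=0$, hence $\eta=\id$). So you are reproducing the cited proof rather than taking a different route.

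There is, however, a genuine gap at your step~3. To get $\delta\equiv 0\pmod{u}$ you need \emph{both} $\xi\circ(1\otimes N_\nabla)\equiv 0$ and $N_\nabla\circ\xi\equiv 0$ modulo $u$. The first is clear since $1\otimes N_\nabla$ on the source vanishes mod $u$. For the second, since $\xi$ is the identity mod $u$, the reduction of $N_\nabla\circ\xi$ is the induced operator $\overline{N_\nabla}$ on $\CMcal{M}/u\CMcal{M}$. Your sentence ``$N_\nabla$ kills $\mathcal{D}(\CMcal{M})=\CMcal{M}/u\CMcal{M}$ in the sense that the target operator of the statement is $1\otimes N_\nabla$ which reduces to $0$ mod $u$ on the source'' conflates the source operator with the target one: the target operator is $N_\nabla$ on $\CMcal{M}$, not $1\otimes N_\nabla$, and its reduction $\overline{N_\nabla}$ is precisely the monodromy $N$ (it is $\W(k)[1/p]$-linear and satisfies $N\varphi=p\varphi N$ since $c\equiv p\pmod u$). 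Nothing in the paper's definition of $(\varphi,N_\nabla)$-module over $\Oo$ forces $N=0$; this is an \emph{extra hypothesis}, and it is exactly what the paper's proof invokes (``we have $N=0$ thus $\eta=\id$''). Without it the statement is false in general (in Kisin's formulation one must twist $\xi$ by $\eta$). So your argument is correct only once you add the assumption $N=0$, which is satisfied in the paper's application (the $\CMcal{M}$ arising from $\Mm\in\Mod_{\Ss}(\varphi)$).

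A minor point on step~5: you do not need $c$ to be a unit anywhere. Since $c(0)=p\neq 0$, the element $c$ has $u$-adic valuation $0$ in $\Oo$, so from $\delta\circ\varphi=c\,\varphi\circ\delta$ and $\delta(\mathcal{D}(\CMcal{M})\otimes 1)\subset u^n\CMcal{M}$ you get directly $\delta(\mathcal{D}(\CMcal{M})\otimes 1)=c\,\varphi\!\left(\delta(\varphi^{-1}\mathcal{D}(\CMcal{M})\otimes 1)\right)\subset c\,u^{pn}\varphi(\CMcal{M})\subset u^{pn}\CMcal{M}$ (using that $\varphi$ is bijective on $\mathcal{D}(\CMcal{M})$ because the $E$-height condition reduces to a unit mod $u$). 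Iterating and using that $\Oo$ is $u$-adically separated finishes it; no localization at $\lambda$ is needed.
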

\begin{proof}
	This is \cite[Lemma 1.2.12 (3)]{Kis06}, noting that we have $N=0$ thus $\eta=\id$ in our case.
\end{proof}

\begin{remark}\label{rem formule N nabla}
(1) Let $\Mm\in \Mod_{\Ss}(\varphi)$ and $\CMcal{M}=\Mm\otimes_{\Ss}\Oo$ the corresponding object in $\Mod_{\Oo}(\varphi, N).$ Then $\CMcal{M}$ is also an object in $\Mod_{\Oo}(\varphi).$ By lemma \ref{lemm 1.3.11}, we have a map $\mathcal{D}(\CMcal{M})\otimes_{\W(k)[1/p]}\Oo \to \CMcal{M}$ that lifts the identity of $\mathcal{D}(\CMcal{M})\simeq \CMcal{M}/u\CMcal{M},$ and is compatible with $\varphi$ and $N_{\nabla}.$ Here $N_{\nabla}$ acts on $\mathcal{D}(\CMcal{M})\otimes_{\W(k)[1/p]} \Oo$ as $1\otimes-u\lambda\frac{d}{du}$ ($\cf$ \cite[1.2]{Kis06}).
\item (2) As $\CMcal{M}/\varphi^{\ast}\CMcal{M}$ is killed by some power of $E(u),$ say $E(u)^n$ with $n\in \NN,$ then $\CMcal{M}/(\mathcal{D}(\CMcal{M})\otimes_{\W(k)[1/p]} \Oo)$ is killed by $\lambda^n$ (\cf lemma \ref{lemm 1.2.6} and the proof of \cite[Lemma 1.2.6]{Kis06}). Let $m\in \CMcal{M},$ and write $m=\sum\limits_{i=1}^{r}d_i\otimes \lambda^{-n}f_i$ with $d_i\in \mathcal{D}(\CMcal{M})$ and $f_i\in \Oo,$ then we have 
\[  N_{\nabla}(m)= \sum\limits_{i=1}^{r}d_i\otimes -u\lambda\frac{d}{du}(\lambda^{-n}f_i)=\sum\limits_{i=1}^{r}d_i\otimes(un\lambda^{-n}f_i\frac{d\lambda}{du}-u\lambda^{1-n}\frac{df_i}{du}).\]
\end{remark}

\section{Recover $\tau$-action from the $N_{\nabla}$-action}\label{section recover tau from N nabla}

\begin{definition}
(1) ($\cf$ \cite[D\'efinition 2.1]{Car13}) Let $D$ be an \'etale $\varphi$-module over $\Oo_{\Ee}.$ A \emph{$\varphi$-lattice} in $D$ is a finite type sub $\Ss$-module $\Mm\subset D$ which is stable by $\varphi$ and such that $D=\Oo_{\Ee}\otimes_{\Ss}\Mm.$
\item (2)  ($\cf$ \cite[D\'efinition 2.19]{Car13}) Let $(D, D_{\tau})$ be a $(\varphi, \tau)$-module over $(\Oo_{\Ee}, \Oo_{\Ee_{\tau}}).$ A \emph{$(\varphi, \tau)$-lattice} in $(D, D_{\tau})$ is a $\varphi$-lattice $\Mm$ of $D$ such that $\Ss_{\tau}\otimes_{\Ss}\Mm\subset \Oo_{\Ee_{\tau}}\otimes_{\Oo_{\Ee}}D=D_{\tau}$ is stable by $\tau,$ where $\Ss_{\tau}=\Oo_{\Ee_{\tau}}\cap\, \W(\Oo_{C^{\flat}}).$\index{$\Ss_{\tau}$}
\end{definition}

\begin{remark}
	Let $T\in\Rep_{\ZZ_p}(\mathscr{G}_K)$ be such that $V=\QQ_p\otimes_{\ZZ_p}T$ is semistable with non negative Hodge-Tate weights. Denote by $\Mm$ the corresponding $(\varphi, \tau)$-lattice (\cf \cite[Proposition 3.1]{Car13}) in the (contravariant) $(\varphi, \tau)$-module : \[\mathcal{D}^{\ast}(T[1/p]):=\Hom_{\QQ_p[\mathscr{G}_{K_{\pi}}]}(T[1/p], \widehat{\Ee^{\ur}})=\mathcal{D}(T^{\vee}[1/p]).\]\index{$D^{\ast}$}
\item (1) The underlying $\varphi$-lattice is the object in $\Mod_{\Ss}(\varphi)$ associated to $T$ by Kisin in \cite{Kis06} (\cf \cite[\S 3.1.1]{Car13}). 
\item (2) Put $\CMcal{M}=\Oo\otimes_{\Ss}\Mm,$ equipped with its operator $N_{\nabla}.$ By \cite[Proposition 2.23]{Car13}, we have 
	\[ N_{\nabla}(\Mm) \subset \Ss_{\nabla}\otimes_{\Ss}\Mm, \] 
	where $\Ss_{\nabla}$ is the set of sums of the form $\sum\limits_{n\geq 0}\frac{P_n(u)}{p^{n+1}}u^{e\frac{p^n-1}{p-1}}$ and the $P_n(u)$ are polynomials with coefficients in $\W(k).$
\end{remark}

\begin{definition}($\cf$ \cite[\S 3.3.2]{Car13})
	For any nonnegative integer $i,$ we define 
	\[\Rr^{\mathsf{int}}_i = \Big\{  \sum\limits_{n\geq 0}a_n u^n ; \,  v_p(a_n)+i\log_p(n) \text{ are bounded below for } n\geq 1 \Big\}.  \]\index{$\Rr^{\mathsf{int}}_i$}
\end{definition} 

\begin{remark}($\cf$ \cite[\S 3.3.2]{Car13})
	Remark that $\Rr^{\mathsf{int}}_i \subset \Oo.$ If $i$ and $j$ are two integers, the product of one function of $\Rr^{\mathsf{int}}_{i}$ by another function of $\Rr^{\mathsf{int}}_j$ falls into $\Rr^{\mathsf{int}}_{i+j}.$ 
\end{remark}

\begin{definition}($\cf$ \cite[\S 3.3.2]{Car13})
	We define $N_{\nabla}^{(0)}=\id$ and 
	\[ N_{\nabla}^{(i+1)}=iu\cdot\frac{d \lambda}{d u}\cdot N_{\nabla}^{(i)} + N_{\nabla}\circ N_{\nabla}^{(i)}, \]\index{$ N_{\nabla}^{(i+1)}$}which then define $\Ss$-linear maps $N_{\nabla}^{(i)}\colon \Mm \to \Rr_i^{\mathsf{int}}\otimes_{\Ss}\Mm$. The operator $\sum\limits_{i\geq 0}\frac{(p \mathfrak{t})^i}{i!}\cdot N_{\nabla}^{(i)}$ converges to a $\W(k)$-linear map 
	\[\tau\colon \Mm \to \BB^{+}_{\cris}\otimes_{\Ss}\Mm \]
(recall that $\BB_{\cris}^{+}=\AA_{\cris}[1/p]$ where $\AA_{\cris}$ is the $p$-adic completion of the divided power envelope of $\W(\Oo_{C^{\flat}})$ with respect to the ideal generated by $1+[\varepsilon^{1/p}]+[\varepsilon^{1/p}]^{2} +\cdots + [\varepsilon^{1/p}]^{p-1},$ \cf\cite{Fon94}).	 
\end{definition}

\begin{remark}\label{rem tau action} (1) By \cite[Remarque 3.25]{Car13}, we have in fact
	\[ \tau(\Mm)\subset \mathsf{A}\otimes_{\Ss}\Mm, \]
	where $\mathsf{A}=\W(\Oo_{C^{\flat}})[\mathfrak{t}/p]^{\wedge}$ is the $p$-adic completion of $\W(\Oo_{C^{\flat}})[\mathfrak{t}/p].$
\item (2) By \cite[Proposition 3.27]{Car13}, we have $\tau(\Mm)\subset \W(\Oo_{C^{\flat}})\otimes_{\Ss}\Mm,$ which inserts in the following commutative square

\[ \xymatrix{
\Mm \ar[rr]^{\tau} \ar@{^(->}[d]&& \W(\Oo_{C^{\flat}})\otimes_{\Ss}\Mm\\
\Oo_{\Ee}\otimes_{\Ss}\Mm \ar^{\simeq}[d]&&\Oo_{\Ee_{\tau}}\otimes_{\Ss}\Mm\ar[d]^{\simeq}\ar@{^(->}[u]\\
\mathcal{D}^{\ast}(T)\ar[rr]^{\tau_D} && \mathcal{D}^{\ast}(T)_{\tau}.
}\] 	
\item (3)  We have $\tau(ux)=[\varepsilon]\cdot \tau(x)$ for any $x\in \Mm,$ which extends $\tau$ to $\BB_{\cris}^{+}\otimes_{\Ss}\Mm$ by semi-linearity ($\cf$ \cite[\S 3.3.2]{Car13}).

\end{remark}

\section{Relate \texorpdfstring{$(\varphi, \tau)$}{(phi,tau)}-modules with Breuil-Kisin modules}
In the following, let $G$ be a $p$-divisible group over $\Oo_K$ and denote $\T_p G$ (or simply denote $\T$) its Tate module and $\Mm$ its associated Breuil-Kisin module. Recall that $\mathcal{D}^{\ast}(\T):=\Hom_{\ZZ_p[\mathscr{G}_{K_{\pi}}]}(\T, \Oo_{\widehat{\Ee^{\ur}}})\simeq \Mm\otimes_{\Ss}\Oo_{\Ee}$ and $\mathcal{D}^{\ast}(\T[1/p])\simeq \Mm\otimes_{\Ss}\Ee.$

\begin{notation}
Let $T\in \Rep_{\ZZ_p}(\mathscr{G}_K),$ then we put 
\[ \mathcal{D}^{\ast, \dagger}(T)=\Hom_{\ZZ_p[\mathscr{G}_{K_{\pi}}]}(T, \Oo_{\Ee^{\ur, \dagger}}). \]\index{$ \mathcal{D}^{\ast, \dagger}$}
Let $V\in \Rep_{\QQ_p}(\mathscr{G}_K),$ then we put 
\[ \mathcal{D}^{\ast, \dagger}(V)=\Hom_{\QQ_p[\mathscr{G}_{K_{\pi}}]}(V, \Ee^{\ur, \dagger}). \]
\end{notation}

\begin{lemma}\label{lemm breuil kisin Robba ring}
We have 
\[ \mathcal{D}^{\ast, \dagger}(\T[1/p])\simeq \Ee^{\dagger}\otimes_{\Ss}\Mm, \]
hence 
\[ \Rr\otimes_{\Ee^{\dagger}} \mathcal{D}^{\ast, \dagger}(\T[1/p])\simeq \Rr\otimes_{\Ss}\Mm.   \]
\end{lemma}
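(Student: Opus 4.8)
The plan is to reduce the statement to the overconvergence result already available for $(\varphi,\tau)$-modules. Recall that $\mathcal{D}^{\ast,\dagger}(\T[1/p]) = \Hom_{\QQ_p[\mathscr{G}_{K_{\pi}}]}(\T[1/p], \Ee^{\ur,\dagger})$ is, by definition, the overconvergent $(\varphi,\tau)$-module of the dual representation, i.e. $\mathcal{D}^{\ast,\dagger}(\T[1/p]) = \mathcal{D}^{\dagger}\big((\T[1/p])^{\vee}\big)$ in the notation of Definition~\ref{def D dagger V}. First I would invoke Theorem~\ref{thm surconvergence phi-tau version} (or the integral refinement, Proposition~\ref{prop Zp surconvergence}) to know that the non-overconvergent $(\varphi,\tau)$-module $\mathcal{D}^{\ast}(\T[1/p]) \simeq \Ee\otimes_{\Ss}\Mm$ (which identifies with $\Mm\otimes_{\Ss}\Oo_{\Ee}[1/p]$ via Kisin's theory, since $\Mm$ is the Breuil-Kisin module attached to $G$, cf. Section~\ref{section recover tau from N nabla}) is overconvergent, so that $\mathcal{D}^{\ast}(\T[1/p]) = \Ee\otimes_{\Ee^{\dagger}}\mathcal{D}^{\ast,\dagger}(\T[1/p])$.

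Next I would exhibit $\Ee^{\dagger}\otimes_{\Ss}\Mm$ as a $\varphi$-stable $\Ee^{\dagger}$-lattice inside $\mathcal{D}^{\ast,\dagger}(\T[1/p])$. Since $\Mm$ is a finite free $\Ss$-module with a Frobenius of finite $E$-height, $\Ee^{\dagger}\otimes_{\Ss}\Mm$ is a finite free $\Ee^{\dagger}$-module equipped with an étale $\varphi$ (étaleness is inherited after inverting $p$ because $E(u)$ is a unit in $\Ee^{\dagger}$). The natural map $\Ee^{\dagger}\otimes_{\Ss}\Mm \to \mathcal{D}^{\ast,\dagger}(\T[1/p])$, obtained by base change from $\Ss\hookrightarrow \Oo_{\Ee^{\ur,\dagger}}$ together with the duality pairing, is $\varphi$-equivariant; tensoring it up to $\Ee$ recovers the known isomorphism $\Ee\otimes_{\Ss}\Mm \simeq \mathcal{D}^{\ast}(\T[1/p])$. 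So the map is injective (as $\Ee$ is faithfully flat over $\Ee^{\dagger}$, cf. \cite[Theorem 7.2(3)]{Mat89}), and both source and target are $\Ee^{\dagger}$-lattices of $\mathcal{D}^{\ast}(\T[1/p])$ stable under $\varphi$. The key point is that an étale $\varphi$-module over $\Ee^{\dagger}$ that becomes isomorphic over $\Ee$ to a fixed étale $\varphi$-module admits a \emph{unique} $\varphi$-stable $\Ee^{\dagger}$-lattice descending it: this is precisely the full faithfulness of the functor $\Mod_{\Ee^{\dagger}}^{\text{ét}}(\varphi)\to \Mod_{\Ee}^{\text{ét}}(\varphi)$ (equivalently the equivalence with $\Rep_{\QQ_p}(\mathscr{G}_{K_{\pi}})$, as used in the proof of Theorem~\ref{prop equivalence cat overconvergent: normal case} via \cite{Ked05}). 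Hence $\Ee^{\dagger}\otimes_{\Ss}\Mm = \mathcal{D}^{\ast,\dagger}(\T[1/p])$.

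Finally, tensoring this identity with $\Rr = \BB_{\rig,K_{\pi}}^{\dagger}$ over $\Ee^{\dagger}$ gives $\Rr\otimes_{\Ee^{\dagger}}\mathcal{D}^{\ast,\dagger}(\T[1/p]) \simeq \Rr\otimes_{\Ee^{\dagger}}(\Ee^{\dagger}\otimes_{\Ss}\Mm) \simeq \Rr\otimes_{\Ss}\Mm$, which is the second assertion; alternatively one can observe that $\Rr\otimes_{\Ee^{\dagger}}\mathcal{D}^{\ast,\dagger}(\T[1/p]) = \mathcal{D}^{\ast,\dagger}_{\rig}(\T[1/p])$ by Theorem~\ref{thm D dagger rig}. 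I expect the main obstacle to be bookkeeping the duality and the covariant-versus-contravariant conventions cleanly — specifically matching the identification $\mathcal{D}^{\ast}(\T[1/p]) \simeq \Mm\otimes_{\Ss}\Ee$ coming from Kisin's $\varphi$-module with the overconvergent construction so that the lattice $\Ee^{\dagger}\otimes_{\Ss}\Mm$ really is $\varphi$-stable inside $\mathcal{D}^{\ast,\dagger}(\T[1/p])$ — but once the uniqueness of the overconvergent lattice is in hand (which is formal from the categorical equivalences already proved), the argument is short.
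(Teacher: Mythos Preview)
Your argument is correct and close in spirit to the paper's, though packaged differently. The paper works at the ``unramified'' level: starting from Kisin's formula $\T=\Hom_{\Ss,\varphi}(\Mm,\Ss^{\ur})$ and the inclusion $\Ss^{\ur}\subset\Oo_{\Ee^{\ur,\dagger}}$, it shows the diagram
\[
\T\;\longrightarrow\;\Hom_{\Ss,\varphi}(\Mm,\Oo_{\Ee^{\ur,\dagger}})\;\hookrightarrow\;\Hom_{\Ss,\varphi}(\Mm,\Oo_{\widehat{\Ee^{\ur}}})
\]
consists of isomorphisms, deduces a $\mathscr{G}_{K_\pi}$-equivariant isomorphism $\Ee^{\ur,\dagger}\otimes_{\QQ_p}\T[1/p]\simeq(\Ee^{\dagger}\otimes_{\Ss}\Mm)^{\vee}\otimes_{\Ee^{\dagger}}\Ee^{\ur,\dagger}$ (by descent from $\widehat{\Ee^{\ur}}$), and only then takes $\mathscr{G}_{K_\pi}$-invariants. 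You instead construct the comparison map directly at the level of $\Ee^{\dagger}$-modules and descend from $\Ee$. Both routes use the same essential ingredient (that $\Ss^{\ur}$ already sits inside the overconvergent ring) and the same faithful-flatness descent; the paper's version is slightly more explicit about how the ``natural map'' arises.

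One remark: your appeal to Kedlaya's uniqueness of \'etale $\varphi$-lattices is redundant. Once you know that your map $\Ee^{\dagger}\otimes_{\Ss}\Mm\to\mathcal{D}^{\ast,\dagger}(\T[1/p])$ becomes an isomorphism after $\otimes_{\Ee^{\dagger}}\Ee$, faithful flatness of the field extension $\Ee^{\dagger}\subset\Ee$ already forces it to be an isomorphism --- there is no need to invoke slope theory or the categorical equivalence from \cite{Ked05}.
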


\begin{proof}
	By \cite[Theorem 2.2.7]{Kis06}, we have
	\[ \T=\Hom_{\Ss, \varphi}(\Mm, \Ss^{\ur}) \]
	(where $\Ss^{\ur}=\Oo_{\Ee^{\ur}}\cap \W(\Oo_{C^{\flat}})).$ \index{$\Ss^{\ur}$} Consider the following diagram
	
	\[ \xymatrix{ \T=\Hom_{\Ss, \varphi}(\Mm, \Ss^{\ur}) \ar[rd]\ar[r]& \Hom_{\Oo_{\Ee}, \varphi}(\Oo_{\Ee}\otimes_{\Ss}\Mm, \Oo_{\widehat{\Ee^{\ur}}}) \simeq \Hom_{\Ss, \varphi}(\Mm, \Oo_{\widehat{\Ee^{\ur}}})\\
			&\Hom_{\Oo_{\Ee^{\dagger}}, \varphi}(\Oo_{\Ee^{\dagger}}\otimes_{\Ss}\Mm, \Oo_{\Ee^{\ur, \dagger}})\simeq \Hom_{\Ss, \varphi}(\Mm, \Oo_{\Ee^{\ur, \dagger}}). \ar@{^(->}[u]
	}  \]
The horizontal map is an isomorphism (\cf \cite[2.1.4]{Kis06}), and the vertical map is injective (as $\Oo_{\Ee^{\ur, \dagger}}\subset \Oo_{\widehat{\Ee^{\ur}}}$), hence all maps in the diagram are isomorphisms. We then have 
\[ \T[1/p]\simeq \Hom_{\Ee^{\dagger}, \varphi}(\Ee^{\dagger}\otimes_{\Ss}\Mm, \Ee^{\ur, \dagger})=\big( (\Ee^{\dagger}\otimes_{\Ss}\Mm)^{\vee}\otimes_{\Ee^{\dagger}}\Ee^{\ur, \dagger} \big)^{\varphi=1}.   \]
In particular  $\T[1/p]\subset  (\Ee^{\dagger}\otimes_{\Ss}\Mm)^{\vee}\otimes_{\Ee^{\dagger}}\Ee^{\ur, \dagger},$  hence we have a $\mathscr{G}_{K_{\pi}}$-equivariant map \begin{equation}\label{equ BK module dfagger}
	\Ee^{\ur, \dagger}\otimes_{\QQ_p}\T[1/p]\to  (\Ee^{\dagger}\otimes_{\Ss}\Mm)^{\vee}\otimes_{\Ee^{\dagger}}\Ee^{\ur, \dagger}
\end{equation}
by $\Ee^{\ur, \dagger}$-linearity. Notice that (\ref{equ BK module dfagger}) is an isomorphism after tensoring with $\widehat{\Ee^{\ur}}$ (as $\mathcal{D}^{\ast}(\T[1/p])\simeq \Ee\otimes_{\Ss}\Mm$), hence (\ref{equ BK module dfagger}) is an isomorphism. We then have 
\begin{align*}
	\mathcal{D}^{\ast, \dagger}(\T[1/p])&=\Hom_{\QQ_p[\mathscr{G}_{K_{\pi}}]}(\T[1/p], \Ee^{\ur, \dagger})\\
	                                   &=\Hom_{\Ee^{\ur, \dagger}}(\Ee^{\ur, \dagger}\otimes_{\QQ_p} \T[1/p], \Ee^{\ur, \dagger})^{\mathscr{G}_{K_{\pi}}}\\
	                                   &=\Hom_{\Ee^{\ur, \dagger}}((\Ee^{\dagger}\otimes_{\Ss}\Mm)^{\vee}\otimes_{\Ee^{\dagger}}\Ee^{\ur, \dagger}, \Ee^{\ur, \dagger})^{\mathscr{G}_{K_{\pi}}}\\
	                                   &=\Hom_{\Ee^{\dagger}}((\Ee^{\dagger}\otimes_{\Ss}\Mm)^{\vee}, \Ee^{\ur, \dagger})^{\mathscr{G}_{K_{\pi}}}\\
	                                   &=\big((\Ee^{\dagger}\otimes_{\Ss}\Mm)\otimes_{\Ee^{\dagger}}\Ee^{\ur, \dagger}\big)^{\mathscr{G}_{K_{\pi}}}\\
	                                   &=\Ee^{\dagger}\otimes_{\Ss}\Mm.
\end{align*}

\end{proof}

\begin{remark}($\cf$ \cite[Proposition 2.2.2]{Kis06})\label{rem N nabla action formula for TpG}
Let $\Mm\in\BT^{\varphi}_{\Ss}$ and $\CMcal{M}=\Mm\otimes_{\Ss}\Oo$ be the corresponding object in $\Mod_{\Oo}(\varphi, N),$ then in particular $\CMcal{M}\in\Mod_{\Oo}(\varphi).$ We have an operator $N_{\nabla}$ on $\CMcal{M}$ by remark \ref{rem formule N nabla}, which extends into a derivation on $\Rr\otimes_{\Oo}(\Oo\otimes_{\Ss}\Mm)\simeq \Rr\otimes_{\Ss}\Mm.$
\end{remark}

\section{Complexes with Breuil-Kisin modules}

For any $V\in \Rep_{\QQ_p}(\mathscr{G}_{K}),$ recall that we have the following complex $\Cc_{\varphi, \tau}(V),$ which computes the continuous Galois cohomology of $V$ by theorem \ref{thm main result}.

\[\xymatrix{ 
	0\ar[rr]&& \mathcal{D}(V) \ar[r]  & \mathcal{D}(V) \oplus  \mathcal{D}(V)_{\tau, 0}   \ar[r] &  \mathcal{D}(V)_{\tau, 0}   \ar[r]& 0   \\
	&& x \ar@{|->}[r]  &((\varphi-1)(x), (\tau_D-1)(x))  &&{}\\
	&&&{} (y,z) \ar@{|->}[r] &(\tau_D-1)(y)-(\varphi-1)(z).&
}  \]

In particular, we consider the representation $V=\T^{\vee}[1/p]$ (recall that $\T=\T_pG$). \ft{$\T^{\vee}$ denotes the $\ZZ_p$-linear dual of $\T$.} We have the complex $\Cc_{\varphi, \tau}(\T^{\vee}[1/p])$ as follows:

\[\xymatrix{ 
	0\ar[rr]&& \mathcal{D}(\T^{\vee}[1/p]) \ar[r]  & \mathcal{D}(\T^{\vee}[1/p]) \bigoplus \mathcal{D}(\T^{\vee}[1/p])_{\tau, 0}  \ar[r] &  \mathcal{D}(\T^{\vee}[1/p])_{\tau, 0}  \ar[r]& 0   \\
	&& x \ar@{|->}[r]  &((\varphi-1)(x), (\tau_D-1)(x))  &&{}\\
	&&&{} (y,z) \ar@{|->}[r] &(\tau_D-1)(y)-(\varphi-1)(z).&
}  \]

 Recall that we have $\mathcal{D}(\T^{\vee}[1/p])\simeq \mathcal{D}^{\ast}(\T[1/p])\simeq \Ee\otimes_{\Ss}\Mm,$ hence we can rewrite the complex $\Cc_{\varphi, \tau}(V)$ using the Breuil-Kisin module $\Mm$ associated to $\T_pG$. 

\medskip

We have a formula for $N_{\nabla}$-action over $\Mm\otimes_{\Ss}\Oo$ by remark \ref{rem N nabla action formula for TpG}, and in particular a formula for $N_{\nabla}$-action restricted on $\Mm.$ By remark \ref{rem tau action}, we have the following formula of $\tau$-action over $\Mm$ \[ \tau:=\sum\limits_{i\geq 0}\frac{(p \mathfrak{t})^i}{i!}\cdot N_{\nabla}^{(i)}\colon \Mm \to \BB^{+}_{\cris}\otimes_{\Ss}\Mm.\]
 As we have seen, it takes values in $\W(\Oo_{C^{\flat}})\otimes_{\Ss}\Mm,$ and factors through a map 
\[ \tau_D\colon \Mm\to \Ee_{\tau}\otimes_{\Ss}\Mm,  \]
which extends into 
\[\tau_D\colon \Ee_{\tau}\otimes_{\Ss}\Mm \to \Ee_{\tau}\otimes_{\Ss}\Mm.\]

\begin{definition}
	Let $\Mm\in \BT_{\Ss}^{\varphi},$ we define the complex $\Cc_{\varphi, \tau}(\Mm)$ as follows:
	
	\[\xymatrix{ 
		0\ar[rr]&& \Ee\otimes_{\Ss}\Mm  \ar[r]  & \Ee\otimes_{\Ss}\Mm \bigoplus (\Ee_{\tau}\otimes_{\Ss}\Mm)_{0}   \ar[r] &  (\Ee_{\tau}\otimes_{\Ss}\Mm)_{0}   \ar[r]& 0   \\
		&& x \ar@{|->}[r]  &((\varphi-1)(x), (\tau_D-1)(x))  &&{}\\
		&&&{} (y,z) \ar@{|->}[r] &(\tau_D-1)(y)-(\varphi-1)(z)&
	}  \]

	where $(\Ee_{\tau}\otimes_{\Ss}\Mm)_{0}:=\big\{ x\in \Ee_{\tau}\otimes_{\Ss}\Mm;\  (\gamma\otimes 1)x=(1+\tau_D+\tau_D^{2}+\cdots + \tau_D^{(\chi(\gamma)-1)})(x) \big\}.$	
\end{definition}

\begin{corollary}\label{coro chpater 6 module BK}
	Let $G\in \BT_{\Oo_K},$ denote $\T$ its Tate module and $\Mm$ its corresponding Breuil-Kisin module. Then we have
	\[\H^{i}(\mathscr{G}_{K}, \T^{\vee}[1/p])\simeq \H^i(\Cc_{\varphi, \tau}(\Mm)) \]
	for all $i\in\NN.$
\end{corollary}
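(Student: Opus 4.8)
The plan is to simply unwind the identifications that have already been established in the excerpt. The entire content of Corollary~\ref{coro chpater 6 module BK} is that the complex $\Cc_{\varphi, \tau}(\Mm)$, built purely from the Breuil-Kisin module $\Mm$ of $G$, is literally the complex $\Cc_{\varphi, \tau}(\T^\vee[1/p])$ written in different notation; once that is seen, Theorem~\ref{thm main result} applied to the $\QQ_p$-representation $V = \T^\vee[1/p]$ gives the canonical and functorial isomorphism $\H^i(\Cc_{\varphi,\tau}(V)) \simeq \H^i(\mathscr{G}_K, V)$ for all $i \in \NN$, which is exactly the desired statement.

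First I would record the dictionary of identifications. We have $V := \T^\vee[1/p]$, and by definition $\mathcal{D}(V) = \mathcal{D}(\T^\vee[1/p]) \simeq \mathcal{D}^\ast(\T[1/p])$; by Kisin's theory (section~\ref{section N nabla action O times M}, using $\mathcal{D}^\ast(\T[1/p]) \simeq \Mm \otimes_{\Ss} \Ee$) this is $\Ee \otimes_{\Ss} \Mm$ as an \'etale $\varphi$-module over $\Ee$. Tensoring up, $\mathcal{D}(V)_\tau = \Oo_{\Ee_\tau} \otimes_{\Oo_\Ee} \mathcal{D}(V) \simeq \Ee_\tau \otimes_{\Ss} \Mm$. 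I would then check that the Frobenius on $\Ee \otimes_{\Ss}\Mm$ coming from $\Mm$ agrees with the Frobenius of the $(\varphi,\tau)$-module $\mathcal{D}(V)$ (this is part of Kisin's equivalence), and that the operator $\tau_D$ on $\Ee_\tau \otimes_{\Ss}\Mm$ constructed in section~\ref{section recover tau from N nabla} via $\tau = \sum_{i\geq 0} \frac{(p\mathfrak{t})^i}{i!} N_\nabla^{(i)}$ coincides, under the identification of Remark~\ref{rem tau action}~(2), with the $\tau_D$ of the $(\varphi,\tau)$-module $\mathcal{D}(V)$. Remark~\ref{rem tau action}~(2) already provides exactly the commutative square asserting this compatibility, so this step is essentially a citation. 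Consequently $(\Ee_\tau \otimes_{\Ss}\Mm)_0$, defined in the statement by the condition $(\gamma \otimes 1)x = (1 + \tau_D + \cdots + \tau_D^{\chi(\gamma)-1})(x)$, matches $\mathcal{D}(V)_{\tau, 0}$ by Lemma~\ref{Replace g witi gamma} (the reformulation of $D_{\tau,0}$ with $\gamma$ in place of arbitrary $g$). Hence the two complexes $\Cc_{\varphi,\tau}(\Mm)$ and $\Cc_{\varphi,\tau}(\T^\vee[1/p])$ have the same terms and the same differentials.

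The remaining step is to invoke Theorem~\ref{thm main result} for $V = \T^\vee[1/p] \in \Rep_{\QQ_p}(\mathscr{G}_K)$, which yields $\H^i(\Cc_{\varphi,\tau}(\T^\vee[1/p])) \simeq \H^i(\mathscr{G}_K, \T^\vee[1/p])$ canonically and functorially for all $i$. Composing with the isomorphism of complexes established above gives $\H^i(\Cc_{\varphi,\tau}(\Mm)) \simeq \H^i(\mathscr{G}_K, \T^\vee[1/p])$, as claimed.

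I do not anticipate a serious obstacle: the proof is a matter of carefully tracking identifications rather than proving anything new. The one point requiring genuine care—hence the step I would flag as the ``main'' one—is verifying that the $\tau_D$-action defined intrinsically on $\Ee_\tau \otimes_{\Ss}\Mm$ through Caruso's formula with the $N_\nabla^{(i)}$ operators really is the same semilinear operator as the one attached to the $(\varphi,\tau)$-module $\mathcal{D}^\ast(\T[1/p])$, together with checking that dualizing and inverting $p$ interacts correctly with all these functorial constructions (in particular that $\mathcal{D}(\T^\vee[1/p]) \simeq \mathcal{D}^\ast(\T[1/p])$ as $(\varphi,\tau)$-modules, not merely as $\varphi$-modules). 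Both facts are supplied by Caruso's work (Remark~\ref{rem tau action}) and by the definition of the contravariant $(\varphi,\tau)$-module, so in the write-up I would state them explicitly with precise references and then simply assemble the chain of isomorphisms.
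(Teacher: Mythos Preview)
Your proposal is correct and is essentially the same approach as the paper's own proof: identify $\Cc_{\varphi,\tau}(\Mm)$ with $\Cc_{\varphi,\tau}(\T^\vee[1/p])$ via the dictionary already set up in the preceding text (the isomorphism $\mathcal{D}(\T^\vee[1/p])\simeq\mathcal{D}^\ast(\T[1/p])\simeq\Ee\otimes_{\Ss}\Mm$ and Remark~\ref{rem tau action}(2) for the $\tau_D$-action), then apply Theorem~\ref{thm main result}. The paper's proof is a single sentence (``This follows from theorem~\ref{thm main result}'') precisely because those identifications were recorded just before the definition of $\Cc_{\varphi,\tau}(\Mm)$; your write-up makes explicit what the paper leaves implicit, but there is no difference in substance.
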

\begin{proof}
	This follows from theorem \ref{thm main result}.
\end{proof}

Let $V\in \Rep_{\QQ_p}(\mathscr{G}_K)$ and $\mathcal{D}^{\dagger}_{\rig}(V)\in \Mod_{\Rr}(\varphi, N_{\nabla})$ the corresponding $(\varphi, N_{\nabla})$-module. We have the complex $\Cc_{\varphi, N_{\nabla}}(V)$ as follows:

\[\xymatrix{ 
	0\ar[rr]&& \mathcal{D}^{\dagger}_{\rig}(V)  \ar[r]  & \mathcal{D}^{\dagger}_{\rig}(V) \oplus \mathcal{D}^{\dagger}_{\rig}(V)   \ar[r] &  \mathcal{D}^{\dagger}_{\rig}(V)   \ar[r]& 0   \\
	&& x \ar@{|->}[r]  &((\varphi-1)(x), N_{\nabla}(x))  &&{}\\
	&&&{} (y,z) \ar@{|->}[r] & N_{\nabla}(y)-(c\varphi-1)(z).&
}  \]

By proposition \ref{prop H0 H1 of complex C_R(V)} we have $\H^i(\Cc_{\varphi, N_{\nabla}}(V))\simeq \iLim{n}\H^i(\mathscr{G}_{K_n}, V)$ for $i\in \{0, 1 \}.$ By lemma \ref{lemm breuil kisin Robba ring}, for \, $G\in \BT_{\Oo_K}$ we have 
\[\Rr\otimes_{\Ee^{\dagger}}\mathcal{D}^{\ast, \dagger}(\T[1/p])\simeq \Mm\otimes_{\Ss}\Rr. \] 
Let $V=\T^{\vee}[1/p]\in \Rep_{\QQ_p}(\mathscr{G}_K),$ we can rewrite the complex with corresponding Breuil-Kisin modules. 

\begin{definition}
	For any $\Mm\in \BT_{\Ss}^{\varphi},$ we define the complex $\Cc_{\varphi, N_{\nabla}}(\Mm)$ as follows:
	\[\xymatrix{ 
		0\ar[rr]&& \Mm\otimes_{\Ss}\Rr  \ar[r]  & \Mm\otimes_{\Ss}\Rr \bigoplus \Mm\otimes_{\Ss}\Rr   \ar[r] &  \Mm\otimes_{\Ss}\Rr \ar[r]& 0   \\
		&& x \ar@{|->}[r]  &((\varphi-1)(x), N_{\nabla}(x))  &&{}\\
		&&&{} (y,z) \ar@{|->}[r] & N_{\nabla}(y)-(c\varphi-1)(z)&
	}  \]
	
	where the action of $N_{\nabla}$ is given by the formula in remark \ref{rem N nabla action formula for TpG}. 
\end{definition}

\begin{corollary}
	Let $G\in \BT_{\Oo_K},$ denote $\T$ its Tate module and $\Mm$ its corresponding Breuil-Kisin module. Then we have
	\[ \H^i(\Cc_{\varphi, N_{\nabla}}(\Mm))\simeq \iLim{n}\H^i(\mathscr{G}_{K_n}, \T^{\vee}[1/p]) \text{ for } i\in \{0, 1 \}.\] 
\end{corollary}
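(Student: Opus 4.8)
The plan is to reduce the statement to the already-established isomorphism $\H^i(\Cc_{\varphi,N_\nabla}(V))\simeq\iLim{n}\H^i(\mathscr{G}_{K_n},V)$ of proposition \ref{prop H0 H1 of complex C_R(V)}, applied to the representation $V=\T^\vee[1/p]$. So the only real content is to identify the complex $\Cc_{\varphi,N_\nabla}(\Mm)$ built from the Breuil--Kisin module with the complex $\Cc_{\varphi,N_\nabla}(\T^\vee[1/p])$ built from the $(\varphi,N_\nabla)$-module $\mathcal{D}^\dagger_{\rig}(\T^\vee[1/p])$ over $\Rr$. First I would recall the canonical identifications: by the equivalence of Kisin (theorem \ref{thm Kisin}) the $p$-divisible group $G$ corresponds to a Breuil--Kisin module $\Mm\in\BT^\varphi_\Ss$, and by lemma \ref{lemm breuil kisin Robba ring} we have a $\varphi$-equivariant isomorphism $\Rr\otimes_{\Ee^\dagger}\mathcal{D}^{\ast,\dagger}(\T[1/p])\simeq\Rr\otimes_\Ss\Mm$. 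Dualizing (using proposition \ref{prop internal hom and tensor product} / lemma \ref{lemm internal hom and tensor product} and the compatibility of $\mathcal{D}^\dagger_{\rig}$ with duals) gives $\mathcal{D}^\dagger_{\rig}(\T^\vee[1/p])\simeq\Rr\otimes_\Ss\Mm$ as $\varphi$-modules over $\Rr$.

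The next step is to check that this identification is compatible with the connection $N_\nabla$ on both sides. On the $(\varphi,\tau)$-module side, $N_\nabla$ is the operator of Poyeton coming from the infinitesimal $\tau$-action, which by remark \ref{rmq N nabla } equals $-u\lambda\frac{d}{du}$ on $\Rr$; on the Breuil--Kisin side, $N_\nabla$ on $\Mm\otimes_\Ss\Oo$ (hence on $\Mm\otimes_\Ss\Rr$) is Kisin's operator, described explicitly via lemma \ref{lemm 1.3.11} and remark \ref{rem formule N nabla} by the formula $N_\nabla(m)=\sum_i d_i\otimes(un\lambda^{-n}f_i\frac{d\lambda}{du}-u\lambda^{1-n}\frac{df_i}{du})$ for $m=\sum_i d_i\otimes\lambda^{-n}f_i$ in $\CMcal{M}$. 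The point is that both are the unique derivation over $-u\lambda\frac{d}{du}\colon\Rr\to\Rr$ extending the given $\varphi$-structure and satisfying $N_\nabla\circ\varphi=c\varphi\circ N_\nabla$; since the underlying $\varphi$-modules over $\Rr$ agree and such an extension of the connection is unique (a $(\varphi,N_\nabla)$-module structure on a given $\varphi$-module over $\Rr$ is rigid — this is implicit in proposition \ref{prop N Nabla connection R-tau} and the theory of $\S5$), the two $N_\nabla$'s are carried to one another by the isomorphism. Consequently $\Cc_{\varphi,N_\nabla}(\Mm)$ and $\Cc_{\varphi,N_\nabla}(\T^\vee[1/p])$ are isomorphic term by term, compatibly with differentials, hence have the same cohomology.

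Finally I would conclude: $\H^i(\Cc_{\varphi,N_\nabla}(\Mm))\simeq\H^i(\Cc_{\varphi,N_\nabla}(\T^\vee[1/p]))\simeq\iLim{n}\H^i(\mathscr{G}_{K_n},\T^\vee[1/p])$ for $i\in\{0,1\}$ by proposition \ref{prop H0 H1 of complex C_R(V)}, noting that $\T^\vee[1/p]$ is indeed a $\QQ_p$-representation of $\mathscr{G}_K$ so the proposition applies. I expect the main obstacle to be the bookkeeping in the second step: one must verify carefully that the dual $\mathcal{D}^\dagger_{\rig}(\T^\vee[1/p])$ is matched with $\Rr\otimes_\Ss\Mm$ and not some twist, and that the normalizations of $N_\nabla$ (there is the subtlety noted in remark \ref{rem formule N nabla} that Kisin's $c$ differs from Caruso's, and the covariant/contravariant switch flagged in section \ref{section D of tate twist}) are consistent; the uniqueness of the $N_\nabla$-structure is what makes this go through cleanly, so isolating and citing that uniqueness precisely is the key to a short, correct argument. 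One should also remark that, as in proposition \ref{prop H0 H1 of complex C_R(V)}, the case $i=2$ is left open, so the statement is restricted to $i\in\{0,1\}$.
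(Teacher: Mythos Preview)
Your overall strategy---reduce to proposition \ref{prop H0 H1 of complex C_R(V)} applied to $V=\T^\vee[1/p]$---is exactly the paper's one-line proof. The paper treats the identification $\Cc_{\varphi,N_\nabla}(\Mm)\simeq\Cc_{\varphi,N_\nabla}(\T^\vee[1/p])$ as already in place: the complex $\Cc_{\varphi,N_\nabla}(\Mm)$ is \emph{defined} (just before the corollary) as the rewriting of $\Cc_{\varphi,N_\nabla}(V)$ via lemma \ref{lemm breuil kisin Robba ring}, with the $N_\nabla$ specified by remark \ref{rem N nabla action formula for TpG}. So nothing beyond the citation is required.

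That said, there is a genuine gap in the justification you propose for the compatibility of the two $N_\nabla$'s. You argue that the $(\varphi,N_\nabla)$-structure on a fixed \'etale $\varphi$-module over $\Rr$ is rigid, and cite proposition \ref{prop N Nabla connection R-tau} for this. That proposition says only that every $(\varphi,N_\nabla)$-module arises from some $(\varphi,\tau^{p^n})$-module; it does \emph{not} say the $N_\nabla$ is unique on a given $\varphi$-module, and in fact it is not: the $\varphi$-module only remembers the $\mathscr{G}_{K_\pi}$-representation, while $N_\nabla$ encodes (infinitesimally) the extra $\tau$-action, so two non-isomorphic $\mathscr{G}_K$-representations with the same restriction to $\mathscr{G}_{K_\pi}$ give the same $\varphi$-module but different $N_\nabla$'s. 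The correct route, which the paper has set up in section \ref{section recover tau from N nabla}, is Caruso's result (remark \ref{rem tau action}): Kisin's $N_\nabla$ on $\Mm$ exponentiates to the \emph{actual} $\tau_D$ on $\mathcal{D}^\ast(\T)$, and Poyeton's $N_\nabla$ on $\mathcal{D}^\dagger_{\rig}(\T^\vee[1/p])$ is by construction the infinitesimal form of that same $\tau_D$. This is what forces the two connections to coincide, not an abstract uniqueness principle.
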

\begin{proof}
	This follows from proposition \ref{prop H0 H1 of complex C_R(V)}.
\end{proof}

	\cleardoublepage
	\phantomsection
	\addcontentsline{toc}{chapter}{Index}
	\printindex

	\cleardoublepage
	\phantomsection
	\addcontentsline{toc}{chapter}{Biblography}

	\Addresses
	
\end{document}